\documentclass[a4paper,12pt]{ociamthesis}

\usepackage[english]{babel}
\usepackage{array}
\usepackage[utf8]{inputenc}
\usepackage{graphicx}
\usepackage{amsmath,latexsym,amsfonts,amssymb,amscd,euscript,epic,eepic,times,amsthm,stmaryrd}
\let\olddegree\degree
\let\degree\relax   % temporarily make it undefined
\usepackage{mathabx}
\let\degree\olddegree
\usepackage{xypic}
\usepackage{nicefrac}
\usepackage{faktor}
\usepackage{pdfsync}
\usepackage{xcolor}
\usepackage{hyperref}
\usepackage{tikz-cd}
\tikzset{
    labl/.style={anchor=north, rotate=90, inner sep=.5mm}
}

\theoremstyle{plain}  % default
\newtheorem{theorem}{Theorem}[section]

\newtheorem*{theorem*}{Theorem}
\newtheorem{corollary}[theorem]{Corollary}
\newtheorem{lemma}[theorem]{Lemma}
\newtheorem{proposition}[theorem]{Proposition}

\theoremstyle{definition}
\newtheorem{definition}[theorem]{Definition}

\theoremstyle{remark}

\newtheorem{remark}[theorem]{Remark}
\newtheorem*{remark*}{Remark}

\newtheorem*{claim*}{Claim}

\DeclareFontFamily{OT1}{rsfs}{}
 \DeclareFontShape{OT1}{rsfs}{n}{it}{<->rsfs10}{}
 \DeclareMathAlphabet{\curly}{OT1}{rsfs}{n}{it}

\newcommand{\llambda}{\Gamma}
\newcommand{\ambda}{\gamma}

\newcommand{\forevery}{\;\text{for}\;\text{every}\;}
\newcommand{\andd}{\quad\text{and}\quad}

\newcommand{\suhthat}{\mid}

\renewcommand{\le}{\leqslant}

\renewcommand{\ge}{\geqslant}
\renewcommand{\setminus}{\smallsetminus}

\newcommand{\R}{\mathbb{R}}

\newcommand{\Z}{\mathbb{Z}}
\newcommand{\C}{\mathbb{C}}

\newcommand{\PPP}{\curly{P}}

\newcommand{\calR}{\mathcal{R}}
\newcommand{\wcalR}{\widetilde{\mathcal{R}}}

\newcommand{\lie}{\mathfrak}
\newcommand{\xra}{\xrightarrow}

\newcommand{\PU}{\mathrm{PU}}

\newcommand{\U}{\mathrm{U}}
\newcommand{\OO}{\mathrm{O}}
\newcommand{\GL}{\mathrm{GL}}
\newcommand{\SL}{\mathrm{SL}}

\newcommand{\SO}{\mathrm{SO}}
\newcommand{\NSO}{\mathrm{NSO}}

\newcommand{\Sp}{\mathrm{Sp}}
\newcommand{\NSp}{\mathrm{NSp}}
\newcommand{\Spin}{\mathrm{Spin}}
\newcommand{\Pin}{\mathrm{Pin}}

\newcommand{\Cl}{\mathrm{Cl}}

\DeclareMathOperator{\ad}{ad}
\DeclareMathOperator{\Ad}{Ad}

\DeclareMathOperator{\tr}{tr}

\DeclareMathOperator{\Hom}{Hom}
\DeclareMathOperator{\End}{End}

\DeclareMathOperator{\id}{Id}
\DeclareMathOperator{\Id}{Id}

\DeclareMathOperator{\Aut}{Aut}
\DeclareMathOperator{\Int}{Int}
\DeclareMathOperator{\Out}{Out}

\DeclareMathOperator{\gal}{Gal}

\newcommand{\fr}{\mathfrak B^{\C}}

\newcommand{\x}{\mathfrak{X}}

\renewcommand{\phi}{\varphi}

\newcommand{\gl}{\mathfrak{gl}}
\newcommand{\lieg}{\mathfrak{g}}

\renewcommand{\phi}{\varphi}

\newcommand{\alg}{\alpha_{\gamma}}

\newcommand{\oo}{\mathcal{O}}

\newcommand{\autg}{\Aut(G)}
\newcommand{\homm}[2]{\Hom\left(#1,#2\right)}
\newcommand{\outg}{\Out(G)}
\newcommand{\homgh}{\homm{\Gamma}{\autg}}

\newcommand{\gt}{G^{\theta}}
\newcommand{\gtt}{G^{\theta'}}
\newcommand{\gs}{G_{\theta}}
\newcommand{\pt}{p_{\theta}}

\newcommand{\tg}{\theta_{\gamma}}
\newcommand{\etag}{\eta_{\gamma}}

\newcommand{\zg}{z_{\gamma}}

\newcommand{\cct}{c_{\theta}}
\newcommand{\ct}{c^{\theta}}
\newcommand{\qt}{q_{\theta}}
\newcommand{\qqt}{\qt}
\newcommand{\hg}{h_{\gamma}}
\newcommand{\oh}{\overline{h}}

\newcommand{\od}{\overline{\delta}}
\newcommand{\ohg}{\overline{h}_{\gamma}}
\newcommand{\bg}{\beta_{\gamma}}
\newcommand{\fg}{f_{\gamma}}

\newcommand{\taug}{\tau_{\gamma}}
\newcommand{\mug}{\mu_{\gamma}}

\newcommand{\ag}{a_{\gamma}}
\newcommand{\sg}{s_{\gamma}}
\newcommand{\taut}{\tau^{\theta}}

\newcommand{\ctt}{\tilde c_{\theta}}
\newcommand{\gamt}{\Gamma_{\theta}}
\newcommand{\gamtt}{\widehat{\Gamma}_{\theta}}
\newcommand{\gamtz}{\widehat{\Gamma}^{\theta}}
\newcommand{\liegm}{\mathfrak{g}^{\theta}_{\mu}}
\newcommand{\liegt}{\mathfrak{g}_{\theta}}
\newcommand{\ol}{\ambda\Delta}

\newcommand{\zt}{Z^1_{a}(\Gamma,Z)}

\DeclareMathOperator{\Fun}{Fun}
\newcommand{\fun}[2]{\Fun(#1,#2)}

\newcommand{\fung}{\fun{\Gamma}{G/Z}}

\newcommand{\cent}{Z_{\Gamma}(\Gamma')}

\newcommand{\sett}{\Gamma_{\theta}}
\newcommand{\setp}{\Gamma_{\theta}^Z}

\newcommand{\wt}{\widetilde{\theta}}

\newcommand{\wf}{\widetilde{f}}

\newcommand{\glt}{\GL(n,\C)^{\theta}}
\newcommand{\gls}{\GL(n,\C)_{\theta}}

\newcommand{\md}{M}

\newcommand{\cM}{\mathcal{M}}
\newcommand{\wcM}{\widetilde{\mathcal{M}}}
\newcommand{\mdl}{\mathcal{M}}

\newcommand{\mdlsg}{\mdl_{ss}(X,G)^{\Gamma}}

\newcommand{\gam}{\Gamma_Y}

\newcommand{\xg}{X_{\Gamma}}
\newcommand{\xd}{X_{\Delta}}
\newcommand{\pg}{p_{\Gamma}}
\newcommand{\pd}{p_{\Delta}}

\newcommand{\mtau}{M^{\tau}}
\newcommand{\mtauu}{M^{\tau'}}
\newcommand{\mttauu}{M^{\tau\tau'}}

\newcommand{\wchi}{\widetilde{\chi}}

\newcommand{\gltt}{\GL(2n,\C)^{\theta}}

\newcommand{\spt}{\Sp(2n,\C)^{\theta}}
\newcommand{\sps}{\Sp(2n,\C)_{\theta}}

\DeclareMathOperator{\irr}{irr}

\newcommand{\homtc}{\Hom_{\theta,\tau,c}(\wgame,\Aut(G))}
\newcommand{\outc}{\Hom_{\theta,\tau,c}(\wgame,\Out(G))}
\newcommand{\etc}{e_{\tau,c}}

\newcommand{\wgam}{\widehat{\Gamma}_{Y}}
\newcommand{\wgamm}{\widehat{\Gamma}}
\newcommand{\wga}{\widehat{\gamma}}
\newcommand{\wgame}{\wgamm_{\eta}}

\newcommand{\otau}{\overline{\tau}}
\newcommand{\zzgtw}{Z^2_{\tau}(\wgam,Z(\gt_0))}
\newcommand{\zzgt}{Z^2_{\tau}(\gamt,Z(\gt_0))}

\newcommand{\wtg}{\widetilde{\theta}_{\gamma}}
\newcommand{\wtb}{\widetilde{\theta}_{\beta}^{-1}}

\newcommand{\fb}{\eta_{\beta}}
\newcommand{\hb}{h_{\beta}}
\newcommand{\mub}{\mu_{\beta}}
\newcommand{\ttg}{\theta'_{\gamma}}

\newcommand{\halpha}{\hat\alpha}

\newcommand{\olambda}{\overline{\Lambda}}
\newcommand{\gtl}{\gt_{\Lambda}}
\newcommand{\gtll}{\gt_{\olambda}}

\newcommand{{\zerocochain}}{g}
\newcommand{{\onecochain}}{f}

\newcommand{\ug}{\underline G}

\newcommand{\cc}[1]{\mathcal C_{#1}}
\newcommand{\hcc}[1]{\hat{\mathcal C}_{#1}}

\newcommand{\zk}{\lie z_{\lie k}}

\newcommand{\pair}[2]{\langle#1,#2\rangle}

\DeclareMathOperator{\Hol}{Hol}

\newcommand{\rhog}{\rho_{\Gamma}}
\newcommand{\rhomu}{\rho_{\mu}}
\newcommand{\rhotm}{\rho_{\tilde\tau,\mu}}
\newcommand{\cdotv}[1]{\cdot \rhog(#1)}

\newcommand{\twistedr}{$(\theta,c,\rhog)$-twisted $\Gamma$-equivariant }

\newcommand{\da}{\kappa}
\newcommand{\daga}{\da_{\gamma}}
\newcommand{\dab}{\delta}
\newcommand{\dac}{\epsilon}
\newcommand{\tda}{\tilde{\dac}}
\newcommand{\rholm}{\rho_{\tda,\mu}}
\newcommand{\lievm}{V^{\da}_{\mu}}

\title{Fixed points in Higgs bundle moduli spaces and the Prym--Narasimhan--Ramanan construction}
\author{Guillermo Barajas Ayuso}

\begin{document}

\thispagestyle{empty}

\begin{center}

\

\

\

\

    { \Huge {\bfseries {Fixed points in Higgs bundle moduli spaces and the Prym--Narasimhan--Ramanan construction}} \par}
{\large \vspace*{15mm} {
  \includegraphics[scale=0.2]{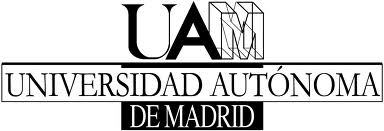}

\par} \vspace*{15mm}}
    {{\Large Guillermo Barajas Ayuso} \par}
{\large \vspace*{1ex}
    {{ICMAT (CSIC-UAM-UC3M-UCM)} \par}
\vspace*{1ex}{\vspace*{15mm}
 \includegraphics[scale=.5]{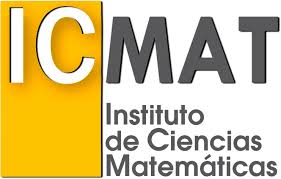}
\par }
\vspace*{15mm}
    {Tesis presentada para la obtenci\'on del t\'itulo de \par}
\vspace*{1ex}
    {\it {Doctor en Matem\'aticas} \par}
\vspace*{1ex}
    {por la Universidad Aut\'onoma de Madrid. \par}
\vspace*{1ex}
    {Director: Óscar Garc\'ia Prada. \par}
\vspace*{2ex}
    8 de marzo de 2023}
\end{center}

\newpage
\thispagestyle{empty}
\

\newpage
\thispagestyle{empty}

\

\vspace{7cm}
\

\begin{flushright}
{\it A papá y mamá}
\end{flushright}

\newpage
\thispagestyle{empty}

\

\begin{abstract}
Let $X$ be a compact Riemann surface and $G$ a connected reductive complex Lie group with centre $Z$. Consider the moduli space $\mdl(X,G)$ of polystable $G$-Higgs bundles on $X$. The group of isomorphism classes of $Z$-bundles on $X$, which is isomorphic to $H^1(X,Z)$, acts on $\mdl(X,G)$ via extension of structure group by the multiplication homomorphism
$Z\times G\to G$. The group $\Aut(G)$ also acts on $\mdl(X,G)$ by extension of structure group, and so does the group $\Aut(X)$ of holomorphic automorphisms by pullback. Finally, $\C^*$ acts by multiplying the Higgs field. Combining these provides an action of $H^1(X,Z)\rtimes(\Aut(G)\times\Aut(X))\times\C^*$ on $\mdl(X,G)$, where $\Aut(G)$ and $\Aut(X)$ act on $H^1(X,Z)$ by extension of structure group and pullback, respectively.  

Let $\Gamma$ be a finite subgroup of $H^1(X,Z)\rtimes(\Aut(G)\times\Aut(X))\times\C^*$.
The goal of this thesis is to find a Prym--Narasimhan--Ramanan-type construction to describe the fixed points of the action of $\Gamma$ on  $\mdl(X,G)$. More precisely, we show that fixed points correspond to twisted equivariant Higgs pairs over certain étale covers of $X$. Our results generalize García-Prada--Ramanan, where $\Gamma$ was considered to be cyclic, and Narasimhan--Ramanan, who only consider actions of cyclic subgroups of $H^1(X,\C^*)$ for $G=\GL(n,\C)$.
\end{abstract}

\

\

\

\

\

\noindent
{\Huge {\bfseries Acknowledgements}  }

\

\noindent First of all I want to thank my thesis supervisor Oscar García-Prada. His love for mathematics involves him in a number of projects requiring a lot of time and effort, thus his time has been invaluable to me. Each meeting with him, no matter its duration, has provided me with tools that I believe will be essential in my scientific career. Even a fraction of this thesis would not have been possible without his guidance.

\

Secondly I want to thank Peter Gothen, who bore with me during my stay in Porto. It was an enriching and fruitful experience, and I am very proud of the publication ---my first one!--- that came out of it.

\

I want to thank ``la Caixa'' Foundation (ID 100010434) for supporting this research through an INPhINIT fellowship (code LCF/BQ/DI19/11730022) fellowship and for the financial help I have enjoyed during my stay in Porto and the conferences and seminars that I have attended. In this regard I also wish to aknowledge the grants PID2019-109339GB-C31 and RED2018-102810-T, awarded by the Spanish Ministry of Science and
Innovation. 

\

Muchas gracias a Mario García, mi tutor, que ha lidiado con el papeleo que tanto nos gusta año a año.
En la parte investigadora, ha ejercido de ancla para nuestro grupo de trabajo sobre el libro de Donaldson--Kronheimer, ha dado grandes charlas informales en el ICMAT y ha sido una parte importante en muchas discusiones interesantes, sobre temas matemáticos pero también sobre la carrera investigadora. Eres toda una referencia para mí.

\

Aunque hay muchos Guilles en investigación últimamente (espero que nadie se entere de nuestra secreta conspiración de monopolizar las mates), hay uno muy especial para mí y ese es mi compañero de doctorado Guillermo Gallego. Cuando llegué a Madrid me ayudaste enormemente a ponerme al día al comienzo del doctorado. Juntos hemos sufrido las confusiones derivadas de leer notación complicada o de no conocer teoremas básicos. Realmente he disfrutado de las numerosas discusiones contigo, y espero que siga siendo así y podamos realizar colaboraciones oficiales en el futuro. Para mí siempre serás un maestro de las referencias matemáticas.

\

I would like to thank Nigel Hitchin for taking the time to know about my thesis and sharing mathematical ideas. I would also like to thank Tomas L. Gomez, Alfonso Zamora, David Alfaya, Andre G. Oliveira and Luis Alvarez-Consul, for several and very useful mathematical discussions.

\

Mil gracias a Juan Álvaro por tantas discusiones interesantes y unas cuantas quedadas para hablar de la vida. No tenía suficiente con los invariantes de Seiberg--Witten, así que vino desde Estados Unidos para estudiar los invariantes de Donaldson.

\

Muchas gracias a Ángel González por su voluntad incansable de hacer matemáticas y sus explicaciones intuitivas; no pensé que la transformada de Fourier--Mukai me sería tan útil cuando la explicaste. No sé cómo, pero de alguna forma estás en todos los fregados.

\

Muchas gracias a Emilio Franco por la facilidad que me ha dado para hablar con él en general, y en particular para discutir temas de {\it mirror symmetry}. Espero que consigas la opción vegetariana que te mereces en el CBM.

\

Thanks a lot to Mathieu Ballandras, who has been a great fit for our research team and has introduced me to tecno music. Our discussions in ICMAT and Oberwolfach have been priceless. I wish I had your help at the beginning of my PhD!

\

I would like to thank my colleagues from Mario's group, better known as the {\it super Mario bros}. Andoni Arriba, Raúl González and Roberto Téllez have been a big support and I have really enjoyed their company in ICMAT and during our stay in Lisbon. Raúl and Roberto were also victims of the study group on Donaldson polynomial invariants. Thank you Arpan for bringing me closer to understanding the physics of mirror symmetry, and for great discussions on a variety of mathematical and non-mathematical topics. I really enjoyed visiting the Galileo museum with you! Thank you King for coming so far to bring us your expertise on toric varieties. It has been a pleasure to see how the super Mario bros group has grown over time.

\

Gracias a mis compañeros de beca: los Víctors, Arce, Arnau, las Anas, Marina, Alejandra, Fernando... ¡Echaré de menos las barras libres de vinos del primer training! Gracias en particular a Víctor por hacerme ver El Padrino, enseñarme a jugar al pádel y acogerme en su casa.

\

Gracias a Enrique, Bilson, Celia y Fernando, que me acompañaron en el grupo de estudio del libro de Hartshorne.

\

Gracias a mis compañeros de congresos: Alex, Mario, Guille Sánchez, Laura... Ha habido muy buenas charlas, cenas y noches.

\

Gracias a mis grandes amigos Adri, Alex, Álvaro, Ángel, Aarón, Carlos, César, Javi, Luis, Sergio... por valorar lo que hacía sin entenderlo del todo, y sacarme de vez en cuando del universo matemático en el que trabajo.

\

Gracias a Maleena y Sandra, que me han escuchado hablar sobre mates con todo el cariño del mundo y me han apoyado siempre.

\

Gracias a mis compañeros de basket: David, Ángel, Paul, Marino, Vico, Sánchez, Crivil, Pelu, Adán, Dave... Gracias a mis compañeros y profes de baile, especialmente a Nico por hacer que ame la salsa y a mi compañera de fatigas Ana. También a Nacho, Nan, Mary, Sofía, Mica, Fran, Fer, Aye, Kevin, Raquel, Sergio, Laura... Cuando juego al baloncesto o bailo me olvido de todo lo demás, y estoy seguro de que eso me ha ayudado a poner en perspectiva la importancia de las cosas.

\

Gracias a mi hermana, mis tíos, mis primos y mis abuelos. Mi abuela siempre ha dicho que me busque un trabajo de verdad, pero yo sé lo que dice por mi bien. 

\

Y gracias a mis padres, que me han apoyado incondicionalmente desde que nací. Sin vuestros sacrificios no sería quien soy ni estaría donde estoy, y espero poder devolveros parte de vuestros esfuerzos con el tiempo.

\

\

\

\

\newpage

\thispagestyle{empty}

\

\newpage 

\

\newpage

\tableofcontents

\newpage

\

\chapter{Introduction}
\vspace{30 pt}

\noindent{\bf\large Background and statement of the problem}
\vspace{10 pt}

\noindent The aim of this thesis is to study fixed points of finite group actions on moduli spaces of $G$-Higgs bundles over a compact Riemann surface $X$. Here $G$ is a connected semisimple complex Lie group. A $G$-Higgs bundle is a pair $(E,\phi)$ consisting of a principal $G$-bundle $E$ over $X$ and a global section $\phi$ of $E(\mathfrak{g})\otimes K_X$, where $E(\mathfrak{g})$ is the bundle associated to $E$
via the adjoint representation of $G$ in $\lieg$ and $K_X$ is the canonical bundle of $X$ ---the section $\phi$ is called the Higgs field. There is a moduli space $\mdl(X,G)$ classifying isomorphism classes of polystable $G$-Higgs bundles, see Chapter \ref{chapter-higgs-bundles}. When $G$ is classical, $G$-Higgs bundles correspond to vector bundles equipped with an endomorphism tensored by $K_X$ and some extra structure.

Since their introduction by Hitchin more than 35 years ago \cite{hitchin1987,hitchin:duke}, moduli spaces of Higgs bundles have been of
tremendous interest in geometry, topology and theoretical physics. They have an extremely rich
geometry coming from the fact that they are hyperK\"ahler, they define completely integrable systems
and, by the non-abelian Hodge correspondence, they are identified with character varieties of the fundamental group of the surface.
Important aspect of the geometry of $\mathcal{M}(X,G)$ is of course reflected by the group of its holomorphic automorphisms, and in particular by the action of finite subgroups of the group of automorphisms. 

We consider several natural group actions on $\mdl(X,G)$. First, the group $\C^*$ acts on $\mdl(X,G)$ by rescaling the Higgs field. Fixed points of this action, called Hodge bundles, play an essential role in the study of the topology of the moduli space via localization. This has been considered when $G=\GL(n,\C)$ for rank and degree coprime, first for rank 2 and fixed determinant by Hitchin \cite{hitchin1987}, second for rank 3 by Gothen \cite{gothen} and finally for arbitrary rank by García-Prada--Heinloth--Schmitt \cite{oscar-heinloth-schmitt} and García-Prada--Heinloth \cite{oscar-heinloth}. Hodge bundles are also involved in the study of variations of Hodge structures by Simpson \cite{simpson-local-systems} and Biquard--Collier--García-Prada--Toledo \cite{oscar-biquard-collier-toledo}. A broader context where these objects appear is the theory of Higgs bundles for real forms and Higher Teichmüller components, see Bradlow--García-Prada--Gothen \cite{oscar-bradlow-gothen}, Aparicio--Bradlow--Collier--García-Prada--Gothen--Oliveira \cite{aparicio}, García-Prada--Oliveira \cite{García-prada-oliveira,García-prada-oliveira-sp}, etc. ---for more references, see the survey paper by García-Prada \cite{oscar-survey}.

In this thesis we are only concerned with actions of finite subgroups of $\C^*$. The simplest example is the involution $\iota$ of $\mdl(X,G)$ sending $(E,\phi)$ to $(E,-\phi)$, which is already considered by Hitchin \cite{hitchin1987} for $G=\GL(2,\C)$ and García-Prada--Ramanan \cite{PR} in general. The fixed points of $\iota$ correspond to Higgs pairs with structure group $\gt$, where $\theta$ runs over the inner involutions of $G$. Here $\gt$ is the subgroup of fixed points of $\theta$ and the Higgs field takes values in the $-1$-eigenspace of the automorphism of $\lie g$ induced by $\theta$. These fixed points correspond to representations of the fundamental group of $X$ in the real form $G^{\sigma}$, where $\sigma$ is an antiholomorphic involution of $G$ of Hodge type, that is, inner equivalent to a compact involution.

Secondly, the group of automorphisms $\Aut(G)$ of $G$ acts on $\mdl(X,G)$ by extension of structure group. More precisely, given a holomorphic automorphism $\theta$ of $G$ and a $G$-Higgs bundle $(E,\phi)$ over $X$, we may twist the $G$-action on the total space of $E$ by $\theta^{-1}$ to obtain a new $G$-bundle, which we call $\theta(E)$, and the isomorphism of vector bundles $E(\lie g)\cong \theta(E)(\lie g)$ induced by $\theta$ produces a Higgs field $\theta(\phi)$ for $\theta(E)$. The action of the group of inner automorphisms $\Int(G)$ respects isomorphism classes: for each $g\in G$, the map $E\to E$ given by multiplication by $g$ induces an isomorphism from $(E,\phi)$ to $\theta(E,\phi)$. Hence, we get a left action of $\Out(G):=\Aut(G)/\Int(G)$. 

For example, if $a$ is a non-trivial element of $\Out(G)$ such that $a^2=1$, the fixed points correspond to $G^{\theta}$-Higgs bundles, where $\theta$ is an outer involution of $G$ lifting $a$ (see \cite{PR}). Since the actions of $\Out(G)$ and $\C^*$ commute, we may also consider the involution combining both $a$ and $-1$ sending $(E,\phi)$ to $(\theta(E),-\theta(\phi))$, in which case fixed points correspond to representations of $\pi_1(X)$ in certain real forms of $G$ which are no longer of Hodge type. In fact, varying $a$, all the possible real forms of $G$ appear. 

Another important group acting on the moduli space of $G$-Higg bundles is $H^1(X,Z)$, the group of isomorphism classes of $Z$-bundles, where $Z$ is the centre of $G$. Given a $G$-Higgs bundle $(E,\phi)$ and a $Z$-bundle $L$, we may define the tensorization $E\otimes L$ to be the $G$-bundle obtained from the $G\times Z$-bundle $E\times L$ via extension of structure group by the multiplication homomorphism. Since the adjoint action of $Z$ on $\lie g$ is trivial, the Higgs field $\phi$ may also be regarded as a Higgs field of $E\otimes L$. 

It can be seen that the actions of $\Out(G)$ and $H^1(X,Z)$ do not commute, but rather the former twists the latter by extension of structure group. Thus we get a combined right action of $H^1(X,Z)\rtimes\Out(G)\times\C^*$ on $\mdl(X,G)$, where the semidirect product is defined using the aforementioned action of $\Out(G)$ on $H^1(X,Z)$, such that an element $(\alpha,a,\mu)$ sends a $G$-Higgs bundle $(E,\phi)$ to $\theta^{-1}(E\otimes\alpha,\mu\phi)$, where $\theta$ is any automorphism of $G$ lifting $a$. 
\cite{PR} gives a description of the fixed points of the action of a general finite cyclic subgroup of $H^1(X,Z)\rtimes\Out(G)\times\C^*$ which is analogous to the ones for the above cases.

 An important construction related to the action of $H^1(X,Z)$ in the context of vector bundles, which we generalize to Higgs bundles, is the famous Narasimhan--Ramanan description of fixed points of the action of a finite cyclic subgroup of the Jacobian. Let $L\to X$ be a line bundle  of finite order $r$, which determines an automorphism of the moduli space of vector bundles of rank $n$ and degree $d$ via tensorization. Narasimhan and Ramanan \cite{narasimhan-ramanan} find that the stable fixed point locus is contained in the pushforward of the moduli space of vector bundles over $X_L$ of rank $n/r$ and degree $d$, where $X_L$ is the étale cover of $X$ determined by $L$. Nasser \cite{nasser} improves this result by showing that the whole polystable fixed point locus is equal to the pushforward of this moduli space.
In particular, if $r=n$ then the fixed point subvariety is isomorphic to the pushforward of the Jacobian of $X_L$, and its intersection with the subvariety of vector bundles having a fixed determinant is isomorphic to the pushforward of the Prym variety.

The last group action on $\mdl(X,G)$ that we consider is that of holomorphic automorphisms $\Aut(X)$ of $X$ via pullback. This twists the action of $H^1(X,Z)$ via pullback and so at the end we get a right action of the group
 \begin{equation}\label{eq-big-group}
     H^1(X,Z)\rtimes(\Aut(X)\times \Out(G))\times \mathbb{C}^*,
 \end{equation}
where the semidirect product is defined by the actions of $\Aut(X)$ and $\Out(G)$ on $H^1(X,Z)$ given by pullback and extension of structure group, respectively. This is given explicitly by
$$
  (E,\phi)\cdot (\alpha,\eta,a,\mu):=(\eta^*\theta^{-1}(E\otimes\alpha), \mu\eta^*\theta^{-1}(\phi)),
$$
for each $(\alpha,\eta,a,\mu)$ in (\ref{eq-big-group}) and any automorphism  $\theta$ of $G$ lifting $a$.  

Fixed points of finite subgroups of $\Aut(X)$ have been studied by several authors including Andersen--Grove \cite{andersen-fixed-points}, Andersen \cite{andersen} and García-Prada--Wilkin \cite{ow}. Fixed points of general finite subgroups of $\Aut(X)\times \Out(G)\times \C^*$ are described by García-Prada--Basu \cite{oscar-suratno} in terms of twisted equivariant Higgs bundles, which we cover in Chapter \ref{chapter-twisted-equivariant-higgs-pairs}.

This thesis undertakes the task of describing the fixed points of the action of an arbitrary finite subgroup $\Gamma$ of (\ref{eq-big-group}) on $\mdl(X,G)$. In particular, we generalize \cite{PR} to any finite subgroup and unify their results with \cite{narasimhan-ramanan}, getting a general Prym--Narasimhan--Ramanan construction. We also generalize \cite{oscar-suratno} by adding the $H^1(X,Z)$-action to their work, and we improve their results in light of the Prym--Narasimhan--Ramanan construction. Our general answer is Theorem \ref{th-prym-narasimhan-ramanan-general}, but in order to give the reader a good grasp of how it works we have isolated its main ingredients into more special cases where their role can be understood best. The particular case of finite group actions of $Z$-bundles on the moduli space of holomorphic $G$-bundles is treated in a joint paper with García-Prada \cite{prym-narasimhan-ramanan}. The formalism of twisted equivariant principal bundles, which play a central role in this study, is developed in a joint paper with García-Prada, Gothen and Mundet i Riera \cite{GGM}. 

\vspace{20 pt}
\noindent{\bf\large Fixed points when the action does not involve tensorizarion}
\vspace{10 pt}

\noindent Let $\Gamma$ be a finite subgroup of $\Aut(X)\times\Out(G)\times\C^*$. In this situation, the fixed point variety $\mdl(X,G)^{\Gamma}$ is given in terms of twisted equivariant Higgs bundles over $X$, which are introduced in Chapter \ref{chapter-twisted-equivariant-higgs-pairs}. The corresponding results are explained in Chapter \ref{chapter-alpha-trivial}. Let $\eta$, $a$ and $\mu$ be the natural homomorphisms from $\Gamma$ to $\Aut(X)$, $\Out(G)$ and $\C^*$ given by projection, and take a homomorphism $\theta:\Gamma\to\Aut(G)$ lifting $a$. Take a map $c:\Gamma\times\Gamma\to Z$. A (right) $(\theta,c)$-twisted $\Gamma$-equivariant action on a $G$-bundle $E$ over $X$, which we denote `$\cdot$', is a lift of the right action $\eta^{-1}$ of $\Gamma$ on $X$ to an action of $\Gamma$ on $E$ which twists the bundle $G$-action by $\theta^{-1}$ and such that the composition of the actions of $\gamma_1$ and $\gamma_2\in\Gamma$ is equal to the composition of the actions of $c(\gamma_1,\gamma_2)$ and $\gamma_1\gamma_2$. The pair $(E,\cdot)$ is called a $(\theta,c)$-twisted $\Gamma$-equivariant $G$-bundle. 

More generally, we may define (left or right) $(\theta,c)$-twisted $\Gamma$-actions on a set $M$ which is equipped with a group $G$-action.
Such actions are associative if and only if $c$ is a 2-cocycle of $\Gamma$ with values in $Z$, in the sense of Galois cohomology \cite{serre-galois}, and we assume so hereafter. We call $Z^2_{a}(\Gamma,Z)$ to the group of 2-cocycles. We may also define the second cohomoly group $H^2_a(\Gamma,Z)$ as the quotient of $Z^2_{a}(\Gamma,Z)$ by an action of the group of maps $\Gamma\to Z$. The theory of twisted equivariant actions is developed in our joint paper with García-Prada, Gothen and Mundet i Riera \cite{GGM}, including a \v{C}ech cohomology interpretation of the set of isomorphism classes of these objects. We review some of our results in Chapter \ref{chapter-twisted-equivariant-bundles}.

Given a representation $\rho:G\to\GL(V)$, a $(\theta,c)$-twisted left $\Gamma$-action $\rhog$ on $V$ and a $(\theta,c)$-twisted $\Gamma$-equivariant $G$-bundle $E$, there is a right $\Gamma$-equivariant action on the associated bundle $E(V)$. A triple $(E,\cdot,\phi)$ consisting of a twisted $\Gamma$-equivariant $G$-bundle $E$ over $X$ and a $\Gamma$-invariant Higgs field $\phi\in H^0(E(V)\otimes K_X)$ is called a $(\theta,c,\rhog)$-twisted $\Gamma$-equivariant $(G,V)$-Higgs pair. There are notions of (poly,semi)stability for twisted equivariant Higgs pairs and a moduli space $\mdl(X,G,\Gamma,\theta,c,V,\rhog)$ classifying isomorphism classes of polystable objects, see Chapter \ref{chapter-twisted-equivariant-higgs-pairs}. This setting may be applied to the case where $V=\lie g$, $\rho$ is the adjoint representation and $\rhog=\mu^{-1}\theta$. In this situation we omit $\lie g$ in the notation, denoting by $\mdl(X,G,\Gamma,\theta,c,\mu^{-1}\theta)$ the moduli space. We have a natural forgetful morphism $\mdl(X,G,\Gamma,\theta,c,\mu^{-1}\theta)\to\mdl(X,G)$ omitting the $\Gamma$-action, whose image we call $\widetilde{\mdl}(X,G,\Gamma,\theta,c,\mu^{-1}\theta)$.

\vspace{5 pt}
{\bf Theorem A} (Theorem \ref{main}). {\it The union $\bigcup_{[c]}\widetilde{\mdl}(X,G,\Gamma,\theta,c,\mu^{-1}\theta)$ is contained in $\mdl(X,G)^{\Gamma}$, where $[c]$ runs over $H^2_a(\Gamma,Z)$, and the simple and stable fixed point locus $\mdl_{ss}(X,G)^{\Gamma}$ is contained in $\bigcup_{[c]}\widetilde{\mdl}(X,G,\Gamma,\theta,c,\mu^{-1}\theta)$.}
\vspace{5 pt}

Actually, in Theorem \ref{main} we decompose each piece $\mdl(X,G,\Gamma,\theta,c,\mu^{-1}\theta)$ according to the action of $\Gamma$ at isotropy points.

\vspace{20 pt}
\noindent{\bf\large Fixed points for trivial action on $X$}
\vspace{10 pt}

\noindent In chapter \ref{chapter-prym-narasimhan-ramanan} we consider the action of an arbitrary finite subgroup $\Gamma$ of $H^1(X,Z)\rtimes\Out(G)\times \mathbb{C}^*$. Projections on $\Out(G)$ and $\C^*$ provide homomorphisms $a$ and $\mu$, and projection on $H^1(X,Z)$ yields a map $\alpha:\Gamma\to H^1(X,Z)$ which satisfies
$
\alpha_{\gamma\gamma'}=\alpha_\gamma \alpha^{\gamma}_{\gamma'}
$
for each $\gamma$ and $\gamma'$ in $\Gamma$,
where the superscript denotes the left action of $\Gamma$ on $H^1(X,Z)$ induced by $a$. Maps $\alpha$ satisfying this equation are called 1-cocycles, and we write $Z^1_a(\Gamma,H^1(X,Z))$ for the group of 1-cocycles. In general, given a left action $b:\Gamma\to\Aut(A)$ on a group $A$, we have a notion of 1-cocycle and we denote the set of 1-cocycles by $Z^1_{b}(\Gamma,A)$.

By de Siebenthal \cite{de-siebenthal} there exists a homomomorphism $\theta:\Gamma\to\Aut(G)$ lifting $a$. Call $\gs$ to the subgroup of $G$ where the automorphism $\tg$ is equivalent to multiplying by some element of the centre $Z$, for each $\gamma$ in $\Gamma$. By definition we have a homomorphism $\gs\to Z^1_{a}(\Gamma,Z)$ whose kernel is the subgroup of fixed points $\gt\le G$. This, in turn, induces a map from the set of isomorphism classes of $\gs$-bundles $H^1(X,\underline{\gs})$ to $H^1(X,Z^1_a(\Gamma,Z))$. Since $G$ is semisimple, $Z$ is finite and so $X$ and $\Gamma$ may be exchanged, thus providing a map $\ctt:H^1(X,\underline{\gs})\to Z^1_a(\Gamma,H^1(X,Z))$ which may be thought of as a ``characteristic class". We may also introduce the $\mu$-weight space $\liegm$ of the automorphism $\theta$ of $\liegm$. The restriction of the adjoint action to $\gs$ preserves $\liegm$ and so we have a moduli space $\mdl_{\alpha}(X,\gs,\liegm)$ of $(\gs,\liegm)$-Higgs pairs $(F,\psi)$ such that $\ctt(F)\cong\alpha$. 

We also have an extension of structure group morphism $\cM_{\alpha}(X,\gs,\liegm)\to\mdl(X,G)$, whose image we denote with a tilde. This image is independent of the class $[\theta]$ of $\theta$ in $\Hom(\Gamma,\Aut(G))/\sim$, where $\sim$ is the equivalence relation such that $\theta\sim \theta'$ if and only there exists $g$ in $G$ satisfying $\theta\sim \Int_g\theta'\Int_g^{-1}$. 

\vspace{5 pt}
{\bf Theorem B} (Theorem \ref{th-fixed-points-oscar-ramanan-higgs}). {\it The union $\bigcup_{[\theta]}\wcM_{\alpha}(X,\gs,\liegm)$, where $[\theta]$ runs over the class of $\theta$ in $\Hom(\Gamma,\Aut(G))/\sim$, is contained in $\mdl(X,G)^{\Gamma}$, and the simple and stable fixed point locus $\mdlsg$ is contained in this union.}
\vspace{5 pt}

The actual statement of Theorem \ref{th-fixed-points-oscar-ramanan-higgs} is slightly different, since it uses a bijection between lifts $\beta\theta$ of $a$ and 1-cocycles $\beta\in Z^1_{\theta}(\Gamma,\Int(G))$ in the sense of Galois cohomology \cite{serre-galois}, inducing a bijection between the set of classes of lifts in $\Hom(\Gamma,\Aut(G))/\sim$ and the first group cohomology set $H^1_{\theta}(\Gamma,\Int(G))$. Here $\theta$ is any fixed lift.

\vspace{20 pt}
\noindent{\bf\large The Prym--Narasimhan--Ramanan construction}
\vspace{10 pt}

\noindent Theorem \ref{th-prym-narasimhan-ramanan-oscar-ramanan-higgs} takes Theorem B one step further, describing each component of the decomposition $\widetilde{\cM}_{\alpha}(X,G_{\theta},\lieg^{\theta}_{\mu})$ as a union of finite group-quotients of moduli spaces of twisted $\gal(Y/X)$-equivariant $(\gt_0,\liegm)$-Higgs pairs over certain étale covers $Y$ of $X$, where $\gt_0$ is the connected component of $\gs$. This is an application of an equivalence of categories between Higgs pairs over $X$ with non-connected reductive structure group $\gs$ and twisted equivariant Higgs pairs over an étale cover $Y\to X$ with structure group equal to the connected component of the identity $\gt_0<\gs$ (see Chapter \ref{chapter-twisted-equivariant-higgs-pairs}). It also holds when $G$ is reductive and $a$ is trivial, something that we exploit in Chapter \ref{chapter-jacobian} when $G=\GL(n,\C)$.

More precisely, let $\gamtt:=\gs/\gt_0$. By Proposition \ref{prop-extensions-isomorphic-twisted-group} we may find a map $t:\gamtt\to \gs$ which chooses an element of $\gs$ in each connected component and which is a homomorphism up to multiplication by elements of the centre $Z(\gt_0)$ of $\gt_0$. On the one hand, the composition $\Int\vert_{\gt_0}\circ t$, where $\Int\vert_{\gt_0}:\gs\to\Aut(\gt_0)$ is the action of $\gs$ on $\gt_0$ by conjugation, is a homomorphism $\tau:\Gamma\to\Aut(\gt_0)$ lifting the characteristic homomorphism of the extension $\gs$ of $\gt_0$ by $\gamtt$. On the other, we can measure the failure of $t$ to be a homomorphism by the map $c:\Gamma\times\Gamma\to Z(\gt_0)$ which sends a pair $(\gamma,\gamma')$ to the difference between $t_{\gamma}t_{\gamma'}$ and $t_{\gamma\gamma'}$. We say that $t$ is a $c$-twisted homomorphism. Associativity of the group multiplication on $\gs$ implies that $c$ is a 2-cocycle in $Z^2_{\tau}(\gamtt,Z(\gt_0))$. Moreover, the map $t$ induces an isomorphism of group extensions $\gs\cong \gt_0\times_{\tau,c}\gamtt$, where $\gt_0\times_{\tau,c}\gamtt$ is the set $\gt_0\times\gamtt$ equipped with a group multiplication involving $\tau$ and $c$ ---e.g., if $c=1$ then this is a semidirect product, see (\ref{eq-def-twisted product}).

Now, given an $\gs$-bundle $E\to X$, the bundle projection morphism can be factored through $E/\gt_0$, which is a $\gamtt$-bundle over $X$. Let $Y$ be a connected component of $E/\gt_0$ with structure group $\gam\le\gamtt$. The reductiveness of $\gs$ implies that $\gamtt$ is finite, hence $Y$ may be regarded as an étale cover of $X$ with Galois group $\gam$. Via the map $t$ we can further equip $E$ with a $\gamtt$-action which is $(\tau,c)$-twisted. This provides a bijection between the set of isomorphism classes of $\gs$-bundles over $X$ and a finite group-quotient of the set of isomorphism classes of $(\tau,c)$-twisted $\gam$-equivariant $\gt_0$-bundles over $Y$ (see Theorem \ref{th-prym-narasimhan-ramanan-principal}). The finite group is equal to the centralizer of $\gam$ in $\gamtt$, which is equal to the centre of $\gamtt$ if  $E/\gt_0$ is connected. All this theory was first developed in our joint paper with García-Prada, Gothen and Mundet i Riera \cite{GGM}. An analogous result for Higgs pairs is given by Theorem \ref{th-prym-narasimhan-ramanan-higgs}. Extension of structure group provides a morphism from the moduli space of twisted equivariant $(\gt_0,\liegm)$-Higgs pairs over $Y$ to $\mdl(X,G)$, whose image we denote with a tilde.

\vspace{5 pt}
{\bf Theorem C} (Theorem \ref{th-prym-narasimhan-ramanan-oscar-ramanan-higgs}). {\it Let $\qqt:H^1(X,\gamtt)\to Z^1_a(\Gamma,H^1(X,Z))$ be the composition of $\ctt$ with extension of structure group by $\gamtt\to\gamt:=\gs/\gt$. The union $\bigcup_{[\theta],Y}\widetilde{\mdl}(Y,\gt_0,\gam,\tau,c, \liegm)/Z_{\gamtt}(\gam)$ is contained in $\cM(X,G)^{\Gamma}$. Here $[\theta]$ runs over classes of lifts of $a$ in $\Hom(\Gamma,\Aut(G))/\sim$ and $Y$ runs over étale covers of $X$ with Galois group $\gam\le\gamtt$ such that $\qqt(Y)\cong \alpha$. Conversely, $\cM_{ss}(X,G)^{\Gamma}$ is contained in this union.}
\vspace{5 pt}

We show how Theorem C generalizes the Prym--Narasimhan--Ramanan construction in Section \ref{section-example-generalize-narasimhan}: let $G=\GL(n,\C)$ and $\Gamma\le J(X)$ be generated by a line bundle $L$ of finite order $r$. In this situation, the homomorphism $a:\Gamma\to\Out(\GL(n,\C))$ is trivial. It can be seen that there is only one class $[\theta]\in\Int(\GL(n,\C))/\sim$ in the decomposition of the fixed point locus of Theorem B, namely the class of the conjugation by the diagonal matrix $M$ whose diagonal contains every $r$-th root of unity with multiplicity $m:=n/r$. In particular, $r$ must divide $n$. In this setting $\GL(n,\C)^{\theta}\cong\GL(m,\C)^{\times r}$ and $\GL(n,\C)_{\theta}\cong\GL(n,\C)^{\theta}\rtimes_{\tau}(\Z/r\Z)$, where the action $\tau$ of $\Z/r\Z$ on $\GL(m,\C)^{\times r}$ permutes the different copies of $\GL(m,\C)$. 

Let $p_L:X_L\to X$ be the étale cover determined by $L$, which has Galois group $\Z/r\Z$. Theorem C implies that $\mdl_{ss}(X,\GL(n,\C))^{L}$ is isomorphic to an open subvariety in $\mdl(X_L,\GL(m,\C)^{\times r},\Z/r\Z,\tau,1)/(\Z/r\Z)$. Let us forget the Higgs field and translate this into the language of vector bundles: let $E$ be a $(\tau,1)$-twisted $\Z/r\Z$-equivariant $\GL(m,\C)^{\times r}$-bundle over $X_L$. The associated vector bundle is a direct sum $E_1\oplus\dots\oplus E_r$, where $E_i$ is a vector bundle of rank $m$. There is an induced $\Z/r\Z$-equivariant action which permutes the summands, hence $E_i\cong\zeta^{*i}E_1$, where $\zeta$ is a generator of $\gal(X_L/X)\cong \Z/r\Z$. The quotient of $E$ by this action is the pushforward $p_{L*}E_1$. Thus we have identified $\mdl_{ss}(X,\GL(n,\C))^{L}$ with an open subvariety of the pushforward of the moduli space of Higgs bundles of rank $m$ over $X_L$, as required.

A generalization of this result to any finite subgroup $\Gamma$ of the Jacobian $J(X)$ is given in Chapter \ref{chapter-jacobian}. A homomorphism $l:\Gamma\to\Gamma^*:=\Hom(\Gamma,\C^*)$ is called antisymmetric if the pairing of any element of $\Gamma$ with itself is equal to 1. For each such antisymmetric pairing choose a maximal subgroup $\Delta\le\Gamma$ where the pairing is trivial, which we call isotropic. This is of course equipped with an embedding in $\Hom(\Delta,H^1(X,\C^*))$, which by swapping $\Delta$ and $X$ provides a $\Delta^*$-bundle $\pd:\xd\to X$. Assume that $\vert\Delta\vert$ divides $n$. Given a Higgs bundle $(E,\phi)$ of rank $n/\vert\Delta\vert$ over $\xd$ and an element $\gamma$ in $\Gamma$, we may construct a new Higgs bundle $(l(\gamma)\vert_{\Delta}^*(E\otimes\pd^*\gamma),l(\gamma)\vert_{\Delta}^*\phi)$, where $l(\gamma)\vert_{\Delta}\in\Delta^*$ is regarded as an element of $\gal(\xd/X)$. This defines a $l(\Gamma)$-action on $\mdl(\xd,\GL(n/\vert\Delta\vert,\C))$.

Theorem \ref{th-finite-group-jacobian-higgs} states that the union $\bigcup_{l}p_{\Delta*}\mdl(\xd,\GL(n/\vert\Delta\vert,\C))^{l(\Gamma)}$ is contained in $\mdl(X,\GL(n,\C))^{\Gamma}$, and the simple and stable fixed point locus $\mdl_{ss}(X,\GL(n,\C))^{\Gamma}$ is contained in the union. Here $l$ runs over antisymmetric pairings of $\Gamma$, and we declare a component to be empty if $\vert\Delta\vert$ does not divide $n$. Note that, if $\vert\Gamma\vert$ divides $n$, the component corresponding to trivial $l$ is equal to $p_{\Gamma*}\mdl(X,\GL(n/\vert\Gamma\vert))$, which makes it clear that this generalizes the result for cyclic $\Gamma$.

We also apply Theorem C when $G=\Sp(2n,\C)$ and $\Gamma$ is a finite subgroup of $H^1(X,Z)$. In this case the centre $Z$ is isomorphic to $\Z/2\Z$, hence we may identify elements of $H^1(X,Z)$ with line bundles of order $2$ over $X$. Let us keep the notation of the previous paragraph. Given an element $q$ in $\Delta^*$, consider the set of isomorphism classes of triples $(E,\phi,\psi)$, where $(E,\phi)$ is a Higgs bundle of rank $2n/\vert\Delta\vert$ over $\xd$ and $\psi$ is an isomorphism $(E,\phi)\xrightarrow{\sim}q^*(E^*,\phi^*)$ satisfying $q^*\psi^*=-\psi$. There are (poly,semi)stability notions for these objects, and a moduli space $\mdl(\xd,\GL(2n/\vert\Delta\vert,\C),q)$ parametrizing polystable triples may be constructed. In particular, $\mdl(\xd,\GL(2n/\vert\Delta\vert,\C),1)$ is isomorphic to $\mdl(\xd,\Sp(2n/\vert\Delta\vert,\C))$.

We have a pushforward morphism 
\begin{equation*}
p_{\Delta*}:\mdl(\xd,\GL(2n/\vert\Delta\vert,\C),q)\to\mdl(X,\Sp(2n,\C)),
\end{equation*}
sending each triple $(E,\phi,\psi)$ to the Higgs bundle $(p_{\Delta*}E,p_{\Delta*}\phi)$ equipped with the symplectic form $p_{\Delta*}\psi:p_{\Delta*}E\to p_{\Delta*}q^*E^*\cong p_{\Delta*}E^*.$
Recall that this yields an $\Sp(2n,\C)$-Higgs bundle by taking the bundle of frames of $p_{\Delta*}E$ and then the reduction of structure group to $\Sp(2n,\C)$ given by the symplectic form.
An action of $l(\Gamma)$ may also be defined as in the previous paragraph.
% : it acts on $(E,\phi)$ and its pullback by $q$, hence it acts on the set of isomorphisms $\psi$ between them.
Theorem \ref{th-finite-group-jacobian-sp-higgs} states that $\bigcup_{l,q}p_{\Delta*}\mdl(\xd,\GL(2n/\vert\Delta\vert,\C),q)^{l(\Gamma)}$ is contained in $\mdl(X,\Sp(2n,\C))^\Gamma$, and the simple and stable locus $\mdl_{ss}(X,\Sp(2n,\C))^\Gamma$ is contained in the union. Here $l$ runs over antisymmetric pairings, and $q$ runs over elements of $\Delta^*$. For example, when $\Gamma$ is generated by a single line bundle $L$, the aforementioned union only contains two components, namely $p_{L*}\mdl(X_L,\Sp(n,\C))$ (if $n$ is even) and $p_{L*}\mdl(X_L,\GL(n,\C),-1)$ ---see Section \ref{section-example-sp-cyclic} for a detailed analysis of this case.

We analyse a few more examples of finite cyclic group actions in Chapter \ref{chapter-cyclic}. In Section \ref{section-dualization} we consider the involution of $\mdl(X,\SL(n,\C))$ sending $(E,\phi)$ to $(E^*\otimes L,\phi^*)$, where $L$ is a line bundle of finite order. Then the fixed point components detected by Theorem C are images of moduli spaces of twisted equivariant $\SO(n,\C)$ and $\Sp(2m,\C)$-Higgs bundles over $X_L$, where the last ones only appear if $n=2m$ is even. When $L$ is trivial, these are just $\SO(n,\C)$ and $\Sp(2m,\C)$-Higgs bundles.

In Section \ref{section-spin} we consider the action of the group generated by a $Z$-bundle $L$ on $\mdl(X,\Spin(n,\C))$, where $Z\cong \Z/2\Z\times\Z/2\Z$ is the centre of $\Spin(n,\C)$. The application of Theorem C yields Proposition \ref{prop-spin}, which identifies the relevant fixed point components as images of twisted equivariant $\Spin(p,\C)\times\Spin(q,\C)$-Higgs bundles over $X_L$. Here $p$ and $q$ take a set of values satisfying $p+q=n$, which depend on the monodromy group of $L$ and the divisibility of $n$ by $4$. 

In Section \ref{section-e7} we find that the fixed point locus of the action of the group generated by a line bundle of order 2 on $\mdl(X,E_7)$, where $E_7$ is the corresponding simply connected exceptional group, contains the image of a moduli space of twisted equivariant $(E_6\times\C^*)/(\Z/3\Z)$-Higgs bundles. Here $\Z/3\Z<\C^*$ acts on $E_6\times\C^*$ by multiplication on both factors.

\vspace{20 pt}
\noindent{\bf\large Fixed points for general action}
\vspace{10 pt}

\noindent The fixed point locus $\mdl(X,G)^{\Gamma}$ when $\Gamma$ is any finite subgroup of (\ref{eq-big-group}) is studied in Chapter \ref{chapter-general}. Projections on $\Aut(X)$, $\Out(G)$ and $\C^*$ provide homomorphisms $\eta,a$ and $\mu$, whereas projection on $H^1(X,Z)$ yields a 1-cocycle $\alpha\in Z^1_{a,\eta}(\Gamma,H^1(X,Z))$, where $\Gamma$ acts on $H^1(X,Z)$ by sending $L$ to $\eta^{*-1}a(L)$. To understand the fixed point description in this general context, we first find a result in the case of trivial $\alpha$ "combining" Theorems A and B. More precisely, we push Theorem A to account for the potential non-injectivity of $\eta$, so that we first perform a reduction of structure group applying Theorem B to the action of $\ker\eta$ and then we get a twisted equivariant action of
the finite group of automorphisms of an étale cover of $X$ lifting $\eta(\Gamma)$.

Assume that $\alpha$ is trivial. Fix a homomorphism $\theta:\ker\eta\to\Aut(G)$ lifting $a\vert_{\ker\eta}$ and let $\gamtz:=\gt/\gt_0$ be the group of connected components of $\gt$. Take a connected étale cover $p:Y\to X$ with Galois group $\gamtz$ and denote by $\wgam$ the group of automorphisms of $Y$ lifting elements of $\eta(\Gamma)$. Let $\wgame$ be the set of pairs $(\gamma,\wga)$ in $\Gamma\times\wgam$ such that $\etag=p(\wga)$, where $p(\wga)$ is the induced automorphism of $X$. Note that the projections of $\wgame$ on the first and second factors have kernels $\gamtz$ and $\ker\eta$ respectively. Consider a homomorphism $\tau:\wgam\to\Aut(\gt_0)$ and a 2-cocycle $c\in Z^2_{\tau}(\wgam,Z(\gt_0))$ whose restrictions to $\gamtz$ fit in an isomorphism 
\begin{equation}\label{eq-extension-gt}
    \gt_0\times_{\tau,c}\gamtz\cong\gt
\end{equation}
of extensions, as above. Then we have a $c$-twisted homomorphism $e_{\tau,c}:\wgam\to\Aut(\gt_0)$, given by composing the obvious map $\wgam\to\gt_0\times_{\tau,c}\wgam$ with the conjugation action of $\gt_0\times_{\tau,c}\wgam$ on itself. We call $\Hom_{\theta,\tau,c}(\wgame,\Aut(G))$ to the set of $c$-twisted homomorphisms $\wgame\to\Aut(G)$ whose restriction to $\ker\eta$ is $\theta$ and which preserve $\gt$ and induce $e_{\tau,c}$. Note that the first two conditions in this definition indeed imply that there is an induced map $\wgam\to\Aut(\gt)$, since $\ker\eta$ acts trivially on $\gt$ via $\theta$.

Note that an element $\tilde\tau\in \Hom_{\theta,\tau,c}(\wgame,\Aut(G))$ provides a $c$-twisted homomorphism $\mu^{-1}d\tilde\tau:\wgame\to\Hom(\liegm,\lie g)$, which in turn induces a $c$-twisted homomorphism $ \rhotm:\wgam\to\Hom(\liegm,\lie g)$ because the action of $d\tilde\tau\vert_{\ker\eta}$ on $\liegm$ is equal to $\mu$. Thus we have a notion of $(\tau,c,\rhotm)$-twisted $\wgam$-equivariant $(\gt_0,\liegm)$-Higgs pair.

The formalism of the above two paragraphs may be generalized to the case when the Galois group of $Y$ is any subgroup of $\gamtz$. Consider the union $$U:=\bigcup_{[\theta],Y,[\tau],[c],\tilde\tau}\wcM(Y,G^{\theta}_0,\wgam,\tau,c,\lie g^{\theta}_{\mu},\rhotm).$$ This runs over connected étale covers $p:Y\to X$ with Galois group equal to a subgroup of $\gamtz$, classes $[\theta]\in\Hom(\Gamma,\Aut(G))/\sim$ of lifts of $a\vert_{\ker\eta}$, cohomology classes $[\tau]$ and $[c]$ such that (\ref{eq-extension-gt}) holds and elements $\tilde\tau\in\Hom_{\theta,\tau,c}(\wgame,\Aut(G))$. The tilde over $\cM$ denotes the image of the morphism given by Proposition \ref{prop-prym-narasimhan-ramanan-alpha-trivial-higgs}. Then Theorem \ref{th-prym-narasimhan-ramanan-alpha-trivial-higgs} states that $U$ is contained in $\mdl(X,G)^{\Gamma}$, and $\mdl_{ss}(X,G)^{\Gamma}$ is contained in $U$.

The general case, where $\alpha$ is not trivial, requires replacing $\gt$ with $\gs$ and $\gamtz$ with $\gamtt:=\gs/\gt_0$. Given a $\gamtt$-bundle $p:Y\to X$, the pullback $p^*\alpha$ defines an étale cover $Y_{\alpha}$ of $Y$ which, composed with $p$, provides an étale cover of $X$. Replace $Y$ with $Y_{\alpha}$ in the previous paragraphs, so that $\wgam$ is the lift of $\eta(\Gamma)$ to $Y_{\alpha}$, etc. We may then define $U$ as in the previous paragraph, but now we impose a further condition which relates $\tilde\tau$, $c$ and $\alpha$ (see (\ref{eq-2-cocycle-vs-alpha})). The general theorem is the following.

\vspace{5 pt}
{\bf Theorem D} (Theorem \ref{th-prym-narasimhan-ramanan-general}). {\it The union $U$ is contained in $\mdl(X,G)^{\Gamma}$, and the smooth fixed point locus $\mdl_{ss}(X,G)^{\Gamma}$ is contained in $U$.}

\vspace{20 pt}
\noindent{\bf\large Fixed points in character varieties}
\vspace{10 pt}

\noindent The non-abelian Hodge correspondence provides a homeomorphism between $\mdl(X,G)$ and the character variety $\calR(X,G)$ parametrizing $G$-conjugacy classes of reductive representations $\pi_1(X)\to G$. The theory has a analogue for twisted equivariant Higgs bundles: fix an action of a finite group $\Gamma$ on a compact Riemann surface $X$, a homomorphism $\theta:\Gamma\to\Aut(G)$ and a 2-cocycle $c\in Z^2_{\theta}(\Gamma,Z)$. Let $\calR(X,G,\Gamma,\theta,c)$ be the moduli space of $G$-conjugacy classes of reductive representations of the equivariant fundamental group $\pi_1(X,\Gamma,x)$ of $X$ in $G\times_{\theta,c}\Gamma$. Then $\calR(X,G,\Gamma,\theta,c)$ is homeomorphic to $\mdl(X,G,\Gamma,\theta,c)$. Thus we may translate our results to give a description of the fixed point loci of certain finite group actions on character varieties. 

More precisely, an element $(\alpha,\eta,a)$ in $H^1(X,Z)\rtimes(\Aut(X)\times\Out(G))$ sends a representation $\rho:\pi_1(X,x)\to G$ to $\theta^{-1}\circ(\eta_*(\rho\otimes\alpha))$. Here $\theta\in\Aut(G)$ is a lift of $a$, $\eta_*$ is induced by the homomomorphism $\eta_*:\pi_1(X,x)\to\pi_1(X,\eta(x))$ and we are calling $\alpha$ to its holonomy representation $\pi_1(X,x)\to Z$ by abuse of notation. With notation as above, Theorem \ref{th-prym-narasimhan-ramanan-character-general} states that the variety of irreducible and simple representations $\calR_{ss}(X,G)^{\Gamma}$ which are fixed by $\Gamma$ is contained in a union of images of moduli spaces $\calR(Y,G^{\theta}_0,\wgam,\tau,c)$, and this union is contained in $\calR(X,G)^{\Gamma}$. Particular cases of this theorem can be found in Sections \ref{section-prym-narasimhan-ramanan-character-varieties} and \ref{section-character-variety-alpha-trivial}.

% We aim to consider further actions on $\mdl(X,G)$ when $\mu$ us non-trivial in future work. For example, fixed points of the action of groups $\Gamma$ of order two with $\mu$ non-trivial can be expressed in terms of moduli spaces of $G^{\R}$-Higgs bundles, where $G^{\R}$ is a real form of $G$. The corresponding actions on $\calR(X,G)$ are antiholomorphic and fixed points are described in terms of real representations of the fundamental group, see \cite{PR}.

\vspace{20 pt}
\noindent{\bf\large Fixed points in moduli spaces of Higgs pairs}
\vspace{10 pt}

\noindent The final Chapter \ref{chapter-pairs} considers finite group actions on moduli spaces of $(G,V)$-Higgs pairs, where now we are given an arbitrary representation $\rho:G\to\GL(V)$. In this situation the actions of $\Aut(X)$ and $\C^*$ still make sense, and the action of $H^1(X,Z)$ is well defined as long as $Z$ is in the kernel of $\rho$ ---otherwise we may consider $H^1(X,Z\cap\ker\rho)$ instead. However, we must replace $\Out(G)$ with the group $\Out(G,V)$, which we define next. Let $\GL_G(V)$ be the subgroup of $\GL(V)\times\Aut(G)$ consisting of pairs $(\da,\theta)$ such that $\da$ induces an isomorphism between $\rho$ and $\rho\circ\theta$. We have a homomorphism $\rho_G:G\to\GL_G(V)$ mapping $g$ to $(\rho(g),\Int_g)$, whose image $\rho_G(G)$ is a normal subgroup of $\GL_G(V)$. Set $\Out(G,V):=\GL_G(V)/\rho_G(G)$. Then the action of $\GL_G(V)$ on $\mdl(X,G,V)$ such that $(\da,\theta)$ sends $(E,\phi)$ to $(\theta(E),\da(\phi))$, where $\da(\phi)\in H^0(X,E\times_{\da\circ\rho}V\otimes K_X)\cong H^0(X,\theta(E)\times_{\rho}V\otimes K_X) $ is the Higgs field induced by $\da^{-1}$, is a left action whose restriction to $\rho_G(G)$ preserves the isomorphism class of $(E,\phi)$. Combining everything, we get a right action
\begin{equation}\label{eq-def-big-group-pairs}
    \mdl(X,G,V)\righttoleftarrow H^1(X,Z)\rtimes(\Aut(X)\times\Out(G,V))\times\C^*.
\end{equation}

Let $\Gamma$ be a finite subgroup of (\ref{eq-def-big-group-pairs}). Projections on the second, third and fourth factors provide homomorphisms $\eta,a$ and $\mu$, and projection on the first factor provides a 1-cocycle $\alpha\in Z^1_{a,\eta}(\Gamma,H^1(X,Z))$, where $\Gamma$ acts on $H^1(X,Z)$ via $\eta$ and the projection of $a$ to $\Out(G)$.
The fixed point Theorems for moduli spaces of Higgs pairs are similar to those for Higgs bundles, but we have to replace lifts $\theta:\Gamma\to\Aut(G)$ with homomorphisms $(\da,\theta):\Gamma\to\GL_G(V)$ lifting $a$, the equivalence relation $\sim$ by conjugation by elements of $\rho_G(G)$, and the weight spaces $\liegm$ with $\lievm$. The analogues of Theorems A, B, C and D are Theorems \ref{main-pairs}, \ref{th-fixed-points-oscar-ramanan-pairs}, \ref{th-prym-narasimhan-ramanan-oscar-ramanan-pairs} and \ref{th-prym-narasimhan-ramanan-general-pairs}, respectively.

\vspace{20 pt}
\noindent{\bf\large Motivations and further developments}
\vspace{10 pt}

\noindent We intend to apply our results to several settings in future work. For example, Atiyah--Bott-like fixed point theorems may be applied to study the topology of the moduli space. In this direction, Narasimhan--Ramanan \cite{narasimhan-ramanan} calculate the $y$-genus of the moduli space of vector bundles of fixed rank and determinant, showing in particular that the Euler characteristic and the signature vanish. 
Andersen \cite{andersen} describes the simple fixed point locus of a finite order automorphism $\eta$ of $X$ in the moduli space of $G$-bundles using twisted equivariant $G$-bundles, and applies this to the calculation of Witten--Reshetikhin--Turaev invariants of the mapping torus of $\eta$.
The main obstruction to apply this formalism to moduli spaces of Higgs bundles is their non-compactness and non-smoothness, which may be overcome by restricting our attention to the cohomology of Hodge bundle subvarieties, which are compact. Via localization, this may lead to results on the $\C^*$-equivariant cohomology of the moduli space.

Fixed point descriptions in terms of parabolic Higgs pairs have been achieved by Ander-\\sen--Grove \cite{andersen-fixed-points} for vector bundles of rank $2$ and García-Prada--Wilkin \cite{ow} for arbitrary $G$.
These rely on the correspondence between equivariant $G$-Higgs bundles and parabolic bundles. Extending this to twisted equivariant pairs would allow for generalizations of their results in our setting.

An important  motivation for this thesis is  
the  identification of  hyperk\"ahler or Lagrangian subvarieties of $\cM(X,G)$, which are the support of branes in the context of mirror 
symmetry and Langlands duality as introduced by Kapustin and Witten \cite{kapustin-witten}. 
% More precisely, recall that the smooth locus of $\cM(X,G)$ is a hyperkähler variety with distinguished complex structures $I$ and $J$ induced by the complex structures of $X$ and $G$ respectively. Several types of branes arise depending A brane of type $B$ with respect to $I$ is a hyperholomorphic vector bundle with hyperkähler suport, whereas a brane of type $A$ is a flat vector bundle with Lagrangian support. Similar definitions apply for $J$. We express the types of a brane with respect to $I, J$ and $IJ$ with a triple of letters in the same order. 
For example, if the projection of a finite subgroup $\Gamma$ of (\ref{eq-big-group}) on $\C^*$ is trivial, the smooth fixed point locus is hyperkähler and so it is the potential support of BBB-branes. However, if $\Gamma$ has order two and the corresponding projection on $\C^*$ is non-trivial, the smooth fixed point locus is the potential support of BAA-branes. We expect the Prym--Narasimhan--Ramanan construction to provide examples of fully equipped branes: this has been achieved by Franco--Gothen--Oliveira--Peón-Nieto \cite{franco-branes} in the case of the action of finite cyclic subgroup of the Jacobian.
% , who construct Narasimhan--Ramanan BBB-branes over the fixed point locus using the Prym--Narasimhan--Ramanan construction 

Examples of BAA-branes corresponding to $\U(n,n)$-Higgs bundles inside the moduli space of vector bundles of rank $2n$, which are fixed points of multiplying the Higgs field by $-1$, have been considered by Hitchin \cite{hitchin2013}. The conjectural mirrors have support over the subvariety of $\Sp(2n,\C)$-bundles. It would be interesting to consider a finite extension of $\Sp(2n,\C)$ instead, whose objects correspond by our theory to twisted equivariant $\Sp(2n,\C)$-Higgs bundles over certain étale covers of $X$. We could then study what happens on the $\U(n,n)$ side, and the compatibility mirror symmetry with the Prym--Narasimhan--Ramanan construction. 

There are no instances of supports for branes of type ABA and AAB in this thesis. Some of these arise from actions involving antiholomorphic involutions of $G$ and $X$, which we plan to consider in the future. Biswas--Calvo--García-Prada \cite{biswas-calvo-García-prada} study real $G^{\R}$-Higgs bundles, which are the real version of our twisted equivariant Higgs bundles. Other references are Biswas--García-Prada \cite{biswas-García-prada} and Biswas--García-Prada--Hurtubise \cite{bhp,biswas-García-prada-hurtubise2,biswas-García-prada-hurtubise3}.

The realization of $\mdl(X,G)$ as the Hitchin fibration \cite{hitchin:duke} over a vector space is crucial in the study of its geometry. 
We expect our fixed point descriptions to be 
useful for the study of the Hitchin fibres of different fixed point subvarieties. Some references in this direction are Heller--Schaposnik \cite{heller-schaposnik}, Schaffhauser \cite{schaffhauser} and Schaposnik \cite{schaposnik-thesis}.

We also plan to extend our fixed point description to moduli spaces of parabolic Higgs bundles in future work. These correspond to representations of the fundamental group of punctured surfaces via non-abelian Hodge theory \cite{oscar-biquard-mundet}. The parabolic setup is geometrically richer and perhaps more natural from the physics point of view of mirror symmetry\cite{gukov-witten,kapustin-witten}.

Finally, our description of fixed points in moduli spaces of Higgs pairs may be applied to the study of fixed point subvarieties for Higgs bundles associated to real forms, which may lead to results on the topology of Higher Teichmüller components \cite{oscar-survey}.

\newpage 

\chapter{Higgs bundles on compact Riemann surfaces}\label{chapter-higgs-bundles}

Higgs bundles are the main characters of our story. To introduce them we will need two ingredients: a compact Riemann surface $X$ with canonical bundle $K_X$ and a reductive complex Lie group $G$ with centre $Z$ and Lie algebra $\lie g$. For now we assume that $G$ is connected, but we will study the case when $G$ is non-connected in Section \ref{section-twisted-and-non-connected-higgs}.

\section{Higgs pairs}\label{section-higgs-pairs}

 Let $V$ be a complex vector space equipped with a holomorphic representation
$$\rho:G\to\GL(V).$$
Given a $G$-bundle over $X$, there is an associated vector bundle $E(V):=E\times_{\rho}V$. 

\begin{definition}\label{def-higgs-pair}
A \textbf{$(G,V)$-Higgs pair} over $X$ is a pair $(E,\phi)$, where $E$ is a holomorphic principal $G$-bundle and $\phi$ is a section of $E(V)\otimes K_X$, called the \textbf{Higgs field}. Two $(G,V)$-Higgs pairs $(E,\phi)$ and $(E',\phi')$ are \textbf{isomorphic} if there is an isomorphism
$f:E\to E'$ of $G$-bundles such that the induced isomorphism $E(\mathfrak{g})\otimes K_X\to E'(\mathfrak{g})\otimes K_X$
sends $\phi$ to $\phi'$.
\end{definition}

\begin{remark}
    We may generalize the notion of $(G,V)$-Higgs pair by allowing tensorization by any line bundle $L\to X$ (or even vector bundles, see \cite{guille}). The result is $L$-twisted Higgs pairs, which also appear naturally in the literature (see \cite{bradlow-oscar}, for example).
\end{remark}

We recall the stability notions for $(G,V)$-Higgs pairs using the approach in \cite{PBI}. Fix a maximal compact subgroup $K$ of $G$ with Lie algebra $\lie k$. Choose a non-degenerate pairing $\langle\cdot,\cdot\rangle$ on $\lie g$ extending the Killing form of a Levi subgroup. Every element $s\in i\lie k$ determines a parabolic subgroup $P_s$ with Lie algebra $\lie p_s$ of $G$, namely
\begin{equation}\label{eq-def-Ps}
    P_s:=\{g\in G\suhthat e^{ts}ge^{-ts}\,\text{remains bounded as}\;t\to\infty\}.
\end{equation}
If $L_s$ is its Levi subgroup then $K_s:=K\cap L_s$ is a maximal compact subgroup of $L_s$ and its inclusion in $P_s$ is a homotopy equivalence. Now let $E$ be a $G$-bundle with a holomorphic reduction $\tau\in H^0(X,E(G/P_s))$, where $E(G/P_s)$ is the $G/P_s$-bundle associated to $E$ via the natural left action of $G$ on $G/P_s$. We call $E_{\tau}$ to the corresponding $P_s$-bundle. Then there is a smooth reduction $\tau'\in\Omega^0(X,E_{\tau}/K_s)$, and we may equip the corresponding $K_s$-bundle with a connection $A$ with curvature $F_A$. We define
\begin{equation}\label{eq-def-deg}
    \deg E(\tau,s):=\frac{i}{2\pi}\int_X\chi_s(F_A),
\end{equation}
where $\chi_s$ is the image of $s$ under the isomorphism $\lie g\cong\lie g^*$ induced by the non-degenerate pairing.

For each $s\in i\lie k$ we may also define
\begin{align}\label{eq-def-V_s}
    &V_s:=\{v\in V\suhthat \rho(e^{ts})v\,\text{remains bounded as}\;t\to\infty\}\andd\\\nonumber
    & V^0_s=\{v\in V\suhthat \lim_{t\to\infty} \rho(e^{ts})v=v\}.
\end{align}
Given a reduction $\tau\in H^0(X,E(G/P_s))$, we may define a sub-bundle $E(V)_{\tau,s}:=E_{\tau}\times_{P_s}V_s\subseteq E(V)$. Given a further reduction $\tau'\in H^0(X,E_{\tau}(P_s/L_s))$, we also have a sub-bundle $E(V)^0_{\tau',s}:=E_{\tau'}\times_{L_s}V^0_s\subseteq E(V)_{\tau,s}$.

\begin{definition}\label{def-stability-higgs-pair}
Let $z\in i\zk$, where $\zk$ is the centre of $\lie k$. A $(G,V)$-Higgs pair $(E,\phi)$ over $X$ is:

\begin{itemize}
    \item \textbf{$z$-semistable} if $\deg E(\tau,s)\ge \pair{z}s$ for any $s\in i\lie k$ and any reduction of structure group $\tau\in H^0(X,E(G/P_s)\otimes K_X)$ such that $\phi\in H^0(X,E(V)_{\tau,s}\otimes K_X)$.
    \item \textbf{$z$-stable} if $\deg E(\tau,s)> \pair{z}s$ for any $s\in i\lie k$ and any reduction of structure group $\tau\in H^0(X,E(G/P_s))$ such that $\phi\in H^0(X,E(V)_{\tau,s}\otimes K_X)$.
    \item \textbf{$z$-polystable} if it is $z$-semistable and, if $\deg E(\tau,s)=\pair{z}s$ for some $s\in i\lie k$ and a reduction $\tau\in H^0(X,E(G/P_s))$ such that $\phi\in H^0(X,E(V)_{\tau,s}\otimes K_X)$, there is a further holomorphic reduction of structure group $\tau'\in H^0(X,E_{\tau}(P_s/L_s))$ with $\phi\in H^0(X,E(V)^0_{\tau',s}\otimes K_X)$
\end{itemize}
\end{definition}

There is a moduli space $\mdl(X,G,V)$ classifying isomorphism classes of ($z$-)polystable $(G,V)$-Higgs pairs \cite{schmitt:2008}, where $z$ runs over $i\zk$.

\section{Moduli space of \texorpdfstring{$G$}{G}-Higgs bundles}\label{section-moduli-space-higgs-bundles}

When $V=\lie g$, the Lie algebra of $G$, and $\rho=\Ad:G\to\GL(\lie g)$ is the adjoint representation, Definition \ref{def-higgs-pair} yields the notion of a \textbf{$G$-Higgs bundle}.
 We will sometimes denote the adjoint bundle
$E(\mathfrak{g})$ by $\ad(E)$.
A $G$-Higgs bundle $(E,\phi)$ is said to be {\bf simple} if $\Aut(E,\phi)\cong Z$ where $\Aut(E,\phi)$
is the group of Higgs bundle automorphisms of $(E,\phi)$ and $Z\subset G$ 
is the centre of $G$. 

\begin{remark}
    When $G$ is classical we may regard a $G$-bundle as a vector bundle equipped with some extra structure. This is realized via extension of structure group by an embedding of $G$ in $\GL(n,\C)$, followed by taking the associated vector bundle. Similarly, in this context a $G$-Higgs bundle may be regarded as a pair $(E,\phi)$, where $E$ is a vector bundle with some extra structure and $\phi$ is a section of $\End(E)\otimes K_X$.
\end{remark}

Definition \ref{def-stability-higgs-pair} yields (poly,semi)stability notions for $G$-Higgs bundles. Recall that these depend on a parameter $z\in i\zk$, where $\zk$ is the centre of a maximal compact subalgebra of $\lie g$. There exists a moduli space classifying isomorphism classes of $z$-polystable $G$-Higgs bundles, which we call $\mdl_z(X,G)$ \cite{schmitt:2008}. We call $\mdl(X,G)$ to the union of all the moduli spaces $\mdl_z(X,G)$ as $z$ runs through $i\zk$. This is a complex quasi-projective algebraic variety. It is not smooth in general, and its smooth locus is usually the open subvariety consisting of stable and simple points, which we call $\mdl_{ss}(X,G)$. We will simply call this the \textbf{smooth locus} by abuse of notation.

\begin{remark}
    Let $E$ be a $G$-bundle. Given a maximal compact subalgebra $\lie k\subset\lie g$, $s\in i\lie k$ and $\tau\in H^0(X,E(G/P_s))$, we have $E(V)_{\tau,s}=E_{\tau}(\lie p_s)$, the adjoint bundle of the $P_s$-bundle $E_{\tau}$ determined by $\tau$. Moreover, given a further reduction of structure group $\tau'\in H^0(X,E_{\tau}(P_s/L_s))$, we have $E(V)^0_{\tau',s}=E_{\tau'}(\lie l_s)$, the adjoint bundle of the $L_s$-bundle $E_{\tau'}$ determined by $\tau'$. These are all ingredients involved in the definition of (poly,semi)stability.
\end{remark}

A very important tool to study $\mdl(X,G)$ is the \textbf{Hitchin fibration}. This is a realization of $\mdl(X,G)$ as a fibration over a vector space $B(G)$ whose generic fibres are abelian varieties, making $\mdl(X,G)$ an algebraic completely integrable system.
For example, when $G=\GL(n,\C)$ the vector space $B(G)$ is equal to $\bigoplus_{k=1}^nH^0(X,K_X^{\otimes k})$ and the projection $\mdl(X,G)\to B(G)$ is given by taking "characteristic polynomials" of the Higgs field, thought of locally as an endomorphism of a vector bundle.

Another crucial feature of $\mdl(X,G)$ is its hyperk\"ahler structure. In other words, it has a natural symplectic structure $\omega$ and two anticommuting complex structures $I$ and $J$ such that both $\omega(\cdot,I\cdot)$ and $\omega(\cdot,J\cdot)$ are K\"ahler metrics. The complex structure $I$ is induced by the complex structure of the Riemann surface $X$, whereas $J$ is induced by the complex structure of $G$.

\section{Non-abelian Hodge theory}

The moduli space of $G$-Higgs bundles is closedly related to the character variety of $\pi_1(X)$ with values in $G$. The proof of this result involves two intermediate steps: the Hitchin--Kobayashi correspondence and the Donaldson--Corlette theorem.

Let $K\subset G$ be a maximal compact subgroup
of $G$. Let $(E,\phi)$ be a $G$-Higgs bundle on $X$. Let $h$ be a reduction of structure group of $E$ from $G$
 to $K$, and $F_h$ be the curvature of  the {\bf Chern connection} --- the unique connection on $E$ compatible with $h$ and the holomorphic structure of $E$.
 Let $\sigma_h:\Omega^{1,0}(X,E(\lieg))\to \Omega^{0,1}(X,E(\lieg))$
be the $\C$-antilinear map defined by the reduction $h$ and the conjugation 
between $(1,0)$- and $(0,1)$-forms on $X$.
Consider the {\bf Hitchin equation} 
\begin{equation}\label{hitchin-equation}
F_h+[\phi,\sigma_h(\phi)]=-i2\pi z\omega,
\end{equation}
where $z\in i\zk$ and $\omega\in\Omega^2(X)$ is a Kähler form on $X$ with total volume one.
One has 
the following (see \cite{hitchin1987,simpson,PBI}). 

\begin{theorem}\label{EH1}
Let $(E,\phi)$ be a $G$-Higgs bundle on $X$ and let $z\in i\zk$.
Then $(E,\phi)$ is $z$-polystable if and only if the $G$-bundle $E$ 
admits a metric $h$ satisfying (\ref{hitchin-equation}). 
\end{theorem}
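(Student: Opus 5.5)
Theorem \ref{EH1} is the Hitchin--Kobayashi correspondence for $G$-Higgs bundles, and I would prove it by the standard two-direction argument: a short Chern--Weil computation for ``metric $\Rightarrow$ polystable'', and an analytic existence result for the converse. Throughout I would view the Hitchin equation (\ref{hitchin-equation}) as a moment map equation. The gauge group $\mathcal G_K$ of $K$-gauge transformations acts on the space of pairs $(\bar\partial_E,\phi)$ with moment map $\mu(h)=\Lambda(F_h+[\phi,\sigma_h(\phi)])$, and $z$-polystability plays the role of GIT polystability for the complexified gauge group.

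\emph{Easy direction.} Suppose $h$ solves (\ref{hitchin-equation}). Take $s\in i\lie k$ and a holomorphic reduction $\tau\in H^0(X,E(G/P_s))$ with $\phi\in H^0(X,E_\tau(\lie p_s)\otimes K_X)$. I would compute $\deg E(\tau,s)$ using the metric induced by $h$ on the $K_s$-reduction. By Chern--Weil, $\deg E(\tau,s)$ equals $\frac{i}{2\pi}\int_X\langle F_h,s_h\rangle$ plus a second-fundamental-form term, and that term is nonnegative because of the sign convention for $P_s$ in (\ref{eq-def-Ps}). Here $s_h$ is the section determined by $s$, $\tau$ and $h$. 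Substituting the equation gives
\begin{equation*}
\deg E(\tau,s)=\pair{z}{s}+\|\beta\|^2+\|[\phi,\ldots]\text{-component}\|^2 .
\end{equation*}
The contribution of $[\phi,\sigma_h\phi]$ is $\ge 0$ because $\phi$ lies in the bounded part $\lie p_s$. This gives $z$-semistability. In the equality case both terms vanish, so the second fundamental form is zero and the $\phi$-term forces $\phi$ into $\lie l_s$. A vanishing second fundamental form means $h$ yields a holomorphic Levi reduction $\tau'$ with $\phi\in E_{\tau'}(\lie l_s)\otimes K_X$, which is exactly polystability.

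\emph{Hard direction.} Assume $(E,\phi)$ is $z$-polystable. First I would reduce to the stable case. If $\deg E(\tau,s)=\pair{z}{s}$ for some $s$, polystability gives a Levi reduction to $L_s$ with $\phi$ in $\lie l_s$, and the resulting $L_s$-Higgs pair is again polystable. By induction on dimension (or via the Jordan--Hölder-type decomposition) I would solve the equation on $L_s$ and extend the metric to $E$. For a $z$-stable pair I would use Donaldson's functional $M(h_0,h)$ on the space of metrics, whose gradient flow is
\begin{equation*}
h^{-1}\partial_t h=-\bigl(i\Lambda(F_h+[\phi,\sigma_h\phi])-2\pi z\bigr),
\end{equation*}
or equivalently a continuity method. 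The key estimate is that stability implies the properness bound $M(h_0,h)\ge C_1\|s\|_{L^1}-C_2$ for $h=h_0e^{s}$. Given this, minimizing sequences have $C^0$ bounds on $s$, elliptic regularity gives higher bounds, and a minimum solves (\ref{hitchin-equation}).

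\emph{Main obstacle.} The properness estimate is the hard step. I would argue by contradiction, following Simpson and Uhlenbeck--Yau. If it fails, there are $s_j$ with $\|s_j\|_{L^1}\to\infty$ violating it. After normalisation, $u_j=s_j/\|s_j\|_{L^1}$ converges weakly in $L^2_1$ to a nonzero $u_\infty$ with constant eigenvalues, and $\phi$ is compatible with its filtration. The eigenvalues of $u_\infty$ define an $s\in i\lie k$, and the weakly holomorphic filtration defines a reduction $\tau$ to $P_s$; its holomorphicity is Uhlenbeck--Yau regularity of weakly holomorphic subbundles, carried out in faithful representations. The limiting inequality then reads $\deg E(\tau,s)\le\pair{z}{s}$, contradicting stability. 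For the general reductive $G$ with the pairing chosen above, I would cite \cite{PBI} for this step rather than redo it, and give the details only in the vector bundle case via a faithful representation.
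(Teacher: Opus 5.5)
Your outline is correct and is the standard route. The paper gives no independent proof of this theorem and simply cites \cite{hitchin1987,simpson,PBI}, which is exactly where you defer the general reductive case. One small fix is needed: both your reduction step and your properness estimate have to discard the central directions, since for $s\in i\zk$ one always has $\deg E(\tau,s)=\pair{z}{s}$ (so $L_s=G$ and the induction does not descend) and Donaldson's functional is flat along constant central rescalings of $h$, so the bound $M(h_0,h)\ge C_1\|s\|_{L^1}-C_2$ only holds for $s$ orthogonal to $i\zk$, as in \cite{PBI}.
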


\begin{remark}
Let $(E,\phi)$ be a $\GL(n,\C)$-Higgs bundle, or the associated Higgs bundle of rank $n$. Equation (\ref{hitchin-equation}) implies that $\frac{i}{2\pi}\int_X\tr F_h=nz$. The left hand side is the \textbf{degree} of $E$, a topological invariant, which means that $z$ is completely determined by the topology of $E$.
\end{remark}

By a {\bf representation} we mean a 
 homomorphism $\rho:\pi_{1}(X)\to G$. The set of all such homomorphisms, 
$\Hom(\pi_{1}(X),G)$, is an affine variety (this follows from the fact that $G$ is linear algebraic, see \cite{goldman}). The group $G$ acts on $\Hom(\pi_{1}(X),G)$ by conjugation
 \[ g\cdot \rho(\gamma):=g^{-1}\rho(\gamma)g.\] for $g\in G$, $\rho\in \Hom(\pi_{1}(X),G)$ and $\gamma\in \pi_{1}(X)$.
 The GIT quotient $\Hom(\pi_{1}(X),G)\sslash G$ is defined by restricting to the subspace
$\Hom^{+}(\pi_{1}(X),G)$ of reductive representations: otherwise the quotient is not Hausdorff. A representation  $\rho\in \Hom(\pi_{1}(X),G)$ is called \textbf{irreducible} if its stabilizer in $G$ is equal to $Z$. It is
 {\bf reductive} if 
 its composition with the adjoint representation of $G$ in $\lie{g}$ is a direct sum of irreducible 
representations, i.e. representations which are the compositions of irreducible representations $\pi_1(X)\to G$ with the adjoint representation.
 We define the {\bf character variety} to be the orbit space
 \[\calR(X,G):=\Hom^{+}(\pi_{1}(X),G)/G=\Hom(\pi_{1}(X),G)\sslash G.\]
 This is an affine algebraic variety, since $\Hom(\pi_{1}(X),G)$ is affine (see \cite{newstead} or \cite{richardson}). 
 
 Let $(E,\phi)$ be a $G$-Higgs
 bundle. Theorem \ref{EH1} states that the polystability of $(E,\phi)$
 is equivalent to a reduction $h$ satisfying the Hitchin equation with $z=0$. A simple computation shows that if $\nabla_h$ is the Chern connection of $h$,
 \[D=\nabla_h+\phi-\sigma_h(\phi)\] 
is a flat connection on the  $G$-bundle 
$E$. Moreover, its holonomy defines a reductive representation of $\pi_{1}(X)$ in  $G$. By a theorem of Donaldson 
\cite{donaldson} and Corlette \cite{corlette}, all reductive representations $\rho:\pi_{1}(X)\to G$ arise in this way. 
More concretely one has the {\bf non-abelian Hodge correspondence} given by the 
following.
 
 \begin{theorem}[Non-abelian Hodge correspondence]\label{rep1}
  Let $G$ be a reductive complex  Lie group. There is a real analytic isomorphism $\cM_0(X,G)\cong \calR(X,G)$ between the moduli space of polystable $G$-Higgs bundles and the character variety of $G$. Under 
this isomorphism, the irreducible representations
are in correspondence with the stable and simple $G$-Higgs bundles.
 \end{theorem}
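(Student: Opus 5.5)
This is the standard composition of two correspondences, and I will prove it in that form. The isomorphism $\cM_0(X,G)\cong\calR(X,G)$ will be the composite of two bijections:
\[
\cM_0(X,G)\;\longleftrightarrow\;\{\text{flat reductive }G\text{-connections}\}/\mathcal G\;\longleftrightarrow\;\calR(X,G).
\]
The first arrow comes from Theorem \ref{EH1} with $z=0$ together with the Donaldson--Corlette theorem. The second is the Riemann--Hilbert holonomy correspondence.

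\textbf{From Higgs bundles to flat connections.} Let $(E,\phi)$ be polystable of degree zero. Theorem \ref{EH1} gives a reduction $h$ satisfying $F_h+[\phi,\sigma_h(\phi)]=0$. Put $D=\nabla_h+\phi-\sigma_h(\phi)$ and decompose $F_D$ by type and by $\lie k$ versus $i\lie k$. The $\lie k$-part is the Hitchin equation. The $i\lie k$-part is $\nabla_h\phi-\nabla_h\sigma_h(\phi)$, and this vanishes because $\bar\partial_E\phi=0$ and $\dim_\C X=1$. So $D$ is flat.

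Two further points need checking. First, $h$ is unique up to automorphisms of $(E,\phi)$, by the usual convexity of the Donaldson functional along geodesics in $G/K$. Hence the gauge class of $D$ is well defined. Second, $D$ is reductive: its holonomy preserves the polystable splitting, so it is a direct sum of irreducibles. Taking holonomy at a base point then produces a class in $\calR(X,G)$.

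\textbf{From representations to Higgs bundles.} Let $\rho$ be reductive and let $E_\rho$ be the associated flat bundle. The Donaldson--Corlette theorem provides a $\rho$-equivariant harmonic map $\tilde X\to G/K$, that is, a harmonic reduction $h$. Write $D=d_A+\Psi$ with $d_A$ $h$-unitary and $\Psi\in\Omega^1(i\lie k)$. Set $\bar\partial_E=d_A^{0,1}$ and $\phi=\Psi^{1,0}$. Harmonicity, $d_A^*\Psi=0$, combined with flatness gives $\bar\partial_E\phi=0$ and the Hitchin equation. By Theorem \ref{EH1}, $(E,\phi)$ is polystable.

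One checks that the two constructions are mutually inverse on isomorphism classes. This uses the uniqueness of the Hitchin metric and of the harmonic map, the latter up to the centralizer of $\rho$.

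\textbf{Real-analyticity and the correspondence of loci.} Both constructions depend real-analytically on parameters: ellipticity of the coupled equations and the implicit function theorem give local real-analytic slices. Hence the bijection is a homeomorphism which is real analytic on the smooth loci.

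For irreducibles, I will compare automorphism groups. The automorphism group of $(E,\phi)$ preserving $h$ equals the stabilizer of $D$, which is the centralizer of the holonomy. By uniqueness of $h$, every automorphism of $(E,\phi)$ is of this form, up to complexification. So $(E,\phi)$ is simple exactly when $\rho$ has stabilizer $Z$. Moreover, stability corresponds to the absence of a proper parabolic reduction preserved by $\rho$.

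The main obstacle is the existence of the harmonic metric in Donaldson--Corlette for arbitrary reductive $G$. For that step I would cite \cite{donaldson,corlette} rather than reprove it. The remaining work is bookkeeping of uniqueness and gauge.
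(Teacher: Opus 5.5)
Your route is the paper's own sketch: Theorem \ref{EH1} at $z=0$, flatness of $D=\nabla_h+\phi-\sigma_h(\phi)$, reductivity of its holonomy, and Donaldson--Corlette for the converse, with the analytic existence results cited rather than proved. At that level your construction of the bijection is sound, apart from the reductivity point below.

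The step that fails is the regularity paragraph. Ellipticity and the implicit function theorem give real-analytic local slices only at points with $\Aut(E,\phi)=Z$. There the linearised Hitchin (resp.\ harmonic-map) equations are surjective modulo the centre and the deformation complex is unobstructed; these are exactly the stable and simple points. At strictly polystable or non-simple points the linearisation has kernel and cokernel coming from the Lie algebra of $\Aut(E,\phi)$. So there are no such slices, and your ``hence the bijection is a homeomorphism'' has no support at those points. Continuity of the correspondence and of its inverse there needs a separate compactness argument: a priori estimates for the Hitchin (resp.\ harmonic) metrics along convergent sequences, combined with their uniqueness. The real-analyticity that the statement asserts at singular points is not reached by your argument at all. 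You should either restrict the analytic claim to the stable and simple locus, or compare the local Kuranishi and Goldman--Millson models at the singular points. The paper does not prove this part either; it only cites it.

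Two smaller points. First, ``its holonomy preserves the polystable splitting, so it is a direct sum of irreducibles'' presupposes that each stable piece has holonomy in no proper parabolic subgroup. That is where harmonicity of $h$ is really used. One shows, by a Chern--Weil (or convexity) argument, that a $D$-flat reduction to a parabolic $P_s$ is holomorphic, compatible with $\phi$ and of degree $0$, which stability forbids unless $s$ is central. Second, the stabiliser of $D$ is the complex group $Z_G(\rho)$, not the compact group of $h$-preserving automorphisms of $(E,\phi)$. The correct comparison is $\Aut(E,\phi)\cap U(h)=Z_G(\rho)\cap U(h)$, with $U(h)$ the $h$-unitary gauge group. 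Both $\Aut(E,\phi)$ and $Z_G(\rho)$ are reductive with this common maximal compact subgroup, so both are its complexification. In particular $\Aut(E,\phi)=Z$ if and only if $Z_G(\rho)=Z$.
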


 \begin{remark}
     When the parameter $z\in i\zk$ defining (poly,semi)stability is not zero, there is also a version of Theorem \ref{rep1} involving representations of the universal central extension of $\pi_1(X)$ (see \cite{narasimhan-seshadri} for a version with zero Higgs field).
 \end{remark}

\section{Group actions on the moduli space of \texorpdfstring{$G$}{G}-Higgs bundles}\label{section-action}

Let $(E,\phi)$ be a $G$-Higgs bundle. An automorphism $\theta$ of $G$ provides another $G$-Higgs 
bundle $(\theta(E),\theta(\phi))$, as follows: the bundle $\theta(E)$ is the holomorphic principal $G$-bundle with total space $E$ and $G$-action 
$$E\times G\to E;\,(e,g)\mapsto e\theta^{-1}(g),$$
where we have written the action of $G$ on $E$ by adjoining elements of $G$ on the right. Alternatively, this is just the $G$-bundle obtained from $E$ by the extension of structure group induced by $\theta$. If $\phi$ is locally equal to $(e,v)\otimes k$ for some $v\in \lieg$ and local sections $e$ and $k$ of $E$ and $K_X$ respectively, $\theta(\phi)$ is locally equal to $(e,\theta(v))\otimes k$. This is well defined, since 
\begin{align*}
(e g,\theta(\Ad_{g^{-1}}v))&=(e \theta^{-1}(\theta(g)),\Ad_{\theta(g^{-1})}\theta(v))
\\&=(e\cdot\theta(g),\Ad_{\theta(g^{-1})}\theta(v))
\\&=
(e,\theta(v))
\end{align*}
for each $g\in G$,
where the presence or absence of the dot denotes the $G$-action on $\theta(E)$ or $E$ respectively. 

Note that this defines a left action of $\Aut(G)$ on the set of $G$-Higgs bundles. We sometimes write $\theta(E,\phi)$ instead of $(\theta(E),\theta(\phi))$. 

Recall that $\Int(G)$ is the group of inner automorphisms of $G$. There is a surjection
$$\Int:G\to\Int(G);\,g\mapsto\Int_g,$$
with kernel equal to $Z$, the centre of $G$. The quotient $\Out(G):=\Aut(G)/\Int(G)$ is the group of outer automorphisms of $G$. The natural left action of $\Aut(G)$ induces a right action of $\outg$ on the set of isomorphism classes of $G$-Higgs bundles as follows: given a class $a\in\outg=\Aut(G)/\Int(G)$, we may choose a representative $\theta$ in $\Aut(G)$ and consider the automorphism of the set of $G$-Higgs bundles sending $(E,\phi)$ to $\theta^{-1}(E,\phi)$. The isomorphism class of $\theta^{-1}(E,\phi)$ is independent of the choice of $\theta$, since $(E,\phi)$ is isomorphic to $\Int_s(E,\phi)$ for every $s\in G$: an isomorphism is multiplication by $s$, since
\begin{equation*}
    egs=es\Int_s^{-1}(g)
\end{equation*}
for each $e\in E$ and $g\in G$. 
% {\color{red}When talking about isomorphism classes of $G$-Higgs bundles, we sometimes write $(a^{-1}(E),a^{-1}(\phi))$ or $a^{-1}(E,\phi)$ instead of $(\theta^{-1}(E),\theta^{-1}(\phi))$.}

We also have an action of the group of $Z$-bundles $H^1(X,Z)$ over $X$ on the set of isomorphism classes of $G$-Higgs bundles: let $\alpha\in H^1(X,Z)$ and a $G$-Higgs bundle $(E,\phi)$. Construct the fibre product $E\times_X\alpha$ with respect to $X$, which is associated to the pullback diagramme
\[\begin{tikzcd}
E\times_X\alpha\arrow{r}\arrow{d} &
E\arrow{d}\\
\alpha\arrow{r} & X
\end{tikzcd}.
\]
There is a $Z$-action on $E\times_X\alpha$, such that $z\in Z$ sends $(e,a)\in E\times\alpha$ to $(ez,az^{-1})$, so we set $E\otimes\alpha:=(E\times_X\alpha)/Z$. When equipped with the right $G$-action given on the left factor $E$, this is a $G$-bundle over $X$ whose isomorphism class only depends on the isomorphism classes of $E$ and $\alpha$. Since the adjoint action of $Z$ on $\lie g$ is trivial we have $(E\otimes\alpha)\times_{\Ad}\lie g=E\times_{\Ad}\lie g$, so that $\phi$ may also be regarded as a section of $E(\lie g)\otimes K_X$. This defines an action of $H^1(X,Z)$ on the set of isomorphism classes of $G$-bundles which is both right and left, since $H^1(X,Z)$ is abelian.

Finally, there is an action of the group of complex automorphisms $\Aut(X)$ of $X$ given by pullback and an action of $\C^*$ given by multiplying the Higgs field. The first one is a right action and the second one is right and left, since $\C^*$ is abelian.

Given elements $\alpha$ and $\alpha'$ in $H^1(X,Z)$, $a$ and $a'$ in $\Out(G)$, $\eta$ and $\eta'\in\Aut(X)$ and $\mu$ and $\mu'$ in $\C^*$, together with a $G$-Higgs bundle $(E,\phi)$ over $X$, we have
\begin{align*}
    (\eta'^*\theta'^{-1}(\eta^*\theta^{-1}(E\otimes\alpha)\otimes\alpha'),\mu'\mu \theta'^{-1}\theta^{-1}(\phi))=\\
((\eta\eta')^*(\theta\theta')^{-1}(E\otimes\alpha\otimes \eta^{*-1}a(\alpha')),\mu'\mu(\theta\theta')^{-1}(\phi)),
\end{align*}
where $\theta$ and $\theta'$ are elements of $\Aut(G)$ lifting $a$ and $a'$ respectively. Therefore, the three actions fit together to provide a right action of the group $H^1(X,Z)\rtimes(\outg\times\Aut(X))\times\C^*$ on the set of isomorphism classes of $G$-Higgs bundles on $X$, where the left action of $\outg\times\Aut(X)$ on $H^1(X,Z)$ which defines the semidirect product is given by 
\begin{equation*}
    (\outg\times\Aut(X))\times H^1(X,Z)\to H^1(X,Z);\,((a,\eta),\alpha)\mapsto \eta^{*-1}a(\alpha).
\end{equation*}
Here we are directly writing $a(\alpha)$, which is well defined even at the level of $Z$-bundles themselves because $\Int(G)$ acts trivially on $Z$.
Explicitly, $(\alpha,a,\eta,\mu)\in H^1(X,Z)\rtimes(\outg\times\Aut(X))\times\C^*$ sends $(E,\phi)$ to $(\eta^*\theta^{-1}(E\otimes\alpha),\mu \phi)$, where $\theta\in\Aut(G)$ lifts $a$.
Since this action preserves ($z$-)(poly)stability and simplicity it induces an action on $\cM(X,G)$ which restricts to the locus of simple and stable points $\cM_{ss}(X,G)$.

\begin{remark}
    The restriction of the action of $H^1(X,Z)\rtimes(\outg\times\Aut(X))\times\C^*$ to $H^1(X,Z)\rtimes\outg\times\C^*$ does not coincide with the action considered in \cite{PR}. Indeed, we have defined a right action such that $(\alpha,a,\mu)\in H^1(X,Z)\rtimes\outg\times\C^*$ sends each Higgs bundle $(E,\phi)$ to $(\theta^{-1}(E\otimes\alpha),\mu\theta^{-1}(\phi))$ for any lift $\theta\in\Aut(G)$ of $a$, whereas the action in \cite{PR} is on the left and sends $(E,\phi)$ to $(\theta(E)\otimes\alpha,\mu\theta(\phi))$. The reason is that we want all the group actions to be on the right, including the action of $G$ on $G$-bundles.
\end{remark}

\begin{remark}\label{remark-induced-iso}
    It is important to notice that, given an isomorphism of $G$-Higgs bundles
$$f:(E,\phi)\to (F,\psi)$$
and an element $(\alpha,\theta,\eta,\mu)\in H^1(X,Z)\rtimes(\Aut(G)\times\Aut(X))\times\C^*$, we
may define an induced isomorphism
$$\eta^*f\otimes \id:\eta^*\theta^{-1}(E\otimes\alpha,\mu\phi)\to \eta^*\theta^{-1}(F\otimes\alpha,\mu\psi)$$
via the natural biholomorphisms between $E$ and $\theta^{-1}(E)$, and $F$ and $\theta^{-1}(F)$.
\end{remark}

\newpage

\chapter{Twisted equivariant bundles}\label{chapter-twisted-equivariant-bundles}
To present the Prym--Narasimhan--Ramanan construction in its full generality we need to go one step further and consider $G$-Higgs bundles equipped with a certain type of twisted action of a group of automorphisms of the curve. The theory of twisted equivariant bundles is developed in our joint paper with García-Prada, Gothen and Mundet i Riera \cite{GGM}.

\section{Twisted equivariant actions}\label{section-twisted-action}
Let $G$ be a connected reductive complex Lie group. Fix a finite group $\Gamma$ and a homomorphism $a:\Gamma\to\Out(G)$. 

\begin{definition}\label{def-2-cocycle}
    A \textbf{$2$-cochain} $c\in
C^2_{a}(\Gamma,Z)$ is a map
\begin{displaymath}
  c\colon\Gamma\times\Gamma\to Z
\end{displaymath}
which satisfies $c(\gamma,1)=c(1,\gamma)=1$ for all $\gamma\in\Gamma$. The set of 2-cochains inherits a group multiplication from $Z$. The subgroup $Z^2_{a}(\Gamma,Z)$ of
\textbf{$2$-cocycles} consists of those $c$ which satisfy the \textbf{cocycle
condition}
\begin{equation}\label{eq-2-cocycle-condition}
  \ag(c(\gamma',\gamma''))c(\gamma,\gamma'\gamma'')
  =c(\gamma,\gamma')c(\gamma\gamma',\gamma'')
\end{equation}
for $\gamma,\gamma',\gamma''\in\Gamma$.
\end{definition}

By \cite{de-siebenthal} there exists a lift $\Out(G)\to\Aut(G)$ of the natural quotient, hence in particular we have a homomorphism $\theta:\Gamma\to\Aut(G)$ fitting in the commutative diagramme
\begin{equation}\label{eq-lift-automorphism}
   \begin{tikzcd}
\Aut(G)\arrow[r]  & \Out(G)\\
  & \Gamma\arrow[lu,dotted,"\theta"]\arrow[u,"a"]
\end{tikzcd}. 
\end{equation}
We will sometimes write $Z^2_{\theta}(\Gamma,Z)$ instead of $Z^2_{a}(\Gamma,Z)$.

\begin{definition}\label{def-twisted-action}
    Let $M$ be a smooth (complex) manifold equipped with a smooth (holomorphic) right $G$-action. A \textbf{$(\theta,c)$-twisted right action} of $\Gamma$ on $M$ is a choice of a smooth (complex) automorphism of $M$ for each $\gamma\in\Gamma$, which we denote $\bullet\cdot\gamma$, satisfying:
    \begin{equation}\label{eq-twisted-equivariant-axioms}
    (mg)\cdot\gamma=(m\cdot\gamma)\theta_{\gamma}^{-1}(g)\andd(m\cdot\gamma)\cdot\gamma'=(mc(\gamma,\gamma'))\cdot(\gamma\gamma')
    \end{equation}
    for every $m\in M$, $g\in G$ and $\gamma$ and $\gamma'\in\Gamma$. We are sometimes interested in looking at the pair consisting of the $G$-action and the twisted $\Gamma$-action, which we call a \textbf{$(\theta,c)$-twisted right $(G,\Gamma)$-action} on $M$.
\end{definition}

Now fix a right holomorphic action of $\Gamma$ on a complex manifold $X$.

\begin{definition}
  \label{def-twisted-equivariant-bundle}
  Let $E$ be a holomorphic principal $G$-bundle over $X$. A \textbf{$(\theta,c)$-twisted $\Gamma$-equivariant right structure/action} on $E$ is a $(\theta,c)$-twisted right action of $\Gamma$ on $E$ descending to the action of $\Gamma$ on $X$, i.e. fitting in the commutative diagramme
  \begin{equation*}
      \begin{tikzcd}
        E\arrow[r,"\cdot\gamma"]\arrow[d] & E\arrow[d]\\
        X\arrow[r,"\cdot\gamma"] & X
      \end{tikzcd}
  \end{equation*}
  for each $\gamma\in\Gamma$.
  The pair $(E,\cdot)$ is called a \textbf{$(\theta,c)$-twisted $\Gamma$-equivariant $G$-bundle}. When it is clear from the context we will omit $\cdot$ from the notation.

A \textbf{morphism} between $(\theta,c)$-twisted $\Gamma$-equivariant $G$-bundles over $X$ is a $\Gamma$-equivariant homomorphism of $G$-bundles. Sometimes we call this a \textbf{$(\theta,c)$-twisted $\Gamma$-equivariant morphism}, or just a \textbf{$\Gamma$-equivariant morphism}.
\end{definition}

Given a $(\theta,c)$-twisted $\Gamma$-equivariant $G$-bundle $E$ over $X$ and an element $\gamma$ in the centre $Z(\Gamma)$, there is an induced $(\theta,c)$-twisted $\Gamma$-equivariant structure on $\gamma^*E$ so that each $\gamma'\in\Gamma$ sends $\gamma^*e\in\gamma^*E$ to $\gamma^*(e\cdot\gamma')$. This descends to the action of $\Gamma$ on $X$, since $\gamma$ commutes with $\gamma'$. We define a \textbf{$Z(\Gamma)$-isomorphism} between two twisted equivariant bundles $E$ and $E'$ to be a $\Gamma$-equivariant isomorphism of $G$ bundles $E\to\gamma^*E'$ for some $\gamma\in Z(\Gamma)$. Equivalently, this is a map $f:E\to E'$ of complex manifolds fitting in the commutative diagramme
    \begin{displaymath}
      \begin{CD}
      E @>{f}>> E'\\
      @VVV @VVV \\
      X @>{\cdot\gamma}>> X
      \end{CD}
    \end{displaymath}
for some $\gamma\in Z(\Gamma)$. To distinguish between isomorphisms as defined earlier and $Z(\Gamma)$-isomorphisms we sometimes call the former ones \textbf{fibre-preserving isomorphisms}.

The rest of this section is a discussion on how the choice of the lift $\theta$ of $a:\Gamma\to\Out(G)$ affects the category of $(\theta,c)$-twisted $\Gamma$-equivariant $G$-bundles. 
First we follow \cite{PR} to describe the different equivalence classes of  lifts of 
$a:\Gamma\to \Out(G)$ to 
$\Aut(G)$ in terms of {\bf non-abelian cohomology}. 

\begin{definition}\label{def-1-cocycle}
    Let $\Gamma$ be a group and  $A$ 
another group acted upon by  $\Gamma$ via a homomorphism
$\theta:\Gamma\to \Aut(A)$, that is, 
every  $\gamma\in \Gamma$ defines an automorphism of $A$ that
we denote by $\theta_\gamma$.
We define a $1$-{\bf cocycle} 
of $\Gamma$ in $A$ 
as a map $\gamma\mapsto a_\gamma$ of $\Gamma$ to $A$ such that
\begin{equation}\label{eq-def-1-cocycle}
a_{\gamma\gamma'}=a_\gamma\theta_\gamma(a_{\gamma'})\;\; \mbox{for}\;\;
\gamma,\gamma'\in \Gamma.
\end{equation}
The set of cocycles is denoted by $Z^1_\theta(\Gamma,A)$.
Two cocycles $a,a'\in Z^1_\theta(\Gamma,A)$ are  said to be  {\bf cohomologous}
if there is $b\in A$ such that
\begin{equation}\label{eq-cohomologous}
a'_\gamma=b^{-1}a_\gamma \theta_\gamma(b).
\end{equation}
 This is an equivalence relation in $Z^1_\theta(\Gamma,A)$ and the quotient is 
denoted by $H^1_\theta(\Gamma,A)$. This is the {\bf first cohomology set of
$\Gamma$ in $A$}. 
\end{definition}

Coming back to our problem, let $S_a$ be the set of lifts  $\theta:\Gamma\to \Aut(G)$ of $a:\Gamma\to \Out(G)$. By \cite{de-siebenthal} this is non-empty. If we fix one element in $\theta\in S_a$ then every other lift $\theta'$ is equal to $\beta\theta$ for some map $\beta:\Gamma\to\Int(G)$. For every $\gamma$ and $\gamma'\in\Gamma$ we have
\begin{equation*}
    \beta_{\gamma\gamma'}\theta_{\gamma\gamma'}=
    \theta'_{\gamma\gamma'}=
    \theta'_{\gamma}\theta'_{\gamma'}=
    \beta_{\gamma}\tg\beta_{\gamma'}\theta_{\gamma'}=
    \beta_{\gamma}\tg(\beta_{\gamma'})\theta_{\gamma\gamma'}.
\end{equation*}
Comparing the first and last terms we conclude that $\beta\in Z^1_{\theta}(\Gamma,\Int(G))$, where by abuse of notation we regard $\theta$ as an automorphism of $\Int(G)$. Conversely, such a 1-cocycle provides a lift.

Since $\Int(G)$ is a normal subgroup of $\Aut(G)$, its conjugation action on $\Aut(G)$ preserves the set of lifts of $a$. It is straightforward to check that this action induces the action of $\Int(G)$ on $Z^1_{\theta}(\Gamma,\Int(G))$ given by Definition \ref{def-1-cocycle}. Thus we conclude:

\begin{lemma}\label{lemma-lifts-vs-non-abelian-cohomology}
    Given a lift $\theta$ of $a$, there is a $\Int(G)$-equivariant bijection
\begin{equation}\label{eq-bijection-Sa-cocycles}
    \{\text{Lifts of $a$}\}\leftrightarrow Z^1_{\theta}(\Gamma,\Int(G));\,\beta\theta\mapsto\beta,
\end{equation}
where the action on the left hand side is given by conjugation and the action on the right hand side is given by (\ref{eq-cohomologous}).
In particular, it induces a bijection
\begin{equation}\label{eq-bijection-Sa-cohomology-pairs}
    \{\text{Lifts of $a$}\}/\Int(G)\leftrightarrow H^1_{\theta}(\Gamma,\Int(G)).
\end{equation}

\end{lemma}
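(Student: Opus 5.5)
The proof has three steps: show the map (\ref{eq-bijection-Sa-cocycles}) is a well-defined bijection, check that it is $\Int(G)$-equivariant, and pass to quotients.

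\textbf{Well-definedness and bijectivity.} Given any lift $\theta'$ of $a$, set $\beta_\gamma:=\theta'_\gamma\theta_\gamma^{-1}$ for each $\gamma\in\Gamma$. Since $\theta'_\gamma$ and $\theta_\gamma$ have the same image $a_\gamma$ in $\Out(G)$, $\beta_\gamma$ lies in $\Int(G)$. So $\theta'=\beta\theta$ for a unique map $\beta:\Gamma\to\Int(G)$.

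The computation preceding the lemma shows that $\theta'$ is a homomorphism if and only if $\beta_{\gamma\gamma'}=\beta_\gamma\theta_\gamma(\beta_{\gamma'})$. Here $\theta_\gamma(\beta)$ means $\theta_\gamma\beta\theta_\gamma^{-1}$, which is an element of $\Int(G)$ because $\Int(G)$ is normal in $\Aut(G)$. Every step of that chain of equalities is reversible. Hence a map $\beta$ lies in $Z^1_\theta(\Gamma,\Int(G))$ exactly when $\beta\theta$ is a homomorphism, and $\beta\theta$ then automatically lifts $a$. The assignment $\beta\theta\mapsto\beta$ is injective by uniqueness of $\beta$, and surjective by this equivalence.

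\textbf{Equivariance.} Let $b\in\Int(G)$ act on lifts by conjugation. The key step is to pick the convention that makes the actions match: $\theta'\mapsto b^{-1}\theta' b$, which is again a lift of $a$ since $b$ maps to $1$ in $\Out(G)$. Then
\[
b^{-1}\beta_\gamma\theta_\gamma b=b^{-1}\beta_\gamma(\theta_\gamma b\theta_\gamma^{-1})\theta_\gamma=\big(b^{-1}\beta_\gamma\theta_\gamma(b)\big)\theta_\gamma .
\]
So the cocycle attached to $b^{-1}\theta' b$ is $b^{-1}\beta_\gamma\theta_\gamma(b)$, which is precisely the action (\ref{eq-cohomologous}). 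If conjugation is instead taken as $\theta'\mapsto b\theta' b^{-1}$, one composes with $b\mapsto b^{-1}$; this changes neither the orbits nor equivariance up to this reparametrization.

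\textbf{Passing to quotients.} An equivariant bijection between two sets with actions of the same group identifies the orbits on each side. Therefore it descends to a bijection $\{\text{Lifts of }a\}/\Int(G)\leftrightarrow H^1_\theta(\Gamma,\Int(G))$, which is (\ref{eq-bijection-Sa-cohomology-pairs}).

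None of these steps is difficult. The only place needing care is the equivariance check: fixing the conjugation convention so that it reproduces (\ref{eq-cohomologous}) exactly, and using that $\theta_\gamma$ acts on $\Int(G)$ by conjugation inside $\Aut(G)$.
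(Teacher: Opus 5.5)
Your proposal is correct and follows the paper's argument: the paper also writes every lift as $\beta\theta$ with $\beta:\Gamma\to\Int(G)$, uses the same chain of equalities to show that $\beta\theta$ is a homomorphism exactly when $\beta\in Z^1_{\theta}(\Gamma,\Int(G))$, and calls the compatibility with conjugation ``straightforward to check''. Your computation $b^{-1}\beta_\gamma\theta_\gamma b=\bigl(b^{-1}\beta_\gamma\theta_\gamma(b)\bigr)\theta_\gamma$, with the convention $\theta'\mapsto b^{-1}\theta' b$, carries out that check explicitly.
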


Thus, given another homomorphism $\theta':\Gamma\to\Aut(G)$ lifting $a$, there exists a map $s:\Gamma\to G$ such that $\Int_s\in Z^1_{\theta}(\Gamma,\Int(G))$ and $\theta'=\Int_s\theta$. Here $\Int_s$ is the homomorphism of $\Gamma$ to $\Int(G)$ defined by
$(\Int_s)_{\gamma}=\Int_{s_{\gamma}}$. We may assume that $s(1)=1$. Since $\Int(G)$ acts trivially on $Z$, there is a natural identification $Z^2_{\theta}(\Gamma,Z)\cong Z^2_{\theta'}(\Gamma,Z)$ and so we talk about 2-cocycles as elements in any of these sets indistinctly. We may define a map 
\begin{equation}\label{eq-def-cs}
    c_s:\Gamma\times\Gamma\to Z;\,(\gamma,\gamma')\mapsto s_{\gamma}\tg(s_{\gamma'})s_{\gamma\gamma'}^{-1}.
\end{equation}
This is in fact a 2-cocycle, since
\begin{align}\label{eq-calculation-cs-cocycle}
    \theta_{\gamma}(c_s(\gamma',\gamma''))c_s(\gamma,\gamma'\gamma'')&=
    \theta_{\gamma}( s_{\gamma'}\theta_{\gamma'}(s_{\gamma''})s_{\gamma'\gamma''}^{-1}) s_{\gamma}\theta_{\gamma}(s_{\gamma'\gamma''})s_{\gamma\gamma'\gamma''}^{-1}\\\nonumber&
    =\theta_{\gamma}( s_{\gamma'}\theta_{\gamma'}(s_{\gamma''})s_{\gamma'\gamma''}^{-1}) \theta_{\gamma}(s_{\gamma'\gamma''})s_{\gamma\gamma'\gamma''}^{-1}s_{\gamma}
    \\\nonumber
    &=\theta_{\gamma}( s_{\gamma'})\theta_{\gamma\gamma'}(s_{\gamma''}) s_{\gamma\gamma'\gamma''}^{-1}s_{\gamma}\\\nonumber
    &=
    s_{\gamma}\theta_{\gamma}(s_{\gamma'})s_{\gamma\gamma'}^{-1}s_{\gamma\gamma'}\theta_{\gamma\gamma'}(s_{\gamma''})s_{\gamma\gamma'\gamma''}^{-1}
  \\\nonumber
    &=c_s(\gamma,\gamma')c_s(\gamma\gamma',\gamma''),
\end{align}
and
$$c_s(\gamma,1)=s_{\gamma}\tg(s(1))s_{\gamma}^{-1}=1=s(1)\theta_1(s_{\gamma})s_{\gamma}^{-1}=c_s(1,\gamma).$$
Thus the product $cc_s$ is also a 2-cocycle.

\begin{remark}
    In (\ref{eq-calculation-cs-cocycle}) we have used that, given two elements $a,b\in G$ such that $ab\in Z$, we have $ab=ba$, since $b^{-1}ab=ab^{-1}b=a$. This fact is applied in the second and fourth equations.
\end{remark}

We have the following:

\begin{proposition}[Proposition 2.15 in \cite{GGM}]\label{prop-different-theta}
Let $s:\Gamma\to\Int(G)$ be a map such that $\Int_s\in Z^1_{\theta}(\Gamma,\Int(G))$. Let $\cc{}(\theta,c)$ be the category of $(\theta,c)$-twisted $\Gamma$-equivariant $G$-bundles. Then the categories $\cc{}(\theta,c)$ and $\cc{}(\Int_s\theta,cc_s)$ are equivalent.
\end{proposition}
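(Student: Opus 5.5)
The plan is to write down an explicit functor $\cc{}(\theta,c)\to\cc{}(\Int_s\theta,cc_s)$ which is the identity on underlying $G$-bundles and on morphisms, and which only modifies the $\Gamma$-action by right multiplication by the elements $s_\gamma$. Here $s$ is regarded as a map $\Gamma\to G$ with $s(1)=1$, as in the discussion preceding (\ref{eq-def-cs}). Given a $(\theta,c)$-twisted $\Gamma$-equivariant $G$-bundle $(E,\cdot)$, define a new action by
\[
e\star\gamma:=(e\cdot\gamma)\,s_\gamma^{-1},
\]
or possibly $(e\cdot\gamma)s_\gamma$. The correct choice of side and inverse will be fixed by the first axiom. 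Since right multiplication by an element of $G$ is fibre-preserving, $\star\gamma$ still covers the action of $\gamma$ on $X$ and is holomorphic.

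The first axiom of (\ref{eq-twisted-equivariant-axioms}) is checked directly:
\[
(eg)\star\gamma=(e\cdot\gamma)\tg^{-1}(g)s_\gamma^{-1}=\bigl((e\cdot\gamma)s_\gamma^{-1}\bigr)\bigl(s_\gamma\tg^{-1}(g)s_\gamma^{-1}\bigr).
\]
One then compares $s_\gamma\tg^{-1}(g)s_\gamma^{-1}$ with $(\Int_{s_\gamma}\tg)^{-1}(g)=\tg^{-1}(s_\gamma^{-1}gs_\gamma)$. These generally differ, so the convention will have to be adjusted: either use $e\star\gamma=(e\cdot\gamma)\tg^{-1}(s_\gamma)$, or $e\star\gamma=(e\cdot\gamma)\tg^{-1}(s_\gamma^{-1})$. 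With the latter,
\[
(eg)\star\gamma=(e\star\gamma)\,\tg^{-1}(s_\gamma)\tg^{-1}(g)\tg^{-1}(s_\gamma^{-1})=(e\star\gamma)\,(\Int_{s_\gamma}\tg)^{-1}(g),
\]
which is exactly the required twisting by $\theta'=\Int_s\theta$. So the definition I commit to is $e\star\gamma:=(e\cdot\gamma)\,\tg^{-1}(s_\gamma)^{-1}$.

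The second axiom is the main computation and the expected obstacle. One expands $(e\star\gamma)\star\gamma'$ using the first axiom to move group elements past $\cdot\gamma'$, then uses $(e\cdot\gamma)\cdot\gamma'=(ec(\gamma,\gamma'))\cdot(\gamma\gamma')$ and the homomorphism property $\theta_{\gamma'}^{-1}\tg^{-1}=\theta_{\gamma\gamma'}^{-1}$. This leaves $e\cdot(\gamma\gamma')$ multiplied by $\theta_{\gamma\gamma'}^{-1}$ applied to an expression of the form $c(\gamma,\gamma')\,\bigl(s_\gamma\tg(s_{\gamma'})\bigr)^{-1}$, up to ordering. One then needs to recognise this as $(e\,c(\gamma,\gamma')c_s(\gamma,\gamma'))\star(\gamma\gamma')$, using the definition (\ref{eq-def-cs}) of $c_s$, centrality of $c$ and $c_s$ (with $\theta$ acting on $Z$ through $a$), and the commutation fact in the Remark after (\ref{eq-calculation-cs-cocycle}).

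I expect the bookkeeping of inverses and order here to be the delicate point. If the conventions produce $c_s^{-1}$ instead of $c_s$, I will switch $s_\gamma$ to $s_\gamma^{-1}$ in the definition, that is, interchange the two candidate conventions above. The statement's normalisation $cc_s$ pins down the choice.

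It remains to deal with morphisms and the inverse functor. A $\Gamma$-equivariant morphism $f:E\to E'$ of $G$-bundles commutes with right multiplication by elements of $G$, so it is automatically equivariant for the $\star$ actions. This gives a functor that is fully faithful, indeed bijective on morphism sets. For essential surjectivity, apply the same construction to $\theta'=\Int_s\theta$ with the cocycle $\Int_{s^{-1}}$-type data, namely the map $\gamma\mapsto \theta'_\gamma(s_\gamma^{-1})$ or the analogous inverse cocycle. This yields a functor $\cc{}(\Int_s\theta,cc_s)\to\cc{}(\theta,c)$ that is strictly inverse on objects, since the two modifications of the action cancel. Hence the categories are isomorphic, and in particular equivalent.
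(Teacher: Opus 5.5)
Your strategy is the paper's: keep the bundles and morphisms, modify the $\Gamma$-action by $s_\gamma$, and undo it with $s^{-1}$. However, the definition you commit to is the wrong one, and your check of the first axiom contains an algebra slip. With $e\star\gamma:=(e\cdot\gamma)\,\theta_\gamma^{-1}(s_\gamma)^{-1}$ you get
\[
(eg)\star\gamma=(e\star\gamma)\,\theta_\gamma^{-1}(s_\gamma g s_\gamma^{-1})=(e\star\gamma)\,(\theta_\gamma^{-1}\circ\Int_{s_\gamma})(g),
\]
whereas $(\Int_{s_\gamma}\theta_\gamma)^{-1}=\theta_\gamma^{-1}\circ\Int_{s_\gamma}^{-1}$. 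That is, $(\Int_{s_\gamma}\theta_\gamma)^{-1}(g)=\theta_\gamma^{-1}(s_\gamma^{-1}gs_\gamma)$, which is exactly the formula you wrote a few lines earlier. So your committed action is twisted by $\gamma\mapsto\Int_{s_\gamma^{-1}}\theta_\gamma$, not by $\Int_s\theta$, and that map need not even be a homomorphism. The correct choice is your other candidate,
\[
e\star\gamma:=(e\cdot\gamma)\,\theta_\gamma^{-1}(s_\gamma)=(es_\gamma)\cdot\gamma,
\]
which is precisely the paper's definition. With it, $(eg)\star\gamma=(e\star\gamma)\,\theta_\gamma^{-1}(s_\gamma^{-1}gs_\gamma)=(e\star\gamma)\,(\Int_{s_\gamma}\theta_\gamma)^{-1}(g)$.

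Your fallback for the second axiom rests on a misdiagnosis. With the committed definition the computation does not return $c_s^{-1}$; it fails to close. It would require $s_{\gamma\gamma'}^{-1}\theta_\gamma(s_{\gamma'})s_\gamma\in Z$, while the cocycle condition only gives $s_{\gamma\gamma'}^{-1}s_\gamma\theta_\gamma(s_{\gamma'})=c_s(\gamma,\gamma')\in Z$. These two differ by a commutator of $s_\gamma$ and $\theta_\gamma(s_{\gamma'})$, which is not central in general.

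With the correct definition the second axiom is a short computation with no sign ambiguity:
\[
(e\star\gamma)\star\gamma'=((e\cdot\gamma)\cdot\gamma')\,\theta_{\gamma\gamma'}^{-1}(s_\gamma)\,\theta_{\gamma'}^{-1}(s_{\gamma'})
=\bigl((ec(\gamma,\gamma'))\cdot\gamma\gamma'\bigr)\,\theta_{\gamma\gamma'}^{-1}\bigl(s_\gamma\theta_\gamma(s_{\gamma'})\bigr).
\]
Since $s_\gamma\theta_\gamma(s_{\gamma'})=c_s(\gamma,\gamma')s_{\gamma\gamma'}$ by (\ref{eq-def-cs}), this equals $(ec(\gamma,\gamma')c_s(\gamma,\gamma'))\star\gamma\gamma'$.

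For the inverse functor, the right datum is $s^{-1}$, not $\gamma\mapsto\theta'_\gamma(s_\gamma^{-1})$. Simply set $e\cdot\gamma=(es_\gamma^{-1})\star\gamma$; this returns the original action, so nothing further needs checking. Your treatment of morphisms is fine as it stands.
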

\begin{proof}
Set $\theta':=\Int_s\theta$ as above. Let $(E,\cdot)$ be a $(\theta,c)$-twisted $\Gamma$-equivariant $G$-bundle. We have a natural choice of $(\theta',cc_s)$-twisted $\Gamma$-equivariant action on $E$, namely
\begin{equation}
    E\times\Gamma\to E;\,(e,\gamma)\mapsto e*\gamma:=es_{\gamma}\cdot\gamma.
\end{equation}
This satisfies Definition \ref{def-twisted-action}:
\begin{equation*}
    (eg)*\gamma=(e\sg\sg^{-1}g\sg)\cdot\gamma=(e\sg)\cdot\gamma\theta_{\gamma}^{-1}(\sg^{-1}g\sg)=e*\gamma\theta'^{-1}_{\gamma}(g)
\end{equation*}
and
\begin{align*}
    (e*\gamma)*\gamma'&=(e\sg\cdot\gamma)s_{\gamma'}\cdot\gamma'\\
    &=((e\cdot\gamma)\cdot\gamma')\theta_{\gamma\gamma'}^{-1}(\sg)\theta_{\gamma'}^{-1}(s_{\gamma'})\\
    &=
    ((ec(\gamma,\gamma'))\cdot\gamma\gamma')\theta_{\gamma\gamma'}^{-1}(\sg\theta_{\gamma}(s_{\gamma'}))\\
    &=
    ((ec(\gamma,\gamma'))\cdot\gamma\gamma')\theta_{\gamma\gamma'}^{-1}(c_s(\gamma,\gamma')s_{\gamma\gamma'})\\
    &=
    (ec(\gamma,\gamma')c_s(\gamma,\gamma')s_{\gamma\gamma'})\cdot\gamma\gamma'\\
    &=
    (ec(\gamma,\gamma')c_s(\gamma,\gamma'))*\gamma\gamma'
\end{align*}
for each $g\in G$, $\gamma,\gamma'\in\Gamma$ and $e\in E$.

A $(\theta,c)$-twisted $\Gamma$-equivariant morphism of $G$-bundles $f:E\to E'$ is also $(\theta',cc_s)$-twisted $\Gamma$-equivariant. Indeed, we have
\begin{equation*}
    f(e*\gamma)=f(es_{\gamma}\cdot\gamma)=f(e\cdot\gamma)\tg^{-1}(s_{\gamma})=f(e)\cdot\gamma\tg^{-1}(s_{\gamma})=f(e)s_{\gamma}\cdot\gamma=f(e)*\gamma
\end{equation*}
for each $e\in E$ and $\gamma\in\Gamma$.

Finally it is clear that the functor is invertible, namely we get the action $\cdot$ by composing the action $*$ with multiplication by $s(\bullet)^{-1}$.
\end{proof}

We may focus our attention on the ambigüity concerning the 2-cocycle $c$: two 2-cocycles $c$ and $c'\in Z^2_a(\Gamma,Z)$ are \textbf{cohomologous} if there exists a map $s:\Gamma\to Z$ such that 
\begin{equation}\label{eq-def-cohomologous-2-cocycle}
c'(\gamma,\gamma')=c(\gamma,\gamma')s_{\gamma\gamma'}^{-1}s_{\gamma}\tg(s_{\gamma'})
\end{equation}
for each $\gamma$ and $\gamma'\in\Gamma$. We define the \textbf{second Galois cohomology group} $H^2_a(\Gamma,Z)$ to be the set of equivalence classes, equipped with the group structure inherited from $Z$. 

\begin{corollary}\label{cor-different-c}
    If $c$ and $c'$ are cohomologous, there is an equivalence of categories between $\cc{}(\theta,c)$ and $\cc{}(\theta,c')$.
\end{corollary}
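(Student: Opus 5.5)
The plan is to derive this as a special case of Proposition \ref{prop-different-theta}, taking the map $s$ with values in the centre. Since $c$ and $c'$ are cohomologous, there is a map $s:\Gamma\to Z$ satisfying (\ref{eq-def-cohomologous-2-cocycle}). First I normalize $s$ so that $s(1)=1$. Evaluating (\ref{eq-def-cohomologous-2-cocycle}) at $\gamma=\gamma'=1$, and using that $c$ and $c'$ are normalized 2-cochains, gives $1=s_1^{-1}s_1\theta_1(s_1)=s_1$. So no rescaling is needed.

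Next I regard $s$ as a map $\Gamma\to G$. Since $s$ takes values in $Z$, every $\Int_{s_\gamma}$ is the identity. Hence $\Int_s$ is the trivial map, which is trivially a 1-cocycle in $Z^1_{\theta}(\Gamma,\Int(G))$, and $\Int_s\theta=\theta$.

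I then compute the 2-cocycle $c_s$ of (\ref{eq-def-cs}): $c_s(\gamma,\gamma')=s_\gamma\tg(s_{\gamma'})s_{\gamma\gamma'}^{-1}$. All factors lie in the abelian group $Z$, and $\tg$ preserves $Z$, so the factors may be reordered freely. Comparing with (\ref{eq-def-cohomologous-2-cocycle}) gives $cc_s=c'$.

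Proposition \ref{prop-different-theta} then yields an equivalence $\cc{}(\theta,c)\simeq\cc{}(\Int_s\theta,cc_s)=\cc{}(\theta,c')$. Concretely, the functor sends a $(\theta,c)$-twisted action $\cdot$ to $e*\gamma:=es_\gamma\cdot\gamma$ and is the identity on morphisms; its inverse uses $s^{-1}$.

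I do not expect a genuine obstacle here. The only points needing care are that Proposition \ref{prop-different-theta} is stated for $s$ valued in $G$ (its statement says $\Int(G)$, but the proof uses $s_\gamma\in G$), and that the centrality of $s$ makes $\Int_s$ trivial, which identifies $\Int_s\theta$ with $\theta$ exactly rather than only up to equivalence.
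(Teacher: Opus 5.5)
Your proposal is correct and follows exactly the paper's proof: take $s\colon\Gamma\to Z$ realizing the cohomology, observe that $\Int_s$ is trivial so that $\Int_s\theta=\theta$, check that $cc_s=c'$, and apply Proposition~\ref{prop-different-theta}. Your extra verification that $s_1=1$ holds automatically from the normalization of $c$ and $c'$ is a detail the paper leaves implicit, and it is correct.
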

\begin{proof}
    Note that $cc_s=c'$ and $\Int_s=1$, so that $\Int_s\theta=\theta$. Hence we may use Proposition \ref{prop-different-theta}.
%     Let $(E,\cdot)$ be a \twisted $G$-bundle over $X$ and let $\beta:\Gamma\to Z$ be a map satisfying (\ref{eq-def-cohomologous-2-cocycle}). Then the following
%     \begin{equation*}
%         E\times\Gamma\to E;\,(e,\gamma)\mapsto e*\gamma:=(e\beta_{\gamma})\cdot\gamma 
%     \end{equation*}
%     defines a ($\theta,c'$)-twisted $\Gamma$-equivariant action on $E$. Indeed, the fact that $\beta_{\gamma}$ commutes with every element of $G$ for each $\gamma\in\Gamma$ implies that the first equation in (\ref{eq-twisted-equivariant-axioms}) is satisfied, and the second equation follows from:
%     \begin{align*}
%         (e*\gamma)*\gamma'&=
%         (((e\beta_{\gamma})\cdot\gamma)\beta_{\gamma'})\cdot\gamma'\\
%         &=((e\beta_{\gamma}\tg(\beta_{\gamma'}))\cdot\gamma)\cdot\gamma'\\
%         &=(e\beta_{\gamma}\tg(\beta_{\gamma'})c(\gamma,\gamma'))\cdot(\gamma\gamma')\\
%         &=(e\beta_{\gamma}\tg(\beta_{\gamma'})c(\gamma,\gamma')\beta_{\gamma\gamma'}^{-1})*(\gamma\gamma')\\
%         &=(ec'(\gamma,\gamma'))*(\gamma\gamma').
%     \end{align*}
% As in the proof of Proposition \ref{prop-different-theta} we may show that a morphism $(E,\cdot)\to (E',\cdot)$ is $(\theta,c)$-twisted $\Gamma$-equivariant if and only if it is $(\theta,c')$-twisted $\Gamma$-equivariant. The functor is clearly invertible.
\end{proof}

\section{Non-connected groups and twisted equivariant structures}\label{section-twisted-and-non-connected-principal}

There is a very explicit relation between principal bundles with non-connected structure group and twisted equivariant bundles which is crucial in our Prym--Narasimhan--Ramanan construction and we explain next, following \cite[Section 4]{GGM}. Let $G_0$ be the connected component of a (not necessarily connected) reductive complex Lie group $G$, $Z$ the centre of $G_0$ and $\Gamma:=G/G_0$. We have a short exact sequence
\begin{equation}\label{eq-general-extension}
    1\to G_0\to G\to\Gamma\to 1,
\end{equation}
making $G$ an \textbf{extension} of $G_0$ by $\Gamma$. Let $a:\Gamma\to \Out(G_0)$ be the characteristic homomorphism of (\ref{eq-general-extension}) and take a lift $\theta:\Gamma\to \Aut(G_0)$. 

\begin{proposition}\label{prop-associativity-2-cocycle}
    The multiplication
    \begin{equation}\label{eq-def-twisted product}
        (G_0\times\Gamma)\times(G_0\times\Gamma)\to G_0\times\Gamma;\,[(g,\gamma),(g',\gamma')]\mapsto (g\tg(g')c(\gamma,\gamma'),\gamma\gamma')
    \end{equation}
    is associative if and only if $c\in Z^2_a(\Gamma,Z)$. Consequently, if $c$ is a 2-cocycle then (\ref{eq-def-twisted product}) defines a group multiplication.
\end{proposition}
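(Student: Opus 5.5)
Associativity reduces to a direct computation: expand both bracketings of a triple product and compare. Take $(g,\gamma),(g',\gamma'),(g'',\gamma'')\in G_0\times\Gamma$. Using (\ref{eq-def-twisted product}) twice, the left bracketing is
\begin{equation*}
\bigl((g,\gamma)(g',\gamma')\bigr)(g'',\gamma'')=\bigl(g\tg(g')c(\gamma,\gamma')\theta_{\gamma\gamma'}(g'')c(\gamma\gamma',\gamma''),\gamma\gamma'\gamma''\bigr).
\end{equation*}
The right bracketing is
\begin{equation*}
(g,\gamma)\bigl((g',\gamma')(g'',\gamma'')\bigr)=\bigl(g\tg(g')\tg\theta_{\gamma'}(g'')\tg(c(\gamma',\gamma''))c(\gamma,\gamma'\gamma''),\gamma\gamma'\gamma''\bigr).
\end{equation*}
Here I use that $\tg$ is a group homomorphism. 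The $\Gamma$-components always agree, so only the $G_0$-components need comparing.

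Since $\theta$ is a homomorphism, $\tg\theta_{\gamma'}=\theta_{\gamma\gamma'}$. Since $c$ takes values in the centre $Z$, the factor $c(\gamma,\gamma')$ commutes past $\theta_{\gamma\gamma'}(g'')$. Cancelling the common factor $g\tg(g')\theta_{\gamma\gamma'}(g'')$, equality of the two components for all $g,g',g''$ becomes
\begin{equation*}
c(\gamma,\gamma')c(\gamma\gamma',\gamma'')=\tg(c(\gamma',\gamma''))c(\gamma,\gamma'\gamma'').
\end{equation*}
Because $\tg$ preserves $Z$ and agrees there with $\ag$ (inner automorphisms act trivially on $Z$), this is exactly the cocycle condition (\ref{eq-2-cocycle-condition}). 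This gives the "if" direction. For "only if", specialise to $g=g'=g''=1$: associativity then forces the displayed identity directly.

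A small point needs care: the paper defines $Z^2_a$ inside the normalized cochains, with $c(\gamma,1)=c(1,\gamma)=1$. The normalization is not implied by associativity alone. So I would read the statement with $c$ ranging over cochains in $C^2_a(\Gamma,Z)$, where normalization is part of the definition, and then only the cocycle identity is at stake.

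For the group structure once $c$ is a cocycle:
\begin{itemize}
\item The identity is $(1,1)$. This uses normalization of $c$ and $\theta_1=\Id$.
\item The inverse of $(g,\gamma)$ is $\bigl(\theta_{\gamma}^{-1}(g^{-1})c(\gamma^{-1},\gamma)^{-1},\gamma^{-1}\gamma\bigr)$ read correctly, i.e. with $\Gamma$-component $\gamma^{-1}$. Concretely, one solves $(h,\gamma^{-1})(g,\gamma)=(1,1)$ to get $h=c(\gamma^{-1},\gamma)^{-1}\theta_{\gamma^{-1}}(g^{-1})$.
\item A left inverse in an associative monoid with identity gives a group, since every element has a left inverse.
\end{itemize}

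The only mildly delicate step is the commutation and centrality bookkeeping when cancelling. No real obstacle is expected.
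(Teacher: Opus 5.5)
Your proposal is correct and follows the paper's proof. Both expand the two bracketings, cancel the common factor using centrality of $c$ and $\theta_\gamma\theta_{\gamma'}=\theta_{\gamma\gamma'}$, read off the cocycle condition (\ref{eq-2-cocycle-condition}), and take $(1,1)$ as identity with the same explicit inverse. Two small points: your remark that normalization must be assumed a priori is accurate, since associativity gives $c(1,\gamma)=c(1,1)$ and $c(\gamma,1)=\theta_\gamma(c(1,1))$ but not $c(1,1)=1$, which the paper tacitly builds into $c$ being a cochain; and using the monoid left-inverse lemma instead of checking the right inverse via $\theta_\gamma(c(\gamma^{-1},\gamma))=c(\gamma,\gamma^{-1})$ is a harmless shortcut.
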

\begin{proof}
    We have
\begin{align*}
[(g,\gamma)(g',\gamma')](g'',\gamma'')
&=(c(\gamma,\gamma')g\tg(g'),\gamma\gamma')(g'',\gamma'')\\&
=(c(\gamma,\gamma')c(\gamma\gamma',\gamma'')g\tg(g')\theta_{\gamma\gamma'}(g''),\gamma\gamma'\gamma'')
\end{align*}
and
\begin{align*}
(g,\gamma)[(g',\gamma')(g'',\gamma'')]&
=(g,\gamma)(c(\gamma',\gamma'')g'\theta_{\gamma'}(g''),\gamma'\gamma'')\\&
=(c(\gamma,\gamma'\gamma'')\tg(c(\gamma',\gamma''))g\tg(g')\theta_{\gamma\gamma'}(g''),\gamma\gamma'\gamma'').
\end{align*}
Thus the product is associative if and only if (\ref{eq-2-cocycle-condition}) holds, as required.

Now assume that $c$ is a 2-cocycle. It is easy to see that $(1,1)$ is a neutral element for the multiplication, so in order to end the proof of the proposition it is left to show existence of inverses. We check that the inverse of $(g,\gamma)$ is $(c(\gamma^{-1},\gamma)^{-1}\tg^{-1}(g)^{-1},\gamma^{-1})$ for each $g\in G_0$ and $\gamma\in \Gamma$:
\begin{align*}
    (c(\gamma^{-1},\gamma)^{-1}\tg^{-1}(g)^{-1},\gamma^{-1})(g,\gamma)
    =(c(\gamma^{-1},\gamma)^{-1}\tg^{-1}(g)^{-1}\tg^{-1}(g)c(\gamma^{-1},\gamma),\gamma^{-1}\gamma)=(1,1)
\end{align*}
and
\begin{align*}
    (g,\gamma)(c(\gamma^{-1},\gamma)^{-1}\tg^{-1}(g)^{-1},\gamma^{-1})
    =
    (gg^{-1}c(\gamma,\gamma^{-1})\tg(c(\gamma^{-1},\gamma))^{-1},\gamma\gamma^{-1})
    =(1,1),
\end{align*}
where the last equation follows from
\begin{equation*}
\tg(c(\gamma^{-1},\gamma))=\tg(c(\gamma^{-1},\gamma))c(\gamma,1)\stackrel{(\ref{eq-2-cocycle-condition})}{=}c(\gamma,\gamma^{-1})c(1,\gamma)=c(\gamma,\gamma^{-1}).
\end{equation*}
\end{proof}

\begin{definition}\label{def-twisted-product}
Given a 2-cocycle $c$, we define the \textbf{$(\theta,c)$-twisted product} of $G_0$ by $\Gamma$, written $G_0\times_{(\theta,c)}\Gamma$, to be the group which is equal to $G_0\times\Gamma$ as a set and has multiplication (\ref{eq-def-twisted product}).
\end{definition}

Recall that there is an equivalence relation on the set of extensions of $G_0$ by $\Gamma$ making another extension $G'$ equivalent to $G$ if and only if there is an isomorphism $G\cong G'$ fitting in a commutative diagramme
$$
\begin{tikzcd}[cong/.style = {draw=none,"\cong" description,sloped}, eq/.style = {draw=none,"=" description,sloped}]
1\arrow[r]  & G_0 \arrow[r]\arrow[d,eq] & G \arrow[r]\ar[d,cong] & \Gamma \arrow[r]\arrow[d,eq] & 1\\
1\arrow[r]  & G_0 \arrow[r] & G' \arrow[r] & \Gamma \arrow[r] & 1.
\end{tikzcd}
$$

\begin{proposition}\label{prop-extensions-isomorphic-twisted-group}
There exists a 2-cocycle $c\in Z^2_{a}(\Gamma,Z)$ such that the extensions of $G_0$ given by $G$ and $G_0\times_{(\theta,c)}\Gamma$ are equivalent.
\end{proposition}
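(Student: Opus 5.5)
The plan is to build the isomorphism from a section of $G\to\Gamma$ chosen compatibly with the fixed lift $\theta$. The twisted product then appears as the group structure that $G$ induces on $G_0\times\Gamma$.

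First I choose the section. For each $\gamma\in\Gamma$, pick an element $t_\gamma$ in the connected component $\gamma$ of $G$, with $t_1=1$. By definition of the characteristic homomorphism $a$, the automorphism $\Int_{t_\gamma}\vert_{G_0}$ lies in the class $a(\gamma)\in\Out(G_0)$, and so does $\theta_\gamma$. Hence $\theta_\gamma^{-1}\Int_{t_\gamma}\vert_{G_0}=\Int_{h}$ for some $h\in G_0$. Replacing $t_\gamma$ by $t_\gamma h^{-1}$, which stays in the same component, I may assume $\Int_{t_\gamma}\vert_{G_0}=\theta_\gamma$ for every $\gamma$. This is the key normalization; for $\gamma=1$ it is compatible with $t_1=1$, since $\theta_1=\Id$.

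Next I define the 2-cochain. Set $c(\gamma,\gamma'):=t_\gamma t_{\gamma'}t_{\gamma\gamma'}^{-1}$. This element lies in $G_0$, because it maps to $\gamma\gamma'(\gamma\gamma')^{-1}=1$ in $\Gamma$. Conjugation by it restricts on $G_0$ to $\theta_\gamma\theta_{\gamma'}\theta_{\gamma\gamma'}^{-1}=\Id$, since $\theta$ is a homomorphism. So $c(\gamma,\gamma')$ is an element of $G_0$ centralizing $G_0$, that is, it lies in $Z$. The normalization $t_1=1$ gives $c(\gamma,1)=c(1,\gamma)=1$. Note that $c$ need not be a coboundary, because the $t_\gamma$ only form a homomorphism up to $Z$.

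Then I write down the map $\Phi\colon G_0\times\Gamma\to G$, $(g,\gamma)\mapsto g\,t_\gamma$. It is a bijection, since every element of the component $\gamma$ is uniquely of the form $g t_\gamma$ with $g\in G_0$. It is compatible with the inclusion of $G_0$ and the projection to $\Gamma$. To see that it intertwines multiplications, I compute
\[
g t_\gamma g' t_{\gamma'}=g\,\theta_\gamma(g')\,t_\gamma t_{\gamma'}=g\,\theta_\gamma(g')\,c(\gamma,\gamma')\,t_{\gamma\gamma'}.
\]
This is exactly the product (\ref{eq-def-twisted product}). So the multiplication transported from $G$ via $\Phi$ is (\ref{eq-def-twisted product}). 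Since the multiplication of $G$ is associative, this transported product is associative, and the ``only if'' direction of Proposition \ref{prop-associativity-2-cocycle} gives $c\in Z^2_a(\Gamma,Z)$. Alternatively, the cocycle identity can be checked directly from associativity of $t_\gamma t_{\gamma'}t_{\gamma''}$. Therefore $\Phi$ is a group isomorphism $G_0\times_{(\theta,c)}\Gamma\cong G$, and it fits into the required diagram of extensions.

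The only delicate step is the normalization $\Int_{t_\gamma}\vert_{G_0}=\theta_\gamma$, which relies on $\theta$ lifting the characteristic homomorphism. Everything after it is formal.
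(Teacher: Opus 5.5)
Your proof is correct and follows the paper's argument: choose a section $t\colon\Gamma\to G$ with $\Int_{t_\gamma}\vert_{G_0}=\theta_\gamma$, define $c$ by $t_\gamma t_{\gamma'}=c(\gamma,\gamma')t_{\gamma\gamma'}$, transport the multiplication of $G$ to $G_0\times\Gamma$, and use associativity together with Proposition \ref{prop-associativity-2-cocycle} to get the cocycle condition. You also spell out why such a section exists and why $c$ takes values in $Z$ and is normalized; the paper leaves these points implicit.
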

\begin{proof}
Take a section $t:\Gamma\to G$ of (\ref{eq-general-extension}) whose composition with the natural homomorphism $G\to\Int(G)$ restricts to $\theta$. Every $g\in G$ can be uniquely written in the form $g_0t_{\gamma}$, where $\gamma$ is the connected component where $g$ lies and $g_0\in G_0$. This determines a map $G\to G_0\times \Gamma$. Let $c:\Gamma\times\Gamma\to Z$ be the map satisfying $t_{\gamma}t_{\gamma'}=c(\gamma,\gamma')t_{\gamma\gamma'}$. It is straightforward to check that the group multiplication of $G$ induces the product (\ref{eq-def-twisted product}) on $G_0\times\Gamma$, which is therefore a group multiplication, and we have an isomorphism of extensions $G\cong G_0\times_{(\theta,c)}\Gamma$. In particular associativity of the group multiplication of $G$ implies the associativity of (\ref{eq-def-twisted product}), which in turn implies that $c\in Z^2_a(\Gamma,Z)$ by Proposition \ref{prop-associativity-2-cocycle}.
\end{proof}

Choose a 2-cocycle $c$ as in Proposition \ref{prop-extensions-isomorphic-twisted-group}. Slightly abusing notation, we consider the groups $G$ and $G_0\times_{(\theta,c)}\Gamma$ to be equal. 

\begin{proposition}[Proposition 2.7 in \cite{GGM}]\label{prop-bijection-G,Gamma-action}
    There is a bijective correspondence between holomorphic right $G$-actions on a complex manifold $M$ and $(\theta,c)$-twisted right $(G_0,\Gamma)$-actions on $M$ (Definition \ref{def-twisted-action}),
    given as follows:
\begin{itemize}
\item Given an action of $G$ on $M$, the action of the subgroup
  $G_0\subseteq G$ on $M$ is defined by restriction, and the automorphism determined by
  $\gamma\in \Gamma$ on $M$ is defined to be the one given by
  $(1,\gamma)\in G$.
\item Given a $(\theta,c)$-twisted $(G,\Gamma)$-action on $M$, we define the $G$-action on $M$ by
  \begin{displaymath}
    m\cdot(g,\gamma) = (m\cdot g)\cdot\gamma.
  \end{displaymath}
\end{itemize}
\end{proposition}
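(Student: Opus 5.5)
The plan is to check directly that each of the two constructions produces the required structure, and then that they are mutually inverse. Throughout, $G$ is identified with $G_0\times_{(\theta,c)}\Gamma$ as in Proposition \ref{prop-extensions-isomorphic-twisted-group}. So $(g,1)$ is the element $g\in G_0$, $(1,\gamma)$ is the chosen representative $t_\gamma$, and the multiplication (\ref{eq-def-twisted product}) gives the basic identities
\begin{equation*}
(1,\gamma)(g,1)=(\tg(g),1)(1,\gamma),\qquad (1,\gamma)(1,\gamma')=(c(\gamma,\gamma'),1)(1,\gamma\gamma'),\qquad (g,\gamma)=(g,1)(1,\gamma).
\end{equation*}

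Step one starts from a holomorphic right $G$-action on $M$. Restricting it to $G_0$ gives a holomorphic right $G_0$-action. Setting $m\cdot\gamma:=m(1,\gamma)$ gives holomorphic automorphisms of $M$. The first identity rearranges to $(g,1)(1,\gamma)=(1,\gamma)(\tg^{-1}(g),1)$, so $(mg)\cdot\gamma=(m\cdot\gamma)\tg^{-1}(g)$. The second identity gives $(m\cdot\gamma)\cdot\gamma'=(mc(\gamma,\gamma'))\cdot(\gamma\gamma')$. Together these are exactly the axioms (\ref{eq-twisted-equivariant-axioms}) for a $(\theta,c)$-twisted right $(G_0,\Gamma)$-action.

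Step two starts from a $(\theta,c)$-twisted $(G_0,\Gamma)$-action and defines $m\cdot(g,\gamma):=(mg)\cdot\gamma$. This map is holomorphic because each piece is. It is the identity at $(1,1)$, since $\theta_1=\id$ and $c(1,1)=1$ force $\cdot 1$ to be an invertible idempotent, hence the identity. The main computation is that it is a right action:
\begin{equation*}
\bigl((mg)\cdot\gamma\bigr)g'\cdot\gamma'=\bigl((m g\tg(g'))\cdot\gamma\bigr)\cdot\gamma'=(mg\tg(g')c(\gamma,\gamma'))\cdot(\gamma\gamma'),
\end{equation*}
which equals $m\cdot\bigl((g,\gamma)(g',\gamma')\bigr)$ by (\ref{eq-def-twisted product}). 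Here the first equality uses the first twisted axiom read backwards, $(m\cdot\gamma)g'=(m\tg(g'))\cdot\gamma$, and the second uses the second axiom. This bookkeeping of where $\tg$ versus $\tg^{-1}$ appears is the only delicate point, and I expect it to be the main (mild) obstacle.

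Step three checks that the two constructions are inverse. Going from a $G$-action to a twisted action and back gives $m\mapsto (m(g,1))(1,\gamma)=m(g,\gamma)$, using the third identity. Going from a twisted action to a $G$-action and back recovers the original $G_0$-action (take $\gamma=1$) and the original $\Gamma$-maps (take $g=1$). Therefore the two constructions are mutually inverse bijections.
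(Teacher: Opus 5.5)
Your proposal is correct and follows the same route as the paper. Both directions come from the identities $(g,1)(1,\gamma)=(1,\gamma)(\theta_{\gamma}^{-1}(g),1)$ and $(1,\gamma)(1,\gamma')=(c(\gamma,\gamma'),1)(1,\gamma\gamma')$ in $G_0\times_{(\theta,c)}\Gamma$, together with the same one-line computation showing that $m\cdot(g,\gamma)=(mg)\cdot\gamma$ respects the twisted multiplication. Your additional checks, that $\cdot 1$ is the identity (by invertibility and $c(1,1)=1$) and that the two constructions are mutually inverse, are left implicit in the paper, and you handle them correctly.
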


\begin{proof}
  Suppose we have an action of $G$ on $M$. We check the
  conditions of Definition~\ref{def-twisted-action} with the given
  actions of $G_0$ and $\Gamma$. Let $g\in G_0$ and
  $\gamma,\gamma',\gamma''\in\Gamma$. Then we have identities in
  $G$,
  \begin{align*}
    &(g,1)(1,\gamma) = (g,\gamma)
      = (1,\gamma)(\tg^{-1}(g),1)\quad\text{and}\\
    &(1,\gamma')(1,\gamma'') = (c(\gamma',\gamma''),\gamma'\gamma'')
      = (c(\gamma',\gamma''),1) (1,\gamma'\gamma''),
  \end{align*}
  which show that (\ref{eq-twisted-equivariant-axioms}) is satisfied.

  Conversely, suppose we have a $(G_0,\Gamma)$-twisted
  action on $M$, and define the action of $G$ as in the statement of
  the proposition (it is worth noting that this definition is forced
  upon us by the identity
  \begin{math}
    (g,\gamma) = (g,1)(1,\gamma)
  \end{math}).
  We must check that this in
  fact defines an honest $G$-action. In the case of a
  left action we have, for $g,g'\in G$ and $\gamma,\gamma'\in\Gamma$:
  \begin{align*}
    ( m\cdot(g,\gamma))\cdot (g',\gamma')
      &=((m\cdot g)\cdot\gamma)\cdot(g',\gamma')\\
      &=(((m\cdot g)\cdot\gamma)\cdot g')\cdot\gamma'\\
      &=((m\cdot g\tg(g'))\cdot\gamma)\cdot\gamma'\\
      &=((m\cdot g\tg(g')c(\gamma,\gamma'))\cdot(\gamma\gamma')\\
      &=m\cdot (g\tg(g')c(\gamma,\gamma'),\gamma\gamma')\\
      &=m\cdot ((g,\gamma)(g',\gamma')),\\
  \end{align*}
  as required.
\end{proof}

Let $E$ be a $G$-bundle over $X$ and set $p_Y:Y:=E/G_0\to X$, which is a principal $\Gamma$-bundle over $X$. Assume that the total space of $Y$ is connected or, equivalently, that $E$ is connected. Then $Y$ is an étale cover of $X$ with Galois group $\Gamma$. The pullback $F:=p_Y^*E$ has a reduction of structure group $F$ to $G_0$ given by the tautological section of $F/G_0\cong p_Y^*(E/G_0).$
Regarded as a $G_0$-bundle over $Y$, $F$ is isomorphic to
$E\to E/G_0\cong Y.$ Since the total spaces of $F$ and $E$ are equal, we have a holomorphic $G$-action on $F$. By Proposition \ref{prop-bijection-G,Gamma-action} this provides a right $G_0$-action (making it a holomorphic $G_0$-bundle over $Y$) and a $(\theta,c)$-twisted $\Gamma$-action. The action of $\Gamma$ is given by the restriction of the action of $G$ to the subset $\{(1,\gamma)\}_{\gamma\in\Gamma}\subset G$, which descends to the natural $\Gamma=\gal(Y/X)$-action on $Y$. Hence $F$ inherits a ($\theta,c$)-twisted $\Gamma$-equivariant structure. Conversely, given a $(\theta,c)$-twisted $\Gamma$-equivariant $G_0$-bundle $F$ on $Y$, the $G$-action on the total space of $F$ given by Proposition \ref{prop-bijection-G,Gamma-action} provides a holomorphic $G$-bundle $E$ over $X.$

\begin{definition}\label{def-categories}
  We denote by $\mathcal{C}_1$ the category whose
  \begin{itemize}
  \item \textbf{objects} are $(\theta,c)$-twisted $\Gamma$-equivariant
    principal $G_0$-bundles $E\to Y$ such that the twisted
    $\Gamma$-action descends to the action of $\Gamma$ as covering
    transformations of the fixed $\Gamma$-covering $E/G_0\cong Y\to X$,
    and whose
  \item \textbf{morphisms} are $Z(\Gamma)$-isomorphisms of twisted equivariant bundles, i.e. holomorphic
    maps $f\colon E\to E'$ both $G$ and $\Gamma$-equivariant such that the diagramme
    \begin{displaymath}
      \begin{CD}
      E @>{f}>> E'\\
      @VVV @VVV \\
      Y @>{\bar{f}}>> Y
      \end{CD}
    \end{displaymath}
    commutes and the induced map $\bar{f}\colon Y\to Y$  is
    a covering transformation which belongs to $Z(\Gamma)$.
  \end{itemize}

    We denote by $\mathcal{C}_2$ the category whose
  \begin{itemize}
  \item \textbf{objects} are principal $G$-bundles on $X$ such
    that there is an isomorphism $E/G_0\cong Y$ covering the identity on
    $X$
    and whose
  \item \textbf{morphisms} are morphisms of principal
    $G$-bundles.
  \end{itemize}
\end{definition}

Summing up, we get the following.

\begin{proposition}[Proposition 4.5 in \cite{GGM}]\label{prop-twisted-equivariant-bundles-one-to-one}
The categories $\cc1$ and $\cc2$ are equivalent. 
\end{proposition}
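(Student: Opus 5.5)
We construct functors $F_1\colon\cc2\to\cc1$ and $F_2\colon\cc1\to\cc2$ from the discussion preceding Definition \ref{def-categories}, with Proposition \ref{prop-bijection-G,Gamma-action} as the main tool, and show they are mutually quasi-inverse.

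\textbf{The functor $F_1\colon\cc2\to\cc1$.} Let $E$ be an object of $\cc2$ and fix an isomorphism $E/G_0\cong Y$ over $X$. The quotient $E\to E/G_0\cong Y$ is a holomorphic principal $G_0$-bundle, since $G_0$ acts freely and properly with local slices inherited from the local triviality of $E$. Proposition \ref{prop-bijection-G,Gamma-action} turns the restriction of the $G$-action into a $(\theta,c)$-twisted $(G_0,\Gamma)$-action. The $\Gamma$-part is given by $(1,\gamma)$. It covers the action of $\gamma$ on $E/G_0=Y$, which is the deck transformation because the $\Gamma$-bundle structure on $E/G_0$ is induced by $G\to\Gamma$.

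For a morphism $f\colon E\to E'$ of $G$-bundles over $X$, $f$ is $G$-equivariant, so it is $G_0$- and $\Gamma$-equivariant. It induces a $\Gamma$-equivariant map $\bar f\colon Y\to Y$ over $X$, namely the composite $Y\cong E/G_0\to E'/G_0\cong Y$. Any $\Gamma$-equivariant automorphism of a connected Galois $\Gamma$-cover is right multiplication by an element commuting with all of $\Gamma$, so $\bar f\in Z(\Gamma)$. This is exactly why $\cc1$ has $Z(\Gamma)$-isomorphisms as morphisms. The map $\bar f$ depends on the chosen identifications $E/G_0\cong Y$. I will fix one for each object, and note that changing it alters $F_1$ only up to natural isomorphism.

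\textbf{The functor $F_2\colon\cc1\to\cc2$.} Given $F\to Y$ in $\cc1$, the other direction of Proposition \ref{prop-bijection-G,Gamma-action} produces a holomorphic $G$-action on the total space of $F$. The composite $F\to Y\to X$ is $G$-invariant. I then check that it makes $F$ a principal $G$-bundle over $X$.

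\begin{itemize}
\item \emph{Freeness.} If $m(g,\gamma)=m$, then $\gamma$ fixes the image point in $Y$, so $\gamma=1$ because $\Gamma$ acts freely on $Y$. Freeness of the $G_0$-action then gives $g=1$.
\item \emph{Transitivity on fibres.} The fibres over $X$ are unions of $G_0$-orbits over the $\Gamma$-orbit in $Y$, and $\Gamma$ permutes these orbits transitively.
\item \emph{Local triviality.} Over an evenly covered open set $U\subset X$, a local section of $F$ over one sheet of $p_Y^{-1}(U)$ gives a local section of $F\to X$, hence a trivialization.
\end{itemize}

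By construction $F/G_0=Y$. A $\Gamma$-equivariant map covering some $\bar f\in Z(\Gamma)$ is $G$-equivariant, since $G$ is generated by $G_0$ and the elements $(1,\gamma)$. It also lies over the identity of $X$, because $\bar f$ is a deck transformation.

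\textbf{Quasi-inverse.} On objects, $F_2F_1$ and $F_1F_2$ are the identity on total spaces, by the bijectivity in Proposition \ref{prop-bijection-G,Gamma-action}. On morphisms both composites are the identity as well. So the functors are mutually inverse up to the choice of identifications $E/G_0\cong Y$, and that choice only contributes a $Z(\Gamma)$-twist.

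The main obstacle is the morphism bookkeeping. One must verify that $\bar f$ lands in $Z(\Gamma)$ rather than just in $\Gamma$, which requires $Y$ to be connected. One must also check that the identification $E/G_0\cong Y$ is well defined only up to $Z(\Gamma)$, so that the morphisms of $\cc1$ are precisely the right notion. The local triviality of $F\to X$ in $F_2$ is routine but must also be written out.
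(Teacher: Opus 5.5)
Your proposal is correct and follows essentially the same route as the paper. Objects are matched through Proposition \ref{prop-bijection-G,Gamma-action}, as in the paragraph preceding Definition \ref{def-categories}. Morphisms are matched by noting that a $\Gamma$-equivariant automorphism of the connected cover $Y$ over $X$ is an element of $Z(\Gamma)$, and that a $G_0$- and $\Gamma$-equivariant map is $G$-equivariant. You also write out the principal-bundle axioms for $F\to X$ and the role of connectedness of $Y$, which the paper leaves implicit.
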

\begin{proof}
    It is left to define the corresponding functor on morphisms. An isomorphism of $G$-bundles $E\cong E'$ over $X$ induces an isomorphism of $\Gamma$-bundles $E/G_0\cong E'/G_0$. This is just an automorphism of $Y$ over the identity on $X$ that commutes with the action of the Galois group, i.e. an element of $Z(\Gamma)$. This shows that the induced map of twisted equivariant $G_0$-bundles over $Y$, which is $G_0$-equivariant (since it is $G$-equivariant) covers an element of $Z(\Gamma)$. Conversely, a morphism of twisted equivariant $G_0$-bundles over $Y$ which covers an element of $Z(\Gamma)$ is both $G_0$ and $\Gamma$-equivariant as a map of $G$-bundles, thus it induces an isomorphism of $G$-bundles over $X$ as required.
\end{proof}

\begin{remark}
Note that the equivalence of categories is not true if we replace the category of twisted equivariant bundles with $Z(\Gamma)$-isomorphisms with the subcategory of twisted equivariant bundles with fibre-preserving morphisms, since an automorphism of a $G$-bundle $E$ on $X$ may not induce the identity on $E/G_0$.
\end{remark}

\section{Twisted equivariant bundles and monodromy}\label{section-monodromy}

Proposition \ref{prop-twisted-equivariant-bundles-one-to-one} assumes the $G$-bundle $E$ to be connected, which may be too restrictive. We would like to consider the interaction between general $G$-bundles and twisted equivariant $G_0$-bundles.

Let ${E}\to X$ be a principal ${G}$-bundle (we do not assume
that ${E}$ is connected this time). We obtain a principal $\Gamma$-bundle
\begin{displaymath}
  Y:={E}/G_0 \to X,
\end{displaymath}
where the $\Gamma={G}/G_0$-action is induced by the ${G}$-action.
Since $\Gamma$ is discrete, $Y$ is a (possibly non-connected) étale cover of $X$ with covering group $\Gamma$ which we call $p\colon Y\to X$. Notice that $\Gamma$ acts on $Y$ on the
right.  Write
\begin{equation}
  \label{eq:tilde-p}
  \tilde{p}\colon{E}\to Y={E}/G_0
\end{equation}
for the quotient map. Since $G_0$ is connected, $\tilde{p}$
induces an isomorphism $\pi_0{E}\xra{\cong}\pi_0Y$.

Choose compatible base points in $X$ and $Y$. Fundamental groups will
be taken with respect to these base points and can be identified with
the corresponding covering groups of the (common) universal covering of $X$ and
$Y$. Let $Y'$ be the
connected component of $Y$ containing the base point.
The $\Gamma$-action on $Y$ induces a $\Gamma$-action on $\pi_0Y$. Let
$\Gamma'\subseteq\Gamma$ be the kernel of the corresponding
homomorphism $\Gamma\to\Aut(\pi_0Y)$. Then $Y'\to X$ is a connected
$\Gamma'$-covering. Moreover, we have the exact sequence
\begin{displaymath}
  1 \to \pi_1Y' \to \pi_1(X) \xrightarrow{w} \Gamma' \to 1.
\end{displaymath}
Identifying $\pi_1(X)$ with the covering group of the universal covering
$\tilde{X}\to X$, the \textbf{monodromy} $w\colon\pi_1(X)\to\Gamma'$ takes a
covering transformation of $\tilde{X}\to X$ to the induced covering
transformation of $Y'\to X$. Equivalently, if $[\alpha]\in\pi_1(X)$, then
$w([\alpha])\in\Gamma'$ is the unique element relating the endpoints
of the lift of the loop $\alpha$ starting at the base point of
$Y'$. We shall sometimes refer to
$w\colon\pi_1(X)\to\Gamma'\subset\Gamma$ as the \textbf{monodromy} of the
  ${G}$-bundle ${E}\to X$ and to $\Gamma'$ as the
\textbf{monodromy group} of ${E}$.

\begin{proposition}
  \label{prop:connected-reduction}
  Let ${E}\to X$ be a principal ${G}$-bundle with monodromy
  $w\colon\pi_1(X)\to \Gamma'\subseteq\Gamma$. Then ${E}$ admits a
  reduction of structure group to ${G}'\subseteq {G}$, where
  ${G}':=G_0\times_{\theta,c}\Gamma'$ is defined by
  restricting $\theta$ and $c$ to $\Gamma'$. Moreover, the total space
  of the corresponding ${G}'$-bundle ${E}'\subseteq {E}$ is
  connected, and $Y'={E}'/G_0\to X$ is a connected
  $\Gamma'$-covering with surjective monodromy
  $w\colon\pi_1(X)\to\Gamma'$.
\end{proposition}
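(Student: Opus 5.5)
The natural candidate for $E'$ is the preimage of a single connected component: I would take $E':=\tilde p^{-1}(Y')\subseteq E$, where $\tilde p\colon E\to Y=E/G_0$ is the quotient map (\ref{eq:tilde-p}) and $Y'$ is the connected component of $Y$ containing the base point. The proof then amounts to checking three things: that $E'$ is preserved exactly by $G'$, that $E'\to X$ is a principal $G'$-bundle, and that $E'$ is connected with the stated quotient and monodromy.

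For the first point, the $G$-action on $E$ covers the right $\Gamma$-action on $Y$, and $G_0$ acts trivially on $Y$. Under the identification $G=G_0\times_{\theta,c}\Gamma$, an element $(g,\gamma)$ therefore preserves $E'$ if and only if $\gamma$ maps $Y'$ to itself. The stabilizer of a component of the principal $\Gamma$-bundle $Y$ is the same for every component: $\Gamma$ permutes the fibre transitively, so stabilizers of components are conjugate; since $Y'\to X$ is connected, the monodromy image is the stabilizer of $Y'$. Hence this stabilizer is $\Gamma'$, which is normal because it is the kernel of $\Gamma\to\Aut(\pi_0Y)$. It follows that $(g,\gamma)$ preserves $E'$ exactly when $\gamma\in\Gamma'$, i.e. when $(g,\gamma)\in G'$. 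Here $G'$ is indeed a subgroup, since $\theta$ and $c$ restrict to $\Gamma'$ and the restricted $c$ is still a 2-cocycle by Proposition \ref{prop-associativity-2-cocycle}.

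Next I would show that $G'$ acts freely and transitively on the fibres of $E'\to X$, and that $E'\to X$ is surjective and locally trivial. Freeness is inherited from $E$. For transitivity, take $e,e'\in E'_x$. Then $e'=eh$ for a unique $h\in G$, so $h$ maps $\tilde p(e)$ to $\tilde p(e')$, both in $Y'$; since $h$ permutes components, it preserves $Y'$, and therefore $h\in G'$ by the previous step. Surjectivity onto $X$ holds because $Y'\to X$ is a covering of the connected space $X$. For local triviality, on an evenly covered open set $U$ with a local section $\sigma$ of $E$, I would right-multiply by an element of $G$ to move $\sigma$ into $E'$. This works because $\tilde p\circ\sigma$ lands in a single sheet over $U$, and $\Gamma$ is transitive on sheets. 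Equivalently, $E'$ is a reduction of structure group to $G'$, given by the section of $E/G'=Y/\Gamma'$ determined by $Y'$.

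Finally, connectedness. $E'\to Y'$ is the restriction of the principal $G_0$-bundle $E\to Y$, with connected fibre $G_0$ and connected base $Y'$, so $E'$ is connected. This is just the identification $\pi_0E\cong\pi_0Y$ noted earlier. Moreover $E'/G_0=Y'$ by construction, and the surjectivity of $w$ onto $\Gamma'$ is the exact sequence already established. The main care is needed in the stabilizer argument of the second paragraph, namely in the identification of $\Gamma'$ with the stabilizer of $Y'$ via transitivity on the fibre; everything else is formal.
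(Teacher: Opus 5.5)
Your proposal is correct and is the paper's argument. The paper also takes $E'$ to be the $\tilde{p}$-preimage of the base component; it writes $\tilde{p}^{-1}(Y)$, which must mean $\tilde{p}^{-1}(Y')$. It then simply asserts connectedness and the principal $G'$-structure, which you verify in full.

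One caveat: your aside that all components have the same stabilizer, i.e.\ that $\Gamma'$ is normal as the kernel of $\Gamma\to\Aut(\pi_0Y)$, is false in general. Stabilizers of components are only conjugate; for $\Gamma=S_3$ with monodromy image $\langle(12)\rangle$ that kernel is trivial. You never use this, though, since all you need is $\Gamma'=\mathrm{Stab}(Y')=w(\pi_1(X))$.
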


\begin{proof}
  Let ${E}'=\tilde{p}^{-1}(Y)$, where $\tilde{p}$ was defined in
  \eqref{eq:tilde-p}. Then ${E}'$ is connected and, by
  construction, the ${G}$-action on ${E}$ restricts to a
  ${G}'$-action on ${E}'$ which makes ${E}'\to X$ into a
  principal ${G}'$-bundle.
\end{proof}

We have the following immediate corollary.

\begin{corollary}
  \label{cor:connected-reduction}
  Let ${E}\to X$ be a principal ${G}$-bundle. Then ${E}$
  admits a reduction to the connected component of the identity
  $G_0\subseteq{G}$ if and only if its monodromy is trivial. \qed
\end{corollary}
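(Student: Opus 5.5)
The statement to prove is Corollary \ref{cor:connected-reduction}: $E$ reduces to $G_0$ if and only if its monodromy is trivial. I would deduce both directions from Proposition \ref{prop:connected-reduction}, together with the fact that $\tilde p$ identifies $\pi_0 E$ with $\pi_0 Y$, where $Y=E/G_0$.

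\emph{Trivial monodromy implies a reduction.} Suppose the monodromy is trivial, so $\Gamma'=1$. Then Proposition \ref{prop:connected-reduction} gives a reduction of $E$ to
$G'=G_0\times_{\theta,c}\Gamma'=G_0\times_{\theta,c}\{1\}$.
Since $c(1,1)=1$ and $\theta_1=\Id$, the multiplication (\ref{eq-def-twisted product}) restricts to the usual one on $G_0\times\{1\}$. Hence $G'$ is exactly $G_0\subseteq G$, and this is the required reduction.

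\emph{A reduction implies trivial monodromy.} Suppose $E_0\subseteq E$ is a $G_0$-reduction. Then $E=E_0\times_{G_0}G$, so $Y=E/G_0\cong E_0\times_{G_0}(G/G_0)=X\times\Gamma$ is a trivial $\Gamma$-covering. Each component of $Y$ therefore maps isomorphically onto $X$. In particular the component $Y'$ containing the base point is a connected cover of degree one over $X$. Because $Y'\to X$ is a connected $\Gamma'$-covering, its degree is $|\Gamma'|$, so $\Gamma'=1$. Equivalently, a loop in $X$ lifts to a loop in $Y'=X\times\{1\}$, so $w$ is trivial.

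The only point that needs any care is matching $\Gamma'$ with the degree of $Y'$. Here one uses the exact sequence $1\to\pi_1Y'\to\pi_1(X)\to\Gamma'\to 1$: it says $\Gamma'$ acts simply transitively on the fibres of $Y'$, so its order equals the degree of $Y'\to X$. With that in place, both directions are routine.
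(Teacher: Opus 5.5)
Your proof is correct and is essentially the paper's argument. The paper records the corollary as immediate from Proposition~\ref{prop:connected-reduction}, which for $\Gamma'=1$ gives a reduction to $G'=G_0$. Your converse spells out the direction the paper leaves unstated: a $G_0$-reduction $E_0$ trivializes $Y=E/G_0\cong X\times\Gamma$ (because $G_0$, being normal, acts trivially on $G/G_0$), so $Y'$ has degree one over $X$ and $\Gamma'=1$.
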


Let $H^1(X,\ug)$ and $H^1(Y',\underline{G_0},\Gamma',\theta,c)$ denote the sets of isomorphism classes of $G$-bundles over $X$ and $(\theta,c)$-twisted $\Gamma'$-equivariant $G_0$-bundles over $Y'$, respectively. 

\begin{proposition}\label{prop-action-centralizer}
    Let $Z_{\Gamma}(\Gamma')$ be the centralizer of $\Gamma'$ in $\Gamma$. There is a natural action of $Z_{\Gamma}(\Gamma')$ on $H^1(Y',\underline{G_0},\Gamma',\theta,c)$ on the left given as follows: take a $(\theta,c)$-twisted $\Gamma'$-equivariant $G_0$-bundle $(E,\cdot)$ over $Y'$ and an element $z\in Z_{\Gamma}(\Gamma')$. 
    \begin{itemize}
        \item We have another $G_0$-bundle $\theta_z(E)$ given by extension of structure group (see Section \ref{section-action}.
        \item There is a natural $(\theta,c)$-twisted $\Gamma'$-action on $\theta_z(E)$ given by 
        \begin{equation}\label{eq-action-centralizer-on-twisted-equivariant}
            e*\gamma:=[ec(z^{-1},z)^{-1}c(z^{-1},\gamma)c(z^{-1}\gamma,z)]\cdot\gamma.
        \end{equation}
    \end{itemize}
    This induces an action of $\cent$ on the set of isomorphism classes $H^1(X,\underline{G'})_{Y'}$ of $G'$-bundles $E'$ such that $E'/G_0\cong Y'$ via Proposition \ref{prop-twisted-equivariant-bundles-one-to-one}, which is just extension of structure group by $\Int_{(1,z)}$ for each $z\in \cent$.
\end{proposition}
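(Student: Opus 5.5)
The plan is to work entirely through the identification $G=G_0\times_{(\theta,c)}\Gamma$ and Proposition \ref{prop-bijection-G,Gamma-action}. The formula (\ref{eq-action-centralizer-on-twisted-equivariant}) should then come out of conjugating by the element $(1,z)\in G$, restricted to $G'$.

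The first step is to check that $(1,z)$ normalizes $G'$ for $z\in\cent$. Since $\Gamma'\le\Gamma$ and $z$ centralizes $\Gamma'$, the image of $\Int_{(1,z)}(g,\gamma)$ in $\Gamma$ is $z\gamma z^{-1}=\gamma$. So $\Int_{(1,z)}$ restricts to an automorphism of $G'$ covering the identity of $\Gamma'$, and on $G_0$ it restricts to $\theta_z$.

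The second step is to compute explicitly, using (\ref{eq-def-twisted product}) and the inverse formula from Proposition \ref{prop-associativity-2-cocycle}:
\[
\Int_{(1,z)}^{-1}(1,\gamma)=(1,z)^{-1}(1,\gamma)(1,z).
\]
Here $(1,z)^{-1}=(c(z^{-1},z)^{-1},z^{-1})$. Multiplying out with the cocycle and using that $\theta$ acts on $Z$, this should give
\[
\bigl(c(z^{-1},z)^{-1}c(z^{-1},\gamma)c(z^{-1}\gamma,z),\gamma\bigr),
\]
possibly after reordering central factors. The cocycle identity (\ref{eq-2-cocycle-condition}) may be needed to match the exact form, and $\theta_{z^{-1}}$ may be applied to some factors. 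I expect this bookkeeping to be the main obstacle: getting the order of factors and any $\theta$-twists right. The fallback is to define the action via conjugation first and read off the formula afterwards.

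The third step transports this through Proposition \ref{prop-bijection-G,Gamma-action}. The $G'$-bundle obtained by extending structure group via $\Int_{(1,z)}$ has the same total space as the original, with the action $e\star h=e\,\Int_{(1,z)}^{-1}(h)$. Its $G_0$-part is exactly $\theta_z(E)$. Its $\Gamma'$-part is $e\star(1,\gamma)=e\cdot\Int_{(1,z)}^{-1}(1,\gamma)$, which by the second step equals the right-hand side of (\ref{eq-action-centralizer-on-twisted-equivariant}). Since the result comes from an honest $G'$-action, Proposition \ref{prop-bijection-G,Gamma-action} automatically guarantees the twisted axioms (\ref{eq-twisted-equivariant-axioms}), with the same $(\theta,c)$ because the $\Gamma'$-labels are unchanged. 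There is one more check: $E/G_0$ is still $Y'$ with the same $\Gamma'$-action, since $\star(1,\gamma)$ differs from $\cdot\gamma$ by an element of $G_0$. Hence the object lies in the category $\cc1$ over $Y'$.

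Finally, the remaining properties come for free. Isomorphisms are preserved, because a twisted equivariant isomorphism commutes with all actions involved. The assignment is a left action, because $\Int_{(1,z)}\Int_{(1,z')}=\Int_{(1,z)(1,z')}=\Int_{(c(z,z'),zz')}$, and $c(z,z')\in Z$ acts on isomorphism classes trivially: right multiplication by a central element gives an isomorphism, as in Section \ref{section-action}. Compatibility with Proposition \ref{prop-twisted-equivariant-bundles-one-to-one} holds by construction, which yields the last sentence of the statement.
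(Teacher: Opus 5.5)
Your proposal is correct and takes essentially the same route as the paper. Both arguments realize the action as extension of structure group of the associated $G'$-bundle by $\Int_{(1,z)}$, compute $(1,z)^{-1}(1,\gamma)(1,z)=\bigl(c(z^{-1},z)^{-1}c(z^{-1},\gamma)c(z^{-1}\gamma,z),\gamma\bigr)$, and let Proposition \ref{prop-bijection-G,Gamma-action} supply the twisted axioms. In that computation no $\theta$-twists or reorderings appear, because every $G_0$-component involved is $1$. Your extra check of the left-action property is sound, with one correction: $c(z,z')$ is central in $G_0$ but not necessarily in $G'$. This does not matter, since $e\mapsto es$ gives $\Int_s(E)\cong E$ for every $s\in G'$, so the conclusion stands.
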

\begin{proof}
Probably the best way to think of the seemingly random formula (\ref{eq-action-centralizer-on-twisted-equivariant}) is to take the $G':=G_0\times_{\theta,c}\Gamma'$-bundle $E_{G'}$ over $X$ associated to $E$ according to Proposition \ref{prop-twisted-equivariant-bundles-one-to-one}, and then define an alternative $G'$-action by
\begin{equation}\label{eq-action-centralizer-on-twisted-equivariant-equivalent}
    E_{G'}\times G'\to E_{G'};\,(e,(g,\gamma))\mapsto e*(g,\gamma):=e(1,z)^{-1}(g,\gamma)(1,z),
\end{equation}
i.e. consider the extension of structure group $\Int_{(1,z)}(E_{G'})$. Recall that the total space of $E_{G'}$ and $E$ are the same. The restriction of the action to $G_0$ is then given by 
\begin{equation*}
    e*g=e(1,z)^{-1}g(1,z)=e\theta_z^{-1}(g)
\end{equation*}
for each $e\in E_{G'}$ and $g\in G_0$, which is precisely the $G_0$-action on the total space of the underlying $G_0$-bundle $E$ defining the action of $G_0$ on $\theta_z(E)$. Definition (\ref{eq-action-centralizer-on-twisted-equivariant}) for the action of $\Gamma'$ on $\theta_z(E)$ is equivalent to the $\Gamma'$-action given by restriction of (\ref{eq-action-centralizer-on-twisted-equivariant-equivalent}), since
\begin{align*}
    (1,z)^{-1}(1,\gamma)(1,z)&=
    (c(z^{-1},z)^{-1},z^{-1})(1,\gamma)(1,z)\\&=
    (c(z^{-1},z)^{-1}c(z^{-1},\gamma),z^{-1}\gamma)(1,z)\\&=
    (c(z^{-1},z)^{-1}c(z^{-1},\gamma)c(z^{-1}\gamma,z),z^{-1}\gamma z)\\&=
    (c(z^{-1},z)^{-1}c(z^{-1},\gamma)c(z^{-1}\gamma,z),\gamma),
\end{align*}
where the last equation follows from the fact that $z$ commutes with every element of $\Gamma'$. Thus the pair consisting of the $G_0$-action and the $\Gamma'$-action (\ref{eq-action-centralizer-on-twisted-equivariant}) on $E$ is equivalent to the natural $G'$-action on $\Int_{(1,z)}(E_{G'})$ by the construction of Proposition \ref{prop-bijection-G,Gamma-action}. The same proposition implies that (\ref{eq-action-centralizer-on-twisted-equivariant}) defines a $(\theta,c)$-twisted action.
\end{proof}

\begin{theorem}\label{th-prym-narasimhan-ramanan-principal}
Let $Y\in H^1(X,\Gamma)$ with monodromy group $\Gamma'$ and corresponding connected component $Y'$, and denote by $H^1(X,\ug)_Y$ the set of isomorphism classes of $G$-bundles $E$ over $X$ such that $E/G_0\cong Y$. We have a bijection
\begin{equation}\label{eq-narasimhan-ramanan}
    H^1(Y',\underline{G_0},\Gamma',\theta,c)/Z_{\Gamma}(\Gamma')\xrightarrow{\sim} H^1(X,\ug)_{Y},
\end{equation}
where $Z_{\Gamma}(\Gamma')$ is the centralizer of $\Gamma'$ in $\Gamma$, which acts on $H^1(Y,\underline{G_0},\Gamma',\theta,c)$ as in Proposition \ref{prop-action-centralizer}. The bijection is given by Proposition \ref{prop-twisted-equivariant-bundles-one-to-one} and extension of structure group from $G'$ to $G$.
\end{theorem}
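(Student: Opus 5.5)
We build the map in two stages and then identify its fibres with $Z_{\Gamma}(\Gamma')$-orbits. First, a $(\theta,c)$-twisted $\Gamma'$-equivariant $G_0$-bundle $F$ over $Y'$ is sent by Proposition \ref{prop-twisted-equivariant-bundles-one-to-one} (applied to $G'=G_0\times_{\theta,c}\Gamma'$ and the connected $\Gamma'$-cover $Y'\to X$) to a $G'$-bundle $E'$ over $X$ with $E'/G_0\cong Y'$. Then we extend structure group to $G$, obtaining $E:=E'\times_{G'}G$. Here $E/G_0\cong E'\times_{\Gamma'}\Gamma$ is the $\Gamma$-bundle induced from $Y'$, which is $Y$ because $Y$ has monodromy group $\Gamma'$ and connected component $Y'$. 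So the map lands in $H^1(X,\ug)_Y$. It is constant on $Z_{\Gamma}(\Gamma')$-orbits: by Proposition \ref{prop-action-centralizer}, $z$ acts on the $G'$-side as extension of structure group by $\Int_{(1,z)}$, and after extending to $G$ this is conjugation by an element of $G$, which does not change the isomorphism class. Explicitly, right multiplication by $(1,z)$ on $E$ gives the isomorphism.

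For surjectivity, take $E$ with $E/G_0\cong Y$. We fix an isomorphism, and hence the component $Y'$. Proposition \ref{prop:connected-reduction} gives a reduction $E'=\tilde p^{-1}(Y')$ to $G'$ with $E'/G_0=Y'$. Proposition \ref{prop-twisted-equivariant-bundles-one-to-one} then produces the twisted equivariant bundle over $Y'$, and its image is $E$.

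Injectivity modulo the centralizer is the main point. Suppose $E_1',E_2'$ are $G'$-bundles with $E_i'/G_0\cong Y'$, and $f:E_1'\times_{G'}G\to E_2'\times_{G'}G$ is an isomorphism. Then $f(E_1')$ is a connected $G'$-reduction of $E_2$ lying over some component of $Y=E_2/G_0$. The components of $Y$ are the translates $Y'\gamma$ for $\gamma\in\Gamma/\Gamma'$, so $f(E_1')=E_2'\cdot(1,\gamma)$ for some $\gamma\in\Gamma$ well defined modulo $\Gamma'$. The stabilizer of $Y'\gamma$ in $\Gamma$ is $\gamma^{-1}\Gamma'\gamma$, and it must equal the monodromy group $\Gamma'$ of $f(E_1')$. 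Hence $\gamma$ lies in the normalizer $N_\Gamma(\Gamma')$, not a priori in the centralizer.

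To get into the centralizer, we use the constraint that $f(E_1')$ and $E_2'$ both have $Y'$, with the fixed identification, as quotient. In $\mathcal C_2$ objects come with an isomorphism $E/G_0\cong Y$ covering the identity, so the correct comparison is through $Y$ as a $\Gamma$-bundle. The translate $E_2'(1,\gamma)$ is isomorphic as a $G'$-bundle, via $\Int_{(1,\gamma)}$, to $E_2'$ with twisted structure group. Its quotient by $G_0$ is $Y'$ with $\Gamma'$-action conjugated by $\gamma$. Since both covers are $Y'$ with monodromy $w:\pi_1(X)\to\Gamma'$ computed at compatible base points, and $w$ is surjective, conjugation by $\gamma$ must fix $w$, hence act trivially on $\Gamma'$. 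So $\gamma$ centralizes $\Gamma'$.

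I expect this step to be the main obstacle: making precise that base points force $\gamma\in Z_\Gamma(\Gamma')$ rather than merely $N_\Gamma(\Gamma')$. Once it is established, $E_1'\cong\Int_{(1,\gamma)}(E_2')$ as $G'$-bundles. Proposition \ref{prop-action-centralizer} identifies this with $\gamma^{\pm1}$ acting on the twisted equivariant bundle. Finally, Proposition \ref{prop-twisted-equivariant-bundles-one-to-one} (whose morphisms may cover elements of $Z(\Gamma')$, which are already inside the centralizer) shows that the two twisted equivariant bundles lie in the same $Z_{\Gamma}(\Gamma')$-orbit.
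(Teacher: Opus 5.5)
Your proposal follows the paper's proof essentially step for step. Surjectivity goes through the connected component over $Y'$, well-definedness through right multiplication by $(1,z)$, and injectivity through $E_2=\bigsqcup E_2'(1,\gamma)$: stabilizers give $\gamma\in N_\Gamma(\Gamma')$, and you then compare $Y'$ with $Y'\gamma$ as $\Gamma'$-bundles. Your handling of $Z(\Gamma')$-isomorphisms at the end is also correct.

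The step you flagged does not hold as literally stated, though the fix is short. Your $\gamma$ is only determined modulo $\Gamma'$, and monodromy is only defined up to $\Gamma'$-conjugacy. An isomorphism of $\Gamma'$-bundles $\varphi\colon Y'\to Y'\gamma$ sends the base point $y_0$ to some point $y_0\lambda\gamma$ with $\lambda\in\Gamma'$, not necessarily to $y_0\gamma$. The monodromy of $Y'\gamma$ at $y_0\lambda\gamma$ is $(\lambda\gamma)^{-1}w(\,\cdot\,)(\lambda\gamma)$, so comparing monodromies along $\varphi$ and using surjectivity of $w$ gives $\lambda\gamma\in Z_\Gamma(\Gamma')$. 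It does not give $\gamma\in Z_\Gamma(\Gamma')$. Indeed, if $\gamma$ centralizes $\Gamma'$ and $\lambda\in\Gamma'$ is non-central, then $\lambda\gamma$ is an equally valid coset representative that does not centralize $\Gamma'$.

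To close the step, observe that $E_2'(1,\lambda\gamma)=E_2'(1,\gamma)$, because $(1,\lambda\gamma)\in G'(1,\gamma)$ and $E_2'$ is $G'$-stable. You may then replace $\gamma$ by $\lambda\gamma$. In other words, "compatible base points" must be implemented as a re-choice of the coset representative, not as a property of the representative you already have.

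This is exactly what the paper does, in deck-transformation language. It writes the $\Gamma'$-isomorphism $Y'\cong Y'\gamma$ as $y\mapsto y\gamma$ followed by a deck transformation of $Y'\gamma$. Hence the isomorphism is right multiplication by some $z\in\gamma\Gamma'$, and $\Gamma'$-equivariance of $y\mapsto yz$ forces $z\in Z_\Gamma(\Gamma')$, so $E_1'\cong E_2'(1,z)$. With this adjustment your argument is complete.
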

\begin{proof}
Given a $G$-bundle $E_G$ over $X$ such that $E_G/G_0\cong Y$, take a connected component $E\subset E_G$ such that $E/G_0\cong Y'$. Then $E$ is a reduction of structure group to $G'$, and so surjectivity follows by Proposition \ref{prop-twisted-equivariant-bundles-one-to-one}. 

To show that the morphism is well-defined, consider a $G'$-bundle $E$ and $z\in\cent$. The element $s:=(1,z)\in G$ determines an automorphism $\beta:=\Int_{s^{-1}}$ of $G'$ which defines an extension of structure group $\beta(E)$. Let $E_G$ be the extension of structure group of $E$ by the embedding $G'\hookrightarrow G$. Then the stabilizer of $E$ under the $G$-bundle action is equal to $G'$, which implies that the stabilizer of $Es\subset E_G$ is equal to $s^{-1}G' s=G'$. In other words, $Es$ determines a reduction of structure group of $E_G$ to $G'$. Moreover, the map
$$E\to Es;\,e\mapsto es$$
induces an isomorphism of $G'$-bundles $\beta(E)\cong Es$. Indeed,
recall from Section \ref{section-action} that $\beta(E)$ may be regarded as the $G'$-bundle which
has the same total space as $E$ and $G$-action determined by
$$E\times G'\to E;\, (e,g)\mapsto e\beta^{-1}(g).$$
But we have
$$e\beta^{-1}(g)s=esgs^{-1}s=esg,$$
which shows that the induced map $\beta(E)\to Es$ is $ G'$-equivariant. This implies that $E's$, which is a reduction of structure group of $E_G$ to $ G'$, is isomorphic to $\beta(E)$. In other words, $E_G$ is the extension of structure group of $\beta(E)$ by the embedding $ G'\hookrightarrow G$.

It is left to show injectivity. Let $F$ and $F'$ be two ($\theta,c$)-twisted $\Gamma'$-equivariant $G_0$-bundles over $Y'$ and let $E$ and $E'$ be the corresponding $ G'$-bundles over $X$. Since $G_0$ and $Y'$ are connected, both $E$ and $E'$ are connected. Assume that they have the same extension of structure group $ E_G$ to $  G$. Note that $E_G$ has an explicit decomposition into connected components, namely
\begin{equation*}
    E_G=\bigsqcup_{\gamma\Gamma'\in \Gamma/\Gamma'}E(1,\gamma),
\end{equation*}
where each coset in $\Gamma/\Gamma'$ has one and only one representative component in the union. Thus $E'$ must be equal to one of these components, say $E(1,\gamma)$ for some $\gamma\in\Gamma$. If $s:=(1,\gamma)$ then the stabilizer of $E'=Es$ in $G$ is $G'$, hence the stabilizer of $Es/G_0$ is $\Gamma'$, which is identified with the Galois group of $Es$ over $X$. We may identify $Y'$ with the quotient $E/G_0\subset E_G/G_0$, thus fixing a copy of $Y'$ inside $E/G_0\cong Y$. The first observation is that, on the one hand, $Es/G_0=E'/G_0\cong Y'$ and, on the other, $Es/G_0=(E/G_0)\gamma\cong Y'\gamma\subset Y$. Thus we have an isomorphism of $\Gamma'$-bundles $Y'\cong Y'\gamma$. This is the composition of the map sending $y\in Y'$ to $y\gamma\in Y'\gamma$ and an automorphism of $Y'\gamma$ over the identity on $X$, i.e. an element of the Galois group of $Y'\gamma$ over $X$, which is equal to $\Gamma'$. At the end of the day the isomorphism $Y'\cong Y'\gamma$ is given by an element $z\in\Gamma$. Since it is an isomorphism of $\Gamma'$-bundles it must commute with the action of $\Gamma'$, i.e. it must lie in the centralizer $\cent$. Therefore $E'=E(1,z)$ for some element $z\in\cent$, which means that $E'\cong\Int_{(1,z)}^{-1}(E)$ as required.
\end{proof}

\section{Associated bundles}\label{section-associated}

This Section follows Section 3.2 in \cite{GGM}. Fix a (smooth, complex) Lie group $G$. Let $E$ be a principal $G$-bundle over $X$ and $M$ be
a smooth or complex manifold equipped with a ($\theta,c$)-twisted right ($G,\Gamma$)-action (see Definition \ref{def-twisted-action}). 
% In the (most common) case, when the $G$-action on
% $M$ is on the left, we convert it into a right action in the standard
% way, by defining
% \begin{displaymath}
%   m\cdot g = g^{-1}\cdot m.
% \end{displaymath}
We denote by $E(M)$ the orbit space $(E\times M)/G$. In the case
of a left action of $G$ on $M$, we thus have the twisted product
$E(M)=E\times_GM$, which can be viewed as the quotient of
$E\times M$ by the equivalence relation
\begin{equation}\label{equiv-associated}
  (eg, m)\sim(e,g\cdot m)
\end{equation}
for any
$e\in E$, $g\in G$ and $m\in M$.

\begin{remark}
  \label{rem:sections-equivariant-correspondence}
  The smooth (holomorphic) sections $s$ of $E(M)$ are in natural bijection with the
  smooth (holomorphic) maps $\tilde{s}:E\to M$ satisfying the $G$-equivariance
  condition
  \begin{equation}\label{antiequivariant}
    \tilde{s}(eg)=\tilde{s}(e)\cdot g \;\; \mbox{for any}\;\; e\in E\;\; \mbox{and} \;\; g\in G.
  \end{equation}
  % Furthermore, $\tilde{s}$ is smooth if and only if $s$ is smooth.
  To see this, let $\tilde{s}:E\to M$ satisfy
  \eqref{antiequivariant}. Then $(\Id,\tilde{s})\colon E\to E\times M$
  is $G$-equivariant and, therefore, descends to the quotient so that
  we have a commutative diagramme
  \begin{displaymath}
    \begin{CD}
      E @>{(\Id,\tilde{s})}>> E\times M \\
      @VVV @VVV \\
      E/G @>{s}>> (E\times M)/G\ ,
    \end{CD}
  \end{displaymath}
  and $s\colon E/G = X \to E(M) = (E\times M)/G$ is the section
  corresponding to $\tilde{s}$. Conversely, given a section $s$, we
  can recover $\tilde{s}$ by setting $\tilde{s}(e) = m$, where
  $s([e])=[(e,m)]$. This is well-defined because the fibres of
  $E\to X$ are $G$-torsors.
\end{remark}

We view $E$ as the $G$-frame bundle of $E(M)$ in the usual way: an
element $e\in E$ with $\pi(e)=x$ defines the frame
\begin{equation} \label{frame}
\begin{aligned}
  M &\xra{\cong} E(M)_x, \\
  m &\mapsto [e,m].
\end{aligned}
\end{equation}

\begin{proposition}
  \label{prop-associated-bundle-equivariant}
  Let $M$ be a manifold equipped with $(\theta,c)$-twisted right $(G,\Gamma)$-action and
  let $\pi\colon E\to X$ be a  $(\theta,c)$-twisted $\Gamma$-equivariant principal
  $G$-bundle. Then the associated fibre
  bundle with typical fibre $M$,
\begin{align*}
  E(M):=E\times_{G}M \to X;\,
  [e,m] &\mapsto \pi(e),
\end{align*}
defined by the above construction is a $\Gamma$-equivariant fibre bundle.
\end{proposition}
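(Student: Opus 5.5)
The plan is to define the $\Gamma$-action on $E(M)$ directly by the diagonal formula
\begin{equation*}
  [e,m]\cdot\gamma:=[e\cdot\gamma,\,m\cdot\gamma],
\end{equation*}
where $e\cdot\gamma$ is the twisted $\Gamma$-equivariant structure on $E$ and $m\cdot\gamma$ is the twisted $\Gamma$-action on $M$. Three things then need checking: that this is well defined on $G$-orbits, that it covers the action of $\Gamma$ on $X$, and that it is an honest (untwisted) right action. Smoothness or holomorphicity is automatic, since the formula descends from the smooth or holomorphic map $(e,m)\mapsto(e\cdot\gamma,m\cdot\gamma)$ on $E\times M$.

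\textbf{Well-definedness.} We write $E(M)=(E\times M)/G$ with the diagonal right action $(e,m)g=(eg,mg)$, so that $[eg,mg]=[e,m]$. By the first axiom in (\ref{eq-twisted-equivariant-axioms}), applied to both $E$ and $M$,
\begin{equation*}
  (eg\cdot\gamma,\,mg\cdot\gamma)=\bigl((e\cdot\gamma)\theta_\gamma^{-1}(g),\,(m\cdot\gamma)\theta_\gamma^{-1}(g)\bigr).
\end{equation*}
This is the $G$-translate of $(e\cdot\gamma,m\cdot\gamma)$ by $\theta_\gamma^{-1}(g)$, so it defines the same class. The map covers $x\mapsto x\cdot\gamma$ on $X$ because the action on $E$ does, by Definition \ref{def-twisted-equivariant-bundle}.

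\textbf{Action axiom.} This is the one point requiring care, since both actions are only twisted by $c$. By the second axiom,
\begin{equation*}
  ([e,m]\cdot\gamma)\cdot\gamma'=\bigl[(e\,c(\gamma,\gamma'))\cdot\gamma\gamma',\,(m\,c(\gamma,\gamma'))\cdot\gamma\gamma'\bigr].
\end{equation*}
Using the first axiom once more, this equals $[(e\cdot\gamma\gamma')z,\,(m\cdot\gamma\gamma')z]$ with $z=\theta_{\gamma\gamma'}^{-1}(c(\gamma,\gamma'))\in G$. This is the same class as $[e\cdot\gamma\gamma',\,m\cdot\gamma\gamma']=[e,m]\cdot\gamma\gamma'$. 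Thus the cocycle $c$ cancels precisely because it enters both factors in the same way, and this is the key step.

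Finally, $[e,m]\cdot 1=[e,m]$ holds because $c(1,1)=1$ forces the action of $1$ to be the identity: the second axiom with $\gamma=\gamma'=1$ gives that $\cdot 1$ is idempotent, and it is bijective, hence the identity. Invertibility of each $\cdot\gamma$ then follows from the action axiom. On each fibre the map is a diffeomorphism, since in the frames (\ref{frame}) it reads $m\mapsto m\cdot\gamma$. This makes $E(M)\to X$ a $\Gamma$-equivariant fibre bundle.
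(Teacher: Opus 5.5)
Your proof is correct and is essentially the paper's argument: define the diagonal action $[e,m]\cdot\gamma=[e\cdot\gamma,m\cdot\gamma]$, use the first twisted axiom to see it respects $G$-orbits, and observe that the cocycle $c$ enters both factors in the same way and is therefore absorbed in the quotient. Your extra checks fill in details the paper leaves implicit, namely that $\cdot 1$ is the identity (from $c(1,1)=1$ and bijectivity) and the fibrewise description in frames.
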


\begin{proof}
Consider the $\Gamma$-action on $E\times M$.
\begin{displaymath}
  [e,m]\cdot\gamma = [e\cdot\gamma,m\cdot\gamma].
\end{displaymath}
The $G$-equivariance of the $\Gamma$-actions on $E$ and $M$ implies that this action is $G$-equivariant, hence it induces an action on the quotient $E(M)=(E\times M)/G$. Moreover, we have
\begin{equation*}
    ((e,m)\cdot\gamma)\cdot\gamma'=(ec(\gamma,\gamma'),\rho(c(\gamma,\gamma'))^{-1}m)\cdot(\gamma\gamma')=(e,m)\cdot(\gamma\gamma'),
\end{equation*}
where $(e,m)$ is the class of $[e,m]$ in $E(M)$, so this is a genuine group action.
\end{proof}

% \begin{remark}
% If the subgroup of $Z$ generated by the image of  the cocycle $c$ acts trivially on $M$, then $E(M)$ acquires a true $\Gamma$-equivariant structure.
% \end{remark}

\begin{remark}\label{remark-associated-vector-bundle}
An important situation where Proposition \ref{prop-associated-bundle-equivariant} may be applied is the case when an embedding of $G\times_{\theta,c}\Gamma$ in $\GL(n,\C)$ is provided, where the twisted product $G\times_{\theta,c}\Gamma$ is given by Definition \ref{def-twisted-product}. In this case the right action of $\Gamma$ on $\C^n$ given for each $\gamma\in\Gamma$ by multiplying on the left by the image of $(1,\gamma)^{-1}$ in $\GL(n,\C)$ satisfies \ref{eq-action-gamma-V}. Hence a twisted $\Gamma$-equivariant $G$-bundle $E$ provides a $\Gamma$-equivariant vector bundle $E(\C^n)$ of rank $n$
, and conversely an equivariant vector bundle whose bundle of frames has a reduction of structure group to $G$ which is preserved by the $\Gamma$-action provides a twisted $\Gamma$-equivariant $G$-bundle
.
\end{remark}

\begin{remark}
  Note that the map (\ref{frame}) has no equivariance properties with
  respect to $\Gamma$.
\end{remark}

Let $C^\infty(E,M)^{G,\Gamma}$ ($\Hol(E,M)^{G,\Gamma}$) be the space of smooth (holomorphic) maps from $E$ to $M$ which are both $G$ and $\Gamma$-equivariant, and let
$C^{\infty}(X,E(M))^{\Gamma}$ ($H^0(X,E(M))^{\Gamma}$) be the space of $\Gamma$-equivariant smooth (holomorphic)
sections of $E(M)\to X$.
% \footnote{In case we have a left action on $M$ we convert it into a
%   right action in the standard way.}

\begin{proposition}
\label{prop:EM-Gamma-equivariant-sections}
With the above notation, there is a bijection
\begin{displaymath}
  C^\infty(E,M)^{G,\Gamma} \xra{\cong} C^{\infty}(X,E(M))^{\Gamma}\quad(\Hol(E,M)^{G,\Gamma} \xra{\cong} H^0(X,E(M))^{\Gamma})
\end{displaymath}
sending $\tilde{s}\colon E\to M$ to $s\colon X\to E(M)$ defined by
\begin{displaymath}
  s(x) = [e, \tilde{s} (e)]
\end{displaymath}
for any $e\in E$ with $\pi(e)=x\in X$.
The inverse takes a section $s\colon X\to E(M)$ to $\tilde{s}\colon
E\to M$ defined by
\begin{displaymath}
  \tilde{s}(e) = m,\quad\text{where $s(\pi{(e)}) = [e,m]$}.
\end{displaymath}
\end{proposition}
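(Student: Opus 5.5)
The plan is to reduce the statement to Remark \ref{rem:sections-equivariant-correspondence}, which already gives a bijection between all smooth (holomorphic) $G$-equivariant maps $\tilde s:E\to M$ and all smooth (holomorphic) sections $s$ of $E(M)$, via $s(\pi(e))=[e,\tilde s(e)]$. The maps in the statement are exactly this correspondence and its inverse. So the task is to show that, under it, the $\Gamma$-equivariant maps correspond exactly to the $\Gamma$-equivariant sections. Here $E(M)$ carries the $\Gamma$-action of Proposition \ref{prop-associated-bundle-equivariant}, $[e,m]\cdot\gamma=[e\cdot\gamma,m\cdot\gamma]$. A section $s$ is $\Gamma$-equivariant when $s(x\cdot\gamma)=s(x)\cdot\gamma$, and $\tilde s$ is $\Gamma$-equivariant when $\tilde s(e\cdot\gamma)=\tilde s(e)\cdot\gamma$.

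For the forward direction, suppose $\tilde s$ is both $G$- and $\Gamma$-equivariant, and let $x=\pi(e)$. Since the twisted action on $E$ covers the action on $X$, we have $\pi(e\cdot\gamma)=x\cdot\gamma$. Then
\begin{equation*}
s(x\cdot\gamma)=[e\cdot\gamma,\tilde s(e\cdot\gamma)]=[e\cdot\gamma,\tilde s(e)\cdot\gamma]=[e,\tilde s(e)]\cdot\gamma=s(x)\cdot\gamma .
\end{equation*}
This computation is independent of the choice of $e$ in the fibre, because $s$ is already well defined by the Remark.

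For the converse, take a $\Gamma$-equivariant section $s$ with $s(\pi(e))=[e,\tilde s(e)]$. Then
\begin{equation*}
[e\cdot\gamma,\tilde s(e\cdot\gamma)]=s(\pi(e)\cdot\gamma)=s(\pi(e))\cdot\gamma=[e\cdot\gamma,\tilde s(e)\cdot\gamma].
\end{equation*}
The fibres of $E$ are $G$-torsors, so the frame map \eqref{frame} at the point $e\cdot\gamma$ is a bijection $M\to E(M)_{x\cdot\gamma}$. Hence $\tilde s(e\cdot\gamma)=\tilde s(e)\cdot\gamma$. The regularity statements in both directions are inherited directly from the Remark.

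The only point needing care, and the main (mild) obstacle, is that the $\Gamma$-action on $E(M)$ is well defined. This holds because the twisted actions on $E$ and $M$ are compatible with $G$ through the same twist $\theta_\gamma^{-1}$: the class of $(eg,m)$ and that of $(e,g\cdot m)$ are sent to classes that still agree after applying $\gamma$. The cocycle $c$ cancels in the composition law, as shown in Proposition \ref{prop-associated-bundle-equivariant}. That proposition can be invoked as given, so the argument above only ever uses the identity $[e,m]\cdot\gamma=[e\cdot\gamma,m\cdot\gamma]$ on representatives.
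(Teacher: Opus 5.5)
Your proposal is correct and follows the same route as the paper. The paper also reduces to the $G$-equivariant correspondence of Remark~\ref{rem:sections-equivariant-correspondence} and then checks $\Gamma$-equivariance directly from the definition $[e,m]\cdot\gamma=[e\cdot\gamma,m\cdot\gamma]$ of the action on $E(M)$; you simply write out both directions, including injectivity of the frame map for the converse, which the paper leaves implicit.
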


\begin{proof}
  The correspondence between $G$-equivariant maps $E\to M$ and
  sections of $E(M)\to X$ is given by Remark~\ref{rem:sections-equivariant-correspondence}
and the $\Gamma$-equivariance statement follows from the definition of the $\Gamma$-action on $E(M)$.
\end{proof}

\newpage

\chapter{Twisted equivariant structures and Higgs pairs}\label{chapter-twisted-equivariant-higgs-pairs}

We now apply the formalism of Chapter \ref{chapter-twisted-equivariant-bundles} to Higgs pairs. Throughout this chapter $X$ will be a compact Riemann surface with canonical bundle $K_X$.

\section{Twisted equivariant Higgs pairs}\label{section-twisted-equivariant-higgs-pairs}

This Section follows \cite[Section 3.2]{GGM} and \cite{PBI}. Let $G$ be a connected reductive complex Lie group with centre $Z$. Let $V$ be a complex vector space equipped with a holomorphic representation
$$\rho:G\to\GL(V).$$
This defines a left action of $G$ on $V$, thus a right action:
\begin{equation*}
    V\times G\to V;\,(v,g)\mapsto \rho(g)^{-1}v.
\end{equation*}
Recall that we have a notion of $(G,V)$-Higgs pair, see Definition \ref{def-higgs-pair}.

Consider a finite group $\Gamma$ equipped with a homomorphism $\eta:\Gamma\to \Aut(X)$ to the group of holomorphic automorphisms of $X$. We have a right action of $\Gamma$ on $X$ such that $\gamma\in\Gamma$ sends $x\in X$ to $\etag^{-1}(x)$. Take a homomorphism 
$$\theta:\Gamma\to\Aut(G);\;\gamma\mapsto\tg$$
and a 2-cocycle $c\in Z^2_{\theta}(\Gamma,Z)$ (see Definition \ref{def-2-cocycle}). Fix a map $\rhog:\Gamma\to\GL(V)$. We will sometimes write
$v\cdot\rhog(\gamma):=\rhog(\gamma)^{-1}v$.
Assume that $\rhog$ determines a $(\theta,c)$-twisted right $\Gamma$-action on $V$ (Definition \ref{def-twisted-action}), i.e. it satisfies
\begin{equation}\label{eq-action-gamma-V}
   (\rho(g)v)\cdotv\gamma=\rho(\theta_{\gamma^{-1}}(g))(v\cdotv\gamma) \andd (v\cdotv\gamma)\cdotv{\gamma'}=(\rho(c(\gamma,\gamma'))^{-1}v)\cdotv{\gamma\gamma'}
\end{equation}
for each $v\in V$, $g\in G$ and $\gamma,\gamma'\in\Gamma$.

Let $E$ be a $(\theta,c)$-twisted $\Gamma$-equivariant $G$-bundle over $X$. By Proposition \ref{prop-associated-bundle-equivariant} the vector bundle $E(V)$ inherits a right $\Gamma$-action descending to the action of $\Gamma$ on $X$, hence $E(V)\otimes K_X$ is also $\Gamma$-equivariant with $\gamma\in\Gamma$ sending
$(e,v)\otimes k\in E(V)\otimes K_X$ to $(e\cdot\gamma,v\cdotv\gamma)\otimes(\gamma^*k).$

\begin{definition}\label{mega}
A \textbf{$(\theta,c,\rhog)$-twisted $\Gamma$-equivariant $(G,V)$-Higgs pair} over $X$ is a $(G,V)$-Higgs pair $(E,\phi)$ over $X$ equipped with a holomorphic $(\theta,c)$-twisted $\Gamma$-equivariant action $\cdot$ on $E$ ---i.e. satisfying  (\ref{eq-twisted-equivariant-axioms})--- such that $\phi$ is a $\Gamma$-invariant section. To emphasize the action of $\Gamma$ we sometimes denote the object by $(E,\cdot,\phi)$. When $\theta,c,\rhog$ or $\Gamma$ are clear from the context we may omit them from the notation. If $V=\lie g$, $\rho=\Ad$ and the action of $\Gamma$ on $\lie g$ is given by $\theta^{-1}$, we call $(E,\phi)$ a \textbf{$(\theta,c)$-twisted $\Gamma$-equivariant $G$-Higgs bundle}.

A \textbf{homomorphism} between $(\theta,c,\rhog)$-twisted $\Gamma$-equivariant $(G,V)$-Higgs pairs over $X$ is a $\Gamma$-equivariant morphism of Higgs pairs. There is a notion of $Z(\Gamma)$ isomorphism as in Section \ref{section-twisted-action}, namely a $Z(\Gamma)$-isomorphism of twisted equivariant bundles preserving the Higgs field.
\end{definition}

\begin{remark}\label{remark-exact-sequence-aut}
Let $(E,\phi)$ be a $(G,V)$-Higgs pair. Let $\Aut(E,\phi)$ be the group of automorphisms of $(E,\phi)$ covering the 
identity of $X$, and let 
$\Aut_{\Gamma,\eta,\theta}(E,\phi)$ be the group of biholomorphic maps 
$f:E\to E$ preserving $\phi$ (according to the chosen action $\rhog$ of $\Gamma$ on $V$), so that $f$ covers the automorphism $\eta^{-1}_\gamma:X\to X$ for some $\gamma\in \Gamma$ and satisfies that $f(eg)=f(e)\tg^{-1}(g)$ for each $e\in E$. Given $f,f'\in \Aut_{\Gamma,\eta,\theta}(E)$, their group product is $ff'=f'\circ f$.
There is  an exact
sequence
\begin{equation}\label{exact-aut}
1\to \Aut(E,\phi)\to \Aut_{\Gamma,\eta,\theta}(E,\phi) \to \Gamma,
\end{equation}
where the group multiplication on $\Gamma$ is transposed, i.e. the product of $\gamma$ and $\gamma'\in\Gamma$ is $\gamma'\gamma$.
A $(\theta,c,\rhog)$-twisted $\Gamma$-equivariant structure on $E$ is simply
a $c$-twisted lift of (\ref{exact-aut}), i.e. a map $\Gamma\to \Aut_{\Gamma,\eta,\theta}(E,\phi)$ satisfying the second equation of (\ref{eq-twisted-equivariant-axioms}).
\end{remark}

We have analogous results to Propositions \ref{prop-different-theta} and \ref{cor-different-c} in this context. Let $\theta'=\Int_s\theta$, where $\Int_s\in Z^1_{\theta}(\Gamma,\Int(G))$, and define $c_s\in Z^2_{\theta}(\Gamma,Z)$ as in (\ref{eq-def-cs}). By Lemma \ref{lemma-lifts-vs-non-abelian-cohomology} $\theta'$ is a homomorphism. Define a map $\rhog^s:=\rho(s)\rhog: \Gamma\to\GL(V)$, so that we have a right action of $\Gamma$ on $V$:
\begin{equation}\label{eq-def-rhos}
    V\times\Gamma\to V;\,(v,\gamma)\mapsto v\cdot\rhog^s(\gamma):=(\rho(\sg)^{-1}v)\cdot\rhog(\gamma)=(\rho(\sg)\rhog(\gamma))^{-1}v.
\end{equation}
This is a ($\theta',cc_s$)-twisted right action:
\begin{align*}
    (\rho(g)v)\cdot\rhog^s(\gamma)&=(\rho(\sg)^{-1}\rho(g)v)\cdot\rhog(\gamma)\\
    &=(\rho(\Int_{\sg}^{-1}(g))\rho(\sg)^{-1}v)\cdot\rhog(\gamma)\\
    &=\rho(\tg^{-1}\Int_{\sg}^{-1}(g))((\rho(\sg)^{-1}v)\cdot\rhog(\gamma))\\
    &=\rho((\Int_{\sg}\tg)^{-1}(g))(v\cdot\rhog^s(\gamma))\\
    &=\rho(\tg'^{-1}(g))(v\cdot\rhog^s(\gamma))
\end{align*}
and
\begin{align*}
    (v\cdot\rhog^s(\gamma))\cdot\rhog^s(\gamma')&
    =(\rho(s_{\gamma'})^{-1}((\rho(\sg)^{-1}v)\cdot\rhog(\gamma)))\cdot\rhog(\gamma')\\&
    =((\rho(\tg(s_{\gamma'}))^{-1}\rho(\sg)^{-1}v)\cdot\rhog(\gamma))\cdot\rhog(\gamma')\\&
    =(\rho(c(\gamma,\gamma'))^{-1}\rho(\sg\tg(s_{\gamma'}))^{-1}v)\cdot\rhog(\gamma\gamma')\\&
    =(\rho(s_{\gamma\gamma'})^{-1}\rho(c(\gamma,\gamma')\sg\tg(s_{\gamma'})s_{\gamma\gamma'}^{-1})^{-1}v)\cdot\rhog(\gamma\gamma')\\&
    =(\rho(cc_s(\gamma,\gamma'))^{-1}v)\cdot\rhog^s(\gamma\gamma')
\end{align*}
for every $v\in V$, $\gamma,\gamma'\in\Gamma$ and $g\in G$, so that $\rhog^s$ satisfies (\ref{eq-action-gamma-V}) as required.

\begin{proposition}\label{prop-different-theta-higgs}
Let $s:\Gamma\to\Int(G)$ be a map such that $\Int_s\in Z^1_{\theta}(\Gamma,\Int(G))$. Let $\cc{}(\theta,c,\rhog)$ be the category of $(\theta,c,\rhog)$-twisted $\Gamma$-equivariant $(G,V)$-Higgs pairs. Then the categories $\cc{}(\theta,c,\rhog)$ and $\cc{}(\Int_s\theta,cc_s,\rhog^s)$ are equivalent.
\end{proposition}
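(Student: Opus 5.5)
The plan is to upgrade the equivalence of Proposition \ref{prop-different-theta} by carrying the Higgs field along. On objects, send a $(\theta,c,\rhog)$-twisted $\Gamma$-equivariant $(G,V)$-Higgs pair $(E,\cdot,\phi)$ to $(E,*,\phi)$, where $e*\gamma:=(e\sg)\cdot\gamma$ is the $(\Int_s\theta,cc_s)$-twisted action built in that proposition. The underlying $(G,V)$-Higgs pair, and in particular the bundle $E(V)\otimes K_X$ and the section $\phi$, stay the same. On morphisms, the functor is the identity on underlying maps. The proof of Proposition \ref{prop-different-theta} already shows that a map is $\cdot$-equivariant if and only if it is $*$-equivariant. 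Preserving $\phi$ does not involve the $\Gamma$-action at all, so the functor is fully faithful on Higgs pairs, and its inverse is given by multiplying by $s(\bullet)^{-1}$ as before.

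What remains is to check that $\phi$ is $\Gamma$-invariant for the new action if and only if it is invariant for the old one. For this I will compare the two induced $\Gamma$-actions on $E(V)$ given by Proposition \ref{prop-associated-bundle-equivariant}: the one from $(\cdot,\rhog)$ and the one from $(*,\rhog^s)$. I claim they coincide. Take a point $[e,v]\in E(V)$. Under the new data, $\gamma$ sends it to
$$[(e\sg)\cdot\gamma,\;v\cdot\rhog^s(\gamma)]=[(e\sg)\cdot\gamma,\;(\rho(\sg)^{-1}v)\cdot\rhog(\gamma)],$$
using (\ref{eq-def-rhos}). This equals $[e\sg,\rho(\sg)^{-1}v]\cdot\gamma$ for the old action. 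By the relation (\ref{equiv-associated}) in the associated bundle, $[e\sg,\rho(\sg)^{-1}v]=[e,v]$. So both actions send $[e,v]$ to $[e\cdot\gamma, v\cdot\rhog(\gamma)]$. The twist by $K_X$ is $\gamma^*$ in both cases, so the actions on $E(V)\otimes K_X$ agree as well, and the sets of invariant sections are identical.

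The earlier computation after (\ref{eq-def-rhos}) shows that $\rhog^s$ is a genuine $(\Int_s\theta,cc_s)$-twisted action, so the target category is well defined. The only delicate point is the identification $[e\sg,\rho(\sg)^{-1}v]=[e,v]$. Here I must be careful with the convention that the right action of $G$ on $V$ is $v\cdot g=\rho(g)^{-1}v$, so that the associated bundle relation reads $(eg,\rho(g)^{-1}v)\sim(e,v)$. Once the conventions are fixed this is immediate, and I expect no real obstacle beyond that bookkeeping.
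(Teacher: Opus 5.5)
Your proposal is correct and matches the paper's proof: transport the twisted action via Proposition \ref{prop-different-theta}, then check that the induced $\Gamma$-actions on $E(V)\otimes K_X$ coincide, so $\Gamma$-invariance of $\phi$ is unchanged.

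The only difference is cosmetic. You absorb $s_{\gamma}$ through the associated-bundle relation before applying $\gamma$, relying on the old action being well defined on $E(V)$. The paper instead first commutes $s_{\gamma}$ past $\gamma$ using the twisted axioms, and then absorbs $\theta_{\gamma}^{-1}(s_{\gamma})$ at the point $e\cdot\gamma$.
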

\begin{proof}
    Let $(E,\cdot,\phi)$ be a \twistedr $(G,V)$-Higgs pair. By the proof of Proposition \ref{prop-different-theta} we may define a new $\Gamma$-action $*$ on $E$ making $(E,*)$ a $(\theta',cc_s)$-twisted $\Gamma$-equivariant $G$-bundle. It is left to show that $\phi$ is $\Gamma$-invariant with respect to $*$ and $\rhog^s$. If $\phi$ is locally equal to $(e,v)\otimes k\in E(V)\otimes K_X$, we have
    \begin{align*}
        (e*\gamma,v\cdot\rhog^s(\gamma))\otimes \gamma^*k&
        =((e\sg)\cdot\gamma,(\rho(\sg)^{-1}v)\cdot\rhog(\gamma))\otimes \gamma^*k\\&
        =((e\cdot\gamma)\tg^{-1}(\sg),\rho(\tg^{-1}(\sg))^{-1}(v\cdot\rhog(\gamma)))\otimes \gamma^*k\\&
        =(e\cdot\gamma,v\cdot\rhog(\gamma))\otimes\gamma^*k,
    \end{align*}
which, since $\phi$ is invariant with respect to $\cdot$ and $\rhog$, is also equal to $\phi$, as required.
\end{proof}

\begin{corollary}\label{cor-different-c-higgs}
    If $c$ and $c'$ are cohomologous, i.e. there is a map $s:\Gamma\to Z$ such that $c'(\gamma,\gamma')=c(\gamma,\gamma')s_{\gamma\gamma'}^{-1}s_{\gamma}\tg(s_{\gamma'})$, there is an equivalence of categories between $\cc{}(\theta,c,\rhog)$ and $\cc{}(\theta,c',\rhog^{s})$.
\end{corollary}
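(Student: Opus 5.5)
The plan is to reduce Corollary \ref{cor-different-c-higgs} to Proposition \ref{prop-different-theta-higgs}, in the same way Corollary \ref{cor-different-c} was deduced from Proposition \ref{prop-different-theta}. Given $s:\Gamma\to Z$ with $c'(\gamma,\gamma')=c(\gamma,\gamma')s_{\gamma\gamma'}^{-1}s_{\gamma}\tg(s_{\gamma'})$, we may normalise $s(1)=1$, which follows from the cochain conditions $c(1,1)=c'(1,1)=1$. We then regard $s$ as a map $\Gamma\to G$ whose image lies in the centre. Then $\Int_{\sg}=\Id$ for every $\gamma$, so $\Int_s$ is the trivial map $\Gamma\to\Int(G)$. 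This is trivially a 1-cocycle in $Z^1_{\theta}(\Gamma,\Int(G))$, and $\Int_s\theta=\theta$.

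Next we compare $c_s$, defined in (\ref{eq-def-cs}), with the given relation. We have $c_s(\gamma,\gamma')=s_{\gamma}\tg(s_{\gamma'})s_{\gamma\gamma'}^{-1}$. All three factors lie in $Z$, which is abelian, so this equals $s_{\gamma\gamma'}^{-1}s_{\gamma}\tg(s_{\gamma'})$. Hence $cc_s=c'$.

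Applying Proposition \ref{prop-different-theta-higgs} with this $s$ gives an equivalence between $\cc{}(\theta,c,\rhog)$ and $\cc{}(\Int_s\theta,cc_s,\rhog^s)=\cc{}(\theta,c',\rhog^s)$. Here $\rhog^s=\rho(s)\rhog$ is defined by (\ref{eq-def-rhos}). Explicitly, the functor keeps the Higgs field and replaces the action by $e*\gamma=(e\sg)\cdot\gamma$.

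The only point requiring care is that Proposition \ref{prop-different-theta-higgs} is stated for $s$ with values in $G$ (its statement says ``$s:\Gamma\to\Int(G)$'', but the proof uses $\sg\in G$). So we must check that a central-valued $s$ is admissible. This holds because the proof of Proposition \ref{prop-different-theta-higgs} uses only $\Int_s\in Z^1_\theta(\Gamma,\Int(G))$ and $s(1)=1$, and both are satisfied here. Note also that $\rhog^s$ generally differs from $\rhog$ unless $Z$ acts trivially on $V$; this is why the corollary states the target category with $\rhog^s$.
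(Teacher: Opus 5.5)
Your proposal is correct and is the paper's own argument: the paper deduces the corollary in one line from Proposition \ref{prop-different-theta-higgs} and (\ref{eq-def-cs}), by treating the central-valued $s$ as a map to $G$ with $\Int_s$ trivial and $cc_s=c'$, exactly as you do. Your remark on the typo $s:\Gamma\to\Int(G)$ in that proposition is accurate, and so is your remark on $s_1=1$, which is in fact forced because $c'(1,1)=c(1,1)s_1$.
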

\begin{proof}
Follows directly from Proposition \ref{prop-different-theta-higgs} and (\ref{eq-def-cs}).
\end{proof}

By Proposition \ref{prop-different-theta-higgs} we may assume that the lift $\theta$ is chosen so that it preserves a maximal compact subgroup $K$ of $G_0$. We may define notions of (poly, semi)stability by changing slightly Definition \ref{def-stability-higgs-pair} \cite{PBI}: first note that if $s$ is in the $\Gamma$-invariant part $i\lie k^{\Gamma}$ of $i\lie k$ then $P_s$ is $\Gamma$-invariant and so is its Levi subgroup $L_s$, since
\begin{equation}\label{eq-def-levi}
    L_s=\{g\in G\suhthat \lim_{t\to\infty} e^{ts}ge^{-ts}=g\}.
\end{equation}
Hence $\theta$ induces $\Gamma$-actions on $G/P_s$ and $P_s/L_s$. If $(E,\cdot)$ is a $(\theta,c)$-twisted $\Gamma$-equivariant bundle then combining the actions $\cdot$ and $\theta$ makes the fibre bundle $E(G/P_s)$ into a $\Gamma$-equivariant bundle by Proposition \ref{prop-associated-bundle-equivariant}, as $Z$ is contained in $P_s$. This lets us introduce the corresponding space of $\Gamma$-invariant reductions $H^0(X,E(G/P_s))^{\Gamma}$. Given such a reduction $\sigma$, the corresponding $P_s$-bundle $E_{\sigma}$ is $(\theta,c)$-twisted $\Gamma$-equivariant. Moreover, since $Z$ is contained in $L_s$, we also have a $\Gamma$-equivariant associated bundle $E(P_s/L_s)$ with a corresponding space of $\Gamma$-invariant reductions $H^0(X,E_{\sigma}(P_s/L_s))^{\Gamma}$.

For each $s\in i\lie k$ we may define
$$V_s:=\{v\in V\suhthat \rho(e^{ts})v\,\text{remains bounded as}\;t\to\infty\}$$
and
\begin{equation*}
    V^0_s=\{v\in V\suhthat \lim_{t\to\infty} \rho(e^{ts})v=v\}.
\end{equation*}
Given a reduction $\sigma\in H^0(X,E(G/P_s))$, we may define a sub-bundle $E(V)_{\sigma,s}:=E_{\sigma}\times_{P_s}V_s\subseteq E(V)$. Given a further reduction $\sigma'\in H^0(X,E_{\sigma}(P_s/L_s))$, we also have a sub-bundle $E(V)^0_{\sigma',s}:=E_{\sigma'}\times_{L_s}V^0_s\subseteq E(V)_{\sigma,s}$.

\begin{definition}\label{definition-stable-twisted-equivariant}
Let $z\in i\zk^{\Gamma}$, where $\zk$ is the centre of $\lie k$. A $(\theta,c,\rhog)$-twisted $\Gamma$-equivariant $(G,V)$-Higgs pair $(E,\phi)$ over $X$ is:
\begin{itemize}
    \item \textbf{$z$-semistable} if $\deg E(\sigma,s)\ge z$ for any $s\in i\lie k^{\Gamma}$ and any $\Gamma$-invariant reduction of structure group $\sigma\in H^0(X,E(G/P_s))^{\Gamma}$ such that $\phi\in H^0(X,E(V)_{\sigma,s}\otimes K_X)$.
    
    \item \textbf{$z$-stable} if $\deg E(\sigma,s)> z$ for any $s\in i\lie k^{\Gamma}$ and any reduction of structure group $\sigma\in H^0(X,E(G/P_s))^{\Gamma}$ such that $\phi\in H^0(X,E(V)_{\sigma,s}\otimes K_X)$.
    
    \item \textbf{$z$-polystable} if it is $z$-semistable and, if $\deg E(\sigma,s)=z$ for some $s\in i\lie k^{\Gamma}$ and a reduction $\sigma\in H^0(X,E(G/P_s))^{\Gamma}$ such that $\phi\in H^0(X,E(V)_{\sigma,s}\otimes K_X)$, there is a further $\Gamma$-invariant holomorphic reduction of structure group $\sigma'\in H^0(X,E_{\sigma}(P_s/L_s))^{\Gamma}$ with $\phi\in H^0(X,E(V)^0_{\sigma',s}\otimes K_X)$.

If $z=0$ we omit it.
\end{itemize}
\end{definition}

\begin{remark}
    When $V=\lie g$, $\rho$ is the adjoint representation of $G$ on $\lie g$ and $\rhog=\theta$, we have $E(V)_{\sigma,s}=E(\lie p_s)$ and $E(V)^0_{\sigma',s}=E(\lie p_s)$ in Definition \ref{definition-stable-twisted-equivariant}.
\end{remark}

There is a moduli space $\cM_z(X,G,\Gamma,\theta,c,V,\rhog)$ which classifies isomorphism classes of $z$-polystable $(\theta,c,\rhog)$-twisted $\Gamma$-equivariant $(G,V)$-Higgs pairs. We denote the union of all these varieties corresponding to different $z\in i\zk^{\Gamma}$ by $\cM(X,G,\Gamma,\theta,c,V,\rhog)$. To our knowledge this has not been constructed in the literature, but we plan to address this in an upcoming paper. 

If $\Gamma$ is trivial we obtain the moduli space of $(G,V)$-Higgs pairs over $X$, which we denote $\cM(X,G,V)$. If $V=\lie g$, $\rho$ is the adjoint representation of $G$ on $\lie g$ and $\rhog=\theta$, we obtain the moduli space of \textbf{$(\theta,c)$-twisted $\Gamma$-equivariant $G$-Higgs bundles}, which we write $\cM(X,G,\Gamma,\theta,c)$. 
Given a character 
\begin{equation*}
    \mu:\Gamma\to\C^*;\,\gamma\mapsto\mug
\end{equation*}
of $\Gamma$, there is an alternative $(\theta,c)$-twisted action of $\Gamma$ on $\lie g$ given by $\rhomu:=\mug^{-1}\theta$. It is clear that the following categories are equivalent:
\begin{enumerate}
    \item The category of \textbf{$(\theta,c,\rhomu)$-twisted $\Gamma$-equivariant $G$-Higgs bundles}.
    \item The category of \textbf{$(\theta,c,\mu)$-twisted $\Gamma$-equivariant $G$-Higgs bundles}, whose objects are pairs $(E,\phi)$ consisting of a $G$-bundle $E$ equipped with a $(\theta,c)$-twisted $\Gamma$-equivariant action $\cdot$ and a Higgs field $\phi\in H^0(X,E(\lie g)\otimes K_X)$ fitting in the following diagramme:
    $$
    \begin{matrix}\label{higa}
    E(\mathfrak{g})\otimes K_X & \stackrel{(\cdot,\tg^{-1})\otimes\etag^{*}}{\longrightarrow} & 
    E(\mathfrak{g})\otimes K_X\\
    ~\Big\uparrow\varphi && ~\,\text{  }~\,\text{ }\Big\uparrow \mug\varphi\\
    X & \stackrel{\etag^{-1}}{\longrightarrow} & X
    \end{matrix},
    $$
    and whose morphisms are $\Gamma$-equivariant homomorphisms of $G$-bundles. By abuse of notation $(\cdot,\tg^{-1})$ denotes the group action of $\Gamma$ on $E(\lie g)$ induced by the corresponding $(\theta,c)$-twisted action on $E\times\lie g$.
\end{enumerate}
Via this equivalence of categories, we may construct a moduli space $\cM(X,G,\Gamma,\theta,c,\mu)$ of $(\theta,c,\mu)$-twisted $\Gamma$-equivariant Higgs bundles.

The notions of (poly,semi)stability for $(\theta,c,\rhog)$-twisted $\Gamma$-equivariant $(G,V)$-Higgs pairs also restrict to notions for $(\theta,c)$-twisted $\Gamma$-equivariant $G$-bundles.
There is a moduli space classifying isomorphism classes of $z$-polystable $(\theta,c)$-twisted $\Gamma$-equivariant $G$-bundles over $X$ for every $z\in i\zk^{\Gamma}$, which we denote by $M(X,G,\Gamma,\theta,c)$.

\section{Non-abelian Hodge correspondence for twisted equivariant Higgs bundles}\label{section-twisted-equivariant-higgs-pairs-and-hitchin-equations}

With notation as in Section \ref{section-twisted-equivariant-higgs-pairs} let $K\subset G$ be a $\Gamma$-invariant maximal compact subgroup
of $G$, where $\Gamma$ acts on $G$ via $\theta$. Let $(E,\phi)$ be a $(\theta,c,\rhog)$-twisted $\Gamma$-equivariant $(G,V)$-Higgs bundle on $X$. Let $h\in\Omega^0(X,E(G/K))^{\Gamma}$ be a $\Gamma$-invariant reduction of structure group of $E$ from $G$
to $K$, where $\Gamma$ acts equivariantly on $E(G/K)$ by Proposition \ref{prop-associated-bundle-equivariant}. Let $F_h$ be the curvature of  the corresponding Chern connection.
 Let $\sigma_h:\Omega^{1,0}(X,E(V))\to \Omega^{0,1}(X,E(V))$
be the $\C$-antilinear map defined by the reduction $h$ and the conjugation 
between $(1,0)$- and $(0,1)$-forms on $X$.
Consider the {\bf Hitchin equation} 
\begin{equation}\label{hitchin-equation-equivariant}
F_h+[\phi,\sigma_h(\phi)]=-2\pi iz\omega,
\end{equation}
where $z\in i\zk$ and $\omega$ is a Kähler form of $X$ with total volume 1.
One has 
the following (see \cite{hitchin1987,simpson,PBI}). 
\begin{theorem}\label{EH1-equivariant}
Let $(E,\phi)$ be a $(\theta,c,\rhog)$-twisted $\Gamma$-equivariant $(G,V)$-Higgs pair on $X$.
Then $(E,\phi)$ is $z$-polystable if and only if the $(G,V)$-Higgs pair $(E,\phi)$ 
admits a $\Gamma$-equivariant metric $h$ satisfying (\ref{hitchin-equation}). 
\end{theorem}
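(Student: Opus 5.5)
The plan is to reduce Theorem \ref{EH1-equivariant} to the non-equivariant Hitchin--Kobayashi correspondence, Theorem \ref{EH1}, which holds for $(G,V)$-Higgs pairs by \cite{PBI}. The two key inputs are uniqueness of the solution of the Hitchin equation, and an averaging argument using finiteness of $\Gamma$. Two preliminary observations come first. By Proposition \ref{prop-different-theta-higgs}, we may choose $\theta$ so that it preserves $K$. Since $Z$ acts on $G/K$ through $Z\cap K$ and commutes with everything, the twisted action of $\Gamma$ on $E(G/K)$ is a genuine action (Proposition \ref{prop-associated-bundle-equivariant}). This makes the space of $\Gamma$-invariant metrics meaningful, and $\Gamma$ acts on the set of all metrics by $h\mapsto h\cdot\gamma$.

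\emph{The "if" direction.} Suppose $h$ is a $\Gamma$-invariant solution. Forgetting $\Gamma$, Theorem \ref{EH1} shows that $(E,\phi)$ is $z$-polystable as an ordinary pair. Twisted-equivariant polystability involves only $\Gamma$-invariant $s\in i\lie k^{\Gamma}$ and $\Gamma$-invariant reductions, so semistability is immediate. For the polystability clause, take a $\Gamma$-invariant reduction $\sigma$ with $\deg E(\sigma,s)=\pair{z}{s}$. The standard argument (integrating the Chern--Weil formula for $\deg E(\sigma,s)$ against the Hitchin equation) shows that the second fundamental form of $\sigma$ with respect to $h$ vanishes, and that the $h$-orthogonal complement gives a holomorphic Levi reduction $\sigma'$ containing $\phi$. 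Since $h$, $\sigma$, $\phi$ and the $\Gamma$-action on $V$ are all compatible, this $\sigma'$ is canonically determined by $(h,\sigma)$ and is therefore $\Gamma$-invariant.

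\emph{The "only if" direction, which I expect to be the main obstacle.} The key difficulty is that $\Gamma$-equivariant polystability need not imply ordinary polystability a priori, since destabilizing reductions need not be $\Gamma$-invariant. The first step is to show semistability in the ordinary sense. Take the Harder--Narasimhan-type canonical maximally destabilizing reduction $(s,\sigma)$, normalized via the Kempf--Ness/Atiyah--Bott uniqueness. By uniqueness, $\gamma$ maps it to the canonical reduction of $(E,\phi)\cdot\gamma\cong(E,\phi)$, and the action on $s$ is through $\theta_\gamma$, which preserves $K$. The canonical reduction is therefore $\Gamma$-invariant with $s\in i\lie k^{\Gamma}$. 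Hence equivariant semistability forces ordinary semistability.

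For polystability I would take the ordinary Jordan--Hölder/socle reduction, which is again canonical (the maximal polystable graded object, as in \cite{PBI}), and argue in the same way. Alternatively, one can avoid this step and prove existence directly by running the Donaldson/Simpson heat flow, or the continuity method, starting from a $\Gamma$-invariant initial metric. The flow is $\Gamma$-equivariant, so it preserves invariance. The properness of the Donaldson functional restricted to $\Gamma$-invariant metrics follows from equivariant stability, because the limiting destabilizing $s$ produced by the usual contradiction argument is $\Gamma$-invariant when all metrics involved are.

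Once a solution $h$ exists, uniqueness finishes the proof. Solutions are unique up to the action of $\Aut(E,\phi)$, and in the stable case uniqueness is exact. Since $h\cdot\gamma$ is again a solution, the solutions form a $\Gamma$-stable set; this set is a totally geodesic, contractible orbit of the centralizer, with nonpositive curvature. Averaging over the finite group $\Gamma$, for instance by taking the barycenter in this space, produces a $\Gamma$-invariant solution.
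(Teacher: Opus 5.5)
The paper gives no proof of this statement; it quotes it from \cite{hitchin1987,simpson,PBI}. Your ``if'' direction is fine. The tests in Definition~\ref{definition-stable-twisted-equivariant} are a subset of the ordinary ones, and the Levi reduction built from $E_h\cap E_\sigma$ is $\Gamma$-invariant because $h$, $\sigma$ and $s\in i\mathfrak{k}^{\Gamma}$ are.

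The gap is in the ``only if'' direction, at the sentence ``the canonical reduction is therefore $\Gamma$-invariant with $s\in i\mathfrak{k}^{\Gamma}$''. Uniqueness of the Harder--Narasimhan (Kempf-type) reduction is uniqueness of an intrinsic object: a $K$-equivariant map $\tilde s\colon E_h\to i\mathfrak{k}$ with values in a single orbit $K\cdot s$, i.e.\ a parabolic subgroup scheme with weights. The element $s$ itself is only determined up to that orbit. Invariance gives $\tilde s(e\cdot\gamma)=d\theta_\gamma^{-1}(\tilde s(e))$, so the orbit $K\cdot s$ is $d\theta(\Gamma)$-stable. But to get a pair $(s,\sigma)$ that the definition lets you test, you need a $d\theta(\Gamma)$-fixed point on $K\cdot s$; then $\sigma=\{e\colon\tilde s(e)=s\}$ would be $\Gamma$-invariant. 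A finite group acting on a flag manifold need not have such a fixed point. Your heat-flow alternative breaks at the same place: the weak limit $u_\infty$ is a $\Gamma$-invariant section of $E_h(i\mathfrak{k})$, not an element of $i\mathfrak{k}^{\Gamma}$.

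This cannot be repaired with the definition as written, because the implication is then false. Take:
\begin{itemize}
\item $G=\SL(2,\C)$ and $K=\mathrm{SU}(2)$;
\item $\Gamma=\{1,a,b,ab\}\cong\Z/2\Z\times\Z/2\Z$ acting freely on $X$;
\item $A=\mathrm{diag}(i,-i)$ and $B=\left(\begin{smallmatrix}0&1\\-1&0\end{smallmatrix}\right)$;
\item $t_1=1$, $t_a=A$, $t_b=B$, $t_{ab}=AB$;
\item $\theta_\gamma=\Int_{t_\gamma}$ and $c(\gamma,\gamma')=t_\gamma t_{\gamma'}t_{\gamma\gamma'}^{-1}\in\{\pm 1\}$.
\end{itemize}
On $i\mathfrak{su}(2)\cong\R^3$ the group $d\theta(\Gamma)$ consists of the rotations by $\pi$ about three orthogonal axes. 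Hence $i\mathfrak{k}^{\Gamma}=0$, and every $(\theta,c)$-twisted $\Gamma$-equivariant pair is polystable in the sense of Definition~\ref{definition-stable-twisted-equivariant}.

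Now apply Proposition~\ref{prop-different-theta} (that is, $e*\gamma=(et_\gamma)\cdot\gamma$) to the pullback from $X/\Gamma$ of the frame bundle of $L\oplus L^{-1}$ with $\deg L>0$, and take $\phi=0$. This gives such an object whose underlying bundle is unstable, so it admits no solution of $F_h=0$.

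So the stability condition has to test all $\Gamma$-invariant holomorphic reductions of merely $d\theta(\Gamma)$-stable type, i.e.\ invariant maps $\tilde s$ as above. For a free action, this is what you get by transporting stability of the corresponding $G\times_{\theta,c}\Gamma$-bundle on $X/\Gamma$, when parabolics $P_s$ of the non-connected group are allowed for every $s\in i\mathfrak{k}$.

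With that corrected definition your strategy works, once you fix three smaller points:
\begin{itemize}
\item Jordan--H\"older reductions are not canonical, so the polystable case needs a genuinely canonical (socle-type) reduction or the equivariant continuity method.
\item When $Z\not\subset K$, the centre acts nontrivially on $G/K$. You must first make $c$ take values in $Z\cap K$; this is possible because $H^2$ of a finite group with coefficients in the vector group $\exp(i\mathfrak{z}_{\mathfrak k})$ vanishes.
\item $\omega$ should be chosen $\Gamma$-invariant, so that $h\cdot\gamma$ is again a solution before you average.
\end{itemize}
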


An equivariant base point is a $\Gamma$-equivariant map $x:\Gamma \to X$, 
where $\Gamma$ acts on itself by multiplication, and on $X$ via $\eta$.
Suppose that $(X,x)$ has a universal $\Gamma$-equivariant covering 
$(\widehat{X},\widehat{x})\to (X,x)$. By the universal property of such 
covering, the group of automorphisms of the equivariant covering $\widehat{X}$
over $X$ is uniquely determined by $X$, up to unique isomorphism. This group is
called  the {\bf equivariant fundamental group} of $X$ with respect to 
the action of $\Gamma$ and is denoted by $\pi_1(X,\Gamma, x)$ or simply
$\pi_1(X,\Gamma)$  (see \cite[Definition 3.1]{Huis}). 

Let $1\in \Gamma$ be the identity, set $x_1:=x(1)$ and let
 $\pi_1(X,x_1)$ be the fundamental group
of $X$ with base point $x_1$.
By \cite[Proposition 3.2]{Huis}, $\pi_1(X,\Gamma,x)$ fits into an exact 
sequence
 \begin{equation}\label{fundu} 
1\to \pi_1(X,x_1)\to \pi_1(X,\Gamma,x)\to \Gamma\to 1.
\end{equation}
 Note that if $\Gamma$ acts trivially on $X$ (i.e., $\eta$ is trivial) 
then $\pi_1(X,\Gamma,x)=\pi_1(X,x)$ and if $\Gamma$ acts
 freely on $X$ then $\pi_1(X,\Gamma,x)=\pi_1(X/\Gamma,\bar{x})$ where 
$\bar{x}$ is 
the  composite map of $x$ and the quotient $X\to X/\Gamma$ (see \cite[Proposition 3.4]{Huis}). 

Suppose that $\eta$ is non-trivial. Choose $x\in X$ so that it is not fixed by any  $\gamma\in \Gamma$ with $\gamma\neq 1$.
 By the description of the equivariant fundamental group in terms of equivariant loops (see \cite[Section 6]{Huis}) we can 
 identify the set with the set of all homotopy classes of maps $\sigma:[0,1]\to X$ such that $\sigma(0)=x$ and 
 $\sigma(1)\in \{\eta_{\gamma}(x)\mid \gamma\in \Gamma\}$. 
Under this identification the surjective map 
 $p: \pi_1(X,\Gamma,x)\to \Gamma$ can be identified with 
$p(\sigma)=\gamma$ if 
$\sigma(1)=\eta_{\gamma}(x)$.
 
Let $G\times_{\theta,c}\Gamma$ be the twisted product given by Definition \ref{def-twisted-product}.
A representation $\widehat{\rho}$ of $\pi_1(X,\Gamma,x)$ on $ G\times_{\theta,c}\Gamma$ 
is said to be  {\bf $(\theta,c)$-twisted $\Gamma$-equivariant} 
if it is an extension of a representation 
$\rho\,:\, \pi_1(X,\, x_1)\,\longrightarrow\, G$ 
fitting  in a commutative diagramme
of homomorphisms

\begin{equation}\label{compatible-rep}
\xymatrix{
0\ar[r] &\pi_1(X,\,x_1)\ar[r]^{}\ar[d]^{\rho} &\pi_1(X,\Gamma,x)\ar[r]^{}\ar[d]^{\widehat{\rho}} &\Gamma\ar[r]\ar[d]^{\Id}&0\\
0\ar[r] &G\ar[r]^{} &G\times_{\theta,c}\Gamma\ar[r]^{} &\Gamma\ar[r]&0.
}
\end{equation}

Denote by $\Hom(\pi_1(X,\Gamma,x), G\times_{\theta,c}\Gamma)$ the set of   
$(\theta,c)$-twisted $\Gamma$-equivariant representations 
$\widehat{\rho}\,:\,
\pi_1(X,\Gamma, x)\,\longrightarrow\, G\times_{\theta,c}\Gamma$. 

Let $\cM(X,G,\Gamma,\theta,c)$ be  the moduli space
of $(\theta,c)$-twisted $\Gamma$-equivariant $G$-Higgs bundles considered in Section 
\ref{section-twisted-equivariant-higgs-pairs}. 
Let 
$$\calR(X,G,\Gamma,\theta,c):=\Hom(\pi_1(X,\Gamma,x), G\times_{\theta,c}\Gamma)\sslash G$$ 
be the moduli space of  
 $G$-conjugacy classes of representations of  
$\pi_1(X,\Gamma,x)$ in $ G\times_{\theta,c}\Gamma$
whose restriction to $\pi_1(X,\,x_1)$ is reductive, where $G\times_{\theta,c}\Gamma$ is given by Definition \ref{def-twisted-product}.
We then have the following Theorem (see \cite{PBI}): 

\begin{theorem}[Non-abelian Hodge correspondence]\label{equivariant-nahc}
There is a homeomorphism between $\cM(X,G,\Gamma,\theta,c)$ 
and  $\calR(X,G,\Gamma,\theta,c)$.
\end{theorem}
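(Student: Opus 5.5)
The strategy is to pass through the ordinary non-abelian Hodge correspondence on $X$, Theorem \ref{rep1}, and to track the $\Gamma$-symmetry on both sides. By Theorem \ref{EH1-equivariant}, a polystable $(\theta,c)$-twisted $\Gamma$-equivariant $G$-Higgs bundle $(E,\cdot,\phi)$ carries a $\Gamma$-invariant metric $h$ solving the Hitchin equation with $z=0$. The flat connection $D=\nabla_h+\phi-\sigma_h(\phi)$ is then $\Gamma$-invariant. Indeed, $\nabla_h$ is determined by $h$ and the holomorphic structure, both preserved by the twisted action. Also $\phi$ is invariant, and $\sigma_h$ commutes with the action because $h$ is invariant and $K$ is $\theta$-stable. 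The central elements $c(\gamma,\gamma')$ act by gauge transformations that are parallel for every connection, since $Z$ acts trivially on $\lie g$, so invariance is meaningful.

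Next I would turn a $\Gamma$-invariant flat connection on a twisted equivariant bundle into a representation $\widehat\rho$ of $\pi_1(X,\Gamma,x)$. Fix a point $e_0$ in the fibre over the base point $x=x(1)$. Let $\sigma$ be a path from $x$ to $\eta_\gamma(x)$, representing an element of $\pi_1(X,\Gamma,x)$ with image $\gamma$ (using the description of \cite{Huis}). Parallel transport $e_0$ along $\sigma$, then apply the twisted action of $\gamma$, which carries the fibre over $\eta_\gamma(x)$ back to the fibre over $x$. The result is $e_0 g$ for a unique $g\in G$, and I set $\widehat\rho(\sigma)=(g,\gamma)\in G\times_{\theta,c}\Gamma$. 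Restricted to $\pi_1(X,x_1)$, this is the usual holonomy $\rho$. The twisting rule $(eg)\cdot\gamma=(e\cdot\gamma)\tg^{-1}(g)$, together with the cocycle identity of Definition \ref{def-twisted-action}, should show that $\widehat\rho$ is a homomorphism into the twisted product (\ref{eq-def-twisted product}) fitting in (\ref{compatible-rep}). An alternative is to use Proposition \ref{prop-bijection-G,Gamma-action} to view the structure as an honest $G\times_{\theta,c}\Gamma$-action and avoid recomputing. Changing $e_0$ changes $\widehat\rho$ by $G$-conjugation, so we obtain a map $\cM(X,G,\Gamma,\theta,c)\to\calR(X,G,\Gamma,\theta,c)$. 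Reductivity of $\rho$ follows from Theorem \ref{rep1}.

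For the inverse, start from a $(\theta,c)$-twisted equivariant representation $\widehat\rho$. Form $E=\widehat X\times_{\pi_1(X,\Gamma,x)}(G\times_{\theta,c}\Gamma)$, restrict to the $G$-part over $X$, and take the induced twisted $\Gamma$-action and flat connection. By Donaldson--Corlette, reductivity of $\rho$ gives a harmonic metric. This metric is unique up to the action of the centralizer, so averaging or uniqueness makes it $\Gamma$-invariant. Set $\phi$ to be the $(1,0)$-part of the resulting self-adjoint $1$-form. Then $\phi$ is $\Gamma$-invariant and the holomorphic structure is preserved, producing a polystable twisted equivariant Higgs bundle by Theorem \ref{EH1-equivariant}.

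The main obstacle is the $\Gamma$-invariance of the harmonic metric. The harmonic metric is unique only up to the automorphisms of the flat bundle, so the pulled-back metric $h\cdot\gamma$ is another harmonic metric, not necessarily equal to $h$. I would handle this using convexity of the energy on the space of metrics, which is nonpositively curved. The set of harmonic metrics is a totally geodesic convex subset on which the finite group $\Gamma$ acts by isometries. Such an action has a fixed point, namely the barycenter of an orbit, and this yields an invariant harmonic metric. A symmetric argument gives the invariant solution on the Higgs side as well. Finally, continuity of both maps in the respective topologies follows as in the untwisted case, giving the homeomorphism.
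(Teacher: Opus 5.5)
The paper gives no argument for this theorem beyond the pointer to \cite{PBI}, so your proposal is a self-contained substitute rather than a reconstruction. One could instead try to deduce it from the non-connected theory of Section~\ref{section-twisted-and-non-connected-higgs}. For a free action, twisted $\Gamma$-equivariant objects on $X$ are $G\times_{\theta,c}\Gamma$-objects on $X/\Gamma$, and $\pi_1(X,\Gamma,x)=\pi_1(X/\Gamma,\bar x)$, so the statement becomes non-abelian Hodge theory for a non-connected reductive group. That route, however, needs freeness, or else orbifold/parabolic input. Your argument stays upstairs on $X$ with $\Gamma$-invariant solutions, so it handles isotropy points at no extra cost. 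It also isolates the one new analytic ingredient, namely $\Gamma$-invariance of harmonic metrics. The plan is sound, but two points need repair.

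First, with the paper's right-action conventions, the assignment $\widehat\rho(\sigma)=(g,\gamma)$ is not multiplicative. Compose $\sigma_1\sigma_2$ as $\sigma_1$ followed by $\eta_{\gamma_1}\circ\sigma_2$, and write $P_\sigma(e_0)\cdot\gamma=e_0g_\sigma$. Invariance of $D$ gives $P_\tau(f)\cdot\gamma=P_{\eta_\gamma^{-1}\circ\tau}(f\cdot\gamma)$. The cocycle axiom gives $m\cdot(\gamma_1\gamma_2)=((mc^{-1})\cdot\gamma_1)\cdot\gamma_2$ with $c=c(\gamma_1,\gamma_2)$. Together these yield $g_{\sigma_1\sigma_2}=g_{\sigma_2}\,\theta_{\gamma_2}^{-1}(g_{\sigma_1})\,\theta_{\gamma_1\gamma_2}^{-1}(c)^{-1}$; already for $\gamma=1$ the uninverted holonomy is an anti-homomorphism. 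The correct definition is $\widehat\rho(\sigma)=(\theta_\gamma(g_\sigma)^{-1},\gamma)$. This does satisfy $(h_1,\gamma_1)(h_2,\gamma_2)=(h_1\theta_{\gamma_1}(h_2)c(\gamma_1,\gamma_2),\gamma_1\gamma_2)$, and it changes by conjugation by $(u,1)$ when $e_0$ is replaced by $e_0u$.

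Second, and more substantively, you handle the choice of $e_0$ but not the choice of $\Gamma$-invariant metric. So neither map is yet well defined on moduli spaces, and you cannot conclude that they are mutually inverse. Non-equivariant uniqueness only says this:
\begin{itemize}
\item two $\Gamma$-invariant solutions $h,h'$ of (\ref{hitchin-equation-equivariant}) differ by some $g\in\Aut(E,\phi)$;
\item two $\Gamma$-invariant harmonic metrics differ by some $g$ in the centralizer $Z_G(\rho)$.
\end{itemize}
If $g$ is not $\Gamma$-equivariant, nothing forces the resulting $\widehat\rho$'s to be $G$-conjugate, or the resulting twisted equivariant Higgs bundles to be isomorphic.

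The missing step uses the same geometry as your existence argument. The solutions form an orbit of the reductive group $\Aut(E,\phi)$ (resp.\ $Z_G(\rho)$). This orbit is a complete finite-dimensional symmetric space of nonpositive curvature on which $\Gamma$ acts isometrically. The geodesic $t\mapsto\exp(ts)\cdot h$ joining two $\Gamma$-fixed points is pointwise fixed, so $s$ is $\Gamma$-invariant and $\exp(s)$ is a $\Gamma$-equivariant isomorphism between the two choices.

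That same finite-dimensional space is where your barycentre argument should be run, since the full space of metrics is not complete. Finally, for $\Gamma$ to act honestly on $\Omega^0(X,E(G/K))$ you need $K$ to be $\theta(\Gamma)$-stable and $c$ to take values in $Z\cap K$. The latter is automatic for semisimple $G$, since then $Z$ is finite.
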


\section{Local structure at isotropy points}\label{section-isotropy}

Let $x\in X$, and \[\Gamma_{x}:=\{\gamma\in \Gamma \mid \eta_{\gamma}(x)=x\}\] be the corresponding isotropy subgroup. 
Let $\PPP:=\{x\in X \mid \Gamma_{x}\neq \{1\}\}$.

It is well-known that, when $\Gamma$ acts on $X$ faithfuly and properly 
discontinuously,
$\PPP$ consists of a finite number of points $\{x_{1},\cdots,x_{r}\}$ 
and for each $x_{i}\in \PPP$ 
the isotropy subgroup $\Gamma_{x_i}\subset \Gamma$ is cyclic (see
\cite{miranda} for example). Each $x_i\in\PPP$ is called an \textbf{isotropy point} of $X$.

Let $(E,\cdot,\phi)$ be a $(\theta,c,\rhog)$-twisted $\Gamma$-equivariant $(G,V)$-Higgs pair with bundle projection $\pi:E\to X$. 
The underlying  $(\theta,c)$-twisted $\Gamma$-equivariant
structure on $E$ determines the following. For each $x\in \PPP$ and $e\in E$ such that $\pi(e)=x$, there is a map 
\[\sigma_{e}:\Gamma_{x}\to G\]
defined by \[e\cdot \gamma=e\tg^{-1}(\sigma_{e}(\gamma))^{-1}.\]

One has the following.
\begin{proposition}\label{local-data}

 \begin{enumerate}
  \item $\sigma_{e}(\gamma\gamma')=c(\gamma,\gamma')\sigma_{e}(\gamma)\theta_{\gamma}(\sigma_{e}(\gamma'))$, $\gamma,\gamma'\in \Gamma$.
  \item $\sigma_{eg}(\gamma)=g^{-1}\sigma_{e}(\gamma)\theta_{\gamma}(g)$ 
  for all $\gamma\in \Gamma$ and $g\in G$.
 \end{enumerate}
\end{proposition}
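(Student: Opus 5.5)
The plan is to compute directly from the defining relation $e\cdot\gamma=e\,\tg^{-1}(\sigma_e(\gamma))^{-1}$ together with the two axioms (\ref{eq-twisted-equivariant-axioms}) of a $(\theta,c)$-twisted action. First observe that $\sigma_e$ is well defined: for $\gamma\in\Gamma_x$ the point $e\cdot\gamma$ lies in the fibre over $\etag^{-1}(x)=x$. Since that fibre is a $G$-torsor, there is a unique $h\in G$ with $e\cdot\gamma=eh$, and we set $\sigma_e(\gamma):=\tg(h)^{-1}$. Both identities are then obtained by computing a given point of the fibre $E_x$ in two ways and invoking freeness of the $G$-action.

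For (2), take $g\in G$ and $\gamma\in\Gamma_x$. The first axiom gives
\begin{equation*}
(eg)\cdot\gamma=(e\cdot\gamma)\tg^{-1}(g)=e\,\tg^{-1}(\sigma_e(\gamma))^{-1}\tg^{-1}(g).
\end{equation*}
On the other hand, by definition $(eg)\cdot\gamma=eg\,\tg^{-1}(\sigma_{eg}(\gamma))^{-1}$. Cancelling $e$ by freeness, we get $g\,\tg^{-1}(\sigma_{eg}(\gamma))^{-1}=\tg^{-1}(\sigma_e(\gamma)^{-1}g)$. Applying $\tg$ and inverting both sides yields $\sigma_{eg}(\gamma)=\tg(g)^{-1}\sigma_e(\gamma)\,$composed appropriately. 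Here I expect a subtlety: the raw computation seems to give $\sigma_{eg}(\gamma)=g^{-1}\sigma_e(\gamma)\tg(g)$ only after rearranging carefully, so the order of factors must be tracked and the conventions (right action, $\tg^{-1}$ twist) applied consistently.

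For (1), take $\gamma,\gamma'\in\Gamma_x$ and expand $(e\cdot\gamma)\cdot\gamma'$ using the first axiom:
\begin{equation*}
(e\cdot\gamma)\cdot\gamma'=\big(e\,\tg^{-1}(\sigma_e(\gamma))^{-1}\big)\cdot\gamma'=(e\cdot\gamma')\,\theta_{\gamma'}^{-1}\tg^{-1}(\sigma_e(\gamma))^{-1}.
\end{equation*}
We then substitute $e\cdot\gamma'$ and use $\theta_{\gamma'}^{-1}\tg^{-1}=\theta_{\gamma\gamma'}^{-1}$, since $\theta$ is a homomorphism. By the second axiom, the same point equals $(ec(\gamma,\gamma'))\cdot\gamma\gamma'=e\,\theta_{\gamma\gamma'}^{-1}(c(\gamma,\gamma'))\,\theta_{\gamma\gamma'}^{-1}(\sigma_e(\gamma\gamma'))^{-1}$, where we use that $c$ is central and that $c(\gamma,\gamma')\in Z$ commutes with the twisted action in the appropriate sense. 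Comparing the two expressions by freeness, applying $\theta_{\gamma\gamma'}$, and using (2) to move $\sigma_e(\gamma')$ past $\sigma_e(\gamma)$ gives the cocycle-type relation.

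The main obstacle is purely bookkeeping. The factor ordering that appears naturally, namely $\theta_{\gamma}(\sigma_e(\gamma'))$ versus $\sigma_e(\gamma)$ on the left or right, depends delicately on the convention that $\Gamma$ acts on the right with the $\theta^{-1}$ twist, and on whether $c$ is twisted by $\theta_{\gamma\gamma'}$ (harmless if $\theta$ acts trivially on $Z$, otherwise one must use the cocycle normalisation). If the naive comparison produces the factors in the reverse order, I would re-express the relation via (2) applied at the point $e\cdot\gamma'$, or adjust the convention by inversion, so that the statement matches exactly. I would carry out the computation for (2) first, since it is the simpler of the two and is then reused in (1).
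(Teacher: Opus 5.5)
This is the same argument as in the paper: both identities come from computing one point of the fibre in two ways using the axioms (\ref{eq-twisted-equivariant-axioms}) and cancelling by freeness of the $G$-action, and the intermediate identities you write down are correct. Your worries about factor order are unfounded, and the fallbacks you propose should be dropped. For (2), applying $\theta_\gamma$ to $g\,\theta_\gamma^{-1}(\sigma_{eg}(\gamma))^{-1}=\theta_\gamma^{-1}(\sigma_e(\gamma)^{-1}g)$ and inverting gives $\sigma_{eg}(\gamma)=g^{-1}\sigma_e(\gamma)\theta_\gamma(g)$ directly, not $\theta_\gamma(g)^{-1}\sigma_e(\gamma)\cdots$. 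For (1), applying $\theta_{\gamma\gamma'}$ to $\theta_{\gamma'}^{-1}(\sigma_e(\gamma'))^{-1}\theta_{\gamma\gamma'}^{-1}(\sigma_e(\gamma))^{-1}=\theta_{\gamma\gamma'}^{-1}\bigl(c(\gamma,\gamma')\sigma_e(\gamma\gamma')^{-1}\bigr)$ gives $\bigl(\sigma_e(\gamma)\theta_\gamma(\sigma_e(\gamma'))\bigr)^{-1}=c(\gamma,\gamma')\sigma_e(\gamma\gamma')^{-1}$, which is (1) in exactly the stated order using only centrality of $c$. So there is no need to invoke (2), the cocycle normalisation, or any change of convention; changing conventions to match the statement would not be a legitimate step anyway.
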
 
 \begin{proof}
  For each $e\in E$ we have $e\cdot(\gamma\gamma')=e\theta_{\gamma\gamma'}^{-1}(\sigma_e(\gamma\gamma'))^{-1}$,
   and 
   \begin{equation*}
\begin{split}
(e\cdot\gamma)\cdot\gamma' & = (e\theta_{\gamma}^{-1}(\sigma_e(\gamma))^{-1})\cdot\gamma'\\
 & = (e\cdot\gamma')\theta_{\gamma'}^{-1}(\theta_{\gamma}^{-1}(\sigma_e(\gamma)))^{-1} \\
 & = e\theta_{\gamma'}^{-1}(\sigma_e(\gamma'))^{-1}\theta_{\gamma\gamma'}^{-1}(\sigma_e(\gamma))^{-1}\\
 & = e\theta_{\gamma\gamma'}^{-1}(\sigma_e(\gamma)\theta_{\gamma}(\sigma_e(\gamma')))^{-1}   \\
\end{split}
\end{equation*}
Since, by (\ref{eq-twisted-equivariant-axioms}), $(e\cdot(\gamma\gamma'))\theta_{\gamma\gamma'}^{-1}(c(\gamma,\gamma'))=(e\cdot\gamma)\cdot\gamma'$,
we have 
\[e\theta_{\gamma\gamma'}^{-1}(\sigma_e(\gamma\gamma'))^{-1}\theta_{\gamma\gamma'}^{-1}(c(\gamma,\gamma'))
=e\theta_{\gamma\gamma'}^{-1}(\sigma_e(\gamma)\theta_{\gamma}(\sigma_e(\gamma')))^{-1} \;\; \mbox{for every}\;\; e\in E.\]
Thus, 
$\sigma_e(\gamma\gamma')=c(\gamma,\gamma')\sigma_e(\gamma)\theta_{\gamma}(\sigma_e(\gamma'))$.

To check $(2)$ note that 
$$ 
eg\tg^{-1}(\sigma_{eg}(\gamma))^{-1}=e\tg^{-1}(\sigma_e(\gamma))^{-1}\theta_{\gamma}^{-1}(g),
$$
and so $\sigma_{eg}(\gamma)=g^{-1}\sigma_e(\gamma)\theta_{\gamma}(g)$.
  \end{proof}

For each isotropy point $x\in X$ , let us denote by ${Z^1}_{c_x}(\Gamma_x,G)$ the set of all $\beta:\Gamma_{x}\to G$ satisfying 
$\beta(\gamma\gamma')=c_x(\gamma,\gamma')\beta(\gamma)\theta_{\gamma}(\beta(\gamma'))$ 
where $c_x$ is the $2$-cocycle given by the restriction of $c$ to $\Gamma_x$. 
We call such a $\beta:\Gamma\to G$ a twisted $1$-cocycle for the action of $\Gamma$ on $G$ given by  $\theta$. 
Two twisted $1$-cocycles $\beta$ and $\beta'$ are related if there exists a $g\in G$ 
such that $\beta=g^{-1}\beta'\theta_{\gamma}(g)$. 
We denote the set of all twisted $1$-cocycles modulo the above defined relation by ${H^1}_{c_x}(\Gamma,G)$.

%%%%%%%%%%%%%%%%%%%%%%%%%%%%%%%%%%%%%%%%%%%%%%%%%%%%

%{\bf Question:} {\bf Is ${H^1}_{c_x}(\Gamma_x,G)$ finite?}
%%%%%%%%%%%%%%%%%%%%%%%%%%%%%%%%%%%%%%%%%%%%%%%%%%%

Let $(E,\phi)$ be a $(\theta,c,\rhog)$-twisted $\Gamma$-equivariant $(G,V)$-Higgs pair. 
From Proposition \ref{local-data}, we have 
that, for each  $x\in \PPP$, there is a unique equivalence class $\sigma_x$ of a twisted $1$-cocycle, which only depends on the $\Gamma$-equivariant isomorphism class of 
$E$.

We fix a $\sigma_{x_i}\in {H^1}_{c_{x_i}}(\Gamma_{x_i},G)$ for each  $x_{i}\in \PPP$. We say that a 
$(\theta,c,\rhog)$-twisted $\Gamma$-equivariant 
$(G,V)$-Higgs pair $(E,\phi)$ has 
{\bf local type} $\sigma_{x_i}$ at a fixed point $x_{i}\in X$ 
if the twisted $1$-cocycle induced by the 
$(\theta,c)$-twisted $\Gamma$-equivariant structure on $E$ 
is $\sigma_{x_i}$.

We define  $\cM(X,G,\Gamma,\theta,c,V,\rhog,\sigma)$ as 
the subvariety of the moduli space 
of twisted equivariant Higgs pairs $\cM(X,G,\Gamma,\theta,c,V,\rhog)$
with fixed local types $\sigma_{x_i}$, $i=1,\cdots, r$. When $V=\lie g$, $\rho$ is the adjoint representation and $\rhog=\theta$ (i.e., when we consider twisted equivariant Higgs bundles) we omit $V$ and $\rhog$. If we are considering the moduli space of $(\theta,c,\mu)$-twisted $\Gamma$-equivariant $G$-Higgs bundles we write $\cM(X,G,\Gamma,\theta,c,\mu,\sigma)$. Similarly, we define  $M(X,G,\Gamma,\theta,c,\sigma)$ as 
the subvariety of the moduli space 
of twisted equivariant $G$-bundles $M(X,G,\Gamma,\theta,c)$
with fixed local types $\sigma_{x_i}$, $i=1,\cdots, r$.

Let $c,c'\in Z^2_\theta(\Gamma,Z)$ be two cohomologous $2$-cocycles, that is, there exists a map $s:\Gamma\to Z$ such that 
\begin{equation}\label{com}
c'(\gamma',\gamma'')=c(\gamma',\gamma'')s_{\gamma'}\theta_{\gamma'}(s_{\gamma''})s_{\gamma'\gamma''}^{-1}, ~ \gamma',\gamma''\in \Gamma. \end{equation}
Let $\sigma_{x_i}\in {H^1}_{c_{x_i}}(\Gamma_{x_i},G)$. We have $1$-cocycles $\sigma^s_{x_i}:=s^{-1}\sigma_{x_i}\in {H^1}_{c'_{x_i}}(\Gamma_{x_i},G)$ induced by $\sigma_{x_i}$, where we are calling $s$ to its restriction to $\Gamma_{x_i}$ by abuse of notation. Indeed,
\begin{align*}
    s^{-1}_{\gamma\gamma'}\sigma_{x_i}(\gamma\gamma')
    &=s_{\gamma}\tg(s_{\gamma'})s^{-1}_{\gamma\gamma'}c(\gamma,\gamma')s_{\gamma}^{-1}\sigma_e(\gamma)\tg(s_{\gamma'})^{-1}\tg(\sigma_e(\gamma'))\\&
    =c'(\gamma,\gamma')\sigma^s_e(\gamma)\tg(\sigma^s_e(\gamma')).
\end{align*}

One  has the following.
\begin{proposition}\label{cohom}
 If $c$ and $c'$ are
cohomologous cocycles in $Z^2_\theta(\Gamma,Z)$ with $c'(\gamma,\gamma')=c(\gamma,\gamma')s_{\gamma\gamma'}^{-1}s_{\gamma}\tg(s_{\gamma'})$, $\rhog^{s}:=\rho(s)\rhog$ and $\sigma^s=s^{-1}\sigma$,
$$
\widetilde{\cM}(X,G,\Gamma,\theta,c,V,\rhog,\sigma)
\cong \widetilde{\cM}(X,G,\Gamma,\theta,c',V,\rhog^{s},\sigma^s).
$$
\begin{proof}
 Let $(E,\cdot,\phi)\in \widetilde{\cM}(X,G,\Gamma,\theta,c,V,\rhog,\sigma)$.
We can show that $(E,\phi)$ admits a $(\theta,c',\rhog^s)$-twisted $\Gamma$-equivariant structure 
with local 
types $\{\sigma^s_{x_i}\}$ namely define
 \[e*\gamma:=(es_{\gamma})\cdot\gamma\]
 as in the proof of Corollary \ref{cor-different-c}. By Corollary \ref{cor-different-c-higgs} this provides a $(\theta,c',\rhog^{s})$-twisted $\Gamma$-equivariant action on $(E,\phi)$. Moreover, we have
\begin{align*}
    e*\gamma
    =(e\cdot\gamma)\tg^{-1}(s_{\gamma})
    =e\tg^{-1}(\sg^{-1}\sigma_{e}(\gamma))^{-1}
    =e\tg^{-1}(\sigma^s_{e}(\gamma))^{-1},
\end{align*}
hence the local type of $(E,*,\phi)$ is $\{\sigma^s_{x_i}\}$.
Similarly we can construct an inverse.
\end{proof}
\end{proposition}

\section{Non-connected groups and twisted equivariant Higgs pairs}\label{section-twisted-and-non-connected-higgs}

 Let $G$ be a (non-connected) reductive complex Lie group with connected component $G_0$ and group of connected components $\Gamma:=G/G_0$. Let $Z$ be the centre of $G_0$. We have a short exact sequence (\ref{eq-general-extension}) with characteristic homomorphism $a:\Gamma\to\Out(G_0)$. Let $\theta:\Gamma\to\Aut(G_0)$ be a lift of $a$ and $c\in Z^2_a(\Gamma,Z)$ the 2-cocycle given by Proposition \ref{prop-extensions-isomorphic-twisted-group}, so that $G\cong G_0\times_{\theta,c}\Gamma$ as extensions of $G_0$. Recall that this isomorphism is given by a section $\Gamma\to G$. Let $\rhog:\Gamma\to\GL(V)$ be the pullback of $\rho$ by this section. The right action of $G$ on $V$ given by 
 \begin{equation*}
     V\times G\to V;\,(v,g)\mapsto \rho(g)^{-1}v
 \end{equation*}
yields by Proposition \ref{prop-bijection-G,Gamma-action} a right $G_0$-action given by the restriction to ${G_0}$ and a $(\theta,c)$-twisted $\Gamma$-action
 \begin{equation*}
     V\times \Gamma\to V;\,(v,\gamma)\mapsto v\cdot\rhog(\gamma):= \rhog(\gamma)^{-1}v=\rho(1,\gamma)^{-1}v.
 \end{equation*}
 
 Take a connected $\Gamma$-bundle $Y\to X$ with canonical bundle $K_Y$.

\begin{definition}\label{def-categories-higgs}
  We denote by $\hat{\mathcal{C}}_1$ the category whose
  \begin{itemize}
  \item \textbf{objects} are $(\theta,c,\rhog)$-twisted $\Gamma$-equivariant
    $(G_0,V)$-Higgs pairs $(E,\phi)$ over $Y$ such that the twisted
    $\Gamma$-action descends to the action of $\Gamma$ as covering
    transformations of the fixed $\Gamma$-covering $E/G_0\cong Y\to X$,
    and whose
  \item \textbf{morphisms} are $Z(\Gamma)$-isomorphisms of twisted equivariant Higgs pairs, i.e. holomorphic
    maps $f\colon E\to E'$ both $G$ and $\Gamma$-equivariant such that the diagramme
    \begin{displaymath}
      \begin{CD}
      E @>{f}>> E'\\
      @VVV @VVV \\
      Y @>{\bar{f}}>> Y
      \end{CD}
    \end{displaymath}
    commutes and the induced map $\bar{f}\colon Y\to Y$  is
    a covering transformation which belongs to $Z(\Gamma)$.
  \end{itemize}

    We denote by $\hat{\mathcal{C}}_2$ the category whose
  \begin{itemize}
  \item \textbf{objects} are $(G,V)$-Higgs pairs on $X$ such
    that there is an isomorphism of $\Gamma$-bundles $E/G_0\cong Y$
    and whose
  \item \textbf{morphisms} are morphisms of principal
    $G$-bundles.
  \end{itemize}
\end{definition}

\begin{proposition}[Proposition 4.5 in \cite{GGM}]\label{prop-twisted-equivariant-higgs pairs-one-to-one}
The categories $\hat{\mathcal{C}}_1$ and $\hat{\mathcal{C}}_2$ are equivalent. 
\end{proposition}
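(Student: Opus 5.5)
The plan is to build on Proposition \ref{prop-twisted-equivariant-bundles-one-to-one}, which already identifies the underlying principal bundles together with their morphisms. What remains is to match the Higgs fields on the two sides and to check that morphisms preserving them correspond.

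\textbf{From $X$ to $Y$.} Start with an object $(E,\phi)$ of $\hat{\mathcal C}_2$, so $E$ is a $G$-bundle on $X$ with $E/G_0\cong Y$. Proposition \ref{prop-twisted-equivariant-bundles-one-to-one} makes the same total space $E$ into a $(\theta,c)$-twisted $\Gamma$-equivariant $G_0$-bundle over $Y$. For the Higgs field, I will use Remark \ref{rem:sections-equivariant-correspondence} and Proposition \ref{prop:EM-Gamma-equivariant-sections} to replace sections by equivariant maps. Sections of $E(V)\otimes K_X=(E\times_G V)\otimes K_X$ are exactly holomorphic maps $\tilde\phi:E\to V\otimes p_X^*K_X$ that are $G$-equivariant for the right action $v\cdot g=\rho(g)^{-1}v$; here $p_X:E\to X$ is the projection.

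Since $p_Y:Y\to X$ is étale, $K_Y=p_Y^*K_X$. Write $G=G_0\times_{\theta,c}\Gamma$. By Proposition \ref{prop-bijection-G,Gamma-action}, $G$-equivariance of $\tilde\phi$ splits into two conditions: $G_0$-equivariance, and equivariance under the elements $(1,\gamma)$. The second condition reads $\tilde\phi(e\cdot\gamma)=\rhog(\gamma)^{-1}\tilde\phi(e)=\tilde\phi(e)\cdot\rhog(\gamma)$, with the identification of $K$-fibres induced by the covering transformation $\gamma$ of $Y$.

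The $G_0$-equivariant maps correspond to sections of $E_{G_0}(V)\otimes K_Y$ over $Y$. The additional $(1,\gamma)$-equivariance says exactly that this section is invariant under the $\Gamma$-action on $E_{G_0}(V)\otimes K_Y$ from Proposition \ref{prop-associated-bundle-equivariant}. This invariance is the Higgs condition of Definition \ref{mega}. So $\phi$ on $X$ corresponds to a $\Gamma$-invariant Higgs field $\psi$ on $Y$. The construction reverses: a $\Gamma$-invariant $\psi$ yields a $G$-equivariant $\tilde\phi$, which descends to $X$ because $E/G=X$.

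\textbf{Morphisms.} By Proposition \ref{prop-twisted-equivariant-bundles-one-to-one}, the bundle morphisms in $\hat{\mathcal C}_2$ correspond to $Z(\Gamma)$-isomorphisms in $\hat{\mathcal C}_1$, and both are the same underlying map $f:E\to E'$ of total spaces. Such an $f$ preserves the Higgs fields exactly when $\tilde\phi'\circ f=\tilde\phi$. That condition is independent of which side we view $f$ from, so Higgs-preserving morphisms correspond on both sides. Functoriality and the inverse functor then carry over unchanged from the bundle case.

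\textbf{Main obstacle.} The delicate point is the twisting of $K$ when $f$ covers a nontrivial $z\in Z(\Gamma)$ acting on $Y$. I need the identification $K_Y=p_Y^*K_X$ to be canonically $\Gamma$-equivariant, so that $z^*K_Y\cong K_Y$ compatibly with the Higgs fields. This holds because deck transformations commute with $p_Y$, but I will verify the compatibility explicitly in local coordinates $(e,v)\otimes k$. The same check is needed to confirm that the $(1,\gamma)$-condition matches the action $(e,v)\otimes k\mapsto(e\cdot\gamma,v\cdot\rhog(\gamma))\otimes\gamma^*k$.
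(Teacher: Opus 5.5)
Your proposal is correct and follows essentially the same route as the paper: reduce to the bundle-level equivalence of Proposition \ref{prop-twisted-equivariant-bundles-one-to-one}, then check that Higgs fields correspond, with $\Gamma$-invariance coming from the fact that the $\Gamma$-action is given by the elements $(1,\gamma)\in G$, and that morphisms match because they are the same Higgs-preserving maps of total spaces. The only difference is presentational. You phrase the Higgs-field correspondence through $G$-equivariant maps on $E$, together with the canonical identification $K_Y\cong p_Y^*K_X$. The paper instead chooses a point $e$ in the fibre and a local representative $(e,v)\otimes k$, then checks directly that the result is independent of $e$ and $\Gamma$-invariant.
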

\begin{proof}
    We have forgetful functors $\hcc1\to\cc1$ and $\hcc2\to\cc2$ to the category of $(\theta,c)$-twisted $\Gamma$-equivariant
    $G_0$-bundles over $Y$ and the category of $G$-bundles $E$ over $X$ such that $E/G_0\cong Y$ respectively, see Definition \ref{def-categories}. Proposition \ref{prop-twisted-equivariant-bundles-one-to-one} provides an equivalence of categories between $\cc1$ and $\cc2$. We show that it lifts to an equivalence of categories between $\hcc1$ and $\hcc2$: given a $G$-bundle $E$ over $X$, the corresponding twisted equivariant $G_0$-bundle $F\to Y$ is given by the factorization $E\to E/G_0\cong Y\to X$. Let $p:E\to Y$ be the middle map. Now take a Higgs field $\phi\in H^0(X,E(V)\otimes K_X)$. We define a $\Gamma$-invariant Higgs field $\psi\in H^0(Y,F(V)\otimes K_Y)$ as follows: 
    for each $y\in Y$ let $x:=p_Y(y)$, choose $e\in p^{-1}(y)$, let $\phi_x=(e,v)\otimes k\in E(V)\otimes K_X\vert_x$ and set $\psi:=(e,v)\otimes p_Y^*k\in E(V)\otimes p_Y^*K_X\vert_y\cong F(V)\otimes K_Y\vert_y$. This is independent of the choice of $e$ since, given $g\in G_0$, the Higgs field $\phi$ at $x$ is also represented by $(e\cdot g,\rho(g)^{-1}v)\otimes k$, and $(e\cdot g,\rho(g)^{-1}v)=(e,v)$ as elements of $F(V)$. Moreover, we have 
    \begin{align*}
        (e\cdot\gamma,v\cdot\rhog(\gamma))\otimes k=(e(1,\gamma),\rho(1,\gamma)^{-1}v)\otimes k=(e,v)\otimes k
    \end{align*}
    for every $\gamma\in \Gamma$ as elements of $E(V)\otimes K_X$, so that $\psi$ is $\Gamma$-invariant. Conversely, a $\Gamma$-invariant section $\psi\in H^0(Y,F(V)\otimes K_Y)$ provides a section $\phi\in H^0(X,E(V)\otimes K_X)$ defined by $\phi_x:=(e,v)\otimes K$, where $(e,v)\otimes p_Y(k)\in F(V)\otimes K_Y\vert_y$ is a local expression of $\psi$ for some $y\in p_Y^{-1}(x)$ and $p_Y(k)$ denotes the image of $k$ by the natural homomorphism $K_Y\cong p_Y^*K_X\to K_X$. The $\Gamma$-invariance of $\psi$ implies that this is independent of the choice of representative.

    The definition of the functor at the level of morphisms is the same as in Section \ref{section-twisted-and-non-connected-principal}, which is well defined because a morphism of $(G,V)$-Higgs pairs over $X$ preserves the Higgs fields and so does the corresponding map between twisted equivariant $(G_0,V)$-Higgs pairs over $Y$.
\end{proof}

Proposition \ref{prop-twisted-equivariant-higgs pairs-one-to-one} and Definition \ref{definition-stable-twisted-equivariant} induce notions of (semi,poly)stability for Higgs pairs with non-connected structure group, which we introduce next. Choose the lift $\theta$ of the characteristic homomorphism of \ref{eq-general-extension} so that $\theta(\Gamma)$ preserves a maximal compact subgroup $K$ of $G_0$. In particular we have an action of $\Gamma$ on $\lie k$, the Lie algebra of $K$.

\begin{definition}\label{def-stability-higgs-pair-non-connected}
Let $z\in i\zk^{\Gamma}$. A $(G,V)$-Higgs pair $(E,\phi)$ over $X$ is:

\begin{itemize}
    \item \textbf{$z$-semistable} if $\deg E(\tau,s)\ge \pair{z}s$ for any $s\in i\lie k^{\Gamma}$ and any reduction of structure group $\tau\in H^0(X,E(G/P_s))$ such that $\phi\in H^0(X,E(V)_{\tau,s}\otimes K_X)$.
    \item \textbf{$z$-stable} if $\deg E(\tau,s)> \pair{z}s$ for any $s\in i\lie k^{\Gamma}$ and any reduction of structure group $\tau\in H^0(X,E(G/P_s))$ such that $\phi\in H^0(X,E(V)_{\tau,s}\otimes K_X)$.
    \item \textbf{$z$-polystable} if it is $z$-semistable and, if $\deg E(\tau,s)=\pair{z}s$ for some $s\in i\lie k^{\Gamma}$ and a reduction $\tau\in H^0(X,E(G/P_s))$ such that $\phi\in H^0(X,E(V)_{\tau,s}\otimes K_X)$, there is a further holomorphic reduction of structure group $\tau'\in H^0(X,E_{\tau}(P_s/L_s))$ with $\phi\in H^0(X,E(V)^0_{\tau',s}\otimes K_X)$.
\end{itemize}
\end{definition}

 A moduli space $\mdl(X,G,V)$ classifying isomorphism classes of $z$-polystable $(G,V)$-Higgs pairs over $X$ for every $z\in i\zk$ may be constructed using GIT, see \cite[Section 2.6]{schmitt:2008}. Theorem \ref{EH1} also holds when $G$ is non-connected, see \cite{PBI}.

  \begin{definition}
     Let $H\le G$ be a reductive subgroup. Given a $(H,V)$-Higgs pair $(E,\phi)$ over $X$, its \textbf{extension of structure group} to $G$ is the $(G,V)$-Higgs pair $(E_G,\phi_G)$ over $X$ given by:
     \begin{itemize}
         \item The extension of structure group $E_G$ of $E$ to $G$.
         \item If $\phi$ be locally equal to $(e,v)\otimes k\in E(V)\otimes K_X$, then $\phi_G$ is also locally equal to $(e,v)\otimes k\in E_G(V)\otimes K_X$.
     \end{itemize}
     Conversely, $(E,\phi)$ is a \textbf{reduction of structure group} of $(E_G,\phi_G)$ to $H$. 
 \end{definition}

 \begin{lemma}\label{lemma-exists-higgs-field-reduction}
     Let $H\le G$ be a subgroup. Given a $(G,V)$-Higgs pair $(E_G,\phi_G)$ and a reduction of structure group $E$ of $E_G$ to $H$, there exists a Higgs field $\phi\in H^0(X,E(V)\otimes K_X)$ making $(E,\phi)$ a reduction of structure group of $(E_G,\phi_G)$.
 \end{lemma}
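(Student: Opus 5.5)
The idea is that, for a reduction $E\subseteq E_G$ to $H$, the associated vector bundles $E(V)=E\times_H V$ (with $H$ acting through $\rho\vert_H$) and $E_G(V)=E_G\times_G V$ are canonically isomorphic. The Higgs field $\phi$ is then simply $\phi_G$ transported through this isomorphism. So I would first build the isomorphism, then define $\phi$, and finally check that $(E,\phi)$ extends to $(E_G,\phi_G)$ in the sense of the preceding definition.

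Step one is to consider the map
$$\iota:E\times_H V\to E_G\times_G V;\,[e,v]_H\mapsto [e,v]_G,$$
induced by the inclusion $E\hookrightarrow E_G$.

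\begin{itemize}
\item It is well defined, since $(eh,\rho(h)^{-1}v)$ is also $G$-equivalent to $(e,v)$ for $h\in H\le G$.
\item It covers the identity of $X$ and is fibrewise linear.
\item It is surjective: any point of $E_G$ over $x$ has the form $eg$ with $e\in E_x$ and $g\in G$, because $E_G\vert_x$ is a $G$-torsor containing $E_x$. Hence $[eg,v]_G=[e,\rho(g)v]_G=\iota([e,\rho(g)v]_H)$.
\item It is injective: if $[e,v]_G=[e',v']_G$ with $e,e'\in E_x$, then $e'=eg$ for a unique $g\in G$. Since $E_x$ is an $H$-torsor, $g\in H$, so the two classes already agree in $E\times_H V$.
\end{itemize}

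Holomorphicity of $\iota$ and of its inverse I would check in local trivializations. A local holomorphic section $e$ of $E$ is also a local section of $E_G$, and in the induced trivializations $\iota$ is the identity of $U\times V$. Tensoring with $K_X$ therefore gives a holomorphic isomorphism $E(V)\otimes K_X\cong E_G(V)\otimes K_X$.

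Step two is to set $\phi:=(\iota\otimes\id)^{-1}(\phi_G)\in H^0(X,E(V)\otimes K_X)$. By construction, wherever $\phi$ is locally $(e,v)\otimes k$ with $e$ a local section of $E$, the field $\phi_G$ is locally $(e,v)\otimes k$ as an element of $E_G(V)\otimes K_X$. This is exactly the defining property of the extension of structure group of a Higgs pair, so $(E,\phi)$ is a reduction of $(E_G,\phi_G)$.

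I expect the only real point needing care to be the bijectivity of $\iota$, namely that every element of $E_G(V)$ can be represented using a frame lying in $E$. This rests on identifying $E_G$ with $E\times_H G$, that is, on $E$ being a genuine reduction of $E_G$ in which each fibre $E_x$ is an $H$-torsor inside the $G$-torsor $E_G\vert_x$. Once that is granted, the rest of the argument is formal.
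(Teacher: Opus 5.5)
Your proposal is correct and is essentially the paper's argument: the paper also uses transitivity of $G$ on the fibres of $E_G$ to rewrite a local representative $(e,v)\otimes k$ of $\phi_G$ as $(eg,\rho(g)^{-1}v)\otimes k$ with $eg\in E$, and takes that as the local form of $\phi$. Your canonical isomorphism $E(V)\cong E_G(V)$ packages the same idea more carefully. In particular, your injectivity check via the $H$-torsor property makes explicit why the result does not depend on the choice of $g$, which the paper leaves implicit.
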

 \begin{proof}
     If $\phi_G$ is locally of the form $(e,v)\otimes k$ for some $e\in E_G$, $v\in V$ and $k\in K_X$, by transitivity of the action of $G$ on $E_G$ we may always find $g\in G$ such that $eg\in E$. Thus $(eg,\rho(g)^{-1}v)\otimes k\in E(V)\otimes K_X$ is also a local representation of $\phi_G$, so we may define $\phi$ to have this same local form.
 \end{proof}

  Now let $Y\to X$ be a (not necessarily connected) $\Gamma$-bundle with monodromy group $\Gamma'\le\Gamma$ and connected component $p_{Y'}:Y'\to X$ (see Section \ref{section-monodromy}). Let $G'\cong G\times_{\theta,c}\Gamma'$ be the preimage of $\Gamma'$ under the natural surjection $G\to\Gamma$.

 \begin{proposition}\label{prop-polystability-extension-of-structure-group-non-connected}
     Fix $z\in i\zk^{\Gamma}$. Let $(E,\phi)$ be a $(G',V)$-Higgs pair over $X$ with extension of structure group $(E_G,\phi_G)$ to $G$. Then  $(E,\phi)$ is $z$-polystable if and only if $(E_G,\phi_G)$ is $z$-polystable. In particular there exists an extension of structure group morphism $\mdl(X,G')\to\mdl(X,G)$.
 \end{proposition}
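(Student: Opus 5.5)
The plan is to compare the two sets of destabilizing data directly. The key observation is that $G'$ and $G$ share the identity component $G_0$, so both notions of stability in Definition \ref{def-stability-higgs-pair-non-connected} are tested against the same objects: elements $s\in i\lie k^{\Gamma}$ together with the parabolic $P_s\le G_0$, its Levi $L_s$, and the spaces $V_s$, $V_s^0$. The only difference lies in the holomorphic reductions $\tau$. For $E$ we use sections of $E(G'/P_s)$, and for $E_G$ we use sections of $E_G(G/P_s)$. (As in the definition, $P_s$ is viewed as a subgroup of $G_0\le G'\le G$.)

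First I would establish a bijection, or at least a degree- and Higgs-compatible correspondence, between these reductions. Since $E_G=E\times_{G'}G$ and $G$ is a disjoint union of cosets $G'(1,\gamma)$, we get
\[
E_G(G/P_s)=\bigsqcup_{\gamma\Gamma'}E\times_{G'}\bigl(G'(1,\gamma)P_s/P_s\bigr).
\]
A section of $E_G(G/P_s)$ over the connected curve $X$ therefore lands in a single piece. That piece is identified with $E(G'/\Int_{(1,\gamma)}P_s)$, and $\Int_{(1,\gamma)}P_s=P_{\theta_\gamma(s)}=P_s$ because $s$ is $\Gamma$-invariant. Hence every reduction $\tau_G$ of $E_G$ to $P_s$ comes from a reduction $\tau$ of $E$ to $P_s$, possibly after replacing $E$ by the translate $E(1,\gamma)$, which is again a reduction of $E_G$ to $G'$. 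Conversely, $E(G'/P_s)\subset E_G(G/P_s)$ gives the other direction directly.

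The $P_s$-bundles $E_\tau$ and $(E_G)_{\tau_G}$ coincide as $P_s$-bundles. Consequently $\deg E(\tau,s)=\deg E_G(\tau_G,s)$, computed by (\ref{eq-def-deg}) with the same $K_s$ and the same $\chi_s$. Because the Higgs fields are locally the same expression $(e,v)\otimes k$, we also get $E(V)_{\tau,s}=E_G(V)_{\tau_G,s}$ inside the common bundle $E(V)=E_G(V)$, so the condition $\phi\in H^0(E(V)_{\tau,s}\otimes K_X)$ matches. The same argument applied to $E_\tau(P_s/L_s)$ handles the further reductions $\tau'$ and the spaces $V_s^0$ in the polystability clause. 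For the translated case I would use $\Gamma$-invariance of $s$ together with the fact that conjugation by $(1,\gamma)$ preserves $\chi_s$ (since $\theta_\gamma$ preserves $K$ and the pairing) to see that degrees and $V_s$ are unchanged.

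With this correspondence in hand, semistability, stability and the polystability clause transfer term by term, which gives the equivalence. The morphism $\mdl(X,G')\to\mdl(X,G)$ then follows from functoriality of extension of structure group on polystable objects, descending to the GIT moduli spaces of \cite{schmitt:2008}. Alternatively, it can be obtained from Theorem \ref{EH1}: a solution $h$ of the Hitchin equation for $E$ extends to one for $E_G$ with the same curvature. I would in fact use Theorem \ref{EH1} as a cross-check for the polystability direction from $E_G$ to $E$, restricting a $K$-reduction of $E_G$ to $E$.

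The step I expect to be the main obstacle is the translation issue. One must check carefully that a reduction $\tau_G$ lying in a non-identity coset piece yields a genuine reduction of $E$ itself, or of an isomorphic copy, with the same degree. This relies on $(1,\gamma)$ normalizing $P_s$ and $L_s$ and preserving $\chi_s$, which needs $\theta_\gamma(s)=s$ and $\theta_\gamma$ to be compatible with the invariant pairing.
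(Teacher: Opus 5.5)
Your overall plan is the paper's plan: match the holomorphic reductions of $E$ and $E_G$, note that degrees agree because the connection (hence the curvature) is the same, and check that the conditions on $\phi$ and the Levi reductions correspond. The problem is in the set-up. You take $P_s\le G_0$, but in Definition \ref{def-stability-higgs-pair-non-connected} the group $P_s$ is defined by (\ref{eq-def-Ps}) inside the non-connected group $G$, and $P'_s=P_s\cap G'$ is the corresponding subgroup of $G'$. In $G_0\times_{\theta,c}\Gamma$ one has $(1,\gamma)e^{ts}(1,\gamma)^{-1}=e^{t\,d\theta_\gamma(s)}=e^{ts}$ for $s\in i\mathfrak{k}^{\Gamma}$. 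Hence every $(1,\gamma)$ lies in $P_s$ (even in $L_s$), so $P_s$ meets every connected component of $G$. This is where the $\Gamma$-invariance of $s$ enters. With your reading the notion becomes degenerate. A reduction of $E$ or $E_G$ to a subgroup of $G_0$ is in particular a reduction to $G_0$, and by Corollary \ref{cor:connected-reduction} that forces trivial monodromy. So for connected $E$ with $\Gamma'\neq 1$ there would be no destabilizing reductions at all, and you would be proving an equivalence of vacuous conditions rather than the stated one.

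Once $P_s$ is read correctly, your ``main obstacle'' does not need to be overcome: it does not arise. Since $G_0\subseteq G'$ and $P_s$ contains every $(1,\gamma)$, you get $G'P_s=G$. Hence $G/P_s\cong G'/P'_s$ as $G'$-spaces, and $E_G(G/P_s)=E\times_{G'}(G/P_s)\cong E(G'/P'_s)$. Your decomposition indexed by $\Gamma'\backslash\Gamma$ is then not disjoint (all the pieces $G'(1,\gamma)P_s/P_s$ coincide), and no translates $E(1,\gamma)$ enter.

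Concretely, as in the paper, $(E_G)_{\tau_G}\cap E$ is a $P'_s$-reduction $E_\tau$ of $E$ with $(E_G)_{\tau_G}=E_\tau\times_{P'_s}P_s$. So the two are related by extension of structure group; they are not equal as $P_s$-bundles, as you state. The degrees still agree, because $P'_s$ and $P_s$ have the same Lie algebra and the connection on $(E_G)_{\tau_G}$ is induced from one on $E_\tau$. Conversely, a $P'_s$-reduction of $E$ extends to a $P_s$-reduction of $E_G$. The same intersection and extension argument handles the Levi subgroups $L'_s\le L_s$ in the polystability clause.
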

 \begin{proof}
     Assume that $(E,\phi)$ is $z$-polystable. First we prove that $(E_G,\phi_G)$ is $z$-semistable: consider a reduction of structure group $\tau_G\in H^0(X,E_G(G/P_s))$ such that $\phi_G$ lies in $H^0(X,E_G(V)_{\tau_G,s}\otimes K_X)$, where $s\in\lie k^{\Gamma}$. Since $s$ is $\Gamma$-invariant the parabolic subgroup $P_s\le G$ intersects every connected component in $G$, hence the fibre of the total space of the reduction $(E_G)_{\tau_G}$ over $x\in X$ intersects every connected component of $E_G\vert_x$. Thus, since $E\vert_x$ is a union of connected components of $E_G\vert_x$, its intersection with $P_s$ must be isomorphic to $G'\cap P_s=P'_s$, where $P'_s:=\{g\in G'\suhthat e^{ts}ge^{-ts}\,\text{remains bounded as}\;t\to\infty\}$ is the parabolic subgroup of $G'$ defined by (\ref{eq-def-Ps}). Thus the intersection of $(E_G)_{\tau}$ with $E$ is a reduction of structure group of $E$ to $P'_s\le P_s$, which we call $\tau\in H^0(X,E(G'/P'_s))$. If the Higgs field $\phi_G$ can be locally expressed in the form $(e,v)\otimes k$, where $e\in E_{\tau}$, $v\in V_s$ and $k\in K_X$, and so $\phi$, which has the same form, is in $H^0(X,(E_{\tau}\times_{\tau,s}V_s)\otimes K_X)=H^0(X,E(V)_{\tau,s}\otimes K_X)$. We also have
     \begin{equation*}
         \deg E_G(\tau_G,s)=\deg E(\tau,s)\ge \pair{z}s,
     \end{equation*}
     where the second equation follows from $z$-semistability of $E$ and the first equation follows from definition (\ref{eq-def-deg}) and the fact that the connection of $E_G$ is induced by a connection of $E$ (so that the corresponding curvatures are equal). 

     Now assume that $\deg E_G(\tau_G,s)=\deg E(\tau,s)= \pair{z}s$. By $z$-polystability of $(E,\phi)$ there is a further holomorphic reduction of structure group $\tau'\in H^0(X,E_{\tau}(P'_s/L'_s))$ with $\phi\in H^0(X,E(V)^0_{\tau',s})$, where $L'_s$ is the Levi subgroup of $P'_s$. Thus the extension of structure group of $E_{\tau'}$ to $L_s$ yields a reduction $\tau'_G\in H^0(X,(E_{G})_{\tau_G}(P_s/L_s))$ such that $\phi_G\in H^0(X,E_G(V)^0_{\tau'_G,s})$, as required.

     The converse can be shown using similar arguments: assume that the $(G,V)$-Higgs pair $(E_G,\phi_G)$ is $z$-polystable. For $z$-semistability consider a reduction of structure group $\tau\in H^0(X,E(G'/P'_s))$ such that $\phi\in H^0(X,E(V)_{\tau,s}\otimes K_X)$, where $s\in\lie k^{\Gamma}$. Then, extending structure group, we get a reduction $\tau_G\in H^0(X,E_G(G/P_s))$ and so we conclude that 
     \begin{equation*}
         \deg E(\tau,s)=\deg E_G(\tau_G,s)\ge \pair{z}s.
     \end{equation*}
     If equality holds, we may find a further reduction $\tau'_G\in H^0(X,(E_G)_{\tau_G}(P_s/L_s))$ with $\phi_G\in H^0(X,E_G(V)^0_{\tau_G',s})$, and the intersection of $(E_G)_{\tau_G}$ with $E$ yields a reduction $\tau'\in H^0(X,E_{\tau}(P'_s/L'_s))$ with $\phi\in H^0(X,E(V)^0_{\tau',s})$.
 \end{proof}

\begin{proposition}\label{prop-action-centralizer-higgs}
    Let $Z_{\Gamma}(\Gamma')$ be the centralizer of $\Gamma'$ in $\Gamma$. There is a natural action of $Z_{\Gamma}(\Gamma')$ on $\mdl(Y',G_0,\Gamma',\theta,c,V,\rhog)$ on the left given as follows: take a $(\theta,c,\rhog)$-twisted $\Gamma'$-equivariant $(G_0,V)$-Higgs pair $(E,\cdot,\phi)$ over $Y'$ and an element $z\in Z_{\Gamma}(\Gamma')$. Then $z$ sends $E$ to $(\theta_z(E),*,\rhog(z)\phi)$, where $*$ is defined as in Proposition \ref{prop-action-centralizer}, $\theta_z(E)$ is the extension of structure group by $\theta_z$ and $\rhog(z)\phi$ is defined using the action $\rhog$ on $V$.
    This induces an action of $\cent$ on the moduli space $\mdl(X,G',V)_{Y'}$ of $(G',V)$-Higgs pairs $(E',\phi')$ such that $E'/G_0\cong Y'$ via Proposition \ref{prop-twisted-equivariant-higgs pairs-one-to-one}, where $z\in \cent$ sends $(E',\phi')$ to $(\Int_{(1,z)}(E'),\rho(1,z)\phi')$.
\end{proposition}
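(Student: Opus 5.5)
The plan mirrors the proof of Proposition \ref{prop-action-centralizer}. We pass through the equivalence of Proposition \ref{prop-twisted-equivariant-higgs pairs-one-to-one}, transport the problem to $(G',V)$-Higgs pairs over $X$, and use the fact that conjugation by an element of $G'$ (here $(1,z)$ with $z\in\cent$) naturally acts on $(G',V)$-Higgs pairs by extension of structure group.

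First take a $(\theta,c,\rhog)$-twisted $\Gamma'$-equivariant $(G_0,V)$-Higgs pair $(E,\cdot,\phi)$ over $Y'$, and let $(E_{G'},\phi')$ be the associated $(G',V)$-Higgs pair over $X$. Let $\beta=\Int_{(1,z)}$, which is an automorphism of $G'$ because $z$ normalizes $\Gamma'$. Then $\beta(E_{G'})$ has the same total space and the twisted action $e*(g,\gamma)=e(1,z)^{-1}(g,\gamma)(1,z)$. Its Higgs field is $\rho(1,z)\phi'$: locally $\phi'=(e,v)\otimes k$, and we set $\beta(\phi')=(e,\rho(1,z)v)\otimes k$. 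Well-definedness is the same computation as in Section \ref{section-action}, with the identity $\rho(\beta(h))\rho(1,z)=\rho(1,z)\rho(h)$ for $h\in G'$ playing the role of $\Ad_{\theta(g)}\theta=\theta\Ad_g$.

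Next we decompose this $G'$-action via Proposition \ref{prop-bijection-G,Gamma-action}, exactly as in Proposition \ref{prop-action-centralizer}. The $G_0$-part is $\theta_z^{-1}$-twisted, which gives $\theta_z(E)$, and the $\Gamma'$-part is the action $*$. For the Higgs field, we apply the construction of Proposition \ref{prop-twisted-equivariant-higgs pairs-one-to-one} to $(\beta(E_{G'}),\rho(1,z)\phi')$. It returns the section of $\theta_z(E)(V)\otimes K_{Y'}$ with local form $(e,\rhog(z)v)\otimes p^*k$, since $\rhog(z)=\rho(1,z)$ by the definition of $\rhog$ as the pullback of $\rho$ along the section. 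This section is $*$-invariant automatically, because $\rho(1,z)\phi'$ is a genuine Higgs field of the $G'$-bundle $\beta(E_{G'})$. This identifies the formula in the statement.

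Then we must check that the assignment descends to moduli. Isomorphisms of twisted equivariant pairs are carried to isomorphisms by the identity map on total spaces, as in Remark \ref{remark-induced-iso}. For polystability, $z$-polystability of $(E_{G'},\phi')$ is equivalent to that of its extension to $G$ by Proposition \ref{prop-polystability-extension-of-structure-group-non-connected}. The extension of $\beta(E_{G'},\phi')$ to $G$ is isomorphic to the original one: right multiplication by $(1,z)$ gives $Es$, as in the proof of Theorem \ref{th-prym-narasimhan-ramanan-principal}, and it carries the Higgs field to the same section. So polystability is preserved, although one should check that the parameter $z$ stays $\Gamma$-invariant.

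Finally we verify that this is a left action. Since $\Int_{(1,z)(1,z')}=\Int_{(1,z)}\Int_{(1,z')}$, and $(1,z)(1,z')=(c(z,z'),zz')$ differs from $(1,zz')$ by a central element of $G_0$, the extension of structure group by $\Int_{c(z,z')}$ is isomorphic to the identity via multiplication by $c(z,z')$. Because $c(z,z')$ lies in the centre $Z$ of $G_0$, this multiplication commutes with the $G'$-action up to the relevant twist. The corresponding Higgs field changes by $\rho(c(z,z'))$, and this change is absorbed by that same isomorphism. I expect this cocycle bookkeeping, in particular confirming that the multiplication by $c(z,z')$ is an honest $G'$-equivariant isomorphism that respects the Higgs fields, to be the main obstacle. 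The remaining steps are formal consequences of the earlier propositions.
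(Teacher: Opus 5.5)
Your proposal is correct, but it takes a different route from the paper's proof.

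\textbf{How the paper argues.} The paper stays on $Y'$. It checks by a local computation that $\rhog(z)\phi$ is a well-defined section of $\theta_z(E)(V)\otimes K_{Y'}$. It then proves $*$-invariance by regarding $\phi$ and $\rhog(z)\phi$ over $p_{Y'}^{-1}(W)$ as equivariant maps to $V$ (Proposition \ref{prop:EM-Gamma-equivariant-sections}) and computing $\phi^z((e\cdot z)*\gamma)=\phi^z(e\cdot z)\cdot\rhog(\gamma)$. That computation tacitly uses the $G$-action on the extended bundle, because $e\cdot z$ is not defined on $E$ itself when $z\notin\Gamma'$. The paper says the action axioms follow from the definitions, and it does not discuss preservation of polystability.

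\textbf{How you argue.} You transport everything to $(G',V)$-pairs over $X$. There the action is ordinary extension of structure group by $(\rho(1,z),\Int_{(1,z)})$, so invariance of the new Higgs field comes for free from Proposition \ref{prop-twisted-equivariant-higgs pairs-one-to-one}. You also verify descent to the moduli space, via the common extension to $G$ and Proposition \ref{prop-polystability-extension-of-structure-group-non-connected}, and you check the action axioms. Your route is cleaner and more complete. The paper's computation is more explicit about the formula on $Y'$.

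\textbf{Small points.}
\begin{itemize}
\item It is the \emph{centralizing} hypothesis, not mere normalization, that makes the $\Gamma'$-action induced on $\Int_{(1,z)}(E')/G_0$ coincide with the original action on $Y'$. That is what keeps the result over the same cover. You use this implicitly when you say ``exactly as in Proposition \ref{prop-action-centralizer}''.
\item With $\beta=\Int_{(1,z)}$, the reduction of $E_G$ isomorphic to $\beta(E_{G'})$ comes from right multiplication by $(1,z)^{-1}$, not by $(1,z)$. The proof of Theorem \ref{th-prym-narasimhan-ramanan-principal} uses $\Int_{s^{-1}}$.
\item The step you expect to be the main obstacle is immediate. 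For any $s$ in the structure group, $e\mapsto es$ is an isomorphism $(E',\phi')\cong(\Int_s(E'),\rho(s)\phi')$, exactly as in Section \ref{section-action}. For $s=c(z,z')\in Z\subset G_0$ this isomorphism covers the identity of $Y'$. You do not need $c(z,z')$ to be central in $G'$, and in general it is not, since $(1,\gamma)$ acts on $Z$ by $\theta_\gamma$.
\item On the stability parameter: for a parameter in $i\zk^{\Gamma}$, both pairs extend to the same $G$-pair, so the same parameter applies and nothing more is needed. For parameters that are only $\Gamma'$-invariant, $\theta_z$ may move the parameter. This is harmless because $\cM$ is the union over all parameters.
\end{itemize}
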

\begin{proof}
    The fact that this is a left group action follows directly from its definition. It is left to show that $\phi^z:=\rhog(z)\phi$ is well defined and $\Gamma$-invariant. Assume that $\phi$ is locally equal to $(e,v)\otimes k\in H^0(Y',E(V)\otimes K_{Y'})$. Then $\phi^z$ is locally equal to $(e,\rhog(z)v)\otimes k\in (\theta_z(E)\times_{\rho}V)\otimes K_X$. Another representative is of the form $(e\cdot g,\rho(g)^{-1}v)\otimes k$ for some $g\in G_0$, which yields
    \begin{align*}
        (e\cdot g,\rhog(z)\rho(g)^{-1}v)\otimes k&
        =(e*\theta_z(g),\rho(\theta_z(g))^{-1}\rhog(z)v)\otimes k
        =(e,\rhog(z)v)\otimes k
    \end{align*}
    where $*$ also denotes the $G_0$-action on $\theta_z(E)$, so that $\rhog(z)\phi\in (\theta_z(E)\times_{\rho}V)\otimes K_X$ is well defined.
    
    % Now let $*$ denote the $\Gamma$-equivariant action on $(\theta_z(E)\times_{\rho}V)\otimes K_X$ determined by $*$ and $\rhog$. 
    In what follows we denote by $\cdot$ the $G'\cong G_0\times_{\theta,c}\Gamma'$-action on $E$ given by Proposition \ref{prop-bijection-G,Gamma-action}, or equivalently the $G'$-action making the total space of $E$ a $G'$-bundle over $X$ as in Proposition \ref{prop-twisted-equivariant-bundles-one-to-one}. Let $W$ be an open subset of $X$ where $K_X$ is trivial, and let $U:=p_{Y'}^{-1}(W)$. The open set $U$ is $\Gamma$-invariant and trivializes $K_{Y'}\cong p_{Y'}^*K_X$. Hence the local sections $\phi\vert_U$ and $\phi^z\vert_{U}$ may be regarded as $G_0$-equivariant maps $\phi_U:E_U\to V$ and $\phi^z_U:\theta_z(E)_U\to V$ respectively, and moreover by Proposition \ref{prop:EM-Gamma-equivariant-sections} $\phi_U$ is $\Gamma$-equivariant. They are related by
    $
        \phi^z(e)=\rhog(z)\phi(e)
    $
    for each $e\in E_U$ or, using the $\Gamma$-equivariance of $\phi_U$, by $\phi^z(e\cdot z)=\phi(e)$. We have
    \begin{align*}
        \phi^z((e\cdot z)*\gamma)&
        =\phi^z(e\cdot(1,z)\cdot(1,z)^{-1}\cdot(1,\gamma)\cdot(1,z))\\&
        =\phi^z((e\cdot\gamma)\cdot z)\\&
        =\phi(e\cdot\gamma)\\&
        =
        \phi(e)\cdot\rhog(\gamma)\\&
        =\phi^z(e\cdot z)\cdot\rhog(\gamma)
    \end{align*}
    for each $e\in E_U$ and $\gamma\in\Gamma'$. This implies that $\phi^z_U$ is $\Gamma$-equivariant as a map and so, by Proposition \ref{prop:EM-Gamma-equivariant-sections}, the corresponding section of $(\theta_z(E)\times_{\rho}V)\otimes K_{Y'}\vert_U$ is $\Gamma$-invariant. Since $U$ is arbitrary, the global section $\phi^z\in H^0(Y',(\theta_z(E)\times_{\rho}V)\otimes K_{Y'})$ is $\Gamma$-invariant as required.
\end{proof}

\begin{theorem}\label{th-prym-narasimhan-ramanan-higgs}
Let $Y\in H^1(X,\Gamma)$ with monodromy group $\Gamma'$ and corresponding connected component $Y'$, and denote by $\mdl_Y(X,G,V)$ the moduli space of $(G,V)$-Higgs pairs $E$ over $X$ such that $E/G_0\cong Y$. We have a bijection
\begin{equation}\label{eq-narasimhan-ramanan-higgs}
    \mdl(Y',{G_0},\Gamma',\theta,c,V,\rhog)/Z_{\Gamma}(\Gamma')\xrightarrow{\sim} \mdl_{Y}(X,G,V),
\end{equation}
where $Z_{\Gamma}(\Gamma')$ is the centralizer of $\Gamma'$ in $\Gamma$, which acts on $\mdl(Y',{G_0},\Gamma',\theta,c,V,\rhog)$ as in Proposition \ref{prop-action-centralizer-higgs}. The bijection is given by Proposition \ref{prop-twisted-equivariant-higgs pairs-one-to-one} and extension of structure group from $G'$ to $G$.
\end{theorem}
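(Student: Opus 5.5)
The proof will follow that of Theorem \ref{th-prym-narasimhan-ramanan-principal}, upgraded to Higgs pairs via Proposition \ref{prop-twisted-equivariant-higgs pairs-one-to-one}, Lemma \ref{lemma-exists-higgs-field-reduction} and Proposition \ref{prop-polystability-extension-of-structure-group-non-connected}. First, the map. Given a polystable $(\theta,c,\rhog)$-twisted $\Gamma'$-equivariant $(G_0,V)$-Higgs pair over $Y'$, Proposition \ref{prop-twisted-equivariant-higgs pairs-one-to-one} (applied with $\Gamma'$ in place of $\Gamma$ and the connected cover $Y'$) gives a $(G',V)$-Higgs pair over $X$ with $E/G_0\cong Y'$. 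We then extend structure group to $G$. Since $(E(1,\gamma))_{\gamma\Gamma'}$ decomposes $E_G$, we get $E_G/G_0\cong Y$. Polystability is preserved in both directions by Proposition \ref{prop-polystability-extension-of-structure-group-non-connected}, once we note that the stability notions on $Y'$ (Definition \ref{definition-stable-twisted-equivariant}) and for non-connected $G'$ (Definition \ref{def-stability-higgs-pair-non-connected}) correspond under the equivalence of categories. This holds because $\Gamma'$-invariant reductions to $P_s$ over $Y'$ with $s\in i\lie k^{\Gamma}$ correspond to reductions of the $G'$-bundle to $G'\cap P_s$, and degrees agree up to the normalisation factor $|\Gamma'|$.

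Second, we show the map descends to the quotient by $\cent$. By Proposition \ref{prop-action-centralizer-higgs}, $z$ acts on the $G'$-side by $(E',\phi')\mapsto(\Int_{(1,z)}(E'),\rho(1,z)\phi')$. As in the proof of Theorem \ref{th-prym-narasimhan-ramanan-principal}, right multiplication by $s=(1,z)$ identifies $\Int_{s^{-1}}(E')$ with the component $E's\subset E_G$. We must also check that the Higgs field matches: a local expression $(e,v)\otimes k$ of $\phi_G$ equals $(es,\rho(s)^{-1}v)\otimes k$. Hence the Higgs field that Lemma \ref{lemma-exists-higgs-field-reduction} induces on $E's$ corresponds under $e\mapsto es$ to $\rho(s)^{\pm1}\phi'$, with the sign convention matching the direction of the action. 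So both elements of the orbit extend to the same $(G,V)$-Higgs pair.

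Third, surjectivity. Given a polystable $(E_G,\phi_G)$ with $E_G/G_0\cong Y$, we choose a connected component $E\subset E_G$ lying over $Y'$. This is a reduction to $G'$ by Proposition \ref{prop:connected-reduction}. Lemma \ref{lemma-exists-higgs-field-reduction} supplies a Higgs field $\phi$, the pair is polystable by Proposition \ref{prop-polystability-extension-of-structure-group-non-connected}, and Proposition \ref{prop-twisted-equivariant-higgs pairs-one-to-one} pulls it back to $Y'$.

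Finally, injectivity, which I expect to be the main obstacle. If two pairs give isomorphic $(E_G,\phi_G)$, we transport along the isomorphism so that both $E$ and $E'$ sit inside the same $E_G$. The argument of Theorem \ref{th-prym-narasimhan-ramanan-principal} then shows $E'=E(1,z)$ for some $z\in\cent$, and the Higgs fields agree because both are the restriction of the single $\phi_G$; this is the computation of step two read backwards. The delicate points are that isomorphisms in $\hcc1$ are $Z(\Gamma')$-isomorphisms rather than fibre-preserving ones, and that moduli points are isomorphism classes. I would handle this by noting that a $Z(\Gamma')$-isomorphism covering $\gamma\in Z(\Gamma')\subseteq\cent$ is itself realised by the $\cent$-action. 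So quotienting by $\cent$ absorbs exactly the ambiguity in both the choice of component and the morphisms of $\hcc1$.
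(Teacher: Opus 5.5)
Your proposal is correct and follows the paper's proof essentially step for step. Surjectivity comes from taking the component of $E_G$ over $Y'$ and applying Lemma \ref{lemma-exists-higgs-field-reduction}. Well-definedness comes from identifying $\Int_{s^{-1}}(E)$ with $Es\subset E_G$ for $s=(1,z)$ and checking locally that the induced Higgs field is $\rho(s)^{-1}\phi$, so the exponent in your $\rho(s)^{\pm1}$ is $-1$. Injectivity comes from showing $E'=E(1,z)$ with $z\in\cent$. Your extra remarks, on polystability and on $Z(\Gamma')$-isomorphisms being absorbed by the $\cent$-action, cover points the paper leaves implicit, and they are sound.
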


\begin{proof}
    Given a $(G,V)$-Higgs pair $(E_G,\phi_G)$ over $X$ such that $E_G/G_0\cong Y$, take a connected component $E\subset E_G$ such that $E/G_0\cong Y'$. By Lemma \ref{lemma-exists-higgs-field-reduction} there exists $\phi\in H^0(X,E(V)\otimes K_X)$ such that $(E,\phi)$ is a reduction of structure group of $(E_G,\phi_G)$ to $G'$, and so surjectivity follows by Proposition \ref{prop-twisted-equivariant-higgs pairs-one-to-one}. 

To show that the morphism is well-defined, consider a $(G',V)$-Higgs pair $(E,\phi)$ with extension of structure group $(E_G,\phi_G)$ to $G$. Let $z\in\cent$, $s:=(1,z)\in G$ and $\beta:=\Int_{s^{-1}}\in \Aut(G')$. In the proof of Theorem \ref{th-prym-narasimhan-ramanan-principal} we saw that $Es\subset E_G$ is a reduction of structure group to $G'$ isomorphic to $\beta(E)$. If $\phi$ is locally equal to $(e,v)\otimes k\in E(V)\otimes K_X$ then the Higgs field of $Es$ given by Lemma \ref{lemma-exists-higgs-field-reduction} is locally equal to $(es,\rho(s)^{-1}v)\otimes k=(es,\rhog(z)^{-1}v)\otimes k$. Recall that the isomorphism $Es\cong \beta(E)$ is multiplication by $s^{-1}$, so that the induced Higgs field is locally equal to $(ess^{-1},\rhog(z)^{-1}v)\otimes k=(e,\rhog(z)^{-1}v)\otimes k$, which is a local representation of $\rhog(z)^{-1}\phi$. Hence the extension of structure group of $(\Int_{s}^{-1}(E),\rhog(z)^{-1}\phi)$ is also $(E_G,\phi_G)$, as required.

It is left to show injectivity. Let $(F,\psi)$ and $(F',\psi')$ be ($\theta,c,\rhog$)-twisted $\Gamma'$-equivariant $(G_0,V)$-Higgs pairs over $Y'$ and let $(E,\phi)$ and $(E',\phi')$ be the corresponding $(G',V)$-Higgs pairs over $X$ given by Proposition \ref{prop-twisted-equivariant-higgs pairs-one-to-one}. Assume that they have the same extension of structure group $(E_G,\phi_G)$ to $G$. By the proof of Theorem \ref{th-prym-narasimhan-ramanan-principal} $E'=Es\subset E_G$, where $s=(1,z)$ for some $z\in\cent$, thus $E'\cong\Int_{s}^{-1}(E)$. By the previous paragraph the Higgs field on $\Int_{s}^{-1}(E)$ induced by the Higgs field on $Es$ given by Lemma \ref{lemma-exists-higgs-field-reduction} is equal to $\rhog(z)\phi$. Thus $(E',\phi')\cong (\Int_{s}^{-1}(E),\rhog(z^{-1})\phi)$, as required.
\end{proof}

\begin{corollary}\label{cor-prym-narasimhan-ramanan-principal}
Let $Y\in H^1(X,\Gamma)$ with monodromy group $\Gamma'$ and corresponding connected component $Y'$, and denote by $M_Y(X,G)$ the moduli space of $G$-bundles $E$ over $X$ such that $E/G_0\cong Y$. We have a bijection
\begin{equation}\label{eq-narasimhan-ramanan-principal}
    M(Y',{G_0},\Gamma',\theta,c)/Z_{\Gamma}(\Gamma')\xrightarrow{\sim} M_{Y}(X,G),
\end{equation}
where $Z_{\Gamma}(\Gamma')$ is the centralizer of $\Gamma'$ in $\Gamma$, which acts on $M(Y',{G_0},\Gamma',\theta,c)$ as in Proposition \ref{prop-action-centralizer-higgs}. The bijection is given by Proposition \ref{prop-twisted-equivariant-bundles-one-to-one} and extension of structure group from $G'$ to $G$.
\end{corollary}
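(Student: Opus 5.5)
The statement is the special case $V=0$ of Theorem \ref{th-prym-narasimhan-ramanan-higgs}, and I would deduce it that way. With $V=0$ the Higgs field is forced to vanish. The categories $\hcc1$ and $\hcc2$ then reduce to $\cc1$ and $\cc2$ of Definition \ref{def-categories}, and Proposition \ref{prop-twisted-equivariant-higgs pairs-one-to-one} reduces to Proposition \ref{prop-twisted-equivariant-bundles-one-to-one}. The action of $\cent$ from Proposition \ref{prop-action-centralizer-higgs} reduces to that of Proposition \ref{prop-action-centralizer}, since the term $\rhog(z)\phi$ disappears.

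I would also check that the stability conditions match. For $V=0$, Definition \ref{definition-stable-twisted-equivariant} and Definition \ref{def-stability-higgs-pair-non-connected} become the corresponding notions for twisted equivariant $G_0$-bundles and for $G$-bundles with non-connected structure group. Hence $M(Y',G_0,\Gamma',\theta,c)$ and $M_Y(X,G)$ are exactly the $V=0$ instances of the moduli spaces in (\ref{eq-narasimhan-ramanan-higgs}).

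Concretely, the map is built in two steps. First, a $z$-polystable $(\theta,c)$-twisted $\Gamma'$-equivariant $G_0$-bundle over $Y'$ goes, via Proposition \ref{prop-twisted-equivariant-bundles-one-to-one}, to a $G'$-bundle over $X$ with $E/G_0\cong Y'$. Second, this is extended to $G$. By Proposition \ref{prop-polystability-extension-of-structure-group-non-connected} with $\phi=0$, polystability is preserved in both directions, so the map lands in $M_Y(X,G)$.

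On isomorphism classes, bijectivity modulo $\cent$ is Theorem \ref{th-prym-narasimhan-ramanan-principal}. Surjectivity comes from choosing a connected component $E\subset E_G$ lying over $Y'$. Well-definedness holds because the component $E(1,z)$ is isomorphic to $\Int_{(1,z)}^{-1}(E)$. Injectivity holds because any two reductions to $G'$ lying over $Y'$ differ by right multiplication by some $(1,z)$ with $z\in\cent$.

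The main point to verify is that the correspondence of the underlying twisted equivariant bundle with its $G'$-bundle also respects polystability of the bundle itself, not just of its extension to $G$. Equivalently, $\Gamma'$-invariant parabolic reductions of the $G_0$-bundle over $Y'$ must correspond to parabolic reductions of the $G'$-bundle over $X$ for $\Gamma$-invariant $s$, with equal degrees. I would argue this as in the proof of Proposition \ref{prop-polystability-extension-of-structure-group-non-connected}. A $\Gamma'$-invariant $P_s\cap G_0$-reduction of the $G_0$-bundle over $Y'$ is the same thing as a $P'_s$-reduction of the total space, viewed as a $G'$-bundle. The degrees agree up to the factor $|\Gamma'|$ coming from pulling back along the étale cover, so once the parameter $z$ is normalised accordingly the inequalities coincide. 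With this, the $V=0$ case of Theorem \ref{th-prym-narasimhan-ramanan-higgs} gives the bijection (\ref{eq-narasimhan-ramanan-principal}).
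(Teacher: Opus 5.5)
Your proposal is correct and follows the paper's route: the corollary is the $V=0$ (vanishing Higgs field) specialisation of Theorem~\ref{th-prym-narasimhan-ramanan-higgs}, whose content on isomorphism classes is Theorem~\ref{th-prym-narasimhan-ramanan-principal}, with polystability carried from $G'$ to $G$ by Proposition~\ref{prop-polystability-extension-of-structure-group-non-connected}. Your extra check that $\Gamma'$-invariant parabolic reductions over $Y'$ match $P'_s$-reductions over $X$, with $z$ rescaled by $|\Gamma'|$, makes explicit a point the paper leaves implicit when it says the stability notion for non-connected groups is induced by the twisted equivariant one.
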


\newpage

\chapter{The Prym--Narasimhan--Ramanan construction}\label{chapter-prym-narasimhan-ramanan}

Let $X$ be a compact Riemann surface with canonical bundle $K_X$ and $G$ a connected reductive complex Lie group with centre $Z$. We consider the problem of finding the fixed points of the action of a finite subgroup $\Gamma$ of $H^1(X,Z)\rtimes\Out(G)\times\C^*$ on the moduli space $\mdl(X,G)$ of $G$-Higgs bundles over $X$ (see Section \ref{section-action}). Projections on the second, third and first factors provide homomorphisms $a:\Gamma\to \Out(G)$, $\mu:\Gamma\to\C^*$ and a 1-cocycle $\alpha\in Z^1_a(\Gamma,H^1(X,Z))$ respectively (see Definition \ref{def-1-cocycle}). Note the absence of $\eta:\Gamma\to\Aut(X)$, which is trivial in this Chapter.

An answer to this problem is already given in \cite{PR} when $\Gamma$ is cyclic, where Theorem 6.10 is an special case of Theorem \ref{th-fixed-points-oscar-ramanan-higgs}. However, even when $\Gamma$ is cyclic the Prym--Narasimhan--Ramanan construction given by Theorem \ref{th-prym-narasimhan-ramanan-oscar-ramanan-higgs} is new. 

\section{Automorphisms of a reductive complex Lie group}\label{section-Gtheta}
Take a homomorphism 
$$\theta:\Gamma\to\autg;\,\gamma\mapsto\tg$$
lifting $a$.
We study the fixed points the action of $\Gamma$ on $G$. We assume that $\zt$ is finite. This is true, for example, if $Z$ is finite (i.e. $G$ is semisimple), since in this case the set of maps $\Fun(\Gamma,H^1(X,Z))$ from $\Gamma$ to $H^1(X,Z)$ is finite, or if $a$ is trivial, since then 
$Z^1_a(\Gamma,Z)=\Hom(\Gamma,Z).$

We define
$$\gt:=\{g\in G\suhthat\tg(g)=g\forevery\gamma\in\Gamma\}\le G$$
and
$$\gs:=\{g\in G\suhthat\tg(g)=z(\gamma,g)g,\,z(\gamma,g)\in Z\forevery\gamma\in\Gamma\}\le G.$$
The group $\Gamma$ acts (trivially) on $\gt$ and, since it acts on $Z$, it also acts on $\gs$ (note that $Z$ is a subgroup of $\gs$). The group $\gt$ is a normal subgroup of $\gs$: for every $\gamma\in\Gamma$, $g\in\gt$ and $s\in\gs$, we have
$$\tg(sgs^{-1})=z(\gamma,g)sgs^{-1}z(\gamma,g)^{-1}=sgs^{-1}.$$

To understand better how $\gt$ lies inside $\gs$ we use 1-cocycles ---see Definition \ref{def-1-cocycle}. Note that the set of 1-cocycles $\zt$ has a group structure induced by $Z$. There is an exact sequence of groups
\begin{equation}\label{eq-exact-seq-groups}
    1\to \gt\to\gs\xrightarrow{\cct} \zt,
\end{equation}
where the last homomorphism sends $g\in\gs$ to the map
$$\Gamma\to Z;\,\gamma\mapsto g^{-1}\tg(g)=\tg(g)g^{-1}=z(\gamma,g)\in Z.$$
To see why $\cct$ is well defined note that, if $\gamma$ and $\gamma'$ are elements of $\Gamma$ and $g\in \gs$, we have:
\begin{align*}
    \cct(\gamma\gamma')g=\theta_{\gamma\gamma'}(g)=\tg\left(\theta_{\gamma'}(g)\right)=
\tg(\cct(\gamma')g)=\tg(\cct(\gamma'))\tg(g)&=\tg(\cct(\gamma'))\cct(\gamma)g\\&=\cct(\gamma)\tg(\cct(\gamma'))g,
\end{align*}
where $\cct$ is evaluated at $g$. The exactness of (\ref{eq-exact-seq-groups}) implies that $\cct$ factors through the quotient 
\begin{equation*}
    \gamt:=\gs/\gt
\end{equation*}
 via a group embedding
$\gamt\hookrightarrow\zt.$
In particular, the finiteness of $\zt$ implies that $\gs$ is a finite extension of $\gt$, and the reductiveness of $\gt$ (see Proposition 3.6 in chapter 3 of \cite{onishchik3}) is inherited by $\gs$.

On the other hand, if $\fun{A}{B}$ denotes the set of maps from a set $A$ to a set $B$, we have a natural group homomorphism
\begin{equation}\label{eq-iso-cohomology}
    H^1(X,\fun{\Gamma}{Z})\to\fun{\Gamma}{H^1(X,Z)},
\end{equation}
where the group structures on both sides are induced by $Z$ ---this is an isomorphism if $Z$ is finite (i.e. $G$ is semisimple).
To define it we use the homomorphism 
\begin{equation*}
    \homm{\pi_1(X)}{A}\to H^1(X,A);\,\rho\mapsto \tilde X\times_{\rho}(A),
\end{equation*}
that exists for any abelian group $A$, where $\tilde X$ is the universal cover of $X$. This has an inverse
\begin{equation}\label{eq-iso-rep-fundamental-cohomology}
    H^1(X,A)\xrightarrow{\sim}\homm{\pi_1(X)}{A}
\end{equation}
if $A$ is also finite. Then (\ref{eq-iso-cohomology}) is the composition
\begin{align*}
    H^1(X,\fun{\Gamma}{Z})\xrightarrow{\sim} \homm{\pi_1(X)}{\fun{\Gamma}{Z}}&\xrightarrow{\sim}\fun{\Gamma}{\homm{\pi_1(X)}{Z}}\\&
    \to\fun{\Gamma}{H^1(X,Z)}.
\end{align*}
One can see that (\ref{eq-iso-cohomology}) restricts to a homomorphism 
\begin{equation}\label{eq-cohomology-Z-iso-Z-cohomology}
H^1(X,\zt)\xrightarrow{\sim} Z^1_{a}(\Gamma,H^1(X,Z)),    
\end{equation}
which is an isomorphism if $Z$ is finite.

Recall that, given a complex Lie group $H$, the set of equivalence classes of \v{C}ech 1-cocycles on $X$ with values on the sheaf $\underline{H}$ of holomorphic functions to $H$ is in natural bijection with the set of isomorphism classes of holomorphic $H$-bundles. We call it $H^1(X,\underline{H})$. It has a distinguished element (representing the trivial bundle), which gives $H^1(X,\underline{H})$ the structure of a pointed set. Using this notation, the homomorphism $\cct$ induces a morphism of pointed sets
$$H^1(X,\underline{\gs})\to H^1(X,\zt)$$
by extension of structure group.
Composing with (\ref{eq-cohomology-Z-iso-Z-cohomology}), we get a morphism of pointed sets
$$\ctt:H^1(X,\underline{\gs})\to Z^1_{a}(\Gamma,H^1(X,Z)).$$
Note that the factorization of $\ctt$ through the cohomology of the quotient
$$H^1(X,\gamt)\to Z^1_{a}(\Gamma,H^1(X,Z))$$
is injective, since it is a composition of an isomorphism and a homomorphism induced by an embedding in an abelian group. The last one is injective because of (\ref{eq-iso-rep-fundamental-cohomology}).

The group $\gt$ is connected when $G$ is simply connected and the image of the homomorphism $\theta$ is cyclic (see chapter 8 in \cite{steinberg-endomorphisms-of-linear-algebraic-groups}). However, it is not connected in general even when $G$ is simply connected. For this reason, we will need to "refine" (\ref{eq-exact-seq-groups}) using the connected component $\gt_0$ of $\gt$. Due to the fact that $\gs$ is an extension of $\gt$ by a finite group, $\gt_0$ is also the connected component of $\gs$. Thus we have an extension
\begin{equation}\label{eq-extension-connected-component}
    1\to\gt_0\to\gs\xrightarrow{\pt}\gamtt\to1,
\end{equation}
where $\gamtt$ is a finite group because $\gs$ is reductive. Of course there is a natural surjective homomorphism $\gamtt\to\gamt,$
which induces a morphism $H^1(X,\gamtt)\to H^1(X,\gamt)$. We call $\qt$ to the composition 
\begin{equation}\label{eq-def-qt}
    \qt:H^1(X,\gamtt)\to H^1(X,\gamt)\hookrightarrow H^1(X,\zt)\to Z^1_{a}(\Gamma,H^1(X,Z)).
\end{equation}

\section{Restrictions of the adjoint representation}\label{section-adjoint-representation} 
Let $\lieg$ be the Lie algebra of $G$. We consider the restriction of the adjoint representation
$$\Ad:G\to\GL(\lieg)$$
to the subgroups $\gt$ and $\gs$, defined in Section \ref{section-Gtheta}. Given a character
$$\mu:\Gamma\to \C^*;\,\gamma\mapsto \mu_{\gamma},$$
we may consider the $\mu$-weight subspace of $\lie g$, given by
$$\liegm:=\{v\in\lieg\suhthat \tg(v)=\mu_{\gamma} v\forevery\gamma\in\Gamma\}.$$
One can see that it is preserved by the adjoint action of $\gs$: for every $g\in\gs$, $\gamma\in\Gamma$ and $v\in\lieg^{\theta}_{\mu}$, we have
$$\tg\Ad_g(v)=\Ad_{\tg(g)}(\tg(v))=\Ad_{z(\gamma,g) g}(\mug v)=\mug\Ad_{g}(v).$$
Abusing notation, we call $\Ad:\gs\to\GL(\liegm)$ to the restriction of the adjoint representation.

The following lemma is a crucial ingredient to prove Proposition \ref{prop-polystability-extension-structure-group}:

\begin{lemma}\label{lemma-compact-involution}
Given a homomorphism $\theta:\Gamma\to\Aut(G)$, there exists a compact involution $\sigma$ of $G$ preserving $\gs$ such that, for every character $\mu:\Gamma\to\C^*$,
$$d\sigma(\lieg^{\theta}_{\mu})=\lieg^{\theta}_{\mu^{-1}}.$$
\end{lemma}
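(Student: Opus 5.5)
Since $\Gamma$ is finite, $\theta(\Gamma)$ is a finite (hence compact) subgroup of $\Aut(G)$. The plan is to choose a maximal compact subgroup $K\le G$ that is $\theta(\Gamma)$-invariant and to take $\sigma$ the corresponding compact (Cartan) involution. Two facts must then be checked: that $d\sigma$ exchanges $\lieg^{\theta}_{\mu}$ and $\lieg^{\theta}_{\mu^{-1}}$, and that $\sigma$ preserves $\gs$. Both should follow once $\sigma$ commutes with every $\tg$.

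\textbf{Existence of an invariant $K$ and commutation.} Consider the compact group $\widetilde K:=K'\rtimes\theta(\Gamma)$ acting on $G$, where $K'$ is some maximal compact subgroup. Rather than construct this directly, I would use the standard fixed point argument (Cartan's theorem on nonpositively curved spaces). The finite group $\theta(\Gamma)$ acts on the space $G/K'$ of maximal compact subgroups, a symmetric space of nonpositive curvature on the semisimple part, by $K\mapsto\tg(K)$. It therefore has a fixed point, i.e.\ a $\Gamma$-stable maximal compact $K$. This is the step I expect to be the main obstacle. In the reductive (non-semisimple) case one must also handle the centre, choosing the compact part of the central torus, which is automatically stable since any automorphism preserves the maximal compact subgroup of a torus. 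Equivalently, one may invoke the general result that a compact group of automorphisms of a reductive group preserves some maximal compact subgroup, e.g.\ Mostow, or Onishchik's book already cited.

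Let $\sigma$ be the antiholomorphic involution with $G^\sigma=K$, so $d\sigma$ is conjugation of $\lieg$ with respect to the real form $\lie k$. For each $\gamma$, $\tg\sigma\tg^{-1}$ is an antiholomorphic involution fixing $\tg(K)=K$. A compact real form is determined by its fixed subgroup, so $\tg\sigma\tg^{-1}=\sigma$.

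\textbf{Weight spaces.} Take $v\in\lieg^{\theta}_{\mu}$. Since $d\sigma$ is antilinear and commutes with $\tg$,
$$\tg(d\sigma(v))=d\sigma(\tg v)=d\sigma(\mug v)=\overline{\mug}\,d\sigma(v).$$
Because $\Gamma$ is finite, $\mug$ is a root of unity, so $\overline{\mug}=\mug^{-1}$. Hence $d\sigma(\lieg^{\theta}_{\mu})\subseteq\lieg^{\theta}_{\mu^{-1}}$, and applying the same argument to $\mu^{-1}$, together with $\sigma^2=1$, gives equality.

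\textbf{Preserving $\gs$.} The centre $Z$ is preserved by every automorphism, so $\sigma(Z)=Z$. If $\tg(g)=z g$ with $z\in Z$, then $\tg(\sigma(g))=\sigma(\tg g)=\sigma(z)\sigma(g)$ with $\sigma(z)\in Z$. Hence $\sigma(g)\in\gs$, and $\sigma$ preserves $\gs$.
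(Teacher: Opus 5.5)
Your proof is correct and follows a genuinely different, much shorter route than the paper's. The paper builds $\sigma$ by hand:
\begin{itemize}
\item it restricts to $\mathfrak g^C$, the fixed points of the commutator subgroup $[\theta(\Gamma),\theta(\Gamma)]$, which is the sum of the character spaces $\mathfrak g^\theta_\mu$;
\item it uses Borel--Mostow to get a $\theta(\Gamma)$-stable Cartan subalgebra there, and rescales root vectors so that each $\theta_\gamma$ permutes them with root-of-unity coefficients;
\item it takes the Chevalley-type compact involution attached to this basis and checks on explicit eigenvectors that it swaps the $\nu$- and $\nu^{-1}$-eigenspaces;
\item separately, it chooses a lift of the characteristic homomorphism of $1\to G^\theta_0\to G_\theta\to\widehat\Gamma_\theta\to 1$ compatible with $\sigma$, assembles a maximal compact subgroup of $G_\theta$ meeting every component, and extends it to $G$.
\end{itemize}
You replace all of this with one standard existence result: a finite group of automorphisms of $G$ stabilizes some maximal compact subgroup. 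This gives a compact involution commuting with every $\theta_\gamma$. Both conclusions then follow in two lines from antilinearity of $d\sigma$, $\overline{\mu_\gamma}=\mu_\gamma^{-1}$, and $\sigma(Z)=Z$.

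Your route gives more than the statement. It needs no reduction to $\mathfrak g^C$, and it swaps the eigenspaces of every $\theta_\gamma$ simultaneously. Since $G_\theta$ is then $\sigma$-stable, $K\cap G_\theta$ is automatically a maximal compact subgroup of $G_\theta$ meeting every component, by polar decomposition. That is exactly what the paper's last paragraph engineers and what Proposition~\ref{prop-polystability-extension-structure-group} uses. It also avoids the paper's final step, where the compact form obtained by extending from $G_\theta$ to $G$ is asserted, without argument, to restrict to the one built on $G^C$. What the paper's construction offers in return is explicitness: a root-basis description of $d\sigma$ adapted to $\theta$.

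Two small cleanups are worth making. First, drop the aside about $K'\rtimes\theta(\Gamma)$, which presupposes the invariance you want. The correct version of that remark is that the finite subgroup $\theta(\Gamma)$ of $G\rtimes\theta(\Gamma)$ lies in a maximal compact subgroup $\widetilde K$, and $\widetilde K\cap G$ is then a $\theta(\Gamma)$-stable maximal compact subgroup of $G$. Second, in the commutation step, say that $\theta_\gamma\sigma\theta_\gamma^{-1}$ has fixed-point group $\theta_\gamma(K)=K$. Then conclude it equals $\sigma$ from $\mathfrak g=\mathfrak k\oplus i\mathfrak k$ and the connectedness of $G$.
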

\begin{proof}
Let $\liegt:=\sum_{\mu\in \Hom(\Gamma,\C^*)}\liegm$. This is a subalgebra of $\lieg$, since $[\liegm,\lieg^{\theta}_{\mu'}]=\lieg^{\theta}_{\mu\mu'}$ for every pair of homomorphisms $\mu$ and $\mu'\in\Hom(\Gamma,\C^*)$. In fact $\liegt$ is the subalgebra $\lieg^{C}$ of fixed points of $\lieg$ under the action of the commutator $C:=[\theta(\Gamma),\theta(\Gamma)]=\langle\{\theta_{\gamma\gamma'\gamma^{-1}\gamma'^{-1}}\}_{\gamma,\gamma'\in\Gamma}\rangle$: note that, for each triple $\gamma,\gamma'$ and $\gamma''\in\Gamma$ and an element $v\in\lieg^C$, we have
$$\theta_{\gamma\gamma'\gamma^{-1}\gamma'^{-1}}\theta_{\gamma''}(v)=\theta_{\gamma''}\theta_{\gamma\gamma'\gamma^{-1}\gamma'^{-1}}(v)=\theta_{\gamma''}(v),$$
so that $\theta(\Gamma)$ acts on $\lieg^C$. The automorphisms of this subalgebra induced by the elements of $\theta(\Gamma)$ can be simultaneously diagonalizable (note that they are semisimple, since they have finite order), thus giving a decomposition as in the definition of $\liegt$. This shows $\lieg^C\subseteq\liegt$, and the reverse inclusion is clear. 

The subalgebra of fixed points under a family of semisimple automorphisms is reductive (see \cite{onishchik3}). The restriction of $\theta(\Gamma)$ to $\lie g^C$ is now an abelian group of automorphisms of a reductive Lie algebra, in particular it has a decomposition $\{1\}=\Gamma_0\subset\Gamma_1\subset\dots\subset\Gamma_{k-1}\subset\Gamma_k=\theta(\Gamma)\vert_{\lie g^C}$ by subgroups such that $\Gamma_{i+1}/\Gamma_i$ is cyclic (just take a set of generators for $\theta(\Gamma)$, order them and let $\Gamma_i$ be the subgroup generated by the first $i$ of them). Hence, by \cite[Theorem 7.6]{borel-invariant-cartan-commuting-automorphisms}, there exists a Cartan subalgebra $\lie t$ of $\lie g^C$ that is preserved by the action of $\theta(\Gamma)$. Since the proof is inductive on the dimension of the Lie algebra, we may assume that $\lie t^{\theta}=\lie t\cap\lie g^{\theta}$ is a Cartan subalgebra of $\lie g^{\theta}$.

Let $\Lambda$ be the set of roots of $\lie g^C$, given by the adjoint action of $\lie t$, and choose a system of positive roots $\Lambda^+$. Consider the root space decomposition
\begin{equation}\label{eq-root-space-decomposition-g^C}
    \lie g^C=\lie t\oplus\bigoplus_{\lambda\in\Lambda^+}\lie g_{\lambda}^C\oplus \lie g_{-\lambda}^C.
\end{equation}
Since $\lie g^{\theta}$ is a reductive subalgebra of $\lie g^C$, its Cartan subalgebra $\lie t^{\theta}$ has a root space decomposition too, say
\begin{equation}\label{eq-root-space-decomposition-g^theta}
    \lie g^{\theta}=\lie t^{\theta}\oplus\bigoplus_{\lambda\in\Lambda_0^+}\lie g_{\lambda}^{\theta}\oplus \lie g_{-\lambda}^{\theta},
\end{equation}
where $\Lambda_0$ is the set of roots of $\lie g^{\theta}$. Since root spaces are one-dimensional, (\ref{eq-root-space-decomposition-g^theta}) appears as a summand in (\ref{eq-root-space-decomposition-g^C}). In particular the space of roots of $\lie g^C$ ``contains" the space of roots of $\lie g^{\theta}$ and we may assume that $\Lambda^+$ ``contains" a system of positive roots of $\lie g^{\theta}$, in the sense that they are represented by elements of $\lie t^{\theta}$. 

Let $G^C$ be the connected reductive subgroup of $G$ with Lie algebra $\lie g^C$. Recall that there is a family of compact involutions $\sigma$ of $G^C$ associated with $\lie t$. We may first define a holomorphic involution as follows: the reductive Lie algebra $\lie g^C$ can be described using $\mathfrak{sl}_2$-triples $(x_{\lambda},t_{\lambda},x_{-\lambda})$, where $\lambda$ is a positive root, $t_{\lambda}\in\lie t$ and $x_{\lambda}\in\lie g^C_{\lambda}$. The pair $(x_{\lambda},x_{-\lambda})$ is well defined up to the action of $\C^*$ such that $c\in\C^*$ sends it to $(cx_{\lambda},c^{-1}x_{-\lambda})$. Then the involution sends $(x_{\lambda},t_{\lambda},x_{-\lambda})$ to $(-x_{-\lambda},-t_{\lambda},-x_{\lambda})$. Composing with the antiholomorphic involution which fixes $x_{\lambda}$ and $t_{\lambda}$ for every root $\lambda$ we get a compact involution, which we call $\sigma$. We now claim that, for every $\gamma\in\Gamma$ and every eigenspace $\lieg_{\nu}$ of $\tg$, we may choose the elements $x_{\lambda}$ so that we have $d\sigma(\lieg_{\nu})=\lieg_{\nu^{-1}}.$

Fix $\gamma\in\Gamma$ with order $n$. Since $\tg$ preserves $\lie t$, it must commute the roots. Thus, the eigenvectors of $\tg$ in $\lie t$ with eigenvalue $\nu\in \C^*$ (an $n$-th root of unity) are linear combinations of elements of the form $t_{\lambda}+\overline{\nu} t_{\tg(\lambda)}+\dots+\overline{\nu}^{n-1}t_{\tg^{n-1}(\lambda)}$, where $\lambda$ is a root. Each of these elements satisfies
$$d\sigma(t_{\lambda}+\overline{\nu} t_{\tg(\lambda)}+\dots+\overline{\nu}^{n-1}t_{\tg^{n-1}(\lambda)})=-t_{\lambda}-\nu t_{\tg(\lambda)}-\dots-\nu^{n-1}t_{\tg^{n-1}(\lambda)},$$
which is a $\nu^{-1}$-eigenvector. On the other hand, given a root $\lambda$, the automorphism $\tg$ sends the element $x_{\lambda}$ to $c_{\lambda}x_{\tg(\lambda)}$, where $c_{\lambda}\in\C^*$. If $\tg(\lambda)=\lambda$ then $c_{\lambda}$ must be an $n$-th root of unity. Thus, by choosing a representative root $\lambda$ in each orbit of the action of $\Gamma$ on the set of roots and then choosing a representative in the intersection of the orbit of $x_{\lambda}$ with the $\theta_{\gamma'}(\lambda)$-root space for every $\gamma'\in\Gamma$, we may assume that $c_{\lambda}$ is a root of unity. Moreover,
$$t_{\tg(\lambda)}=\tg([x_{\lambda},x_{-\lambda}])=c_{-\lambda}c_{\lambda}[x_{\tg(\lambda)},x_{-\tg(\lambda)}]=c_{-\lambda}c_{\lambda}t_{\tg(\lambda)},$$
so we must have $c_{-\lambda}=c_{\lambda}^{-1}=\overline{c_{\lambda}}$. A $\nu$-eigenvector in the sum of root spaces is a linear combination of elements of the form $x_{\lambda}+\overline{\nu}c_{\lambda} x_{\tg(\lambda)}+\dots+\overline{\nu}^{n-1}c_{\tg^{n-2}(\lambda)}x_{\tg^{n-1}(\lambda)}$. These satisfy
$$d\sigma(x_{\lambda}+\overline{\nu}c_{\lambda} x_{\tg(\lambda)}+\dots+\overline{\nu}^{n-1}c_{\tg^{n-2}(\lambda)}x_{\tg^{n-1}(\lambda)})=$$$$x_{-\lambda}+\nu\overline{c_{\lambda}} x_{-\tg(\lambda)}+\dots+\nu^{n-1}\overline{c_{\tg^{n-2}(\lambda)}}x_{-\tg^{n-1}(\lambda)}=$$$$x_{-\lambda}+\nu c_{-\lambda} x_{-\tg(\lambda)}+\dots+\nu^{n-1}c_{-\tg^{n-2}(\lambda)}x_{-\tg^{n-1}(\lambda)},$$
which is a $\nu^{-1}$-eigenvector. This proves that $d\sigma(\lieg_{\nu})\subset\lieg_{\nu^{-1}}$, but reversing $\nu^{-1}$ and $\nu$ shows that $d\sigma(\lieg_{\nu^{-1}})\subset\lieg_{\nu}$, which implies that $\lieg_{\nu^{-1}}\subset d\sigma(\lieg_{\nu})$ too.

To finish the argument pick a lift $\tau:\gamtt\to\Aut(\gt)$ of the characteristic homomorphism of (\ref{eq-extension-connected-component}) that leaves $\lie t^{\theta}$ invariant. We may further assume that $d\taug(x_{\lambda})=x_{\taug(\lambda)}$ for each $\lambda\in\Lambda_0$ and $\gamma\in\gamtt$, where $\taug(\lambda)$ denotes the image of $\lambda$ under the automorphism induced by $\taug$ on the space of roots of $\lie g^{\theta}$. Then $d\sigma\vert_{\lie g^{\theta}}$ commutes with $d\taug$ for each $\gamma\in\gamtt$. Thus $\sigma\vert_{\gt}$ commutes with $\tau(\gamtt)$, i.e. the maximal compact subgroup $K_0:=(\gt_0)^{\sigma}$ is $\tau(\gamtt)$-invariant.

By Proposition (\ref{prop-extensions-isomorphic-twisted-group}) there exists a 2-cocycle $c\in Z^2_{\tau}(\gamtt,K_Z)$ with values in $K_Z:=K_0\cap Z(\gt_0)$ such that the extensions $\gs$ and $\gt_0\times_{\tau,c}\gamtt$ of $\gt$ are isomorphic. Conjugation by $(1,\gamma)\in \gt_0\times_{\tau,c}\gamtt$ on $\gt$ is equal to $\taug$, which implies that we have a maximal compact subgroup 
\begin{equation*}
    K:=\bigcup_{\gamma\in\gamtt} K_0(1,\gamma)
\end{equation*}
of $\gs$. Extending this to a maximal compact subgroup of $G$ provides the required compact form $\sigma$ on $G$, which restricts to the original $\sigma$ on $G^C$.
\end{proof}

\section{Galois cohomology and GIT}\label{section-representation-varieties}

Let $G$ be a connected reductive complex Lie group, $\Gamma$ a finite group and
$\theta:\Gamma\to\Aut(G)$
a homomorphism.
Consider the action of $G$ on the set of 1-cocycles given by
$$G\times Z^1_{\theta}(\Gamma,G)\to Z^1_{\theta}(\Gamma,G);\,(g,\beta)\mapsto g\beta\theta(g)^{-1},$$
where $g$ is regarded as a constant map. The quotient of $Z^1_{\theta}(\Gamma,G)$ by this action is  precisely the \textbf{first non-abelian cohomology set} $H^1_{\theta}(\Gamma,G)$.

There is a closed embedding
$$Z^1_{\theta}(\Gamma,G)\hookrightarrow\Hom(\Gamma,G\rtimes_{\theta}\Gamma);\,\beta\mapsto (\gamma\mapsto (\beta(\gamma),\gamma)),$$
where the semidirect product is defined using $\theta$.
This is $G$-equivariant for the conjugation action of $G$ on $\Hom(\Gamma,G\rtimes_{\theta}\Gamma)$, since
\begin{align*}
g(\beta(\gamma),\gamma)g^{-1}=(g\beta(\gamma)\tg(g)^{-1},\gamma)
\end{align*}
for each $g\in G$, $\gamma\in\Gamma$ and $\beta\in Z^1_{\theta}(\Gamma,G)$.

Recall that a homomorphism $\Hom(\Gamma,G\rtimes_{\theta}\Gamma)$ is \textbf{reductive} if its composition with the adjoint representation is completely reducible. On the other hand, $G\rtimes_{\theta}\Gamma$ is a reductive group (it is a finite extension of a reductive group) and so reductive representations have closed orbits \cite{representations-finitely-generated-groups}. But $\Gamma$ is finite, hence all its representations are reductive. We have shown:

\begin{proposition}\label{prop-representation-varieties}
All the elements of $Z^1_{\theta}(\Gamma,G)$ have closed orbits under the action of $G$.
\end{proposition}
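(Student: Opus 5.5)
The plan is to realize $Z^1_{\theta}(\Gamma,G)$ inside a representation variety of the finite group $\Gamma$ and then apply the closed-orbit criterion for reductive representations. First I will check that the map
$$Z^1_{\theta}(\Gamma,G)\hookrightarrow\Hom(\Gamma,G\rtimes_{\theta}\Gamma);\,\beta\mapsto(\gamma\mapsto(\beta(\gamma),\gamma))$$
is well defined. The cocycle identity $\beta(\gamma\gamma')=\beta(\gamma)\tg(\beta(\gamma'))$ is exactly the condition that $(\beta(\gamma),\gamma)(\beta(\gamma'),\gamma')=(\beta(\gamma\gamma'),\gamma\gamma')$ in the semidirect product. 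I will also check that the map is a closed embedding of affine varieties. Its image is the set of homomorphisms $\Gamma\to G\rtimes_\theta\Gamma$ whose composition with the projection to $\Gamma$ is the identity. This is a closed condition, being a union of components cut out by a discrete requirement. The inverse on the image is just the projection to the $G$-coordinate.

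Next I will verify $G$-equivariance. This is the computation $g(\beta(\gamma),\gamma)g^{-1}=(g\beta(\gamma)\tg(g)^{-1},\gamma)$. It shows that the twisted action $g\cdot\beta=g\beta\theta(g)^{-1}$ corresponds to conjugation by $G\subset G\rtimes_\theta\Gamma$. Because the embedding is closed and equivariant, the $G$-orbit of $\beta$ in $Z^1_\theta(\Gamma,G)$ is closed if and only if the $G$-orbit of the corresponding homomorphism is closed in $\Hom(\Gamma,G\rtimes_\theta\Gamma)$.

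The key step is to show that the conjugation orbit of every $\rho\in\Hom(\Gamma,G\rtimes_\theta\Gamma)$ under $G$ is closed. The group $H:=G\rtimes_\theta\Gamma$ is reductive, since it is a finite extension of a connected reductive group. By Richardson's theorem (\cite{representations-finitely-generated-groups}), a representation of a finitely generated group into a reductive group has a closed $H$-orbit if it is reductive, meaning its composition with the adjoint representation is completely reducible. Since $\Gamma$ is finite, Maschke's theorem makes every finite-dimensional representation completely reducible, so every $\rho$ is reductive and $H\cdot\rho$ is closed.

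The main obstacle is passing from $H$-orbits to $G$-orbits. Since $H=\bigsqcup_{\gamma\in\Gamma}G(1,\gamma)$, the orbit $H\cdot\rho$ is the finite union of the translates $(1,\gamma)\cdot(G\cdot\rho)$. These translates all have the same dimension, so each irreducible component of $G\cdot\rho$ is a component of the closed set $H\cdot\rho$, up to finitely many translates. Then $\overline{G\cdot\rho}\setminus G\cdot\rho$ is a union of orbits of strictly smaller dimension, yet it is contained in $H\cdot\rho$, whose $G$-orbits all have dimension $\dim G\cdot\rho$. Hence the boundary is empty and $G\cdot\rho$ is closed. Alternatively, one can cite directly that $G$ has finite index in $H$, so closedness of orbits is equivalent for $G$ and $H$.
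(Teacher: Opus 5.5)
Your proposal is correct and follows the paper's argument: the same closed $G$-equivariant embedding $\beta\mapsto(\gamma\mapsto(\beta(\gamma),\gamma))$ into $\Hom(\Gamma,G\rtimes_{\theta}\Gamma)$, the reductivity of $G\rtimes_{\theta}\Gamma$, Richardson's closed-orbit criterion, and complete reducibility of representations of the finite group $\Gamma$. Your last step is also sound: the orbit of $G\rtimes_{\theta}\Gamma$ splits into finitely many $G$-orbits of equal dimension, so the $G$-orbit itself is closed. The paper leaves this passage from $G\rtimes_{\theta}\Gamma$-orbits to $G$-orbits implicit.
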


\section{Simple \texorpdfstring{$G$}{G}-bundles and Galois cohomology}\label{section-simple-and-H}
Let $X$ be a compact Riemann surface, and $G$ a connected reductive complex Lie group. Let $\Gamma$ be a finite subgroup of $H^1(X,Z)\rtimes\Out(G)\times\C^*$. Projections on the second, third and first factors provide homomorphisms $a:\Gamma\to \Out(G,V)$, $\mu:\Gamma\to\C^*$ and a 1-cocycle $\alpha\in Z^1_a(\Gamma,H^1(X,Z))$ respectively. Recall that this is a map $\alpha:\Gamma\to H^1(X,Z)$ satisfying
\begin{equation*}
    \alpha_{\gamma\gamma'}=\alg\ag(\alpha_{\gamma'})
\end{equation*}
for each $\gamma$ and $\gamma'\in\Gamma$.

Fix a lift $\theta:\Gamma\to\Aut(G)$ of $a$. Abusing notation we also call $\theta$ to the induced homomorphism $\Gamma\to\Aut(G/Z)$. 
The first step to describe the fixed points is to construct a map
\begin{equation*}
    \wf:H^1(X,\ug)_s^{\llambda}\to 
    % \x(\llambda,\Int(G));
    % new
    H^1_{\theta}(\Gamma,G/Z);
\end{equation*}
here $H^1(X,\ug)_s^{\llambda}$ is the set of isomorphism classes of simple $G$-bundles which are fixed under the action of $\llambda$ and $H^1_{\theta}(\Gamma,G/Z)$ is the first Galois cohomology set of $\Gamma$ with values in $G/Z$, consisting of equivalence classes of 1-cocycles as given in Definition \ref{def-1-cocycle}. 

Let $E$ be a simple $G$-bundle over $X$ such that $E\cong\tg^{-1}(E\otimes\alg)$ for every $\gamma$ in $\Gamma$. In other words, for each $\gamma\in\Gamma$ we have an isomorphism
$$\hg:E\xrightarrow{\sim}\tg^{-1}(E\otimes\alg).$$
This in turn induces an isomorphism
$$\ohg:E/Z\to\tg^{-1}(E\otimes\alg)/Z.$$
Note that the simplicity of $E$ implies that this is independent of the choice of $\hg$. Since $\tg(Z)=Z$, there are natural isomorphisms
$$\tg^{-1}(E\otimes\alg)/Z\cong\tg^{-1}(E)/Z\cong\tg^{-1}(E/Z).$$
According to Section \ref{section-action}, the total spaces of $\tg^{-1}(E/Z)$ and $E/Z$ are naturally biholomorphic. After composing we may regard the isomorphisms $\ohg$ as biholomorphisms
$$\ohg:E/Z\to E/Z$$
satisfying
$$\ohg(eg)=\ohg(e)\tg(g).$$

Define a holomorphic map
$$f:E\to \fun{\Gamma}{G/Z}$$
in such a way that $\ohg(e)=e \fg(e)$ for each $e\in E/Z$. A straightforward calculation shows that 
\begin{equation}\label{eq-conjugacy-f}
    f(e g)=g^{-1}f(e)\theta(g)
\end{equation}
for every $g\in G/Z$ and $e\in E/Z$, where we are identifying elements in $G/Z$ with constant functions. 

\begin{lemma}\label{lemma-f-Z}
For every $e\in E$, $f(e)\in Z^1_{\theta}(\Gamma,G/Z)$.
\end{lemma}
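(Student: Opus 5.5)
The plan is to compare two expressions for the map $\overline{h}_{\gamma\gamma'}$ on $E/Z$, using simplicity of $E$ to identify them. The goal is the cocycle identity $f_{\gamma\gamma'}(e)=f_\gamma(e)\,\theta_\gamma(f_{\gamma'}(e))$ in $G/Z$.

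First I compose the isomorphisms. From $h_{\gamma'}:E\to\theta_{\gamma'}^{-1}(E\otimes\alpha_{\gamma'})$, apply the functor $\theta_\gamma^{-1}(\,\cdot\,\otimes\alpha_\gamma)$ as in Remark \ref{remark-induced-iso}. Composing with $h_\gamma$ then gives an isomorphism
\begin{equation*}
E\to\theta_\gamma^{-1}\bigl(\theta_{\gamma'}^{-1}(E\otimes\alpha_{\gamma'})\otimes\alpha_\gamma\bigr)\cong(\theta_{\gamma'}\theta_\gamma)^{-1}\bigl(E\otimes\alpha_{\gamma'}\otimes\theta_{\gamma'}(\alpha_\gamma)\bigr).
\end{equation*}
I will have to check the order against the cocycle relation $\alpha_{\gamma\gamma'}=\alpha_\gamma a_\gamma(\alpha_{\gamma'})$ and against the right-action conventions. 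The correct composite is the one whose target is $\theta_{\gamma\gamma'}^{-1}(E\otimes\alpha_{\gamma\gamma'})$, and it comes from the action law computed in Section \ref{section-action}.

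Second I pass to $E/Z$. Both this composite and $h_{\gamma\gamma'}$ are isomorphisms $E\to\theta_{\gamma\gamma'}^{-1}(E\otimes\alpha_{\gamma\gamma'})$. By simplicity, $\Aut(E)=Z$, so they differ by a central element and induce the same map on $E/Z$. Hence $\overline{h}_{\gamma\gamma'}$ equals the composite of $\overline{h}_{\gamma'}$ and the $\theta$-twist of $\overline{h}_\gamma$, in the appropriate order, all viewed as self-maps of the total space $E/Z$ as in the text. The $Z$-bundles $\alpha$ disappear here because tensoring by a $Z$-bundle does not change $E/Z$.

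Third I compute with $\overline{h}_\gamma(e)=e f_\gamma(e)$ and the twisted equivariance $\overline{h}_\gamma(eg)=\overline{h}_\gamma(e)\theta_\gamma(g)$. This gives
\begin{equation*}
\overline{h}_\gamma(\overline{h}_{\gamma'}(e))=\overline{h}_\gamma(e f_{\gamma'}(e))=e f_\gamma(e)\,\theta_\gamma(f_{\gamma'}(e)).
\end{equation*}
Comparing with $e f_{\gamma\gamma'}(e)$ and using that $G/Z$ acts freely on $E/Z$ yields the cocycle identity. One also checks $f_1=1$, which holds because $h_1$ may be taken to be the identity.

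The main obstacle is bookkeeping. Three things must be pinned down: the order of composition, which is determined by whether the action is written on the left or on the right; the identification of $\theta_\gamma^{-1}$-twisted total spaces with $E/Z$; and whether the composite is $\overline{h}_\gamma\circ\overline{h}_{\gamma'}$ or the reverse. I will fix these by checking that the formula is consistent with \eqref{eq-conjugacy-f}, so that the cocycle condition matches Definition \ref{def-1-cocycle} for the action $\theta$ on $G/Z$.
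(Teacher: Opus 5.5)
Your proposal is correct and follows the paper's proof: simplicity forces $\overline{h}_{\gamma\gamma'}=\overline{h}_\gamma\circ\overline{h}_{\gamma'}$ on $E/Z$, and expanding the right-hand side with $\overline{h}_\gamma(eg)=\overline{h}_\gamma(e)\theta_\gamma(g)$ and freeness of the $G/Z$-action gives $f_{\gamma\gamma'}(e)=f_\gamma(e)\,\theta_\gamma(f_{\gamma'}(e))$. On the ordering you flagged, the composite you wrote first, $\theta_\gamma^{-1}(h_{\gamma'}\otimes\mathrm{id})\circ h_\gamma$, lands in $E\cdot(\gamma'\gamma)$; the correct one, as in the paper, is $h_{\gamma'}$ followed by the isomorphism $E\gamma'\to E\gamma\gamma'$ induced by $h_\gamma$, and on $E/Z$ this is exactly the map $\overline{h}_\gamma\circ\overline{h}_{\gamma'}$ you expand in your third step.
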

\begin{proof}
By Remark \ref{remark-induced-iso}, if $\gamma$ and $\gamma'$ are elements of $\Gamma$, the isomorphism $\hg$ induces an isomorphism 
$$(E,\phi)\gamma'\to (E,\phi)\gamma\gamma'$$
which we also call $\hg$. Since $h_{\gamma}h_{\gamma'}h_{\gamma\gamma'}^{-1}$ is in the gauge group of $(E,\phi)$, which is $Z$, we have $\ohg \oh_{\gamma'}=\oh_{\gamma\gamma'}$ and so
$$ef_{\gamma\gamma'}(e)=\oh_{\gamma\gamma'}(e)=\oh_{\gamma}\oh_{\gamma'}(e)=\oh_{\gamma}(ef_{\gamma'}(e))=\oh_{\gamma'}(e) \theta_{\gamma}(f_{\gamma'}(e))=ef_{\gamma}(e) \theta_{\gamma}(f_{\gamma'}(e))$$
for each $e\in E/Z$, as required.
\end{proof}

Note that the fact that $G/Z$ is an affine algebraic variety implies that $\fung$ is affine. Since $Z^1_{\theta}(\Gamma,G/Z)$ is a closed subvariety of $\fung$, it is itself affine. Consider the action of $G/Z$ on $Z^1_{\theta}(\Gamma,G/Z)$, such that $g\in G/Z$ sends $\beta\in Z^1_{\theta}(\Gamma,G/Z)$ to $g\beta\theta(g)^{-1}$. Because of Lemma \ref{lemma-f-Z} and (\ref{eq-conjugacy-f}) we have a morphism
$$X\to Z^1_{\theta}(\Gamma,G/Z)\sslash G/Z,$$
where the right hand side is the corresponding GIT quotient \cite{mumford}. Since $Z^1_{\theta}(\Gamma,G/Z)$ is affine, so is $Z^1_{\theta}(\Gamma,G/Z)\sslash G/Z$. Since this is an algebraic morphism from a projective variety to an affine variety, it must be constant. Using Proposition \ref{prop-representation-varieties} we conclude that, for each $e\in E$, the orbit of $f(e)$ under the action of $G/Z$ is closed. Since the closures of all these orbits intersect (because their images in the GIT quotient are equal), all the orbits must coincide. Thus, we get a map
$$\wf:H^1(X,\ug)_s^{\llambda}\to H^1_{\theta}(\Gamma,G/Z)$$
sending $E$ to the class of $f(e)$ for any $e\in E$. Moreover, we have:

\begin{lemma}\label{lemma-orbit-fibre-principal}
An element $\beta\in Z^1_{\theta}(\Gamma,G/Z)$ is in the class $\wf(E,\phi)$ if and only if, for every (or for some) $x\in X$, there exists $e$ in the fibre of $E$ over $x$ such that $f(e)=\beta$.
\end{lemma}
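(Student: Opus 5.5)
The plan is to use only two facts established just before the statement: the equivariance property (\ref{eq-conjugacy-f}), $f(eg)=g^{-1}f(e)\theta(g)$ for $g\in G/Z$, and the observation that all the values $f(e)$, $e\in E/Z$, lie in a single $G/Z$-orbit of $Z^1_{\theta}(\Gamma,G/Z)$. By definition, this orbit is the class $\wf(E,\phi)$.

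The ``if'' direction is immediate. Suppose $x\in X$ and $e$ in the fibre over $x$ satisfy $f(e)=\beta$. Then $\beta$ lies in the common orbit, so its class is $\wf(E,\phi)$. This handles the weaker hypothesis (``for some $x$''), and therefore also the stronger one.

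For the ``only if'' direction, suppose $\beta$ is in the class $\wf(E,\phi)$ and fix any $x\in X$. I will show that some $e$ over $x$ satisfies $f(e)=\beta$, which gives the ``for every $x$'' version and a fortiori the ``for some $x$'' version. Pick any $e_0$ in the fibre of $E/Z$ over $x$. Since $f(e_0)$ and $\beta$ lie in the same orbit, there is $g\in G/Z$ with $\beta=g^{-1}f(e_0)\theta(g)$. Here I use the inverse of $g$ relative to the action $g\cdot\beta=g\beta\theta(g)^{-1}$, which is harmless. By (\ref{eq-conjugacy-f}), $f(e_0g)=\beta$, and $e_0g$ lies in the same fibre over $x$. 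If an element of $E$ itself is required rather than of $E/Z$, lift $e_0g$ to any preimage in $E$: $f$ was defined on $E/Z$ and extended to $E$ through the quotient, so the value is unchanged.

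The only point needing care, which I expect to be the main (mild) obstacle, is bookkeeping. First, the action convention on cocycles must match the sign in (\ref{eq-conjugacy-f}). Second, the $G/Z$-action on the fibre of $E/Z$ must be transitive, which holds because $E/Z$ is a principal $G/Z$-bundle. Third, the identification of the class with the common orbit relies on the GIT argument preceding the lemma, which I take as given.
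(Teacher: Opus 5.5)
Your proof is correct and follows the paper's argument essentially verbatim: the ``if'' direction is immediate from the definition of $\wf$ as the common $G/Z$-orbit of the values of $f$. For ``only if'' you pick a point in the fibre over $x$ and translate it by a suitable element of $G/Z$, using the equivariance (\ref{eq-conjugacy-f}) to hit $\beta$ exactly. The paper writes $f(e')=g^{-1}\beta\theta(g)$ and takes $e=e'g^{-1}$, which is the same step with $g$ replaced by its inverse.
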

\begin{proof}
The {\sl if} direction follows immediately from the definition of $\wf$. For the {\sl only if} direction, fix $x\in X$ and let $\beta\in \wf(E,\phi)$. From the previous paragraphs we know that there exist $e'$ in the fibre of $x$ and $g\in G/Z$ such that $f(e')=g^{-1}\beta\theta(g).$ By (\ref{eq-conjugacy-f}) we get
$$f(e'g^{-1})=gf(e')\theta(g^{-1})=\beta,$$
so we set $e:=e'g^{-1}$.
\end{proof}

\begin{remark}\label{remark-G/Z-vs-Int(G)}
    Using the natural isomorphism $G/Z\cong\Int(G)$, we will sometimes identify the image of $\wf$ with $H^1_{\theta}(\Gamma,\Int(G))$, where the action of $\Gamma$ on $\Int(G)$ is conjugation by $\theta$.
\end{remark}

Now let $S$ be the set of isomorphism classes of simple $G$-Higgs bundles, and let $S^{\Gamma}$ be the subset of fixed points. Using the same arguments above we may define a map $\wf:S^{\Gamma}\to Z^1_{\theta}(\Gamma,G/Z)\cong Z^1_{\theta}(\Gamma,\Int(G))$. Moreover, we have

\begin{lemma}\label{lemma-orbit-fibre-higgs}
Let $(E,\phi)$ be a simple $G$-Higgs bundle preserved by the action of $\Gamma$. An element $\beta\in Z^1_{\theta}(\Gamma,G/Z)$ is in the class $\wf(E,\phi)$ if and only if, for every (or for some) $x\in X$, there exists $e$ in the fibre of $E$ over $x$ such that $f(e)=\beta$.
\end{lemma}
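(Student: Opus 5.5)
The proof of Lemma \ref{lemma-orbit-fibre-higgs} runs in parallel with that of Lemma \ref{lemma-orbit-fibre-principal}. The only new work is to confirm that the construction of $\wf$ carries over to Higgs bundles, so that the set $\wf(E,\phi)$ is a single $G/Z$-orbit in $Z^1_{\theta}(\Gamma,G/Z)$ and equals the image $f(E)$.

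First I will set up the data for the Higgs case. Since $(E,\phi)$ is fixed by $\Gamma$, for each $\gamma\in\Gamma$ there is an isomorphism of Higgs bundles $\hg:(E,\phi)\xrightarrow{\sim}\tg^{-1}(E\otimes\alg,\mu_\gamma\phi)$. Simplicity means $\Aut(E,\phi)\cong Z$, so the induced map $\ohg$ on $E/Z$ does not depend on the choice of $\hg$.

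Next I will define $f:E/Z\to\fung$ by $\ohg(e)=e\fg(e)$, exactly as before. The equivariance (\ref{eq-conjugacy-f}) holds by the same computation. The cocycle property of Lemma \ref{lemma-f-Z} also holds: $h_\gamma h_{\gamma'}h_{\gamma\gamma'}^{-1}$ is an automorphism of the Higgs bundle $(E,\phi)$, hence lies in $Z$, and so $\ohg\oh_{\gamma'}=\oh_{\gamma\gamma'}$.

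Then I will run the GIT argument. The map $X\to Z^1_\theta(\Gamma,G/Z)\sslash (G/Z)$ is an algebraic map from a projective curve to an affine variety, so it is constant. By Proposition \ref{prop-representation-varieties} every orbit is closed. All orbit closures map to the same point of the quotient, and in an affine GIT quotient each fibre contains a unique closed orbit, so all the orbits $f(E_x)$ coincide. This single orbit is by definition the class $\wf(E,\phi)$.

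It remains to prove the two directions, for a fixed $x\in X$. The ``if'' direction is immediate from the definition of $\wf$. For ``only if'', let $\beta$ be in the class. The fibre $E_x$ maps onto an orbit equal to the class, so there is some $e'\in E_x$ with $f(e')=g^{-1}\beta\theta(g)$ for some $g\in G/Z$. By (\ref{eq-conjugacy-f}), $f(e'g^{-1})=\beta$, so $e=e'g^{-1}$ works. Because the orbit is the same for every $x$, the ``for every'' and ``for some'' versions of the statement are equivalent.

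The only delicate point is the step ``the orbit $f(E_x)$ is independent of $x$ and equals the class''. Everything hinges on constancy into an affine GIT quotient combined with closedness of orbits, so I expect to spell that step out carefully rather than merely citing the principal-bundle argument.
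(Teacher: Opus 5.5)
Your proposal is correct and follows the paper's argument. The paper proves this lemma only implicitly, by rerunning the construction of $f$ from Section~\ref{section-simple-and-H} with Higgs isomorphisms $\hg$, where $\Aut(E,\phi)\cong Z$ is what makes $\ohg$ well defined. It then uses the same cocycle property and the same GIT constancy plus closed-orbit argument, and concludes exactly as in the proof of Lemma~\ref{lemma-orbit-fibre-principal}.
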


% \section{Fixed points and \texorpdfstring{$Z$}{Z}-bundles of finite order}

% The group structure of $Z$ determines a group multiplication on $H^1(X,\uz)$, which in turn makes $Z^1_a(\Gamma,H^1(X,\uz))$ an abelian group. In this section we prove the following:

% \begin{proposition}
%     If $H^1(X,\ug)^{\Gamma}$ is non-empty then $\alpha$ has finite order.
% \end{proposition}

% \begin{proof}
%     Recall that there is an embedding $G\hookrightarrow\GL(N,\C)$ for some number $N$. Since $G$ is reductive, this can be decomposed into irreducible representations. On the other hand the centre $Z\le G$ is isomorphic to $(\C^*)^m\times \Lambda$, where $\Lambda$ is a finite abelian group. For each copy of $\C^*$ in $Z$ we may find an irreducible linear representation $\sigma:G\to\GL(n,\C)$ such that $\sigma(\C^*)$ is non-zero. Since $\C^*$ is one-dimensional, the kernel must be finite dimensional. Moreover, $\sigma(\C^*)\subset \C^*\le \GL(n,\C)$ by irreducibility, which implies equality. Thus the composition $\det\circ\sigma:\C^*\to\C^*$ with the determinant homomorphism $\det:\GL(n,\C)\to\C^*$ also has finite kernel. Now consider a $Z$-bundle $L$ and a $G$-bundle $E$ over $X$, together with an automorphism  such that $E\cong \tg(E)$
% \end{proof}

\section{Simple \texorpdfstring{$G$}{G}-Higgs bundles and fixed points}\label{section-fixed-isomorphism-classes}
We are now ready to describe the fixed points in the set of isomorphism classes of $G$-bundles under the $\llambda$-action, following \cite{oscar-barajas-higgs}. 
% We assume hereafter that $\alpha$ has finite order as an element of $Z^1_a(\Gamma,H^1(X,Z))$. This is true, for example, if $Z$ is finite (i.e. $G$ is semisimple), since in this case $\Fun(\Gamma,H^1(X,Z))$ is finite, or if $a$ is trivial, since then $Z^1_a(\Gamma,H^1(X,Z))=\Hom(\Gamma,H^1(X,Z))$. 
% Note that $\alpha$ can be thought of as an element of $H^1(X,Z^1_{a}(\llambda,Z))$ via the composition
% \begin{align*}
%     Z^1_{a}(\llambda,H^1(X,Z)_0)\cong Z^1_{a}(\llambda,\Hom(\pi_1(X),Z))\cong \\\Hom(\pi_1(X),Z^1_{a}(\llambda,Z))\hookrightarrow H^1(X,Z^1_{a}(\llambda,Z)),
% \end{align*}
% where $H^1(X,Z)_0$ is the group of flat $Z$-bundles over $X$. Indeed, $\alg$ is flat for each $\gamma\in\Gamma$ because it has finite order.

% First note that, given a Lie group $G$, an automorphism $\theta:G\to G$ and a $G$-bundle $E$, we may define the extension of structure group $\theta(E)$ induced by $\theta$. Alternatively, this is the $G$-bundle with total space equal to $E$ and $G$-action given by $G\times E\ni(s,e)\mapsto e\theta^{-1}(s)$, where we have written the original $G$-action on $E$ by attaching elements of $G$ on the right. If $\theta=\Int_s$ for some $s\in G$ then the map 
% $E\ni e\mapsto es$
% induces an isomorphism of $G$-bundles $E\xrightarrow{\sim}\theta(E)$.

\begin{proposition}\label{prop-reduction-principal}
Let $ E $ be a $G$-bundle and $\theta\in\Hom(\llambda,\Aut(G))$ a lift of $a$. With notation as in Section \ref{section-Gtheta}, assume that there is a $\gs$-bundle $F$ which is a reduction of structure group of $ E $ such that 
\begin{equation}\label{eq-c-theta}
    \ctt(F)\cong\alpha.
\end{equation}
Then $ E $ is isomorphic to $E\gamma= \tg^{-1}(E\otimes\alg)$ for every $\ambda\in\llambda$.
\end{proposition}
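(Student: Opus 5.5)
Fix $\gamma\in\Gamma$. We will build an explicit isomorphism $E\to\tg^{-1}(E\otimes\alg)$ from the reduction $F$, and since $E$ is the extension of $F$ to $G$ it suffices to construct an isomorphism of $G$-bundles $F\times_{\gs}G\to\tg^{-1}((F\times_{\gs}G)\otimes\alg)$. The key observation is that on $\gs$ the automorphism $\tg$ is multiplication by the central element $\cct(g)(\gamma)=g^{-1}\tg(g)$, so $\tg^{-1}$ restricted to $\gs$ is $g\mapsto \cct(g)(\gamma)^{-1}g$ up to the $\Gamma$-action on $Z$. Hence $\tg^{-1}(F)$, the $\gs$-bundle with the same total space and twisted action $f\cdot g:=f\tg^{-1}(g)$, differs from $F$ exactly by the $Z$-bundle obtained from $F$ by extension of structure group along $\gs\to Z$, $g\mapsto \tg^{-1}(g)g^{-1}$ (a homomorphism since the correction is central).

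The first step is to verify this cleanly: define the homomorphism $\chi_\gamma:\gs\to Z$, $\chi_\gamma(g)=\tg^{-1}(g)g^{-1}$, and check that the map $F\times_X \chi_\gamma(F)\to \tg^{-1}(F)$ (or its inverse) given on fibres by $(f,[f,z])\mapsto fz$ is a $\gs$-equivariant isomorphism, yielding $\tg^{-1}(F)\cong F\otimes\chi_\gamma(F)$ as $\gs$-bundles (tensorization by $Z\le\gs$ being defined as in Section \ref{section-action}, using that $Z$ is central in $\gs$ as well). Extending to $G$ gives $\tg^{-1}(E)\cong E\otimes\chi_\gamma(F)$. Using that $\tg^{-1}(E\otimes\alg)\cong\tg^{-1}(E)\otimes\tg^{-1}(\alg)$, we obtain $\tg^{-1}(E\otimes\alg)\cong E\otimes\chi_\gamma(F)\otimes\tg^{-1}(\alg)$.

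The second step is to identify $\chi_\gamma(F)$ with $\tg^{-1}(\alg)^{-1}$ using hypothesis (\ref{eq-c-theta}). By construction $\chi_\gamma(g)=\tg^{-1}\bigl(g\,\tg(g)^{-1}\bigr)=\tg^{-1}(\cct(g)(\gamma))^{-1}$, so $\chi_\gamma$ is the composition of $\cct$, evaluation at $\gamma$ (a homomorphism $\zt\to Z$), inversion and $\tg^{-1}$. Under the map (\ref{eq-cohomology-Z-iso-Z-cohomology}), evaluation at $\gamma$ on $H^1(X,\zt)$ corresponds to taking the $\gamma$-component in $Z^1_a(\Gamma,H^1(X,Z))$; this compatibility is immediate from the definition of (\ref{eq-iso-cohomology}) via holonomy, possibly needing a short argument when $Z$ is not finite (extension of structure group commutes with the homomorphism to $\Fun(\Gamma,H^1(X,Z))$ componentwise). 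Thus $\ctt(F)\cong\alpha$ gives the extension of $F$ along $g\mapsto\cct(g)(\gamma)$ isomorphic to $\alg$, hence $\chi_\gamma(F)\cong\tg^{-1}(\alg)^{-1}$, and the two $Z$-twists cancel, giving $E\cong\tg^{-1}(E\otimes\alg)$.

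The main obstacle I expect is bookkeeping of conventions: signs/inverses (whether $\cct(g)(\gamma)$ or its inverse appears, and whether $\tg^{-1}$ acts on $\alg$), and the meaning of ``$\ctt(F)\cong\alpha$'' when $Z$ is infinite so (\ref{eq-cohomology-Z-iso-Z-cohomology}) is only a homomorphism. I would first do the computation on transition functions: if $F$ has \v{C}ech cocycle $g_{ij}$ with values in $\gs$, then $\tg^{-1}(E\otimes\alg)$ has cocycle $\tg^{-1}(g_{ij}a_{ij})$ where $a_{ij}$ is a cocycle of $\alg$, and $\tg^{-1}(g_{ij})=g_{ij}\,\tg^{-1}(\cct(g_{ij})(\gamma))^{-1}$; the hypothesis says $\cct(g_{ij})(\gamma)$ is cohomologous to $a_{ij}$, say $\cct(g_{ij})(\gamma)=z_i a_{ij} z_j^{-1}$, so the cocycle becomes $\tg^{-1}(z_i)^{-1}g_{ij}\tg^{-1}(z_j)$, cohomologous to $g_{ij}$. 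This transition-function version would be my actual written proof, with the conceptual argument above as motivation.
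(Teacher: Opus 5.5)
Your transition-function computation is exactly the paper's proof: there the trivializations of $\alg$ are chosen so that its transition functions are $z_{ij}=g_{ij}^{-1}\tg(g_{ij})$ exactly. This gives $\tg^{-1}(g_{ij}z_{ij})=g_{ij}$ and the isomorphism $e_i\mapsto e_i\otimes z_i$; you instead keep a general cocycle and absorb the coboundary with the central $0$-cochain $\tg^{-1}(z_i)$, which is equivalent, so the proposal is correct. One bookkeeping slip in your conceptual aside: with the paper's convention $\tg^{-1}(F)$ carries the action $f\tg(g)$, not $f\tg^{-1}(g)$ (this is what makes the cocycle $\tg^{-1}(g_{ij})$ you actually use correct), and accordingly the fibrewise isomorphism $F\otimes\chi_\gamma(F)\to\tg^{-1}(F)$ is $(f,[f,z])\mapsto f\tg(z)$ rather than $(f,[f,z])\mapsto fz$.
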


\begin{proof}
It is enough to get an isomorphism
$$\hg:F\xrightarrow{\sim}\tg^{-1}(F\otimes\alg).$$
This in turn would induce an isomorphism from $E$ to $\tg^{-1}(E\otimes \alg)$.

Fix $\ambda\in\llambda$ and choose an open cover $\{U_i\}_{i\in I}$ of $X$ which trivializes both $F$ and $\alg$. Let $e_i$ and $z_i$ define local sections of $F$ and $\alg$ resp. on $U_i$. We get transition functions
$$g_{ij}:U_i\cap U_j\to G,\,z_{ij}:U_i\cap U_j\to Z$$
satisfying $e_j=e_ig_{ij}$ and $z_j=z_iz_{ij}$. A set of local trivializations for $\tg^{-1}(F\otimes\alg)$ is then $\{U_i,e_i\otimes z_i\}$ (with the $e_i\otimes z_i$'s regarded as local sections of $\tg^{-1}(F\otimes \alg)$), and the corresponding transition functions are $\tg^{-1}(g_{ij}z_{ij}):U_i\cap U_j\to G.$
But, using (\ref{eq-c-theta}), we may assume that $z_{ij}=g_{ij}^{-1}\tg(g_{ij})$ and so
$\tg^{-1}(g_{ij}z_{ij})=\tg^{-1}(g_{ij}g_{ij}^{-1}\tg(g_{ij}))=g_{ij}.$
Hence, we may set $\hg(e_i):=e_i\otimes z_i$ and extend the isomorphism to the whole $F$ imposing that it respects the $\gs$-actions.
\end{proof}

\begin{proposition}\label{prop-reduction-higgs}
Let $(E,\phi) $ be a $G$-Higgs bundle and $\theta\in\Hom(\llambda,\Aut(G))$ a lift of $a$. With notation as in Section \ref{section-Gtheta}, assume that there is a $(\gs,\liegm)$-Higgs pair $(F,\psi)$ which is a reduction of structure group of $(E,\phi)$ satisfying (\ref{eq-c-theta}).
Then $(E,\phi)$ is isomorphic to $(E,\phi)\gamma= (\tg^{-1}(E\otimes\alg),\mug\tg^{-1}(\phi))$ for every $\ambda\in\llambda$.
\end{proposition}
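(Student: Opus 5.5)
The plan is to reuse the isomorphism $\hg$ built in the proof of Proposition \ref{prop-reduction-principal} and then check that it carries the Higgs field correctly. First I apply Proposition \ref{prop-reduction-principal} to the underlying $\gs$-bundle $F$. This gives, for each $\gamma\in\Gamma$, an isomorphism of $\gs$-bundles
$$\hg:F\xrightarrow{\sim}\tg^{-1}(F\otimes\alg).$$
It is defined in the trivializations $\{U_i,e_i\}$ of $F$ and $\{U_i,z_i\}$ of $\alg$ by $\hg(e_i)=e_i\otimes z_i$, with $z_{ij}=g_{ij}^{-1}\tg(g_{ij})$, and is extended $\gs$-equivariantly. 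Extending structure group to $G$ gives the isomorphism $E\cong\tg^{-1}(E\otimes\alg)$. The remaining task is to show that the induced map on adjoint bundles sends $\psi$, viewed as $\phi$, to $\mug\tg^{-1}(\phi)$. Up to the sign convention in $\gamma\mapsto\mu_\gamma^{\pm1}$, which I will match to the definition of the action, this is the statement.

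For the check, work locally. On $U_i$ write $\psi=(e_i,v_i)\otimes k$ with $v_i$ a local section of $\liegm\otimes K_X$. By definition of extension of structure group, $\phi$ has the same local form in $E(\lie g)\otimes K_X$. Now compute $\hg$ at the level of associated bundles. The point $(e_i,v_i)$ of $F(\liegm)$ maps to $(e_i\otimes z_i,v_i)$. In $\tg^{-1}(F\otimes\alg)$, the identification of total spaces from Section \ref{section-action} turns the adjoint bundle of $\tg^{-1}(F\otimes\alg)$ into $F(\lie g)$ by applying $\tg^{-1}$, or $\tg$, to the fibre coordinate, and $Z$ acts trivially on $\lie g$. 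Since $v_i\in\liegm$, we have $\tg(v_i)=\mug v_i$. Hence the image of $\psi$ is exactly $\mug$ times the field $\tg^{-1}(\phi)$, written in these frames, or its inverse-convention analogue.

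Globally, the local expressions are compatible. Under a change of frame by $g_{ij}\in\gs$, the vector $v_i$ transforms by $\Ad_{g_{ij}}$, which preserves $\liegm$ by Section \ref{section-adjoint-representation}. On the other side the transition is $\tg^{-1}(g_{ij}z_{ij})=g_{ij}$, as computed in Proposition \ref{prop-reduction-principal}. So the pointwise identities glue to a global equality of sections, and $\hg$ is an isomorphism of Higgs bundles $(E,\phi)\cong(E,\phi)\gamma$.

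The main obstacle is the bookkeeping of conventions, not any conceptual step. I would carefully unwind how $\tg^{-1}(\phi)$ is defined on the twisted total space in Section \ref{section-action}, and check that the eigenvalue that appears is $\mug$ rather than $\mug^{-1}$. If it comes out inverted, I would adjust by noting the convention for $\liegm$ and for the $\Gamma$-action; the statement is consistent with one of these choices.
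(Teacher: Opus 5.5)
Your proposal is correct and follows the paper's own proof: take the isomorphism $h_\gamma$ from Proposition \ref{prop-reduction-principal}, induced by the identity on $F$, and check locally that it sends $\psi$ to $\mu_\gamma\theta_\gamma^{-1}(\psi)$. The convention worry does not arise. On the same total space, $\theta_\gamma^{-1}(\phi)$ is locally $(e,\theta_\gamma^{-1}(v))\otimes k$, and $\theta_\gamma^{-1}(v)=\mu_\gamma^{-1}v$ for $v\in\mathfrak{g}^{\theta}_{\mu}$, so the factor is exactly $\mu_\gamma$; also, no separate gluing step is needed, since $h_\gamma$ is already a global isomorphism.
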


\begin{proof}
    Mimicking the proof of Proposition \ref{prop-reduction-principal} we get an isomorphism $\hg:E\to \tg^{-1}(E\otimes\alg)$ which is induced by the identity on $F$. It is left to check that 
$$\hg(\psi)=\mu_{\gamma}\tg^{-1}(\psi).$$
Note that, if $\psi$ is locally of the form $(e_i,r)\otimes k$, where $e_i$ is the local section defined above, then the local form of $\hg(\psi)$ is also $(e_i,r)\otimes k$ with $e_i$ considered as a local section of $\tg^{-1}(E)$. But $\mug\tg^{-1}(\psi)$ is locally equal to
$$\mug(e_i,\tg^{-1}(r))\otimes k=\mug\mu_{\gamma}^{-1}(e_i,r)\otimes k=(e_i,r)\otimes k,$$
as required.
\end{proof}

\begin{proposition}\label{prop-simple-fixed-points-principal}
Let $ E $ be a simple $G$-bundle over $X$ which is isomorphic to $ E \ambda$ for every $\ambda$ in $\llambda$. Then a 1-cocycle $\beta\in Z^1_{\theta}(\Gamma,G/Z)$ is in $\wf(E)\in H^1_{\theta}(X,G/Z)$ (definitions as in Section \ref{section-simple-and-H}) if and only if there exists a $G_{\beta\theta}$-bundle $F$ which is a reduction of structure group of $ E $ and satisfies 
\begin{equation}\label{eq-c-betatheta}
    \tilde c_{\beta\theta}(F)\cong\alpha,
\end{equation}
where we are identifying $G/Z$ with $\Int(G)$. For each $\beta\in Z^1_{\theta}(\Gamma,G/Z)$ such a reduction is unique.
\end{proposition}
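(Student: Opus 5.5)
Given $\beta$ in the class $\wf(E)$, I will build the reduction as the subset $F:=\{e\in E\suhthat f(e)=\beta\}$, where $f:E\to\fun{\Gamma}{G/Z}$ is the map of Section \ref{section-simple-and-H}. I first replace $\theta$ by $\beta\theta$. Here $\beta$ is regarded as a $1$-cocycle in $Z^1_\theta(\Gamma,\Int(G))$, as in Remark \ref{remark-G/Z-vs-Int(G)}, so $\beta\theta$ is a lift of $a$ by Lemma \ref{lemma-lifts-vs-non-abelian-cohomology}. Multiplying the isomorphisms $\ohg$ by the corresponding inner automorphisms shows that $f^{\beta\theta}(e)=f^{\theta}(e)\beta^{-1}$ in the appropriate sense, and $\wf$ changes accordingly. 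This reduces everything to the case $\beta=1$, i.e. to the claim that the trivial cocycle lies in $\wf(E)$ if and only if there is a $\gs$-reduction $F$ with $\ctt(F)\cong\alpha$.

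For the \emph{only if} direction with $\beta=1$, I set $F:=f^{-1}(1)$. By Lemma \ref{lemma-orbit-fibre-principal}, $F$ meets every fibre of $E$. By (\ref{eq-conjugacy-f}), for $e\in F$ we have $eg\in F$ exactly when $g^{-1}\tg(g)\in Z$ for all $\gamma$, that is, when $g\in\gs$; so $F$ is a $\gs$-torsor fibrewise. To see that $F$ is a holomorphic subbundle, I observe that the orbit map $G/Z\to Z^1_\theta(\Gamma,G/Z)$, $g\mapsto g\theta(g)^{-1}$, has closed orbit (Proposition \ref{prop-representation-varieties}) with stabilizer $\gs/Z$. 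It is therefore a smooth fibration onto the orbit, and local holomorphic sections of $E$ can be corrected into $F$. Since $f(e)=1$ on $F$, the map $\ohg$ is the identity on $F/Z$. Hence the lifts $\hg$ restricted to $F$ are of the form $e\mapsto e\otimes z_\gamma(e)$. Comparing transition functions as in the proof of Proposition \ref{prop-reduction-principal}, run backwards, the $Z$-bundle $\alg$ has transition functions $g_{ij}^{-1}\tg(g_{ij})=\cct(g_{ij})(\gamma)$. This is exactly $\ctt(F)\cong\alpha$.

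For the \emph{if} direction, given $F$ with $\ctt(F)\cong\alpha$, Proposition \ref{prop-reduction-principal} provides $\hg$ induced by the identity on $F$. Simplicity makes $\ohg$ independent of the choice of $\hg$, so $f\equiv1$ on $F$, and Lemma \ref{lemma-orbit-fibre-principal} gives $1\in\wf(E)$. For uniqueness, the argument above shows that any such reduction $F$ is contained in $f^{-1}(\beta)$. Both sets are $\gs$-torsors in each fibre, so $F=f^{-1}(\beta)$.

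I expect the main obstacle to be the holomorphicity and local triviality of $f^{-1}(\beta)$. The plan is to use the closedness of orbits to identify the orbit with the affine homogeneous space $(G/Z)/(\gs/Z)$, which is smooth, and then apply the existence of local sections of $G/Z\to(G/Z)/(\gs/Z)$. A secondary point is keeping track of $Z$ versus $G/Z$ when recovering $\alpha$, since $\hg$ is determined on $F$ only up to $Z$ and not merely up to $G/Z$; this ambiguity is absorbed because isomorphism classes of $Z$-bundles are what is being compared.
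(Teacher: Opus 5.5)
Your proposal is correct and follows essentially the paper's proof: the reduction is the preimage in $E$ of $f^{-1}(\beta)$, its structure group is identified via (\ref{eq-conjugacy-f}), the transition functions of $\alpha_\gamma$ are read off from the restricted isomorphisms $e\mapsto e\otimes z_\gamma(e)$, and the converse and uniqueness come from Proposition \ref{prop-reduction-principal}, simplicity and Lemma \ref{lemma-orbit-fibre-principal}. Your upfront normalization to $\beta=1$ by passing to the lift $\beta\theta$ is the paper's composition of $h_\gamma$ with right multiplication by $s_\gamma^{-1}$, just done at the start, and your local-section argument for the holomorphicity of $f^{-1}(\beta)$ fills in a point the paper leaves implicit.
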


\begin{proof}
Choose an element $\beta\in \wf(E)$ and let $s:\llambda\to G$ be a map such that $\beta=\Int_s$. For each $\ambda\in\llambda$ take an isomorphism 
$\hg:E\to \tg^{-1}(E\otimes\alg).$
By Lemma \ref{lemma-orbit-fibre-principal}, if we define $f$ as in Section \ref{section-simple-and-H}, $f^{-1}(\beta)$ has non-empty intersection with every fibre of $E$ over $X$. Define
\begin{equation}\label{eq-definition-reduction-from-iso}
   F:=\{e\in E:\hg(e)=e\sg\otimes\zg(e),\,\zg(e)\in Z(G)\}.
\end{equation}
This is the preimage of $f^{-1}(\beta)$ under the natural projection $E\to E/Z.$
Then, given $e\in F$ and $g\in G$, the element $eg$ is also in $F$ if and only if
\begin{align*}
    eg\sg\otimes\zg(e)=(e\sg\otimes\zg(e)) \beta^{-1}(g)=\hg(e)\beta^{-1}(g)&=\hg(e\tg^{-1}\beta^{-1}(g))\\&=e(\beta\tg)^{-1}(g)\sg\otimes\zg(eg)
\end{align*}
for every $\ambda\in\llambda$, or equivalently $\beta\tg(g)g^{-1}\in Z$. This shows that $F$ provides a reduction of structure group to $G_{\beta\theta}$. Moreover, the isomorphisms
\begin{equation*}
    E\xrightarrow{\hg} \tg^{-1}(E\otimes\alg)\xrightarrow{\sg^{-1}}\tg^{-1}\bg^{-1}(E\otimes\alg)=(\bg\tg)^{-1}(E\otimes\alg),
\end{equation*}
where the second map is multiplication by $\sg^{-1}$ on the right, restricts to isomorphisms
$$F\to(\bg\tg)^{-1}(F\otimes\alg);\,e\mapsto e\otimes\zg(e)$$
for every $\ambda\in\llambda$. To see why (\ref{eq-c-betatheta}) is true we fix $\ambda\in\llambda$ and use an open cover $\{U_i\}_{i\in I}$ of $X$ trivializing $F$ equipped with local sections $e_i$ on each $U_i$, so that $z_i:=\zg(e_i)$ is a set of local sections for $\alg$. Setting $z_j=z_iz_{ij}$ and $e_j=e_ig_{ij}$, we have
\begin{align*}
    (e_i\sg\otimes z_i)\bg^{-1}(g_{ij}) z_{ij}=(e_ig_{ij}\sg\otimes z_i) z_{ij}=e_j\sg\otimes z_j=\hg(e_j)&=\hg(e_ig_{ij})\\&=(e_i\sg\otimes z_i)\tg(g_{ij}).
\end{align*}
Hence
$z_{ij}=\bg(z_{ij})=\bg\tg(g_{ij})g_{ij}^{-1}$,
which implies that $\alg\cong c_{\beta\theta}(E)$.
The uniqueness of the reduction follows from Proposition \ref{prop-reduction-principal}, the simplicity of $E$ and the fact that the resulting isomorphisms completely determine the reduction by (\ref{eq-definition-reduction-from-iso}).

Now let $\beta'\in Z^1_a(\Gamma,G/Z)$ be another 1-cocycle and assume that there is a reduction of structure group $F'$ of $ E $ to $G_{\beta'\theta}$ satisfying (\ref{eq-c-betatheta}) with $\beta'$ instead of $\beta$. Let $s':\Gamma\to G$ be a map such that $\beta'=\Int_{s'}$. Proposition \ref{prop-reduction-principal} provides isomorphisms
$$\hg': E \xrightarrow{\sim}(\beta'\tg)^{-1}(E)\otimes\ambda\xrightarrow{\sg'}\tg^{-1}(E)\otimes\ambda,$$
where the second morphism is multiplication by $\sg'\in G$, and these induce isomorphisms
$$\ohg':E/Z\xrightarrow{\sim}\tg^{-1}(E)/Z.$$
Note from the proof of Proposition \ref{prop-reduction-principal} that there exists $e\in E/Z$ such that $\ohg'(e)=e\bg'$, where we regard $\beta'$ as the image of $s'$ in $\Fun(\Gamma,G/Z)$. Thus, by Lemma \ref{lemma-orbit-fibre-principal}, $\beta'\in\wf(E)$. 
\end{proof}

\begin{proposition}\label{prop-simple-fixed-points-higgs}
Let $(E,\phi)$ be a simple $G$-Higgs bundle over $X$ which is isomorphic to $(E,\phi) \ambda$ for every $\ambda$ in $\llambda$. Then a 1-cocycle $\beta\in Z^1_{\theta}(\Gamma,G/Z)$ is in $\wf(E,\phi)\in H^1_{\theta}(X,\Int(G))$ (definitions as in Section \ref{section-simple-and-H}) if and only if there exists a $G_{\beta\theta}$-bundle $F$ which is a reduction of structure group of $ E $ and satisfies (\ref{eq-c-betatheta}).
For each $\beta\in Z^1_{\theta}(\Gamma,G/Z)$ such a reduction is unique.
\end{proposition}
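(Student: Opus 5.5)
The plan is to reduce to Proposition \ref{prop-simple-fixed-points-principal}, carrying the Higgs field along at each step. Fix isomorphisms of Higgs bundles $\hg:(E,\phi)\to(\tg^{-1}(E\otimes\alg),\mug\tg^{-1}(\phi))$ for each $\gamma\in\Gamma$. Since the automorphism group of $(E,\phi)$ is $Z$ by simplicity, the induced maps $\ohg$ on $E/Z$ are independent of the choice of $\hg$. Define $f:E\to\fun{\Gamma}{G/Z}$ exactly as in Section \ref{section-simple-and-H}. Lemma \ref{lemma-f-Z} then goes through verbatim, because $h_\gamma h_{\gamma'}h_{\gamma\gamma'}^{-1}$ is now an automorphism of the Higgs bundle and hence central. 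The GIT argument likewise gives the well-defined class $\wf(E,\phi)$ and Lemma \ref{lemma-orbit-fibre-higgs}. With these in hand, the bundle-level constructions of Proposition \ref{prop-simple-fixed-points-principal} can be copied, with $\hg$ now chosen Higgs-compatible.

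For the forward direction, take $\beta=\Int_s\in\wf(E,\phi)$ and define $F\subset E$ by (\ref{eq-definition-reduction-from-iso}). The same computation shows that $F$ is a $G_{\beta\theta}$-reduction satisfying $\tilde c_{\beta\theta}(F)\cong\alpha$. The new point is that the Higgs field reduces to $F$. By Lemma \ref{lemma-exists-higgs-field-reduction}, $\phi$ is locally of the form $(e,v)\otimes k$ with $e\in F$, and I will show that $v\in\lieg^{\beta\theta}_{\mu}$. The composite $E\xrightarrow{\hg}\tg^{-1}(E\otimes\alg)\xrightarrow{\sg^{-1}}(\bg\tg)^{-1}(E\otimes\alg)$ sends $e\in F$ to $e\otimes\zg(e)$. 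Comparing the local expression of the image of $\phi$, which is $(e,v)\otimes k$, with that of $\mug(\bg\tg)^{-1}(\phi)$, which is $\mug(e,(\bg\tg)^{-1}v)\otimes k$, gives $(\bg\tg)(v)=\mug v$. Hence $\phi$ reduces to a section $\psi$ of $F(\lieg^{\beta\theta}_\mu)\otimes K_X$, and $(F,\psi)$ is a $(G_{\beta\theta},\lieg^{\beta\theta}_\mu)$-Higgs pair reducing $(E,\phi)$.

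For the converse, given such a reduction, I take the isomorphisms built in Proposition \ref{prop-reduction-higgs} (applied with $\beta\theta$ in place of $\theta$) and compose them with multiplication by $\sg$. These are Higgs isomorphisms, and the same $\ohg$ arises because the choice does not matter by simplicity. From the trivialization in that proof there is an $e$ with $\ohg(e)=e\bg$, so $\beta\in\wf(E,\phi)$ by Lemma \ref{lemma-orbit-fibre-higgs}.

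For uniqueness, the reduction is determined by the isomorphisms $\hg$ via (\ref{eq-definition-reduction-from-iso}). The maps $\hg$ are determined up to $Z$ by simplicity, and $Z$ does not affect the defining condition. So I expect only the Higgs-field weight-space check to need care, specifically keeping track of the $\mu$ versus $\mu^{-1}$ convention between $\mug\tg^{-1}(\phi)$ and the definition of $\liegm$. Everything else is formal transport of the principal-bundle argument.
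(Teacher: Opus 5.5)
Your proposal is correct and follows essentially the same route as the paper: you define $F$ by (\ref{eq-definition-reduction-from-iso}), compose $\hg$ with right multiplication by $\sg^{-1}$, and compare local expressions to get $v\in\lieg^{\beta\theta}_{\mu}$, which is exactly (\ref{eq-Phi-induces-local}); the converse and uniqueness are then transported from Proposition \ref{prop-simple-fixed-points-principal}, with Proposition \ref{prop-reduction-higgs} in place of Proposition \ref{prop-reduction-principal}. As in the paper, your converse tacitly takes the given reduction to be a $(G_{\beta\theta},\lieg^{\beta\theta}_{\mu})$-Higgs pair reduction rather than a bare bundle reduction, which is what Proposition \ref{prop-reduction-higgs} requires.
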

\begin{proof}
Take isomorphisms $\hg:(E,\phi)\to\theta^{-1}(E\otimes\alg,\mug\phi)$ for each $\gamma\in\Gamma$ and
assume that a 1-cocycle $\beta\in Z^1_{\theta}(\Gamma,G/Z)$ is in $\wf(E,\phi)\in H^1_{\theta}(X,\Int(G))$. Mimicking the proof of Proposition \ref{prop-simple-fixed-points-principal} and replacing Proposition \ref{prop-reduction-principal} with Proposition \ref{prop-reduction-higgs} we get a reduction of structure group of $E$ to a $G_{\beta\theta}$-bundle $F$ given by (\ref{eq-definition-reduction-from-iso}). Let $\theta':=\beta\theta$ and let $\hg'$ be the composition
\begin{equation*}
    \hg':(E,\phi)\xrightarrow{\hg}\theta^{-1}(E\otimes\alg,\mug\phi)\xrightarrow{\sg^{-1}}\theta'^{-1}(E\otimes\alg,\mug\phi),
\end{equation*}
where $s:\Gamma\to G$ is any map such that $\beta=\Int_s$. 

The Higgs field is in $H^0(X,F(\lie g^{\theta'}_{\mu})\otimes K_X)$: if $\phi$ is locally of the form $(e,v)\otimes k$, we may assume by Lemma \ref{lemma-exists-higgs-field-reduction} that $e\in F$, hence $\hg(e)=e\sg\otimes \zg(e)$ by (\ref{eq-definition-reduction-from-iso}) and so $\hg'(e)=e\otimes \zg(e)$. Therefore, we have
\begin{equation}\label{eq-Phi-induces-local}
    \tg'^{-1}(E)(\lieg)\otimes k\ni(e,v)\otimes k=(\hg'(e),v)\otimes k=\mu_{\gamma}(e,\tg'^{-1}(v))\otimes k
\end{equation}
for every $\gamma\in \Gamma$ (here we are using the identification between $\tg'^{-1}(E\otimes\alg)\times_{\Ad}\lieg$ and $\tg'^{-1}(E)\times_{\Ad}\lieg$).
That is to say,
$v=\mu_{\gamma}\tg'^{-1}(v)$
whenever $k$ does not vanish, and so $\phi$ is induced by a section of $F(\lieg^{\beta\theta}_{\mu})\otimes K_X$ as required. Thus, we have a reduction to a $(G_{\beta\theta},\lieg^{\beta\theta}_{\mu})$-Higgs pair $(F,\psi)$. 

The uniqueness of the reduction and the converse statement (i.e. the \textit{if} statement) follow as in the proof of Proposition \ref{prop-simple-fixed-points-principal} after replacing Proposition \ref{prop-reduction-principal} with Proposition \ref{prop-reduction-higgs}.
\end{proof}

\section{Fixed points in the moduli space of \texorpdfstring{$G$}{G}-Higgs bundles}\label{section-fixed-moduli}

We use the results of Section \ref{section-fixed-isomorphism-classes} to give a description of the fixed points in the moduli space of $G$-Higgs bundles. As in Section \ref{section-Gtheta} we assume that $Z^1_a(\Gamma,Z)$ is finite. 
We start with:

\begin{proposition}\label{prop-polystability-extension-structure-group}
Let $\theta\in\homgh$ be a lift of $a$. Take a lift $\gamtt\to\Aut(\gt_0)$ of the characteristic homomorphism of \ref{eq-extension-connected-component} preserving a maximal compact subgroup $K^{\theta}\le\gt_0$ and extend $K^{\theta}$ to a maximal compact subgroup $K$ of $G$. Fix $z\in i(\zk^{\theta})^{\Gamma}$, where $\zk^{\theta}$ is the centre of $\lie k^{\theta}$. Then:
\begin{enumerate}
    \item If a $(\gs,\liegm)$-Higgs pair $(F,\psi)$ is $z$-polystable, the $G$-Higgs bundle $(E,\phi)$ obtained by extension of structure group is also $z$-polystable.
    \item If $(E,\phi)$ is a $z$-(semi,poly)stable $G$-Higgs bundle with a reduction of structure group to a $(\gs,\liegm)$-pair $(F,\psi)$, then $(F,\psi)$ is $z$-(semi,poly)stable.
    \item Given $g\in G$ and $\theta':=\Int_g\theta\Int_{g^{-1}}$,
    % we have an isomorphism $\gamtt\cong \widetilde\Gamma_{\theta'}$ induced by the equality $G_{\theta'}=g\gs g^{-1}$, and so we may regard $\Lambda$ as a subgroup of $\widetilde\Gamma_{\theta'}$. Moreover, 
    there is an isomorphism between $\cM(X,\gs,\liegm)$ and $\cM(X,G_{\theta'},\lieg^{\theta'}_{\mu})$ making the following diagramme commute:
\begin{equation}\label{eq-diagramme-extension-theta-theta'}
    \begin{tikzcd}
\cM(X,\gs,\liegm)\arrow{r}\arrow{d} &
\cM(X,G)\\
\cM(X,G_{\theta'},\lieg^{\theta'}_{\mu})\arrow{ur}
\end{tikzcd},
\end{equation}
where the morphisms to $\cM(X,G)$ are given by extension of structure group. For each $\alpha\in Z^1_{a}(\Gamma,H^1(X,Z))$, it restricts to a diagramme
\[\begin{tikzcd}
\cM_{\alpha}(X,\gs,\liegm)\arrow{r}\arrow{d} &
\cM(X,G)\\
\cM_{\alpha}(X,G_{\theta'},\lieg^{\theta'}_{\mu})\arrow{ur}
\end{tikzcd},
\]
where $\cM_{\alpha}(X,\gs,\liegm)$ is the moduli space of $(\gs,\liegm)$-Higgs pairs $(F,\psi)$ such that $\ctt(F)\cong\alpha$. 
\end{enumerate}
(1), (2) and (\ref{eq-diagramme-extension-theta-theta'}) are also true after replacing $\gs$ and $G_{\theta'}$ by $\gt$ and $G^{\theta'}$ respectively, and for any $z\in i\zk^{\theta}$. 
\end{proposition}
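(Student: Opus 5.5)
The natural approach is through the Hitchin equations, i.e. Theorem \ref{EH1} in its version for non-connected groups and pairs (\cite{PBI}), rather than through parabolic reductions. The compact form comes from Lemma \ref{lemma-compact-involution}: it provides a compact involution $\sigma$ of $G$ preserving $\gs$ with $d\sigma(\liegm)=\lieg^{\theta}_{\mu^{-1}}$. I take $K=G^\sigma$, so that $K\cap\gs$ is a maximal compact subgroup of $\gs$ extending $K^\theta$ and invariant under the chosen lift of the characteristic homomorphism. With this $K$, a $(K\cap\gs)$-reduction $h$ of $F$ induces a $K$-reduction of $E$, and the map $\sigma_h$ on $F(\liegm)\otimes K_X$ is the restriction of $\sigma_h$ on $E(\lieg)\otimes K_X$.

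For (1), let $(F,\psi)$ be $z$-polystable. Theorem \ref{EH1} for $(\gs,\liegm)$-pairs gives a reduction $h$ solving $F_h+[\psi,\sigma_h(\psi)]=-2\pi i z\omega$. The bracket takes values in $[\liegm,\lieg^{\theta}_{\mu^{-1}}]\subset\lieg^{\theta}=\operatorname{Lie}(\gs)$, and the Chern connection of the induced $K$-reduction of $E$ is the one induced from $F$. Hence the same $h$ solves the $G$-Hitchin equation with the same $z$, and $z$ is central in $\lie k$ because it is $\Gamma$-invariant and central in $\lie k^\theta$. Theorem \ref{EH1} then gives polystability of $(E,\phi)$.

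For (2) I argue algebraically, following the proof of Proposition \ref{prop-polystability-extension-of-structure-group-non-connected}. Take $s\in i\lie k^\theta$ and a reduction $\tau$ of $F$ to $P_s\cap\gs$ with $\psi\in F(\liegm)_{\tau,s}$. Extending structure group gives a reduction $\tau_G$ of $E$ to $P_s$, and $\phi$ lies in $E(\lieg)_{\tau_G,s}=E_{\tau_G}(\lie p_s)$ because $(\liegm)_s=\liegm\cap\lie p_s$. Degrees agree since the connection is induced. Therefore $\deg F(\tau,s)=\deg E(\tau_G,s)\ge\pair{z}{s}$, and strict inequality holds if $E$ is stable. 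For polystability, when equality holds, I would intersect the Levi reduction of $E$ with $F$. The main obstacle is here: a Levi reduction of $E_{\tau_G}$ need not come from $F$. I would avoid this by using the metric instead. If $(E,\phi)$ is polystable and admits the reduction $F$, the uniqueness of the harmonic metric up to $\Aut(E,\phi)$, together with the $\Gamma$-invariance of the data defining $F$ (via the isomorphisms $h_\gamma$ of Proposition \ref{prop-reduction-higgs}), should show that the solution can be chosen compatible with $F$. Concretely, I would average over the finite group generated by the twisted automorphisms, or use uniqueness to show the metric is preserved by them. The metric then restricts to a solution on $F$, and the converse of Theorem \ref{EH1} gives polystability of $(F,\psi)$.

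For (3), $\Int_g$ maps $\gs$ isomorphically onto $G_{\theta'}=g\gs g^{-1}$ and $\liegm$ onto $\lieg^{\theta'}_\mu$. Extension of structure group by $\Int_g$ therefore gives an isomorphism of moduli spaces, which preserves polystability by transport of structure. Composing with the inclusion into $G$ yields the $G$-Higgs bundle $\Int_g(E,\phi)$, which is isomorphic to $(E,\phi)$ via right multiplication by $g$ as in Section \ref{section-action}. So the triangle commutes. For the $\alpha$-refinement, $c_{\theta'}(\Int_g f)=c_\theta(f)$, since $\theta'_\gamma(gfg^{-1})gf^{-1}g^{-1}=g\,\theta_\gamma(f)f^{-1}g^{-1}$ and this element is central. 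Hence $\tilde c$ is preserved. The statements for $\gt$ and $G^{\theta'}$ follow by the same arguments, and in that case the connected-component issues disappear.
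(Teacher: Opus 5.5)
\textbf{Where you follow the paper.} Your treatment of (1), of (3), and of the semistable/stable half of (2) is the paper's argument:
\begin{itemize}
\item the compact involution $\sigma$ of Lemma~\ref{lemma-compact-involution};
\item the observation that $[\psi,\sigma_h(\psi)]$ lies in $\lieg^{\theta}$, so a solution for $(F,\psi)$ is a solution for $(E,\phi)$;
\item the comparison $\deg F(\tau,s)=\deg E(\tau_G,s)$, using $(\liegm)_s=\liegm\cap\lie p_s$;
\item in (3), your $\Int_g$-transport, which is the paper's reduction $Fg^{-1}$ via $e\mapsto eg^{-1}$.
\end{itemize}

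\textbf{Polystability in (2).} This is where you diverge. The paper only says that ``the same argument applies''. You are right that it does not apply verbatim, since a Levi reduction of $E_{\tau_G}$ need not descend to $F_\tau$. Replacing it with a harmonic-metric argument is a sound alternative, but as sketched it has two holes.

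First, ``use uniqueness to show the metric is preserved'' works only when $(E,\phi)$ is simple, because solutions are unique only up to $\Aut(E,\phi)$. Nor can metrics be averaged linearly. What does work is the following.
\begin{itemize}
\item Set $\alpha:=\ctt(F)$. The specific isomorphisms $\hg$ of Proposition~\ref{prop-reduction-higgs}, induced by the identity on $F$, give an action of the finite group $\Gamma$ on $K$-reductions of $E$.
\item This action sends solutions to solutions provided $K$ is $\theta(\Gamma)$-stable, i.e.\ $\sigma\tg=\tg\sigma$. Such $K$ exist and satisfy the conclusion of Lemma~\ref{lemma-compact-involution}. You also need $|\mug|=1$ and $\tg(z)=z$.
\item The solution set $\Aut(E,\phi)\cdot h_0$ is a closed, totally geodesic, finite-dimensional subspace of the non-positively curved space of metrics, and $\Gamma$ acts on it by isometries. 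Cartan's fixed point theorem then gives a $\Gamma$-fixed solution $h$.
\end{itemize}

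Second, you must still show that such an $h$ reduces to $F$. Let $e_0\in F_x$ and $e_0g\in E_h$; invariance gives $g^{-1}\tg(g)\in K$ for all $\gamma$. Write $g=\exp(X)k$ with $X\in i\lie k$. Then the condition says $\exp(-X)\exp(\tg X)\in K$, and applying $\sigma$ gives $\exp(2\tg X)=\exp(2X)$. Hence $\tg X=X$, so $\exp(X)\in\gt$ and $e_0\exp(X)\in F\cap E_h$. Only after this does $h$ restrict to a $(K\cap\gs)$-reduction of $F$ solving the $\gs$-equation, so that the converse of Theorem~\ref{EH1} for non-connected groups applies.

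\textbf{The parameter $z$.} Your claim in (1) that $z$ is central in $\lie k$, because it is $\Gamma$-invariant and central in $\lie k^{\theta}$, is false. Take $G=\SL(3,\C)$ and $\theta=\Int_{\mathrm{diag}(1,1,-1)}$. Then $\gs=\gt\cong\GL(2,\C)$ and $i\zk^{\theta}=\R\,\mathrm{diag}(1,1,-2)$, while $i\zk=0$. Extending a $z$-polystable $(V,\varphi)$ with $\deg V>0$ gives $V\oplus\det V^{-1}$, which is not semistable. Theorem~\ref{EH1} on the $G$-side needs $z\in i\zk$, so both your argument and the paper's cover only $z\in i\zk\cap i\lie k^{\theta}$. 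State this restriction explicitly rather than claiming it follows.
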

\begin{proof}
The proof of (2) is precisely the same as the proof of Proposition 5.7 (2) in \cite{PR}: suppose that  $(F,\psi)$ is not $z$-semistable.
Following Definition \ref{def-stability-higgs-pair-non-connected}, there is an 
$s\in i(\lie k^{\theta})^{\Gamma}$ defining a parabolic subgroup $P_s\in \gs$, 
and a reduction $\tau$ of
$F$ to a $P_s$-bundle such that $\deg F(s,\tau)< \langle z,s\rangle$ and $\psi\in H^0(X,F(\liegm)_{\tau,s}\otimes K_X)$. 
But $s$ also defines a parabolic subgroup $\widetilde{P}_s$ of $G$, and 
the reduction $\tau$ defines a reduction  $\widetilde{\tau}$ of $E$ to
$\widetilde{P}_s$ such that  
$\deg E(s,\widetilde{\tau})=\deg F(s,\tau)$. Moreover, it is straightforward to check that $(\liegm)_{s}=\widetilde{P}_s\cap\liegm$ and so $\phi\in H^0(X,E_{\widetilde\tau}(\lie p_s)\otimes K_X)$. This contradicts the $z$-semistability of $(E,\phi)$. The same argument applies to stability and polystability.

Now we prove (1), which also follows \cite{PR}: fix a maximal compact subgroup $K_{\theta}$ of $\gs$ and consider a maximal compact subgroup $K$ of $G$ containing it, so that $K_{\theta}=K\cap\gs$. By Lemma \ref{lemma-compact-involution} we may assume that $K$ is defined by an antiholomorphic involution $\sigma$ of $G$ satisfying 
\begin{equation}\label{eq-tau}
    d\sigma(\liegm)=\lieg^{\theta}_{\mu^{-1}}
\end{equation}
for every homomorphism $\mu:\Gamma\to\C^*$. Then, given a $z$-polystable $(\gs,\liegm)$-Higgs pair $(F,\psi)$, by Theorem \ref{EH1-equivariant} and Proposition \ref{prop-twisted-equivariant-bundles-one-to-one} there exists a $\gamtt$-invariant reduction $h_{\theta}\in \Omega^0(F/K_{\theta})$ satisfying the Hitchin equation (\ref{hitchin-equation}). Let $(E,\phi)$ be the extension of structure group of $(F,\psi)$ to $G$. Using the inclusion $F/K_{\theta}\subset E/K$, we get a reduction of structure group $h\in \Omega^0(E/K)$. 

On the other hand, (\ref{hitchin-equation}) is an equation setting a moment map equal to $-i2\pi z$: if we consider the topological bundle underlying $E$ and the space $\mathcal A$ of pairs $(A,\phi)$, where $A$ is a hermitian $G$-connection on $E$ and $\phi\in\Omega^1(E(\lie g))$, there is an action of the gauge group preserving the metric $h$ and this provides a moment map
$m:X\to E_h(\lie k)^*,$
where $E_h$ is the reduction of $E$ to $K$ given by $h$ and $\lie{k}$ is the Lie algebra of $K$. Using the Killing form, $m$ may be regarded as a map $X\to E_h(\lie{k})$. The space $\mathcal B$ of $(\gs,\liegm)$-Higgs pairs $(B,\psi)$, where $B$ is a hermitian $\gs$-connection and $\psi\in\Omega^1(F(\liegm))$ (here $F$ is the reduction of structure group to $\gs$ determined by the $\gs$-connection) is then embedded in $\mathcal A$, and the corresponding moment map $m_{\theta}$ is the restriction of
$$X\xrightarrow{m}E_h(\lie{k})\to F_{h_{\theta}}(\lie k^{\theta})$$
to $\mathcal B$, where the second homomorphism is given by orthogonal projection and we use the obvious notations. Given a $z$-polystable $(\gs,\liegm)$-Higgs pair $(F,\psi)$ as in the previous paragraph, the moment map at $(F,\psi)$ is given by:
$$m(E,\phi)=\Lambda(F_{h}+[\psi,\sigma_{h}(\psi)]),$$
where $\Lambda$ is the adjoint operator of wedging by the Kähler form $\omega$ on $X$.
But $F_h=F_{h_{\theta}}$ (the curvature of the Chern connection for the metric $h_{\theta}$), and (\ref{eq-tau}) implies that $[v,\sigma(v)]\in [\liegm,\lie g^{\theta}_{\mu^{-1}}]\subset\lie g^{\theta}$ for each $v\in\liegm$, hence $[\psi,\sigma_{h}(\psi)]\in \Omega^0(X,F_{h_{\theta}}(\lie k\cap\lie g^{\theta}))= \Omega^0(X,F_{h_{\theta}}(\lie k^{\theta}))$. This means that the moment map $m_{\theta}$ is just the restriction of $m$ and so $m(E,\phi)=m_{\theta}(F,\psi)$ as required. Thus, $(E,\phi)$ satisfies (\ref{hitchin-equation}) and so, by Theorem \ref{EH1}, it is $z$-polystable.

The isomorphism in (3) is given as follows: consider a $G$-Higgs bundle $(E,\phi)$, a homomorphism $\theta\in\homgh$ and a reduction of structure group $F$ to $\gs$ such that $\phi$ is induced by a section $\psi$ of $F(\liegm)\otimes K_X$. Let $g\in G$ and $\theta':=\Int_{g}\theta \Int_{g^{-1}}$. We show that there is also a reduction $F'$ to $G_{\theta'}$ such that $\phi$ is induced by a section $\psi'$ of $F'(\lieg^{\theta'}_{\mu})\otimes K_X$. We set 
$$F':=Fg^{-1},$$
so that $F'$ is a reduction to $\Int_{g}(\gs)$. But $u\in G$ is contained in $\Int_{g}(\gs)$ if and only if
$$\Int_g\theta\Int_{g^{-1}}(u)=z(\gamma,u)u$$
for each $\gamma\in\Gamma$, which yields $\Int_{g}(\gs)=G_{\theta'}$. Similarly $\Int_{g}(\gt)=G^{\theta'}$, therefore $g^{-1}$ induces an isomorphism from $F/\gt$ to $F'/G^{\theta'}$ and so $\ctt(F)=\ctt(F')$. On the other hand, if $\psi$ is locally equal to $(e,v)\otimes k$ for some $e\in F$, $v\in\liegm$ and $k\in K$, we may define $\psi'$ locally as $(eg^{-1},\Ad_gv)\otimes k$. It is straightforward to see that this is a section of $F'(\lieg^{\theta'}_{\mu})\otimes K_X$.

The proof for $\gt$ is precisely the same.
\end{proof}

\begin{corollary}
    \label{cor-polystability-extension-structure-group-principal}
Let $\theta\in\homgh$ be a lift of $a$ and fix $z\in i(\zk^{\theta})^{\Gamma}$. Then:
\begin{enumerate}
    \item If a $\gs$-bundle $F$ is $z$-polystable, the $G$-bundle obtained by extension of structure group is also $z$-polystable.
    \item If $E$ is a $z$-(semi,poly)stable $G$-bundle with a reduction of structure group to a $\gs$-bundle $F$, then $F$ is $z$-(semi,poly)stable.
    \item Given $g\in G$ and $\theta':=\Int_g\theta\Int_{g^{-1}}$,
    % we have an isomorphism $\gamtt\cong \widetilde\Gamma_{\theta'}$ induced by the equality $G_{\theta'}=g\gs g^{-1}$, and so we may regard $\Lambda$ as a subgroup of $\widetilde\Gamma_{\theta'}$. Moreover, 
    there is a canonical isomorphism between $M(X,\gs)$ and $M(X,G_{\theta'})$ making the following diagramme commute:
    \[\begin{tikzcd}
M(X,\gs)\arrow{r}\arrow{d} &
M(X,G)\\
M(X,G_{\theta'})\arrow{ur}
\end{tikzcd},
\]
where the morphisms to $M(X,G)$ are given by extension of structure group. For each $\alpha\in Z^1_{a}(\Gamma,H^1(X,Z))$, it restricts to a diagramme
\[\begin{tikzcd}
M_{\alpha}(X,\gs)\arrow{r}\arrow{d} &
M(X,G)\\
M_{\alpha}(X,G_{\theta'})\arrow{ur}
\end{tikzcd},
\]
\end{enumerate}
where $M_{\alpha}(X,\gs)$ is the moduli space of $\gs$-bundles $F$ such that $\ctt(F)\cong\alpha$.
\end{corollary}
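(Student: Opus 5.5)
The plan is to deduce the corollary from Proposition \ref{prop-polystability-extension-structure-group} by specializing to zero Higgs field. A $\gs$-bundle $F$ is the same as a $(\gs,\liegm)$-Higgs pair $(F,0)$. The stability notions of Definition \ref{def-stability-higgs-pair-non-connected} for $(F,0)$ reduce to the usual ones for principal bundles. This is because the condition $\phi\in H^0(X,E(V)_{\tau,s}\otimes K_X)$ holds automatically for every reduction $\tau$, and in the polystable case the condition on $E(V)^0_{\tau',s}$ only asks for the existence of a holomorphic Levi reduction $\tau'$. Extension of structure group of $(F,0)$ to $G$ gives $(E,0)$, where $E$ is the extended $G$-bundle. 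So (1) and (2) become literal instances of (1) and (2) of the proposition. For (2) we use that $(E,0)$ is $z$-(semi,poly)stable as a Higgs bundle exactly when $E$ is so as a bundle, and that the reduction $F$ of $E$ carries the zero Higgs field, compatibly with Lemma \ref{lemma-exists-higgs-field-reduction}.

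One point needs checking for (1). The proof of Proposition \ref{prop-polystability-extension-structure-group}(1) goes through the Hitchin equation (\ref{hitchin-equation}) and Theorem \ref{EH1}. For $\phi=0$ these become the Narasimhan--Seshadri/Ramanathan correspondence, with a metric satisfying $F_h=-2\pi i z\omega$. The moment map argument there only uses that $F_h=F_{h_\theta}$, and the term $[\psi,\sigma_h(\psi)]$ simply vanishes. The compact involution of Lemma \ref{lemma-compact-involution} is still needed, but only to ensure $K\cap\gs$ is a maximal compact subgroup compatible with the $\gamtt$-action. So I would restate that argument with $\psi=0$ rather than citing it blindly. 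I expect this step to be the only one needing care, since the Higgs version of Theorem \ref{EH1} is literally applicable to $\phi=0$ anyway.

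For (3), I would reuse the explicit construction from the proposition with $\psi=0$. For $g\in G$ and $\theta'=\Int_g\theta\Int_{g^{-1}}$, send a $\gs$-bundle $F\subset E$ to $F':=Fg^{-1}$. This is a reduction of $E$ to $\Int_g(\gs)=G_{\theta'}$, and right multiplication by $g^{-1}$ identifies $F$ with $\Int_g(F)$ as an abstract $G_{\theta'}$-bundle. The map is canonical, it is inverse to the one given by $g^{-1}$, and it preserves polystability by (1) and (2). Moreover $F$ and $F'$ span the same subset of $E$ after extending to $G$, so the triangle commutes. Finally, $g^{-1}$ induces an isomorphism $F/\gt\cong F'/G^{\theta'}$ compatible with $\cct$ and $c_{\theta'}$, since $\Int_g$ intertwines the two cocycle maps and acts trivially on $Z$. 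Hence $\ctt(F)=\tilde c_{\theta'}(F')$, and the diagram restricts to the components $M_\alpha$.
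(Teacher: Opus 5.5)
Your proposal is correct and follows essentially the same route as the paper. The paper states this corollary without separate proof, as the zero-Higgs-field case of Proposition~\ref{prop-polystability-extension-structure-group}: the $\phi$-conditions in Definition~\ref{def-stability-higgs-pair-non-connected} become vacuous for $\phi=0$, which gives (1) and (2), and (3) uses the same map $F\mapsto Fg^{-1}$ together with $c_{\theta'}\circ\Int_g=\cct$ on $\gs$.

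One small correction: for $\psi=0$ the property $d\sigma(\liegm)=\lieg^{\theta}_{\mu^{-1}}$ from Lemma~\ref{lemma-compact-involution} is not needed at all, contrary to your remark that it is ``still needed''. Any maximal compact subgroup of $G$ whose intersection with $\gs$ is maximal compact in $\gs$ suffices.
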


We call $\widetilde{\mdl}_{\alpha}(X,\gs,\liegm)$ and $\widetilde M_{\alpha}(X,\gs)$ to the images of ${\mdl}_{\alpha}(X,\gs,\liegm)$ and $M_{\alpha}(X,\gs)$ in $\mdl(X,G)$ and $M(X,G)$ respectively, and similarly replacing $\gs$ with $\gt$. By Proposition \ref{prop-polystability-extension-structure-group}, if $\theta'=\Int_g\theta\Int_{g^{-1}}$ for some $g\in G$, we have 
$\widetilde{\mdl}_{\alpha}(X,\gs,\liegm)=\widetilde{\mdl}_{\alpha}(X,G_{\theta'},\lie g^{\theta'}_{\mu})$.

Let $\mdl(X,G)^{\llambda}$ and $M(X,G)^{\llambda}$ be the fixed point locus of $\mdl(X,G)$ and $M(X,G)$ respectively under the action of $\llambda$, and let $\mdl_{ss}(X,G)^{\llambda}$ and $M_{ss}(X,G)^{\llambda}$ be the intersections with the stable and simple loci. Given a 1-cocycle $\bg\in Z^1_{\theta}(\Gamma,G/Z)$, we call $[\beta]$ to its cohomology class in $H^1_{\theta}(\Gamma,G/Z)$ (see Definition \ref{def-1-cocycle}). Combining Propositions \ref{prop-reduction-principal} and \ref{prop-simple-fixed-points-principal} we get: 

\begin{theorem}\label{th-fixed-points-oscar-ramanan-principal}
Fix a homomorphism $\theta:\Gamma\to\Aut(G)$ lifting $a$. We have the inclusions
    $$\bigcup_{[\beta]\in H^1_{\theta}(\Gamma,\Int(G))}\widetilde{{M}}_{\alpha}(X,G_{\beta\theta})\subset{M}(X,G)^{\llambda}$$
and    
    $${M}_{ss}(X,G)^{\llambda}\subset\bigcup_{[\beta]\in H^1_{\theta}(\Gamma,\Int(G))}\widetilde{{M}}_{\alpha}(X,G_{\beta\theta}).$$
Moreover, the intersections 
$${M}_{ss}(X,G)\cap\widetilde{{M}}_{\alpha}(X,G_{\beta\theta})={M}_{ss}(X,G)^{\llambda}\cap\widetilde{{M}}_{\alpha}(X,G_{\beta\theta})$$
are disjoint for different $[\beta]\in H^1_{\theta}(\Gamma,\Int(G))$.
\end{theorem}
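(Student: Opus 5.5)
The plan is to prove the three assertions separately, each by combining Propositions \ref{prop-reduction-principal} and \ref{prop-simple-fixed-points-principal} with the polystability statements of Corollary \ref{cor-polystability-extension-structure-group-principal}.

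\textbf{First inclusion.} Fix $[\beta]\in H^1_{\theta}(\Gamma,\Int(G))$ and choose a representative $\beta$. By Lemma \ref{lemma-lifts-vs-non-abelian-cohomology}, $\theta':=\beta\theta$ is again a homomorphism lifting $a$. Take a point of $\widetilde M_{\alpha}(X,G_{\theta'})$. It is the extension of structure group $E$ of a polystable $G_{\theta'}$-bundle $F$ with $\tilde c_{\theta'}(F)\cong\alpha$. By Corollary \ref{cor-polystability-extension-structure-group-principal}(1), $E$ is polystable, so it defines a point of $M(X,G)$. Proposition \ref{prop-reduction-principal}, applied with $\theta'$ in place of $\theta$, gives $E\cong\theta'^{-1}_{\gamma}(E\otimes\alpha_{\gamma})$ for every $\gamma$. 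Since $\theta'_{\gamma}$ and $\theta_{\gamma}$ differ by an inner automorphism, this is isomorphic to $E\gamma$, so $[E]\in M(X,G)^{\Gamma}$. The union over $[\beta]$ is independent of the chosen representatives. Indeed, cohomologous cocycles correspond under Lemma \ref{lemma-lifts-vs-non-abelian-cohomology} to lifts conjugate by some $\Int_g$, and Corollary \ref{cor-polystability-extension-structure-group-principal}(3) shows that the images in $M(X,G)$ coincide.

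\textbf{Second inclusion.} Let $E$ be stable and simple with $[E]$ fixed by $\Gamma$. Section \ref{section-simple-and-H} provides the class $\wf(E)\in H^1_{\theta}(\Gamma,G/Z)\cong H^1_{\theta}(\Gamma,\Int(G))$. Pick $\beta\in\wf(E)$. Proposition \ref{prop-simple-fixed-points-principal} then yields a reduction $F$ of $E$ to $G_{\beta\theta}$ with $\tilde c_{\beta\theta}(F)\cong\alpha$. By Corollary \ref{cor-polystability-extension-structure-group-principal}(2), applied to $\beta\theta$, the bundle $F$ is polystable. Hence $[F]\in M_{\alpha}(X,G_{\beta\theta})$ and its image is $[E]$. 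Two points need care here. First, the $z$-parameter must be compatible, so I would note that the stability parameter of $E$ lies in $i(\zk^{\beta\theta})^{\Gamma}$ after choosing the maximal compact as in Proposition \ref{prop-polystability-extension-structure-group}. Second, $\beta\theta$ must preserve a suitable maximal compact subgroup; if it does not, I would replace $\beta$ by a cohomologous cocycle, which changes nothing by (3).

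\textbf{Disjointness.} The equality of the two intersections is immediate from the first inclusion. Now suppose $[E]$ is stable and simple and lies in both $\widetilde M_{\alpha}(X,G_{\beta\theta})$ and $\widetilde M_{\alpha}(X,G_{\beta'\theta})$. Then $E$ admits reductions to $G_{\beta\theta}$ and to $G_{\beta'\theta}$, each satisfying (\ref{eq-c-betatheta}). The ``if'' direction of Proposition \ref{prop-simple-fixed-points-principal} gives $\beta,\beta'\in\wf(E)$, which is a single class, so $[\beta]=[\beta']$.

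The main obstacle is making sure that \emph{every} point of $\widetilde M_{\alpha}(X,G_{\beta\theta})\cap M_{ss}(X,G)$ has a reduction of the required form for the fixed representative $\beta$, rather than only for some conjugate lift. This is again handled by Corollary \ref{cor-polystability-extension-structure-group-principal}(3) together with the well-definedness of $\wf$. The other delicate point is the matching of stability parameters in the second inclusion.
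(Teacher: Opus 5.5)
Your proposal is correct and follows the paper's argument. The paper simply combines Proposition \ref{prop-reduction-principal} for the first inclusion with both directions of Proposition \ref{prop-simple-fixed-points-principal} for the second inclusion and for disjointness, via the single class $\widetilde{f}(E)$, and uses the polystability statements of Corollary \ref{cor-polystability-extension-structure-group-principal} implicitly. Your extra worries are harmless but unnecessary: by definition every point of $\widetilde{M}_{\alpha}(X,G_{\beta\theta})$ already comes from a $G_{\beta\theta}$-bundle for the fixed representative $\beta$, and since $\Gamma$ is finite every lift $\beta\theta$ preserves some maximal compact subgroup, so no change of representative is ever needed.
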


We also have the corresponding result for Higgs bundles, which follows from Propositions \ref{prop-reduction-higgs} and \ref{prop-simple-fixed-points-higgs}:

\begin{theorem}\label{th-fixed-points-oscar-ramanan-higgs}
Fix a homomorphism $\theta:\Gamma\to\Aut(G)$ lifting $a$. We have the following relations between moduli spaces:
\begin{enumerate}
    \item $$\bigcup_{[\beta]\in H^1_{\theta}(\Gamma,\Int(G))}\widetilde{\cM}_{\alpha}(X,G_{\beta\theta},\lieg^{\beta\theta}_{\mu})\subset\cM(X,G)^{\Gamma}. $$
    
    \item $$\cM_{ss}(X,G)^{\Gamma}\subset\bigcup_{[\beta]\in H^1_{\theta}(\Gamma,\Int(G))}\widetilde{\cM}_{\alpha}(X,G_{\beta\theta},\lieg^{\beta\theta}_{\mu}).$$
    
\end{enumerate}

As $[\beta]$ runs over $H^1_{\theta}(\Gamma,\Int(G))$, $\beta\theta$ runs over all the conjugacy classes (under conjugation by $\Int(G)$) of elements of $\homgh$ lifting $a$. Moreover, the intersections 
$$\cM_{ss}(X,G)\cap\widetilde{\cM}_{\alpha}(X,G_{\beta\theta},\lieg^{\beta\theta}_{\mu})=\cM_{ss}(X,G)^{\Gamma}\cap\widetilde{\cM}_{\alpha}(X,G_{\beta\theta},\lieg^{\beta\theta}_{\mu})$$
are disjoint for different $[\beta]\in H^1_{\theta}(\Gamma,\Int(G))$.
\end{theorem}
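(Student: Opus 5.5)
The plan is to deduce the statement from Propositions \ref{prop-reduction-higgs}, \ref{prop-simple-fixed-points-higgs} and \ref{prop-polystability-extension-structure-group}, together with Lemma \ref{lemma-lifts-vs-non-abelian-cohomology}.

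For (1), take a point of $\widetilde{\cM}_{\alpha}(X,G_{\beta\theta},\lieg^{\beta\theta}_{\mu})$. It is the extension of structure group $(E,\phi)$ of a polystable $(G_{\beta\theta},\lieg^{\beta\theta}_{\mu})$-Higgs pair $(F,\psi)$ with $\tilde c_{\beta\theta}(F)\cong\alpha$. By Proposition \ref{prop-polystability-extension-structure-group}(1), applied to the lift $\beta\theta$, $(E,\phi)$ is polystable. Proposition \ref{prop-reduction-higgs}, again with $\beta\theta$ in place of $\theta$, gives $(E,\phi)\cong(E,\phi)\gamma$ for every $\gamma$. Since $\beta\theta$ and $\theta$ differ by inner automorphisms, both lift the same $a$ and define the same action on $\cM(X,G)$, so the point is fixed. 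Independence of the representative $\beta$ in $[\beta]$ comes from Proposition \ref{prop-polystability-extension-structure-group}(3): cohomologous cocycles give lifts conjugate by some $\Int_g$, and the images in $\cM(X,G)$ agree.

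For (2), take a stable and simple $(E,\phi)$ fixed by $\Gamma$. Section \ref{section-simple-and-H} gives a class $\wf(E,\phi)\in H^1_{\theta}(\Gamma,\Int(G))$. Choose a representative $\beta$. Proposition \ref{prop-simple-fixed-points-higgs} then yields a reduction $(F,\psi)$ to $(G_{\beta\theta},\lieg^{\beta\theta}_{\mu})$ with $\tilde c_{\beta\theta}(F)\cong\alpha$. Proposition \ref{prop-polystability-extension-structure-group}(2) makes $(F,\psi)$ polystable, so it defines a point of $\cM_\alpha$ mapping to $(E,\phi)$. One point needs care here: the stability parameter. Polystability of $(E,\phi)$ is with respect to some $z\in i\zk$, and Proposition \ref{prop-polystability-extension-structure-group} wants $z\in i(\zk^\theta)^\Gamma$. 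I would argue that the fixedness of $(E,\phi)$ forces its topological parameter $z$ to be $\Gamma$-invariant, or else invoke the last clause of that proposition, which allows any $z$.

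The statement about conjugacy classes is exactly Lemma \ref{lemma-lifts-vs-non-abelian-cohomology}, using the identification $G/Z\cong\Int(G)$ of Remark \ref{remark-G/Z-vs-Int(G)}.

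For disjointness, suppose a stable and simple $(E,\phi)$ lies in the images for both $[\beta]$ and $[\beta']$. Then it has reductions to $G_{\beta\theta}$ and to $G_{\beta'\theta}$, both satisfying (\ref{eq-c-betatheta}). The \emph{if} direction of Proposition \ref{prop-simple-fixed-points-higgs} puts both $\beta$ and $\beta'$ in the single class $\wf(E,\phi)$, so $[\beta]=[\beta']$. The equality of the two displayed intersections follows from (1): the image is already contained in the fixed locus.

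I expect the main obstacle to be the parameter bookkeeping in (2) just described, that is, ensuring that $z$ is compatible with the restricted stability notion on $G_{\beta\theta}$. The rest is a direct assembly of the earlier results.
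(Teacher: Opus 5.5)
Your proposal is correct and follows the paper's argument: the paper deduces the theorem directly from Propositions \ref{prop-reduction-higgs} and \ref{prop-simple-fixed-points-higgs}, with Proposition \ref{prop-polystability-extension-structure-group} and Lemma \ref{lemma-lifts-vs-non-abelian-cohomology} tacitly supplying the passage to moduli spaces, the independence of the representative $\beta$ and the description of lifts, and your disjointness argument via the uniqueness of the class $\wf(E,\phi)$ for simple objects is the intended one. On the parameter $z$, which the paper never discusses, neither of your two justifications works as stated: fixedness alone does not force invariance for reductive $G$ (for example $E\cong E^{*}\otimes L^{-1}$ in $\GL(n,\C)$ with $\deg L\neq 0$), and the last clause of Proposition \ref{prop-polystability-extension-structure-group} concerns $\gt$ rather than $\gs$ and still requires $z\in i\zk^{\theta}$; the invariance actually comes from the standing assumption that $Z^1_a(\Gamma,Z)$ is finite, since the coboundaries $\gamma\mapsto w^{-1}\theta_\gamma(w)$ with $w\in Z$ then force $\theta(\Gamma)$ to act trivially on the identity component of $Z$, so every $z\in i\zk$ is central and invariant and hence lies in $i(\zk^{\theta})^{\Gamma}$.
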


\begin{corollary}
    If the character $\mu:\Gamma\to\C^*$ is trivial, we have the following inclusions:
    \begin{enumerate}
    \item $$\bigcup_{[\beta]\in H^1_{\theta}(\Gamma,\Int(G))}\widetilde{\cM}_{\alpha}(X,G_{\beta\theta})\subset\cM(X,G)^{\Gamma}. $$
    
    \item $$\cM_{ss}(X,G)^{\Gamma}\subset\bigcup_{[\beta]\in H^1_{\theta}(\Gamma,\Int(G))}\widetilde{\cM}_{\alpha}(X,G_{\beta\theta}).$$
    
\end{enumerate}
Moreover, the intersections 
$$\cM_{ss}(X,G)\cap\widetilde{\cM}(X,G_{\beta\theta})=\cM_{ss}(X,G)^{\Gamma}\cap\widetilde{\cM}(X,G_{\beta\theta})$$
are disjoint for different $[\beta]\in H^1_{\theta}(\Gamma,\Int(G))$.
\end{corollary}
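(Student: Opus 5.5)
The corollary is the specialisation of Theorem \ref{th-fixed-points-oscar-ramanan-higgs} to $\mu\equiv 1$, so the plan is to substitute $\mu=1$ and then identify the resulting moduli spaces with $\cM_{\alpha}(X,G_{\beta\theta})$.

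\textbf{Step 1: the weight space.} For $\mu$ trivial, the $\mu$-weight space is
$$\lieg^{\beta\theta}_{1}=\{v\in\lieg\suhthat (\beta\theta)_\gamma(v)=v\ \forall\gamma\}=\lieg^{\beta\theta},$$
the Lie algebra of $G^{\beta\theta}$. Since $\gt$ and $G_{\beta\theta}$ differ only by a finite group (via the embedding $\gamt\hookrightarrow\zt$ of Section \ref{section-Gtheta}), this is also the Lie algebra of $G_{\beta\theta}$.

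\textbf{Step 2: identifying the moduli spaces.} The adjoint action of $\gs$ on $\liegm$ introduced in Section \ref{section-adjoint-representation} then becomes the full adjoint representation of $G_{\beta\theta}$. Hence a $(G_{\beta\theta},\lieg^{\beta\theta}_1)$-Higgs pair is exactly a $G_{\beta\theta}$-Higgs bundle, with the non-connected group treated as in Section \ref{section-twisted-and-non-connected-higgs}. The stability notions agree as well: in Definition \ref{def-stability-higgs-pair-non-connected}, with $V$ the Lie algebra, $V_s=\lie p_s$ and $V^0_s=\lie l_s$. Therefore $\cM_\alpha(X,G_{\beta\theta},\lieg^{\beta\theta}_1)=\cM_\alpha(X,G_{\beta\theta})$. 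The extension-of-structure-group map to $\cM(X,G)$ is the same map in both descriptions, so the images with a tilde coincide.

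\textbf{Step 3: transferring the inclusions.} Items (1) and (2) of the corollary are then literally items (1) and (2) of Theorem \ref{th-fixed-points-oscar-ramanan-higgs}. The disjointness statement transfers the same way: its source is the uniqueness and class assertion of Proposition \ref{prop-simple-fixed-points-higgs}. A simple stable point lying in the image for $\beta$ determines $\beta$ up to cohomology as an element of $\wf(E,\phi)$, and this uses Lemma \ref{lemma-orbit-fibre-higgs} and Proposition \ref{prop-reduction-higgs}.

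\textbf{Main obstacle.} The only delicate point is the notation in the disjointness clause. There the subscript $\alpha$ is dropped, and if the statement is read without it, two different $[\beta]$ could a priori share a point through different $\alpha$'s. This does not happen here, because $\alpha$ is fixed by $\Gamma$, and any reduction produced by Proposition \ref{prop-simple-fixed-points-higgs} satisfies $\tilde c_{\beta\theta}(F)\cong\alpha$. So the intersection in the clause should be read with the $\alpha$-component, and disjointness then follows as in the theorem.
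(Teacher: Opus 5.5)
Your proposal is correct and is the argument the paper intends, since it states this corollary without proof as the $\mu$-trivial case of Theorem \ref{th-fixed-points-oscar-ramanan-higgs}. The key point is that $\lieg^{\beta\theta}_{1}=\lieg^{\beta\theta}$ is the Lie algebra of $G_{\beta\theta}$, because $G_{\beta\theta}/G^{\beta\theta}$ is finite, so $(G_{\beta\theta},\lieg^{\beta\theta}_1)$-Higgs pairs are just $G_{\beta\theta}$-Higgs bundles.

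Restoring the subscript $\alpha$ in the disjointness clause is not a harmless reading but a necessary correction. Under the literal statement, taking $a$ and $\beta$ trivial gives $G_{\theta}=G$, so $\widetilde{\cM}(X,G_{\theta})=\cM(X,G)$ and the stated equality already fails whenever $\Gamma$ acts nontrivially.
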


\begin{corollary}
    If $\widetilde{\mdl}(X,\gt)$ is the image of the extension of structure group morphism $\mdl(X,\gt)\to\mdl(X,G)$ and $\alpha$ is trivial, we have the following inclusions:
    \begin{enumerate}
    \item $$\bigcup_{[\beta]\in H^1_{\theta}(\Gamma,\Int(G))}\widetilde{\cM}(X,G^{\beta\theta},\lieg^{\beta\theta}_{\mu})\subset\cM(X,G)^{\Gamma}. $$
    
    \item $$\cM_{ss}(X,G)^{\Gamma}\subset\bigcup_{[\beta]\in H^1_{\theta}(\Gamma,\Int(G))}\widetilde{\cM}(X,G^{\beta\theta},\lieg^{\beta\theta}_{\mu}).$$
    
\end{enumerate}

Moreover, the intersections 
$$\cM_{ss}(X,G)\cap\widetilde{\cM}_{\alpha}(X,G^{\beta\theta},\lieg^{\beta\theta}_{\mu})=\cM_{ss}(X,G)^{\Gamma}\cap\widetilde{\cM}_{\alpha}(X,G^{\beta\theta},\lieg^{\beta\theta}_{\mu})$$
are disjoint for different $[\beta]\in H^1_{\theta}(\Gamma,\Int(G))$.
\end{corollary}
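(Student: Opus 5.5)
The plan is to deduce this corollary directly from Theorem \ref{th-fixed-points-oscar-ramanan-higgs}, specialised to the trivial $1$-cocycle $\alpha$. For this it suffices to show that, for every lift $\theta'=\beta\theta$, the images satisfy
\[
\widetilde{\cM}_{1}(X,G_{\theta'},\lieg^{\theta'}_{\mu})=\widetilde{\cM}(X,G^{\theta'},\lieg^{\theta'}_{\mu}).
\]
Once this equality holds, items (1) and (2) and the disjointness statement are exactly the corresponding statements of Theorem \ref{th-fixed-points-oscar-ramanan-higgs} with the left-hand sides rewritten. Without loss of generality I write $\theta$ for $\theta'$.

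\emph{Inclusion $\supseteq$.} Take a polystable $(\gt,\liegm)$-Higgs pair $(F_0,\psi)$. By (1) and (2) of Proposition \ref{prop-polystability-extension-structure-group}, applied with $\gt$ and then with $\gs$, its extension $(F,\psi)$ to $\gs$ is polystable. The same proposition also gives that the extension to $G$ is polystable. Since $\cct$ is trivial on $\gt$ by the exact sequence (\ref{eq-exact-seq-groups}), the induced $\zt$-bundle of $F$ is trivial. Hence $\ctt(F)$ is trivial, and $(F,\psi)$ defines a point of $\cM_{1}(X,\gs,\liegm)$ with the same image in $\cM(X,G)$.

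\emph{Inclusion $\subseteq$.} Let $(F,\psi)$ be a polystable $(\gs,\liegm)$-pair with $\ctt(F)$ trivial. Section \ref{section-Gtheta} observes that $\ctt$ factors through $H^1(X,\gamt)$ via the injective map $H^1(X,\gamt)\hookrightarrow H^1(X,\zt)\to Z^1_a(\Gamma,H^1(X,Z))$. Injectivity then forces the finite étale $\gamt$-bundle $F/\gt$ to be trivial. A holomorphic section of $F/\gt$ (for instance, one component of this trivial cover) gives a reduction $F_0\subset F$ to $\gt$. Because $\liegm$ is a $\gs$-representation restricting to a $\gt$-representation, Lemma \ref{lemma-exists-higgs-field-reduction} produces a Higgs field $\psi_0\in H^0(X,F_0(\liegm)\otimes K_X)$ reducing $\psi$. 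Part (2) of Proposition \ref{prop-polystability-extension-structure-group}, in its $\gt$-version, applied to the polystable extension in $G$, shows that $(F_0,\psi_0)$ is polystable. Thus every point of $\widetilde{\cM}_1(X,\gs,\liegm)$ lies in $\widetilde{\cM}(X,\gt,\liegm)$.

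The main obstacle I expect is making the injectivity step precise. One must check that the composite $H^1(X,\gamt)\to H^1(X,\zt)\cong Z^1_a(\Gamma,H^1(X,Z))$ really detects triviality of the $\gamt$-bundle $F/\gt$, and not merely of some associated bundle. I would verify this through (\ref{eq-iso-rep-fundamental-cohomology}): finite bundles on the connected surface $X$ correspond to homomorphisms $\pi_1(X)\to\gamt$, and composing with the injection $\gamt\hookrightarrow\zt$ keeps the homomorphism nontrivial. This uses the standing finiteness assumption on $\zt$, and it also uses that $\gamt$ is abelian, which holds because it embeds in $\zt$. The rest is bookkeeping: the polystability parameters $z$ transfer between the $\gt$-, $\gs$- and $G$-moduli spaces exactly as in Proposition \ref{prop-polystability-extension-structure-group}. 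Disjointness for distinct $[\beta]$ is inherited verbatim from Theorem \ref{th-fixed-points-oscar-ramanan-higgs}.
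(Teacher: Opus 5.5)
Your proposal is correct and follows the paper's route: the paper's proof is the one-line reduction to Theorem \ref{th-fixed-points-oscar-ramanan-higgs} via the equality $\widetilde{\cM}_{1}(X,G_{\beta\theta},\lieg^{\beta\theta}_{\mu})=\widetilde{\cM}(X,G^{\beta\theta},\lieg^{\beta\theta}_{\mu})$, which is exactly your reduction. Your verification of that equality supplies details the paper leaves implicit: triviality of $\ctt(F)$ forces $F/G^{\beta\theta}$ to be a trivial $\gamt$-bundle, by the injectivity of $H^1(X,\gamt)\to Z^1_a(\Gamma,H^1(X,Z))$ asserted in Section \ref{section-Gtheta}, and Lemma \ref{lemma-exists-higgs-field-reduction} together with Proposition \ref{prop-polystability-extension-structure-group} handle the Higgs field and polystability in both directions.
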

\begin{proof}
    Follows from Theorem \ref{th-fixed-points-oscar-ramanan-higgs} using $\widetilde{\cM}_{1}(X,G_{\beta\theta},\lieg^{\beta\theta}_{\mu})=\widetilde{\cM}(X,G^{\beta\theta},\lieg^{\beta\theta}_{\mu})$, where we denote the trivial 1-cocycle in $Z_a^1(\Gamma,H^1(X,Z))$ by $1$.
\end{proof}

% \begin{remark}
% If $\mu$ is trivial then the moduli spaces involved in the statement of Theorem \ref{th-fixed-points-oscar-ramanan-higgs} are moduli spaces of honest Higgs bundles with suitable structure group.
% \end{remark}

\section{The Prym--Narasimhan--Ramanan construction of fixed points}\label{section-prym-narasimhan-ramanan}

The Prym--Narasimhan--Ramanan construction can be now given as a corollary of Theorem \ref{th-prym-narasimhan-ramanan-higgs} which, combined with Theorem \ref{th-fixed-points-oscar-ramanan-higgs}, provides a characterization of the subvariety of fixed points.

Let $X$ be a compact Riemann surface and let $G$ be a connected reductive complex Lie group with centre $Z$. Let $\Gamma$ be a finite subgroup of $H^1(X,Z)\rtimes\Out(G)\times\C^*$ and consider a lift $\theta$ of $a$. In this section we assume that the order of $\alpha\in Z^1_a(\Gamma,H^1(X,Z))$ and $\zt$ are both finite. 

\begin{remark}\label{remark-finiteness-zt}
    The second assumption is not necessary, since the fact that the structure group of $\alpha$ is a finite subgroup $\gam\le\zt$ implies that any $\gs$-bundle $F$ such that $\ctt(F)\cong\alpha$ has a reduction of structure group to $c_{\theta}^{-1}(\gam)$. Thus in what follows we could replace $\gs$ by $c_{\theta}^{-1}(\gam)$ and eliminate the finiteness assumption on $\zt$. However, for simplicity we keep this assumption.
\end{remark}

We keep the notation of Section \ref{section-Gtheta}. Let $Z( \gt_0)$ be the centre of $ \gt_0$ and $\hat Y\in H^1(X,\gamtt)$, a $\gamtt$-bundle over $X$. The connected component $Y$ of $\hat Y$ may be regarded as a connected étale cover of $X$ with Galois group $\gam\le\gamtt$. Consider the subgroup $G_{Y}\le\gs$ which is the preimage of $\gam$ under the quotient $\gs\to\gamtt$. Choose a lift
$\taut:\gamtt\to\Aut( \gt_0)$
of the characteristic homomorphism of the extension (\ref{eq-extension-connected-component}).
By Proposition \ref{prop-extensions-isomorphic-twisted-group}, there exists a 2-cocycle $\ct\in Z^2_{\taut}(\gamtt,Z( \gt_0))$ such that
$\gs\cong  \gt_0\times_{(\taut,\ct)}\gamtt.$
On the other hand, a $\gs$-bundle $E$ over $X$ satisfies $\ctt(E)\cong{\alpha}$ if and only if $E/ \gt_0\cong\hat Y$ for some element $\hat Y\in H^1(X,\gamtt)$ such that $\qqt(Y)\cong\alpha$, where by abuse of notation $\qqt$ is given by applying extension of structure group from $\gam$ to $\gamtt$ and then using the homomorphism $\gamtt\to\gamt$ defined in Section \ref{section-Gtheta}.
The application of Theorem (\ref{th-prym-narasimhan-ramanan-higgs}) yields:

\begin{theorem}\label{th-prym-narasimhan-ramanan-oscar-ramanan-higgs}
For each homomorphism $\theta:\Gamma\to\Aut(G)$ lifting $a$ we have an isomorphism
\begin{equation}
    \bigsqcup_{\qqt(Y)\cong \alpha}\mdl(Y,\gt_0,\gam,\tau^{\theta},c^{\theta}, \lieg^{\theta}_{\mu})/Z_{\gamtt}(\gam)\cong \mdl_{\alpha}(X,\gs,\liegm),
\end{equation}
where $Z_{\gamtt}(\gam)$ is the centralizer of $\gam$ in $\gamtt$, which acts on  by Proposition \ref{prop-action-centralizer-higgs}.

Fix such a lift $\theta$. Let $\widetilde{\mdl}(Y,\gt_0,\tau^{\beta\theta},\gam,c^{\beta\theta}, \lieg^{\beta\theta}_{\mu},\mu)/Z_{\gamtt}(\gam)$ be the image of the moduli space $\mdl(Y,\gt_0,\tau^{\beta\theta},\gam,c^{\beta\theta}, \lieg^{\beta\theta}_{\mu},\mu)$ in $\mdl(X,G)$ via the composition of the isomorphism given in Theorem \ref{th-prym-narasimhan-ramanan-higgs} and extension of structure group from $\gs$ to $G$. Then we have the following inclusions:

\begin{enumerate}
    \item $$\bigcup_{[\beta]\in H^1_{\theta}(\Gamma,\Int(G)),\qqt(Y)\cong \alpha}\widetilde{\mdl}(Y,\gt_0,\gam,\tau^{\beta\theta},c^{\beta\theta}, \lieg^{\beta\theta}_{\mu})/Z_{\gamtt}(\gam)\subset\cM(X,G)^{\Gamma}.$$
    
    \item $$\cM_{ss}(X,G)^{\Gamma}\subset\bigcup_{[\beta]\in H^1_{\theta}(\Gamma,\Int(G)),\qqt(Y)\cong \alpha}\widetilde{\mdl}(Y,\gt_0,\gam,\tau^{\beta\theta},c^{\beta\theta}, \lieg^{\beta\theta}_{\mu})/Z_{\gamtt}(\gam).$$
    
\end{enumerate}

As $[\beta]$ runs over $H^1_{\theta}(\Gamma,\Int(G))$, $\beta\theta$ runs over all the conjugacy classes (under conjugation by $\Int(G)$) of elements of $\homgh$ lifting $a$. Moreover, the intersections 
$$\cM_{ss}(X,G)\cap\widetilde{\mdl}(Y,\gt_0,\gam,\tau^{\beta\theta},c^{\beta\theta}, \lieg^{\beta\theta}_{\mu})/Z_{\gamtt}(\gam)
% =\cM_{ss}(X,G)^{\Gamma}\cap\widetilde{\mdl}(Y,\tau^{\beta\theta},c^{\beta\theta}, \gt_0,\gam,\lieg^{\beta\theta}_{\mu},\mu)/Z_{\gamtt}(\gam)
$$
are disjoint for different $[\beta]\in H^1_{\theta}(\Gamma,\Int(G))$ and $Y$.

\end{theorem}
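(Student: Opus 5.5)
The plan has two parts, matching the two halves of the statement: first the isomorphism onto $\mdl_{\alpha}(X,\gs,\liegm)$, then the inclusions obtained by composing with Theorem \ref{th-fixed-points-oscar-ramanan-higgs}.

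\emph{The isomorphism.} Fix a lift $\theta$. By Proposition \ref{prop-extensions-isomorphic-twisted-group}, applied to the extension (\ref{eq-extension-connected-component}), we identify $\gs$ with $\gt_0\times_{(\taut,\ct)}\gamtt$. The representation $\Ad:\gs\to\GL(\liegm)$ then gives, via the section $t$, the $(\taut,\ct)$-twisted action $\rhog$ on $\liegm$ used in Section \ref{section-twisted-and-non-connected-higgs}. The space $\mdl_{\alpha}(X,\gs,\liegm)$ decomposes according to the isomorphism class of the $\gamtt$-bundle $\hat Y=F/\gt_0$. As observed just before the statement, $\ctt(F)\cong\alpha$ holds exactly when $\qqt(\hat Y)\cong\alpha$, because $\ctt$ factors through $H^1(X,\gamtt)\to H^1(X,\gamt)$. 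Hence
\[
\mdl_{\alpha}(X,\gs,\liegm)=\bigsqcup_{\qqt(\hat Y)\cong\alpha}\mdl_{\hat Y}(X,\gs,\liegm).
\]
For each $\hat Y$ with connected component $Y$ and monodromy group $\gam$, Theorem \ref{th-prym-narasimhan-ramanan-higgs} identifies the piece with $\mdl(Y,\gt_0,\gam,\taut,\ct,\liegm)/Z_{\gamtt}(\gam)$. Polystability is preserved in both directions:
\begin{itemize}
\item Proposition \ref{prop-polystability-extension-of-structure-group-non-connected} handles the passage between the preimage $G_Y$ of $\gam$ and $\gs$;
\item the definitions of stability in Definitions \ref{definition-stable-twisted-equivariant} and \ref{def-stability-higgs-pair-non-connected} are set up to match under Proposition \ref{prop-twisted-equivariant-higgs pairs-one-to-one}.
\end{itemize}
Taking the disjoint union over $\hat Y$ gives the claimed isomorphism.

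\emph{The inclusions.} For each class $[\beta]\in H^1_\theta(\Gamma,\Int(G))$, apply the isomorphism above to $\beta\theta$ in place of $\theta$, and compose with extension of structure group from $G_{\beta\theta}$ to $G$. The image is then exactly $\widetilde{\cM}_{\alpha}(X,G_{\beta\theta},\lieg^{\beta\theta}_{\mu})$. Part (1) of the theorem then follows from part (1) of Theorem \ref{th-fixed-points-oscar-ramanan-higgs}, and part (2) from part (2) of that theorem. The statement that $\beta\theta$ runs over conjugacy classes of lifts is Lemma \ref{lemma-lifts-vs-non-abelian-cohomology}. Independence of the chosen representative $\beta$ comes from Proposition \ref{prop-polystability-extension-structure-group}(3).

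\emph{Disjointness, and the main obstacle.} For different $[\beta]$, disjointness is inherited from the last statement of Theorem \ref{th-fixed-points-oscar-ramanan-higgs}. For fixed $[\beta]$ and different $Y$, we argue as follows. By Proposition \ref{prop-simple-fixed-points-higgs}, a simple stable $(E,\phi)$ has a \emph{unique} reduction $F$ to $G_{\beta\theta}$ once the cocycle $\beta$ is fixed. Changing $\beta$ within its class only conjugates $F$, which does not alter the isomorphism class of $F/\gt_0$ as a $\gamtt$-bundle, up to the identification in Proposition \ref{prop-polystability-extension-structure-group}(3). Therefore $\hat Y=F/\gt_0$, and with it $Y$ up to the $Z_{\gamtt}(\gam)$-ambiguity already quotiented out, is determined by $(E,\phi)$.

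The delicate point is this last step. Conjugating $\beta$ by $g\in G$ acts on $\gamtt$ by an automorphism, so one has to check that the resulting relabelling of $\hat Y$ is absorbed by the indexing. I would verify this directly from the formula $F'=Fg^{-1}$ in the proof of Proposition \ref{prop-polystability-extension-structure-group}.
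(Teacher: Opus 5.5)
Your proposal is correct and follows the paper's route. The paper obtains the theorem by splitting $\mdl_{\alpha}(X,\gs,\liegm)$ according to the $\gamtt$-bundle $F/\gt_0$, applying Theorem~\ref{th-prym-narasimhan-ramanan-higgs} to each piece, and then feeding the resulting images into Theorem~\ref{th-fixed-points-oscar-ramanan-higgs}.

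Your uniqueness-of-reduction argument via Proposition~\ref{prop-simple-fixed-points-higgs} for disjointness in $Y$ fills in a step the paper leaves implicit. Two remarks on it:
\begin{itemize}
\item Since a representative $\beta$ is fixed in each class, the ``delicate point'' about conjugating $\beta$ never actually arises.
\item $F/\gt_0$ determines $Y$ only up to the choice of connected component, i.e.\ up to $\gamtt$-conjugation of $\gam$. For the disjointness to hold, the index $Y$ has to be read in this sense, not merely modulo the $Z_{\gamtt}(\gam)$-action.
\end{itemize}
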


\begin{theorem}\label{th-prym-narasimhan-ramanan-oscar-ramanan-principal}
For each homomorphism $\theta:\Gamma\to\Aut(G)$ lifting $a$ we have an isomorphism
\begin{equation}
    \bigsqcup_{\qqt(Y)\cong \alpha}M(Y, \gt_0,\gam,\tau^{\beta\theta},c^{\beta\theta})/Z_{\gamtt}(\gam)\cong M_{\alpha}(X,\gs),
\end{equation}
where $Z_{\gamtt}(\gam)$ is the centralizer of $\gam$ in $\gamtt$, which acts on  by Proposition \ref{prop-action-centralizer-higgs}.

Fix such a lift $\theta$. Let $\widetilde{M}(Y, \gt_0,\gam,\tau^{\beta\theta},c^{\beta\theta})/Z_{\gamtt}(\gam)$ be the image of the moduli space $M(Y, \gt_0,\gam,\tau^{\beta\theta},c^{\beta\theta})/Z_{\gamtt}(\gam)$ in $M(X,G)$ via the composition of the isomorphism given in Theorem \ref{th-prym-narasimhan-ramanan-higgs} and extension of structure group from $\gs$ to $G$. Then we have the following inclusions:

\begin{enumerate}
    \item $$\bigcup_{[\beta]\in H^1_{\theta}(\Gamma,\Int(G)),\qqt(Y)\cong \alpha}\widetilde{M}(Y, \gt_0,\gam,\tau^{\beta\theta},c^{\beta\theta})/Z_{\gamtt}(\gam)\subset M(X,G)^{\Gamma}.$$
    
    \item $$ M_{ss}(X,G)^{\Gamma}\subset\bigcup_{[\beta]\in H^1_{\theta}(\Gamma,\Int(G)),\qqt(Y)\cong \alpha}\widetilde{M}(Y, \gt_0,\gam,\tau^{\beta\theta},c^{\beta\theta})/Z_{\gamtt}(\gam).$$
    
\end{enumerate}

As $[\beta]$ runs over $H^1_{\theta}(\Gamma,\Int(G))$, $\beta\theta$ runs over all the conjugacy classes (under conjugation by $\Int(G)$) of elements of $\homgh$ lifting $a$. Moreover, the intersections 
$$M_{ss}(X,G)\cap\widetilde{M}(Y, \gt_0,\gam,\tau^{\beta\theta},c^{\beta\theta})/Z_{\gamtt}(\gam)
% =\cM_{ss}(X,G)^{\Gamma}\cap\widetilde{M}(Y,\tau^{\beta\theta},c^{\beta\theta}, \gt_0,\gam,\lieg^{\beta\theta}_{\mu},\mu)/Z_{\gamtt}(\gam)
$$
are disjoint for different $[\beta]\in H^1_{\theta}(\Gamma,\Int(G))$ and $Y$.

\end{theorem}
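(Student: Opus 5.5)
The plan is to deduce Theorem \ref{th-prym-narasimhan-ramanan-oscar-ramanan-principal} from Theorem \ref{th-fixed-points-oscar-ramanan-principal} and Corollary \ref{cor-prym-narasimhan-ramanan-principal}. This mirrors the Higgs case in Theorem \ref{th-prym-narasimhan-ramanan-oscar-ramanan-higgs}, with zero Higgs field throughout.

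\textbf{The isomorphism.} I will work with the extension (\ref{eq-extension-connected-component}) and identify $\gs\cong\gt_0\times_{(\taut,\ct)}\gamtt$ via Proposition \ref{prop-extensions-isomorphic-twisted-group}. Then $M_\alpha(X,\gs)$ decomposes as a disjoint union over isomorphism classes $\hat Y\in H^1(X,\gamtt)$ of the pieces $M_{\hat Y}(X,\gs)$, where $\hat Y$ is the image of $E\mapsto E/\gt_0$. For this I must check that $\ctt(E)\cong\alpha$ holds exactly when $\qqt(E/\gt_0)\cong\alpha$. This follows from the definition (\ref{eq-def-qt}): $\ctt$ factors through extension of structure group $\gs\to\gamtt\to\gamt$, and $\gt_0\subset\gt=\ker\cct$.

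Next I apply Corollary \ref{cor-prym-narasimhan-ramanan-principal} to each piece, taking $\Gamma=\gamtt$ and $\Gamma'=\gam$ the monodromy group with connected component $Y$. This gives $M(Y,\gt_0,\gam,\taut,\ct)/Z_{\gamtt}(\gam)\cong M_{\hat Y}(X,\gs)$. Polystability is preserved under the passage to $G_Y$ and back, by Proposition \ref{prop-polystability-extension-of-structure-group-non-connected} specialized to $V=0$. Since $\hat Y$ is determined by $Y$ as $Y\times_{\gam}\gamtt$, indexing by $Y$ is the same as indexing by $\hat Y$. Taking the disjoint union gives the first isomorphism.

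\textbf{The two inclusions.} By Lemma \ref{lemma-lifts-vs-non-abelian-cohomology}, each class $[\beta]$ gives a lift $\beta\theta$, and I run the same construction with $\beta\theta$ in place of $\theta$. Composing with extension of structure group $G_{\beta\theta}\to G$ maps each quotient onto $\widetilde M_\alpha(X,G_{\beta\theta})$. Inclusion (1) and inclusion (2) then follow directly from the corresponding inclusions in Theorem \ref{th-fixed-points-oscar-ramanan-principal}. By Corollary \ref{cor-polystability-extension-structure-group-principal}(3), the image is independent of the representative of $[\beta]$, so the union is well defined.

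\textbf{Disjointness.} This is the step needing the most care. For different $[\beta]$, disjointness on the simple–stable locus comes from Theorem \ref{th-fixed-points-oscar-ramanan-principal}. For a fixed $\beta$ and different $Y$, I use the uniqueness statement in Proposition \ref{prop-simple-fixed-points-principal}. A simple stable fixed $E$ has a unique reduction $F$ to $G_{\beta\theta}$ for each cocycle in its class. The cover $F/\gt_0$ is therefore determined, and its connected component $Y$ is determined up to the $Z_{\gamtt}(\gam)$-action. Changing the cocycle within the class replaces $F$ by $Fg^{-1}$, which does not change the isomorphism class of the $\gamtt$-bundle. The main obstacle is this last bookkeeping: checking that the identification $\gamtt\cong\widehat\Gamma_{\beta'\theta}$ induced by conjugation is compatible with the indexing by $Y$. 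I will handle it using the explicit map in the proof of Proposition \ref{prop-polystability-extension-structure-group}(3).
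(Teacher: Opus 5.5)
Your proposal is correct and follows the route the paper intends. The paper gives no separate proof: it presents the theorem as an application of Corollary \ref{cor-prym-narasimhan-ramanan-principal}, via $\ctt(E)\cong\alpha$ if and only if $\qqt(E/\gt_0)\cong\alpha$, combined with Theorem \ref{th-fixed-points-oscar-ramanan-principal}. Your use of the uniqueness of the reduction in Proposition \ref{prop-simple-fixed-points-principal} for disjointness in $Y$ correctly fills in what the paper leaves implicit.

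One small correction: the assignment $Y\mapsto\hat Y=Y\times_{\gam}\gamtt$ is not injective. The connected component of $F/\gt_0$ is determined only up to translation by all of $\gamtt$, not just by $Z_{\gamtt}(\gam)$, so distinct components of the same $\hat Y$ give the same piece. Both the union and the disjointness claim must therefore be read with one chosen component per class $\hat Y$.
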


\section{The Prym--Narasimhan--Ramanan construction for character varieties}\label{section-prym-narasimhan-ramanan-character-varieties}

Keep the assumptions of Section \ref{section-prym-narasimhan-ramanan}, namely that the order of $\alpha\in Z^1_a(\Gamma,H^1(X,Z))$ and $\zt$ are both finite. If we suppose that $\mu$ is trivial, Theorem \ref{th-prym-narasimhan-ramanan-oscar-ramanan-higgs} yields a description of the fixed points of a $\Gamma$-action on the character variety $\calR(X,G)$. We keep the definitions of Section \ref{section-twisted-equivariant-higgs-pairs-and-hitchin-equations}. The action of $\Gamma$ on $\calR(X,G)$ induced by its action on $\cM(X,G)$ via Theorem \ref{rep1} is given as follows: for each $\gamma\in\Gamma$, the corresponding $Z$-bundle $\alg$ is flat ---it has finite order by assumption--- and so it is given by a representation $\rho_{\gamma}:\pi_1(X)\to Z$. Given a representation $\rho:\pi_1(X)\to G$, we may multiply it by $\rho_{\gamma}$ to get a new representation $\rho\otimes\rho_{\gamma}$. Now, given the automorphism $\theta_\gamma$  of $G$ and  
$\rho\in \Hom(\pi_1(X,x),G)$, there is another representation of $\pi_1(X,x)$ in $G$ given by $\theta_\gamma\circ \rho$. This defines a left action of $\Aut(G)$ on 
$\calR(X,G)$ that clearly descends to an action of $\Out(G)$.
So for every $\gamma\in \Gamma$ and  $\rho\in \Hom(\pi_1(X),G)$ we have 
$\rho\cdot\gamma \in \Hom(\pi_1(X),G)$ given by 
$$
\rho\cdot\gamma=\theta_\gamma^{-1}\circ (\rho\otimes\alg).
$$
It is straightforward to show (see 
\cite{biswas-calvo-García-prada,PR,ow} 
for a similar computation) that the right action of $\Gamma$ on  $\calR(X,G)$ 
given by this coincides with the action of $\Gamma$ on $\cM(X,G)$ defined in 
Section \ref{section-action} via the non-abelian Hodge correspondence (recall
that here we are taking the character $\mu:\Gamma\to \C^*$ to be trivial).

For each lift $\theta$ of $a$, let $\taut$ and $\ct$ satisfy $\gs\cong\gt_0\times_{\taut,\ct}\gamtt$. Let $Y$ be the connected component of a $\gamtt$-bundle, with Galois group $\Gamma_Y\le\gamtt$. Then Proposition \ref{prop-polystability-extension-structure-group} and Theorem \ref{th-prym-narasimhan-ramanan-higgs} provide, using the non-abelian Hodge correspondence \ref{equivariant-nahc}, a morphism $\calR(Y,\gt_0,\Gamma_Y,\taut,\ct)/Z_{\gamtt}(\Gamma_Y)\to\calR(X,G)$. Let $\wcalR(Y,\gt_0,\Gamma_Y,\taut,\ct)/Z_{\gamtt}(\Gamma_Y)$ be its image. Theorem \ref{th-prym-narasimhan-ramanan-oscar-ramanan-higgs} provides the following result:

\begin{theorem}\label{th-prym-narasimhan-ramanan-character-varieties}
Let $\mu$ be trivial and fix $\theta\in\homgh$ lifting $a$. We have the following relations between character varieties:
\begin{enumerate}
    \item $$\bigcup_{[\beta]\in H^1_{\theta}(\Gamma,\Int(G)),q_{\beta\theta}(Y)=\alpha}\wcalR(Y,G^{\beta\theta}_0,\Gamma_Y,\tau^{\beta\theta},c^{\beta\theta})/Z_{\gamtt}(\Gamma_Y)\subset\calR(X,G)^{\Gamma}. $$
    
    \item $$\calR_{\irr}(X,G)^{\Gamma}\subset\bigcup_{[\beta]\in H^1_{\theta}(\Gamma,\Int(G)),q_{\beta\theta}(Y)=\alpha}\wcalR(Y,G^{\beta\theta}_0,\Gamma_Y,\tau^{\beta\theta},c^{\beta\theta})/Z_{\gamtt}(\Gamma_Y).$$
    
\end{enumerate}
\end{theorem}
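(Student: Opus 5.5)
The plan is to transport Theorem \ref{th-prym-narasimhan-ramanan-oscar-ramanan-higgs}, with $\mu$ trivial, across the non-abelian Hodge correspondence. This needs three ingredients. The first is the homeomorphism $\cM(X,G)\cong\calR(X,G)$ of Theorem \ref{rep1}, which sends stable and simple Higgs bundles to irreducible representations. The second is its twisted equivariant analogue, Theorem \ref{equivariant-nahc}, which gives $\cM(Y,\gt_0,\gam,\taut,\ct)\cong\calR(Y,\gt_0,\gam,\taut,\ct)$; with $\mu=1$ the representation on $\lieg^{\theta}_{\mu}=\lieg^{\theta}$ is the differential of $\taut$, so these really are twisted equivariant Higgs bundles. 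The third is the compatibility, already asserted above, between the $\Gamma$-action $\rho\mapsto\theta_\gamma^{-1}\circ(\rho\otimes\alg)$ on $\calR(X,G)$ and the action on $\cM(X,G)$.

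The first step is to fix what the maps $\calR(Y,\gt_0,\gam,\taut,\ct)/Z_{\gamtt}(\gam)\to\calR(X,G)$ are. On the Higgs side the map is the composite of the bijection of Theorem \ref{th-prym-narasimhan-ramanan-higgs}, landing in $(\gs,\lieg^\theta)$-pairs with $E/\gt_0\cong\hat Y$, with extension of structure group to $G$. This composite preserves polystability with $z=0$ by Proposition \ref{prop-polystability-extension-structure-group}(1) and Proposition \ref{prop-polystability-extension-structure-group-non-connected}. I will check that under NAH it agrees with the obvious representation-theoretic map. A $(\taut,\ct)$-twisted equivariant representation $\pi_1(Y,\gam,y)=\pi_1(X)\to\gt_0\times_{\taut,\ct}\gam=G_Y\subset\gs\subset G$ is simply composed with the inclusions. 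The key point is that solutions of Hitchin's equation for the reduction are solutions for the extension; this is the moment-map argument in the proof of Proposition \ref{prop-polystability-extension-structure-group}. Consequently the flat connection of the extended Higgs bundle is the extension of the flat connection of the reduction, and holonomy commutes with extension of structure group. For $Y$ connected with Galois group $\gam$ and no isotropy, the equivariant fundamental group is $\pi_1(X)$ by \cite[Proposition 3.4]{Huis}, which makes the identification clean. The image of this map is $\wcalR(Y,G^{\beta\theta}_0,\Gamma_Y,\tau^{\beta\theta},c^{\beta\theta})/Z_{\gamtt}(\Gamma_Y)$, and it is homeomorphic to the corresponding $\widetilde{\mdl}$.

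With this in hand, inclusion (1) follows by applying NAH to part (1) of Theorem \ref{th-prym-narasimhan-ramanan-oscar-ramanan-higgs}. Because NAH is $\Gamma$-equivariant, the fixed loci correspond: $\cM_0(X,G)^\Gamma\leftrightarrow\calR(X,G)^\Gamma$. For inclusion (2), I take $\rho\in\calR_{\irr}(X,G)^\Gamma$. Its Higgs bundle is stable and simple by Theorem \ref{rep1} and is $\Gamma$-fixed, so part (2) of Theorem \ref{th-prym-narasimhan-ramanan-oscar-ramanan-higgs} places it in some $\widetilde{\mdl}(Y,\ldots)$. Transporting back by the compatibility established in the first step puts $\rho$ in the corresponding $\wcalR$.

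The main obstacle is the first step: verifying that the composite NAH $\circ$ (Prym--Narasimhan--Ramanan bijection) $\circ$ (extension of structure group) agrees with composition of representations. Two points need care. One must track the $\gam$-invariant harmonic metric through Theorem \ref{EH1-equivariant}, checking that the equivariant metric on $Y$ descends to the $\gs$-bundle on $X$. One must also handle the $Z_{\gamtt}(\gam)$-quotient, checking that the action of Proposition \ref{prop-action-centralizer-higgs} corresponds to conjugation by $(1,z)$ on representations. I would settle the second point by noting that both sides are extension of structure group by $\Int_{(1,z)}$, which commutes with taking holonomy.
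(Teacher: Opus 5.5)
Your proposal is correct and follows the paper's route: the paper also obtains the statement by transporting Theorem \ref{th-prym-narasimhan-ramanan-oscar-ramanan-higgs} (with $\mu$ trivial) through Theorems \ref{rep1} and \ref{equivariant-nahc}, using the $\Gamma$-equivariance of the non-abelian Hodge correspondence. The only difference is that the paper \emph{defines} the map $\calR(Y,\dots)/Z_{\gamtt}(\Gamma_Y)\to\calR(X,G)$ by conjugating the Higgs-side morphism with non-abelian Hodge theory, so your identification of that map with composition of representations is an additional consistency check rather than a step the theorem needs.
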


\newpage

\chapter{Prym--Narasimhan--Ramanan construction for a finite cyclic group action}\label{chapter-cyclic}

\section{Action of a finite cyclic subgroup of the Jacobian on the moduli space of Higgs bundles}\label{section-example-generalize-narasimhan}
In this section we apply Theorem \ref{th-prym-narasimhan-ramanan-oscar-ramanan-higgs} to the case where $G=\GL(n,\C)$ and $a$ is trivial. In particular this shows that Theorem \ref{th-prym-narasimhan-ramanan-oscar-ramanan-higgs} gives the construction in \cite{narasimhan-ramanan} when applied to $\GL(n,\C)$-bundles, or vector bundles of rank $n$.

Let $L$ be a line bundle over $X$ with order $r$, which may be seen as an element of $H^1(X,\Z/r\Z)$, and assume that $n=rm$ is divisible by $r$ (otherwise there would be no fixed points). Let $\Gamma<H^1(X,\Z/r\Z)$ be the subgroup generated by $L$ and consider a homomorphism
$$\mu:\Gamma\to\C^*;\,\gamma\mapsto\mug.$$
We want to calculate the fixed points of the action of $\Gamma$ on $\cM(X,\GL(n,\C))$. 

Every element in $\Hom(\Z/r\Z,\Int(\GL(n,\C)))$ is determined by the image of the generator of $\Z/r\Z$. Moreover, every element in $\GL(n,\C)$ with finite order is diagonalizable and so every class in $H^1(\Z/r\Z,\Int(\GL(n,\C)))$, which is equal to the character variety $\mathfrak{X}(\Z/r\Z,\Int(\GL(n,\C))):=\Hom(\Z/r\Z,\Int(\GL(n,\C)))/\Int(\GL(n,\C))$, is represented by an automorphism of the form $\theta:=\Int_M$, where
$$M:=\begin{pmatrix}
    I_{p_1}&0&\ldots&0\\
    0&\zeta I_{p_2}&\ldots&0\\
    \vdots&\vdots&\ddots&\vdots\\
    0&0&\ldots&\zeta^{r-1}I_{p_r}
    \end{pmatrix},$$
$\zeta$ is a primitive $r$-th root of unity and $p_1+\ldots+p_r=n$. The group of fixed points $\GL(n,\C)^{\theta}$ under this automorphism is equal to 
$\GL(p_1,\C)\times\GL(p_2,\C)\times\ldots\times\GL(p_r,\C).$

In order to define the group $\GL(n,\C)_{\theta}$, as defined in Section \ref{section-Gtheta}, note that the group multiplication $\Z/r\Z\times\Z/r\Z\to\Z/r\Z$ induces an inclusion of $\Z/r\Z$ in the group of permutations of the eigenvalues of $M$. Let $A$ be a matrix permuting the eigenspaces of $M$, in the sense that it sends each eigenspace to its permutation. We denote by $p(A)$ the corresponding permutation of the eigenvalues of $M$. We then call $A$ a \textbf{permutation matrix} if $p(A)\in\Z/r\Z$. The group $\GL(n,\C)_{\theta}$ is generated by $\GL(n,\C)^{\theta}$ and a set of permutation matrices, one for each element $k$ in $\Z/r\Z$ such that the eigenspaces corresponding to the same orbit of eigenvalues under the action of $\langle k\rangle$ have the same dimension. On the other hand, we know from Theorem \ref{th-fixed-points-oscar-ramanan-higgs} that the moduli of fixed points $\cM^{\Gamma}(X,\GL(n,\C))$ is empty unless the homomorphism
$$\cct:\GL(n,\C)_{\theta}\to\Z/r\Z$$
is surjective. This happens if and only if all the eigenspaces have the same dimension, i.e. $p_1=\dots=p_r=m$. We will assume this from now on. 

Thus, $\GL(n,\C)_{\theta}$ is a $\Z/r\Z$-extension of $\GL(n,\C)^{\theta}$ generated by $\GL(n,\C)^{\theta}$ and the permutation matrix
$$S:=\begin{pmatrix}
    0&I_m&0&\ldots&0\\
    0&0&I_m&\ldots&0\\
    \vdots&\vdots&\ddots&\ddots&\vdots\\
    0&0&0&\ldots&I_m\\
    I_m&0&0&\ldots&0\\
    \end{pmatrix}.$$
Since $S^r=1$, the 2-cocycle involved is the trivial map
$$\Z/r\Z\times\Z/r\Z\to Z(\GL(n,\C)^{\theta})\cong (\C^*)^{r}.$$
This provides an isomorphism
$$\gls\cong \glt \times_{(\Int_S,1)}\Z/r\Z=\glt\rtimes_{\Int_S}\Z/r\Z,$$
where by abuse of notation we are writing the image $\Int_S$ of the generator instead of the whole homomorphism
$$\Z/2\Z\to\Aut(\glt).$$ 

Let 
$$p_L:X_L\to X$$ 
be the Galois $r$-cover of $X$ defined by $L$ and let $\lambda$ be a generator of its Galois group $\Gamma_L$, which is isomorphic to $\Z/r\Z$. According to Theorem \ref{th-fixed-points-oscar-ramanan-higgs}, simple fixed points come from $(\Int_S,c)$-twisted $\Z/r\Z$-equivariant $\glt$-bundles over $X_L$. The associated vector bundles are direct sums of vector bundles of rank $m$ with a $\Gamma$-equivariant action. Take one of them, and call it $V=F\oplus F_1\oplus\ldots\oplus F_{d-1}$. The automorphism induced by $\lambda$ via Remark \ref{remark-associated-vector-bundle} and the natural embedding of $\GL(n,\C)_{\theta}$ in $\GL(n,\C)$ exchanges the action of the copies of $\GL(m,\C)$ (as does the conjugation by $S$), therefore its effect on $V$ is permuting the summands. This provides an isomorphism
$$V\cong \lambda^*V$$
permuting the summands or, in other words,
$$V\cong F\oplus\lambda ^*F\oplus\lambda^{2*}F\oplus\ldots\oplus\lambda^{(r-1)*}F.$$
The action of the Galois group is the permutation action on $V$.

To see what happens to the Higgs field, we first note that $\gl_{\mu}^{\theta}$ is the vector space of permutation matrices $A$ such that $p(A)=\mu(L)^{-1}$. The homomorphism $\pi_1(X)\to\C^*$ determined by $L$ induces a natural isomorphism $\Gamma_L\cong \Gamma^*:=\Hom(\Gamma,\C^*)$. Regarding $\mu$ as an element of $\Gamma_L$ and using the $\Gamma_L$-equivariance, we find that the Higgs field on $V$ must be induced by homomorphisms $\lambda^{k*}F\to\mu^{-1*}\lambda^{k*}F\otimes K_L$ obtained by pulling back a homomorphism 
$$\psi:F\to\mu^{-1*}F\otimes K_L$$
under the elements of $\gal(X_L/X)$ where $K_L$ is the canonical bundle of $X_L$. 

In summary,
\begin{proposition}
A simple $\GL(n,\C)$-Higgs bundle $(E,\phi)$ over $X$ is isomorphic to $(E\otimes L,\mu(L)\phi)$ if and only if it is the pushforward of a vector bundle $F$ of rank $m$ over $X_L$ and $\phi$ is induced by a homomorphism
$$\psi:F\to\mu^{-1*}F\otimes K_X.$$
\end{proposition}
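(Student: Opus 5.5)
The plan is to specialise Theorem \ref{th-fixed-points-oscar-ramanan-higgs} and Theorem \ref{th-prym-narasimhan-ramanan-higgs} to $G=\GL(n,\C)$, with $\Gamma=\langle L\rangle\cong\Z/r\Z$, $a$ trivial and $\alpha_L=L$, and then to translate principal-bundle language into vector bundles. Both directions pass through the $\gls$-reduction.

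For the \emph{only if} direction, let $(E,\phi)$ be simple and fixed. Theorem \ref{th-fixed-points-oscar-ramanan-higgs}(2) gives a class $[\beta]$ and a reduction $(F',\psi')$ of $(E,\phi)$ to $(\GL(n,\C)_{\beta\theta},\gl^{\beta\theta}_\mu)$ with $\ctt(F')\cong\alpha$. Using the discussion above, I take $\beta\theta=\Int_M$ with all $p_i=m$; the other classes are excluded because $\cct$ must be surjective, since $\alpha$ has order $r$ and the factorisation through $H^1(X,\gamt)$ is injective. Thus $\gls=\glt\rtimes_{\Int_S}\Z/r\Z$. Since $\glt$ is connected, $\gamtt=\gamt=\Z/r\Z$. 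Under $q_\theta$, the condition $\ctt(F')\cong\alpha$ says that $F'/\glt$ is the $\Z/r\Z$-bundle with holonomy equal to that of $L$, namely $X_L$, which is connected. Theorem \ref{th-prym-narasimhan-ramanan-higgs} then produces an $(\Int_S,1)$-twisted $\Z/r\Z$-equivariant $(\glt,\gl^\theta_\mu)$-Higgs pair over $X_L$.

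Next I pass to the associated vector bundle via Remark \ref{remark-associated-vector-bundle}, using $\gls\subset\GL(n,\C)$. The $\glt$-bundle gives $V=F_0\oplus\dots\oplus F_{r-1}$, and the generator $\lambda$ acts through $S$, so it cyclically permutes the summands. This forces $F_k\cong\lambda^{k*}F_0$. The invariant pushforward $(p_L^*p_{L*}F_0)^{\Gamma_L}$, which is the descent of $V$, is $p_{L*}F_0$. Hence $E\cong p_{L*}F$ with $F:=F_0$ of rank $m$.

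For the Higgs field, I use the description of $\gl^\theta_\mu$ as the block matrices whose nonzero blocks are those of the permutation $p(A)=\mu(L)^{-1}$. Through $\Gamma_L\cong\Gamma^*$, $\psi'$ is then a collection of maps $\lambda^{k*}F\to\lambda^{k*}\mu^{-1*}F\otimes K_{X_L}$. Invariance under $\lambda$ says these are all pullbacks of a single $\psi:F\to\mu^{-1*}F\otimes K_{X_L}$. Since $K_{X_L}=p_L^*K_X$, pushing forward gives $\phi=p_{L*}\psi$.

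For the \emph{if} direction, I reverse the construction. Given $(F,\psi)$, the data $(\bigoplus_k\lambda^{k*}F,\bigoplus_k\lambda^{k*}\psi)$ with the permutation action is a twisted equivariant pair, and Theorem \ref{th-prym-narasimhan-ramanan-higgs} gives a $(\gls,\gl^\theta_\mu)$-pair with $q_\theta=\alpha$. Proposition \ref{prop-reduction-higgs} then shows the extension is fixed. Alternatively, one can compute directly with $p_{L*}F\otimes L\cong p_{L*}(F\otimes p_L^*L)=p_{L*}F$, using $p_L^*L$ trivial, together with the twist by $\mu$.

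The main obstacle is the Higgs-field bookkeeping: identifying $\gl^\theta_\mu$ with a single off-diagonal cyclic block pattern, and matching the character $\mu$ with the deck transformation $\mu^{-1}\in\Gamma_L$ with the correct sign conventions. I would settle this first by an explicit check in the case $r=2$.
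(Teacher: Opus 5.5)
Your proposal is correct and follows essentially the same route as the paper. You use Theorem \ref{th-fixed-points-oscar-ramanan-higgs} to reduce to $\theta=\Int_M$ with equal multiplicities (forced by surjectivity of $\cct$), identify $\gls\cong\glt\rtimes_{\Int_S}\Z/r\Z$, and pass to an $(\Int_S,1)$-twisted $\Z/r\Z$-equivariant pair on $X_L$ whose associated bundle is $\bigoplus_k\lambda^{k*}F$ with the permutation action; you then read off $\psi$ from the block description of $\gl^\theta_\mu$, and your explicit converse via Proposition \ref{prop-reduction-higgs} just spells out what the paper leaves to part (1) of that theorem.
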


\begin{corollary}\label{cor-U(m,m)}
Let $L$ be a line bundle of order 2 over $X$ with corresponding \'Etale cover $X_L\to X$. Consider the involution of $\cM(X,\GL(2m,\C))$ sending each Higgs bundle $(E,\phi)$ to $(E\otimes L,-\phi)$. Then a smooth point is fixed under this action if and only if it is the pushforward of a vector bundle $F$ of rank $m$ on $X_L$ equipped with a homomorphism $\psi:F\to\lambda^*F\otimes K_L$, where $\lambda$ is the generator of $\gal(X_L/X)\cong \Z/2\Z$ and $K_L$ is the canonical bundle of $X_L$.
\end{corollary}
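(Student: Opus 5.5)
Corollary \ref{cor-U(m,m)} is the case $r=2$, $n=2m$, $\mu(L)=-1$ of the preceding Proposition. The plan is to derive it from that Proposition together with the general description of the smooth fixed locus coming from Theorem \ref{th-prym-narasimhan-ramanan-oscar-ramanan-higgs}. We first check that the involution $(E,\phi)\mapsto(E\otimes L,-\phi)$ is the action of the element $(L,1,-1)$ of $H^1(X,Z)\rtimes\Out(G)\times\C^*$. Here $a$ is trivial, $\Gamma=\langle L\rangle\cong\Z/2\Z$ and $\mu$ is the nontrivial character. Since $a$ is trivial, $Z^1_a(\Gamma,Z)=\Hom(\Gamma,\C^*)$ is finite, so the standing hypotheses of Section \ref{section-prym-narasimhan-ramanan} hold.

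Next we identify the data in the Proposition for $r=2$. Up to conjugation the only relevant lift is $\theta=\Int_M$ with $M=\mathrm{diag}(I_m,-I_m)$, which gives $\glt=\GL(m,\C)\times\GL(m,\C)$. Also $\gls=\glt\rtimes_{\Int_S}\Z/2\Z$, where $S$ swaps the two blocks. The weight space $\gl^{\theta}_{\mu}$ for $\mu(L)=-1$ is the off-diagonal block space, which is exactly the $-1$-eigenspace of $\Int_M$. Under the identification $\Gamma_L\cong\Gamma^*$ induced by $L$, the character $\mu$ corresponds to the generator $\lambda$ of $\gal(X_L/X)$. Since $\lambda$ is an involution, $\mu^{-1}$ also corresponds to $\lambda$.

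Applying the Proposition, a simple point $(E,\phi)$ is fixed if and only if $E\cong p_{L*}F$ for some rank-$m$ bundle $F$ on $X_L$, with $\phi$ induced by $\psi:F\to\mu^{-1*}F\otimes K_L=\lambda^*F\otimes K_L$. Concretely, on $p_L^*E\cong F\oplus\lambda^*F$ the Higgs field is the off-diagonal matrix with entries $\psi$ and $\lambda^*\psi$, and it descends to $X$ by $\Gamma_L$-equivariance. Smooth points are stable and simple, so the ``only if'' direction follows from the inclusion $\cM_{ss}(X,G)^{\Gamma}\subset\dots$ in Theorem \ref{th-prym-narasimhan-ramanan-oscar-ramanan-higgs}. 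For the ``if'' direction, the pushforward is fixed by part (1) of that theorem, or directly by Proposition \ref{prop-reduction-higgs}. Indeed, $p_{L*}F\otimes L\cong p_{L*}(F\otimes p_L^*L)\cong p_{L*}F$ because $p_L^*L$ is trivial, and this isomorphism acts by $\pm1$ on the two eigen-summands, so it negates the off-diagonal field.

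The main obstacle is the identification of $\mu^{-1}$ with $\lambda$ and of $\gl^\theta_\mu$ with the off-diagonal blocks. This requires care with sign conventions in the twisted product and in the Galois-group/character duality. For $r=2$ the difficulty disappears, since all choices of generator and inverse coincide.
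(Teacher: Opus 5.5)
Your proposal is correct and is essentially the paper's own argument. The corollary is simply the case $r=2$, $\mu(L)=-1$ of the preceding Proposition, which the paper obtains from Theorem~\ref{th-prym-narasimhan-ramanan-oscar-ramanan-higgs} using $\theta=\Int_M$, $\GL(2m,\C)_{\theta}\cong\GL(2m,\C)^{\theta}\rtimes_{\Int_S}\Z/2\Z$, and the Higgs field in the off-diagonal blocks, exactly as you do; your direct check of the ``if'' direction via $p_L^*L\cong\mathcal{O}$ and the sign action on the two summands is a harmless supplement to citing part (1) of that theorem.
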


\begin{remark}
With data $(F,\psi)$ as in Corollary \ref{cor-U(m,m)}, the higgs bundle $(F\oplus\lambda^*F,\psi\oplus\lambda^*\psi)$ ---whose quotient by the permutation action of $\gal(X_L/X)$ is the pushforward of $F$--- is a $\U(m,m)$-Higgs bundle on $X_L$.
\end{remark}

\section{Dualization of Higgs bundles}\label{section-dualization}

Throughout this section, attaching an ``N" to the left of a subgroup of $\SL(n,\C)$ denotes its normalizer in $\SL(n,\C)$. If $n\ge 3$, the group of outer automorphisms of $\SL(n,\C)$ is isomorphic to $\Z/2\Z$. A lift of its generator $a$ is given by the automorphism $\theta$ sending $A$ to $A^{t-1}$. Given a line bundle $L$, 
$$L\theta(L)=LL^*\cong\oo,$$
the trivial bundle. Thus we have a homomorphism
$$\Z/2\Z\to H^1(X,\C^*)\rtimes\Out(\SL(n,\C))$$
sending the generator $-1\in\Z/2\Z$ to $(L,a)$. If $E$ is a simple fixed point,
$$\oo\cong\det(E)\cong\det(E^*)\otimes L^n\cong L^n,$$
so that necessarily $L$ has finite order $r$ dividing $n$ and so $L$ reduces to a a $\Z/n\Z$-bundle. From \cite{helgason} we also know that every class of $H^1_{\theta}(\Z/2\Z,\Int(\SL(n,\C)))$ is represented either by $\theta$ or, in case that $n=2m$ is even, $\nu:=\Int_J\theta$, where
\begin{equation}\label{eq-S-symplectic-form}
    J:=
\begin{pmatrix}
0&I_m\\-I_m&0
\end{pmatrix}.
\end{equation}

We need to calculate the extensions $\NSO(n,\C)=\SL(n,\C)_{\theta}$ and $\NSp(2m,\C)=\SL(n,\C)_{\nu}$ (see \cite{PR}) of $\SO(n,C)$ and $\Sp(2m,\C)$, respectively. To do so, we first calculate $c_{\theta}(\NSO(n,\C))$ and $c_{\nu}(\NSp(2m,\C))$. Recall that $c_{\theta}$ is the homomorphism which sends $g\in \NSO(n,\C)$ to $\theta(g)g^{-1}$, which is an element of the centre of $\SO(n,\C)$, and we may define $c_{\nu}$ similarly. For the symplectic group consider the group of invertible matrices
\begin{equation}\label{eq-def-Mc}
    M_c:=\begin{pmatrix}
I_m&0\\0&c^{-1}I_m
\end{pmatrix},
\end{equation}
where $c\in\C^*$. They satisfy
$$c_{\nu}(M_c)=cI_n,$$
so that $c_{\nu}$ is an isomorphism from the group to $\C^*$. Since $\C^*$ coincides with the centre of $\GL(2m,\C)$, every coset of the normalizer of $\Sp(2m,\C)$ in $\GL(2m,\C)$ must be represented by a matrix $M_c$. Classes in the quotient $\NSp(2m,\C)/\Sp(2m,\C)$ must be in bijection with elements of the group with determinant 1, which are exactly the elements $M_c$ such that $c$ is an $m$-th root of unity. Choose a primitive $m$-th root of unity $\zeta$ and set
$$\Int_M:\Z/m\Z\to\Int(\Sp(2m,\C));\,k\mapsto \Int_{M_{\zeta^k}}.$$
Then we get an isomorphism
\begin{equation}\label{eq-normalizer-sp}
    \NSp(2m,\C)\cong \Sp(2m,\C)\times_{(\Int_{M},1)}\Z/m\Z,
\end{equation}
i.e. $\NSp(2m,\C)$ is a semidirect product of $\Sp(2m,\C)$ by $\Z/m\Z$.

For the orthogonal group, given $c\in \C^*$, 
$$c_{\theta}(cI_n)=c^{-2}I_n,$$
and varying $c$ we get the whole centre of $\SL(n,\C)$. Again, classes in $\NSO(n,\C)/\SO(n,\C)$ are represented by scalar matrices with determinant 1, which are just $n$-th roots of unity. When $n$ is odd, taking a power $-2$ is an automorphism of the group of $n$-th roots $\Z/n\Z$, so that $\NSO(n,\C)$ is a semidirect product of $\SO(n,\C)$ with $\Z/n\Z$ given by conjugation by an element of the centre. In other words, a direct product.
However, when $n=2m$ is even the restriction of $c_{\theta}$ to $\Z/n\Z$ is not an isomorphism and so a 2-cocycle appears. In this case we may restrict to $n$-th roots of unity with argument less than $\pi$ to give a bijection between $\NSO(2m,\C)$ and $\SO(2m,\C)\times\Z/m\Z$. Since conjugation by elements in the centre is trivial, so is the characteristic homomorphism. The cocycle $c$ is defined as follows: define a map
$$\sigma:\Z/n\Z\to\Z/2\Z$$
which assigns an $n$-th root to $1$ if its argument is less than $\pi$ and $-1$ otherwise. Then set
\begin{equation}\label{eq-def-c-dualizing}
    c:\Z/n\Z\times\Z/n\Z\to \Z/2\Z;\, (\delta,\delta')\mapsto \sigma(\delta^{\frac 12}\delta'^{\frac 12}),
\end{equation}
where $\delta^{\frac 12}$ is the unique square root of $\delta$ with argument less than $\pi$.

\begin{remark}
    We may explicitly check that this is a 2-cocycle since, if $\delta,\delta'$ and $\delta''$ are $n$-th roots of unity,
$$c(\delta',\delta'')c(\delta,\delta'\delta'')=\sigma(\delta^{\frac 12}\delta'^{\frac 12})\sigma(\delta^{\frac 12}\delta'^{\frac 12}\delta''^{\frac 12})\sigma(\delta'^{\frac 12}\delta''^{\frac 12})=
c(\delta\delta',\delta'')c(\delta,\delta'),$$
where we have used that
$$(\delta\delta')^{\frac 12}=\sigma(\delta^{\frac 12}\delta'^{\frac 12})\delta^{\frac 12}\delta'^{\frac 12}.$$
\end{remark}

Applying the proof of Proposition \ref{prop-extensions-isomorphic-twisted-group} we have isomorphisms
\begin{equation}\label{eq-normalizer-so}
    \NSO(n,\C)\cong\begin{cases}  \SO(n,\C)\times \Z/n\Z    &   \text{if $n$ is odd}\\
                                \SO(n,\C)\times_{(1,c)} \Z/m\Z   &   \text{if $n=2m$.}
\end{cases}
\end{equation}

\begin{remark}
Note that the same reasoning that gives the even case in (\ref{eq-normalizer-so}) also provides an isomorphism
$$\NSp(2m,\C)\cong \Sp(2m,\C)\times_{(1,c)}\Z/m\Z.$$
Thus we have two descriptions of $\NSp(2m,\C)$, one with trivial 2-cocycle and one with a trivial lift of the characteristic homomorphism.
\end{remark}

We are now ready to describe the simple fixed points. Let $r$ be the order of $L$. Using Theorem \ref{th-fixed-points-oscar-ramanan-higgs}, we first conclude that there are no simple fixed points if $r$ does not divide $n$, or if it does not divide $m$ when $n=2m$. Let $X_L$ be the étale cover of $X$ associated to $L$, which has degree equal to $r$. If $r$ divides $n$ [or $m$], we have:

\begin{proposition}\label{prop-fixed-points-symplectic-cyclic}
    Let $\Gamma\cong\Z/2\Z$ equipped with the homomorphism $\Gamma\to J(X)\rtimes\Aut(\SL(n,\C))$ which assigns $-1$ to $(L,\theta)$, where $\theta$ is taking transpose and inverse and $L$ is a line bundle over $X$ of order $r$ dividing $n$, if $n$ is odd, or $m=n/2$ if $n$ is even. Let $X_L\to X$ be the étale cover of degree $r$ determined by $L$. Then we have:
    \begin{enumerate}
        \item If $n$ is odd:
        \begin{align*}
            &\wcM(X_L,\SO(n,\C),\Z/r\Z,1,1)\subset\mdl(X,\SL(n,\C))^{\Gamma}\andd\\
            &\mdl_{ss}(X,\SL(n,\C))^{\Gamma}\subset\wcM(X_L,\SO(n,\C),\Z/r\Z,1,1).
        \end{align*}
        \item If $n=2m$ is even:
                \begin{align*}
            \wcM(X_L,\SO(n,\C),\Z/r\Z,1,c)\cup\\\wcM(X_L,\Sp(2m,\C),\Z/r\Z,\Int_M,1)
            \subset &\mdl(X,\SL(n,\C))^{\Gamma}\andd\\
            \mdl_{ss}(X,\SL(n,\C))^{\Gamma}
            \subset  &\wcM(X_L,\SO(n,\C),\Z/r\Z,1,c)\cup\\&\wcM(X_L,\Sp(2m,\C),\Z/r\Z,\Int_M,1).
        \end{align*},
    \end{enumerate}
    where $c$ and $M$ are given by (\ref{eq-def-c-dualizing}) and (\ref{eq-def-Mc}) respectively.
\end{proposition}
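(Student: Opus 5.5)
The plan is to specialise Theorem \ref{th-prym-narasimhan-ramanan-oscar-ramanan-higgs} to $G=\SL(n,\C)$, $\Gamma\cong\Z/2\Z$, $\mu$ trivial, with $\alpha$ sending the generator to $L$. First, following \cite{helgason}, the classes $[\beta]\in H^1_{\theta}(\Z/2\Z,\Int(\SL(n,\C)))$ are represented by $\theta$ alone when $n$ is odd, and by $\theta$ and $\nu=\Int_J\theta$ when $n=2m$ is even. For these lifts we have $G^{\theta}=\SO(n,\C)$ and $G^{\nu}=\Sp(2m,\C)$, both connected, so $G^{\beta\theta}_0=G^{\beta\theta}$. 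Moreover $\gs$ and $G_\nu$ are the normalisers $\NSO(n,\C)$ and $\NSp(2m,\C)$ computed above. By (\ref{eq-normalizer-so}) and (\ref{eq-normalizer-sp}), $\gamtt$ is $\Z/n\Z$ (odd case) or $\Z/m\Z$ (even cases). Because $G^{\beta\theta}$ is connected, $\gamtt=\gamt$, so $\qqt$ is injective by the remarks of Section \ref{section-Gtheta}.

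Next I identify the étale covers $Y$ with $\qqt(Y)\cong\alpha$. The map $c_\theta$ on scalars $\delta\in\Z/n\Z$ sends $\delta$ to $\delta^{-2}$. In the odd case this is an isomorphism, and in the even cases it identifies $\gamtt$ with the subgroup of $\Z/n\Z$ that is relevant. Since $\alpha$ is given by $L$, whose holonomy $\pi_1(X)\to Z$ has image of order $r$, injectivity of $\qqt$ shows that the unique $\gamtt$-bundle with $\qqt(\hat Y)\cong\alpha$ is the one induced by $L$. Its connected component is $Y=X_L$, with Galois group $\gam\cong\Z/r\Z\le\gamtt$. This also gives the divisibility condition on $r$: if $r$ does not divide $|\gamtt|$ the relevant union is empty.

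Since $\gamtt$ is abelian, $Z_{\gamtt}(\gam)=\gamtt$. Its action on the moduli of twisted equivariant objects over $X_L$, by Proposition \ref{prop-action-centralizer-higgs}, should be absorbed once we pass to the image $\widetilde{\cM}$ in $\cM(X,\SL(n,\C))$. The twisting data are then read off by restricting to $\gam$. For $\SO$ the lift $\tau$ is trivial, because scalars act trivially by conjugation; the cocycle is trivial for $n$ odd and is the restriction of $c$ from (\ref{eq-def-c-dualizing}) for $n$ even. For $\Sp$ we use (\ref{eq-normalizer-sp}), which gives $\tau=\Int_M$ and trivial cocycle. Finally, with $\mu$ trivial, $\lie g^{\beta\theta}_{\mu}$ is the Lie algebra of $G^{\beta\theta}$, so the Higgs pairs are $\SO(n,\C)$- or $\Sp(2m,\C)$-Higgs bundles. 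Substituting all of this into items (1) and (2) of Theorem \ref{th-prym-narasimhan-ramanan-oscar-ramanan-higgs} gives both inclusions.

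I expect the main obstacle to be the even orthogonal case. There $c_\theta$ restricted to $\Z/n\Z$ has kernel $\pm1$, so I must check carefully that $\gamtt\to Z^1(\Gamma,Z)$ is an isomorphism onto a subgroup containing the holonomy of $L$, rather than relying on the odd-case reasoning. I must also verify that the chosen section produces exactly the cocycle $c$ once restricted to $\Z/r\Z$. I would compute $t_\delta t_{\delta'}t_{\delta\delta'}^{-1}$ directly, using the square-root section.
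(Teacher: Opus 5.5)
Your route is the paper's: Helgason's classification of the lifts, the normalisers $\SL(n,\C)_\theta=\NSO(n,\C)$ and $\SL(n,\C)_\nu=\NSp(2m,\C)$, injectivity of $\qqt$ to single out the cover $X_L$, absorbing the $Z_{\gamtt}(\gam)$-action into the image, and then Theorem \ref{th-prym-narasimhan-ramanan-oscar-ramanan-higgs}. The odd case and the symplectic case go through exactly as you describe.

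There is, however, a false step in the even orthogonal case, which you take from (\ref{eq-normalizer-so}); the paper's own computation has the same slip. For $n=2m$ the group $\SL(n,\C)_\theta$ is not generated by $\SO(n,\C)$ and scalars. It equals $\{\lambda h\suhthat h\in\OO(n,\C),\ \lambda^n=\det h\}$, so it also has components with $\lambda^n=-1$ and $\det h=-1$. For example, for $n=4$ the matrix $g=e^{i\pi/4}\mathrm{diag}(1,1,1,-1)$ lies in $\SL(4,\C)$ and satisfies $\theta(g)g^{-1}=-iI_4$, which generates the centre.

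Consequently $\gamtt\cong\{\lambda\suhthat\lambda^{2n}=1\}/\{\pm1\}\cong\Z/n\Z$. The map $\cct$ identifies $\gamtt$ with all of $Z^1_a(\Gamma,Z)\cong\Z/n\Z$, not with an index-two subgroup. Moreover no lift $\tau$ can be trivial, because the new components act on $\SO(2m,\C)$ by conjugation by a reflection, which is outer. So your claims ``$\gamtt\cong\Z/m\Z$'' and ``$\tau$ is trivial because scalars act trivially by conjugation'' are wrong as stated. Your divisibility remark, applied correctly, forces only $r\mid n$ for the orthogonal piece; $r\mid m$ is forced only for the symplectic piece.

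The proposition itself survives, because only $\pt^{-1}(\gam)$ enters the construction, so you should run the argument there rather than on all of $\gamtt$. Under $\cct(\lambda h)=\lambda^{-2}$, the monodromy group $\gam$ of the cover attached to $L$ corresponds to the $r$-th roots of unity, that is, to components with $\lambda^{2r}=1$. Since $r\mid m$ we have $2r\mid n$, hence $\lambda^n=1$ and $\det h=1$. Thus $\pt^{-1}(\gam)=\SO(2m,\C)\cdot\{\lambda I_n\suhthat\lambda^{2r}=1\}$ is generated by $\SO(2m,\C)$ and scalars, so $\tau\vert_{\gam}$ is trivial. With the section $t_k=e^{i\pi k/r}I_n$ ($0\le k<r$) one finds $t_kt_l=-t_{k+l-r}$ exactly when $k+l\ge r$, which is the carry cocycle described by (\ref{eq-def-c-dualizing}), restricted to $\gam$. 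With this replacement the rest of your argument, and both inclusions, go through.
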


\section{Tensorization of symplectic Higgs bundles by line bundles of order 2}\label{section-example-sp-cyclic}
Let $L\in H^1(X,\Z/2\Z)$ be a line bundle of order 2 and let $X_L$ be the corresponding étale cover of degree two over $X$. Consider the involution of the moduli space of $\Sp(2n,\C)$-Higgs bundles which sends $(E,\phi)$ to $(E\otimes L,\pm\phi)$. In other words, the generator of $\Gamma$ is sent to $(L,1,\pm 1)\in H^1(X,\Z/2\Z)\rtimes\Out(\Sp(2n,\C))\times\C^*$ ---general actions of finite subgroups of $H^1(X,\Z/2\Z)$ are considered in Chapter \ref{chapter-symplectic}. We note on passing that $\Out(\Sp(2n,\C))$ is trivial and the centre of $\Sp(2n,\C)$ is $\Z/2\Z$, so that this covers all the possible subgroups of order 2 of $H^1(X,\Z/2\Z)\rtimes\Out(\Sp(2n,\C))\times\C^*$.

Consider the embedding of $\Sp(2n,\C)$ in $\GL(2n,\C)$ associated to the symplectic form $J$ defined by (\ref{eq-S-symplectic-form}). According to \cite{helgason} the conjugacy classes of involutions in the quotient $\Aut(\Sp(2n,\C))/\Int(\Sp(2n,\C))$ are represented by $\Int_J$ and $\Int_{K_{p,q}}$, where 
$$
    J:=\begin{pmatrix}
    0 & I_{n}\\
    -I_{n} & 0
    \end{pmatrix}
    \andd
    K_{p,q}:=i\begin{pmatrix}
    -I_p & 0 & 0 & 0\\
    0 & I_q & 0 & 0\\
    0 & 0 & -I_p & 0\\
    0 & 0 & 0 & I_q 
    \end{pmatrix},$$
    and $p+q=n$.
The matrix $J$ is conjugate to
$$
J'=\begin{pmatrix}
    i & 0\\
    0 & -i
    \end{pmatrix}.$$
Set $\theta:=\Int_{J'}$ and $\tau_{p,q}:=\Int_{K_{p,q}}$. By Theorem \ref{th-fixed-points-oscar-ramanan-higgs} we have
\begin{align*}
    \cM_{ss}(X,\Sp(2n,\C))^{\Gamma}=\\\cM_{ss}(X,\Sp(2n,\C))\cap\left(\widetilde{\cM}_{L}(X,\Sp(2n,\C)_{\theta},\lieg^{\theta}_{\pm1})\cup\bigcup_{p+q=n}\widetilde{\cM}_{L}(X,\Sp(2n,\C)_{\tau_{p,q}},\lieg^{\tau_{p,q}}_{\pm1})\right),
\end{align*}
where the subscript $L$ indicates that the quotient by the action of the fixed-point subgroups $\Sp(2n,\C)^{\theta}$ and $\Sp(2n,\C)^{\tau_{p,q}}$ respectively, is a $\Z/2\Z$-bundle isomorphic to $L$. Since 
$$\Sp(2n,\C)_{\tau_{p,q}}=\Sp(2n,\C)^{\tau_{p,q}}$$
unless $p=q$, we actually have 
\begin{align}\label{eq-sp-fixed-points}
    \cM_{ss}(X,\Sp(2n,\C))^{\Gamma}=\\
    \cM_{ss}(X,\Sp(2n,\C))\cap\left(\widetilde{\cM}_{L}(X,\Sp(2n,\C)_{\theta},\lieg^{\theta}_{\pm1})\cup\widetilde{\cM}_{L}(X,\Sp(2n,\C)_{\tau},\lieg^{\tau}_{\pm1})\right),\nonumber
\end{align}
where $\tau:=\tau_{n/2,n/2}$ and the second component is only present if $n$ is even. From now on, whenever $n/2$ appears it will be implicit that $n$ is even.

We find that $\Sp(2n,\C)^{\theta}\cong\GL(n,\C)$ is the subgroup of matrices of the form
$$
\begin{pmatrix}
    A & 0\\
    0 & A^{t-1}
\end{pmatrix},
$$
and $\Sp(2n,\C)_{\theta}$ is the subgroup of $\Sp(2n,\C)$ generated by $\Sp(2n,\C)^{\theta}$ and $S$. Since $S^2=-I$, we get an isomorphism
$$\Sp(2n,\C)_{\theta}\cong\Sp(2n,\C)^{\theta}\times_{\Int_J,-1}\Z/2\Z,$$
where we call $-1$ to the 2-cocycle $c\in Z_{\Int_J}^2(\Z/2\Z,Z(\Sp(2n,\C)^{\theta}))$ such that $c(-1,-1)=-I_{2n}$.

On the other hand, $\Sp(2n,\C)^{\tau}\cong\Sp(n/2,\C)\times\Sp(n/2,\C)$ is the subgroup of matrices of the form
$$\begin{pmatrix}
    A & 0 & B & 0\\
    0 & O & 0 & P\\
    C & 0 & D & 0\\
    0 & Q & 0 & R
    \end{pmatrix},$$
where
$$
\begin{pmatrix}
    A & B\\
    C & D
\end{pmatrix}\andd
\begin{pmatrix}
    O & P\\
    Q & R
\end{pmatrix}
$$
are in $\Sp(n/2,\C)$. The group $\Sp(2n,\C)_{\tau}$ is generated by $\Sp(2n,\C)^{\tau}\cong\Sp(n/2,\C)\times\Sp(n/2,\C)$ and
$$T:=\begin{pmatrix}
    0 & I_{n/2} & 0 & 0\\
    I_{n/2} & 0 & 0 & 0\\
    0 & 0 & 0 & I_{n/2}\\
    0 & 0 & I_{n/2} & 0
    \end{pmatrix},$$
which has order two. Thus, $\Sp(2n,\C)_{\tau}\cong\Sp(2n,\C)^{\tau}\times_{\Int_T,1}\Z/2\Z$.

Let $X_L$ be the étale cover of $X$ associated to $L$. Consider the action of $\Gamma$ on $\C^{2n}$ such that the generator of $\Gamma$ multiplies vectors by $S$ on the left. This is the action provided by Remark \ref{remark-associated-vector-bundle} and the embedding of $\Sp(2n,\C)_{\theta}$ in $\GL(2n,\C)$. By Proposition \ref{prop-associated-bundle-equivariant} we may describe $(\Int_J,-1)$-twisted $\Z/2\Z$-equivariant $\Sp(2n,\C)$-bundles over $X_L$ in terms of vector bundles of rank $2n$ over $X_L$ which have the form $E\oplus E^*$, where $E$ is a vector bundle of rank $n$ over $X_L$ and the automorphism corresponding to the generator $\lambda$ of the Galois group exchanges the two summands. Let $f:E\to \lambda^*E^*$ and $f':E^*\to \lambda^*E$ be the restrictions of the homomorphism induced by the action of $\gal(X_L/X)$ on $E\oplus E^*$. On the one hand the equivariance of the $\Gamma$-action implies that $\lambda^*(f)f'=1$. On the other, the fact that the action preserves the symplectic form implies
\begin{equation*}
    \langle e,e^*\rangle=\omega(e,e^*)=\omega(f(e),f'(e^*))=-\langle f'(e^*),f(e)\rangle=-\langle f'^{*}f(e),e^*\rangle
\end{equation*}
for each $e\in E$ and $e^*\in E^*$, where $\langle \bullet,\bullet\rangle$ denotes the natural pairing between $E$ and $E^*$ and $\omega$ is the corresponding symplectic form. Thus $f'=-f^{*-1}$, and we conclude that $\lambda^*(f^*)=-f$.
The action of the Galois group on $E\oplus E^*$ given by $f\oplus -f^*$ is induced by the natural permutation action on $E\oplus\lambda^*E$ using the isomorphism $\lambda^*(f)$ between $\lambda^*E$ and $E^*$.

Similarly, when $n$ is even, by Remark \ref{remark-associated-vector-bundle} we know that $(\Int_T,1)$-twisted $\Z/2\Z$-equivariant $\Sp(n/2,\C)\times\Sp(n/2,\C)$-bundles over $X_L$ have associated vector bundles of the form $E\oplus E'$, where both $E$ and $E'$ are symplectic vector bundles. The action of the Galois group exchanges $E$ and $E'$, so that $E'\cong \lambda^*E$ and the symplectic form on $E'$ is just the pullback of the symplectic form on $E$.

Thus the relevant twisted equivariant principal bundles are in correspondence with the following objects:
\begin{enumerate}
    \item For $\widetilde{\cM}_{L}(X,\Sp(2n,\C)_{\theta})$: vector bundles $E$ of rank $n$ on $X_L$ equipped with an isomorphism $f:E\xrightarrow{\sim}\lambda^*E^*$ satisfying $\lambda^*f^*=-f$, where $\lambda\in\Gamma^*=\gal(X_L/X)$ is the generator. The corresponding vector bundle on $X$ is the pushforward of $E$, and the symplectic form is given by the standard one on $E\oplus E^*$ and $f$.
    \item For $\widetilde{\cM}_{L}(X,\Sp(2n,\C)_{\tau})$, assuming $n$ is even: symplectic vector bundles $E$ of rank $n$ on $X_L$. Again, the corresponding vector bundles on $X$ are the pushforwards.
\end{enumerate}
Let $K_L$ is the canonical bundle of $X_L$. The Higgs field has values in the $\pm 1$-weight space of the corresponding involution. In the $+1$ case this means that it preserves $E$, in the sense that it is given by a homomorphism $E\to E\times K_L$, whereas in the $-1$ case it is determined by a a $\lambda^*$-invariant homomorphism $E\to\lambda^*E\otimes K_L$, i.e. the Higgs field exchanges $E$ and $\lambda^*E$. Summing up and plugging in the stability conditions, which for symplectic bundles mean that no isotropic sub-bundle has non-negative degree, yields:

\begin{proposition}
Let $L$ be a line bundle of order 2 over $X$ with corresponding \'Etale cover $p_L:X_L\to X$ and Galois group generated by $\lambda\in\Aut(X_L)$. Let $K_L$ be the canonical bundle of $X_L$. Consider the automorphism of $\cM(X,\Sp(2n,\C))$ sending $(E,\phi)$ to $(E\otimes L,\phi)$. Then the smooth fixed points in $\cM(X,\Sp(2n,\C))$ are pushforwards (under $p_L$) of the following two types of Higgs bundles with extra structure:
\begin{enumerate}
    \item A stable Higgs bundle $(F,\psi)$ of rank $n$ on $X_L$ equipped with an isomorphism $f:(F,\psi)\xrightarrow{\sim}\lambda^*(F^*,\psi^*)$ such that $\lambda^*f^*=-f$.
    \item Only if $n$ is even: a stable symplectic Higgs bundle of rank $n/2$ over $X$.
\end{enumerate}
Conversely, such pushforwards are fixed points.

Now consider the automorphism of $\cM(X,\Sp(2n,\C))$ sending $(E,\phi)$ to $(E\otimes L,-\phi)$. Then the smooth fixed points in $\cM(X,\Sp(2n,\C))$ are pushforwards (under $p_L$) of the following two types of vector bundles with extra structure:
\begin{enumerate}
    \item A vector bundle bundle $F$ of rank $n$ on $X_L$ equipped with a homomomorphism $\psi:F\to\lambda^*F\otimes K_L$ and an isomorphism $f:(F,\psi)\xrightarrow{\sim}\lambda^*(F^*,\psi^*)$ such that $\lambda^*f^*=-f$.
    \item Only if $n$ is even: a symplectic vector bundle of rank $n/2$ over $X$ with a homomorphism $\psi:F\to\lambda^*F\otimes K_L$ which preserves the symplectic form.
\end{enumerate}
Conversely, such pushforwards are fixed points.
\end{proposition}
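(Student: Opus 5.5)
The plan is to specialize Theorem \ref{th-prym-narasimhan-ramanan-oscar-ramanan-higgs} (equivalently Theorem \ref{th-fixed-points-oscar-ramanan-higgs} combined with Theorem \ref{th-prym-narasimhan-ramanan-higgs}) to $G=\Sp(2n,\C)$, $\Gamma=\Z/2\Z$ with generator mapped to $(L,1,\pm1)$. By (\ref{eq-sp-fixed-points}), the smooth fixed locus meets only the components indexed by $\theta=\Int_{J'}$ and, for $n$ even, by $\tau=\tau_{n/2,n/2}$. The other classes $\tau_{p,q}$ with $p\neq q$ satisfy $\Sp(2n,\C)_{\tau_{p,q}}=\Sp(2n,\C)^{\tau_{p,q}}$. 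Then $\ctt$ is trivial, so it cannot equal the nontrivial cocycle $\alpha=L$, and these components do not contribute. For the two remaining classes I would use the extension descriptions already computed: $\Sp(2n,\C)_{\theta}\cong\GL(n,\C)\times_{\Int_J,-1}\Z/2\Z$ and $\Sp(2n,\C)_{\tau}\cong(\Sp(n/2,\C)\times\Sp(n/2,\C))\rtimes_{\Int_T}\Z/2\Z$.

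Next, I would check which covers $Y$ are allowed. Here $\gamtt=\Z/2\Z$ maps isomorphically onto $\gamt=\Z/2\Z\subset Z^1(\Gamma,Z)$. So the condition $\qqt(Y)\cong\alpha$ forces $Y=X_L$, which is connected because $L$ is nontrivial, with $\gam=\Z/2\Z$. The centralizer $Z_{\gamtt}(\gam)$ is all of $\Z/2\Z$, and its action (Proposition \ref{prop-action-centralizer-higgs}) is just pullback by $\lambda$. This action does not change the pushforward to $X$, so it only produces the identification of $(F,\psi)$ with its $\lambda$-pullback.

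Then I would translate the twisted equivariant objects into vector bundle data, using Remark \ref{remark-associated-vector-bundle} together with the analysis carried out just before the statement. In the $\theta$ case, the associated bundle is $F\oplus F^*$ with $\lambda$ exchanging the summands through $f:F\to\lambda^*F^*$, and the cocycle $-1$ gives $\lambda^*f^*=-f$. In the $\tau$ case, the associated bundle is $F\oplus\lambda^*F$ with $F$ symplectic. Pushing forward along $p_L$ recovers the $\Sp(2n,\C)$-bundle on $X$, since the quotient of $V$ by the Galois permutation action is $p_{L*}F$. The symplectic form descends from the form on $F\oplus F^*$ twisted by $f$, or from the form on $F$ in the $\tau$ case.

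The Higgs field lies in $\lieg^{\theta}_{\pm1}$, respectively $\lieg^{\tau}_{\pm1}$. For $+1$ it is block diagonal, which means a Higgs field $\psi$ on $F$ compatible with $f$, that is $f\psi=\lambda^*\psi^*f$, or one preserving the symplectic form on $F$. For $-1$ it is off-diagonal, which means a map $\psi:F\to\lambda^*F\otimes K_L$ with the analogous compatibility.

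Finally, stability. Part (2) of Proposition \ref{prop-polystability-extension-structure-group} shows that a stable fixed point has a stable reduction, and part (1) gives the converse inclusion. Translated into these objects, this becomes the usual condition on $\lambda$-invariant isotropic subbundles. The main obstacle I expect is carrying out this dictionary for stability and the compatibility conditions rigorously, in particular checking the signs $\lambda^*f^*=-f$ and how $\psi$ interacts with $f$. I would first verify these on local frames using the explicit matrices $S$, $T$, $J'$, and then globalize.
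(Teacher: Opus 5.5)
Your proposal follows the paper's own route. You use the decomposition (\ref{eq-sp-fixed-points}) from Theorem \ref{th-fixed-points-oscar-ramanan-higgs}, discard the classes $\tau_{p,q}$ with $p\neq q$ because $\Sp(2n,\C)_{\tau_{p,q}}=\Sp(2n,\C)^{\tau_{p,q}}$, and use the twisted-product descriptions of $\Sp(2n,\C)_\theta$ and $\Sp(2n,\C)_\tau$. You then translate to $F\oplus F^*$ (resp.\ $F\oplus\lambda^*F$) via Remark \ref{remark-associated-vector-bundle} and read the Higgs field off the $\pm1$-weight spaces, exactly as the paper does.

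One small point, which the paper also glosses over. Proposition \ref{prop-polystability-extension-structure-group}(2) only gives stability of the reduction, i.e.\ the condition on $\lambda$-compatible isotropic subbundles. To conclude that $(F,\psi)$ itself is stable, add one observation: a stable degree-zero summand of the polystable $(F,\psi)$ is either isotropic for the pairing induced by $f$, contradicting stability of $p_{L*}F$, or splits off orthogonally, contradicting simplicity.
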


\begin{remark}
Let $\eta$ be the compact involution of $\Sp(2n,\C)$ yielding the maximal compact subgroup $\Sp(2n,\C)\cap U(n)$. Under the given embedding in $\GL(2n,\C)$ this is just conjugating, transposing and inverting, and it can be seen with a simple computation that it commutes with $\theta$ and $\tau$. Then $\eta\theta\in\Aut(\Sp(2n,\C))$ is a conjugate of $\eta\Int_J$ (see \ref{eq-S-symplectic-form}) which, since $\Int_J$ is just taking transpose and inverse on $\Sp(2n,\C)$, is the antiholomorphic involution given by conjugation of matrices. Thus the fixed point real subgroup $\Sp(2n,\C)^{\eta\theta}$ is isomorphic to $\Sp(2n,\R)$. We may also see that $\Sp(2n,\C)^{\eta\tau}$ is isomorphic to $\Sp(n/2,n/2)$. Let $\NSp(2n,\R)$ and $\NSp(n/2,n/2)$ denote the respective normalizers in $\Sp(2n,\C)$. Their quotients by their connected components, which are $\Sp(2n,\R)$ and $\Sp(n/2,n/2)$ respectively, are isomorphic to $\Z/2\Z$. According to Section 8.2 in \cite{PR} the non-abelian Hodge correspondence provides isomorphisms 
\begin{align*}
    \cM(X_L,\Sp(2n,\C)^{\theta},\lieg^{\theta}_{-1})\cong \calR(X_L,\Sp(2n,\R)),\\ \cM(X_L,\Sp(2n,\C)^{\tau},\lieg^{\tau}_{-1})\cong \calR(X_L,\Sp(n/2,n/2)),
    % \\
    % \cM(X_L,\Sp(2n,\C)_{\theta},\lieg^{\theta}_{-1})\cong \calR_L(X,\NSp(2n,\R))\andd\\
    % \cM(X_L,\Sp(2n,\C)_{\tau},\lieg^{\tau}_{-1})\cong \calR_L(X,\NSp(n/2,n/2)),
\end{align*}
where $\calR(X_L,\bullet)$ denotes the character variety of $\pi_1(X_L)$ with values in $\bullet$ 
% and the subscript $L$ in $\calR_L(X,\bullet)$ denotes the subvariety consisting of equivalence classes of representations whose composition with the quotient by the connected component of $\bullet$ yields the holonomy representation of $L$
. 
Thus our statements may be regarded as a correspondence between fixed points in $\mdl(X,\Sp(2n,\C))$ and twisted equivariant $\Sp(2n,\R)$ and $\Sp(n/2.n/2)$-Higgs bundles on $X_L$ respectively.
\end{remark}

\section{Action of the group generated by a \texorpdfstring{$Z(\Spin(n,\C))$-}{Z-}bundle of order two on the moduli space of \texorpdfstring{$\Spin(n,\C)$}{Spin(n,C)}-Higgs bundles.}\label{section-spin}
First we briefly recall the construction of the group $\Spin(n,\C))$. For more details see \cite{lawson_spin_1989}. Consider a complex vector space $V$ of dimension $n$ equipped with an orthogonal form $\omega$. The tensor algebra $T(V):=\bigoplus_kV^{\otimes k}$ has an ideal $I(V,\omega)$ generated by elements of the form $v\otimes v+\omega(v)$, where $v\in V$ and $\omega(v):=\omega(v,v)$, and the quotient is the Clifford algebra $\Cl(V,\omega):=T(V)/I(V,\omega)$. We define $\Pin(n,\C)\subset\Cl(V,\omega)$ to be the multiplicative group generated by elements $v\in V$ such that $\omega(v)=\pm 1$, and $\Spin(n,\C)<\Pin(n,\C)$ to be the subgroup of elements of even length. We have a degree two covering 
$$f:\Spin(n,\C)\to\SO(n,\C)$$
given by associating an element of $\Spin(n,\C)$ to its adjoint action on $V$. The fibre of the automorphism $\Ad_{v_1\dots v_{2k}}$ is $\pm v_1\dots v_{2k}$. We assume that $\omega$ is the standard orthogonal form on $V=\C^n$.

According to \cite{helgason}, the involutions of $\SO(n,\C)$ are given, up to conjugation by $\Int(G)$, by $\Int_{I_{p,q}}$ (for any $p,q$ such that $p+q=n$) and, when $n$ is even, $\Int_{J}$, where 
\begin{equation}\label{eq-def-J-Ipq-Spin}
    J:=\begin{pmatrix}
    0 & I_{n/2}\\
    -I_{n/2} & 0
    \end{pmatrix}\quad\text{and}\quad
I_{p,q}:=
\begin{pmatrix}
    -I_p & 0\\
    0 & I_q
\end{pmatrix}.
\end{equation}
The only inner automorphisms of order two up to conjugation are $\Int_{I_{p,q}}$ for one of $p$ or $q$ even (and $p+q=n$) and $\Int_J$ for $n=4m$. This is of course also true for $\Spin(n,\C)$, since $\Int(\SO(n,\C))\cong \Int(\Spin(n,\C))$. If we assume without loss that $p$ is even, we find 
$$\SO(n,\C)^{\Int_{I_{p,q}}}=\text{S}(\OO(p,\C)\times\OO(q,\C))=(\SO(p,\C)\times\SO(q,\C))\sqcup(\OO^-(p,\C)\times\OO^-(q,\C)),$$
where $\OO^-(p,\C)$ is the non-trivial coset of $\SO(p,\C)$ in $\OO(p,\C)$.

We split our analysis for $\Int_{I_{p,q}}$ in two cases: first consider that $p\ne q$. Then the group $\SO(n,\C)_{\Int_{I_{p,q}}}$ is equal to $\SO(n,\C)^{\Int_{I_{p,q}}}$. Therefore $\Spin(n,\C)_{\Int_{I_{p,q}}}$ is equal to $f^{-1}(\SO(n,\C)^{\Int_{I_{p,q}}})$, which is an extension of $\Spin(n,\C)^{\Int_{I_{p,q}}}$ by either $\Z/2\Z$ or 1. But $\Spin(n,\C)^{\Int_{I_{p,q}}}$ is connected, since $\Spin(n,\C)$ is simply connected \cite{onishchik3}, so that necessarily 
$$\Spin(n,\C)^{\Int_{I_{p,q}}}=f^{-1}(\SO(p,\C)\times\SO(q,\C))=\Spin(p,\C)\times\Spin(q,\C).$$ 
An element in $f^{-1}(\OO^-(p,\C)\times\OO^-(q,\C))$ with order two is $s_p:=iv_1v_{p+1}$, where $v_1\in\C^p$ and $v_{p+1}\in\C^q$ have norm 1. The adjoint action on $\Spin(p,\C)\times\Spin(q,\C)$ is determined by reflections on $v_1$ and $v_{p+1}$, and
$$s_p^2=-v_1v_{p+1}v_1v_{p+1}=v_1v_1v_{p+1}v_{p+1}=(-1)(-1)=1,$$
so that the 2-cocycle $c$ given by Proposition \ref{prop-extensions-isomorphic-twisted-group} is trivial.
Thus we get an isomorphism
$$\Spin(n,\C)_{\Int_{I_{p,q}}}\cong (\Spin(p,\C)\times\Spin(q,\C))\times_{\Int_{s_{p}},1}\Z/2\Z,$$
a semidirect product, where by abuse of notation we are writing the image of the generator instead of the whole homomorphism 
$$\Z/2\Z\to\Aut(\Spin(p,\C)\times\Spin(q,\C)).$$ 

When $n=4m$ we have the remaining case $p=q=2m$, where $\SO(n,\C)_{\Int_{I_{2m,2m}}}$ is generated by $\SO(2m,\C)\times\SO(2m,\C)$ and $J$. In this case $\Spin(4m,\C)_{\Int_{I_{2m,2m}}}$ is generated by $\Spin(2m,\C)\times\Spin(2m,\C)$, one of the elements $J'\in f^{-1}(J)$ and $s_{2m}$. Since $J'^2=J^2=-1$, we have an isomorphism 
$$\Spin(4m,\C)_{\Int_{I_{2m,2m}}}\cong(\Spin(2m,\C)\times\Spin(2m,\C))\times_{\tau,c}(\Z/2\Z)^2.$$
If the generators of $(\Z/2\Z)^2$ are $a$ and $b$ then $\tau_a=\Int_{J'}$, which exchanges the two factors, and $\tau_b=\Int_{s_{2m}}$. The 2-cocycle $c$ is trivial except for the pairs $(b,a)$, $(a,a)$, $(ba,b)$ and $(b,ab)$, which are mapped to $-1$.

Finally, when $n=4m$ we have that $\SO(4m,\C)^{J}\cong\GL(2m,\C)$ is the subgroup of matrices of the form
\begin{equation}\label{eq-def-A-B-matrix}
    \begin{pmatrix}
    A & B\\
    -B & A
\end{pmatrix},
\end{equation}
where the isomorphism sends this matrix to $A+iB$. The extension $\SO(4m,\C)_{J}$ is generated by $\SO(4m,\C)^{J}$ and $I_{2m,2m}$, so that
$$\SO(4m,\C)_{J}\cong\GL(2m,\C)\times_{\theta,1}\Z/2\Z,$$
where $\theta\in\Aut(\GL(2m,\C))$ consists of taking transpose and inverse. Indeed, note that the Lie algebra of $\SO(4m,\C)^{J}$ consists of matrices of the form (\ref{eq-def-A-B-matrix}) with $A$ antisymmetric and $B$ symmetric. The automorphism $\Ad_{I_{2m,2m}}$ sends this matrix to
\begin{equation*}
    \begin{pmatrix}
    A & -B\\
    B & A
\end{pmatrix},
\end{equation*}
which is mapped to $A-iB\in \gl(2m,\C)$, thus the induced automorphism of $\gl(2m,\C)$ sends $M$ to $-M^t$. Since this coincides with the differential of $\theta$, the automorphism $\Int_{I_{2m,2m}}$ must induce $\theta$.

The preimage of each component in $\Spin(n,\C)$ is connected: we may define a path from $1\in\Spin(n,\C)$ to $-1$ in $f^{-1}(\SO(4m,\C)^{J})$ by choosing a vector $v$ of norm $-1$, so that $vv=1$, and rotating one of the factors inside a two-dimensional subspace of $V$ until it gets to $-v$. Since $f$ is a degree two covering given by quotienting by $\{\pm 1\}$, this proves that $f^{-1}(\SO(4m,\C)^{J})$ is connected. Since this is the preimage of the connected component of $\SO(4m,\C)_{J}$, the preimage of the remaining component is also connected. Note that $f^{-1}(\SO(4m,\C)^{J})$ is a 2-cover of $\GL(2m,\C)$, hence it is isomorphic to $\GL(2m,\C)$ itself. 

Summing up, we have:

\begin{proposition}\label{prop-spin}
Let $L$ be a $Z(\Spin(n,\C))$-bundle of order two over $X$, and let $X_L$ be the (connected) étale cover associated to it. Let $f:\Spin(n,\C)\to\SO(n,\C)$ be the natural 2-cover, $J$ the matrix given in (\ref{eq-def-J-Ipq-Spin}) and $J'\in \Spin(n,\C)$ one of the elements in $f^{-1}(J)$.
\begin{enumerate}
    \item If $n\ne 4m$ then, with definitions as in Section \ref{section-prym-narasimhan-ramanan},
    \begin{equation*}
        \bigcup_{p\;\text{even},\, p+q=n}\widetilde{\mdl}(X_L,\Spin(p,\C)\times\Spin(q,\C),\Z/2\Z,\Int_{s_p},1)\subset \mdl(X,\Spin(n,\C))^L
    \end{equation*}
    and
    \begin{equation*}
        \mdl_{ss}(X,\Spin(n,\C))^L\subset\bigcup_{p\;\text{even},\, p+q=n}\widetilde{\mdl}(X_L,\Spin(p,\C)\times\Spin(q,\C),\Z/2\Z,\Int_{s_p},1),
    \end{equation*}
    where $s_p=iv_1v_{p+1}$, 1 is the trivial 2-cocycle and we are calling $\Int_{s_p}$ to the homomorphism $\Z/2\Z\to\Aut(\Spin(p,\C)\times\Spin(q,\C))$ whose image of $-1$ is $\Int_{s_p}$ by abuse of notation.
    
    \item When $n=4m$ there are several possibilities depending on the image of the monodromy representation of $L$: if the monodromy group is $\Z/2\Z\times\Z/2\Z$ then
    \begin{equation*}
    \widetilde{\mdl}(X_L,\Spin(2m,\C)\times\Spin(2m,\C),\Z/2\Z\times\Z/2\Z,\tau,c)\subset \mdl(X,\Spin(n,\C))^L
    \end{equation*}
    and
    \begin{equation*}
        \mdl_{ss}(X,\Spin(n,\C))^L\subset\widetilde{\mdl}(X_L,\Spin(2m,\C)\times\Spin(2m,\C),\Z/2\Z\times\Z/2\Z,\tau,c).
    \end{equation*}
    If the generators of $(\Z/2\Z)^2$ are $a$ and $b$ then $\tau_a=\Int_{J}$ and $\tau_b=\Int_{s_{2m}}$.
    Moreover, $c$ is the 2-cocycle in $Z^2_{\tau}(\Z/2\Z\times\Z/2\Z,Z(\Spin(2m,\C)\times\Spin(2m,\C)))$ which is equal to $-1$ at  $(b,a)$, $(a,a)$, $(ba,b)$ and $(b,ab)$ and trivial at every other pair.

    \item If $n=4m$ and the monodromy group of $L$ is a subgroup of order 2 in $\Z/2\Z\times\Z/2\Z$ whose image under $f$ is $\pm1\in\SO(n,\C)$ then:
    \begin{align*}
    &\widetilde{\mdl}(X_L,\Spin(2m,\C)\times\Spin(2m,\C),\Z/2\Z,\Int_{M},-1)\cup\\ &\cup\widetilde{\mdl}(X_L,\GL(2m,\C),\Z/2\Z,\theta,1)\subset \mdl(X,\Spin(n,\C))^L
    \end{align*}
    and
    \begin{align*}
    \mdl_{ss}(X,\Spin(n,\C))^L\subset&\widetilde{\mdl}(X_L,\Spin(2m,\C)\times\Spin(2m,\C),\Z/2\Z,\Int_M,-1)\cup\\ &\cup\widetilde{\mdl}(X_L,\GL(2m,\C),\Z/2\Z,\theta,1),
    \end{align*}
    where $M=J'$ or $J's_{2m}$ depending on the actual monodromy group and $\theta$ consists of taking transpose and inverse.

    \item If $n=4m$ and the image of the monodromy is $f^{-1}(1)=\pm1\in\Spin(4m,\C)$ then:
    \begin{equation*}
        \bigcup_{p\;\text{even},\, p+q=n}\widetilde{\mdl}(X_L,\Spin(p,\C)\times\Spin(q,\C),\Z/2\Z,\Int_{s_p},1)\subset \mdl(X,\Spin(n,\C))^L
    \end{equation*}
    and
    \begin{equation*}
        \mdl_{ss}(X,\Spin(n,\C))^L\subset\bigcup_{p\;\text{even},\, p+q=n}\widetilde{\mdl}(X_L,\Spin(p,\C)\times\Spin(q,\C),\Z/2\Z,\Int_{s_p},1).
    \end{equation*}
\end{enumerate}
\end{proposition}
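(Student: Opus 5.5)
The proof is an application of Theorem \ref{th-prym-narasimhan-ramanan-oscar-ramanan-higgs} with $G=\Spin(n,\C)$, $\Gamma=\langle L\rangle\cong\Z/2\Z$, trivial $a$ and trivial $\mu$. The assembly has four steps.

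\textbf{Step 1: the lifts $\theta$.} Since $a$ is trivial, the classes $[\beta]\in H^1_{\theta}(\Gamma,\Int(G))$ are the conjugacy classes of homomorphisms $\Z/2\Z\to\Int(\Spin(n,\C))\cong\Int(\SO(n,\C))$. These are the trivial one together with the inner involutions listed from \cite{helgason}: $\Int_{I_{p,q}}$ with $p$ even, and $\Int_J$ when $n=4m$.

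\textbf{Step 2: which $\theta$ contribute.} For each $\theta$ we need a $\gs$-bundle $F$ with $\ctt(F)\cong\alpha=L$. Here $\ctt$ factors through $\gamt=\gs/\gt\hookrightarrow Z^1_a(\Gamma,Z)=\Hom(\Z/2\Z,Z)$. So $\theta$ contributes only if the monodromy of $L$ lies in the image of $c_\theta$, i.e. in the subgroup of $Z(\Spin(n,\C))$ of elements $g^{-1}\theta(g)$ for $g\in\gs$.

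\begin{itemize}
\item For the trivial $\theta$ we have $\gs=\gt$, so $c_\theta$ is trivial and a nontrivial $L$ gets nothing.
\item For $\Int_{I_{p,q}}$ with $p\ne q$: $c_\theta(s_p)=s_p^{-1}\theta(s_p)=s_p^{-1}I'_{p,q}s_pI'^{-1}_{p,q}$, where $I'_{p,q}$ is a lift of $I_{p,q}$ to $\Spin(n,\C)$. This is the commutator in $\Spin$ of two lifts of commuting reflections-type elements, and computing it in the Clifford algebra identifies which central element it is. I expect it to be $-1$, the kernel of $f$, when $p$ and $q$ are both even, and an element of order $4$ or $2$ outside $\ker f$ otherwise.
\item For $p=q=2m$ the two generators $J'$ and $s_{2m}$ give two independent central elements, hence a $\Z/2\Z\times\Z/2\Z$ image.
\item For $\Int_J$ the extra component is $I_{2m,2m}$, whose lift gives one central element.
\end{itemize}

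Matching these images against the monodromy group of $L$ produces the case split. For $n\ne4m$ only the $I_{p,q}$ terms appear. For $n=4m$ there are three cases: monodromy $(\Z/2\Z)^2$ forces $p=q=2m$ with the full extension; an order-2 subgroup not equal to $\ker f$ picks the $J'$ or $J's_{2m}$ component together with the $\Int_J$ case; and monodromy $\ker f$ returns the $I_{p,q}$ union.

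\textbf{Step 3: the twisted products.} For each surviving $\theta$, restrict $\gs$ to the preimage $G_Y$ of the monodromy group $\gam$, as in Remark \ref{remark-finiteness-zt}. Then identify $G_Y\cong\gt_0\times_{\tau,c}\gam$ using the computations already made above: the semidirect product by $\Int_{s_p}$ with trivial cocycle; the $(\Z/2\Z)^2$ extension with the stated $c$; for a single component $J'$, the cocycle $J'^2=-1$; and $f^{-1}(\SO(4m,\C)_J)\cong\GL(2m,\C)\times_{\theta,1}\Z/2\Z$ with $\theta$ transpose-inverse. The last one needs a lift of $I_{2m,2m}$ of order two, which I would choose via Clifford products so that it squares to $1$. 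Since $\Gamma_Y=\gam$ is the Galois group of the connected cover $X_L$, the centralizer $Z_{\gamtt}(\gam)$ is abelian and acts within the images. It is therefore absorbed into the tilde images and does not change the inclusions.

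\textbf{Step 4: conclusion.} Theorem \ref{th-prym-narasimhan-ramanan-oscar-ramanan-higgs} then gives both inclusions in each case. Because $\mu$ is trivial, the weight space $\liegm$ is the Lie algebra of $\gt_0$, so the pairs are genuine twisted equivariant Higgs bundles.

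The main obstacle is the Clifford-algebra computation in Step 2: determining exactly which central element $c_\theta$ takes on each non-identity component. This is what sorts $I_{p,q}$ versus $J$ by monodromy type. I would do it by explicit products of unit vectors, e.g. checking whether $J'$ commutes or anticommutes with $s_p$ and with lifts of $I_{p,q}$.
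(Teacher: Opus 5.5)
Your route is the paper's: Theorem~\ref{th-prym-narasimhan-ramanan-oscar-ramanan-higgs} for $\Gamma=\langle L\rangle$, Helgason's list, and the twisted products computed before the statement. Sorting the pieces by whether the monodromy of $L$ lies in the image of $c_\theta$ is the right mechanism. \emph{The gap is that this sorting is the entire content of the case split, and you leave it as an expectation that is partly wrong and that, once carried out, does not give the split you state.}

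Take orthonormal $e_1,\dots,e_n$, with $s_p\propto e_1e_{p+1}$ and $X_p=e_1\cdots e_p$ lifting $I_{p,q}$ ($p$ even). Then $e_1$ anticommutes with $p-1$ factors of $X_p$ and $e_{p+1}$ with all $p$, so $e_1e_{p+1}$ anticommutes with $X_p$. Hence $c_\theta(s_p)=-1\in\ker f$ for \emph{every} even $p$. For $n$ odd, $Z=\{\pm1\}=\ker f$, so your alternative ``outside $\ker f$'' cannot occur, and it would contradict your own case (1).

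For $n=4m$, put $X=X_{2m}$ and $z:=J'^{-1}XJ'X^{-1}$, which lies in $f^{-1}(-I)$ (the two central elements outside $\ker f$). Then $c_{\Int_X}(J')=z$, $c_{\Int_X}(J's_{2m})=-z$, and $c_{\Int_{J'}}(X)=X^{-1}J'XJ'^{-1}=z^{-1}=z$. So with only $\Int_J$ in Step~1, your criterion attaches the $\GL(2m,\C)$-type piece to the monodromy group $\langle z\rangle$ alone (alongside $M=J'$). It never attaches it to $\langle -z\rangle$ (alongside $M=J's_{2m}$).

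The missing point is in Step~1. There $[\beta]$ runs over $H^1_\theta(\Gamma,\Int(G))$, i.e.\ involutions up to $\Int(G)$-conjugacy, while Helgason classifies up to all automorphisms. For $n=4m$ the class of $J$ splits. $J''=RJR^{-1}$, with $R$ a reflection, gives an inner involution $\Int_{J''}$ not conjugate to $\Int_J$: the centralizer of $J$ in $\mathrm{O}(4m,\C)$ is $\GL(2m,\C)\subset\SO(4m,\C)$, and $-J$ is $\SO$-conjugate to $J$. For this class (lift $e_1J'e_1^{-1}$) one gets $c(X)=e_1ze_1^{-1}=-z$, so it supplies the $\GL(2m,\C)$-type piece for monodromy $\langle -z\rangle$. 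Already for $n=4$, in $\Spin(4,\C)=\SL(2,\C)^2$ the non-conjugate involutions $\Int_{(k,1)}$ and $\Int_{(1,k)}$, $k=\mathrm{diag}(i,-i)$, have $c$-images $\{1,(-1,1)\}$ and $\{1,(1,-1)\}$. Without this class, the second inclusion of (3) is not proved for that monodromy group.

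Step~3 also takes over squares computed in $\SO(n,\C)$ rather than in $\Spin(n,\C)$.
\begin{itemize}
\item $J'^2$ lifts $J^2=-I$, so it lies in $f^{-1}(-I)$ and is not $-1$.
\item $s_p=iv_1v_{p+1}$ is not in $\Spin(n,\C)$, since $i$ would then lie in $\ker f=\{\pm1\}$; meanwhile $(v_1v_{p+1})^2=-1$.
\item More seriously, the order-two lift of $I_{2m,2m}$ you propose does not exist for $m$ odd. We have $X^2=(-1)^m$, and no element of that component has order two.
\end{itemize}
To see the last point, note that $f^{-1}(\SO(4m,\C)^J)$ is the connected double cover $\{(g,\lambda):\lambda^2=\det g\}$ of $\GL(2m,\C)$, not $\GL(2m,\C)$ itself. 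Conjugation by $X$ acts as $(g,\lambda)\mapsto(\bar g,\lambda^{-1})$, with $g\mapsto\bar g$ of transpose-inverse type. Hence $(Xh)^2=(\bar g g,(-1)^m)\neq 1$ for $h=(g,\lambda)$ and $m$ odd. For $n=4$ and $J'=(k,1)$, the component is $(N(T)\setminus T)\times\SL(2,\C)$ with $T$ the diagonal torus, and its elements square to $(-1,B^2)$.

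So for $m$ odd that piece necessarily carries a non-trivial cocycle. The twisted products you feed into Theorem~\ref{th-prym-narasimhan-ramanan-oscar-ramanan-higgs} must be recomputed in $\Spin$, not imported from $\SO$ or fixed by a choice of Clifford product.
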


\section{Action of a line bundle of order two on \texorpdfstring{$E_7$}{E7}}\label{section-e7}
Let $E_7$ be the simply connected group with exceptional Lie algebra $\lie e_7$. We briefly review the construction of $E_7$ ---for more details see \cite{exceptional}. Recall that the Cayley algebra $\mathfrak C$ is the $\R$-algebra generated by the group of octonions, which is equipped with a conjugation. The exceptional Jordan algebra $\mathfrak J$ consists of $3\times 3$ hermitian matrices over $\mathfrak C$, and we may construct its complexification $\mathfrak J^{\C}$ and define the Freudenthal vector space 
$$\mathfrak B^{\C}:=\mathfrak J^{\C}\oplus\mathfrak J^{\C}\oplus\C\oplus\C.$$
Given two elements $u$ and $v$ in $\mathfrak B^{\C}$, we may define a $\C$-linear mapping $u\times v:\mathfrak B^{\C}\to \mathfrak B^{\C}$. We define $E_7$ as the group of $\C$-linear automorphisms $f:\fr\to\fr$ such that 
$$f(u\times v)f^{-1}=f(u)\times f(v).$$
This has centre $\{\pm 1\}\cong \Z/2\Z$.
Two elements in $E_7$ are $\iota$ and $s$, defined by
$$\iota:\fr\to\fr;\,(u,v,a,b)\mapsto (-iu,iv,-ia,ib)
$$
and
$$
s:\fr\to\fr;\,(u,v,a,b)\mapsto (v,-u,b,-a).$$
They anticommute and their squares are both $-1$, so that the corresponding inner automorphisms have order 2. According to \cite{exceptional}, if $E_6$ is the simply connected group with Lie algebra $\mathfrak e_6$ then $E_7^{\Int_{\iota}}\cong (E_6\times\C^*)/(\Z/3\Z)$, where $\Z/3\Z\subset\C^*$ acts by simultaneous multiplication on both factors. The action of $\tau:=\Int_{s}$ on $(E_6\times\C^*)/(\Z/3\Z)$ is given by transposing and inverting the factor $E_6$ and inverting the factor $\C^*$, and we have
$$(E_7)_{\Int_\iota}\cong (E_6\times\C^*)/(\Z/3\Z)\times_{\tau,-1}\Z/2\Z.$$

Now let $L$ be a line bundle of order $2$ on $X$, which may be regarded as $\Z/2\Z$-bundle, and let $\llambda$ be the subgroup of $H^1(X,\Z/2\Z)$ generated by $L$. Let $X_L$ be associated étale cover. According to the previous paragraph and Theorem \ref{th-prym-narasimhan-ramanan-oscar-ramanan-higgs}, the image of
$$\mdl(X_L,(E_6\times\C^*)/(\Z/3\Z),\Z/2\Z,\tau,-1)$$
is contained in $\mdl(X,E_7)^{\llambda}$ and its intersection with the smooth locus is a union of connected components of $\mdl_{ss}(X,E_7)^{\llambda}$.

\newpage

\chapter{Actions of finite subgroups of the Jacobian on moduli spaces of Higgs bundles}\label{chapter-jacobian}
Let $X$ be a compact Riemann surface and consider the moduli space $\mdl(X,\GL(n,\C))$, isomorphic to the moduli space of Higgs bundles of rank $n$. Applying Theorem \ref{th-prym-narasimhan-ramanan-oscar-ramanan-higgs} we find a description for the locus of fixed points in $\mdl(X,\GL(n,\C))$ under the action of a finite subgroup $\llambda$ in the Jacobian $J(X)\cong H^1(X,\C^*)$. In other words, we give a description of $\mdl(X,\GL(n,\C))^{\llambda}$. For simplicity we make our arguments in the moduli space $M(X,\GL(n,\C))$ of vector bundles of rank $n$, which are also valid for Higgs bundles. We write $\llambda^*:=\Hom(\llambda,\C^*)$. This will be our $\gam$, the Galois group of the étale cover constructed in Section \ref{section-prym-narasimhan-ramanan}.

\section{Antisymmetric pairings and character varieties}The first step is to achieve a better understanding of the character variety
$$\x(\llambda,\Int(\GL(n,\C))):=\Hom(\llambda,\Int(\GL(n,\C)))/\Int(\GL(n,\C)),$$
where $\Int(\GL(n,\C))$ acts by conjugation.

Let $l:\llambda\to\llambda^*$ be an \textbf{antisymmetric pairing}, i.e. a homomorphism whose associated pairing satisfies $\langle\ambda,\ambda\rangle=1$ for each $\ambda\in\llambda$. 
In particular note that 
\begin{equation}\label{eq-antisymmetry}
    1=\langle\ambda\ambda',\ambda\ambda'\rangle=\langle\ambda,\ambda'\rangle\langle\ambda',\ambda\rangle
\end{equation}
for every $\ambda$ and $\ambda'\in\llambda$.
Consider a maximal subgroup $\iota:\Delta\hookrightarrow\llambda$ satisfying that the induced homomorphism $\Delta\to\Delta^*$ is trivial, which we call a \textbf{maximal isotropic subgroup}. In particular, the kernel of $\llambda\to\llambda^*\to\Delta^*$ is equal to $\Delta$. Indeed, if it were bigger than $\Delta$ then there would be an element $\ambda\in\llambda$ such that $\langle\ambda,\delta\rangle=1$ for each $\delta\in\Delta$, hence because of the antisymmetry of $l$ the subgroup of $\llambda$ generated by $\ambda$ and $\Delta$ would pair trivially with itself, contradicting the maximality of $\Delta$. Hence we get an injection
$$f:\llambda/\Delta\hookrightarrow\Delta^*.$$
Consider a homomorphism
$$s:\Delta\to\C^{*n}\subset\GL(n,\C)$$
landing in the subgroup of diagonal matrices of $\GL(n,\C)$. The set of weights for the action of $\Delta$ on $\C^n$ is a subset $\widehat{\Delta}\subseteq s(\Delta)^*\le\Delta^*$. We call $W_{\delta}$ to the weight space in $\C^n$ with weight $\delta\in\Delta^*$.
A matrix whose only non-zero entries are contained in blocks corresponding to each weight space of $s(\Delta)$ is called a \textbf{$\Delta$-matrix}. Given a $\Delta$-matrix $M$, we call $M_{\delta}$ to the block which is the restriction to the weight space $W_{\delta}$ for each $\delta\in\widehat{\Delta}$.
A matrix $M$ which is given by a set of linear isomorphisms $M_{\delta'\delta,\delta'}$ going from a weight space $W_{\delta'}$ to the weight space $W_{\delta'\delta}$, where $\delta$ and $\delta'\in\widehat{\Delta}$, is called a \textbf{permutation matrix}. The element $\delta\in\Delta^*$ is denoted $p(M)$. In particular, $\Delta$-matrices are permutation matrices with trivial $p$-image.

\begin{definition}\label{def-admissible-pair-representative-triple}
A \textbf{representative triple} for $\llambda$ is a triple $(l,\Delta,s)$, where $l:\llambda\to\llambda^*$ is an antisymmetric pairing, $\Delta\le\llambda$ is a maximal isotropic subgroup and $s:\llambda\to\GL(n,\C)$ is a map satisfying that:
\begin{enumerate}
    \item It restricts to a homomorphism $s\vert_{\Delta}:\Delta\to\C^{*n}\subset\GL(n,\C)$.
    \item\label{condition-ints-hom} The map $\Int_s$ is a homomorphism.
    \item The antisymmetric pairing 
    $$\llambda\to\llambda^*;\,\ambda\mapsto(\ambda'\mapsto \sg s_{\ambda'}\sg^{-1}s_{\ambda'}^{-1}),$$
    which is well defined because of (\ref{condition-ints-hom}) and the fact that $\llambda$ is abelian, is equal to $l$. In particular, for every $\ambda\in\llambda$ the matrix $\sg$ is a permutation matrix such that 
    $
    p(\sg)=\iota^*l(\ambda)^{-1}.    
    $
    Note that this condition only depends on the class of $\Int_s$ in $\x(\llambda,\GL(n,\C))$
    \item The image of $s$ consists of permutation matrices whose blocks are multiples of the identity.
    \end{enumerate}
\end{definition}

\begin{lemma}\label{lemma-class-representative-triple}
For every class in the character variety $\x(\llambda,\Int(\GL(n,\C)))$ there exists a representative triple $(l,\Delta,s)$ such that $\Int_s$ is in the class.
\end{lemma}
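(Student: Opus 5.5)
Take a homomorphism $\beta:\llambda\to\Int(\GL(n,\C))\cong\mathrm{PGL}(n,\C)$ representing the class, and choose lifts $s:\llambda\to\GL(n,\C)$ with $\Int_s=\beta$. Since $\llambda$ is abelian and $\beta$ is a homomorphism, the commutators $s_\ambda s_{\ambda'}s_\ambda^{-1}s_{\ambda'}^{-1}$ are scalars independent of the chosen lifts. They define a pairing $\llambda\times\llambda\to\C^*$. This pairing is bimultiplicative, because conjugation by $s_\ambda$ multiplies $s_{\ambda'}$ by a scalar and is multiplicative in $\ambda$, and it satisfies $\langle\ambda,\ambda\rangle=1$. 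So it is an antisymmetric pairing $l:\llambda\to\llambda^*$, which gives condition (3) by construction. Now pick a maximal isotropic subgroup $\Delta\le\llambda$ for $l$.

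Next I normalise $s$ on $\Delta$. Since $l$ vanishes on $\Delta\times\Delta$, the matrices $s_\delta$ with $\delta\in\Delta$ commute pairwise. I rescale each lift on a set of generators of $\Delta$ so that $s\vert_\Delta$ becomes a genuine homomorphism; this is possible because $s_\delta^{\mathrm{ord}(\delta)}$ is a scalar and $\C^*$ is divisible, so the induced extension cocycle of $\Delta$ by $\C^*$ is trivial. The image of $s\vert_\Delta$ is then a finite abelian group of commuting semisimple matrices, hence simultaneously diagonalisable. After conjugating by some $g\in\GL(n,\C)$, which changes nothing in the class, we get $s\vert_\Delta:\Delta\to\C^{*n}$. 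This gives condition (1), and it also yields the weight decomposition $\C^n=\bigoplus_{\delta\in\widehat\Delta}W_\delta$.

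For $\ambda\notin\Delta$, the relation $s_\ambda s_\delta s_\ambda^{-1}=\langle\ambda,\delta\rangle s_\delta$ shows that $s_\ambda$ maps $W_{\delta'}$ isomorphically onto $W_{\delta'\cdot\iota^*l(\ambda)^{\pm1}}$. Hence $s_\ambda$ is a permutation matrix with $p(s_\ambda)=\iota^*l(\ambda)^{-1}$, once the sign convention is fixed. Condition (2) holds automatically because $\Int_s=\beta$.

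The main obstacle is condition (4), which asks that the blocks of each $s_\ambda$ be multiples of the identity. I will first choose coset representatives $\ambda_1,\dots,\ambda_k$ of $\llambda/\Delta$ compatible with a decomposition of $\llambda/\Delta\hookrightarrow\Delta^*$ into cyclic factors. Within each orbit of weights under $f(\llambda/\Delta)$ (note that $f$ is injective by maximality of $\Delta$), I fix one weight space $W_{\delta_0}$ and transport a basis of it to the other weight spaces in the orbit using the maps $s_{\lambda}$. This change of basis is $\Delta$-diagonal, so it commutes with $s\vert_\Delta$, and it makes every block of the chosen $s_{\ambda_i}$ the identity except possibly where an orbit closes up.

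Where an orbit closes up, the element $s_{\ambda_i}^{m}$ with $\lambda_i^m\in\Delta$ lies in $s(\Delta)$ up to a scalar. That element is diagonal with scalar blocks, so the leftover block is a scalar and can be absorbed by rescaling $s_{\ambda_i}$ by a root. The commutation relations between different $\lambda_i$ then hold only up to the scalars prescribed by $l$, and I expect a careful but routine check that the transported bases remain consistent, using the fact that the commutators are scalar. Finally I define $s$ on general elements $\delta\lambda_1^{a_1}\cdots\lambda_k^{a_k}$ as the corresponding product, which again consists of permutation matrices with scalar blocks.
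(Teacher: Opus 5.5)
Your proposal is correct and follows the paper's proof essentially step by step. You define $l$ from the scalar commutators, pick a maximal isotropic $\Delta$, normalise $s\vert_\Delta$ to a diagonal homomorphism, note that each $s_\lambda$ permutes the weight spaces, and conjugate by a $\Delta$-matrix that transports a basis of one weight space in each $f(\Gamma/\Delta)$-orbit to the others. The paper transports with one fixed lift $\lambda_{\delta'}\in f^{-1}(\delta')$ per shift and verifies scalar blocks for every $\lambda$ at once from $s(\lambda'')=s(\lambda)s(\lambda')d$, where $d$ is a $\Delta$-matrix with scalar blocks. That is exactly the routine check you defer, and it shows that the closing-up blocks are already scalar, so no rescaling by roots is needed.
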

\begin{proof}
Let 
$$\theta:\llambda\to\Int(\GL(n,\C));\,\ambda\mapsto\Int_{\sg}$$
be a homomorphism.
Since $\llambda$ is abelian, we get an antisymmetric pairing
$$l:\llambda\to\llambda^*;\,\ambda\mapsto(\ambda'\mapsto \sg s_{\ambda'}\sg^{-1}s_{\ambda'}^{-1}).$$
Choose a maximal isotropic subgroup $\iota:\Delta\hookrightarrow\llambda$ and call the corresponding injection
$f:\llambda/\Delta\hookrightarrow\Delta^*.$ Since the elements in $s(\Delta)$ are semisimple (a finite power of each of them is in $\C^*$) and commute with each other, they can be simultaneously diagonalised and so we may assume (after conjugating $\theta$ if necessary) that they are all diagonal. Moreover, we may assume after rescaling that the map $s\vert_{\Delta}$ is a homomorphism and every element of $s(\Delta)$ has some diagonal entries equal to one. The set of weights is a subset $\widehat{\Delta}\subseteq s(\Delta)^*\le\Delta^*$ containing 1. A matrix $M$ whose conjugation by the elements in $s(\Delta)$ induces an element $\delta\in\Delta^*$ must be a permutation matrix with $p(M)=\delta$. Therefore, the homomorphism 
$\llambda/\Delta\xrightarrow{ps}\Delta^*$
induced by $p$ is precisely equal to the multiplicative inverse of $f$. Given a weight $\delta\in\widehat{\Delta}\subset\Delta^*$ there is a coset of weights $\delta f(\llambda/\Delta)$, and the dimensions of all the weight spaces in a given coset must be equal. The subgroup $f(\llambda/\Delta)$ of $\Delta^*$ preserves $\widehat{\Delta}$, since the corresponding cosets are the orbits of the conjugation action of $s(\llambda)$.

%The group $f(\llambda/\Delta)$ acts simply, transitively and freely on each orbit. 
We show that there exists a $\Delta$-matrix $S$ such that $\Int_S\theta\Int_S^{-1}(\llambda)$ consists of conjugations by permutation matrices whose blocks are multiples of the identity. Since conjugating by a $\Delta$-matrix does not change the elements of $s(\Delta)$, it is enough to choose representatives $\ambda\in \llambda$ of each coset $\ol\in\llambda/\Delta$ and find a $\Delta$-matrix $S$ so that $Ss(\ambda)S^{-1}$ is a permutation matrix with blocks in $\C^*$ for each $\ol\in\llambda/\Delta$. Choose a representative $\delta\in\widehat{\Delta}$ of each orbit $\delta f(\llambda/\Delta)\in\widehat{\Delta}/f(\llambda/\Delta)$. Consider the $\Delta$-matrix $S$ determined by $S_{\delta\delta'}=s(\ambda_{\delta'})_{\delta\delta',\delta}^{-1},$
where $\ambda_{\delta'}\in\llambda$ is any element in $f^{-1}(\delta')$ and $\delta\in\widehat{\Delta}$ is the representative of $\delta f(\llambda/\Delta)$. Here $s(\ambda_{\delta'})_{\delta\delta',\delta}$ is just the square matrix representing the restriction of $s(\ambda_{\delta'})$ to the $\delta$-weight space, whose image is the $\delta\delta'$-weight space. We show that $S$ satisfies the claim. 

Indeed, let $\ol\in\llambda/\Delta$ be represented by $\ambda\in\llambda$. We want to show that $$S_{\delta'f(\ol)}s(\ambda)_{\delta'f(\ol),\delta'}S_{\delta'}^{-1}=s(\ambda'')_{\delta'f(\ol),\delta}^{-1}s(\ambda)_{\delta'f(\ol),\delta'}s(\ambda')_{\delta',\delta}$$
is a multiple of the identity for each $\od\in\widehat{\Delta}/f(\llambda/\Delta)$ and $\delta'\in \od$, where $f(\ambda''\Delta)=\delta^{-1}\delta'f(\ol)$ and $f(\ambda' \Delta):=\delta^{-1}\delta'$. But, since $\theta$ is a homomorphism, $s(\ambda'')=s(\ambda)s(\ambda')d$ for some $\Delta$-matrix $d$ such that $d_{\delta}\in\C^*$ for each $\delta\in\widehat{\Delta}$, hence 
$$s(\ambda'')_{\delta'f(\ol),\delta}^{-1}s(\ambda)_{\delta'f(\ol),\delta'}s(\ambda')_{\delta',\delta}=d_{\delta}^{-1}s(\ambda')_{\delta',\delta}^{-1}s(\ambda)_{\delta'f(\ol),\delta'}^{-1}s(\ambda)_{\delta'f(\ol),\delta'}s(\ambda')_{\delta',\delta}=d_{\delta}^{-1},$$
as required.
\end{proof}

\section{The homomorphism \texorpdfstring{$\cct$}{c sub theta}}
Let $\theta:\llambda\to \Int(\GL(n,\C))$ be a homomorphism. When studying fixed points we only care about the class of $\theta$ in $\x(\llambda, \Int(\GL(n,\C)))$ by Theorem \ref{th-fixed-points-oscar-ramanan-principal}, hence by Lemma \ref{lemma-class-representative-triple} we may assume that $\theta=\Int_s$ for some representative triple $(l,\Delta,s)$. The group $\GL(n,\C)^{\theta}$ defined in Section \ref{section-Gtheta} consists of all the invertible $\Delta$-matrices $M$ such that $M_{\delta}=M_{\delta'}$ whenever $\delta$ and $\delta'$ are elements of $\widehat{\Delta}$ in the same coset of $\widehat{\Delta}/f(\llambda/\Delta)$. From Section \ref{section-Gtheta} we have a homomorphism
$$\cct:\GL(n,\C)_{\theta}\to \llambda^*.$$
Recall that the objects $\md_{\llambda}(X,\GL(n,\C)_{\theta})$ in the statement of Theorem \ref{th-fixed-points-oscar-ramanan-principal} are defined as the subvariety of $\md(X,\GL(n,\C)_{\theta})$ consisting of $\GL(n,\C)_{\theta}$-bundles $E$ such that $\cct(E)\cong\llambda$. Thus the second step in the description of $\md(X,\GL(n,\C))^{\llambda}$ is to understand $\cct$.

\begin{lemma}\label{lemma-gltheta}
The image of $\cct$ is equal to
$$\sett:=\{\gamma\in\llambda^*\suhthat \gamma\vert_{\Delta}\widehat{\Delta}=\widehat{\Delta}\andd \dim W_{\gamma\delta}=\dim W_{\delta}\forevery \delta\in\widehat{\Delta}\},$$
where $W_{\delta}$ is the $\delta$-weight space.
Moreover, there is a subgroup $\setp<\GL(n,\C)_{\theta}$ containing the centre$Z(\GL(n,\C)^{\theta})$ of $\GL(n,\C)^{\theta}$ such that the restriction $\cct\vert_{\setp}$ induces an isomorphism
$$\setp/Z(\GL(n,\C)^{\theta})\cong \sett.$$
\end{lemma}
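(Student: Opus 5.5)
We work with $\theta=\Int_s$ for a representative triple $(l,\Delta,s)$, as in Lemma \ref{lemma-class-representative-triple}. Since $a$ is trivial, $Z^1_a(\llambda,Z)=\Hom(\llambda,\C^*)=\llambda^*$, and $\cct(g)$ is the character $\ambda\mapsto \Int_{\sg}(g)g^{-1}$. The plan is to prove the two inclusions for the image of $\cct$ and then to build $\setp$ out of explicit permutation matrices.

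\emph{Image of $\cct$ lies in $\sett$.} Take $g\in\GL(n,\C)_\theta$ and set $\gamma:=\cct(g)$. For $\delta\in\Delta$ we have $s_\delta g s_\delta^{-1}=\gamma(\delta)g$, because $s_\delta$ is central-equivalent to a homomorphism on $\Delta$. This identity says that $g$ carries each weight space $W_{\delta'}$ into $W_{\delta'\gamma|_\Delta^{\pm1}}$, with the sign fixed by convention. Since $g$ is invertible, it induces a bijection of $\widehat\Delta$ given by translation by $\gamma|_\Delta$, together with isomorphisms between the corresponding weight spaces. Hence $\gamma|_\Delta\widehat\Delta=\widehat\Delta$ and the dimensions agree, so $\gamma\in\sett$.

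\emph{Every element of $\sett$ is attained.} Let $\gamma\in\sett$. We need $g$ with $\sg g\sg^{-1}g^{-1}=\gamma(\ambda)$ for all $\ambda$.
\begin{itemize}
\item First choose a permutation matrix $P$ with $p(P)=\gamma|_\Delta$ whose blocks are identifications of weight spaces. This realises the correct character on $\Delta$.
\item The defect $\ambda\mapsto \sg P\sg^{-1}P^{-1}\gamma(\ambda)^{-1}$ is then trivial on $\Delta$. We want to correct it by multiplying $P$ by some $s_\mu$, using that $l$ induces $\llambda/\Delta\hookrightarrow\Delta^*$.
\item Using condition (4) of a representative triple, we pick the blocks of $P$ compatibly along the $f(\llambda/\Delta)$-orbits in $\widehat\Delta$, choosing one block per orbit as in the proof of Lemma \ref{lemma-class-representative-triple}. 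With this choice $P$ commutes with each $\sg$ up to a scalar.
\item That scalar defines a character of $\llambda$ agreeing with $\gamma$ on $\Delta$. The remaining discrepancy is a character trivial on $\Delta$. Maximality of $\Delta$ identifies $(\llambda/\Delta)^*$ with $l(\llambda)$ restricted appropriately, so the discrepancy is $l(\mu)$ for some $\mu\in\llambda$, and replacing $P$ by $P s_\mu^{\pm1}$ removes it.
\end{itemize}
The main obstacle is this last step: checking that maximal isotropy gives exactly the surjectivity $\llambda\to(\llambda/\Delta)^*$-type statement needed, via the antisymmetry (\ref{eq-antisymmetry}).

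\emph{Construction of $\setp$ and the isomorphism.} Let $\setp$ be the subgroup generated by $Z(\GL(n,\C)^\theta)$ and the elements $P_\gamma$ constructed above, one for each $\gamma$ in a generating set of $\sett$. All of these are permutation matrices whose blocks are multiples of fixed identifications; the centre $Z(\GL(n,\C)^\theta)$ consists of the $\Delta$-matrices that are scalar on each weight space and constant along orbits. Such a matrix lies in $\ker\cct$, since $\ker\cct=\GL(n,\C)^\theta$. Conversely, an element of $\setp\cap\ker\cct$ is a $\Delta$-matrix with scalar blocks commuting with all $\sg$, and therefore lies in $Z(\GL(n,\C)^\theta)$. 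So $\cct|_{\setp}$ is surjective onto $\sett$ with kernel $Z(\GL(n,\C)^\theta)$, which gives the stated isomorphism.
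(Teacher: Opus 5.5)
Your weight-space argument that the image of $\cct$ lies in $\sett$ is the same as the paper's. Defining $\setp$ as a generated subgroup and computing $\ker(\cct|_{\setp})=Z(\glt)$ is also fine, and slightly cleaner than the paper's explicit check that $M^\gamma M^{\gamma'}\in Z(\glt)M^{\gamma\gamma'}$.

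The gap is in surjectivity, and it is not where you placed it. You claim that choosing one block per orbit and transporting it makes $P$ commute with every $s_\mu$ up to a scalar. This does not follow from condition (4), and it is false in general.

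To see this, fix an orbit representative $\delta$ and a block $P_0\colon W_\delta\to W_{\delta\gamma}$. Choose lifts $\lambda_\epsilon\in\Gamma$ with $p(s_{\lambda_\epsilon})=\epsilon$ for $\epsilon\in f(\Gamma/\Delta)$, with $\lambda_1=1$, and set $P|_{W_{\delta\epsilon}}:=s_{\lambda_\epsilon}P_0s_{\lambda_\epsilon}^{-1}$ blockwise. For $\mu\in\Gamma$ with $p(s_\mu)=\epsilon'$, write $\mu\lambda_\epsilon=\lambda_{\epsilon\epsilon'}\delta_0$ with $\delta_0\in\Delta$. Since $s_\mu s_{\lambda_\epsilon}\in\C^*s_{\lambda_{\epsilon\epsilon'}}s_{\delta_0}$, one finds $s_\mu P=\gamma(\delta_0)\,Ps_\mu$ on $W_{\delta\epsilon}$, where $\gamma(\delta_0)=\gamma(\mu)\gamma(\lambda_\epsilon)\gamma(\lambda_{\epsilon\epsilon'})^{-1}$. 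This depends on $\epsilon$ unless $\gamma$ kills the values $\lambda_\epsilon\lambda_{\epsilon'}\lambda_{\epsilon\epsilon'}^{-1}\in\Delta$ of the extension cocycle.

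That condition can fail when $\Gamma\to\Gamma/\Delta$ does not split. Take $\Gamma=(\Z/4\Z)^2$ with $\langle(a,b),(c,d)\rangle=i^{ad-bc}$, $\Delta=2\Gamma$, $\gamma((1,0))=i$, and $\mu=\lambda_\epsilon$ a lift of the class of $(1,0)$. Then $s_\mu P=Ps_\mu$ on $W_\delta$, but $s_\mu P=\gamma((2,0))\,Ps_\mu=-Ps_\mu$ on $W_{\delta\epsilon}$, for every choice of lifts. So there is no scalar, and hence no character $\chi$ to correct. By comparison, the step you flagged as the main obstacle is routine: $(\Gamma/\Delta)^*=l(\Delta)$ is just the transpose of the injectivity of $f$.

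The missing idea is the normalisation built into the paper's $M^\gamma$. Multiply the transported block on $W_{\delta'}$, where $\delta'=\delta\,p(s_\lambda)$, by $\gamma(\lambda)^{-1}$:
$M^\gamma_{\delta'\gamma,\delta'}=s(\lambda)_{\delta'\gamma,\delta\gamma}\,s(\lambda)_{\delta',\delta}^{-1}\,\gamma(\lambda)^{-1}$.
Because $\gamma$ is a character of all of $\Gamma$, this has two effects:
\begin{itemize}
\item The block becomes independent of the choice of $\lambda$ in its $\Delta$-coset, since the factors $\gamma(\delta_0)^{\pm1}$ cancel.
\item By the computation above, $s_\mu M^\gamma s_\mu^{-1}=\gamma(\mu)M^\gamma$ for all $\mu\in\Gamma$ directly, so no correction by $s_\mu$ is needed.
\end{itemize}
Take $P_\gamma:=M^\gamma$; its blocks are still scalar multiples of the fixed identifications by condition (4). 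With this choice your argument for $\setp$ goes through unchanged.
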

\begin{proof}
For each element $\gamma\in \sett$ we define a permutation matrix $\mtau$ such that $p(\mtau)=\gamma\vert_{\Delta}$ and
\begin{equation}\label{eq-mtau}
    \Int_{s(\ambda)}(\mtau)=\gamma(\ambda)\mtau
\end{equation}
for each $\ambda\in\llambda$. First choose a representative $\delta\in\widehat{\Delta}$ of each coset $\delta f(\llambda/\Delta)\in\widehat{\Delta}/f(\llambda/\Delta)$. Then, for each $\delta'\in \delta f(\llambda/\Delta)$, choose $\ambda\in\llambda$ such that $f(\ol)=\delta'\delta^{-1}$ and define 
$$\mtau_{\delta'\gamma,\delta'}:=s(\ambda)_{\delta'\gamma,\delta\gamma}s(\ambda)_{\delta',\delta}^{-1}\gamma(\ambda)^{-1}.$$ 
This is independent of the choice of $\ambda$ since, for every $\delta_0\in \Delta=\ker f$, we have
\begin{align*}
    s(\ambda\delta_0)_{\delta'\gamma,\delta\gamma}s(\ambda\delta_0)_{\delta',\delta}^{-1}\gamma(\ambda\delta_0)^{-1}=ds(\ambda)_{\delta'\gamma,\delta\gamma}s(\delta_0)_{\delta\gamma}d^{-1}s(\ambda)_{\delta',\delta}^{-1}s(\delta_0)^{-1}_{\delta}\gamma(\ambda)^{-1}\gamma(\delta_0)^{-1}=\\
    s(\ambda)_{\delta'\gamma,\delta\gamma}s(\ambda)_{\delta',\delta}^{-1}\gamma(\ambda)^{-1}\gamma(\delta_0)\gamma(\delta_0)^{-1}=s(\ambda)_{\delta'\gamma,\delta\gamma}s(\ambda)_{\delta',\delta}^{-1}\gamma(\ambda)^{-1},
\end{align*}
where $d\in\C^*$ depends on $\ambda$ and $\delta_0$.
Moreover, for each element $\ambda'\in\llambda$, we have
\begin{align*}
s(\ambda')_{f(\ambda'\Delta)\delta'\gamma,\delta'\gamma}\mtau_{\delta'\gamma,\delta'}s(\ambda')_{{f(\ambda'\Delta)\delta',\delta'}}^{-1}=\\
s(\ambda')_{f(\ambda'\Delta)\delta'\gamma,\delta'\gamma}s(\ambda)_{\delta'\gamma,\delta\gamma}s(\ambda)_{\delta',\delta}^{-1}s(\ambda')_{{f(\ambda'\Delta)\delta',\delta'}}^{-1}\gamma(\ambda)^{-1}
\end{align*}
which, since $s(\ambda)s(\ambda')$ and $s(\ambda\ambda')$ differ by a constant, is equal to
$$s(\ambda\ambda')_{f(\ambda'\Delta)\delta'\gamma,\delta\gamma}s(\ambda\ambda')_{f(\ambda'\Delta)\delta',\delta}^{-1}\gamma(\ambda)^{-1}=\mtau_{f(\ambda'\Delta)\delta'\gamma,f(\ambda'\Delta)\delta'}\gamma(\ambda').
$$
This shows that $\mtau$ satisfies $\Int_{\sg}(\mtau)=\gamma(\ambda)\mtau$ for every $\ambda\in\llambda$. 

Note that, if $\gamma\in\llambda^*$ did not restrict to an element of $\Delta^*$ preserving $\widehat{\Delta}$, there would be no matrix $M$ satisfying $\Int_{\delta}M=\gamma(\delta)M$ for each $\delta\in\Delta$, since this implies that $M$ is a permutation matrix such that $p(M)=\gamma\vert_{\Delta}$. The automorphism $M$ would then send some non-zero weight space to a trivial weight space via an isomorphism, which is absurd. Something similar happens when $W_{\delta}$ and $W_{\gamma\delta}$ have different dimension for some $\delta\in\widehat{\Delta}$. Therefore, the map
$$\{\mtau\}_{\gamma\in \sett}\xrightarrow{\cct}\gamt$$
is a bijection. 

Now we prove that, if $Z(\glt)\cong (\C^*)^{\vert\widehat{\Delta}/f(\llambda/\Delta)\vert}$ is the centre of $\glt$, the set $\setp:=Z(\glt)\{\mtau\}_{\gamma\in\sett}$ is actually a subgroup of $\GL(n,\C)$, so that $\cct$ induces an isomorphism $\setp/Z(\glt)\cong\gamt$. Indeed, it is enough to see that whenever $\gamma$ and $\gamma'\in\llambda^*$ preserve $\widehat{\Delta}$ and the corresponding weight spaces have the same dimension, the matrix $\mtau\mtauu$ is equal to $\mttauu$ multiplied by an element of $Z(\glt)$, which is a $\Delta$-matrix whose restriction to each orbit of $\widehat{\Delta}/f(\llambda/\Delta)$ is constant. Given $\delta'\in\Delta^*$, the chosen representatives $\delta\in\delta'f(\llambda/\Delta)$ and $\delta_0\in\delta'\gamma' f(\llambda/\Delta)$ and elements $\ambda$ and $\ambda_0\in\llambda$ satisfying $f(\ol)=\delta'\delta^{-1}$ and $f(\ambda_0\Delta)=\delta\delta_0^{-1}\gamma'$, we have
$$\mtau_{\delta'\gamma\gamma',\delta'\gamma'}\mtauu_{\delta'\gamma',\delta'}=
s(\ambda\ambda_0)_{\delta'\gamma\gamma',\delta_0\gamma}s(\ambda\ambda_0)_{\delta'\gamma',\delta_0}^{-1}\gamma(\ambda\ambda_0)^{-1}
s(\ambda)_{\delta'\gamma',\delta\gamma'}s(\ambda)_{\delta',\delta}^{-1}\gamma'(\ambda)^{-1},$$
which is equal to
\begin{align*}
    s(\ambda)_{\delta'\gamma\gamma',\delta\gamma\gamma'}s(\ambda_0)_{\delta\gamma\gamma',\delta_0\gamma}s(\ambda_0)^{-1}_{\delta\gamma',\delta_0}s(\ambda)_{\delta'\gamma',\delta\gamma'}^{-1}
    s(\ambda)_{\delta'\gamma',\delta\gamma'}s(\ambda)_{\delta',\delta}^{-1}\gamma\gamma'(\ambda)^{-1}\gamma(\ambda_0)^{-1}=\\
    s(\ambda)_{\delta'\gamma\gamma',\delta\gamma\gamma'}s(\ambda)_{\delta',\delta}^{-1}\gamma\gamma'(\ambda)^{-1}[s(\ambda_0)_{\delta\gamma\gamma',\delta_0\gamma}s(\ambda_0)^{-1}_{\delta\gamma',\delta_0}\gamma(\ambda_0)^{-1}]=\\
    \mttauu_{\delta'\gamma\gamma',\delta'}[s(\ambda_0)_{\delta\gamma\gamma',\delta_0\gamma}s(\ambda_0)^{-1}_{\delta\gamma',\delta_0}\gamma(\ambda_0)^{-1}].
\end{align*}
The expression in brackets only depends on $\delta,\delta_0,\gamma$ and $\gamma'$, so that $\mtau_{\delta'\gamma\gamma',\delta'\gamma'}\mtauu_{\delta'\gamma',\delta'}$ and $\mttauu_{\delta'\gamma\gamma',\delta'}$ differ by an element of $Z(\glt)$ as required.
\end{proof}

\begin{corollary}\label{cor-surjective-cct}
The homomorphism $\cct:\gls\to\llambda^*$ is surjective if and only if the set of weights is the whole $\Delta^*$ and the weight spaces have all the same dimension. In particular, under this assumption $s\vert_{\Delta}$ is injective, $\widehat{\Delta}$ is identified with $\Delta^*$ via $s^*$ and the order of $\Delta$ must divide $n$.
\end{corollary}

% \begin{proof}
% Assume that $\cct$ is surjective. Note that the restriction homomorphism $\iota^*:\llambda^*\to\Delta^*$ is surjective, which means that $\iota^*\cct(P)$ must be the whole group $\Delta^*$. By Lemma \ref{lemma-gltheta} this implies that the set of elements $\delta\in\Delta^*$ preserving $\widehat{\Delta}$ is the whole $\Delta^*$, i.e. $\widehat{\Delta}=\Delta^*$, and the weight spaces all have the same dimension.

% Conversely, since the composition $P\xrightarrow{\cct}\llambda^*\to\Delta^*$ is surjective, it is enough for $\cct$ to be surjective to show that $\cct(s(\Delta))=(\llambda/\Delta)^*<\llambda^*$. Note that $s(\Delta)$ is a subgroup of $\gls$, since the adjoint action of $s(\llambda)$ is given by the multiplicative inverse of $f$, and moreover $\cct(s(\Delta))\subseteq(\llambda/\Delta)^*$ because $\cct(s(\Delta))$ is trivial on $\Delta$. But $\cct\vert_{s(\Delta)}$ is precisely the dual of $f$, since
% $$\cct(s(\delta))(\ambda)=\langle\ambda,\delta\rangle=f(\ambda\Delta)(\delta)=f^*(\delta)(\ambda\Delta)$$
% for every $\delta\in\Delta$ and $\ambda\in\llambda$, so its surjectivity onto $(\llambda/\Delta)^*$ follows from the injectivity of $f$.
% \end{proof}

We call $(l,\Delta,s)$ an \textbf{admissible triple} if any of the two equivalent conditions in the statement of Corollary \ref{cor-surjective-cct} are met. In particular this implies, by injectivity of $s\vert_{\Delta}$ and $f=ps$, that $\theta=\Int_s$ is injective. 

\section{Admissible triples and components of the fixed point variety}
With definitions as in Section \ref{section-fixed-moduli} we show that $\widetilde{M}_{\llambda}(X,\GL(n,\C)_{\theta})$ is empty unless the triple $(l,\Delta,s)$ is admissible. Moreover, we parametrize the components of the fixed point locus using solely antisymmetric pairings.

\begin{corollary}\label{cor-moduli-non-empty}
If $M(X,\GL(n,\C))^{\llambda}$ is the stable fixed point locus, the intersection $$M(X,\GL(n,\C))^{\llambda}\cap \widetilde{M}(X,\GL(n,\C)_{\theta})$$
is empty unless $(l,\Delta,s)$ is an admissible triple.
\end{corollary}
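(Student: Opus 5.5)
The argument combines Theorem \ref{th-fixed-points-oscar-ramanan-principal} with Corollary \ref{cor-surjective-cct}. The key point is that a bundle in the image of $M_{\llambda}(X,\GL(n,\C)_{\theta})$ must have $\cct$-class equal to the tautological cocycle $\alpha$. Here $\alpha\in Z^1(\llambda,H^1(X,\C^*))=\Hom(\llambda,J(X))$ is the inclusion of $\llambda$ in the Jacobian, so its image is all of $\llambda$.

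First, let $E$ be a stable point of $M(X,\GL(n,\C))^{\llambda}$ lying in $\widetilde{M}(X,\GL(n,\C)_{\theta})$. Theorem \ref{th-fixed-points-oscar-ramanan-principal} and Proposition \ref{prop-simple-fixed-points-principal} give a reduction $F$ of $E$ to $\gls$ with $\ctt(F)\cong\alpha$. I will regard $\ctt(F)$ as the extension of structure group of $F$ to the finite abelian group $\cct(\gls)\le\llambda^*$. By (\ref{eq-cohomology-Z-iso-Z-cohomology}) and (\ref{eq-iso-rep-fundamental-cohomology}), it corresponds to a homomorphism $\pi_1(X)\to\cct(\gls)$, i.e. to an element of $\Hom(\pi_1(X),\llambda^*)\cong\Hom(\llambda,\Hom(\pi_1(X),\C^*))$. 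Under this identification $\alpha$ is the tautological map $\llambda\hookrightarrow H^1(X,\C^*)$, whose elements are flat because they have finite order.

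Second, I will show that the monodromy homomorphism $\rho:\pi_1(X)\to\llambda^*$ corresponding to $\alpha$ is surjective. Suppose its image is a proper subgroup $A<\llambda^*$. Then some nontrivial $\ambda\in\llambda$ satisfies $\chi(\lambda)=1$ for all $\chi\in A$, by duality of finite abelian groups. For this $\ambda$, the holonomy of $\alpha_\ambda$, which is $\rho(\cdot)(\ambda)$, is trivial. Hence $\alpha_\ambda$ is the trivial line bundle, contradicting the injectivity of $\llambda\le J(X)$. So $\rho$ is onto $\llambda^*$. On the other hand, $\rho$ factors through $\cct(\gls)$, because $F/\GL(n,\C)^{\theta}$ has structure group $\gamt\hookrightarrow\llambda^*$ with image $\cct(\gls)$. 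Therefore $\cct$ is surjective.

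Finally, by Corollary \ref{cor-surjective-cct}, surjectivity of $\cct$ is exactly the admissibility condition on $(l,\Delta,s)$. By Lemma \ref{lemma-class-representative-triple}, this holds for any representative triple with $\Int_s$ in the class of $\theta$.

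The main obstacle is the middle step, namely justifying carefully that $\ctt(F)\cong\alpha$ forces the image of the monodromy to be contained in $\cct(\gls)$ while also being all of $\llambda^*$. This requires tracking the identifications (\ref{eq-iso-cohomology}) and the factorization through $H^1(X,\gamt)$. If stability alone does not give the reduction with $\ctt(F)\cong\alpha$, I would simply intersect with the stable simple locus, which is the only locus the corollary concerns.
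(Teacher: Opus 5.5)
Your proof is correct for the intended statement, and it follows the paper's outline: the fixed-point theorem forces $\tilde c_\theta(F)\cong\alpha$, the monodromy of $\alpha$ is all of $\Gamma^*$, hence $c_\theta$ is onto, hence admissibility by Corollary~\ref{cor-surjective-cct}.

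\textbf{Monodromy step.} This is where you differ. The paper cites Proposition~\ref{prop-XGamma-connected}, which it proves by induction on the generators of $\Gamma$, using that $\ker\bigl(p'^*:J(X)\to J(X_{\Gamma'})\bigr)=\Gamma'$ via descent of equivariant line bundles. You instead observe that the transpose of $\rho:\pi_1(X)\to\Gamma^*$ is exactly the holonomy map $\Gamma\to\Hom(\pi_1(X),\C^*)$, $\lambda\mapsto\mathrm{hol}(\alpha_\lambda)$. So surjectivity of $\rho$ is Pontryagin-dual to injectivity of $\Gamma\hookrightarrow J(X)$. This is shorter and self-contained. The paper's route buys, in addition, the inductive construction of $X_\Gamma$ and the characterization $\ker p_\Gamma^*=\Gamma$. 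You also make explicit that the monodromy of $\tilde c_\theta(F)$ lies in $c_\theta(\GL(n,\C)_\theta)$ because it is extended from the $\Gamma_\theta$-bundle $F/\GL(n,\C)^\theta$; the paper leaves this step implicit.

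\textbf{Correction to your first step.} Proposition~\ref{prop-simple-fixed-points-principal} does not give a reduction to the prescribed $\GL(n,\C)_\theta$. It only gives one to $G_{\beta\theta}$ for $\beta$ in the class $\widetilde f(E)$. The reduction with $\tilde c_\theta(F)\cong\alpha$ comes instead from reading the intersection as being with $\widetilde M_\Gamma(X,\GL(n,\C)_\theta)$, as in your opening sentence and as the surrounding text intends; then $F$ exists by definition. For the literal $\widetilde M(X,\GL(n,\C)_\theta)$ the claim is false. Take $s\equiv 1$: then $\GL(n,\C)_\theta=\GL(n,\C)$, and the triple is not admissible when $\Gamma\neq 1$, yet the intersection is the whole stable fixed locus.

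Finally, for $\GL(n,\C)$ stable bundles are simple, so your fallback of restricting to the simple locus is unnecessary.
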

\begin{proof}
The whole point here is that the monodromy of $\llambda$ when considered as an element of $H^1(X,\Hom(\llambda,\C^*))\cong\Hom(\llambda,H^1(X,\C^*))$ is precisely $\llambda^*$ by Proposition \ref{prop-XGamma-connected}. Therefore, according to Theorem \ref{th-fixed-points-oscar-ramanan-principal}, in order for the smooth fixed point locus $M(X,\GL(n,\C))^{\llambda}$ to be non-empty we need $\gamt$ to be isomorphic to $\llambda^*$ via the homomorphism $\cct:\GL(n,\C)_{\theta}\to\llambda^*$. Equivalently, $\cct$ must be surjective.
\end{proof}

Let $p:\xg\to X$ be the étale cover which underlies the $\llambda^*$-bundle $\llambda$.

\begin{proposition}\label{prop-XGamma-connected}
    The cover $\xg$ is connected. In other words, the monodromy of $\llambda$ is equal to $\llambda^*$. 
\end{proposition}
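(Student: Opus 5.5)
The plan is to reduce connectedness of $\xg$ to surjectivity of its monodromy, and then to read the monodromy off from the fact that $\llambda$ sits inside $H^1(X,\C^*)$ as a finite subgroup. Since $\llambda$ is finite, every line bundle in $\llambda$ has finite order and is therefore flat with finite holonomy. Through the identification (\ref{eq-iso-rep-fundamental-cohomology}) and the exchange of $X$ and $\llambda$ used in Section \ref{section-Gtheta}, the $\llambda^*$-bundle $\xg$ corresponds to the homomorphism
$$w:\pi_1(X)\to\llambda^*=\Hom(\llambda,\C^*);\quad \ambda_0\mapsto\big(\ambda\mapsto \mathrm{hol}_{\ambda}(\ambda_0)\big),$$
where $\mathrm{hol}_{\ambda}:\pi_1(X)\to\C^*$ is the holonomy of the line bundle $\ambda$. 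By the discussion of monodromy in Section \ref{section-monodromy}, $\xg\cong \tilde X\times_w\llambda^*$, and its connected components correspond to the cosets of $w(\pi_1(X))$ in $\llambda^*$. So it is enough to prove that $w$ is surjective.

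To prove surjectivity I will use duality of finite abelian groups. Suppose the image $H:=w(\pi_1(X))$ were a proper subgroup of $\llambda^*$. Then there would be a nontrivial character of $\llambda^*$ vanishing on $H$. By Pontryagin duality, the characters of $\llambda^*$ are exactly the evaluations at elements of $\llambda$, so this character is evaluation at some $\ambda\neq 1$ in $\llambda$. Vanishing on $H$ means $\mathrm{hol}_{\ambda}(\ambda_0)=1$ for every $\ambda_0\in\pi_1(X)$, i.e. the holonomy representation of $\ambda$ is trivial.

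A flat line bundle with trivial holonomy is the trivial line bundle, so $\ambda$ would be the identity element of $H^1(X,\C^*)$. This contradicts the injectivity of $\llambda\hookrightarrow H^1(X,\C^*)$, which holds because $\llambda$ is a subgroup. Hence $w$ is surjective and $\xg$ is connected.

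The main obstacle is justifying the identification of the monodromy of $\xg$ with $w$. This needs the fact that a finite-order line bundle lies in the image of $\Hom(\pi_1(X),\mu_N)\to H^1(X,\C^*)$ injectively. The point to check is that a finite-order class in $J(X)$ has a unique flat unitary (indeed finite) holonomy lifting it. I would try to get this from the Narasimhan--Seshadri correspondence for degree-zero line bundles, which is compatible with the tensor structure, and then apply (\ref{eq-iso-rep-fundamental-cohomology}) to the finite group $\mu_N$ for $N=\vert\llambda\vert$.
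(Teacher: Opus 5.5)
Your proof is correct, but it takes a different route from the paper's.

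The paper argues by induction on the number of cyclic factors of $\Gamma=\langle\gamma_1,\dots,\gamma_k\rangle$. It assumes $X_{\Gamma'}$ is connected for $\Gamma'=\langle\gamma_1,\dots,\gamma_{k-1}\rangle$. By descent to $\Gamma'^*$-equivariant bundles it shows $\ker\bigl(p'^*\colon J(X)\to J(X_{\Gamma'})\bigr)=\Gamma'$, so $p'^*\gamma_k$ still has order $|\gamma_k|$ on $X_{\Gamma'}$. It then picks a loop in $\pi_1(X_{\Gamma'})$ whose monodromy is a character trivial on $\Gamma'$ and of full order on $\gamma_k$. Together with surjectivity onto $\Gamma'^*$, this fills all of $\Gamma^*$.

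You replace the whole induction by a single annihilator argument: a proper image of $w$ is killed by some $\lambda\neq 1$, whose holonomy must then be trivial.

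Both proofs rest on the same input, namely that $\Hom(\pi_1(X),\mu_N)\to J(X)[N]$ is an isomorphism. The paper uses this tacitly, both to identify the monodromy of $X_\Gamma$ with holonomy and to pass from the order of $p'^*\gamma_k$ to the order of its holonomy. Your argument needs nothing beyond this and finite duality, so it avoids the descent lemma and the choice of a cyclic decomposition.

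For that input you do not need Narasimhan--Seshadri. The Kummer sequence $1\to\mu_N\to\mathcal{O}^*\xrightarrow{N}\mathcal{O}^*\to 1$ gives it directly, since $H^0(X,\mathcal{O}^*)=\mathbb{C}^*$ is $N$-divisible.

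What the paper's approach buys is the by-product recorded in the remark after the proposition. It gives a tower of cyclic covers building $X_\Gamma$ and, via the descent lemma, the characterization $\ker(p_\Gamma^*)=\Gamma$. Your argument directly gives only the easy inclusion $\Gamma\subseteq\ker(p_\Gamma^*)$.
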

\begin{proof}
We prove this by induction on the minimal number of generators of $\llambda$. Let $\llambda=\langle\ambda_1,\dots,\ambda_k\rangle$ be a choice of generators identifying $\llambda$ with a product of cyclic groups. Let $\llambda':=\langle\ambda_1,\dots,\ambda_{k-1}\rangle\le \llambda$ (this may be trivial). Let $p':X_{\llambda'}\to X$ be the connected étale cover determined by $\llambda$. By induction hypothesis $p':X_{\llambda'}\to X$ is connected and has Galois group $\llambda'^*$, so it is enough to show that the monodromy of $\llambda$ contains an element of $\llambda^*$ which is trivial on $\llambda'$ and has order equal to the order of $\ambda_k$.

First note that the kernel of the pullback $p'^*:J(X)\to J(X_{\llambda'})$ is equal to $\llambda'$: indeed, there is an equivalence of categories between line bundles on $X$ and $\llambda'^*$-equivariant bundles on $X_{\llambda'}$. The only possible actions on the trivial bundle are elements of $(\llambda'^*)^*\cong\llambda'$ multiplied by the trivial action (holomorphic functions are trivial on $X_{\llambda'}$, and the composition of any action with the inverse of the trivial action is just a group of holomorphic functions), and each of those actions determines an element of $\llambda'$ and vice versa. Thus, since no power of $p'^*\ambda_k$ is in $\llambda'$ by assumption, its order is equal to the order of $\ambda_k$. Hence the monodromy of $p'^*\ambda_k$ must contain an element of $\C^*$ with order equal to the order of $\ambda_k$, in other words there is an element of $\pi_1(\xg)\le\pi_1(X)$ whose image $\gamma\in\llambda^*$ via holonomy satisfies that $\gamma(\ambda_k)$ has this order. On the other hand, the fact that $p'^*\llambda'$ is trivial implies that $\gamma$ is trivial on $\llambda'$. 
\end{proof}

\begin{remark}
In fact the proof of Proposition \ref{prop-XGamma-connected} provides an inductive construction of $\xg$ in terms of étale covers associated to line bundles with finite order. It also shows that we may characterize $\xg$ by the condition that $\ker(\pg^*:J(X)\to J(\xg))=\llambda$.
\end{remark}

\begin{lemma}\label{lemma-admissible}
Given an antisymmetric pairing $l:\llambda\to\llambda^*$ such that the order of a maximal isotropic subgroup $\Delta\le\llambda$ divides $n$, there exists a map $s:\llambda\to\GL(n,\C)$ making $(l,\Delta,s)$ an admissible triple. Moreover, the class of $\theta:=\Int_s$ in $\x(\Gamma,\Int_s)$ is unique, i.e. it only depends on $l$.
\end{lemma}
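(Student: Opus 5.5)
Existence comes from an explicit Heisenberg-type representation built from the pairing, and uniqueness from the fact that a homomorphism $\Gamma\to\Int(\GL(n,\C))$ is a projective representation whose commutator pairing is the fixed $l$.

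\textbf{Existence.} Given $l$ and a maximal isotropic $\Delta$ with $|\Delta|$ dividing $n$, put $m:=n/|\Delta|$. Write $\C^n=\bigoplus_{\delta\in\Delta^*}W_\delta$ with every $\dim W_\delta=m$, and identify each $W_\delta$ with $\C^m$.
\begin{itemize}
\item Define $s\vert_\Delta$ by letting $\delta_0\in\Delta$ act on $W_\delta$ by the scalar $\delta(\delta_0)$. This is a homomorphism into diagonal matrices, and its set of weights is all of $\Delta^*$.
\item The map $f:\Gamma/\Delta\hookrightarrow\Delta^*$ is an isomorphism. It is injective by maximality of $\Delta$, as shown above. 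For the count, the pairing $l$ induces a nondegenerate alternating pairing on $\Gamma/\Delta^\perp$ with $\Delta^\perp=\Delta$. A maximal isotropic subgroup of a nondegenerate alternating pairing on a finite abelian group is Lagrangian, so $|\Gamma|=|\Delta|^2$.
\item Choose coset representatives $\lambda$ of $\Gamma/\Delta$ and let $s_\lambda$ be the block permutation matrix sending $W_\delta$ identically to $W_{\delta f(\lambda\Delta)^{-1}}$. On a general element $\lambda\delta_0$, set $s_{\lambda\delta_0}:=s_\lambda s_{\delta_0}$.
\end{itemize}
Products of such matrices differ from the chosen representatives by $\Delta$-matrices whose blocks are multiples of the identity. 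By construction, $\Int_{s_\lambda}$ and $\Int_{s_{\delta_0}}$ commute up to the scalar $l(\lambda)(\delta_0)$.

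The main obstacle is showing that $\Int_s$ is a homomorphism, i.e.\ that the commutators $s_\lambda s_{\lambda'}s_\lambda^{-1}s_{\lambda'}^{-1}$ between non-$\Delta$ representatives are scalars equal to $\langle\lambda,\lambda'\rangle$. Plain permutation representatives commute, which would force $l$ to be trivial on $\Gamma/\Delta$. I would therefore twist each $s_\lambda$ by a diagonal $\Delta$-matrix $D_\lambda$, with scalar blocks, chosen so that the commutator is correct. This amounts to choosing a ``bicharacter'' $b:\Gamma/\Delta\times\Delta^*\to\C^*$ whose antisymmetrization recovers $l$ restricted to $\Gamma/\Delta$. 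Such a $b$ exists because $\C^*$ is divisible and alternating forms on finite abelian groups are antisymmetrizations of bilinear forms; one constructs it on a symplectic basis of cyclic factors and extends bilinearly.

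Once this holds, check conditions (1)--(4) of Definition~\ref{def-admissible-pair-representative-triple}: the commutator pairing equals $l$ by the previous step, and $p(s_\lambda)=\iota^*l(\lambda)^{-1}$ follows from how $\Delta$ acts on $W_\delta$. Admissibility then follows from Corollary~\ref{cor-surjective-cct}, since all weights occur and all weight spaces have equal dimension.

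\textbf{Uniqueness.} Let $(l,\Delta',s')$ be any admissible triple with the same $l$.
\begin{enumerate}
\item First reduce to $\Delta'=\Delta$: distinct maximal isotropic subgroups give the same class because the argument below never uses $\Delta$ beyond diagonalizing. Concretely, I would instead diagonalize $s'\vert_{\Delta}$ for our fixed $\Delta$; this is legitimate because $\Int_{s'}$ restricted to $\Delta$ is an abelian family of commuting semisimple elements, since $l\vert_\Delta$ is trivial.
\item By Corollary~\ref{cor-surjective-cct}, admissibility forces all $|\Delta^*|$ weights to occur, each with multiplicity $m$. After conjugating and rescaling by characters of $\Delta$, $s'\vert_\Delta$ coincides with $s\vert_\Delta$.
\item The elements $s'_\lambda$ are permutation matrices with the same $p$-values as $s_\lambda$. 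Conjugating by a $\Delta$-matrix, exactly as in the proof of Lemma~\ref{lemma-class-representative-triple} using orbit representatives, reduces each block to a scalar times the identity.
\item The scalar blocks of $s'$ and $s$ are then cocycles with the same commutator $l$. Their ratio is a genuine homomorphism $\Gamma\to Z(\GL(n,\C)^\theta)$ modulo scalars, and this ratio is a coboundary realized by a diagonal $\Delta$-matrix. That matrix conjugates $\Int_{s'}$ to $\Int_s$, so the class of $\theta$ depends only on $l$.
\end{enumerate}
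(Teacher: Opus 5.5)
The existence half has a genuine gap. You reduce ``$\Int_s$ is a homomorphism'' to the commutator identity $s_\lambda s_{\lambda'}s_\lambda^{-1}s_{\lambda'}^{-1}=\langle\lambda,\lambda'\rangle$, but that identity is not enough.

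Because you define $s_{\lambda\delta_0}:=s_\lambda s_{\delta_0}$ from a fixed set of coset representatives, you also need $s_\lambda s_{\lambda'}\in\C^*\,s_\mu s_{\delta''}$ whenever $\lambda\lambda'=\mu\delta''$ with $\mu$ the chosen representative. In particular you need $s_\lambda^r\in\C^*\,s_{\lambda^r}$, where $r$ is the order of $\lambda\Delta$ in $\Gamma/\Delta$ and $\lambda^r\in\Delta$.

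Your bicharacter ansatz cannot achieve this. Write $s_\lambda=P_\lambda D_\lambda$, where $P_\lambda\colon W_\delta\to W_{\delta\chi^{-1}}$ is the identity with $\chi:=f(\lambda\Delta)$, and $D_\lambda\vert_{W_\delta}=b(\lambda\Delta,\delta)$ with $b$ bimultiplicative. Then $s_\lambda^r$ acts on $W_\delta$ by $\prod_{j=0}^{r-1}b(\lambda\Delta,\delta\chi^{-j})=b(\lambda\Delta,\chi)^{-r(r-1)/2}$, using $(\lambda\Delta)^r=1$. This is a scalar independent of $\delta$. On the other hand, $s_{\lambda^r}$ acts on $W_\delta$ by $\delta(\lambda^r)$, which is non-constant as soon as $\lambda^r\neq 1$, because every character of $\Delta$ occurs as a weight.

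When $1\to\Delta\to\Gamma\to\Gamma/\Delta\to 1$ does not split, rechoosing representatives does not help. Take $\Gamma=(\Z/4\Z)^2$ with $\langle(a,b),(c,d)\rangle=i^{ad-bc}$, the Lagrangian $\Delta=\{(2a,2b)\}$, and $n=4$. Every representative $\lambda$ of the class of $(1,0)$ has $\lambda^2=(2,0)$, a nontrivial element of $\Delta$. So no choice of $b$ makes $\Int_s$ a homomorphism, even though an admissible $s$ exists (the $4$-dimensional Heisenberg representation).

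This power relation is exactly what the paper's proof enforces. It works inductively along a minimal generating set of $\Gamma/\Delta$ and takes $s(\lambda)=M^\gamma d$, with $d$ a $\Delta$-matrix constant on the blocks $W_{\delta f(H)}$. The matrix $d$ is chosen so that $(M^\gamma d)^{\vert\overline\lambda\vert}$ is a multiple of $s(\lambda^{\vert\overline\lambda\vert})$, and then $s(h\lambda^r):=s(h)s(\lambda)^r$. The resulting diagonal corrections are in general not bimultiplicative.

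Two smaller points. First, $f$ need not be an isomorphism. The pairing $l$ is nondegenerate only on $\Gamma/\ker l$, and the correct count is $\vert\Gamma\vert\,\vert\ker l\vert=\vert\Delta\vert^2$. For the trivial pairing, which the paper needs (it gives the component $p_{\Gamma*}\mathcal{M}(X,\GL(n/\vert\Gamma\vert,\C))$), one has $\Delta=\Gamma$ and $\Gamma/\Delta$ is trivial. Your construction does not actually use surjectivity. However, any antisymmetrization of $b$ must be formed on $\Gamma/\Delta\times f(\Gamma/\Delta)$. Also, ``$l$ restricted to $\Gamma/\Delta$'' is not well defined, since $\langle\lambda\delta,\lambda'\rangle=\langle\lambda,\lambda'\rangle\langle\delta,\lambda'\rangle$.

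Second, step 4 of your uniqueness argument is misstated and unjustified. The ratio $D_\lambda:=s'_\lambda s_\lambda^{-1}$ is a diagonal $\Delta$-matrix with scalar blocks, but these blocks need not be constant along $f(\Gamma/\Delta)$-orbits. So $D$ is not a homomorphism into $Z(\GL(n,\C)^\theta)$, on which $\Int_s$ acts trivially. It is a $1$-cocycle of $\Gamma/\Delta$ with values in $T/\C^*$, where $T$ is the torus of such matrices and $\Gamma/\Delta$ acts by freely permuting blocks.

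That this cocycle is a coboundary needs an argument, for example:
\begin{itemize}
\item $T$ is coinduced, so $H^1(\Gamma/\Delta,T)=H^2(\Gamma/\Delta,T)=0$, which gives $H^1(\Gamma/\Delta,T/\C^*)\cong H^2(\Gamma/\Delta,\C^*)$.
\item For the abelian group $\Gamma/\Delta$, this last group is detected by alternating forms.
\item The alternating form attached to $D$ is the ratio of the commutator pairings of $s'$ and $s$, which is trivial because both equal $l$.
\end{itemize}
With this supplement your uniqueness route is a valid and cleaner alternative to the paper's induction. The existence construction is what has to be rebuilt, for instance along the paper's lines.
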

\begin{remark}\label{remark-maximal-isotropic-same-order}
The assumption is independent of the choice of $\Delta$, since $\vert\Delta\vert$ only depends on $n$ and $l$: given two maximal isotropic groups $\Delta$ and $\Delta'$ in $\llambda$, the pairing $l$ induces an injection $\Delta'/\Delta\cap\Delta'\hookrightarrow(\Delta/\Delta\cap\Delta')^*$. Indeed, the kernel of $\Delta'\to\Delta^*$ is $\Delta\cap\Delta'$ because otherwise the group generated by the kernel and $\Delta$ would be isotropic and would strictly contain $\Delta$, and $\Delta\cap\Delta'$ is in the kernel of $\iota^*l(\Delta')$, where $\iota:\Delta\hookrightarrow\llambda$ is the inclusion. This implies that $\vert\Delta'\vert\le\vert\Delta\vert$, and by symmetry we get $\vert\Delta'\vert=\vert\Delta\vert$.
\end{remark} 
\begin{proof}[Proof of Lemma \ref{lemma-admissible}]
Let $\langle\cdot,\cdot\rangle$ be the pairing associated to $l$. We define the restriction of $s$ to $\Delta$ to be an isomorphism to a subgroup of diagonal matrices of $\GL(n,\C)$ whose set of weights is $\Delta^*$ and whose weight spaces have the same dimension (this is possible because $\vert\Delta\vert=\vert\Delta^*\vert$ divides $n$). It is easy to see that any such choice of $s\vert_{\Delta}$ is in the same class of $\x(\Delta,\GL(n,\C))$.

Let $\overline{\ambda}_1,\dots\overline{\ambda}_m$ be a minimal set of generators for $\llambda/\Delta$ (this gives an isomorphism between $\llambda/\Delta$ and a product of finite cyclic groups). Consider the quotient $\kappa:\llambda\to\llambda/\Delta$ and assume that $s$ has been defined for a subgroup $\kappa^{-1}(H)$, where $H<\llambda/\Delta$ is generated by the first $k-1$ generators. Set $\overline{\ambda}:=\overline{\ambda_k}$ with representative $\ambda\in\llambda$ and let $H'\le \llambda/\Delta$ be the subgroup generated by $H$ and $\overline{\ambda}$. We extend $s$ over the subgroup $\kappa^{-1}(H')$ of $\llambda$ as follows: first consider a permutation matrix $\mtau$ defined as in the proof of Lemma \ref{lemma-gltheta}, where $\gamma:=l(\ambda)\in H^*$. The matrix $\mtau$ satisfies 
\begin{equation}\label{eq-commutativity-relations}
    s(h)\mtau s(h)^{-1}=\langle h,\ambda\rangle\mtau
\end{equation}
for each $h\in \kappa^{-1}(H)$ and, in particular, $p(\mtau)=\gamma\vert_{\Delta}$. Note that multiplying $\mtau$ by a $\Delta$-matrix $d$ whose restriction to
\begin{equation*}
    W_{\delta f(H)}:=\bigoplus_{\delta'\in\delta f(H)}W_{\delta'}
\end{equation*}
is a multiple of the identity for each $\delta\in\Delta^*$ preserves (\ref{eq-commutativity-relations}), where $f:H\to\Delta^*$ is the homomorphism induced by $l$. We may choose $d$ such that $(\mtau d)^{\vert \overline{\ambda}\vert}$ is a multiple of $s(\ambda^{\vert \overline{\ambda}\vert})$. Set $s(\ambda):=\mtau d$ and define $s(h\ambda^r):=s(h)s(\ambda)^r$ for each $h\in \kappa^{-1}(H)$ and natural number $r$. Note that the elements of $s(\kappa^{-1}(H))$ commute up to multiplication by $\C^*$. Moreover, given an element $h\in H$, the fact that $s(h\ambda^r)^{\vert \kappa(h\ambda^r)\vert}$ is a multiple of the the correct diagonal matrix (namely $s((h\ambda^r)^{\vert \kappa(h\ambda^r)\vert})$, where $(h\ambda^r)^{\vert \kappa(h\ambda^r)\vert}\in\Delta$) follows from the corresponding properties of the generators and the fact that $s\vert_{\Delta}$ is a homomorphism. The minimality of the generators chosen in $\llambda/\Delta$ implies that the new definitions are compatible with the old ones. The fact that the map $s$ induces the pairing $l$ follows from (\ref{eq-antisymmetry}) and (\ref{eq-commutativity-relations}). 

Finally note that, given $s\vert_{\kappa^{-1}(H)}$, the choice of $s(\ambda)$ (or rather $\Int_{s(\ambda)}$) is determined by $l(\ambda)\vert_{\kappa^{-1}(H)}$ up to conjugation by a $\Delta$-matrix which is constant on each set of blocks defined by a coset of $\Delta^*/f(H)$. Of course, conjugating $s$ by such a matrix does not change $s\vert_{\kappa^{-1}(H)}$. Moreover, by induction $\Int_s\vert_{\kappa^{-1}(H)}$ is unique up to conjugation by an element $\Int_g\in\Int(\GL(n,\C))$, and it can be seen that $gs(\ambda)g^{-1}$ satisfies (\ref{eq-commutativity-relations}) if $s\vert_{\kappa^{-1}(H)}$ is replaced by $gs\vert_{\kappa^{-1}(H)}g^{-1}$. Thus the class of $\theta=\Int_s$ in $\x(\llambda,\GL(n,\C))$ only depends on $l$ and $\Delta$. To see that it does not depend on $\Delta$ notice that, given another maximal isotropic subgroup $\Delta'\le\llambda$, by Lemma \ref{lemma-class-representative-triple} there exists a homomorphism $\theta'=\Int_{s'}$ in the class of $\theta$ such that $(l,\Delta',s')$ is admissible, and by the previous argument this only depends on $l$ and $\Delta'$. Thus the class of $\theta$ is the same for $\Delta$ and $\Delta'$, as required.
\end{proof}

\section{Fixed points as twisted equivariant bundles}

Let $(l,\Delta,s)$ be an admissible triple. In order to apply Theorem \ref{th-prym-narasimhan-ramanan-oscar-ramanan-higgs} we need to describe the homomorphism $\tau$ fitting in the commutative diagramme
\begin{equation}\label{eq-commutative-diagramme-gls-glt}
    \begin{tikzcd}
\setp\arrow[r,"\Int"]\arrow[d,"\cct"]&\Aut(\glt)\\
\llambda^*\arrow[ru,"\tau"]\arrow[u,bend left,"\phi"]
\end{tikzcd}.
\end{equation}
This map lifts the characteristic homomorphism $\gamt\to\Out(\glt)$ of the extension $\GL(n,\C)_{\theta}$. Recall that $\setp$ consists of permutation matrices with constant blocks and the conjugation of a $\Delta$-matrix $M$ by an element $a\in \setp$ is described by the equation
$(aMa^{-1})_{\delta}=M_{(\iota^*\cct(a))\delta}$
for every $\delta\in\Delta^*$. Hence, for each $\ambda\in\llambda$, $\tau(\ambda)$ permutes the blocks of an element in $\Aut(\GL(n,\C)^{\theta})$ according to the multiplication by $\iota^*(\ambda)^{-1}$ in $\Delta^*$. Since an element $M\in\GL(n,\C)^{\theta}$ satisfies $M_{\delta}=M_{\delta'}$ for each $\delta$ and $\delta'$ in the same coset of $\Delta^*/f(\llambda/\Delta)$, the automorphism $\tau(\ambda)$ is actually determined by the coset of $\iota^*(\ambda)^{-1}$ in $\Delta^*/f(\llambda/\Delta)$. Choosing a map $\phi:\llambda^*\to \setp$ such that $\cct \phi=1$ gives an isomorphism $\glt\times_{\tau,c}\llambda^*$ for a suitable 2-cocycle $c\in Z_{\tau}^2(\llambda^*,Z(\glt))$ as in Proposition $\ref{prop-extensions-isomorphic-twisted-group}$. 

Let $\xg$ be the (connected by Proposition \ref{prop-XGamma-connected}) étale cover determined by the $\llambda^*$-bundle $\llambda$. We may simplify the description of polystable $(\tau,c)$-twisted $\llambda^*$-equivariant $\glt$-bundles over the étale cover $\xg\to X$ determined by the $\llambda^*$-bundle $\llambda$ as follows:

\begin{definition}\label{def-chi-equivariant}
The antisymmetric homomorphism $l$ induces a pairing on $\iota^{*-1}(f(\llambda/\Delta))=l(\llambda)\cong\llambda/\ker l$, which we will also denote $\langle\cdot,\cdot\rangle$. Let $E$ be a vector bundle of rank $n/\vert\Delta\vert$ over $\xg$ equipped with an action $\chi$ of $l(\llambda)$ descending to the inverse of the action on $\xg$ and satisfying
    \begin{equation}\label{eq-commutativity-lgamma}
    \chi_{\gamma}\chi_{\gamma'}= \langle\gamma,\gamma'\rangle\chi_{\gamma'}\chi_{\gamma}\forevery\gamma\,\text{and}\,\gamma'\in l(\llambda).
    \end{equation}
We also assume that $\chi_{\gamma}^{\vert\gamma\vert}$ is a fixed multiple of the identity for each $\gamma\in l(\llambda)$.
We call the pair $(E,\chi)$ \textbf{stable} if every $\chi$-invariant sub-bundle has smaller slope (which is the degree divided by the rank), and we call it \textbf{polystable} if it is a direct sum of stable $\chi$-invariant sub-bundles with the same slope. We denote the moduli space parametrizing isomorphism classes of such polystable pairs $M(\xg,l,n)$.
\end{definition}

\begin{lemma}\label{lemma-twisted-equivariant-are-vector-bundles}
    There is a surjective morphism $M(\xg,l,n)\to M(\xg,\glt,\llambda^*,\tau,c)$, where the moduli spaces are given in Definition \ref{def-chi-equivariant} and the end of Section \ref{section-twisted-equivariant-higgs-pairs} resp. The preimage of the stable locus of $M(\xg,\tau,c,\glt,\llambda^*)$ is the subvariety $U$ of stable pairs $(E,\chi)\in M(\xg,l,n)$ such that $E$ is not $\chi\vert_{l(\llambda)}$-equivariantly isomorphic to $\gamma^*E$ for any $\gamma\in \llambda^*/l(\llambda)$. The restriction of the surjection induces an isomorphism 
    $$U/(\llambda^*/l(\llambda))\xrightarrow{\sim} M_{ss}(\xg,\tau,c,\glt,\llambda^*),$$ 
    where $\llambda^*/l(\llambda)$ acts via pullback.
\end{lemma}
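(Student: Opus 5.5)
The plan is to pass from a twisted equivariant $\glt$-bundle to its associated vector bundle, then restrict attention to the block indexed by a single weight. Let $F$ be a $(\tau,c)$-twisted $\llambda^*$-equivariant $\glt$-bundle over $\xg$. Recall that $\glt$ consists of $\Delta$-matrices that are constant on cosets of $\Delta^*/f(\llambda/\Delta)$, so $\glt\cong\GL(n/\vert\Delta\vert,\C)^{\times N}$ with $N=\vert\Delta^*/f(\llambda/\Delta)\vert=\vert\llambda^*/l(\llambda)\vert$. Embed $\setp\le\GL(n,\C)$ as in Remark \ref{remark-associated-vector-bundle}. The associated bundle is then a $\llambda^*$-equivariant bundle of rank $n$ on $\xg$ which splits as $\bigoplus_{\delta\in\Delta^*}V_\delta$, where $V_\delta=V_{\delta'}$ whenever $\delta,\delta'$ lie in the same $f(\llambda/\Delta)$-coset. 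The factors are thus indexed by $\Delta^*/f(\llambda/\Delta)$, and $\llambda^*$ permutes them through its image in this quotient.

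First I define the functor $M(\xg,l,n)\to M(\xg,\glt,\llambda^*,\tau,c)$. Start from $(E,\chi)$ and form the induced bundle $\bigoplus_{\bar\gamma\in\llambda^*/l(\llambda)}\gamma^*E$, choosing coset representatives through the section $\phi$ of (\ref{eq-commutative-diagramme-gls-glt}). The $l(\llambda)$-action $\chi$, which satisfies (\ref{eq-commutativity-lgamma}), extends to a twisted action of $\llambda^*$. The point to check is that the commutator relations (\ref{eq-commutativity-lgamma}), together with the prescribed scalars $\chi_\gamma^{\vert\gamma\vert}$, reproduce exactly the multiplication in $\setp$, i.e.\ the cocycle $c$ and the relations (\ref{eq-mtau}) of the matrices $\mtau$ from Lemma \ref{lemma-gltheta}. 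Elements of $l(\llambda)$ fix the block $V_{1}$, while $\Delta^*$-translates outside $f(\llambda/\Delta)$ move it; the blockwise action of $\mtau$ for $\gamma\in l(\llambda)$ on the diagonal copy then plays the role of $\chi$.

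For the inverse direction I take the block corresponding to the identity coset; it is a rank $n/\vert\Delta\vert$ bundle preserved by the stabiliser $l(\llambda)$, with the induced action $\chi$. Every twisted equivariant bundle arises by induction from this block, which gives surjectivity. Two choices of block differ by pullback by a coset representative of $\llambda^*/l(\llambda)$, so the fibres of the map are exactly the $\llambda^*/l(\llambda)$-orbits.

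Next I compare stability. Equivariant reductions to parabolics of $\glt$ correspond to $\llambda^*$-invariant filtrations compatible with the block decomposition, and these are determined by $\chi$-invariant filtrations of the single block. So slope-(poly)stability of $(E,\chi)$ matches Definition \ref{definition-stable-twisted-equivariant}, up to the issue of the centre. The induced object is stable with automorphisms in $Z(\glt)$ only when the summands $\gamma^*E$ are pairwise non-isomorphic as $l(\llambda)$-equivariant bundles; otherwise an isomorphism $E\cong\gamma^*E$ produces a strictly semistable object, or extra automorphisms that mix blocks. This is how $U$ is identified and how the isomorphism $U/(\llambda^*/l(\llambda))\cong M_{ss}$ follows.

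The main obstacle I expect is the cocycle bookkeeping: checking that the $c$ produced by $\phi$ agrees with the $c$ coming from (\ref{eq-commutativity-lgamma}) and the fixed powers. I will handle it by transporting everything to explicit matrices $\mtau$ and using the computation in the proof of Lemma \ref{lemma-gltheta}.
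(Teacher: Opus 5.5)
Your first two steps, extracting $(E,\chi)$ from the identity-coset block and inducing back to $\bigoplus_{\bar\gamma}\bar\gamma^*E$ with a twisted $\llambda^*$-action built from the section $\phi$ and checked against the matrices $\mtau$, are the paper's construction in outline. The gap is in the last part, which carries the content of $U/(\llambda^*/l(\llambda))\cong M_{ss}$.

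``Two choices of block differ by pullback'' only shows that a $\llambda^*/l(\llambda)$-orbit lands in one fibre. For the converse you need that an isomorphism between $F=\bigoplus_{\bar\gamma}\bar\gamma^*E$ and $F'=\bigoplus_{\bar\gamma}\bar\gamma^*E'$ maps $E$ onto a single summand $\bar\gamma^*E'$. For $\llambda^*$-equivariant isomorphisms of these bundles (what the paper compares, and what survives extension of structure group) this is false off $U$. Take $l$ trivial, $\llambda^*=\Z/2\Z$ generated by $\gamma$, $A,B$ generic line bundles of equal degree, $E=A\oplus B$ and $E'=A\oplus\gamma^*B$. Then $F\cong F'$, since both descend to $p_*A\oplus p_*B$ on $X$, but $E'$ is not in the orbit of $E$. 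The paper uses exactly the defining property of $U$ at this point: the summands $\bar\gamma^*E$ are stable of equal slope and pairwise non-isomorphic as $l(\llambda)$-bundles, so a Schur-type argument forces any isomorphism $F\cong F'$ to permute them. Your argument never invokes $U$ here.

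The identification of $U$ as the preimage of the stable locus is also only asserted, and it conflicts with your own (correct) remark that $\llambda^*$-invariant parabolic reductions of $\glt$ are governed by $\chi$-invariant filtrations of one block. By that remark the induced $\glt$-object is stable as soon as $(E,\chi)$ is. Fibre-preserving $\glt$-morphisms also preserve the block decomposition, so an isomorphism $E\cong\gamma^*E$ produces neither strict semistability nor automorphisms mixing blocks. With such morphisms the identity block is canonical, so there would be nothing to quotient by.

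If you mean isomorphisms covering deck transformations (equivalently $\gls$-bundles on $X$), rigidity of the blocks does give the orbit statement on all of $M(\xg,l,n)$. But then you must show $U$ is the simple locus, i.e.\ that automorphisms covering $\gamma\notin l(\llambda)$ are exactly $l(\llambda)$-compatible isomorphisms $E\cong\gamma^*E$.

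If you compare the associated bundles $F$ as the paper does, you need its argument. Let $H\le\llambda^*/l(\llambda)$ be the subgroup of $\gamma$ with $E\cong\gamma^*E$ compatibly with $\chi$. Build the subbundles $F_\mu$ and $F'_\mu=\sum_{\gamma}\wchi_\gamma(F_\mu)$, and show $F=\bigoplus_\mu F'_\mu$ with each $F'_\mu$ stable, so $F$ is stable exactly when $H$ is trivial.

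In either reading you have to fix the notion of isomorphism and stability on the target and prove both inclusions.
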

\begin{proof}
First let $E'$ be a $(\tau,c)$-twisted $\llambda^*$-equivariant $\glt$-bundle with $\llambda^*$-action $\wchi$. The associated vector bundle $E$ is equal to a direct sum of $\vert\Delta^*\vert=\vert\Delta\vert$ vector bundles of rank $m:=n/\vert\Delta\vert$ such that any two summands corresponding to elements $\delta$ and $\delta'$ in the same coset of $\Delta^*/f(\llambda/\Delta)$ are isomorphic. Set
$E=\bigoplus_{\delta\in\Delta^*}E_{\delta},$
where $E_{\delta'}\cong E_{\delta}$ for every $\delta$ and $\delta'\in\Delta^*$ such that $\delta^{-1}\delta'\in f(\llambda/\Delta)$. The $(\tau,c)$-twisted $\llambda^*$-equivariant action on $E'$ induces, by Remark \ref{remark-associated-vector-bundle}, an action $\chi:l(\llambda)=\iota^{*-1}(f(\llambda/\Delta))\to\Aut(E_1)$ on the summand $E_1$. Note that, for each $\gamma\in l(\llambda)$, the element $\phi(\gamma)\in P$ is a permutation matrix with blocks equal to multiples of the identity and satisfying the commutativity relations (\ref{eq-commutativity-relations}), which implies that it is equal to $s_{\llambda}D$ for any $\ambda\in l^{-1}(\gamma)$ and some $D\in Z(\glt)$ (depending on $\ambda$). But, for any pair of elements $\ambda$ and $\ambda'\in\llambda$, we have $s_{\ambda}s_{\ambda'}=\langle\ambda,\ambda'\rangle s_{\ambda'}s_{\ambda}$, and this does not change if we introduce elements of $Z(\glt)$ because such elements commute with $s_{\ambda'}$ and $s_{\ambda}$. This implies (\ref{eq-commutativity-lgamma}).

Conversely, let $(E_1,\chi)$ be a vector bundle of rank $m$ with an $l(\llambda)$-action $\chi$ satisfying (\ref{eq-commutativity-lgamma}). In particular we have isomorphisms $\gamma^*E_1\cong\gamma'^*E_1$ for each $\gamma$ and $\gamma'$ in the same coset of $\llambda^*/l(\llambda)$, so that we write $\overline{\gamma}^*E_1$ instead of $\gamma^*E_1$, where $\overline{\gamma}$ is the coset of $\gamma\in\llambda^*$. Let $F:=\bigoplus_{\overline{\gamma}\in\llambda^*/l(\llambda)}\overline{\gamma}^*E_1$. We define an action $\wchi$ on $F$ as follows: first we define the action of $l(\llambda)$ on $E_1$ so that 
$
    \wchi_{\gamma}\vert_{E_1}\wchi_{\gamma'}\vert_{E_1}=c(\gamma,\gamma')_1\wchi_{\gamma\gamma'}\vert_{E_1}
$
for every $\gamma$ and $\gamma'\in l(\llambda)$, which is done rescaling the action $\chi$. For example, we may choose a minimal set of generators $\gamma_1,\dots,\gamma_m$ of $\llambda^*$ defining an isomorphism with a product of cyclic groups. We define $\wchi(\gamma_k)\vert_{E_1}:=z_k\chi(\gamma_k)$, where $z_k\in\C^*$ is such that $z_k^{\vert\gamma_k\vert}=\phi(\gamma_k)_1^{\vert\gamma_k\vert}\chi(\gamma_k)^{-\vert\gamma_k\vert}$. Then define $\wchi(\gamma_1^{r_1}\dots\gamma_m^{r_m}):=\gamma\wchi(\gamma_1)^{r_1}\dots\chi(\gamma_m)^{r_m}$, where $\gamma=[\phi(\gamma_1)^{r_1}\dots\phi(\gamma_m)^{r_m}\phi(\gamma_1^{r_1}\dots\gamma_m^{r_m})^{-1}]_1$ is completely determined by $c$. Then the complex number $\wchi(\gamma)\wchi(\gamma')\wchi(\gamma\gamma')^{-1}$ depends solely on the commutators of $\chi(\llambda^*)$, which are the ones of $\phi(\llambda^*)$, and so it is equal to $c(\gamma,\gamma')_1$. We then extend $\wchi\vert_{E_1}$ arbitrarily (but independently of the choice of $(E_1,\chi)$) to get a $\llambda^*$-action on $E_1$ (note that the images of elements in $\llambda^*\setminus l(\llambda)$ are not in $E_1$ though) and, for each $\gamma$ and $\gamma'\in\llambda^*$, we set $\wchi(\gamma')\vert_{\overline{\gamma}^{*}E_1}:=[\phi(\gamma')\phi(\gamma)\phi(\gamma\gamma')^{-1}]_{\gamma}\wchi(\gamma\gamma')\wchi(\gamma)^{-1}.$
Note that this is compatible with the existing definitions because of the definition of $\wchi(l(\llambda))\vert_{E_1}$, and moreover it is independent of the choice of $\gamma$ in $\overline{\gamma}\in\llambda^*/l(\llambda)$. Then we have
\begin{align*}
    \wchi(\gamma'')\vert_{(\overline{\gamma}\overline{\gamma}')^{*}E_1}\wchi(\gamma')\vert_{\overline{\gamma}^{*}E_1}=\\
    [\phi(\gamma'')\phi(\gamma\gamma')\phi(\gamma\gamma'\gamma'')^{-1}\phi(\gamma')\phi(\gamma)\phi(\gamma\gamma')^{-1}]_{\gamma}\wchi(\gamma\gamma'\gamma'')\wchi(\gamma\gamma')^{-1}\wchi(\gamma\gamma')\wchi(\gamma)^{-1}=\\
    [\phi(\gamma'')\phi(\gamma')\phi(\gamma)\phi(\gamma\gamma'\gamma'')^{-1}]_{\gamma\gamma'\gamma''}\wchi(\gamma\gamma'\gamma'')\wchi(\gamma)^{-1}=\\
    [\phi(\gamma'')\phi(\gamma')\phi(\gamma'\gamma'')^{-1}]_{\gamma'\gamma''}[\phi(\gamma'\gamma'')\phi(\gamma)\phi(\gamma\gamma'\gamma'')^{-1}]_{\gamma\gamma'\gamma''}\wchi(\gamma\gamma'\gamma'')\wchi(\gamma)^{-1}=\\
    c(\gamma'',\gamma')\wchi(\gamma'\gamma'')\vert_{\overline{\gamma}^{*}E_1}.
\end{align*}
This shows that, if $E$ is a direct sum of $\vert l(\llambda)/(\llambda/\Delta)^*\vert$ copies of $F$ and $E'$ is the $\glt$-bundle whose extension of structure group to $\GL(n,\C)$ is the bundle of frames of $E$, then $\wchi$ induces a $(\tau,c)$-twisted $\llambda^*$-equivariant action on $E'$.

To see the compatibility between the stability notions suppose that $E_1$ is polystable. The decomposition of $E_1$ into stable $l(\llambda)$-sub-bundles with the same slope induces a decomposition of $F$ into $l(\llambda)$-invariant sub-bundles with the same slope, so we may further assume that $E_1$ is stable. Note that polystability of $E'$ is equivalent to the fact that $F$ is a direct sum of $\llambda^*$-invariant sub-bundles, each of which has no proper $\llambda^*$-invariant sub-bundle with larger or equal slope. First let $\Gamma\le \llambda^*/l(\llambda)$ be the subgroup of elements $\gamma$ such that there are isomorphisms $\psi_{\gamma}:E_1\cong\gamma^*E_1$ compatible with the respective restrictions of the $l(\llambda)$-action. For each homomorphism $\mu:\llambda\to \C^*$ set 
$$F_{\mu}:=\left\{\sum_{\gamma\in\Gamma}\mu_{\gamma}\psi_{\gamma}(e):\;e\in E_1\right\}\subset F.$$
This has no proper $\Gamma$-invariant proper sub-bundles with slope bigger than or equal to the slope of $E_1$ (which is equal to the slope of $F$): any sub-bundle with bigger or equal slope has non-trivial projection on $\gamma^*E_1$ for some $\gamma\in\Gamma$ and so the only possibility is that it has the same slope and the projection is an isomorphism, which implies that it is the whole $F_{\mu}$.
Moreover $\bigoplus \gamma^*E_1\cong\bigoplus_{\mu\in\llambda^*}F_{\mu}$ is a decomposition into $\llambda$-invariant sub-bundles which are isomorphic to $E_1$ via projection, thus having the same slope as $F$. 

We claim that $F_{\mu}':=\sum_{\gamma l(\llambda)\in\llambda^*/l(\llambda)}\wchi_{\gamma}(F_{\mu})\subseteq F$ is stable as a $\llambda^*$-invariant sub-bundle of $F$. Since $F= \bigoplus_{\mu\in\llambda^*} F_{\mu}'$, this will be enough. Let $F_0$ be a $\llambda^*$-invariant sub-bundle of $F_{\mu}'$ with slope equal or greater. The projection on some summand of $F_{\mu}'$ must be non-trivial, and the image must have slope smaller or equal to the summand because it is $\llambda$-invariant. Thus the kernel must have slope greater than or equal to the slope of $F_{\mu}$. Repeating the same argument with the kernel we arrive at a situation where the projection on a summand has trivial kernel. At this point we conclude that the slope of $F_0$ must be equal to the slope of $F_{\mu}$ and the projection is an isomorphism, let's say to $F_{\mu}$ without loss. If the projection to every other summand is zero then we get a sub-bundle of $F_0$ contained in $F_{\mu}$ and, by $\llambda^*$-equivariance, $F_0$ must be the whole $F_{\mu}'$. Otherwise we get a second summand, say $\wchi_{\gamma}(F_{\mu})$ with $\gamma\in\llambda^*\setminus\llambda$, such that the corresponding projection is also an isomorphism. Since projections are $l(\llambda)$-equivariant this yields an $l(\llambda)$-equivariant isomorphism $F_{\mu}\cong\wchi_{\gamma}(F_{\mu})$, which contradicts the definition of $\llambda$.

Finally, given two pairs $(E_1,\chi)$ and $(E_1',\chi')$ in $U\subseteq M(\xg,l,n)$ such that the corresponding bundles $F:=\bigoplus_{\overline{\gamma}\in\llambda^*/l(\llambda)}\overline{\gamma}^*E_1$ and $F':=\bigoplus_{\overline{\gamma}\in\llambda^*/l(\llambda)}\overline{\gamma}^*E'_1$ are $\llambda^*$-equivariantly isomorphic. Because of the definition of $U$ this implies that $E_1$ is isomorphic to $\overline{\gamma}^*E'_1$ for some $\gamma\in\llambda^*/l(\llambda)$, which proves the last statement.

%By Proposition 4.1 in \cite{oscar-suratno} $E_1$ is polystable, hence we may assume after rescaling that $\psi_{\gamma\gamma'}=\psi_{\gamma}\psi_{\gamma'}$ for every $\gamma$ and $\ambda'\in\llambda$.

\end{proof}

\section{Fixed points as pushforwards}

Let $\pd:\xd\to X$ be the étale cover of $X$ determined by the $\Delta^*$-bundle $\Delta<J(X)$. Note that there is a natural isomorphism $\Delta^*\cong\llambda^*/l(\Delta)$ given by the short exact sequence of abelian groups
\begin{equation*}
    1\to (\llambda/\Delta)^*\to\llambda^*\to\Delta^*\to 1
\end{equation*}
and the fact that the homomorphism $\Delta\to (\llambda/\Delta)^*$ induced by $l$ is surjective (this follows from injectivity of $f$). The pairing $l$ determines an isomorphism $\llambda/\Delta\cong l(\llambda)/l(\Delta)$ (this is injective because $l$ induces the injection $f:\llambda/\Delta\to\Delta^*$). 

\begin{definition}\label{def-moduli-vector-bundles-xd}
Consider the moduli space $M(\xd,\GL(n/\vert\Delta\vert,\C))$ of vector bundles of rank $n/\vert\Delta\vert$ over $\xd$. We define a subvariety $M(\xd,\GL(n/\vert\Delta\vert,\C))^{l(\llambda)}$ consisting of isomorphism classes of polystable vector bundles $E$ such that $E\cong l(\ambda)^*E\otimes \pd^*\ambda$ for every $\ambda\in\llambda$. There is a natural pushforward morphism
\begin{equation*}
p_{\Delta*}:M(\xd,\GL(n/\vert\Delta\vert,\C))^{l(\llambda)}\to M(X,\GL(n,\C)).
\end{equation*}
\end{definition}

On the other hand, by the equivalence of categories between $l(\Delta)$-equivariant bundles over $\xg$ and bundles over $\xd$, we have a morphism 
\begin{equation*}
    p_{l}:M(\xg,l,n)\to M(\xd,\GL(n/\vert\Delta\vert,\C))^{l(\llambda)}.
\end{equation*}
Indeed, given a twisted equivariant bundle $E\in M(\xg,l,n)$, the $l(\Delta)$-action on $E$ is equivariant and so $E/l(\Delta)$ is a vector bundle over $\xd$. Moreover, the $l(\llambda)$-action satisfies (\ref{eq-commutativity-lgamma}) and so, for each $\ambda\in\llambda$, $E\cong l(\ambda)^*E$ and the induced $l(\Delta)$-action on $l(\ambda)^*E$ is defined, for each $\delta\in\Delta$, as the pullback of the $l(\delta)$-action on $E$ multiplied by $\langle\ambda,\delta\rangle=(l(\delta)(\ambda))^{-1}$. In other words, $E$ is equivariantly isomorphic to $l(\ambda)^*E$ tensored by the trivial line bundle on $\xg$ equipped with the $l(\Delta)$-action determined by $\ambda^{-1}$. But this is the natural $l(\Delta)$-action on $\pg^*(\ambda)$, so we get $l(\Delta)$-equivariant isomorphisms $E/l(\Delta)\cong l(\ambda)^*(E/l(\Delta))\otimes\pd^*\ambda$ as required. A stable bundle $E\in M(\xg,l,n)$ contains no proper $l(\Delta)$-invariant sub-bundles with greater or equal slope and so the corresponding image in $M(\xd,\GL(n/\vert\Delta\vert,\C))^{l(\llambda)}$ contains no proper sub-bundles with greater or equal slope, so the stability notions are compatible.

\begin{lemma}\label{lemma-twisted-equivariant-pushforward}
The composition
\begin{align*}
    M(X_{\llambda},l,n)\to M(\xg,\tau,c,\glt,\llambda^*)\to M(X,\gls) \to M(X,\GL(n,\C)),
\end{align*}
where the last morphism is defined in Proposition \ref{prop-polystability-extension-structure-group} and the second one is defined in Theorem \ref{th-prym-narasimhan-ramanan-higgs}, factors as
\begin{equation*}
    M(X_{\llambda},l,n)\to M(\xd,\GL(n/\vert\Delta\vert,\C))^{l(\llambda)}\xrightarrow{p_{\Delta*}}M(X,\GL(n,\C)).
\end{equation*}
In particular, by Lemma \ref{lemma-admissible} and Proposition \ref{prop-polystability-extension-structure-group}, the image of the pushforward is independent of the choice of maximal isotropic subgroup $\Delta$.
\end{lemma}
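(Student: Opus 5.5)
The plan is to trace a single polystable pair $(E_1,\chi)\in M(\xg,l,n)$ through both routes and check that the two resulting rank $n$ vector bundles on $X$ are isomorphic. Along the top route, Lemma \ref{lemma-twisted-equivariant-are-vector-bundles} sends $(E_1,\chi)$ to the $(\tau,c)$-twisted $\llambda^*$-equivariant $\glt$-bundle $E'$. The associated vector bundle of $E'$ is $E=F^{\oplus k}$ with $F=\bigoplus_{\overline\gamma\in\llambda^*/l(\llambda)}\overline\gamma^*E_1$, and it carries the action $\wchi$. Theorem \ref{th-prym-narasimhan-ramanan-higgs} (its vector bundle version, Corollary \ref{cor-prym-narasimhan-ramanan-principal}) then produces a $\gls$-bundle over $X$, and extension of structure group to $\GL(n,\C)$ gives a vector bundle on $X$. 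By Proposition \ref{prop-bijection-G,Gamma-action} and Remark \ref{remark-associated-vector-bundle}, this vector bundle is the descent of $E$ along $\pg:\xg\to X$ with respect to the genuine $\llambda^*$-action induced by the embedding $\gls\hookrightarrow\GL(n,\C)$. In other words, it is $E/\llambda^*$, the $\llambda^*$-invariant pushforward $(\pg_*E)^{\llambda^*}$.

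Next I factor the descent through the intermediate cover. Since $\xd=\xg/l(\Delta)$ and $\llambda^*/l(\Delta)\cong\Delta^*$, descending along $\xg\to X$ can be done in two stages: first quotient by $l(\Delta)\le\llambda^*$ to get a bundle on $\xd$, then quotient by the residual $\Delta^*$-action on $\xd$. The key identification is that $E$, viewed as an $l(\Delta)$-equivariant bundle, is induced from $E_1$. Concretely, because the blocks of $\phi(\gamma)$ permute the weight spaces simply transitively according to $\iota^*$ on cosets, the summands of $E$ are permuted freely by $\llambda^*/l(\Delta)\cong\Delta^*$. Hence $E/l(\Delta)$ is a direct sum of $|\Delta^*|$ pullbacks $\delta^*(E_1/l(\Delta))$ over $\delta\in\Delta^*$, and $\Delta^*$ permutes these summands freely. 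By the standard fact that a bundle of the form $\bigoplus_{\delta}\delta^*V$ with the permutation action descends to $p_{\Delta*}V$, the quotient $(E/l(\Delta))/\Delta^*$ is $p_{\Delta*}(E_1/l(\Delta))$. Since $E_1/l(\Delta)=p_l(E_1,\chi)$, this gives the claimed factorization. Rank is consistent: $E_1/l(\Delta)$ has rank $n/|\Delta|$ and its pushforward along the degree $|\Delta|$ cover has rank $n$.

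The main obstacle is bookkeeping the twisting. I must check that the scalars introduced when rescaling $\chi$ to $\wchi$ (the cocycle $c$, and the factors $[\phi(\gamma')\phi(\gamma)\phi(\gamma\gamma')^{-1}]_\gamma$) are exactly absorbed, so that the $l(\Delta)$-action on $E_1$ used in the descent agrees with the one defining $p_l$. I also need the $\Delta^*$-action on the summands to be the honest permutation action and not a twisted version of it. I would handle this by working with the genuine $\llambda^*$-action on $E$ coming from $\gls\subset\GL(n,\C)$: because $\phi(\delta)$ for $\delta\in l(\Delta)$ is a $\Delta$-matrix $s_\lambda D$ acting diagonally by scalars, it preserves each summand. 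Any mismatch of scalars is then an isomorphism of the quotient bundle, not a change of its isomorphism class. Finally, the independence of $\Delta$ follows formally. The image of the left-hand composite depends only on the class of $\theta$, which by Lemma \ref{lemma-admissible} depends only on $l$, and Proposition \ref{prop-polystability-extension-structure-group}(3) identifies the images for conjugate $\theta$.
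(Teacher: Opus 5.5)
Your route is the right one, and it is what the paper leaves implicit: the lemma is stated without proof, right after the construction of $p_l$. The steps are as follows.
\begin{itemize}
\item The top composite is the descent of the genuine $\Gamma^*$-equivariant bundle $E=E'(\C^n)$ along $X_{\Gamma}\to X$.
\item The stabilizer of each weight summand $E_\delta$ is $(\Gamma/\Delta)^*$, which equals $l(\Delta)$ by injectivity of $f$.
\item Admissibility makes $\Gamma^*/l(\Delta)\cong\Delta^*$ permute the $|\Delta^*|$ summands simply transitively, so $E$ is induced from the $l(\Delta)$-equivariant summand $E_1$.
\item Descending in two stages through $X_{\Delta}=X_{\Gamma}/l(\Delta)$ then gives $p_{\Delta*}(E_1/l(\Delta))$.
\end{itemize}
Your argument for independence of $\Delta$ is the paper's.

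The step that fails is how you dispose of the scalar bookkeeping you yourself flag as the main obstacle. A scalar mismatch between two honest $l(\Delta)$-linearizations of $E_1$ is \emph{not} absorbed by an isomorphism of the quotient. Suppose the linearizations differ by a nontrivial character $\nu$ of $l(\Delta)=(\Gamma/\Delta)^*$. Then the two quotients differ by tensoring with the flat line bundle on $X_{\Delta}$ attached to $\nu$. This is $p_{\Delta}^*\lambda$, where $\lambda\in\Gamma$ represents $\nu$, and it is nontrivial whenever $\lambda\notin\Delta$, since the kernel of $p_{\Delta}^*$ on $J(X)$ is $\Delta$. So the two quotients are in general different points of $M(X_{\Delta},\GL(n/|\Delta|,\C))$. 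A constant rescaling of $E_1$ commutes with everything and cannot undo this. The ambiguity is genuinely present, because Definition \ref{def-chi-equivariant} only pins down $\chi_\gamma^{|\gamma|}$.

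Two repairs are available.
\begin{itemize}
\item Normalise $p_l$ by restricting the genuine $\Gamma^*$-action to the stabilizer $l(\Delta)$ of $E_1$. This is how $\chi$ is read off in the forward direction of Lemma \ref{lemma-twisted-equivariant-are-vector-bundles} via Remark \ref{remark-associated-vector-bundle}, so there is then nothing to absorb.
\item Show that the mismatch disappears after pushforward. Let $V=p_l(E_1,\chi)$, so that the other descent is $V\otimes p_{\Delta}^*\lambda$. The projection formula gives $p_{\Delta*}(V\otimes p_{\Delta}^*\lambda)\cong p_{\Delta*}V\otimes\lambda$. Pushing forward $V\cong l(\lambda)^*V\otimes p_{\Delta}^*\lambda$, and using $p_{\Delta}\circ l(\lambda)=p_{\Delta}$, gives $p_{\Delta*}V\cong p_{\Delta*}V\otimes\lambda$.
\end{itemize}
With either fix in place, your argument is complete.
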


The morphism $p_l$ is not surjective. Indeed, (\ref{eq-commutativity-lgamma}) implies that, for every polystable vector bundle $E\in p_l(M(\xg,l,n))$, the corresponding isomorphisms $f_{\ambda}:E\xrightarrow{\sim}l(\ambda)^*E\otimes \pd^*\ambda$ satisfy
$$(l(\ambda')^*f_{\ambda}\otimes\id_{\pd^*\ambda'})\circ f_{\ambda'}=\langle\ambda,\ambda'\rangle(l(\ambda)^*f_{\ambda'}\otimes\id_{\pd^*\ambda})\circ f_{\ambda}$$
for every $\ambda$ and $\ambda'\in\llambda$, so that $l$ fixes the "commutators" of the isomorphisms. On the other hand, any polystable bundle $E\in M(\xd,\GL(n/\vert\Delta\vert,\C))^{l(\llambda)}$ has isomorphisms $f_{\ambda}:E\xrightarrow{\sim}l(\ambda)^*E\otimes \pd^*\ambda$ such that 
$$f_{\ambda\ambda'}=c(l(\ambda)^*f_{\ambda'}\otimes\id_{\pd^*\ambda})\circ f_{\ambda}$$
for some $c\in\C^*$ depending on $\ambda$ and $\ambda'\in\llambda$. This is trivial if $E$ is stable because $E$ is simple, and in general this follows by redefining the original isomorphisms so that both the left and right hand sides send each stable summand of $E$ to the same stable summand in $l(\ambda\ambda')^*E\otimes\pd^*(\ambda\ambda')$ (and maybe rescaling the restriction to each summand). Thus either way we may choose the isomorphisms $f_{\ambda}$ so that the "commutators" are multiples of the identity, which defines an antisymmetric pairing $l':\llambda\to \llambda^*$. Note that this pairing is trivial on $\Delta$, so that $E$ is actually in the image of $M(\xg,l',\Delta)$. We conclude that
\begin{equation}\label{eq-union-twisted-equivariant-equal-pushforward}
    M(\xd,\GL(n/\vert\Delta\vert,\C))^{l(\llambda)}=\bigcup_{l}p_l(\xg,l,n),
\end{equation}
where $l$ runs over all antisymmetric pairings of $\llambda$ having $\Delta$ as a maximal isotropic subgroup.

We conclude:

\begin{theorem}\label{th-finite-group-jacobian-principal}
We have the inclusions
$$\bigcup_{l}p_{\Delta*}M(\xd,\GL(n/\vert\Delta\vert,\C))^{l(\Gamma)}\subset M(X,\GL(n,\C))^{\Gamma}$$
and
$$M_{ss}(X,\GL(n,\C))^{\Gamma}\subset\bigcup_{l}p_{\Delta*}M(\xd,\GL(n/\vert\Delta\vert,\C))^{l(\Gamma)},$$
where $M(\xd,\GL(n/\vert\Delta\vert,\C))^{l(\Gamma)}$ is defined in Definition \ref{def-moduli-vector-bundles-xd}. The parameter $l$ runs over all antisymmetric pairings on $\llambda$ such that the order of a maximal isotropic subgroup $\Delta$ divides $n$. The choice of $\Delta$ is fixed for each $l$.
\end{theorem}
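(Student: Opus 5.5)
The plan is to assemble Theorem \ref{th-finite-group-jacobian-principal} from Theorem \ref{th-prym-narasimhan-ramanan-oscar-ramanan-principal} (equivalently Theorem \ref{th-fixed-points-oscar-ramanan-principal}) applied to $G=\GL(n,\C)$ with $a$ and $\mu$ trivial, together with the chain of lemmas of this chapter. First I parametrize the relevant lifts. By Lemma \ref{lemma-lifts-vs-non-abelian-cohomology}, classes $[\beta]\in H^1_{\theta}(\Gamma,\Int(\GL(n,\C)))$ with $\theta$ trivial are exactly the classes in $\x(\llambda,\Int(\GL(n,\C)))$. By Lemma \ref{lemma-class-representative-triple}, each such class is represented by $\Int_s$ for a representative triple $(l,\Delta,s)$. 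By Corollary \ref{cor-moduli-non-empty}, only admissible triples can contribute to the smooth fixed locus. By Corollary \ref{cor-surjective-cct}, admissibility forces $|\Delta|$ to divide $n$. Conversely, Lemma \ref{lemma-admissible} shows that for each antisymmetric $l$ with $|\Delta|\mid n$ the admissible class exists and depends only on $l$. So the union over $[\beta]$ becomes a union over such $l$, with $\Delta$ fixed for each $l$ (Remark \ref{remark-maximal-isotropic-same-order}).

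Next I identify each component. For admissible $\theta=\Int_s$, the map $\cct$ is surjective onto $\llambda^*$. Since $\xg$ is connected (Proposition \ref{prop-XGamma-connected}), the only $\llambda^*$-bundle $Y$ with $q_\theta(Y)\cong\llambda$ is $\xg$ itself, with monodromy $\llambda^*$. Hmm—in general $\gamtt$ could be larger than $\gamt$; I use Lemma \ref{lemma-gltheta} and Remark \ref{remark-finiteness-zt} to replace $\gls$ by $\setp\glt$, whose component group is $\llambda^*$. Theorem \ref{th-prym-narasimhan-ramanan-oscar-ramanan-principal} then identifies $\widetilde M_{\llambda}(X,\gls)$ with the image of $M(\xg,\glt,\llambda^*,\tau,c)$. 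The centralizer quotient is trivial here because $\llambda^*$ is abelian and the action is absorbed in the isomorphism classes.

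Lemma \ref{lemma-twisted-equivariant-are-vector-bundles} gives a surjection $M(\xg,l,n)\to M(\xg,\glt,\llambda^*,\tau,c)$. Lemma \ref{lemma-twisted-equivariant-pushforward} factors the composite to $M(X,\GL(n,\C))$ through $p_l$ and $p_{\Delta*}$. Hence the image of the $l$-component is $p_{\Delta*}p_l(M(\xg,l,n))$. Using (\ref{eq-union-twisted-equivariant-equal-pushforward}), the union over those $l'$ sharing $\Delta$ of $p_{l'}(M(\xg,l',n))$ equals $M(\xd,\GL(n/|\Delta|,\C))^{l(\llambda)}$. Combined with the two inclusions of Theorem \ref{th-fixed-points-oscar-ramanan-principal}, this yields both stated inclusions.

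The main obstacle is the first inclusion as applied to the whole of $M(\xd,\ldots)^{l(\llambda)}$. Equation (\ref{eq-union-twisted-equivariant-equal-pushforward}) produces pairings $l'$ that need not equal $l$, although they share $\Delta$. I would handle this directly. For $E$ in that space, the isomorphisms $E\cong l(\ambda)^*E\otimes\pd^*\ambda$ push forward to isomorphisms $p_{\Delta*}E\cong p_{\Delta*}E\otimes\ambda$, by the projection formula and the invariance of $p_{\Delta*}$ under pullback by deck transformations. Polystability of the pushforward follows from Proposition \ref{prop-polystability-extension-structure-group}(1) applied through $p_{l'}$, since $l'$ is itself an admissible pairing with the same $\Delta$. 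This gives fixedness without tracking which $l$ was used.
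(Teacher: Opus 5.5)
Your plan is correct and is essentially the paper's own (implicit) proof: restrict the union in Theorem \ref{th-fixed-points-oscar-ramanan-principal} to admissible classes indexed by $l$ via Lemma \ref{lemma-class-representative-triple}, Lemma \ref{lemma-admissible} and Corollary \ref{cor-moduli-non-empty}; identify each component as a pushforward via Lemmas \ref{lemma-twisted-equivariant-are-vector-bundles} and \ref{lemma-twisted-equivariant-pushforward}; then rewrite the union using (\ref{eq-union-twisted-equivariant-equal-pushforward}). Your direct projection-formula check of fixedness is a harmless shortcut, but two slips should be corrected, neither of which affects the conclusion: $\GL(n,\C)^{\theta}$ is a product of copies of $\GL(n/\vert\Delta\vert,\C)$, hence connected, so $\widehat{\Gamma}_{\theta}=\Gamma_{\theta}\cong\Gamma^*$ directly and Remark \ref{remark-finiteness-zt} is not the relevant tool; and the centralizer $Z_{\Gamma^*}(\Gamma^*)=\Gamma^*$ is not trivial, although quotienting by it does not change the images in $M(X,\GL(n,\C))$.
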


Similar arguments (replacing Theorems \ref{th-fixed-points-oscar-ramanan-principal} and \ref{th-prym-narasimhan-ramanan-oscar-ramanan-principal} by Theorems \ref{th-fixed-points-oscar-ramanan-higgs} and \ref{th-prym-narasimhan-ramanan-oscar-ramanan-higgs} respectively) give the result for Higgs bundles, which we introduce next.

\begin{definition}\label{def-moduli-higgs-bundles-xd}
Consider the moduli space $\mdl(\xd,\GL(n/\vert\Delta\vert,\C))$ of Higgs bundles of rank $n/\vert\Delta\vert$ over $\xd$. We define a subvariety $\mdl(\xd,\GL(n/\vert\Delta\vert,\C))^{l(\llambda)}$ consisting of isomorphism classes of polystable Higgs bundles $(E,\phi)$ such that $(E,\phi)\cong (l(\ambda)^*E\otimes \pd^*\ambda,l(\ambda)^*\phi)$ for every $\ambda\in\llambda$. There is a natural pushforward morphism
\begin{equation*}
p_{\Delta*}:\mdl(\xd,\GL(n/\vert\Delta\vert,\C))^{l(\llambda)}\to\mdl(X,\GL(n,\C)).
\end{equation*}
\end{definition}

\begin{theorem}\label{th-finite-group-jacobian-higgs}
    We have the inclusions
$$\bigcup_{l}p_{\Delta*}\mdl(\xd,\GL(n/\vert\Delta\vert,\C))^{l(\Gamma)}\subset \mdl(X,\GL(n,\C))^{\Gamma}$$
and
$$\mdl_{ss}(X,\GL(n,\C))^{\Gamma}\subset\bigcup_{l}p_{\Delta*}\mdl(\xd,\GL(n/\vert\Delta\vert,\C))^{l(\Gamma)},$$
where $\mdl(\xd,\GL(n/\vert\Delta\vert,\C))^{l(\Gamma)}$ is defined in Definition \ref{def-moduli-higgs-bundles-xd}. The parameter $l$ runs over all antisymmetric pairings on $\llambda$ such that the order of a maximal isotropic subgroup $\Delta$ divides $n$. The choice of $\Delta$ is fixed for each $l$.
\end{theorem}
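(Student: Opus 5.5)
The plan is to mirror the proof of Theorem \ref{th-finite-group-jacobian-principal} step by step, keeping the Higgs field along at each stage. We replace Theorem \ref{th-fixed-points-oscar-ramanan-principal} by Theorem \ref{th-fixed-points-oscar-ramanan-higgs}, and Theorem \ref{th-prym-narasimhan-ramanan-oscar-ramanan-principal} by Theorem \ref{th-prym-narasimhan-ramanan-oscar-ramanan-higgs}. Here $a$ is trivial and $\mu$ is trivial, so $\lieg^{\theta}_{\mu}=\gl(n,\C)^{\theta}$ consists of the $\Delta$-matrices that are constant on cosets of $f(\llambda/\Delta)$.

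First, by Lemma \ref{lemma-class-representative-triple} each class in $H^1_{\theta}(\Gamma,\Int(\GL(n,\C)))=\x(\llambda,\Int(\GL(n,\C)))$ is represented by $\theta=\Int_s$ for a representative triple $(l,\Delta,s)$. By Corollary \ref{cor-moduli-non-empty}, together with Proposition \ref{prop-XGamma-connected}, only admissible triples contribute to the smooth locus. By Lemma \ref{lemma-admissible} these are indexed exactly by antisymmetric pairings $l$ with $|\Delta|$ dividing $n$. Theorem \ref{th-fixed-points-oscar-ramanan-higgs} then reduces the statement to describing $\widetilde{\mdl}_{\Gamma}(X,\gls,\gl(n,\C)^{\theta})$. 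By Theorem \ref{th-prym-narasimhan-ramanan-oscar-ramanan-higgs} with $Y=\xg$ (connected, with Galois group $\llambda^*$), this is the image of $\mdl(\xg,\glt,\llambda^*,\tau,c,\gl(n,\C)^{\theta})$. Here $\tau$ and $c$ are the data of the diagram (\ref{eq-commutative-diagramme-gls-glt}), and the centralizer quotient is trivial since $\llambda^*$ is abelian and equals the whole monodromy group.

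Next, we prove the Higgs analogue of Lemma \ref{lemma-twisted-equivariant-are-vector-bundles}. Let $\mdl(\xg,l,n)$ denote the moduli of triples $(E,\chi,\psi)$, where $(E,\chi)$ is as in Definition \ref{def-chi-equivariant} and $\psi\in H^0(\End E\otimes K_{\xg})$ commutes with $\chi$. Via Remark \ref{remark-associated-vector-bundle}, a $\Gamma$-invariant Higgs field with values in $\gl(n,\C)^{\theta}$ is block diagonal with equal blocks along cosets. Invariance under the twisted action means that it is determined by its $E_1$-block, which is $\chi$-equivariant. So the construction of $F=\bigoplus\overline{\gamma}^*E_1$ carries the Higgs field along, and the stability comparison goes through verbatim using only $\psi$-invariant subbundles.

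Then we define $p_l:\mdl(\xg,l,n)\to\mdl(\xd,\GL(n/|\Delta|,\C))^{l(\llambda)}$ by quotienting by the equivariant $l(\Delta)$-action. The field $\psi$, being $l(\Delta)$-invariant, descends, and the isomorphisms $E\cong l(\lambda)^*E\otimes\pd^*\lambda$ intertwine $\psi$ with $l(\lambda)^*\psi$. The factorisation of Lemma \ref{lemma-twisted-equivariant-pushforward} then holds for Higgs pairs, because pushforward of a Higgs bundle is compatible with extension of structure group from $\gls$ to $\GL(n,\C)$. Finally, the analogue of (\ref{eq-union-twisted-equivariant-equal-pushforward}) shows that $\bigcup_{l'}p_{l'}$ exhausts $\mdl(\xd,\GL(n/|\Delta|,\C))^{l(\llambda)}$. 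Combining these gives the second inclusion. For the first inclusion, a direct check suffices: if $(E,\phi)\cong(l(\lambda)^*E\otimes\pd^*\lambda,l(\lambda)^*\phi)$ for all $\lambda$, then $p_{\Delta*}$ of it is isomorphic to its tensor product with $\lambda$, using the projection formula and $p_{\Delta*}\circ l(\lambda)^*=p_{\Delta*}$.

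The main obstacle is the exhaustion step (\ref{eq-union-twisted-equivariant-equal-pushforward}) for strictly polystable Higgs bundles. There the isomorphisms $f_\lambda$ are not unique up to scalars, and they must be renormalised so that their commutators become scalars preserving $\phi$. I would handle this as in the vector bundle case: decompose $(E,\phi)$ into stable Higgs summands, choose the $f_\lambda$ so that they permute isotypic summands compatibly, and rescale on each summand using the simplicity of stable Higgs bundles.
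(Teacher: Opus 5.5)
Your proposal is correct and takes essentially the paper's route, which is to rerun the vector-bundle argument of Chapter~\ref{chapter-jacobian} with Theorems~\ref{th-fixed-points-oscar-ramanan-higgs} and~\ref{th-prym-narasimhan-ramanan-oscar-ramanan-higgs} in place of their principal-bundle counterparts, carrying the Higgs field along; your direct check of the first inclusion (projection formula plus invariance of $p_{\Delta*}$ under deck transformations of $\xd$) is a slightly cleaner substitute for passing through (\ref{eq-union-twisted-equivariant-equal-pushforward}). Two small corrections: since $\llambda^*$ is abelian, its centralizer in itself is all of $\llambda^*$, so the quotient in Theorem~\ref{th-prym-narasimhan-ramanan-oscar-ramanan-higgs} is not trivial (harmless, because only images in $\mdl(X,\GL(n,\C))$ enter the statement); and once the first inclusion is checked directly, the second needs only the surjectivity onto the twisted equivariant moduli space and the factorisation through $p_l$, so the exhaustion step you flag as the main obstacle is not actually required.
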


\section{The case \texorpdfstring{$\llambda\cong\Z/2\times\Z/2$}{the finite group is generated by two line bundles of order 2}}

We apply Proposition \ref{th-finite-group-jacobian-higgs} to the case when $\llambda$ has order 4 with two generators $a$ and $b$. We consider its action on the moduli space of Higgs bundles of rank $n$ and degree $d$. There are two possible antisymmetric pairings, the trivial one and the non-degenerate one $l$. Maximal isotropic subgroups have order 4 and 2 resp. Thus the smooth fixed point locus is empty unless $n=4m$ or $4m+2$ for some natural number $m$. 
\begin{enumerate}
    \item If $n=4m+2$ then $4$ does not divide $n$ and so the only relevant pairing in the decomposition of Theorem \ref{th-finite-group-jacobian-higgs} is $l$. Let $p_a:X_a\to X$ be the étale cover of $X$ associated to $a$. Call $\mdl(X_a,m+1,d)^b$ to the variety of polystable Higgs bundles $(E,\phi)$ of rank $m+1$ and degree $d$ on $X_{\llambda}$ equipped with an isomorphism $(E,\phi)\cong \gamma^*(E\otimes p_a^*b,\phi)$, where $\gamma$ is the generator of the Galois group of $X_a$. Then $p_{a*}\mdl(X_a,m+1,d)^b$ is contained in the fixed point locus and contains its intersection with the smooth locus of $\mdl(X,n,d)$.
    
    \item If $n=4m$ then we have a second component corresponding to the trivial pairing, namely the pushforward of the moduli space of Higgs bundles of rank $m$ and degree $d$ over $\xg$. In this case the smooth fixed point locus is contained in the union of $p_{\llambda*}(\md(\xg,m,d))$ and $p_{a*}(\md(X_a,2m,d)^b)$, and this union is contained in the fixed point locus.
\end{enumerate}

\newpage

\chapter{Action of a finite group of line bundles of order 2 on the moduli space of symplectic Higgs bundles}\label{chapter-symplectic}

Let $\Sp(2n,\C)$ be the symplectic group. Its centre is $\{1,-1\}\cong\Z/2$. Let $\llambda\subset H^1(X,\Z/2)$ be a finite subgroup. Note that, via the Abel--Jacobi Theorem, we may identify $H^1(X,\Z/2)$ with the subgroup of $J(X)$ consisting of elements of order at most 2. Since $J(X)$ is an abelian variety of dimension equal to the genus $g$ of $X$, this is isomorphic to $(\Z/2\Z)^{2g}$. We adapt the definitions and arguments of Section \ref{chapter-jacobian} to describe $M(X,\Sp(2n,\C))^{\llambda}$, and we divide it into analogous sections to emphasize the parallelism. The same arguments give the result for $\mdl(X,\Sp(2n,\C))^{\llambda}$, which we just state for simplicity. 

Throughout this chapter we regard $\Sp(2n,\C)$ as the subgroup of $\GL(2n,\C)$ preserving the standard symplectic form on $\C^{*2n}$.
%Section \ref{chapter-jacobian}.

\section{Antisymmetric pairings and character varieties}

\begin{definition}\label{def-admissible-pair-representative-triple-sp}
Recall that an \textbf{antisymmetric pairing} on $\llambda$ is a homomorphism $l:\llambda\to\llambda^*:=\Hom(\llambda,\Z/2\Z)\cong\Hom(\llambda,\C^*)$ such that every element of $\llambda$ pairs trivially with itself. We may choose a maximal \textbf{isotropic} subgroup $\iota:\Delta\hookrightarrow\llambda$ where the pairing is trivial, and we have an induced injection $f:\llambda/\Delta\hookrightarrow\Delta^*$.

A \textbf{representative quadruple} for $\llambda$ is a quadruple $(l,\Delta,q,s)$, where $l:\llambda\to\llambda^*$ is an antisymmetric pairing, $\Delta$ is a maximal isotropic subgroup, $q\in\Delta^*$ and $s:\llambda\to\Sp(2n,\C)$ is a map satisfying that:
\begin{enumerate}
    \item It induces a homomorphism $\Int_s:\llambda\to\Int(\Sp(2n,\C))$.
    \item It restricts to a map $\Delta\to\C^{*2n}\subset\GL(2n,\C)$.
    \item The antisymmetric pairing
    $$\llambda\to\llambda^*;\,\ambda\mapsto(\ambda'\mapsto \sg s_{\ambda'}\sg^{-1}s_{\ambda'}^{-1})$$
    is equal to $l$. In particular, for every $\ambda\in\llambda$ the element $\sg$ is a permutation matrix such that
    $
    p(\sg)=\iota^*l(\ambda).    
    $
    \item The image of $s$ consists of permutation matrices whose blocks are multiples of the identity.
    \item For each $\delta\in\Delta$, the eigenspaces of $\C^{2n}$ of the automorphism $s(\delta)$ are either isotropic (i.e. the restriction of the symplectic form is trivial) or symplectic (i.e., the restriction of the symplectic form is non-degenerate). This provides a homomorphism $\Delta\to\Z/2\Z$ which maps $\ambda$ to $-1$ if the eigenspaces are isotropic and to $1$ otherwise, which must be equal to $q$.
    \end{enumerate}
We call $q$ the \textbf{characteristic homomorphism} of $s$.
\end{definition}

The only statement in (5) which is not tautological is the fact that the characteristic homomorphism is equal to $q$: note that (1) imposes that the eigenvalues of $s(\delta)$ be either $\pm 1$ or $\pm i$, thus there are only two eigenspaces. The fact that $s(\delta)$ preserves the symplectic form implies that the eigenspaces are isotropic if and only if the eigenvalues are $\pm i$ (with each isotropic subspace having opposite eigenvalue), and they are $\pm 1$ if and only if the eigenspaces are symplectic. It follows from (2) that the characteristic homomorphism is an actual homomorphism: given $\delta$ and $\delta'\in\Delta$, the eigenspaces of $s(\delta\delta')$ are isotropic/symplectic if and only if the eigenspaces of $s(\delta)s(\delta')$ are isotropic/symplectic (by (1) they differ by a constant), if and only if the eigenvalues of $s(\delta)s(\delta')$ are $\pm i$/$\pm 1$. These are the product of the eigenvalues of $s(\delta)$ and $s(\delta')$, hence the result reduces to checking the possibilities.

\begin{lemma}\label{lemma-class-representative-triple-sp}
For every class in the character variety $\x(\llambda,\Int(\Sp(2n,\C)))$ there exists a representative quadruple $(l,\Delta,q,s)$ such that $\Int_s$ is in the class.
\end{lemma}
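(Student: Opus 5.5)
We follow the proof of Lemma \ref{lemma-class-representative-triple} closely, adapting each step to $\Sp(2n,\C)$ and the symplectic form. Start from a homomorphism $\theta:\llambda\to\Int(\Sp(2n,\C))$ and write $\theta=\Int_s$ with $s:\llambda\to\Sp(2n,\C)$. Since $\llambda$ is abelian, commutators $\sg s_{\ambda'}\sg^{-1}s_{\ambda'}^{-1}$ lie in the centre $\{\pm1\}$. Hence they define an antisymmetric pairing $l:\llambda\to\llambda^*$, using that every element pairs trivially with itself. Choose a maximal isotropic subgroup $\Delta\le\llambda$ for $l$. Then $s(\Delta)$ consists of pairwise commuting elements. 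Each $s(\delta)$ squares to $\pm1$ because $\delta^2=1$ and $\Int_s$ is a homomorphism. So these elements are semisimple and can be simultaneously diagonalised inside $\Sp(2n,\C)$: a commuting family of semisimple elements of a reductive group lies in a common maximal torus, and all maximal tori of $\Sp(2n,\C)$ are conjugate to the diagonal one. After conjugating $\theta$ we may therefore assume $s(\Delta)$ is diagonal, which gives conditions (1) and (2). Conditions (1) and (3) hold by construction.

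Next we deal with the sign ambiguity. Each $s(\delta)$ is defined only up to $\pm1$, and $s(\delta)^2=\pm1$, so its eigenvalues are $\pm1$ or $\pm i$. The discussion after Definition \ref{def-admissible-pair-representative-triple-sp} then shows that the eigenspaces are symplectic or isotropic and that the resulting map $q:\Delta\to\Z/2\Z$ is a homomorphism. This gives condition (5) with this $q$. We will not try to make $s\vert_\Delta$ an honest homomorphism, since the definition does not demand it. We only normalise signs, for instance so that a chosen weight block has a fixed eigenvalue, so that the weight-space bookkeeping below is consistent.

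Finally we obtain condition (4). The weight spaces $W_\delta$ of $s(\Delta)$ are permuted by $s(\ambda)$ according to $\iota^*l(\ambda)$, so each $s(\ambda)$ is a permutation matrix. As in Lemma \ref{lemma-class-representative-triple}, choose a representative $\delta$ in each orbit of $f(\llambda/\Delta)$ on the set of weights. Then define a $\Delta$-matrix $S$ by $S_{\delta\delta'}=s(\ambda_{\delta'})^{-1}_{\delta\delta',\delta}$. Conjugating by $S$ makes every block of every $s(\lambda)$ a scalar. The same computation as before shows this: the discrepancy $d$ between $s(\lambda'')$ and $s(\lambda)s(\lambda')$ is now $\pm1$.

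The main obstacle is that $S$ must lie in $\Sp(2n,\C)$. Otherwise conjugating by it does not stay within $\Int(\Sp(2n,\C))$. We plan to handle this by choosing the orbit representatives compatibly with the symplectic pairing between weight spaces. Symplectic duality identifies $W_\delta^*$ with $W_{\delta'}$, where $\delta'=\delta$ or its opposite weight depending on $q$. We would choose representatives in pairs of dual orbits. On one orbit of each dual pair we define $S$ as above. On the dual orbit we define $S$ as the inverse transpose with respect to the symplectic form. This forces $S$ to be symplectic.

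It remains to check that the dual orbit still gives scalar blocks. This follows because each $s(\lambda)$ is symplectic, so its block from $W_{\delta'}$ is determined by the inverse transpose of its block on $W_\delta$. For self-dual orbits (the symplectic case $q(\delta)=1$), the block $S_\delta$ must itself lie in $\Sp(W_\delta)$. Here we would use that the relevant block of $s(\lambda)$ is symplectic up to sign, possibly after an extra scalar correction by $\pm i$ that is absorbed into $s$.
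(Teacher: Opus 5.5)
Your choice of $l$ and $\Delta$, the simultaneous diagonalisation and the definition of $q$ all match the paper. So does your handling of the symplecticity of $S$ in the two situations you consider. For a pair of distinct dual orbits ($q\neq 1$, $q\notin f(\Gamma/\Delta)$), your inverse-transpose prescription is just the $\GL$ recipe run with paired representatives $\delta$ and $q\delta$. For $q=1$, the blocks of each $s(\gamma)$ are honest symplectic isomorphisms between weight spaces, so $S$ is automatically symplectic. No correction by $\pm i$ is needed there, and none would be allowed, since $i$ times a symplectic block multiplies the form by $-1$.

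The genuine gap is the remaining case $q\neq 1$ with $q\in f(\Gamma/\Delta)$, i.e.\ $q=f(\gamma_q\Delta)$ for some $\gamma_q\in\Gamma$. This is not marginal: it occurs for every nontrivial $q$ whenever $l$ is nondegenerate. Here each weight space $W_\delta$ is isotropic and is paired with $W_{q\delta}$, which lies in the \emph{same} $f(\Gamma/\Delta)$-orbit. So orbits cannot be paired, the condition ``$S_\delta\in\Sp(W_\delta)$'' is meaningless, and your two prescriptions collide. The $\GL$ recipe with representative $\delta$ sets $S_\delta=1$ and $S_{q\delta}=s(\gamma_q)^{-1}_{q\delta,\delta}$. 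A symplectic $\Delta$-matrix with $S_\delta=1$, however, must have $S_{q\delta}=S_\delta^{t-1}=1$. Hence your $S$ is not symplectic unless $s(\gamma_q)$ is essentially already in the normal form you are trying to reach. An extra idea is needed to normalise $s(\gamma_q)$ itself.

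The paper does this in two steps. First it picks $\delta_q\in\Delta$ with $q(\delta_q)=-1$ and uses the index-two subgroup $\ker\delta_q\le f(\Gamma/\Delta)$, which does not contain $q$. The half-orbits $\delta\ker\delta_q$ and $q\delta\ker\delta_q$ are dual to each other, so your paired construction applied to them gives a symplectic $S$ making the blocks of $s(\gamma)$ scalar whenever $f(\gamma\Delta)\in\ker\delta_q$. It then conjugates by a further $\Delta$-matrix $S'$ with $S'_\delta=s(\gamma_q)_{\delta,q\delta}^{-1/2}$ and $S'_{q\delta}=s(\gamma_q)_{\delta,q\delta}^{1/2}$. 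This turns the blocks of $s(\gamma_q)$ into $1$ and $\pm1$, and since $\Int_s$ is a homomorphism, that suffices.

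If you follow this route, be careful. Symplecticity and $s(\gamma_q)^2=\epsilon\in\{\pm1\}$ give $s(\gamma_q)_{\delta,q\delta}^t=-\epsilon\, s(\gamma_q)_{\delta,q\delta}$, and $S'$ is a multiple of a symplectic matrix only in the symmetric case $\epsilon=-1$. When $\epsilon=+1$ the block is skew-symmetric. A symplectic $\Delta$-matrix, restricted to $W_\delta\oplus W_{q\delta}$ as $\mathrm{diag}(P,P^{t-1})$, acts on it by the congruence $A\mapsto PAP^t$, so it can never become scalar. This subcase does occur: take an anticommuting pair $s(a),s(b)\in\Sp(4,\C)$ with $s(a)^2=-1$, $s(b)^2=1$ and $\Delta=\langle a\rangle$. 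It therefore needs a different argument, for instance another choice of $\Delta$.

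Finally, your justification of the simultaneous diagonalisation is too general as stated. Commuting semisimple elements of a reductive group need not lie in a common maximal torus; the Klein four-group of diagonal sign matrices in $\SO(3,\C)$ is a counterexample. For $\Sp(2n,\C)$ the claim is true, e.g.\ by choosing a Darboux basis adapted to the simultaneous eigenspaces.
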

\begin{proof}
Let 
$$\theta:\llambda\to\Int(\Sp(2n,\C));\,\ambda\mapsto\Int_{\sg}$$
be a homomorphism.
Since $\llambda$ is abelian, we get an antisymmetric pairing
$$l:\llambda\to\llambda^*;\,\ambda\mapsto(\ambda'\mapsto \sg s_{\ambda'}\sg^{-1}s_{\ambda'}^{-1}).$$
Choose a maximal isotropic subgroup $\iota:\Delta\hookrightarrow\llambda$ and consider the corresponding injection
$f:\llambda/\Delta\hookrightarrow\Delta^*.$ Since the elements in $s(\Delta)$ are semisimple (the square of each of them is in $\Z/2\Z\subset\C^*$) and commute with each other, they can be simultaneously diagonalised by symplectic matrices (this follows, for example, from the fact that every two maximal tori in $\Sp(2n,\C)$ are conjugate to each other). Thus we may assume, after conjugating $\theta$ if necessary, that they are all diagonal.

Let $q\in \Delta^*$ be the characteristic homomorphism of $s$. It follows from the fact that every element of $\Delta$ has order two that, for each $\delta\in\Delta$, the diagonal matrix $s(\delta)$ has eigenvalues $\pm 1$ if $q(\delta)=1$ and $\pm i$ otherwise. Thus we may further assume after rescaling by $\pm 1$ that the first vector of the standard basis of $\C^{2n}$ has eigenvalue $1$ or $-i$. For convenience we redefine the eigenvalues of $s(\delta)$ so that they are always $\pm1$ by multiplying them by $i$ if $q(\delta)=-1$. Note that, under this convention, the first vector of the standard basis always has eigenvalue $1$. We call a simultaneous eigenspace for the action of $s(\Delta)$ a weight space, whose corresponding weight is an element of $s(\Delta)^*\le\Delta^*$ which associates its eigenvalue (in this new sense, so that it takes values $\pm 1$) to each $\delta\in\Delta$. The fact that each weight is a homomorphism $\Delta\to\C^*$ follows from the first vector of the standard basis having weight 1, since $s(\delta\delta')=\pm s(\delta)s(\delta')$ for every $\delta$ and $\delta'\in\Delta$. Note that, since the only possible eigenvalues are $1$ and $-1$, the eigenspaces of each element in $s(\Delta)$ determine that element. This implies that the set of weights is precisely equal to $s(\Delta)^*\le\Delta^*$. 

%Note that, even after rescaling elements of $s(\Delta)$ so that eigenvalues only take values $\pm 1$, we may choose the elements of $s(\llambda)$ such that $s(\gamma)s(\gamma')=\pm s(\gamma\gamma')$ for all $\gamma$ and $\gamma'\in\Gamma$. Indeed, if we choose generators $\gamma_1,\dots,\gamma_k,\dots,\gamma_m$ identifying $\Gamma$ as a product of $\Z/2\Z$'s, where $\langle\gamma_1,\dots,\gamma_l\rangle=\Delta$, we may rescale each element $\gamma_1^{r_1}\dots\gamma_m^{r_m}$ by $c_1^{r_1}\dots c_k^{r_k}$, where $c_i$ is the factor we are multiplying $\gamma_i$ by. The resulting matrices are not symplectic, but rather they multiply the symplectic form by $\pm 1$.
%If $q$ is not trivial we may assume that there exists some $\delta_{q}\in\Delta$ such that 
%$$s(\delta_{q})=
%\begin{pmatrix}
 %   1 & 0\\
  %  0 & -1
%\end{pmatrix}
%$$

As in the proof of Lemma \ref{lemma-class-representative-triple}, the homomorphism 
$\llambda/\Delta\xrightarrow{ps}\Delta^*$
induced by $p$ is equal to the multiplicative inverse of $f$, which in this case is equal to $f$. Given a weight $\delta\in s(\Delta)^*\subset\Delta^*$ we get an orbit of weights $\delta f(\llambda/\Delta)$, and the dimensions of all the weight spaces in a given orbit must be equal. In particular the subgroup $f(\llambda/\Delta)$ of $\Delta^*$ must be contained in $s(\Delta)^*$. For each $\delta\in s(\Delta)^*$ we call $W_{\delta}$ to the corresponding weight space.

%The group $f(\llambda/\Delta)$ acts simply, transitively and freely on each orbit. 
As in the proof of Lemma \ref{lemma-class-representative-triple}, we choose representatives $\ambda\in \llambda$ of each coset $\ambda\Delta\in\llambda/\Delta$ and show that there exists a symplectic $\Delta$-matrix $S$ such that $Ss(\ambda)S^{-1}$ is a permutation matrix with blocks in $\C^*$ for each $\ambda\Delta\in\llambda/\Delta$. Choose a representative $\delta\in s(\Delta)^*$ of each coset $\delta f(\llambda/\Delta)\in s(\Delta)^*/f(\llambda/\Delta)$. We split the argument into three cases: first assume that the characteristic homomorphism $q\in \Delta^*$ is non-trivial and $q\in f(\llambda/\Delta)$. Choose an element $\delta_q\in q^{-1}(-1)$ and consider the subgroup $\ker \delta_q<f(\llambda/\Delta)$, which has degree 2 (because $\delta_q\in f(\llambda/\Delta)^*$ has order at most two and $q(\delta_q)=-1$). Consider the $\Delta$-matrix $S$ determined by $S_{\delta\delta'}=s(\ambda_{\delta'})_{\delta\delta',\delta}^{-1}$ and
$S_{(q\delta)\delta'}=s(\ambda_{\delta'})_{q\delta\delta',q\delta}^{-1},$
where $\delta'\in \ker \delta_q$, $\ambda_{\delta'}\in \llambda$ is any element whose coset $\ambda_{\delta'}\Delta\in\llambda/\Delta$ is equal to $f^{-1}(\delta')$ and $\delta\in s(\Delta)^*$ is the representative of $\delta f(\llambda/\Delta)$.
Note that there is a decomposition into symplectic subspaces
$$\C^{2n}=\bigoplus_{\delta\in\ker \delta_q}W_{\delta}\oplus W_{q\delta},$$ 
where $W_{\delta}$ is isotropic. This follows from the fact that $W_{q\delta}$ is contained in the same eigenspaces as $W_{\delta}$ for elements in $\ker q$, which have symplectic eigenspaces, and in different eigenspaces for elements in $q^{-1}(-1)$, which decompose $\C^{2n}$ into maximal isotropic subspaces. Thus $s(\ambda_{\delta'})_{\delta\delta',\delta}^{-1}\oplus s(\ambda_{\delta'})_{q\delta\delta',q\delta}^{-1}$ may be regarded as an automorphism of $\C^{\dim W_{\delta}}\oplus\C^{\dim W_{\delta}}$ preserving the standard symplectic form, which shows that $S\in \Sp(2n,\C)$. The same calculation as in the proof of Lemma \ref{lemma-class-representative-triple} shows that $Ss(\ambda)S^{-1}$ has constant blocks for every $\ambda\in\llambda$ such that $l(\ambda)\in \ker \delta_q$. We take this for granted hereafter.

Now choose $\ambda_q\in \llambda$ such that $f(\ambda_q\Delta)=q$. Since $\ambda_q$ has order two we know that, for each $\delta\in\ker\delta_q\le s(\Delta)^*$, $s(\ambda_q)_{q\delta,\delta}s(\ambda_q)_{\delta,q\delta}$ is equal to $1$ or $-1$. Since $s(\ambda_q)$ preserves the symplectic form, we have $s(\ambda_q)_{q\delta,\delta}=- s(\ambda_q)_{\delta,q\delta}^{t-1}$ (recall that the restriction of the standard symplectic form to $W_{\delta}\oplus W_{q\delta}$ is the standard one in lower dimension). Thus $s(\ambda_q)_{\delta,q\delta}$ is either symmetric or skew-symmetric and so it is diagonalizable, which implies that it has a square root $s(\ambda_q)_{\delta,q\delta}^{1/2}$ in $\GL(\dim W_{\delta},\C)$. Now let $S'\in\GL(2n,\C)$ be the $\Delta$-matrix such that $S'_{\delta}=s(\ambda_q)_{\delta,q\delta}^{-1/2}$ and $S'_{q\delta}=s(\ambda_q)_{\delta,q\delta}^{1/2}$ for every $\delta\in \ker \delta_q\le s(\Delta)^*$. The matrix $S'$ is a multiple of a symplectic matrix because, for every (skew-)symmetric matrix $A$,
\begin{align*}
    \begin{pmatrix}
    A^{1/2}   &   0\\
    0   &   A^{-1/2}
\end{pmatrix}
\begin{pmatrix}
    0 &   1\\
    -1   &   0
\end{pmatrix}
\begin{pmatrix}
    (A^{1/2})^t   &   0\\
    0   &   (A^{-1/2})^t
\end{pmatrix}&=
\begin{pmatrix}
    0   &   A^{1/2}\\
     -A^{-1/2}   &  0
\end{pmatrix}
\begin{pmatrix}
    \pm A^{1/2}   &   0\\
    0   &   \pm A^{-1/2}
\end{pmatrix}\\&=
\begin{pmatrix}
    0 &   \pm1\\
    \mp1   &   0
\end{pmatrix}.
\end{align*}
Moreover, since $\Int_s$ is a homomorphism, $s(\ambda)$ has constant blocks for every $\ambda\in\llambda$ such that $l(\ambda)\in \ker \delta_q$ and $f(\llambda/\Delta)=\ker \delta_q\sqcup q\ker \delta_q$, we know that $s(\ambda_q)_{\delta\delta',q\delta\delta'}$ is a multiple of $s(\ambda_q)_{q\delta,\delta}$ for every $\delta'\in f(\llambda/\Delta)$. This implies that the restrictions of $S'$ to $W_{\delta}$ and $W_{\delta\delta'}$ differ by a constant and so a permutation matrix $M$ with $p(M)\in f(\llambda/\Delta)$ satisfies $(S'MS'^{-1})_{\delta}=c M_{\delta}$ for some $c\in\C^*$. Therefore, $S's(\ambda)S'^{-1}$ still has constant blocks for every $\ambda\in\llambda$ such that $l(\ambda)\in \ker \delta_q$. Hence, since $\Int_s$ is a homomorphism, in order to prove that the image of $S'sS'^{-1}$ consists of permutation matrices with constant blocks it is enough to show that $S's(\ambda_q)S'^{-1}$ has constant blocks:
$$(S's(\ambda_q)S'^{-1})_{\delta,q\delta}=S'_{\delta}s(\ambda_q)_{\delta,q\delta}S'^{-1}_{q\delta}=s(\ambda_q)_{\delta,q\delta}^{-1/2}s(\ambda_q)_{\delta,q\delta}s(\ambda_q)_{\delta,q\delta}^{-1/2}=1$$
and
\begin{align*}
  (S's(\ambda_q)S'^{-1})_{q\delta,\delta}=S'_{q\delta}s(\ambda_q)_{q\delta,\delta}S'^{-1}_{\delta}=s(\ambda_q)_{\delta,q\delta}^{1/2}s(\ambda_q)_{q\delta,\delta}s(\ambda_q)_{\delta,q\delta}^{1/2}=\\
  \pm s(\ambda_q)_{\delta,q\delta}^{1/2}s(\ambda_q)_{\delta,q\delta}^{-1}s(\ambda_q)_{\delta,q\delta}^{1/2}=\pm1
\end{align*}
for each $\delta\in\ker\delta_q$.

It remains to consider the cases when either $q$ is trivial or $q\notin f(\llambda/\Delta)$. If $q$ is trivial then the weight spaces are all symplectic vector spaces with the standard symplectic form. The elements of $s(\llambda)$ are permutation matrices whose blocks preserve this form, hence the matrix $S$ defined in the proof of Lemma \ref{lemma-class-representative-triple}, which is a $\Delta$-matrix built up from these blocks, must be symplectic. If $q$ is not trivial and $q\notin f(\llambda/\Delta)$, choose $\delta_q\in q^{-1}(-1)$ as before. In this situation the kernel of $\delta_q$ in $f(\llambda/\Delta)$ is equal to the whole $f(\llambda/\Delta)$, so the matrix $S$ which we defined when addressing the first case (with non-trivial $q\in f(\llambda/\Delta)$) does the trick.
\end{proof}

\section{The homomorphism \texorpdfstring{$\cct$}{c sub theta}}

Let $\theta=\Int_s$ be the homomorphism corresponding to a representative quadruple $(l,\Delta,q,s)$.
The group $\Sp(2n,\C)^{\theta}$ consists of all the symplectic $\Delta$-matrices $M$ such that $M_{\delta}=M_{\delta'}$ whenever $\delta$ and $\delta'$ are elements of $ s(\Delta)^*$ in the same orbit of $ s(\Delta)^*/f(\llambda/\Delta)$. From Section \ref{section-Gtheta} we have a homomorphism
$$\cct:\Sp(2n,\C)_{\theta}\to \llambda^*.$$

\begin{lemma}\label{lemma-sptheta}
The image of $\cct$ is equal to
$$\sett:=\{\gamma\in\llambda^*\suhthat \gamma\vert_{\Delta}\in s(\Delta)^*\andd \dim W_{\gamma\delta}=\dim W_{\delta}\forevery \delta\in s(\Delta)^*\}.$$
Moreover, there is a subgroup $\setp<\Sp(2n,\C)_{\theta}$ containing the centre$Z(\Sp(2n,\C)^{\theta})$ of $\Sp(2n,\C)^{\theta}$ such that the restriction $\cct\vert_{\setp}$ induces an isomorphism
$$\setp/Z(\Sp(2n,\C)^{\theta})\cong \sett.$$
\end{lemma}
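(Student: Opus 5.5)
The plan is to mirror the proof of Lemma \ref{lemma-gltheta}, adding a symplectic correction at each step. First, the necessity of the two conditions defining $\sett$. If $g\in\Sp(2n,\C)_{\theta}$ has $\cct(g)=\gamma$, then $\Int_{s(\delta)}(g)=\gamma(\delta)^{\pm1}g$ for every $\delta\in\Delta$. Hence $g$ is a permutation matrix with $p(g)=\gamma\vert_{\Delta}$, up to the sign convention fixed when the eigenvalues were normalised to $\pm1$ in Lemma \ref{lemma-class-representative-triple-sp}. Since $g$ is invertible, it maps each weight space $W_{\delta}$ isomorphically onto $W_{\gamma\delta}$. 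This forces $\gamma\vert_{\Delta}$ to preserve the set of weights $s(\Delta)^*$, which is a group, so $\gamma\vert_{\Delta}\in s(\Delta)^*$. It also forces $\dim W_{\gamma\delta}=\dim W_{\delta}$.

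For sufficiency, take $\gamma\in\sett$ and build $\mtau$ exactly as in Lemma \ref{lemma-gltheta}. Pick representatives $\delta$ of the cosets in $s(\Delta)^*/f(\llambda/\Delta)$ and set $\mtau_{\delta'\gamma,\delta'}:=s(\ambda)_{\delta'\gamma,\delta\gamma}s(\ambda)_{\delta',\delta}^{-1}\gamma(\ambda)^{-1}$, with a block chosen on each representative $W_{\delta}\to W_{\delta\gamma}$. The same computation as there gives (\ref{eq-mtau}), and it also shows independence from the choice of $\ambda$ within a coset of $\Delta$. The new point is that $\mtau$ must lie in $\Sp(2n,\C)$. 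Because $s(\llambda)$ consists of permutation matrices with scalar blocks (condition (4)), we can take every block of $\mtau$ to be scalar, so $\mtau$ permutes the weight spaces by multiplication by $\gamma\vert_\Delta$.

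Deciding when a scalar block permutation is symplectic requires knowing how the form pairs weight spaces. By condition (5) and the eigenspace analysis after Definition \ref{def-admissible-pair-representative-triple-sp}, the form pairs $W_{\delta}$ nondegenerately with $W_{q\delta}$ and with nothing else. Multiplication by $\gamma\vert_\Delta$ commutes with multiplication by $q$, so it sends the pairing of $W_\delta$ with $W_{q\delta}$ to the pairing of $W_{\gamma\delta}$ with $W_{q\gamma\delta}$. After fixing compatible standard symplectic bases on each $W_\delta\oplus W_{q\delta}$, as in the proof of Lemma \ref{lemma-class-representative-triple-sp}, we can rescale the scalar block on $W_{\delta}$ and on $W_{q\delta}$ by $c$ and $c^{-1}$ respectively; this makes $\mtau$ symplectic while keeping (\ref{eq-mtau}). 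Two cases need care. When $q\in f(\llambda/\Delta)$, $W_\delta$ and $W_{q\delta}$ lie in the same orbit, so the rescaling has to be consistent along the whole orbit. When $q$ is trivial, each $W_\delta$ is already symplectic and a single block permuting standard symplectic blocks is automatically symplectic up to a sign, which is absorbed by a scalar.

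Finally, define $\setp:=Z(\Sp(2n,\C)^{\theta})\{\mtau\}_{\gamma\in\sett}$ and show it is a group. The bracket computation from Lemma \ref{lemma-gltheta} shows that $\mtau\mtauu$ differs from $\mttauu$ by a $\Delta$-matrix that is constant on orbits and symplectic, hence lies in $Z(\Sp(2n,\C)^{\theta})$. Together with the bijection $\{\mtau\}\to\sett$ under $\cct$, this gives $\setp/Z(\Sp(2n,\C)^{\theta})\cong\sett$. I expect the main obstacle to be the symplectic normalisation in the case $q\in f(\llambda/\Delta)$. There the rescaling constants can only be chosen up to signs, so the difference matrices must be shown to be constant on orbits, and not merely constant up to sign. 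I would handle this with the square-root trick and the matrix $S'$ from Lemma \ref{lemma-class-representative-triple-sp}.
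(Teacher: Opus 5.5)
Your plan follows the same route as the paper: build $M^{\gamma}$ as in Lemma \ref{lemma-gltheta}, correct it to a symplectic matrix without losing (\ref{eq-mtau}), then prove necessity and the group structure of $\setp$. Your necessity argument and your final paragraph are fine. The gap is in the one step that is new for $\Sp(2n,\C)$, the symplectic correction.

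\textbf{The correction as written does not work.} Read literally, it cannot succeed. Since $\omega$ pairs $W_{\delta}$ only with $W_{q\delta}$, multiplying the blocks of $M^{\gamma}$ on $W_{\delta}$ and $W_{q\delta}$ by $c$ and $c^{-1}$ multiplies $\omega(M^{\gamma}v,M^{\gamma}w)$, for $v\in W_{\delta}$ and $w\in W_{q\delta}$, by $c\cdot c^{-1}=1$. Such a rescaling is itself a symplectic $\Delta$-matrix (compare the matrices $C$ in Remark \ref{remark-non-uniqueness-ints-sp}). It therefore preserves the conformal factor of $M^{\gamma}$ rather than correcting it.

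There is also a constraint you do not verify. To keep (\ref{eq-mtau}), any correction must commute with $s(\Gamma)$. So it must be a $\Delta$-matrix that is scalar on each $W_{\delta f(\Gamma/\Delta)}$, that is, an element of $Z(\GL(2n,\C)^{\Int_s})$. When $q\in f(\Gamma/\Delta)$, this forces equal scalars on $W_{\delta}$ and $W_{q\delta}$, so your mechanism allows only $c=\pm 1$.

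The tool you plan to use in that case, the matrix $S'$ of Lemma \ref{lemma-class-representative-triple-sp}, does not help. It has the same $A^{-1/2}\oplus A^{1/2}$ shape and is not constant on orbits. It was built to conjugate $s$ into constant-block form, which condition (4) already gives you.

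\textbf{The missing fact.} What makes a central correction possible is the following. Write $\omega(M^{\gamma}v,M^{\gamma}w)=k_{\delta'}\,\omega(v,w)$ for $v\in W_{\delta'}$, $w\in W_{q\delta'}$. Then $k_{\delta'}$ is constant on each $f(\Gamma/\Delta)$-orbit. Indeed, (\ref{eq-mtau}) gives $M^{\gamma}s(\lambda)=\gamma(\lambda)^{-1}s(\lambda)M^{\gamma}$ for $\lambda\in\Gamma$. Moreover $s(\lambda)\in\Sp(2n,\C)$ maps $W_{\delta''}\oplus W_{q\delta''}$ onto $W_{\delta'}\oplus W_{q\delta'}$, and $\gamma(\lambda)^{2}=1$. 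Together these give $k_{\delta'}=k_{\delta''}$.

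The paper reaches the same conclusion by explicit computation in each case:
\begin{itemize}
\item when $q\in f(\Gamma/\Delta)$, the bracketed constant comes from $s(\lambda\lambda_q)=\pm s(\lambda)s(\lambda_q)$;
\item when $q\notin f(\Gamma/\Delta)$, it uses compatible representatives $\delta,q\delta$ together with the sign matrix $D$.
\end{itemize}

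With this fact, the correct correction is:
\begin{itemize}
\item the uniform scalar $k^{-1/2}$ on the orbit, when $q$ is trivial or $q\in f(\Gamma/\Delta)$;
\item scalars on the two orbits $O$ and $qO$ whose product is $k^{-1}$, when $q\notin f(\Gamma/\Delta)$.
\end{itemize}
All of these lie in $Z(\GL(2n,\C)^{\Int_s})$.

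Your worry about signs then disappears. The corrections lie in $Z(\GL(2n,\C)^{\Int_s})$, which the $M^{\gamma}$ normalize. Hence $M^{\gamma}M^{\gamma'}(M^{\gamma\gamma'})^{-1}$ stays in $Z(\GL(2n,\C)^{\Int_s})$, and since it is symplectic it lies in $Z(\Sp(2n,\C)^{\theta})$, exactly as the paper concludes.
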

\begin{proof}
Choose representatives $\delta\in s(\Delta)^*$ for each coset $ \delta f(\llambda/\Delta)\in s(\Delta)^*/f(\llambda/\Delta)$ and define
$$\mtau_{\delta'\gamma,\delta'}:=s(\ambda)_{\delta'\gamma,\delta\gamma}s(\ambda)_{\delta',\delta}^{-1}\gamma(\ambda)^{-1}=s(\ambda)_{\delta'\gamma,\delta\gamma}s(\ambda)_{\delta',\delta}^{-1}\gamma(\ambda)$$ 
as in the proof of Lemma \ref{lemma-gltheta}, where $\delta'\in \delta f(\llambda/\Delta)$ and $\ambda\in\llambda$ is such that $p(s(\ambda))=\delta'\delta^{-1}=\delta'\delta$. Recall that this definition is independent of the choice of $\ambda$. We start by showing that, for each $\gamma\in \sett$, the matrix $M^{\gamma}$ is equal to a symplectic matrix multiplied by an element of the centre$Z(\GL(2n,\C)^{\Int_s})$ of the fixed point subgroup $\GL(2n,\C)^{\Int_s}$ of $\GL(2n,\C)$. Note that this symplectic matrix still satisfies (\ref{eq-mtau}). 

First suppose that the characteristic homomorphism $q$ is trivial. In this situation every weight space for the action of $s(\Delta)$ is symplectic with standard symplectic form. Thus $s(\ambda)_{\delta'\gamma,\delta\gamma}$ and $s(\ambda)_{\delta',\delta}$ are both symplectic. Moreover $\gamma(\ambda)=\pm 1$, which also preserves the symplectic form, so $\mtau\in\Sp(2n,\C)$. 

Now let $q$ be non-trivial. If $q\notin f(\llambda/\Delta)$ then we may assume that, if $\delta$ is the chosen representative for $\delta f(\llambda/\Delta)$, the element $q\delta\in q\delta f(\llambda/\Delta)$ also represents its class and so, on the one hand, for every $\delta'\in \delta f(\llambda/\Delta)$,
$$\mtau_{\delta'\gamma,\delta'}=s(\ambda)_{\delta'\gamma,\delta \gamma}s(\ambda)_{\delta',\delta}^{-1}\gamma(\ambda)$$
and, on the other,
$$\mtau_{\delta'q\gamma,\delta'q}=s(\ambda)_{\delta'q\gamma,\delta q\gamma}s(\ambda)_{\delta'q,\delta q}^{-1}\gamma(\ambda).$$
If $\gamma\ne q$ then, since $s(\ambda)_{\delta'\gamma,\delta \gamma}\oplus s(\ambda)_{\delta'q\gamma,\delta q\gamma}$ and $s(\ambda)_{\delta',\delta}\oplus s(\ambda)_{\delta'q,\delta q}$ preserve the standard symplectic form, the restriction of $\mtau$ to $W_{\delta'}\oplus W_{q\delta'}$ is symplectic for every $\delta'\in s(\Delta)^*$. 

If $q=\gamma$ then $\mtau_{\delta'\gamma,\delta'}$ and $\mtau_{\delta'q\gamma,\delta'q}$ are inverses of each other, in other words $\mtau$ exchanges $W_{\delta'}$ and $W_{q\delta'}$ with the restrictions being inverses of each other. By antisymmetry of the symplectic form the restriction to $W_{\delta'}\oplus W_{q\delta'}$ multiplies the symplectic form by $-1$. If we choose representatives $\delta'$ of each coset $\delta'\{1,q\}\subset\Delta^*$ in such a way that the action of $f(\llambda/\Delta)$ preserves representatives (recall that $q\notin f(\llambda/\Delta)$) then we may define a $\Delta$-matrix $D$ which is equal to 1 when restricted to $W_{\delta'}$ and $-1$ when restricted to $W_{\delta'q}$. It is clear that $D$ is constant on 
\begin{equation*}
    W_{\delta f(\llambda/\Delta)}:=\bigoplus_{\delta'\in\delta f(\llambda/\Delta)}W_{\delta'}
\end{equation*}
for every $\delta\in s(\Delta)^*$, hence it is in $Z(\GL(2n,\C)^{\Int_s})$. Moreover $D$ multiplies the symplectic matrix by $-1$, so $D\mtau$ is symplectic as required.

Now assume that $q\in f(\llambda/\Delta)$ and choose $\ambda_q\in \llambda$ such that $f(\ambda_q)=q$.
Let $d=\pm 1$ such that $s(\ambda\ambda_q)=ds(\ambda)s(\ambda_q)$. Then
\begin{align*}
\mtau_{\delta'q\gamma,\delta'q}&=s(\ambda\ambda_q)_{\delta'q\gamma,\delta\gamma}s(\ambda\ambda_q)_{\delta'q,\delta}^{-1}\gamma(\ambda\ambda_q)\\&=ds(\ambda)_{\delta'q\gamma,\delta q\gamma}s(\ambda_q)_{\delta q\gamma,\delta\gamma}d^{-1}s(\ambda)_{\delta'q,\delta q}^{-1}s(\ambda_q)_{\delta q,\delta}^{-1}\gamma(\ambda\ambda_q)\\&=
[s(\ambda_q)_{\delta q\gamma,\delta\gamma}s(\ambda_q)_{\delta q,\delta}^{-1}\gamma(\ambda_q)]s(\ambda)_{\delta'q\gamma,\delta q\gamma}s(\ambda)_{\delta'q,\delta q}^{-1}\gamma(\ambda),
\end{align*}
where the expression in square brackets only depends on $q,\gamma$ and the coset $\delta f(\llambda/\Delta)$. Since $s(\ambda)_{\delta'\gamma,\delta \gamma}\oplus s(\ambda)_{\delta'q\gamma,\delta q\gamma}$ and $s(\ambda)_{\delta',\delta }\oplus s(\ambda)_{\delta'q,\delta q}$ are symplectic, this shows that the restriction of $\mtau$ to $\bigoplus_{\delta'\in \delta f(\llambda/\Delta)}W_{\delta'}\oplus W_{q\delta'}$ multiplies the symplectic form by a constant, which implies that multiplying this restriction by a suitable complex number (namely $[s(\ambda_q)_{\delta q\gamma,\delta\gamma}s(\ambda_q)_{\delta q,\delta}^{-1}\gamma(\ambda_q)]^{-1/2}$) yields a symplectic transformation. In other words, $\mtau$ is equal to a symplectic matrix multiplied by a diagonal matrix which is constant on $W_{\delta f(\llambda/\Delta)}$ for every $\delta\in s(\Delta)^*$ yields a symplectic matrix. But such diagonal matrix is in $Z(\GL(2n,\C)^{\Int_s})$, as required.

Hereafter we rename the matrices $\mtau$ so that they are symplectic.

Note that, if $\gamma\in\llambda^*$ did not restrict to an element of $ s(\Delta)^*$, there would be no matrix $M$ satisfying $\Int_{\delta}M=\gamma(\delta)M$ for each $\delta\in\Delta$, since this implies that $M$ is a permutation matrix such that $p(M)=\gamma\vert_{\Delta}$. The automorphism $M$ would then send some non-zero weight space to a trivial weight space via an isomorphism, which is absurd. Similarly, given $\delta$ and $\gamma\in s(\Delta)^*,$ the weight spaces $W_{\delta}$ and $W_{\delta\gamma}$ must have the same dimension if there is a permutation invertible matrix $M$ with $p(M)=\gamma$, since the image of $W_{\delta}$ after applying the linear transformation $M$ is $W_{\delta\gamma}$. Therefore, the map
$$\{\mtau\}_{\gamma\in \sett}\xrightarrow{\cct}\gamt$$
is a bijection. 

In the proof of Lemma \ref{lemma-gltheta} we showed that, for every $\gamma$ and $\gamma'\in \sett$, $\mtau M^{\gamma'}=zM^{\gamma\gamma'}$ for some $z\in Z(\GL(2n,\C)^{\Int_s})$. The new definition of $\mtau$ only differs from the old one by an element of $Z(\GL(2n,\C)^{\Int_s})$, so this still holds. Moreover, since $\mtau$ is symplectic for every $\gamma\in \sett$, it follows that $z\in Z(\GL(2n,\C)^{\Int_s})\cap \Sp(2n,\C)$. But, on the one hand, $Z(\GL(2n,\C)^{\Int_s})\cap \Sp(2n,\C)\subset \GL(2n,\C)^{\Int_s}\cap \Sp(2n,\C)=\spt$. On the other, the adjoint action of $Z(\GL(2n,\C)^{\Int_s})$ on $\spt<\GL(2n,\C)^{\Int_s}$ is trivial. Thus $z\in Z(\spt)$ and so we may define $\setp:=Z(\spt)\{\mtau\}_{\gamma\in \sett}$.
\end{proof}

\begin{corollary}\label{cor-surjective-cct-sp}
The homomorphism $\cct:\sps\to\llambda^*$ is surjective if and only if $ s(\Delta)^*$ is identified with $\Delta^*$ via $s^*$ and all the weight spaces have the same dimension. In particular, under this assumption $s\vert_{\Delta}$ is injective and the order of $\ker q\le\Delta$ must divide $n$.
\end{corollary}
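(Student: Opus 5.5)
The plan is to derive Corollary \ref{cor-surjective-cct-sp} directly from Lemma \ref{lemma-sptheta}, in the same way Corollary \ref{cor-surjective-cct} follows from Lemma \ref{lemma-gltheta}. By that lemma, $\cct$ is surjective exactly when $\sett=\llambda^*$. So I will prove the equivalence by checking both directions of the condition $\sett=\llambda^*$ against the defining conditions of $\sett$.

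For the forward direction, assume $\sett=\llambda^*$. The restriction $\iota^*:\llambda^*\to\Delta^*$ is surjective, because dualizing the inclusion of finite abelian groups $\Delta\hookrightarrow\llambda$ is exact. Hence every $\delta\in\Delta^*$ is of the form $\gamma\vert_\Delta$ with $\gamma\in\sett$, and therefore lies in $s(\Delta)^*$. This gives $s(\Delta)^*=\Delta^*$, so the dual map $s^*$ identifies the two groups. In particular $s\vert_\Delta$ must be injective: otherwise $s(\Delta)$ is a proper quotient of $\Delta$ and $\vert s(\Delta)^*\vert<\vert\Delta^*\vert$. The dimension condition in $\sett$, applied to all $\gamma$, then says $\dim W_{\gamma\delta}=\dim W_{\delta}$ for every $\gamma\vert_\Delta\in\Delta^*$. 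Since $\Delta^*$ acts transitively on itself, all weight spaces have the same dimension.

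For the converse, if $s(\Delta)^*=\Delta^*$ and all weight spaces have equal dimension, then both conditions defining $\sett$ hold for every $\gamma\in\llambda^*$. So $\sett=\llambda^*$, and surjectivity follows from Lemma \ref{lemma-sptheta}. A mild subtlety is that the weights were normalized to take values $\pm1$, with eigenvalues multiplied by $i$ when $q(\delta)=-1$, and that $s\vert_\Delta$ is only a homomorphism up to sign. I would point out that the proof of Lemma \ref{lemma-class-representative-triple-sp} already shows that the set of weights is $s(\Delta)^*$, where $s(\Delta)$ denotes the group of normalized diagonal matrices. That is the only input used here.

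The divisibility statement needs a symplectic refinement, and this is the step I expect to need the most care. There are $\vert\Delta\vert$ weight spaces, all of the same dimension $2n/\vert\Delta\vert$. If $q$ is trivial, every $s(\delta)$ has symplectic eigenspaces, so each weight space is an intersection of symplectic eigenspaces and is itself symplectic. Its dimension is then even, so $\vert\Delta\vert=\vert\ker q\vert$ divides $n$. If $q$ is nontrivial, then as in the proof of Lemma \ref{lemma-class-representative-triple-sp} the weight spaces are isotropic and pair off as $W_\delta\oplus W_{q\delta}$. Each such pair is a symplectic subspace of dimension $2\dim W_\delta$, and there are $\vert\Delta\vert/2=\vert\ker q\vert$ pairs. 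This gives $\dim W_\delta=n/\vert\ker q\vert$, so $\vert\ker q\vert$ divides $n$.
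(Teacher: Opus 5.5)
Your proposal is correct and follows the paper's proof. The equivalence is read off from Lemma~\ref{lemma-sptheta}; you additionally spell out the surjectivity of $\iota^*$ and the transitivity of $\Delta^*$ on itself. The divisibility comes from the same case split on $q$, and in the nontrivial case the count $2n=\vert\Delta\vert\dim W_\delta=2\vert\ker q\vert\dim W_\delta$ already suffices, so your pairing $W_\delta\oplus W_{q\delta}$ is harmless but not needed.

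One small correction for the case $q=1$. The weight spaces are symplectic because the joint eigenspace decomposition of the commuting elements $s(\delta)$ with eigenvalues $\pm1$ is $\omega$-orthogonal. Being an intersection of symplectic subspaces is not enough by itself, since such intersections can be degenerate in general.
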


\begin{proof}
The first statement follows from Lemma \ref{lemma-sptheta}. To show that the order of $\ker q$ divides $n$ we distinguish two cases: if $q$ is trivial then the restriction of the symplectic form to every weight space is the standard one, which implies that the intersection of each weight space with a copy of $\C^n$ in $\C^{2n}$ has half its dimension. Dividing $n$ by this dimension (which is equal for every weight space) gives the number of weights, which is equal to the order of $\Delta=\ker q$. If $q$ is non-trivial then $\ker q$ is a subgroup of $\Delta$ of degree 2. Since the quotient of $2n$ by the dimension of any weight space gives the number of weights, which is equal to $\vert\Delta\vert=2\vert\ker q\vert$, the order of $\ker q$ must divide $n$ as required.
\end{proof}

We call $(l,\Delta,q,s)$ an \textbf{admissible quadruple} if any of the two equivalent conditions in the statement of Corollary \ref{cor-surjective-cct-sp} are met. In particular this implies, by injectivity of $s\vert_{\Delta}$ and $f=ps$, that $\theta=\Int_s$ is injective.

\section{Admissible quadruples as components of the fixed point variety}

Consider the image $\widetilde{M}(X,\Sp(2n,\C)_{\theta})^{\llambda}$ of the extension of structure group morphism
$$M(X,\Sp(2n,\C)_{\theta})\to M(X,\Sp(2n,\C))$$ 
given in Proposition \ref{prop-polystability-extension-structure-group}.

\begin{corollary}\label{cor-moduli-non-empty-sp}
If $M_{ss}(X,\Sp(2n,\C))^{\llambda}$ is the moduli space of stable fixed points, the intersection $M_{ss}(X,\Sp(2n,\C))^{\llambda}\cap \widetilde{M}(X,\Sp(2n,\C)_{\theta})$ is empty unless $(l,\Delta,q,s)$ is an admissible quadruple.
\end{corollary}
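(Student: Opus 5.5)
The plan is to mirror the proof of Corollary \ref{cor-moduli-non-empty} for $\GL(n,\C)$, with Lemma \ref{lemma-sptheta} and Corollary \ref{cor-surjective-cct-sp} playing the roles of Lemma \ref{lemma-gltheta} and Corollary \ref{cor-surjective-cct}. Here $\Gamma\le H^1(X,\Z/2)$ acts on $M(X,\Sp(2n,\C))$ with $a$ and $\mu$ trivial, and the $1$-cocycle $\alpha$ is just the inclusion $\Gamma\hookrightarrow H^1(X,Z)$. Under the isomorphism (\ref{eq-cohomology-Z-iso-Z-cohomology}) this becomes the $\Gamma^*$-bundle $p:\xg\to X$, and the first step is to identify its monodromy.

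\textbf{Step 1 (monodromy).} I will argue, as in Proposition \ref{prop-XGamma-connected}, that the monodromy group of $\alpha$ regarded in $H^1(X,\Hom(\Gamma,\Z/2))$ is all of $\Gamma^*$. Since $\Gamma$ is an elementary abelian $2$-group of line bundles of order at most $2$, the induction on generators carries over verbatim: $\ker(p'^*:J(X)\to J(X_{\Gamma'}))=\Gamma'$, so each new generator contributes a new independent $\Z/2$ to the monodromy. Equivalently, $\Hom(\pi_1(X),\Z/2)\supseteq\Gamma$ implies that the evaluation map $\pi_1(X)\to\Gamma^*$ is surjective. This is the step I expect to need the most care in identifications, though it is elementary.

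\textbf{Step 2 (reduction to surjectivity of $\cct$).} Take a point of $M_{ss}(X,\Sp(2n,\C))^{\Gamma}$ lying in $\widetilde M(X,\Sp(2n,\C)_\theta)$, so it comes from an $\Sp(2n,\C)_\theta$-bundle $F$ with $\ctt(F)\cong\alpha$, via Theorem \ref{th-fixed-points-oscar-ramanan-principal} and Proposition \ref{prop-simple-fixed-points-principal}. Extending structure group by $\cct$ from $\Sp(2n,\C)_\theta$ to $\Gamma^*$ sends $F$ to the $\gamt$-bundle $F/\Sp(2n,\C)^\theta$, viewed inside $\Gamma^*$ through the injection $\gamt\hookrightarrow Z^1(\Gamma,Z)=\Gamma^*$. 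This bundle equals $\alpha$, whose monodromy is all of $\Gamma^*$ by Step 1. But the monodromy group of a bundle with structure group $\gamt$ lies in $\gamt=\cct(\Sp(2n,\C)_\theta)$. Hence $\cct$ must be surjective.

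\textbf{Step 3 (conclusion).} By Corollary \ref{cor-surjective-cct-sp}, surjectivity of $\cct$ is exactly the admissibility condition, so the quadruple $(l,\Delta,q,s)$ is admissible. If it is not admissible, no such $F$ exists and the intersection is empty.
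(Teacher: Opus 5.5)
Your proposal is correct and follows the paper's proof. The full monodromy $\Gamma^*$ of $\alpha$ (Proposition \ref{prop-XGamma-connected}, or your cleaner duality argument from $\Gamma\hookrightarrow\Hom(\pi_1(X),\Z/2)$) forces $\gamt=\cct(\Sp(2n,\C)_\theta)$ to be all of $\Gamma^*$, and this surjectivity of $\cct$ is admissibility by Corollary \ref{cor-surjective-cct-sp}.

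Like the paper, you are reading $\widetilde{M}(X,\Sp(2n,\C)_\theta)$ as $\widetilde M_\alpha$, the image of the bundles $F$ with $\ctt(F)\cong\alpha$. If it meant the image of arbitrary $\Sp(2n,\C)_\theta$-bundles, your Step 2 inference would not follow: for example, $s\equiv 1$ gives all of $M(X,\Sp(2n,\C))$.
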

\begin{proof}
The monodromy of $\llambda$ when considered as an element of $$H^1(X,\llambda^*)\cong\Hom(\llambda,H^1(X,\Z/2))\cong\Hom(\llambda,H^1(X,\C^*)),$$
where the last isomorphism follows from the Abel--Jacobi Theorem, is equal to $\llambda^*$ by Proposition \ref{prop-XGamma-connected}. Therefore, according to Theorem \ref{th-fixed-points-oscar-ramanan-principal}, in order for the smooth fixed point locus $M_{ss}(X,\Sp(2n,\C))^{\llambda}$ to be non-empty we need $\gamt$ to be isomorphic to $\llambda^*$ via the homomorphism $\cct:\Sp(2n,\C)_{\theta}\to\llambda^*$. Equivalently, $\cct$ must be surjective.
\end{proof}

\begin{lemma}\label{lemma-admissible-sp}
Given an antisymmetric pairing $l:\llambda\to\llambda^*$, a maximal isotropic subgroup $\Delta\le\llambda$ and an element $q\in\Delta^*$ such that the order of $\ker q$ divides $n$, there exists a map $s:\llambda\to\Sp(2n,\C)$ making $(l,\Delta,q,s)$ an admissible quadruple.
\end{lemma}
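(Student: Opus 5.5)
I would mimic the proof of Lemma \ref{lemma-admissible}: first build $s\vert_{\Delta}$ so that its weights are all of $\Delta^*$ with equal-dimensional weight spaces and characteristic homomorphism $q$. Then extend $s$ inductively over $\kappa^{-1}(H)$ for an increasing chain of subgroups $H\le\llambda/\Delta$, adding one generator at a time. The new ingredient, compared with the $\GL$ case, is that every matrix must be symplectic and that $q$ dictates how the symplectic form interacts with the weight decomposition.

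\textbf{Step 1: the restriction to $\Delta$.} If $q$ is trivial, write $\C^{2n}=\bigoplus_{\delta\in\Delta^*}W_\delta$ with each $W_\delta$ a symplectic subspace of dimension $2n/\vert\Delta\vert$. This is possible since $\vert\Delta\vert=\vert\ker q\vert$ divides $n$. Let $s(\delta)$ act on $W_{\delta'}$ by the sign $\delta'(\delta)=\pm1$; this is diagonal and symplectic.

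If $q$ is non-trivial, pick $\delta_q$ with $q(\delta_q)=-1$ and split $\Delta^*=K\sqcup qK$, where $K$ is a complement of $\langle q\rangle$. Pair the weights $\delta\in K$ with $q\delta$, and write $\C^{2n}=\bigoplus_{\delta\in K}(W_\delta\oplus W_{q\delta})$ with $W_\delta,W_{q\delta}$ complementary Lagrangians in a symplectic summand of dimension $2n/\vert\ker q\vert\cdot$ appropriate. The divisibility $\vert\ker q\vert\mid n$ is exactly what makes these dimensions integral, since $\vert\Delta\vert=2\vert\ker q\vert$.

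Let $s(\delta)$ act on $W_{\delta'}$ by $\delta'(\delta)$ when $q(\delta)=1$, and by $i\,\delta'(\delta)$ when $q(\delta)=-1$. Since $\delta'(\delta)$ and $q\delta'(\delta)$ are opposite in the latter case, this is symplectic. $\Int_s\vert_\Delta$ is then a homomorphism with $s(\delta\delta')=\pm s(\delta)s(\delta')$, it has characteristic homomorphism $q$, and its weight set is $\Delta^*$ with equal-dimensional weight spaces. So condition (5) and admissibility (Corollary \ref{cor-surjective-cct-sp}) hold.

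\textbf{Step 2: induction over generators of $\llambda/\Delta$.} Choose generators $\overline\ambda_1,\dots,\overline\ambda_m$ of $\llambda/\Delta$. These all have order $2$, since $\llambda\subset H^1(X,\Z/2)$. Suppose $s$ is defined on $\kappa^{-1}(H)$, and let $\ambda$ represent the next generator with $\gamma:=l(\ambda)$. Take the matrix $\mtau$ constructed in the proof of Lemma \ref{lemma-sptheta}, already renormalised to be symplectic there. It satisfies the commutation relations (\ref{eq-commutativity-relations}) with $s(\kappa^{-1}(H))$, and $p(\mtau)=\gamma\vert_\Delta$.

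Lemma \ref{lemma-sptheta} assumed $s$ was defined on all of $\llambda$, but its construction only uses $s$ on elements whose $f$-images move within cosets. I would therefore run it with $f(H)$ in place of $f(\llambda/\Delta)$. Since the weight spaces of $\Delta$ already exhaust $\Delta^*$ with equal dimensions, the obstruction in Lemma \ref{lemma-sptheta} never arises.

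Then correct $\mtau$ by a symplectic $\Delta$-matrix $d$ that is constant on the blocks $W_{\delta f(H)}$. Such $d$ preserves (\ref{eq-commutativity-relations}), and I would choose it so that $(\mtau d)^2\in\C^*s(\ambda^2)$, with $\ambda^2\in\Delta$. Set $s(\ambda):=\mtau d$ and $s(h\ambda^r):=s(h)s(\ambda)^r$. Antisymmetry (\ref{eq-antisymmetry}) together with (\ref{eq-commutativity-relations}) shows that $\Int_s$ is a homomorphism inducing $l$. The image consists of permutation matrices with constant blocks because both $\mtau$ and the earlier $s$-values do.

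\textbf{Main obstacle.} I expect the hardest step to be choosing $d$ symplectic, with scalar blocks, so that $(\mtau d)^2$ is the prescribed element of $\C^*s(\ambda^2)$. In the $\GL$ case one simply takes roots of scalars. Here one must distinguish whether $\gamma\vert_\Delta=q$, in which case $\mtau$ swaps each $W_\delta$ with $W_{q\delta}$, or not.

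In the swapping case the square of $\mtau$ restricted to $W_\delta\oplus W_{q\delta}$ is forced to be $\pm1$ by the symplectic condition, as in the computation with $A^{1/2}$ in Lemma \ref{lemma-class-representative-triple-sp}. The sign must then be matched against $s(\ambda^2)$, which has eigenvalues $\pm i$ or $\pm1$ according to $q(\ambda^2)$. To fix a mismatched sign I would first try multiplying by the scalar $i$ on a block pair, or by a central element of $\spt$ such as the matrix $D$ used in Lemma \ref{lemma-sptheta}.
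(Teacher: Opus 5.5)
Your Step 1 and the inductive scheme match the paper's proof: the paper also builds $s\vert_{\ker q}$ as $s'\oplus s'$, sets $s(\delta_q)=\mathrm{diag}(iI_n,-iI_n)$, and adjoins one order-two generator $\lambda$ at a time via the matrix $\mtau$ of Lemma \ref{lemma-sptheta}. Your block-scalar correction $d$ is also valid when $q\in f(H)$ or $q\notin\langle f(H),\gamma\vert_\Delta\rangle$.

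The gap is the step you yourself flag as the main obstacle, and the remedies you propose do not work. Suppose $q\in\gamma\vert_\Delta f(H)$; this forces $q\notin f(H)$, since $\gamma\vert_\Delta=f(\lambda\Delta)\notin f(H)$. Then $\mtau$ maps each isotropic block $W_{\delta f(H)}$ onto its dual block $W_{q\delta f(H)}$. Any symplectic $\Delta$-matrix that is scalar on $f(H)$-cosets must satisfy $d_{q\delta}=d_\delta^{-1}$. Hence $(\mtau d)^2=d_\delta d_{q\delta}(\mtau)^2=(\mtau)^2$: your $d$ cannot change the square at all.

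Your fallbacks leave $\Sp(2n,\C)$. The scalar $i$ on a symplectic block pair multiplies $\omega$ by $-1$. The matrix $D$ is not in $\spt$: it lies in $Z(\GL(2n,\C)^{\Int_s})$ and multiplies $\omega$ by $-1$, and the paper uses it precisely to turn an anti-symplectic matrix into a symplectic one. You have also misidentified the target. Since $\lambda^2=1$ in $\Gamma$, all that is needed is that $s(\lambda)^2$ be central, i.e. a single sign $\pm I$ on all of $\C^{2n}$. The real danger is different signs on different block pairs, not a mismatch with $s(\lambda^2)$.

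What is missing is a proof that in this case $(\mtau)^2$ is already a global scalar; this is the claim $(\mtau)^2=\pm1$ on which the paper's proof rests. Here is one way to get it.
\begin{itemize}
\item First replace $\lambda$ by $\lambda h$ with $h\in\kappa^{-1}(H)$, so that $\gamma\vert_\Delta=q$ exactly.
\item Then $\mtau$ sends $W_{\delta'}$ to the dual Lagrangian $W_{q\delta'}$ by a scalar $a$, and sends $W_{q\delta'}$ back by a scalar $b$. The blocks are scalar with respect to identifications in which $\omega$ on $W_{\delta'}\times W_{q\delta'}$ is a constant multiple of the standard pairing.
\item For $e\in W_{\delta'}$ and $e'\in W_{q\delta'}$ one gets $\omega(\mtau e,\mtau e')=-ab\,\omega(e,e')$, so symplecticity forces $ab=-1$.
\item Hence $(\mtau)^2=-1$ on every pair, uniformly, and no correction is needed.
\end{itemize}

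The paper reaches the same conclusion differently. It chooses coset representatives so that $\delta$ is a representative if and only if $\gamma\delta$ is, which makes the two factors of $(\mtau)^2_{\delta',\delta'}$ cancel. In the swap case the unnormalised matrix then squares to $1$ and anticommutes with $D$, so its symplectic renormalisation squares to $-1$. In the remaining cases the same choice, with compatible square roots when $q\in f(H)$, gives square $1$. So the paper never needs a correction $d$.
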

\begin{proof}
Let $\langle\cdot,\cdot\rangle$ be the pairing associated to $l$. Choose an injective map $s':\ker q\hookrightarrow\GL(n,\C)$ whose image consists of diagonal matrices with eigenvalues $\pm 1$ and weight spaces of dimension $n/\ker q$. Take the composition 
$$s\vert_{\ker q}:\ker q\xrightarrow{s'\oplus s'}\GL(n,\C)\oplus\GL(n,\C)\hookrightarrow\GL(2n,\C).$$
If $q$ is non-trivial we then choose $\delta_q\in q^{-1}(-1)$, set 
$$s(\delta_q):=\begin{pmatrix}
iI_n & 0\\
0   & -iI_n
\end{pmatrix}
$$
and define $s\vert_{\Delta}$ so that $s$ preserves the group multiplication up to factors of $\pm 1$ (note that $\Delta=\ker q\sqcup\delta_q\ker q$).

Since every element of $\llambda$ has order two, we may find a set of generators $\ambda_1,\dots,\ambda_m$ identifying $\llambda$ with a product of $\Z/2\Z$'s and such that $\Delta$ is generated by the first $k$ of them. We prove the statement by induction on the number of generators of $\llambda/\Delta$, so assume that we have a map $s:\llambda':=\langle\ambda_1,\dots,\ambda_{m-1}\rangle\to\Sp(2n,\C)$ making $(l\vert_{\llambda'},\Delta,q,s)$ an admissible quadruple for some $m>k$. Let $\gamma:=l(\ambda_m)\vert_{\llambda'}$ and $\mtau\in\Sp(2n,\C)$ as defined in the proof of Lemma \ref{lemma-sptheta}. We claim that $(\mtau)^2=\pm 1$. Note that $\gamma$ cannot be in $l(\llambda')$, since $f:\llambda/\Delta\to\Delta^*$ is an injection. Hence we may choose the representative of each coset in $\Delta^*/f(\llambda'/\Delta)$ so that, if $\delta$ represents $\delta f(\llambda'/\Delta)$, so does $\gamma\delta$ in $\gamma\delta f(\llambda'/\Delta)$. Recall that if $q$ is trivial or $q\notin f(\llambda'/\Delta)$ then 
$$\mtau_{\delta'\gamma,\delta'}=s(\ambda)_{\delta'\gamma,\delta\gamma}s(\ambda)_{\delta',\delta}^{-1}\gamma(\ambda)\quad\text{or}\quad s(\ambda)_{\delta'\gamma,\delta\gamma}s(\ambda)_{\delta',\delta}^{-1}\gamma(\ambda)D_{\delta'\gamma,\delta'},$$
where $D$ is defined in the proof of Lemma \ref{lemma-sptheta}, depending on whether $q\ne\gamma$ or $q=\gamma$ resp. On the other hand, if $q$ is non-trivial and $q\in f(\llambda'/\Delta)$,
$$\mtau_{\delta'\gamma,\delta'}=[s(\ambda_q)_{\delta q\gamma,\delta\gamma}s(\ambda_q)_{\delta q,\delta}^{-1}\gamma(\ambda_q)]^{-1/2}s(\ambda)_{\delta'\gamma,\delta\gamma}s(\ambda)_{\delta',\delta}^{-1}\gamma(\ambda),$$
where $f(\ambda)=\delta'\delta$ and $f(\ambda_q\Delta)=q$. In the first case, if $q\ne\gamma$ then
$$(\mtau)^2_{\delta',\delta'}=\mtau_{\delta',\delta'\gamma}\mtau_{\delta'\gamma,\delta'}=s(\ambda)_{\delta',\delta}s(\ambda)_{\delta'\gamma,\delta\gamma}^{-1}\gamma(\ambda)s(\ambda)_{\delta'\gamma,\delta\gamma}s(\ambda)_{\delta',\delta}^{-1}\gamma(\ambda)=1.$$
If $q=\gamma$ then, since $\mtau$ and $D$ anticommute (the restrictions of $D$ to $W_{\delta}$ and $W_{q\delta}$ differ by $-1$ for every $\delta\in\Delta^*$ and $\mtau=M^q$ permutes them), we have, by the previous calculation,
$$1=(\mtau D)^2=-(\mtau)^2D^2=-(\mtau)^2.$$
In the second case, i.e. if $q\in f(\llambda/\Delta)$ is non-trivial,
\begin{align*}
  (\mtau)^2_{\delta',\delta'}=[s(\ambda_q)_{\delta q,\delta}s(\ambda_q)_{\delta q\gamma ,\delta\gamma}^{-1}\gamma(\ambda_q)]^{-1/2}s(\ambda)_{\delta',\delta}s(\ambda)_{\delta'\gamma,\delta\gamma}^{-1}\gamma(\ambda)\\
  [s(\ambda_q)_{\delta q\gamma,\delta\gamma}s(\ambda_q)_{\delta q,\delta}^{-1}\gamma(\ambda_q)]^{-1/2}s(\ambda)_{\delta'\gamma,\delta\gamma}s(\ambda)_{\delta',\delta}^{-1}\gamma(\ambda)=1,
\end{align*}
assuming that we have chosen the square roots in a compatible way.

Now it can be seen that the map $s:\llambda\to\Sp(2n,\C)$ which sends $\ambda \ambda_m^k$ to $s(\ambda)(\mtau)^k$ for each $\ambda\in\llambda'$ induces a homomorphism $\Int_s$: indeed, $(s(\ambda)\mtau)^2$ is equal to $\pm 1$ because the square of each factor is $\pm 1$ and they commute up to multiplication by $\pm 1$. Moreover, $s$ yields an admissible quadruple: for example, the antisymmetry of $l$ and the construction of $s$ imply that $l$ is induced by $s$.
\end{proof}

\begin{remark}\label{remark-non-uniqueness-ints-sp}
Unlike the case of $\GL(n,\C)$ (see Lemma \ref{lemma-admissible}), given the data $l,\Delta$ and $q$, the class of $\theta=\Int_s$ in $\x(\Gamma,\Int(\Sp(2n,\C)))$ is not unique. Indeed, on each inductive step in the proof of Lemma \ref{lemma-admissible-sp} we have a choice: having already defined $s\vert_{\ambda'}$ and letting $\gamma:=l(\ambda_m)\vert_{\llambda'}$, let us think about the possible choices for a permutation matrix in $\Sp(2n,\C)$ with constant blocks satisfying (\ref{eq-commutativity-relations}) and having square $\pm 1$ (any such a matrix would extend $s$ to $\llambda$ by setting $s(\ambda_m):=\mtau$, in such a way that $(l,\Delta,q,s)$ is an admissible quadruple). We know by Lemma \ref{lemma-admissible-sp} that this matrix exists, namely the matrix $\mtau$ defined in the proof of Lemma \ref{lemma-sptheta}. Another symplectic matrix satisfying the requirements is equal to $\mtau C$, where $C$ is a symplectic $\Delta$-matrix which is equal to a constant in $\C^*$ when restricted to $W_{\delta f(\llambda/\Delta)}:=\bigoplus_{\delta'\in\delta f(\llambda/\Delta)}W_{\delta'}$ for every $\delta\in\Delta^*$. We split the argument into three different cases.

If $q\in f(\llambda/\Delta)$ then, according to the proof of Lemma \ref{lemma-admissible-sp} we may choose $\mtau$ to have order 2. Moreover, $C$ is symplectic if and only if its restriction to $W_{\delta f(\llambda/\Delta)}$ is equal to $\pm 1$ for every $\delta\in\Delta^*$, since $W_{\delta f(\llambda/\Delta)}$ is symplectic. The condition $(C\mtau)^2=\pm 1$ implies that the restriction of $C$ to $W_{\delta f(\llambda/\Delta)}$ is equal to $\pm 1$ multiplied by its restriction to $W_{\gamma\delta f(\llambda/\Delta)}$. A representative of the $+1$ case is just $C=1$, whereas we call a representative for the $-1$ case $C^-$. Each sign yields a different class in $\x(\Gamma,\Int(\Sp(2n,\C)))$: the matrices $\mtau$ and $\mtau C^-$ have different order (2 and 4 resp.), since $(\mtau C^-)^2=-(\mtau)^{2}(C^-)^2=-1$. 
Now, if $\Int_{\mtau}$ and $\Int_{\mtau C^-}$ where conjugate in $\Int(\Sp(2n,\C))$ then $\mtau$ would be conjugate to $\pm \mtau C^-$, which has different order, so this is impossible. Hence we have two different choices for the class of $\Int_s$ completely determined by the order of $s(\ambda_m)$.

If $q\notin f(\llambda'/\Delta)$ and $q\ne\gamma$ then, for every $\delta\in\Delta^*$, the subspaces $W_{\delta\llambda'/\Delta}$ and $W_{\delta q\llambda'/\Delta}$ are different. The automorphism $C\vert_{W_{\delta\gamma\llambda'/\Delta}}$ may differ from $C\vert_{W_{\delta\llambda'/\Delta}}$ by a factor, say $k$, but then the fact that $C$ is symplectic implies that $C\vert_{W_{\delta\gamma q\llambda'/\Delta}}=k^{-1}C\vert_{W_{\delta q\llambda'/\Delta}}$. However, conjugation by the matrix
\begin{equation*}
    \begin{pmatrix}
        1 & 0 & 0 & 0 \\
        0 & k^{-1/2} & 0 & 0 \\
        0 & 0 & 1 & 0 \\
        0 & 0 & 0 & k^{1/2}
    \end{pmatrix},
\end{equation*}
which is defined on $W_{\delta\llambda'/\Delta}\oplus W_{\delta\gamma\llambda'/\Delta}\oplus W_{\delta q\llambda'/\Delta}\oplus W_{\delta\gamma q\llambda'/\Delta}$ (note that this preserves the standard symplectic form), reduces the possible cases to $k=1$. Then $C$ is of the form $c\oplus c\oplus c^{-1}\oplus c^{-1}$ on $W_{\delta\llambda'/\Delta}\oplus W_{\delta\gamma\llambda'/\Delta}\oplus W_{\delta q\llambda'/\Delta}\oplus W_{\delta\gamma q\llambda'/\Delta}$, and so $\mtau C$ is conjugate to $\mtau$ via the matrix
\begin{equation*}
    \begin{pmatrix}
        c^{-1/2} & 0 & 0 & 0 \\
        0 & c^{-1/2}& 0 & 0 \\
        0 & 0 & c^{1/2} & 0 \\
        0 & 0 & 0 & c^{1/2} \\
    \end{pmatrix},
\end{equation*}
so there is only one possible choice for the class of $\Int_s$ in this case.

Finally, if $q=\gamma$, the matrix $C$ is symplectic if and only if its restriction to $W_{\delta\llambda'/\Delta}\oplus W_{\delta q\llambda'/\Delta}$ is equal to $c\oplus c^{-1}$ for some $c\in\C^*$. Then $\mtau C$ is conjugate to $\mtau$ via the matrix
\begin{equation*}
    \begin{pmatrix}
        c^{-1/2} & 0 \\
        0 & c^{1/2} \\
    \end{pmatrix},
\end{equation*}
so again there is only one possible choice for the class of $\Int_s$.
\end{remark}

\section{Fixed points as twisted equivariant bundles}

Let $(l,\Delta,q,s)$ be an admissible quadruple. As we saw in section \ref{chapter-jacobian}, the group $\gltt$ consists of $\Delta$-matrices $M$ such that $M_{\delta}=M_{\delta'}$ whenever $\delta\delta'\in f(\llambda/\Delta)$. We have a similar description for $\spt=\gltt\cap\Sp(2n,\C)$.
Let $\setp:=Z(\spt)\{\mtau\}_{\gamma\in \sett}<\sps$ as in the proof of Lemma \ref{lemma-sptheta}. According to Lemma \ref{lemma-sptheta} the group $\sps$ is generated by $\spt$ and $\setp$ and the commutative diagramme (\ref{eq-commutative-diagramme-gls-glt}) restricts to
$$
    \begin{tikzcd}
\setp\arrow[r,"\Int"]\arrow[d,"\cct"]&\Aut(\spt)\\
\llambda^*\arrow[ru,"\tau"]\arrow[u,bend left,"\phi"]
\end{tikzcd},
$$
where $\phi$ is a section of $\cct$. Let $\xg$ be the (connected by Proposition \ref{prop-XGamma-connected}) étale cover determined by the $\llambda^*$-bundle $\llambda$. According to Lemma \ref{lemma-twisted-equivariant-are-vector-bundles} $(\tau,c)$-twisted $\llambda^*$-equivariant $\glt$-bundles over $\xg$ are in correspondence with twisted $l(\llambda)$-equivariant vector bundles of rank $2n/\vert\Delta\vert$ over $\xg$, where the action satisfies (\ref{eq-commutativity-lgamma}). Hence $(\tau,c)$-twisted $\llambda^*$-equivariant $\spt$-bundles may be described in terms of twisted $l(\llambda)$-equivariant vector bundles equipped with some "symplectic form". There are two main possibilities depending on $q$:
\begin{enumerate}
    \item If $q$ is trivial then we get symplectic vector bundles of rank $2n/\vert\Delta\vert$ equipped with an $l(\llambda)$-action satisfying (\ref{eq-commutativity-lgamma}) which respects the symplectic form. Note that a symplectic form on a vector bundle $E$ may be regarded as an isomorphism $\psi:E\cong E^*$ such that, for every $e$ and $e'\in E$, 
    \begin{equation*}
        \psi(e)(e')=-\psi(e')(e)=-\psi^*(e)(e'),
    \end{equation*}
    i.e. $\psi^*=-\psi$.
    \item\label{item-q-sp-2} If $q$ is not trivial then, because of the description of $\spt$, we get a symplectic form on $E\oplus q^*E$, where by abuse of notation $q\in\llambda^*$ is an extension of $q$ and $E$ is a vector bundle of rank $2n/\vert\Delta\vert$ with a symplectic $l(\llambda)$-action satisfying (\ref{eq-commutativity-lgamma}). The symplectic form $\omega$ is codified by an isomorphism $\psi:E\xrightarrow{\sim}q^*E^*$ which is equivariant with respect to the $l(\Delta)$-action on $E$ and the dual of the pullback of this action on $q^*E^*$. The restriction of $\omega$ to $E$ is equal to $\psi$, whereas the restriction to $q^*E$ is equal to $q^*\psi$. 
    The antisymmetry of $\omega$ is equivalent to $q^*\psi^*=-\psi$ since, for every $e\in E$ and $e'\in q^*E$,
    \begin{equation*}
        q^*\psi^*(e)(e')=q^*\psi(e')(e)=-\psi(e)(e').
    \end{equation*}
\end{enumerate}

\begin{remark}
A subtle point of contrast with the case of $\GL(2n,\C)$ is the ambig\"uity in the choice of $\theta$ for each triple $(l,\Delta,q)$, see Remark \ref{remark-non-uniqueness-ints-sp}. This implies that we have different choices for the orders of the different elements of the $l(\llambda)$-action. We have not made explicit mention of this so far, but see Remark \ref{remark-non-unique-ints-pushforward} for a more clear interpretation in terms of pushforwards.
\end{remark}

For each antisymmetric pairing $l$, choose a maximal isotropic subgroup $\Delta\le\Gamma$ and let $M(\xg,l,\Delta,2n,q)$ be the moduli space of triples $(E,\psi,\chi)$, where $E$ is a vector bundle of rank $2n/\vert\Delta\vert$, $\psi:E\xrightarrow{\sim}q^*E^*$ is an isomorphism such that $q^*\psi^*=-\psi$ and $\chi$ is an $l(\llambda)$-action on $E$ respecting $\psi$ and satisfying (\ref{eq-commutativity-lgamma}). Cases (1) and (2) form a partition of these moduli spaces.
We have morphisms $M(\xg,l,\Delta,2n,q)\to \mdl(\xg,\tau,c,\spt,\llambda^*)$ which follow as in Lemma \ref{lemma-twisted-equivariant-are-vector-bundles}
% and Proposition 4.1 in \cite{oscar-suratno}.

\section{Fixed points as pushforwards}
As in the case of $\GL(n,\C)$ we have an interpretation of the moduli spaces $M(\xg,l,\Delta,2n,q)$ in terms of pushforwards.

\begin{definition}\label{def-moduli-vector-bundles-xd-sp}
Let $M(\xd,\GL(2n/\vert\Delta\vert,\C),q)$ be the moduli space of pairs $(E,\psi)$ consisting of a vector bundle $E$ of rank $2n/\vert\Delta\vert$ and an isomorphism $\psi:E\xrightarrow{\sim} q^*E$ such that $q^*\psi^*=-\psi$ (this may be constructed using the techniques in \cite{schmitt:2008}). We define 
$$M(\xd,\GL(2n/\vert\Delta\vert,\C),q)^{l(\llambda)}$$ 
to be the subvariety parametrizing pairs $(E,\psi)$ such that $l(\ambda)^*(E\otimes\pd^*\ambda,\psi)\cong(E,\psi)$ for each $\ambda\in\llambda$ (we define the pullback $l(\ambda)^*\psi$ as it is usual for homomorphisms of vector bundles). The pushforward of vector bundles induces a morphism 
\begin{equation*}
    p_{\Delta^*}: M(\xd,\GL(2n/\vert\Delta\vert,\C),q)^{l(\llambda)}\to M(X,\Sp(2n,\C)).
\end{equation*}
\end{definition}

Note that we are calling $l(\ambda)$ to its coset in $\llambda^*/l(\Delta)$ by abuse of notation, see the discussion before the statement of Lemma \ref{lemma-twisted-equivariant-pushforward}. As in such discussion the correspondence between $l(\Delta)$-equivariant vector bundles on $\xg$ and vector bundles over $\xd$ implies the existence of a morphism $p_{l,q}:M(\xg,l,\Delta,2n,q)\to M(\xd,\GL(2n/\vert\Delta\vert,\C),q)^{l(\llambda)}$. As in the case of $\GL(n,\C)$, this determines a decomposition
\begin{equation}\label{eq-union-twisted-equivariant-equal-pushforward-sp}
    M(\xd,\GL(2n/\vert\Delta\vert,\C),q)^{l(\llambda)}=\bigcup_{l,q} p_{q,l}M(\xg,l,\Delta,2n,q),
\end{equation}
where $l$ runs over all antisymmetric pairings of $\Gamma$ with maximal isotropic subgroup $\Delta$ and $q$ runs over $\Delta^*$.

\begin{remark}\label{remark-non-unique-ints-pushforward}
This decomposition of $M(\xd,\GL(2n/\vert\Delta\vert,\C),q)^{l(\llambda)}$ manifests the different choices that we have for the commutators between the different isomorphisms $f_{\ambda}:(E,\psi)\xrightarrow{\sim}l(\ambda)^*(E\otimes\pd^*\ambda,\psi)$, where $\ambda\in \llambda$ and $(E,\psi)$ represents an element of the moduli space $M(\xd,\GL(2n/\vert\Delta\vert,\C),q)^{l(\llambda)}$, as it is the case with (\ref{eq-union-twisted-equivariant-equal-pushforward}). However, the moduli space $M(\xg,l,\Delta,2n,q)$ itself has different components corresponding to different choices for the orders of this isomorphisms because of the considerations of Remark \ref{remark-non-uniqueness-ints-sp}. In other words, for each non-trivial $\ambda$ we have the possibilities $(l(\ambda)^*f_{\ambda}\otimes\id_{\pd^*\ambda})\circ f_{\ambda}=\pm1$ and in general only one of them is possible (note that multiplying $if_{\ambda}$ does not preserve $\psi$, which is something that we do not need to worry about for $\GL(n,\C)$).
\end{remark}

\begin{lemma}\label{lemma-twisted-equivariant-pushforward-sp}
The morphism
\begin{align*}
    M(X_{\llambda},l,n,q)\to M(\xg,\tau,c,\spt,\llambda^*)\to M(X,\sps) \to M(X,\Sp(n,\C)),
\end{align*}
where the last morphism is defined in Proposition \ref{prop-polystability-extension-structure-group} and the second one is defined in Theorem \ref{th-prym-narasimhan-ramanan-higgs}, factors as
\begin{equation*}
    M(X_{\llambda},l,n,q)\to M(\xd,\GL(2n/\vert\Delta\vert,\C),q)^{l(\llambda)}\xrightarrow{p_{\Delta*}}M(X,\Sp(n,\C)).
\end{equation*}
In particular, by Lemma \ref{lemma-admissible} and Proposition \ref{prop-polystability-extension-structure-group}, the image of the pushforward is independent of the choice of maximal isotropic subgroup $\Delta$.
\end{lemma}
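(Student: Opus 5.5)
We have to show that the composite morphism factors through $p_{\Delta*}$. The plan follows the $\GL(n,\C)$ case (Lemma \ref{lemma-twisted-equivariant-pushforward}), keeping track of the symplectic data at each step. Start with a triple $(E,\psi,\chi)\in M(\xg,l,\Delta,2n,q)$. Going along the top row, Lemma \ref{lemma-twisted-equivariant-are-vector-bundles} gives the associated $(\tau,c)$-twisted $\llambda^*$-equivariant $\spt$-bundle, whose associated rank $2n$ vector bundle is
\[
V=\bigoplus_{\overline{\gamma}\in\llambda^*/l(\llambda)}\overline{\gamma}^*F,
\]
where $F$ is built from $E$ as in that proof. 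The action $\wchi$ permutes the summands, and the symplectic form on $V$ is the one determined by $\psi$ together with the description of $\spt$ given in cases (1) and (2). Theorem \ref{th-prym-narasimhan-ramanan-higgs} says the resulting $\sps$-bundle over $X$ is obtained by descending $V$ along $\xg\to X$ using $\wchi$, and Proposition \ref{prop-polystability-extension-structure-group} then extends the structure group to $\Sp(2n,\C)$. So the image over $X$ is the quotient $V/\llambda^*$, with its descended symplectic form.

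Next, factor this quotient in two stages along $\llambda^*\to\Delta^*\cong\llambda^*/l(\Delta)$. Taking the quotient by $l(\Delta)$ is $p_{l,q}$: it turns $E$ into a bundle $E_\Delta:=E/l(\Delta)$ on $\xd$. Because $\psi$ is $l(\Delta)$-equivariant, with the dual pulled-back action on $q^*E^*$, it descends to an isomorphism $E_\Delta\to q^*E_\Delta^*$ that still satisfies $q^*\psi^*=-\psi$. The twisting by $\pd^*\ambda$ comes out exactly as in the discussion before Lemma \ref{lemma-twisted-equivariant-pushforward}. The remaining quotient is by $\Delta^*$ acting on the $\overline\gamma$-summands, and one checks that this is precisely the pushforward. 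Concretely, $V/l(\Delta)$ decomposes as $\bigoplus_{\delta\in\Delta^*}\delta^*E_\Delta$, and the residual free $\Delta^*$-action permutes these summands, so $V/\llambda^*\cong p_{\Delta*}E_\Delta$.

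It remains to show that the symplectic form on $V/\llambda^*$ coincides with $p_{\Delta*}\psi$, that is, with
\[
p_{\Delta*}E_\Delta\to p_{\Delta*}q^*E_\Delta^*\cong (p_{\Delta*}E_\Delta)^*,
\]
where the last isomorphism is relative duality for the étale cover. I would check this locally on fibres. Over a point $x\in X$, the fibre of $p_{\Delta*}E_\Delta$ is $\bigoplus_{y\in\pd^{-1}(x)}E_{\Delta,y}$. The form pairs $E_{\Delta,y}$ with $E_{\Delta,q(y)}$ through $\psi$. This is exactly the block description of $\omega$ in case (2): $\omega$ restricted to $E$ is $\psi$, and restricted to $q^*E$ it is $q^*\psi$. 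Case (1), with $q$ trivial, is the block-diagonal case. The step I expect to be most delicate is the sign bookkeeping, which has two parts. First, the scalars introduced in Lemma \ref{lemma-sptheta} when $\mtau$ is rescaled to be symplectic, including the matrix $D$ and the square roots, must not alter the descended form. Second, the identification $p_{\Delta*}q^*E^*\cong(p_{\Delta*}E)^*$ must carry the antisymmetry $q^*\psi^*=-\psi$ to antisymmetry of the pushed-forward form. For both points I would argue that the rescalings lie in $Z(\spt)$ and therefore act by scalars preserving $\omega$ on each summand.

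The last claim, that the image does not depend on the choice of $\Delta$, comes from the factorization. The image of the top row depends only on the class of $\theta=\Int_s$, through Proposition \ref{prop-polystability-extension-structure-group}(3). A different maximal isotropic $\Delta'$ yields, via Lemma \ref{lemma-class-representative-triple-sp}, a representative quadruple in the same class. Its top-row image is therefore the same, and by the factorization it equals the corresponding pushforward image for $\Delta'$.
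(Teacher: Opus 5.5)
The paper gives no proof of this lemma; it states it as the symplectic analogue of Lemma~\ref{lemma-twisted-equivariant-pushforward}, which is itself unproved. Your skeleton is therefore the natural and presumably intended argument: the associated bundle is induced from its weight-one summand, you divide by $l(\Delta)$ and then by the free permutation action of $\Delta^*\cong\Gamma^*/l(\Delta)$, and you compare forms fibrewise. One slip first: index the decomposition by weights, $V=\bigoplus_{\delta\in\Delta^*}V_\delta$, with $\Gamma^*$ permuting the summands through $\Gamma^*\to\Delta^*$ and with stabiliser $l(\Delta)$. Your indexing by $\Gamma^*/l(\Gamma)$ does not give rank $2n$ in general.

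The step you single out as delicate is settled by a false claim. The rescalings in Lemma~\ref{lemma-sptheta} (the matrix $D$ and the block scalars $[\cdots]^{-1/2}$) lie in $Z(\GL(2n,\C)^{\Int_s})$, not in $Z(\spt)$. They are exactly the non-symplectic corrections that make $M^\gamma$ symplectic; for instance $D$ multiplies $\omega$ by $-1$. Nor can you descend along $\xg\to X$ ``using $\wchi$'', because $\wchi$ is only $c$-twisted.

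The correct argument needs no sign tracking. By Proposition~\ref{prop-associated-bundle-equivariant}, $V$ carries the genuine action $[e,v]\cdot\gamma=[e\cdot\gamma,\phi(\gamma)^{-1}v]$. This action preserves $\omega$ because $\phi(\Gamma^*)\subset\sps\subset\Sp(2n,\C)$. Its quotient is the symplectic bundle associated with the $\sps$-bundle on $X$, whichever section $\phi$ was chosen. Your second delicate point is just the fibrewise identity $\omega(e',e)=\psi_{q(y)}(e')(e)=(q^*\psi^*)_y(e)(e')=-\psi_y(e)(e')=-\omega(e,e')$.

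What actually needs an argument is your sentence ``taking the quotient by $l(\Delta)$ is $p_{l,q}$''. The top row divides the weight-one summand $V_1$ by the genuine $l(\Delta)$-action $R$, which is a scalar rescaling of the merely projective $\chi|_{l(\Delta)}$. Only rescalings by $\pm1$ preserve $\psi$ (cf.\ Remark~\ref{remark-non-unique-ints-pushforward}). So ``$\psi$ is $l(\Delta)$-equivariant, hence descends'' is not automatic for an arbitrary normalisation.

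The fix is to build $\psi$ from $V$ itself. Set $\psi(e)(e')=\omega(e,R_{\tilde q}^{-1}e')$ for $e$ over $\tilde y$ and $e'$ over $\tilde y\tilde q$, where $\tilde q\in\Gamma^*$ lifts $q$. This is $R|_{l(\Delta)}$-invariant because $\Gamma^*$ is abelian and acts symplectically, so it descends to $E_\Delta=V_1/l(\Delta)$. Any other genuine normalisation differs from $R$ by a character of $l(\Delta)=(\Gamma/\Delta)^*$. That twists $(E_\Delta,\psi)$ by some $p_\Delta^*\lambda$ with $\lambda\in\Gamma$, which disappears after $p_{\Delta*}$ because of the condition $E\cong l(\lambda)^*(E\otimes p_\Delta^*\lambda)$ defining the $l(\Gamma)$-fixed locus.

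On independence of $\Delta$: your use of Lemma~\ref{lemma-class-representative-triple-sp} is better than the paper's citation of Lemma~\ref{lemma-admissible}. That lemma is the $\GL$ uniqueness statement, and it has no $\Sp$ counterpart (Remark~\ref{remark-non-uniqueness-ints-sp}). However, $q\in\Delta^*$ becomes some $q'\in\Delta'^*$ that depends on the class of $\theta$. So independence holds only for the union over $q$ and over the sign classes, not component by component.
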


We conclude:

\begin{theorem}\label{th-finite-group-jacobian-sp}
We have the inclusions
$$\bigcup_{l,q}p_{\Delta*}M(\xd,\GL(2n/\vert\Delta\vert,\C),q)^{l(\Gamma)}\subset M(X,\Sp(2n,\C))^{\Gamma}$$
and
$$M_{ss}(X,\Sp(2n,\C))^{\Gamma}\subset\bigcup_{l,q}p_{\Delta*}M(\xd,\GL(2n/\vert\Delta\vert,\C),q)^{l(\Gamma)},$$
where $M(\xd,\GL(2n/\vert\Delta\vert,\C),q)^{l(\Gamma)}$ is defined in Definition \ref{def-moduli-vector-bundles-xd-sp}. The parameter $l$ runs over all antisymmetric pairings on $\llambda$ such that the order of a maximal isotropic subgroup $\Delta$ divides $n$. The choice of $\Delta$ is fixed for each $l$. The parameter $q$ runs over elements of $\Delta^*$.
\end{theorem}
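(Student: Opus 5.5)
The plan is to follow the proof of Theorem \ref{th-finite-group-jacobian-principal} for $\GL(n,\C)$ step by step, replacing each ingredient by its symplectic analogue. First, by Theorem \ref{th-fixed-points-oscar-ramanan-principal} applied with $G=\Sp(2n,\C)$, $a$ and $\mu$ trivial, and $\alpha$ the tautological cocycle given by the inclusion $\llambda\subset H^1(X,\Z/2)$, the smooth fixed locus $M_{ss}(X,\Sp(2n,\C))^{\Gamma}$ is contained in the union of the images $\widetilde{M}_{\alpha}(X,\Sp(2n,\C)_{\theta})$, where $[\theta]$ runs over $\x(\llambda,\Int(\Sp(2n,\C)))$. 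Conversely, that union lies in $M(X,\Sp(2n,\C))^{\Gamma}$.

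By Lemma \ref{lemma-class-representative-triple-sp}, each class is represented by $\theta=\Int_s$ for a representative quadruple $(l,\Delta,q,s)$. By Corollary \ref{cor-moduli-non-empty-sp}, only admissible quadruples contribute to the smooth locus. Corollary \ref{cor-surjective-cct-sp} then forces $\vert\ker q\vert$ to divide $n$. Conversely, Lemma \ref{lemma-admissible-sp} shows that every triple $(l,\Delta,q)$ with this divisibility property actually occurs. This step produces the indexing set in the statement. As Remark \ref{remark-non-uniqueness-ints-sp} explains, several classes $[\theta]$ may correspond to the same $(l,\Delta,q)$. This does not matter, because in the end we take a union over all of them.

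Second, for an admissible quadruple, $\cct$ is surjective onto $\llambda^*$. By Proposition \ref{prop-XGamma-connected} the cover $\xg$ is connected with Galois group $\llambda^*$, so $\qt^{-1}(\alpha)$ consists of $\xg$ alone. Theorem \ref{th-prym-narasimhan-ramanan-oscar-ramanan-principal} therefore identifies $M_{\alpha}(X,\Sp(2n,\C)_{\theta})$ with $M(\xg,\spt,\llambda^*,\tau,c)/Z(\llambda^*)$. Here $\tau$ and $c$ come from the section $\phi$ of $\cct$ on the subgroup $\setp$ constructed in Lemma \ref{lemma-sptheta}. Since $\llambda^*$ is abelian, the quotient by $Z(\llambda^*)$ does not change the image in $M(X,\Sp(2n,\C))$.

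Third, we translate these twisted equivariant $\spt$-bundles into the triples $(E,\psi,\chi)$ defining $M(\xg,l,\Delta,2n,q)$, following the two cases ($q$ trivial or $q$ nontrivial) described before Definition \ref{def-moduli-vector-bundles-xd-sp}. The argument mirrors Lemma \ref{lemma-twisted-equivariant-are-vector-bundles}. Two points need checking: that the symplectic structure is carried through the rescaling of $\chi$ used to build $\wchi$, and that the stability conditions match. In the symplectic case, stability should be tested on isotropic $\chi$-invariant subbundles, and the argument passes to $E\oplus q^*E$ when $q$ is nontrivial.

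Finally, we apply Lemma \ref{lemma-twisted-equivariant-pushforward-sp}: taking the quotient by $l(\Delta)$ and pushing forward along $\pd$ factors the composite map through $M(\xd,\GL(2n/\vert\Delta\vert,\C),q)^{l(\llambda)}$. The decomposition (\ref{eq-union-twisted-equivariant-equal-pushforward-sp}) then shows that the union of the images $p_{l,q}$ over pairings $l$ with maximal isotropic subgroup $\Delta$ exhausts this space. For the converse inclusion, we check directly that a pushforward $(p_{\Delta*}E,p_{\Delta*}\psi)$ is fixed. The isomorphisms $E\cong l(\ambda)^*(E\otimes\pd^*\ambda)$ push forward to isomorphisms $p_{\Delta*}E\cong p_{\Delta*}E\otimes\ambda$ compatible with the symplectic form, by the projection formula and the invariance of $p_{\Delta*}$ under deck transformations.

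The main obstacle is the polystability correspondence together with the symplectic compatibility in the third step, since it must handle the sign ambiguity $(l(\ambda)^*f_\lambda)f_\lambda=\pm1$ in the commutators. I would handle it by recording the choice of sign as part of the datum $\chi$ and checking that the pushforward step still lands in the same union.
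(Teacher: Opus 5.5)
Your plan is essentially the paper's proof. That proof just chains Theorem \ref{th-fixed-points-oscar-ramanan-principal}, Lemma \ref{lemma-class-representative-triple-sp}, Corollary \ref{cor-moduli-non-empty-sp}, Lemma \ref{lemma-admissible-sp}, the translation into triples $(E,\psi,\chi)$ as in Lemma \ref{lemma-twisted-equivariant-are-vector-bundles}, Lemma \ref{lemma-twisted-equivariant-pushforward-sp} and (\ref{eq-union-twisted-equivariant-equal-pushforward-sp}). Your direct projection-formula check of the first inclusion is a harmless shortcut; the paper instead goes back through Theorem \ref{th-fixed-points-oscar-ramanan-principal}(1).

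One claim is wrong, however. Corollary \ref{cor-surjective-cct-sp} and Lemma \ref{lemma-admissible-sp} give the condition that $\vert\ker q\vert$ divides $n$, whereas the statement requires that $\vert\Delta\vert$ divide $n$. The two agree for $q=1$. For $q\neq 1$, yours only says that $\vert\Delta\vert$ divides $2n$, so your sentence ``this step produces the indexing set in the statement'' is false.

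Your condition is the correct one, and the statement's is too restrictive. Take $\llambda=\langle L\rangle$ and $n=1$, so $\Sp(2,\C)=\SL(2,\C)$. The only antisymmetric pairing is trivial, with $\vert\Delta\vert=2$, so the statement's union is empty. Yet for genus at least $2$, $M_{ss}(X,\Sp(2,\C))^{\llambda}$ is the nonempty Narasimhan--Ramanan Prym locus. That locus is exactly the $q\neq1$ component, the $\Sp(2n,\C)^{\theta}\cong\GL(n,\C)$ case of Section \ref{section-example-sp-cyclic}. So you should index by pairs $(l,q)$ with $\vert\ker q\vert$ dividing $n$, and not claim to recover the statement's index set. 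Equivalently, allow all $q$ and require $\vert\Delta\vert$ to divide $2n$; the extra $q=1$ terms would be odd-rank symplectic, hence empty.

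A smaller precision, which the paper shares: $\spt$ need not be connected. Take $\llambda\cong(\Z/2\Z)^2$ with nondegenerate $l$, $q\neq 1$, and the $s$ built in Lemma \ref{lemma-admissible-sp} (second generator squaring to $-1$). Then $\spt\cong\OO(n,\C)$. Theorem \ref{th-prym-narasimhan-ramanan-oscar-ramanan-principal} is phrased with $\gt_0$ and $\gamtt$, so here it does not literally give $M(\xg,\spt,\llambda^*,\tau,c)$. Also, $\qt^{-1}(\alpha)$ then consists of $\gamtt$-bundles covering $\xg$, not of $\xg$ itself.

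What you really use is the same correspondence for the normal subgroup $\spt\le\sps$ with quotient $\llambda^*$. Its proof, via Propositions \ref{prop-bijection-G,Gamma-action} and \ref{prop-twisted-equivariant-bundles-one-to-one}, goes through verbatim once $\setp$ from Lemma \ref{lemma-sptheta} gives $\sps\cong\spt\times_{\tau,c}\llambda^*$ with $c$ valued in $Z(\spt)$.
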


\begin{proof}
Follows from Lemmas \ref{lemma-class-representative-triple-sp}, \ref{lemma-admissible-sp} and \ref{lemma-twisted-equivariant-pushforward-sp}, Theorem \ref{th-fixed-points-oscar-ramanan-principal}, Corollary \ref{cor-moduli-non-empty-sp} and (\ref{eq-union-twisted-equivariant-equal-pushforward-sp}).
\end{proof}

\begin{definition}\label{def-moduli-vector-bundles-xd-sp-higgs}
Let $\mdl(\xd,\GL(2n/\vert\Delta\vert,\C),q)$ be the moduli space of triples $(E,\phi,\psi)$ consisting of a Higgs bundle $(E,\phi)$ of rank $2n/\vert\Delta\vert$ and an isomorphism $\psi:(E,\phi)\xrightarrow{\sim} q^*(E,\phi)$ such that $q^*\psi^*=-\psi$ (this may be constructed using the techniques in \cite{schmitt:2008}). We define $\mdl(\xd,\GL(2n/\vert\Delta\vert,\C),q)^{l(\llambda)}$ to be the subvariety parametrizing triples $(E,\phi,\psi)$ such that $l(\ambda)^*(E,\phi\otimes\pd^*\ambda,\phi,\psi)\cong(E,\phi,\psi)$ for each $\ambda\in\llambda$ (we define the pullback $l(\ambda)^*\psi$ as it is usual for homomorphisms of vector bundles). The pushforward of vector bundles induces a morphism 
\begin{equation*}
    p_{\Delta^*}: \mdl(\xd,\GL(2n/\vert\Delta\vert,\C),q)^{l(\llambda)}\to \mdl(X,\Sp(2n,\C)).
\end{equation*}
\end{definition}

\begin{theorem}\label{th-finite-group-jacobian-sp-higgs}
We have the inclusions
$$\bigcup_{l,q}p_{\Delta*}\mdl(\xd,\GL(2n/\vert\Delta\vert,\C),q)^{l(\Gamma)}\subset \mdl(X,\Sp(2n,\C))^{\Gamma}$$
and
$$\mdl_{ss}(X,\Sp(2n,\C))^{\Gamma}\subset\bigcup_{l,q}p_{\Delta*}\mdl(\xd,\GL(2n/\vert\Delta\vert,\C),q)^{l(\Gamma)},$$
where $\mdl(\xd,\GL(2n/\vert\Delta\vert,\C),q)^{l(\Gamma)}$ is defined in Definition \ref{def-moduli-vector-bundles-xd-sp-higgs}. The parameter $l$ runs over all antisymmetric pairings on $\llambda$ such that the order of a maximal isotropic subgroup $\Delta$ divides $n$. The choice of $\Delta$ is fixed for each $l$. The parameter $q$ runs over elements of $\Delta^*$.
\end{theorem}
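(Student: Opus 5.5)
The proof follows the template of Theorem \ref{th-finite-group-jacobian-principal}. We replace each ingredient for vector bundles by its Higgs analogue, and $\GL(n,\C)$ by $\Sp(2n,\C)$ as in Theorem \ref{th-finite-group-jacobian-sp}.

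The first step is to apply Theorem \ref{th-fixed-points-oscar-ramanan-higgs} with $a$ and $\mu$ trivial, so that $\lieg^{\theta}_{\mu}=\lieg^{\theta}$. This gives
\[
\mdl(X,\Sp(2n,\C))^{\Gamma}\supset \bigcup_{[\theta]}\widetilde{\mdl}_{\Gamma}(X,\sps,\lieg^{\theta})\supset \mdl_{ss}(X,\Sp(2n,\C))^{\Gamma},
\]
where $[\theta]$ runs over $\x(\Gamma,\Int(\Sp(2n,\C)))$. Lemma \ref{lemma-class-representative-triple-sp} lets us take $\theta=\Int_s$ for a representative quadruple $(l,\Delta,q,s)$. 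For the second inclusion we may discard non-admissible quadruples. The argument of Corollary \ref{cor-moduli-non-empty-sp} applies verbatim: it only uses Proposition \ref{prop-XGamma-connected}, namely that the monodromy of $\Gamma$ is all of $\Gamma^*$, together with surjectivity of $\cct$. It therefore still holds with Theorem \ref{th-fixed-points-oscar-ramanan-higgs} in place of Theorem \ref{th-fixed-points-oscar-ramanan-principal}. By Corollary \ref{cor-surjective-cct-sp}, admissibility forces $|\ker q|$ to divide $n$. Conversely, Lemma \ref{lemma-admissible-sp} produces admissible quadruples for every $(l,\Delta,q)$ satisfying this divisibility condition.

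The second step is to apply Theorem \ref{th-prym-narasimhan-ramanan-oscar-ramanan-higgs}. Since $\xg$ is connected with Galois group $\Gamma^*$, the component of $\widetilde{\mdl}_{\Gamma}(X,\sps,\lieg^{\theta})$ is the image of $\mdl(\xg,\spt,\Gamma^*,\tau,c,\lieg^{\theta})/Z(\Gamma^*)$. Here the Higgs field is $\Gamma^*$-invariant with values in $\lieg^{\theta}$. I would then rerun the Higgs version of Lemma \ref{lemma-twisted-equivariant-are-vector-bundles}, adapted to $\spt$ exactly as in cases (1) and (2) of the previous section. This identifies these objects with quadruples $(E,\phi,\psi,\chi)$: here $(E,\phi)$ is a Higgs bundle of rank $2n/|\Delta|$ on $\xg$, $\psi:E\to q^*E^*$ satisfies $q^*\psi^*=-\psi$, and $\chi$ is an $l(\Gamma)$-action satisfying (\ref{eq-commutativity-lgamma}) that preserves both $\psi$ and $\phi$. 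The condition that $\psi$ intertwine the Higgs fields is exactly the requirement that $\phi$ take values in $\mathfrak{sp}$. Next, I would quotient by $l(\Delta)$, which acts honestly. This gives the morphism $p_{l,q}$ to $\mdl(\xd,\GL(2n/|\Delta|,\C),q)^{l(\Gamma)}$, and the analogue of (\ref{eq-union-twisted-equivariant-equal-pushforward-sp}) shows that the images, taken over all $l$ with the given $\Delta$, exhaust that moduli space. The commutator argument is unchanged, because the isomorphisms $f_\lambda$ now also intertwine the Higgs fields and simplicity or polystability still allows rescaling summandwise. Finally, the Higgs analogue of Lemma \ref{lemma-twisted-equivariant-pushforward-sp} shows that the extension of structure group to $\Sp(2n,\C)$ equals $p_{\Delta*}$. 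The reason is that the pushforward Higgs field is the descent of $\phi$, and the symplectic form is $p_{\Delta*}\psi$. Combining these steps gives both inclusions.

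The main obstacle is the stability bookkeeping in the Higgs version of Lemma \ref{lemma-twisted-equivariant-are-vector-bundles}. There we must check that polystability of the twisted equivariant $(\spt,\lieg^{\theta})$-pair corresponds to polystability of $(E,\phi,\psi)$ on $\xd$ for $\phi$-invariant isotropic subbundles. I would handle this as in that lemma, restricting the slope argument to $\phi$-invariant subbundles, and rely on Proposition \ref{prop-polystability-extension-structure-group} to pass polystability along the extension of structure group. Only the direction needed for each inclusion has to be verified: polystable maps to polystable for the first inclusion, and stable simple reduces for the second.
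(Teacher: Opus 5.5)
Your argument follows the paper's route. The paper proves Theorem \ref{th-finite-group-jacobian-sp} by combining Theorem \ref{th-fixed-points-oscar-ramanan-principal}, Lemmas \ref{lemma-class-representative-triple-sp}, \ref{lemma-admissible-sp} and \ref{lemma-twisted-equivariant-pushforward-sp}, Corollary \ref{cor-moduli-non-empty-sp} and (\ref{eq-union-twisted-equivariant-equal-pushforward-sp}). It then says only that ``the same arguments'' give the Higgs statement, which is exactly the substitution you carry out.

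Two points you should make explicit.

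First, your index set differs from the statement's. You index by pairs $(l,q)$ with $\vert\ker q\vert$ dividing $n$. The statement asks that $\vert\Delta\vert$ divide $n$ and lets $q\in\Delta^*$ be arbitrary. Your condition is the one Corollary \ref{cor-surjective-cct-sp} and Lemma \ref{lemma-admissible-sp} actually deliver, and the literal one makes the second inclusion fail when $q\neq 1$. For example, take $\Gamma=\langle L\rangle$ and $n$ odd. The only antisymmetric pairing is trivial and $\Delta=\Gamma$ has order $2$, which does not divide $n$, so the stated union is empty. Yet Section \ref{section-example-sp-cyclic} exhibits smooth fixed points of type (1), which come from $q\neq 1$; already for $\Sp(2,\C)=\SL(2,\C)$ this is the Prym locus. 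So say explicitly that you prove the theorem with the divisibility condition read as ``$\vert\ker q\vert$ divides $n$''.

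Second, $\spt$ need not be connected. For instance, take $\Gamma\cong(\Z/2\Z)^2$ with $l$ non-degenerate and $q\neq 1$. With $s(a)=\mathrm{diag}(iI_n,-iI_n)$ and $s(b)$ the standard symplectic matrix, the quadruple is admissible and $\spt\cong\OO(n,\C)$. In that case Theorem \ref{th-prym-narasimhan-ramanan-oscar-ramanan-higgs} does not literally give $\Gamma^*$-equivariant $\spt$-objects over $\xg$, because it is phrased in terms of $\gt_0$ and $\gamtt=\gs/\gt_0$. To justify your step, rerun the proof of Proposition \ref{prop-twisted-equivariant-higgs pairs-one-to-one} for the normal subgroup $\spt$ of $\sps$, using the splitting $\setp$ of Lemma \ref{lemma-sptheta}; that argument does not use connectedness. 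The paper glosses over this in the same way, and your final pushforward description is unaffected.
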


\section{An abelianization phenomenon}
In particular, if $2n =2^m$ is some power of $2$ and the genus of $X$ is greater than $m/2$, there are subgroups $\llambda\le H^1(X,\Z/2\Z)$ of order $2^{m}$. If $l$ is the trivial pairing then the pushforward $p_{\Delta*}M(\xd,\C^*,q)^{l(\Gamma)}$ is a component of the fixed point locus, where $q$ is any non-trivial element of $\llambda^*$. In this situation $M(\xd,\C^*,q)^{l(\Gamma)}$ is just the moduli space of isomorphism classes of pairs $(L,\psi)$, where $L$ is line bundle over $\xg$ and $\psi:q^*L\xrightarrow{\sim}L^*$ is an isomorphism satisfying $q^*\psi=-\psi^*$, and the image $p_{\Delta*}(L,\psi)$ is just the pushforward of $L$ equipped with the induced symplectic form. This "abelianization" phenomenon is a manifestation, with the extra structure of the symplectic form, of the corresponding description of a component in the fixed point locus of $M(X,\GL(2n,\C))$ under the action of a subgroup of order $2n$ in $J(X)$.

\newpage

\chapter{Fixed points when the action does not involve tensorization}\label{chapter-alpha-trivial}

Throughout this Section $G$ is a connected semisimple complex Lie group with centre $Z$ and Lie algebra $\lie g$, $X$ is a compact Riemann surface and $\Gamma$ is a subgroup of $\Aut(X)\times\Out(G)\times\C^*$. From Section \ref{section-action} we have an action of $\Gamma$ on $\cM(X,G)$ given by homomorphisms $\eta:\Gamma\to\Aut(X), a:\Gamma\to\Out(G)$ and $\mu:\Gamma\to\C^*$. We also fix a lift $\theta$ of $a$. Note the absence of $\alpha\in Z^1_{a}(\Gamma,H^1(X,Z))$, which is trivial in this Chapter. 

Let us denote the fixed-point subvariety of $\cM(X,G)$ under the above defined action of $\Gamma$ by $\cM(X,G)^{\Gamma}$. This section gives a description of $\cM(X,G)^{\Gamma}$ which is a particular case of the answer in Chapter \ref{chapter-general} when $\eta$ is injective. When $\eta$ is not injective it gives a description which is refined in Chapter \ref{chapter-general}.

\section{The forgetful morphism}
\label{section-forgetful-morphism}
%%%%%%%%%%%%%%%%%%%%%%%%%%%%%%%%%%%%%%%%%%%
Let $c$ a 2-cocycle in $Z^2_{a}(\Gamma,Z)$ and $\mu$ a character of $\Gamma$.

\begin{proposition}\label{underlyingsemi}
Let $(E,\cdot,\phi)$ be a $(\theta,c,\mu)$-twisted $\Gamma$-equivariant $G$-Higgs bundle as defined in Section \ref{section-twisted-equivariant-higgs-pairs}.
\begin{enumerate}
  \item  If $(E,\phi)$ is (semi)stable then $(E,\cdot,\phi)$ is (semi)stable.
  \item If $(E,\cdot,\phi)$ is semistable then $(E,\phi)$ is semistable.
  \item If  $(E,\cdot,\phi)$ is polystable 
        then $(E,\phi)$ is polystable.
 \end{enumerate}
\end{proposition}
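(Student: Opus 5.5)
Part (1) is immediate from the definitions. Definition \ref{definition-stable-twisted-equivariant} tests only $\Gamma$-invariant $s\in i\lie k^{\Gamma}$ and $\Gamma$-invariant reductions $\sigma\in H^0(X,E(G/P_s))^{\Gamma}$ compatible with $\phi$. These form a subset of the pairs $(s,\tau)$ tested in Definition \ref{def-stability-higgs-pair} for the underlying $G$-Higgs bundle $(E,\phi)$, and the degree $\deg E(\sigma,s)$ is the same number whether or not we remember the $\Gamma$-action. So any violating pair for $(E,\cdot,\phi)$ is also a violating pair for $(E,\phi)$, and (semi)stability descends. We first use Proposition \ref{prop-different-theta-higgs} to arrange that $\theta$ preserves a maximal compact $K$, so that both notions are defined with respect to the same $K$ and the same $z$.

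For (2) we argue by contradiction using the canonical (Harder--Narasimhan) reduction. Suppose $(E,\phi)$ is not semistable. Its Harder--Narasimhan reduction is a pair $(s_0,\tau_0)$, with $s_0\in i\lie k$ and $\tau_0$ a reduction to $P_{s_0}$ compatible with $\phi$, that is uniquely characterized by maximal destabilization. For each $\gamma\in\Gamma$, apply the twisted action: pull back along $\eta_\gamma$, twist by $\theta_\gamma$, and multiply the Higgs field by $\mu_\gamma$. This sends $(E,\phi)$ to itself via the $\Gamma$-structure $\cdot$. The central factors $c(\gamma,\gamma')$ lie in every parabolic, so the 2-cocycle plays no role here. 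This transformation carries a destabilizing pair $(s,\tau)$ to another destabilizing pair $(\theta_\gamma(s),\gamma\cdot\tau)$ of the same degree. Indeed, rescaling $\phi$ by $\mu_\gamma$ preserves $E(\lie g)_{\tau,s}$, pullback by an automorphism of $X$ preserves degrees, and $\langle z,\cdot\rangle$ is $\Gamma$-invariant because $z\in i\zk^{\Gamma}$. By uniqueness, $(s_0,\tau_0)$ is fixed up to the usual conjugacy of $s_0$ within $P_{s_0}$. Hence $s_0$ can be taken in $i\lie k^{\Gamma}$ after averaging or conjugating, and $\tau_0$ is $\Gamma$-invariant. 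This yields a $\Gamma$-invariant destabilizing reduction, contradicting the semistability of $(E,\cdot,\phi)$. The main obstacle is the uniqueness statement for the Harder--Narasimhan reduction of Higgs bundles with general $G$, in the form needed to deduce invariance of the pair $(s_0,\tau_0)$. I would cite the $G$-Higgs version of the canonical reduction, as in \cite{PBI} and \cite{schmitt:2008}, and keep track of the fact that $s_0$ is only determined up to the Weyl-type ambiguity inside the Levi.

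For (3) I would avoid reductions and use Theorem \ref{EH1-equivariant}. Polystability of $(E,\cdot,\phi)$ gives a $\Gamma$-invariant reduction $h$ to $K$ solving the Hitchin equation (\ref{hitchin-equation-equivariant}). Forgetting invariance, $h$ is a reduction of $E$ solving (\ref{hitchin-equation}) with the same $z$. Theorem \ref{EH1} then shows that $(E,\phi)$ is $z$-polystable. The only point to check is that both equations use the same $K$, the same $z$, and the same Kähler form $\omega$. We can take $\omega$ to be $\Gamma$-invariant by averaging, and it still has total volume one.
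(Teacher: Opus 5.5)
Parts (1) and (3) match the paper's arguments. Part (1) is read off from Definition \ref{definition-stable-twisted-equivariant}, and part (3) combines Theorem \ref{EH1-equivariant} with Theorem \ref{EH1}. For (2) you use the same idea as the paper: uniqueness of the Harder--Narasimhan object forces $\Gamma$-invariance. The paper works with the HN filtration of the Higgs vector bundle $(\ad(E),\ad(\phi))$, whose uniqueness is classical and so avoids your worry about canonical reductions for general $G$. Its middle term $F_{(n+1)/2}$ is the adjoint bundle of your canonical reduction.

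The step that fails is ``hence $s_0$ can be taken in $i\lie k^{\Gamma}$ after averaging or conjugating''. Uniqueness only shows that the parabolic subgroup scheme $\underline{P}\subset\Ad(E)$ determined by $\tau_0$ is $\Gamma$-invariant. Concretely, $E_{\tau_0}\cdot\gamma=E_{\tau_0}g_\gamma$ for some $g_\gamma\in G$, with $\theta_\gamma^{-1}(P_{s_0})=g_\gamma^{-1}P_{s_0}g_\gamma$. So $P_{s_0}$ is preserved by $\Int_{g_\gamma}\circ\theta_\gamma^{-1}$, which differs from $\theta_\gamma^{-1}$ by an inner automorphism, and in general not by $\theta_\gamma$ itself; note $g_\gamma$ need not lie in $P_{s_0}$. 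Averaging $\theta_\gamma(s_0)$ over $\Gamma$ does land in $i\lie k^{\Gamma}$, but it defines a different parabolic, possibly $G$ itself. Conjugating is not available, because Definition \ref{definition-stable-twisted-equivariant} is stated for a fixed $\theta$.

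Read literally, the definition makes (2) false, so no such step can work. Take $G=\SL(2,\C)$ and $\Gamma=\{1,a,b,ab\}\cong(\Z/2\Z)^2$ with $\eta$ and $\mu$ trivial. Set $t_a=\mathrm{diag}(i,-i)$, $t_b=\begin{pmatrix}0&1\\-1&0\end{pmatrix}$, $t_{ab}=t_at_b$, and $\theta_\gamma=\Int_{t_\gamma}$, which preserves $K=\mathrm{SU}(2)$. Define $c\in Z^2_a(\Gamma,Z)$ by $t_\gamma t_{\gamma'}=c(\gamma,\gamma')t_{\gamma\gamma'}$. On the frame bundle $E$ of $L\oplus L^{-1}$ with $\deg L>0$, the rule $e\cdot\gamma:=e\,t_\gamma$ is a $(\theta,c)$-twisted $\Gamma$-equivariant structure, and $\phi=0$ is invariant.

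Since $\theta(\Gamma)$ acts on $\lie{su}(2)$ by rotations by $\pi$ about three orthogonal axes, $i\lie k^{\Gamma}=0$. Hence $(E,\cdot,0)$ is vacuously semistable, even polystable, in the sense of Definition \ref{definition-stable-twisted-equivariant}, while $E$ is unstable. Yet $\Gamma$ acts trivially on $\Ad(E)$, so the destabilizing Borel subgroup scheme is invariant. Via Proposition \ref{prop-different-theta-higgs}, the cohomologous lift $\Int_{t^{-1}}\theta=1$ with cocycle $cc_{t^{-1}}=1$ turns the same object into $E$ with trivial action, which is unstable. (For a faithful $\eta$, use $L=K_X$ with its canonical linearization and compose the induced action on frames with right multiplication by $t_\gamma$.)

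The paper's proof has the same unchecked step: it passes from the invariance of $F_{(n+1)/2}$ to a contradiction without producing an $s\in i\lie k^{\Gamma}$ and a $\Gamma$-invariant section of $E(G/P_s)$. The repair belongs in the definition. Semistability of $(E,\cdot,\phi)$ must be tested against $\Gamma$-invariant parabolic subgroup schemes of $\Ad(E)$ compatible with $\phi$. With that notion your uniqueness argument closes with no averaging needed. The same example also shows that Theorem \ref{EH1-equivariant}, on which your part (3) and the paper's rely, holds only under this corrected notion.
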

\begin{proof}
The statement $(1)$ is obvious. For the statement $(3)$ let $(E,\cdot,\phi)$ be a polystable twisted equivariant $G$-Higgs bundle. Then by Theorems \ref{EH1-equivariant} and \ref{EH1} the underlying $G$-Higgs bundle is polystable.

Now we prove $(2)$. Suppose $(E,\cdot,\phi)$ is semistable
 but $(E,\phi)$ is not semistable. Note that, in char $0$, $(E,\phi)$ is semistable if and only if 
 $(\ad(E),\ad(\phi))$ is semistable ( see \cite[Lemma 2.10]{AB}). 
 Thus $(\ad(E),\ad(\phi))$ is not semistable. Then there is a unique filtration of $(\ad(E),\ad(\phi))$ 
 by $\ad(\phi)$ invariant sub-bundles
 \[0=F_0\subset F_1\subset \cdots F_{n-1}\subset F_{n}=\ad(E)\] such that each $(F_{i}/F_{i-1},\ad(\phi)|_{F_i/F_{i-1}})$ is semistable and
 $\mu(F_{i}/F_{i-1})<\mu(F_{i-1}/F_{i-2})$, for all $i=1,2,\cdots,n$. Then $n$ is odd and $F_{n+1/2}$ is a parabolic subalgebra
 bundle of $\ad(E)$ (see proof of \cite[Lemma 2.5]{bhp} and \cite[Lemma 2.11]{AB}).
 Moreover, as $(\ad(E),\ad(\phi))$ is not semistable we have $\mbox{deg}(F_{n+1/2})> 0$. Let $\Ad(E):=E\times^{G} G$ be the group scheme associated 
 to $E$ for the action of $G$ on itself.
 By \cite[Lemma 4]{AAB} there exists a parabolic 
 sub group scheme $\underline{P}\subset \Ad(E)$ such that the associated Lie algebra bundle is $F_{n+1/2}$. By uniqueness of Harder Narashimahn 
 filtration we get $F_{n+1/2}\cdot\gamma=F_{n+1/2}$ for all $\gamma\in \Gamma$.
 Now we can show that there exists a parabolic subgroup $P\subset G$ and a reduction of structure group $E_{\sigma}\subset E$ to $P$ such that 
 $\Ad(E_{\sigma})=\underline{P}$ (see the proof \cite[Lemma 2.11]{AB}. Therefore, $F_{n+1/2}=E_{\sigma}(\mathfrak{p})$.
 But this would contradict our assumption that $(E,\cdot,\phi)$ is semistable. Therefore, $(\ad(E),\mbox{ad}(\phi))$ is 
 semistable.
 
 %The sub-bundle $F_{n+1/2}$ is of the form $\mbox{ad}(E_{P})$ where $P$ is a parabolic subgroup and 
 %$E_P\subset E$ is a holomorphic reduction of the structure group. Using the uniqueness of Harder-Narashimahn filtration 
 %we can easily show that the $P$-bundle $E_{P}\subset E$ is defined by a $\Gamma$-invariant reduction $\sigma$ of
 %structure group of $E$ to $P$ and clearly, by construction $\phi\in H^0(X,E_{P}(\mathfrak{p})\otimes K_X)$. 
 %But this would contradict our assumption that $(E,\{\widetilde{\eta}_{\gamma}\},\phi)$ is semistable.

\end{proof}

Let $\sigma_{x_i}\in Z^1_{c_{x_i}}(\Gamma_{x_i},G)$ for each isotropy point $x_i\in X$ (notation as in Section \ref{section-isotropy}). By Proposition \ref{underlyingsemi} here exists a forgetful morphism
\begin{equation}\label{eq-forgetful-morphism}
    \cM(X,G,\Gamma,\theta,c,\mu,\sigma)\to \cM(X,G).
\end{equation}
We denote the image of the forgetful map inside  
$\cM(X,G)$ by  $\widetilde{\cM}(X,G,\Gamma,\theta,c,\mu,\sigma)$.

The image of the forgetful map consists of those isomorphism classes of polystable $G$-Higgs bundles which admit a 
$(\theta,c,\mu)$-twisted $\Gamma$-equivariant structure.
Now if a $G$-Higgs bundle $(E,\phi)$ admits a $(\theta,c,\mu)$-twisted $\Gamma$-equivariant structure then, by definition of twisted equivariant structures, 
we have 
\[(E,\phi)\cong (\eta_{\gamma}^*\theta^{-1}_{\gamma}(E),\mu(\gamma)\eta_{\gamma}^*\theta^{-1}_{\gamma}(\phi)),\] where $\cong$ denotes isomorphism of $G$-Higgs bundles.
As points of $\mathcal{M}(X,G)$ consist of isomorphism classes of polystable $G$-bundles we immediately have the following.
\begin{proposition}\label{obvious-fix}
 $\widetilde{\cM}(X,G,\Gamma,\theta,c,\mu,\sigma)\subset \cM(X,G)^{\Gamma}$.
\end{proposition}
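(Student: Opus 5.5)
The plan is to take a point of $\widetilde{\cM}(X,G,\Gamma,\theta,c,\mu,\sigma)$, represent it by the image under the forgetful morphism (\ref{eq-forgetful-morphism}) of a polystable $(\theta,c,\mu)$-twisted $\Gamma$-equivariant $G$-Higgs bundle $(E,\cdot,\phi)$, and produce for each $\gamma\in\Gamma$ an explicit isomorphism of $G$-Higgs bundles
$$(E,\phi)\xrightarrow{\sim}(\eta_\gamma^*\theta_\gamma^{-1}(E),\mu_\gamma\eta_\gamma^*\theta_\gamma^{-1}(\phi)),$$
which is exactly the statement that $[(E,\phi)]$ is fixed by $\gamma$ under the action of Section \ref{section-action} (with $\alpha$ trivial). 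Polystability of $(E,\phi)$, needed for it to define a point of $\cM(X,G)$ at all, is already provided by Proposition \ref{underlyingsemi}(3), so only the isomorphism has to be built.

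The construction goes as follows. The automorphism $e\mapsto e\cdot\gamma$ of the total space $E$ covers $\eta_\gamma^{-1}$ on $X$ and satisfies $(eg)\cdot\gamma=(e\cdot\gamma)\theta_\gamma^{-1}(g)$ by (\ref{eq-twisted-equivariant-axioms}). Hence, via the universal property of the pullback, it induces a map $E\to\eta_\gamma^*E$ over the identity of $X$, and the twisting relation says precisely that this map is $G$-equivariant once the $G$-action on the target is replaced by the one defining $\theta_\gamma^{-1}(\eta_\gamma^*E)=\eta_\gamma^*\theta_\gamma^{-1}(E)$ (Section \ref{section-action}). This gives an isomorphism of principal $G$-bundles $h_\gamma:E\to\eta_\gamma^*\theta_\gamma^{-1}(E)$. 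I would check the bookkeeping of left versus right, and of $\theta_\gamma$ versus $\theta_\gamma^{-1}$, against the conventions of Section \ref{section-action}; if they come out reversed, the same map read with $\gamma^{-1}$ in place of $\gamma$ does the job, since fixedness under $\gamma$ and under $\gamma^{-1}$ are equivalent.

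For the Higgs field, the induced map on $E(\lieg)\otimes K_X$ sends $(e,v)\otimes k$ to $(e\cdot\gamma,\theta_\gamma^{-1}(v))\otimes\eta_\gamma^*k$. This is exactly the diagram in the definition of a $(\theta,c,\mu)$-twisted $\Gamma$-equivariant Higgs bundle, which states that this map carries $\phi$ to $\mu_\gamma\phi$. Translating through $h_\gamma$ shows that $h_\gamma$ sends $\phi$ to $\mu_\gamma\eta_\gamma^*\theta_\gamma^{-1}(\phi)$, up to replacing $\mu_\gamma$ by $\mu_\gamma^{-1}$ according to the convention; this is again absorbed by passing to $\gamma^{-1}$, since $\mu$ is a character.

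The cocycle $c$ and the local types $\sigma$ play no role: fixedness is checked one element $\gamma$ at a time, and no compatibility between the $h_\gamma$ for different $\gamma$ is required. The only genuine obstacle is therefore the consistency of conventions in identifying $\eta_\gamma^*\theta_\gamma^{-1}(E)$ with $E$ carrying the twisted action, and I would handle it by writing out transition functions in a local trivialization, as in the proof of Proposition \ref{prop-reduction-principal}.
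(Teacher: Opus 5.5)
Your proposal is correct and follows the paper's argument, which simply observes that a $(\theta,c,\mu)$-twisted $\Gamma$-equivariant structure gives, by definition, isomorphisms $(E,\phi)\cong(\eta_\gamma^*\theta_\gamma^{-1}(E),\mu_\gamma\eta_\gamma^*\theta_\gamma^{-1}(\phi))$, with polystability of the underlying Higgs bundle supplied by Proposition~\ref{underlyingsemi}; you just write that isomorphism out explicitly. One correction to your convention bookkeeping: swapping $\gamma$ for $\gamma^{-1}$ inverts $\eta_\gamma$, $\theta_\gamma$ and $\mu_\gamma$ all at once, so it cannot absorb a sign discrepancy in $\mu$ alone. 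The sign is fixed by the definition itself: with the invariance convention for $\mu^{-1}\theta$ (right action $v\mapsto\mu_\gamma\theta_\gamma^{-1}(v)$ on $\lieg$), the map $e\mapsto e\cdot\gamma^{-1}$ is directly an isomorphism $(E,\phi)\to(\eta_\gamma^*\theta_\gamma^{-1}(E),\mu_\gamma\eta_\gamma^*\theta_\gamma^{-1}(\phi))$, whereas the opposite convention would land you in the fixed locus of the action with $\mu^{-1}$.
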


%%%%%%%%%%%%%%%%%%%%%%%%%%%%%%%%%%%%%%%%%
\section{Fixed points and simplicity}\label{section-fixed-points-and-simplicity-alpha-trivial}
%%%%%%%%%%%%%%%%%%%%%%%%%%%%%%%%%%%%%%%%%

Recall that a $G$-Higgs bundle $(E,\phi)$ is said to be {\bf simple} if $\mbox{Aut}(E,\phi)=Z$.
% Let $(E,\phi)$ be a $G$-Higgs bundle. 
% Note that an isomorphism $E\cong \eta_{\gamma}^*\theta^{-1}_{\gamma}(E)$, for $\gamma\in \Gamma$, produces a Higgs a field 
% $\eta_{\gamma}^*\theta^{-1}_{\gamma}(\phi)$ of $\eta_{\gamma}^*\theta^{-1}_{\gamma}(E)$.  
% We say that $(E,\phi)\cong  (\eta_{\gamma}^*\theta^{-1}_{\gamma}(E),\mu(\gamma)\eta_{\gamma}^*\theta^{-1}_{\gamma}(\phi))$
% if  $E\cong \eta_{\gamma}^*\theta^{-1}_{\gamma}(E)$ and the Higgs field $\phi$ makes the following diagramme commutative

% $$
% \begin{matrix}
% E(\mathfrak{g})\otimes K_X & \cong & 
% \eta_\gamma^*\theta_{\gamma}(E)(\mathfrak{g})\otimes K_X\\
% ~\Big\uparrow\varphi && ~\,\text{  }~\,\text{ }\Big\uparrow \mu_{\gamma}\eta_{\gamma}^*\theta^{-1}_{\gamma}(\phi)\\
% X & = & X
% \end{matrix}.
% $$
%Let $\alpha\in Z^1(\Gamma,H^1(X,Z))$ be a 
%$1$-cocycle where $\Gamma$ acts on $H^1(X,Z)$ via the action of $\Aut(X)\times \Out(G)$ on $H^1(X,Z)$.

\begin{proposition} \label{simple}
Let $\theta:\Gamma\to \Aut(G)$ be a lift of $a:\Gamma\to \Out(G)$, and 
let $(E,\phi)$ be a simple $G$-Higgs bundle over $X$ such that 
 $(E,\phi)\cong (\eta_{\gamma}^*\theta^{-1}_{\gamma}(E),\mu_{\gamma}\eta_{\gamma}^*\theta^{-1}_{\gamma}(\phi))$ for each $\gamma\in\Gamma$. 
Then $(E,\phi)$ admits a $(\theta,c,\mu)$-twisted $\Gamma$-equivariant structure for some
$c\in Z^2_\theta(\Gamma,Z)$.
\end{proposition}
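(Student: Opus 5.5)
The plan is to build the twisted action directly from the chosen isomorphisms and then measure how far they are from composing correctly; simplicity will force that discrepancy into $Z$. For each $\gamma\in\Gamma$, fix an isomorphism of $G$-Higgs bundles
$$h_\gamma:(E,\phi)\xrightarrow{\sim}(\eta_\gamma^*\theta_\gamma^{-1}(E),\mu_\gamma\eta_\gamma^*\theta_\gamma^{-1}(\phi)),$$
taking $h_1=\id$. Using the identification of total spaces of $\theta_\gamma^{-1}(E)$ with $E$ (Section \ref{section-action}) and the tautological map $\eta_\gamma^*E\to E$ covering $\eta_\gamma$, we will regard $h_\gamma$ as a biholomorphism $E\to E$. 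We then define $e\cdot\gamma$ to be the image of $e$ under the inverse of this composite (or the composite itself, whichever produces a right action covering $\eta_\gamma^{-1}$). By construction it covers $x\mapsto\eta_\gamma^{-1}(x)$ and satisfies $(eg)\cdot\gamma=(e\cdot\gamma)\theta_\gamma^{-1}(g)$. The condition that $h_\gamma$ sends $\phi$ to $\mu_\gamma\eta_\gamma^*\theta_\gamma^{-1}(\phi)$ translates into the commutative square defining $(\theta,c,\mu)$-twisted equivariance of the Higgs field.

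Next we compare $(e\cdot\gamma)\cdot\gamma'$ with $e\cdot(\gamma\gamma')$. Both maps are $E\to E$ covering the same automorphism of $X$ and twisting the $G$-action by $\theta_{\gamma\gamma'}^{-1}=\theta_{\gamma'}^{-1}\theta_\gamma^{-1}$, since $\theta$ is a homomorphism. So the composite of one with the inverse of the other is a fibre-preserving $G$-equivariant automorphism of $E$ preserving $\phi$, exactly as in Remark \ref{remark-induced-iso} and the proof of Lemma \ref{lemma-f-Z}. By simplicity, $\Aut(E,\phi)=Z$, so this automorphism is right multiplication by a constant $c(\gamma,\gamma')\in Z$. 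Rewriting this with care about the order and the twisting gives $(e\cdot\gamma)\cdot\gamma'=(e\,c(\gamma,\gamma'))\cdot(\gamma\gamma')$. The normalisation $h_1=\id$ gives $c(\gamma,1)=c(1,\gamma)=1$.

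Finally, we must show $c\in Z^2_\theta(\Gamma,Z)$. This follows by expanding $((e\cdot\gamma)\cdot\gamma')\cdot\gamma''$ in the two possible ways. The main step is the commutation $(ez)\cdot\gamma=(e\cdot\gamma)\theta_\gamma^{-1}(z)$ for $z\in Z$, which introduces the $\theta_\gamma$-twist of $c(\gamma',\gamma'')$. Comparing the two expansions, and using that $Z$ acts freely on fibres, yields \eqref{eq-2-cocycle-condition}, possibly after replacing $c$ by $\theta$-transported values; this is precisely the associativity computation of Proposition \ref{prop-associativity-2-cocycle}. I expect the main obstacle to be bookkeeping rather than ideas: getting the direction conventions for the right action, $\eta_\gamma^{-1}$, and $\theta_\gamma^{-1}$ to match Definition \ref{def-twisted-action}, so that the resulting $c$ satisfies the cocycle identity in exactly the stated form. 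I would settle this by first checking the case $\eta$ trivial against the computation in Lemma \ref{lemma-f-Z}.
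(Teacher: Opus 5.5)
Your proposal is correct and is essentially the paper's argument: the paper notes that simplicity turns the exact sequence (\ref{exact-aut}) of Remark \ref{remark-exact-sequence-aut} into an extension $1\to Z\to\Aut_{\Gamma,\eta,\theta,\mu}(E,\phi)\to\Gamma\to 1$, and your normalised choice of isomorphisms $h_\gamma$ (with $h_1=\id$) is a set-theoretic section of this extension, whose failure to be multiplicative is the 2-cocycle $c$. Your expansion of $((e\cdot\gamma)\cdot\gamma')\cdot\gamma''$ in two ways, using $(e\cdot\gamma)z=(e\,\theta_\gamma(z))\cdot\gamma$ for $z\in Z$, already gives (\ref{eq-2-cocycle-condition}) exactly as stated, so the hedge about ``$\theta$-transported values'' is unnecessary.
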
 
 \begin{proof}
 Let $(E,\phi)$ be a  $G$-Higgs bundle over $X$ such that 
 \begin{equation}\label{sim1} 
 (E,\phi)\cong (\eta_{\gamma}^*\theta^{-1}_{\gamma}(E),\mu_{\gamma}\eta_{\gamma}^*\theta^{-1}_{\gamma}(\phi))
 \end{equation}
for all $\gamma \in \Gamma$.  Let $\Aut(E,\phi)$ be the group of automorphisms covering the identity of $X$, and $\Aut_{\Gamma,\eta,\theta,\mu}(E,\phi)$ be the group
defined in Remark \ref{remark-exact-sequence-aut}. 
The simplicity of $(E,\varphi)$ implies that 
$\Aut(E,\varphi)\cong Z$,  and hence (\ref{exact-aut})
gives an extension
$$
1\to Z\to \Aut_{\Gamma,\eta,\theta,\mu}(E,\varphi) \to \Gamma\to 1. 
$$
This extension defines a cocycle $c\in Z^2_\theta(\Gamma,Z)$,  and a
$c$-twisted homomorphism 
$$
\Gamma\to \Aut_{\Gamma,\eta,\theta,\mu}(E,\varphi)
$$ 
with cocycle $c$, that is, a $(\theta,c,\mu)$-twisted $\Gamma$-equivariant
structure on $(E,\varphi)$. 
\end{proof}

We have the following.

\begin{theorem}[Proposition 5.1 and Theorem 5.7 in \cite{oscar-suratno}]\label{main}
 Let ${\cM}_{ss}(X,G)\subset \cM(X,G)$ be the subvariety of $\cM(X,G)$ 
consisting of those $G$-Higgs bundles
 which are stable and simple. Fix a homomorphism $\theta:\Gamma\to \Aut(G)$ lifting $a:\Gamma\to \Out(G)$. Then 
 $$
{\cM}_{ss}(X,G)^{\Gamma}\subset \bigcup_{[c]\in H^2_a(\Gamma,Z), 
[\sigma]\in \{H^1_{c_{x_i}}(\Gamma_{x_i},G)\}} 
\widetilde{\cM}(X,G,\Gamma,\theta,c,\mu,\sigma)
$$
and 
$$ \bigcup_{[c]\in H^2_a(\Gamma,Z), 
[\sigma]\in \{H^1_{c_{x_i}}(\Gamma_{x_i},G)\}} 
\widetilde{\cM}(X,G,\Gamma,\theta,c,\mu,\sigma)
\subset
{\cM}(X,G)^{\Gamma}.
$$

\end{theorem}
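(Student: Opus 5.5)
The second inclusion is essentially Proposition \ref{obvious-fix} applied to each piece of the union, so I will handle it first. For every class $[c]\in H^2_a(\Gamma,Z)$ and every choice of local types $\sigma=\{\sigma_{x_i}\}$, the forgetful morphism (\ref{eq-forgetful-morphism}) is well defined by Proposition \ref{underlyingsemi}(3), since polystable twisted equivariant objects have polystable underlying Higgs bundles. Its image consists of $G$-Higgs bundles $(E,\phi)$ carrying an action $\cdot$ with $(E,\phi)\cong(\eta_\gamma^*\theta_\gamma^{-1}(E),\mu_\gamma\eta_\gamma^*\theta_\gamma^{-1}(\phi))$ for every $\gamma$; this is exactly the statement of Proposition \ref{obvious-fix}. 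The union over the representatives $c$ is independent of the chosen representative. Indeed, by Corollary \ref{cor-different-c-higgs} and Proposition \ref{cohom}, cohomologous cocycles give the same image up to the relabelling $\sigma\mapsto\sigma^s$, and $\rho_\Gamma^s=\rho_\Gamma$ because $Z$ acts trivially on $\lie g$. So the union indexed by $H^2_a(\Gamma,Z)$ makes sense.

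For the first inclusion I take a stable and simple $(E,\phi)$ whose class is fixed by $\Gamma$. Then for each $\gamma$ there is an isomorphism $(E,\phi)\cong(\eta_\gamma^*\theta_\gamma^{-1}(E),\mu_\gamma\eta_\gamma^*\theta_\gamma^{-1}(\phi))$. Proposition \ref{simple} then produces a cocycle $c\in Z^2_\theta(\Gamma,Z)$ together with a $(\theta,c,\mu)$-twisted $\Gamma$-equivariant structure $\cdot$ on $(E,\phi)$. The cocycle comes from the extension $1\to Z\to\Aut_{\Gamma,\eta,\theta,\mu}(E,\phi)\to\Gamma\to1$, via a set-theoretic section normalised so that $c(\gamma,1)=c(1,\gamma)=1$. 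Surjectivity onto $\Gamma$ in that sequence is exactly the fixed-point hypothesis. Next I read off the local type: at each isotropy point $x_i$, Proposition \ref{local-data} gives a twisted $1$-cocycle $\sigma_e$ whose class in $H^1_{c_{x_i}}(\Gamma_{x_i},G)$ is independent of $e$, and I call it $\sigma_{x_i}$.

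The main obstacle is showing that $(E,\cdot,\phi)$ is polystable as a twisted equivariant pair, so that it actually defines a point of $\cM(X,G,\Gamma,\theta,c,\mu,\sigma)$. Proposition \ref{underlyingsemi}(1) gives stability from stability of $(E,\phi)$. This works because $\Gamma$-invariant reductions $\sigma\in H^0(X,E(G/P_s))^\Gamma$ with $s\in i\lie k^\Gamma$ are in particular reductions of $E$, so the defining inequalities are a subset of those for $(E,\phi)$. Here I must first arrange, using Proposition \ref{prop-different-theta-higgs}, that $\theta$ preserves a maximal compact $K$, and check that the stability parameter $z$ can be taken $\Gamma$-invariant. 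If that parameter compatibility is delicate, I would instead argue through Theorem \ref{EH1-equivariant}. The Hitchin metric of the stable simple $(E,\phi)$ is unique, so it is $\Gamma$-invariant: each $\gamma$ transports a solution to a solution, and $c$ takes values in $Z$, which does not affect the metric. This directly gives polystability of the twisted equivariant pair.

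Finally, the class of $(E,\phi)$ is the image of $(E,\cdot,\phi)$ under the forgetful map. It therefore lies in $\widetilde{\cM}(X,G,\Gamma,\theta,c,\mu,\sigma)$ with $[c]\in H^2_a(\Gamma,Z)$ and $[\sigma]$ in the product of the $H^1_{c_{x_i}}(\Gamma_{x_i},G)$, which proves the inclusion of ${\cM}_{ss}(X,G)^\Gamma$ into the union.
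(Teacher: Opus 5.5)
Your proposal is correct and takes the same route as the paper. It uses Proposition \ref{simple} to put a $(\theta,c,\mu)$-twisted $\Gamma$-equivariant structure on a simple fixed point, Proposition \ref{obvious-fix} for the reverse inclusion, and Proposition \ref{cohom} to index the union by $[c]\in H^2_a(\Gamma,Z)$ and the local types. Beyond that, you spell out the polystability of $(E,\cdot,\phi)$ via Proposition \ref{underlyingsemi}(1), which the paper only uses implicitly; your worry about choosing a $\Gamma$-invariant parameter $z$ does not arise, because $G$ is semisimple in this chapter and so $z=0$.
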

\begin{proof}
 Let $(E,\phi)\in {\cM}_{ss}(G)^{\Gamma}$. 
 Then, by Proposition \ref{simple}, $(E,\phi)$ admits a $(\theta,c,\mu)$-twisted $\Gamma$-equivariant structure, where $c\in Z^2_\theta(\Gamma,Z)$, the set of all $2$-cocycles where 
 $\Gamma$ acts on $Z$ via $\theta$.
  Thus $(E,\phi)\in \widetilde{\cM}(X,G,\Gamma,\theta,c,\mu,\sigma)$. 
It follows from Proposition  \ref{cohom} that the
union should run over  $[c]\in H^2_a(\Gamma,Z)$ and
$[\sigma]\in \{H^1_{c_{x_i}}(\Gamma_{x_i},G)\}$.
\end{proof}

\begin{theorem}\label{main-principal}
 Let ${M}_{ss}(X,G)\subset M(X,G)$ be the subvariety of $M(X,G)$ 
consisting of those $G$-bundles
 which are stable and simple. Fix a homomorphism $\theta:\Gamma\to \Aut(G)$ lifting $a:\Gamma\to \Out(G)$.  Then 
 $$
{M}_{ss}(X,G)^{\Gamma}\subset \bigcup_{[c]\in H^2_a(\Gamma,Z), 
[\sigma]\in \{H^1_{c_{x_i}}(\Gamma_{x_i},G)\}} 
\widetilde{M}(X,G,\Gamma,\theta,c,\sigma)
$$
and 
$$ \bigcup_{[c]\in H^2_a(\Gamma,Z), 
[\sigma]\in \{H^1_{c_{x_i}}(\Gamma_{x_i},G)\}} 
\widetilde{M}(X,G,\Gamma,\theta,c,\sigma)
\subset
{M}(X,G)^{\Gamma}.
$$

\end{theorem}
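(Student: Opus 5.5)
The plan is to run the proof of Theorem \ref{main} again, this time with the Higgs field removed throughout. The statement is just the specialisation of that theorem to $\phi=0$ and trivial $\mu$. The only extra work is to check that each ingredient still makes sense for $G$-bundles rather than Higgs bundles.

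\emph{Second inclusion.} First I would establish the bundle analogue of Proposition \ref{underlyingsemi}(3). If a $(\theta,c)$-twisted $\Gamma$-equivariant $G$-bundle is polystable, then Theorem \ref{EH1-equivariant} with $\phi=0$ gives a $\Gamma$-invariant metric solving the Hermitian--Einstein equation. Forgetting the $\Gamma$-invariance, Theorem \ref{EH1} then shows that the underlying $G$-bundle is polystable. This produces a forgetful morphism $M(X,G,\Gamma,\theta,c,\sigma)\to M(X,G)$ whose image is $\widetilde{M}(X,G,\Gamma,\theta,c,\sigma)$. Now take a $G$-bundle that admits a $(\theta,c)$-twisted $\Gamma$-equivariant structure. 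For each $\gamma\in\Gamma$, the action map $e\mapsto e\cdot\gamma$ twists the $G$-action by $\theta_\gamma^{-1}$ and covers $\eta_\gamma^{-1}$. It therefore gives an isomorphism $E\cong\eta_\gamma^*\theta_\gamma^{-1}(E)$, exactly as in Proposition \ref{obvious-fix}. Hence the union of the images lies in $M(X,G)^{\Gamma}$.

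\emph{First inclusion.} Let $E$ be stable and simple with $E\cong \eta_\gamma^*\theta_\gamma^{-1}(E)$ for all $\gamma\in\Gamma$. I would define $\Aut_{\Gamma,\eta,\theta}(E)$ as in Remark \ref{remark-exact-sequence-aut}, again with $\phi=0$. The fixed-point hypothesis makes the map to $\Gamma$ surjective. Simplicity gives $\Aut(E)=Z$, so we obtain an extension $1\to Z\to\Aut_{\Gamma,\eta,\theta}(E)\to\Gamma\to1$. Next I choose a set-theoretic section normalised so that $1\mapsto\Id$. Measuring its failure to be a homomorphism gives a map $c:\Gamma\times\Gamma\to Z$. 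Associativity in $\Aut_{\Gamma,\eta,\theta}(E)$ then gives the cocycle condition (\ref{eq-2-cocycle-condition}), where $\Gamma$ acts on $Z$ through $\theta$; this is the same computation as in Proposition \ref{prop-associativity-2-cocycle}. The section is then precisely a $(\theta,c)$-twisted $\Gamma$-equivariant structure on $E$. This is the argument of Proposition \ref{simple}.

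\emph{Stability of the twisted equivariant bundle.} A $\Gamma$-invariant destabilising reduction is in particular a destabilising reduction of $E$, so stability of $E$ implies stability of $(E,\cdot)$. This is Proposition \ref{underlyingsemi}(1). The structure also has some local type $\sigma$ at each isotropy point, so $E\in\widetilde{M}(X,G,\Gamma,\theta,c,\sigma)$.

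\emph{Reduction to cohomology classes.} It remains to pass from cocycles to classes. If the section is changed by a map $s:\Gamma\to Z$, then $c$ changes within its class in $H^2_a(\Gamma,Z)$ and $\sigma$ changes to $s^{-1}\sigma$. Proposition \ref{cohom}, with $V$ and $\rhog$ dropped, shows that these images coincide. Proposition \ref{local-data}(2) shows that $\sigma$ is only defined up to the equivalence in $H^1_{c_{x_i}}(\Gamma_{x_i},G)$. So the union may be indexed by classes $[c]$ and $[\sigma]$.

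\emph{Main obstacle.} I expect the hardest step to be the surjectivity of $\Aut_{\Gamma,\eta,\theta}(E)\to\Gamma$, together with the identification of the kernel with $Z$. One has to check that an isomorphism $E\cong\eta_\gamma^*\theta_\gamma^{-1}(E)$ really is a biholomorphism $E\to E$ covering $\eta_\gamma^{-1}$ with twisted equivariance $\theta_\gamma^{-1}$. This comes from unwinding the definitions of pullback and of extension of structure group in Section \ref{section-action}. Everything else is formal.
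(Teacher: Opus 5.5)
Your proposal is correct and follows the paper's route. The paper obtains this result by specialising the proof of Theorem~\ref{main} to bundles: Proposition~\ref{simple} gives the first inclusion, Proposition~\ref{obvious-fix} gives the second, and Proposition~\ref{cohom} justifies indexing the union by classes $[c]$ and $[\sigma]$. Your remarks on normalising the section and on turning the fixed-point isomorphisms into twisted lifts only make explicit what the paper leaves implicit.
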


%%%%%%%%%%%%%%%%%%%%%%%%%%%%%%%%%%%%%%%%%%%%%%%%%%%%%%%%%%%%%%%%%%%%%%%%%%%%
\section{Fixed points in the character variety}\label{section-character-variety-alpha-trivial}
%%%%%%%%%%%%%%%%%%%%%%%%%%%%%%%%%%%%%%%%%%%%%%%%%%%%%%%%%%%%%%%%%%%%%%%%%%%

We study now the action of $\Gamma$ on the character variety $\calR(X,G)$ and 
describe the fixed points in terms of twisted equivariant representations. Recall that we are given  homomorphisms  $\eta:\Gamma\to \Aut(X)$, 
$a:\Gamma\to \Out(G)$ and $\theta:\Gamma\to \Aut(G)$, where $\theta$ is a lift of $a$. 

Fix a point $x\,\in\, X$. The automorphism  $\eta_\gamma$ of $X$ produces a
homomorphism
$$
{\eta_\gamma}_*\, :\, \pi_1(X, x)\to \pi_1(X, \eta_\gamma(x))\, .
$$
This induces an automorphism of $\calR(X,G)$ since 
the quotient $\Hom(\pi_1(X,x),G)/G$ is independent of the base point of $X$.
Combining this with the action of $\Aut(G)$ given in Section \ref{section-prym-narasimhan-ramanan-character-varieties},
for every $\gamma\in \Gamma$ and  $\rho\in \Hom(\pi_1(X),G)$ we have 
$\rho\cdot\gamma \in \Hom(\pi_1(X),G)$ given by 
$$
\rho\cdot\gamma=\theta_\gamma^{-1}\circ \rho\circ {\eta_\gamma}_*.
$$

% In other words we have the following.

% \begin{proposition}\label{action-nahc}
% Denote  the action of $\Gamma$ on $\cM(X,G)$ and $\calR(X,G)$ by $\gamma\cdot$, and consider
% the non-abelian Hodge correspondence $\cM(X,G)\cong \calR(X,G)$ given by Theorem
% \ref{rep1}. Then, for every $\gamma\in\Gamma$  the  following diagramme commutes:

% \begin{equation}\label{eq-induced-action-character-variety}
%     \xymatrix{
% \cM(X,G) \ar[rr]^{\cong} \ar[d]_{\cdot\gamma} & & \calR(X,G) 
% \ar[d]^{\cdot\gamma}
% \\
% \cM (G)\ar[rr]^{\cong} & & \calR(X,G).
% }
% \end{equation}

% \end{proposition}

Combining Theorem \ref{rep1} with Theorems \ref{equivariant-nahc} and \ref{main} and Proposition 
\ref{obvious-fix} we have the following.

\begin{theorem}\label{main-rep}
Let ${\calR}_{\irr}(X,G)\subset \calR(X,G)$ be the subvariety of $\calR(X,G)$ 
consisting of isomorphism classes of irreducible representations, and let 
$\widetilde{\calR}(X,G,\Gamma,\theta,c)$ 
be the image of $\calR(X,G,\Gamma,\theta,c)$ in $\calR(X,G)$ under the 
natural map defined by diagramme  (\ref{compatible-rep}). Let $\calR(X,G)^{\Gamma}$ be
the fixed-point subvariety for the action of $\Gamma$ defined by the homomorphism $(\eta,a): \Gamma \to \Aut(X)\times \Out(G)$.
Then

$$
\bigcup_{[c]\in H^2_a(\Gamma,Z)}\widetilde{\calR}(X,G,\Gamma,\theta,c)\subset \calR(X,G)^{\Gamma}  
$$

and

$$
{\calR}_{\irr}(X,G)^{\Gamma}\subset \bigcup_{[c]\in H^2_a(\Gamma,Z)} 
\widetilde{\calR}(X,G,\Gamma,\theta,c).
$$
Here $\theta:\Gamma\to \Aut(G)$ is 
any  lift of $a:\Gamma\to \Out(G)$.
\end{theorem}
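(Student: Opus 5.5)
The plan is to transport Theorem \ref{main} through the non-abelian Hodge correspondence, in both its ordinary form (Theorem \ref{rep1}) and its twisted equivariant form (Theorem \ref{equivariant-nahc}). Since we work with representations, we take $\mu$ trivial and $z=0$. The first step is to check that the homeomorphism $\cM_0(X,G)\cong\calR(X,G)$ of Theorem \ref{rep1} intertwines the $\Gamma$-action of Section \ref{section-action} with the action $\rho\cdot\gamma=\theta_\gamma^{-1}\circ\rho\circ{\eta_\gamma}_*$. The argument runs as follows. Let $h$ be a harmonic metric on $(E,\phi)$. Then $\eta_\gamma^*\theta_\gamma^{-1}(h)$ is a harmonic metric on $\eta_\gamma^*\theta_\gamma^{-1}(E,\phi)$, provided we pick the maximal compact $K$ to be $\theta(\Gamma)$-invariant, which is possible as in Section \ref{section-twisted-equivariant-higgs-pairs}. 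The flat connection $D=\nabla_h+\phi-\sigma_h(\phi)$ is natural under pullback by $\eta_\gamma$ and under extension of structure group by $\theta_\gamma^{-1}$, so its holonomy is $\theta_\gamma^{-1}\circ\rho\circ{\eta_\gamma}_*$. This gives $\calR(X,G)^\Gamma\cong\cM_0(X,G)^\Gamma$.

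The second step is compatibility of the forgetful maps. Consider a polystable $(\theta,c)$-twisted $\Gamma$-equivariant Higgs bundle. Theorem \ref{EH1-equivariant} gives it a $\Gamma$-invariant harmonic metric, so the resulting flat connection is $\Gamma$-equivariant. Its equivariant holonomy is a representation $\widehat\rho$ of $\pi_1(X,\Gamma,x)$ in $G\times_{\theta,c}\Gamma$ as in (\ref{compatible-rep}), and its restriction to $\pi_1(X,x_1)$ is exactly the holonomy of the underlying flat $G$-bundle. Hence the square formed by the forgetful morphism (\ref{eq-forgetful-morphism}), the restriction $\widehat\rho\mapsto\widehat\rho|_{\pi_1(X,x_1)}$ and the two correspondences of Theorems \ref{equivariant-nahc} and \ref{rep1} commutes. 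It follows that $\widetilde{\calR}(X,G,\Gamma,\theta,c)$ is the image of $\bigcup_\sigma\widetilde\cM(X,G,\Gamma,\theta,c,1,\sigma)$.

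With these two steps in hand, the first inclusion follows from Proposition \ref{obvious-fix}. For the second inclusion we also need that irreducible representations correspond to stable and simple Higgs bundles, which is the last sentence of Theorem \ref{rep1}. Then $\calR_{\irr}(X,G)^\Gamma$ corresponds to a subset of $\cM_{ss}(X,G)^\Gamma$, and Theorem \ref{main} lets us conclude, since the union over local types $\sigma$ collapses into the union over $[c]$. Independence of the lift $\theta$, and the fact that the union may run over classes $[c]\in H^2_a(\Gamma,Z)$, follow from Propositions \ref{prop-different-theta-higgs} and \ref{cohom}. The corresponding conjugations in $G\times_{\theta,c}\Gamma$ realise these equivalences on the representation side.

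I expect the main obstacle to be the second step: showing that the equivariant holonomy genuinely lands in $G\times_{\theta,c}\Gamma$ with the correct cocycle and that it restricts to the ordinary holonomy. I would try to handle this through the description of $\pi_1(X,\Gamma,x)$ by paths from $x$ to $\eta_\gamma(x)$. A path ending at $\eta_\gamma(x)$ would be sent to the element $(g,\gamma)$, where $g$ compares parallel transport with the twisted action of $\gamma$ on the fibre, using Proposition \ref{prop-bijection-G,Gamma-action}. Checking that concatenation matches the twisted product (\ref{eq-def-twisted product}) then reduces to the axioms (\ref{eq-twisted-equivariant-axioms}).
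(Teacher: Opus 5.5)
Your proposal is correct and follows the paper's route. The paper proves this by combining Theorems~\ref{rep1}, \ref{equivariant-nahc} and \ref{main} with Proposition~\ref{obvious-fix}, i.e.\ by transporting the Higgs-bundle fixed-point result through the ordinary and twisted equivariant non-abelian Hodge correspondences. The paper takes for granted the compatibilities you single out, namely that the correspondence intertwines the $\Gamma$-actions and that the forgetful morphism corresponds to restricting $\widehat\rho$ to $\pi_1(X,x_1)$, so your write-up makes explicit what its one-line argument uses implicitly.
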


\begin{remark}
We could have considered also a non-trivial character $\mu:\Gamma\to \C^*$ 
with image the subgroup of $\C^*$ given by $\{\pm 1\}\cong \Z/2\Z$. With a minor
modification in the definition of the group $G\times_{\theta,c}\Gamma$ 
one would obtain similar results (see  
\cite{biswas-calvo-García-prada,PR,ow} 
for an analogous situation).
\end{remark}

%%%%%%%%%%%%%%%%%%%%%%%%%%
\section{Example 1}\label{section-alfa-trivial-example1}
%%%%%%%%%%%%%%%%%%%%%%%%%%

 Let $G=\SL(2,\C)$ and $(X,\sigma)$ be a hyperelliptic curve together with the 
hyperelliptic involution $\sigma$.
 In this case let $\Gamma=\Z/2\Z$  and 
consider  the homomorphism $\eta:\Z/2\Z\to \Aut(X)$ defined by sending 
$-1\mapsto \sigma$.
In this case $\Out(G)=1$ and $Z=\Z/2\Z$,  hence $\Aut(G)=\Int(G)$, 
and therefore
 $\Aut(G)$ acts trivially on the centre $Z$. So, in this case, 
we have $H^2(\Z/2\Z,Z)=\Z/2\Z$. 
Also,
there are only two characters $\mu^\pm$, defined by  $\mu^\pm(-1)=\pm 1$. 
We can then define actions on the moduli space of $\SL(2,\C)$-Higgs bundles
defined by $\eta$ and $\mu^\pm$. The case in which $\eta$ is the trivial homomorphism from $\Gamma$ to $\Aut(X)$ and $\mu=\mu^-$ is studied in
\cite{hitchin1987,García-prada,García-prada-ramanan,PR}.

%%%%%%%%%%%%%%%%%%%%%%%%%%%%%%%%%%%%%%%
\section{Example 2}\label{section-alfa-trivial-example2}
%%%%%%%%%%%%%%%%%%%%%%%%%%%%%%%%%%%%%%

Let $G=\SL(n,\C)$, with $n>2$ and $X$ a hyperelliptic curve together with the 
hyperelliptic involution $\sigma$, as above. Let $\Gamma=\Z/2\Z$ and 
$\eta:\Z/2\Z\to \Aut(X)$ be the 
homomorphism defined by sending $-1\mapsto \sigma$. 
In this case $\Out(G)\cong \Z/2\Z$ and $Z\cong \Z/n\Z$.
Let us denote the 
trivial homomorphism from $\Gamma\to \Out(G)$ by $a^+$ and the homomorphism which sends $-1$ to $b$, where
$\Out(G)=\langle b \rangle$, by $a^-$. 
In the first case we have the trivial action of $\Gamma$ on the 
centre $Z$ via $a^+$. To compute the second group cohomology of $\Z/2\Z$ with coefficients in $\Z/n\Z$ we will use the following fact:
Let $C$ be a cyclic group of order $r$ generated by $t$ and $A$ be a finite abelian group with a $C$-action. Let 
$N=1+t+\cdots+t^{r-1}\in \Z[\Gamma]$ then obviously $Na$, $a\in A$, is fixed by all $\alpha\in C$. With these notations we have 
$H^p(C, A)=\frac{A^{\Gamma}}{NA}$, $p=2,4,6...$.
Thus we have , in this case, $H^2(\Gamma,Z)=0$ when $n$ 
is odd and  $H^2(\Gamma,Z)=(\Z/2\Z)^r$ when $n$ is even,
where $r$ is the minimal number of generators of the $2$-Sylow subgroups of 
$\Z/n\Z$. 
On the other 
hand the action of the generator of $\Z/2\Z$ on $Z$ induced by $a^-$ 
sends $x\in Z$ to $x^{-1}$. In this case we have $H^2_{a^-}(\Gamma, Z)=Z^{\Z/2\Z}$.
Thus the action is trivial when $n$ is odd and 
hence $H^2_{a^-}(\Gamma, Z)=0$, and if $n$ is even then $H^2_{a^-}(\Gamma,Z)$ consists of  
all order $2$ elements of $\Z/n\Z$.
As in the previous example, we have $\mu^\pm$ as possible characters.

The cases in which $\eta$ is the trivial homomorphism from $\Gamma$ to $\Aut(X)$ and $\mu=\mu^\pm$ is studied in
\cite{García-prada,PR} (see also \cite{heller-schaposnik,schaffhauser} 
for related work).

%%%%%%%%%%%%%%%%%%%%%%%%%%%%%
\section{Example 3}\label{section-alfa-trivial-example3}
%%%%%%%%%%%%%%%%%%%%%%%%%%%%%

Let $G=\Spin(8,\C)$. Then $Z=\Z/2\Z\times \Z/2\Z$, $\Out(G)\cong S_3$. 
In \cite{PR} the authors consider various 
actions of cyclic subgroups  $\Gamma$ of $\Out(G)$, with 
$\Gamma$ acting trivially on  $X$, and identify the fixed-point subvarieties.  

Now in our situation the following three cases are relevant.

Case (I): Let $X$ be a compact Riemann surface of genus $g>2$ and $\Gamma:=S_3$.
Let $\eta:\Gamma \to \Aut(X)$ be an injective homomorphism in other words the action of $\Gamma$ on $X$ is faithful.
 Let $\sigma$ and $\tau$ generate the group $\Out(\Spin(8,\C))\cong S_3$. 
Let $a: \Gamma\to \Out(\Spin(8,\C))$ be the isomorphism defined by sending an order $2$ generator to $\sigma$
and an order $3$ generator to $\tau$. Let $\mu:\Gamma\to \mathbb{C}^*$ be a  
character of $S_3$. We know that $S_3$ has three non-equivalent conjugacy classes. Let 
$\mu_i$, $i=1,2,3$, be the corresponding characters. 
We define homomorphisms 
$F_i=(\eta,a,\mu_i):\Gamma \to \Aut(X)\times \Out(G)\times \C^*$, $i=1,2,3$. Then each $F_i$  determines an action on 
the moduli space of $G$-Higgs bundles. Let $H=\langle\tau\rangle$ be the 
normal subgroup of $G$ generated by $\tau$. 
Then by (\cite[Lemma 2.2.4]{oxr}) $H^2(\Gamma,Z)=H^2(\Gamma/H,Z^{H})$. As $\tau$ is an element of order $3$ either $Z^{H}=(e)$ or $Z^{H}=Z$.
So, we have either $H^2(\Gamma,Z)=0$ or $H^2(\Gamma,Z)=\Z/2\Z$.

Case (II): Let $X$ be a hyperelliptic curve and $\Gamma:=S_3$. We 
define a homomorphism $\eta:S_3\to \Aut(X)$ by sending $\sigma$ to $\text{Id}$ and $\tau$ to an order $2$ hyperelliptic 
involution. Let $b_i\in \Out(G)$ be the class of an order two automorphism of $G$ and $\theta_i:\Gamma \to \Out(G)$
be the homomorphism defined by $\tau \mapsto 1$ and $\sigma \mapsto b_i$. 
We define $F_i:=(\eta,\theta_i,\mu_i):\Gamma\to \text{Aut}(X)\times \text{Out}(G)$, $i=1,2,3$. Then the respective actions of $\Gamma$ on the moduli space 
of $G$-Higgs bundles are determined by $F_i$. In this case the subgroup $H$ 
acts on $Z$ trivially, therefore $Z^{H}=Z$, and hence 
$H^2(\Gamma,Z)=\Z/2\Z$.

Case (III): $X$ is a cyclic trigonal curve.  In other words we assume that the subgroup $<\tau>$ acts trivially on $X$ and
$f$ is an order $3$ automorphism such that $X/<f>\cong \mathbb{P}^1$.
This case is related to the work of Oxbury and Ramanan \cite{oxr}, and to
what they refer as Galois $\Spin(8,\mathbb{C})$-bundles.
We 
define a homomorphism $\eta:S_3\to \Aut(X)$ by sending $\sigma$ to $\Id$ and 
$\tau$ to the order $3$ automorphism $f$.
Let $b$ be the class of unique order $3$ automorphism of $X$ and $\theta:S_3\to \Out(G)$ be defined
by sending $\sigma$ to $I$ and $\tau$ to $b$.  As in the previous case we define $F_i:=(\eta,\theta_i,\mu_i):\Gamma\to \text{Aut}(X)\times \text{Out}(G)$, $i=1,2,3$. 
Then the action of $\Gamma$ on the moduli space 
of $G$-Higgs bundles is determined by $F_i$. Since $\Z/3\Z$ and 
$Z=\Z/2\Z\times \Z/2\Z$ have coprime 
order, by \cite[Lemma 2.2.4]{oxr} $H^2(\Z/3\Z,Z)=0$.

%%%%%%%%%%%%%%%%%%%%%%%%%%%%%
\section{Example 4}\label{section-alfa-trivial-example4}
%%%%%%%%%%%%%%%%%%%%%%%%%%%%%

Let $G$ be a group of type $E_6$. In this case $\text{Out}(G)=\Z/2\Z$.
Let $X$ be a hyperelliptic curve together with a hyperelliptic involution $\sigma$ and $\Gamma=\Z/2\Z$.
We define a homomorphism $\eta:\Gamma\to \text{Aut}(X)$ by sending $-1\mapsto \sigma$. As in the case 
of example $1$ we have two homomorphisms $a^\pm:\Gamma\to \text{Out}(G)$ and two characters $\mu^{\pm}$.

\newpage

\chapter{Fixed points when the action is general}\label{chapter-general}
Throughout this chapter we fix a finite group $\Gamma$, a compact Riemann surface $X$ and a connected semisimple complex Lie group $G$ with centre $Z$ and Lie algebra $\lie g$.

\section{Étale covers and lifts of \texorpdfstring{$a$}{a}}\label{section-group-theory}
Let $\eta:\Gamma\to\Aut(X)$ and $a:\Gamma\to\Out(G)$ be homomorphisms. By \cite{de-siebenthal}, there exists a homomorphism
$$\theta:\ker\eta\to\Aut(G)$$
lifting $a\vert_{\ker\eta}$. With definitions as in Section \ref{section-Gtheta} we have an extension
\begin{equation}\label{eq-extension-connected-component-gt}
    1\to\gt_0\to\gt\xrightarrow{\pt}\gamtz\to1,
\end{equation}
restricting (\ref{eq-extension-connected-component}) where $\gt_0$ is the connected component of the identity and $\gamtz$ is a (finite) subgroup of $\gamtt$. Let $\otau$ be the characteristic homomorphism of (\ref{eq-extension-connected-component-gt}). By \cite{de-siebenthal} there exists a lift $\tau:\gamtz\to\Aut(\gt_0)$ of the characteristic homomorphism of (\ref{eq-extension-connected-component-gt}), and consequently by Proposition \ref{prop-extensions-isomorphic-twisted-group} we can find $c\in\zzgt$ such that we have an isomorphism $\gt\cong\gt_0\times_{\tau,c}\gamtz$ as extensions of $\gt_0$, i.e. we have a commutative diagramme
\begin{equation*}
    \begin{tikzcd}[ar symbol/.style = {draw=none,"#1" description,sloped},
  isomorphic/.style = {ar symbol={\cong}},
  equals/.style = {ar symbol={=}},
  ]
        1\arrow[r]\ar[equal]{d}  &   \gt_0\arrow[r]\ar[equal]{d} &   \gt\arrow[r]\arrow[d,"\sim" labl]   & \gamtz  \arrow[r]\ar[equal]{d} & 1\ar[equal]{d}\\
        1 \arrow[r]  &   \gt_0 \arrow[r]  &   \gt_0\times_{\tau,c}\gamtz \arrow[r]    &     \gamtz \arrow[r]  &   1
    \end{tikzcd}.
\end{equation*}

Now let $\Lambda\le \gamtz$ be a subgroup, $\gtl:=\pt^{-1}(\Lambda)$ and let $p:Y\to X$ be a connected étale cover associated to a $\Lambda$-bundle over $X$ and consider the subgroup $\wgam\le\Aut(Y)$ lifting $\eta(\Gamma)$. This contains $\Lambda$, the Galois group of $Y$ over $X$, as a normal subgroup ($\Lambda$ is the kernel of the projection $\wgam\to\eta(\Gamma)$). Let 
\begin{equation*}
    \wgame:=\{(\gamma,\wga)\in\Gamma\times\wgam\suhthat \etag=p(\wga)\}.
\end{equation*}
This contains the subgroup $\ker\eta\times 1$, which is a copy of $\ker\eta$. Let $p_{\Gamma}:\wgame\to\Gamma$ be the projection on the first factor. We have the following commutative diagramme:
\begin{equation}\label{eq-extension-wgam}
    \begin{tikzcd}[
      ar symbol/.style = {draw=none,"#1" description,sloped},
      equals/.style = {ar symbol={=}},
      ]
        &   &   1\arrow[d]&   1\arrow[d]&   \\
        &   &   \Lambda\arrow[d]\ar[equal]{r}&   \Lambda\arrow[d]&   \\
        1\arrow[r]&  \ker\eta\ar[equal]{d}\arrow[r] &   \wgame\arrow[d,"p_{\Gamma}"]\arrow[r]&   \wgam\arrow[d,"p"]\arrow[r]&   1\\
        1\arrow[r]&   \ker\eta\arrow[r]&   \Gamma\arrow[d]\arrow[r,"\eta"]&   \eta(\Gamma)\arrow[d]\arrow[r]&   1\\
        &   &   1&   1&   \\
    \end{tikzcd},
\end{equation}
whose rows and columns are exact.

% A homomorphism $\tau:\wgam\to\Aut(\gt_0)$ is said to be a \textbf{restriction} of a map $\tilde \tau:\wgame\to\Aut(G)$ if $\tilde\tau(\wgame)$ preserves $\gt_0$ and factors through a map $\wgam=\wgame/\ker\eta\to \Aut(\gt_0)$ which is equal to $\tau$. 

We say that a map $\tilde\tau:\wgame\to\Aut(G)$ is a \textbf{$c$-twisted homomorphism} if it satisfies
\begin{equation}\label{eq-c-twisted-hom}
    \tilde\tau_{\gamma,\wga}\tilde\tau_{\gamma',\wga'}=\Int_{c(\gamma,\gamma')}\tilde\tau_{\gamma\gamma',\wga\wga'}
\end{equation}
for every $(\gamma,\wga)$ and $(\gamma',\wga')\in\wgame$, where $c\in Z^2_{\tilde\tau}(\wgam,Z(\gt_0))$; of course we are assuming that $\tilde\tau(\wgam)$ preserves $Z(\gt_0)$.
Equivalently, the associated (left) action of $\wgame$ on $G$ is $c$-twisted. We have a similar notion of a $c$-twisted homomorphism $\wgam\to \gt$. Denote by $\Hom_c(A,B)$ the set of $c$-twisted homomorphisms from $A$ to $B$ and let $\Hom_c(\wgame,\Aut(G))^{\gtl}$ be the set of $c$-twisted homomorphisms whose associated $c$-twisted $\wgame$-action on $G$ preserves $\gtl$. We have a restriction map $r_{\ker\eta}:\Hom_c(\wgame,\Aut(G))\to\Hom(\ker\eta,\Aut(G))$. The image consists of homomorphisms because $c$ is trivial on $\ker\eta\in\wgame/\ker\eta\cong\wgam$. Let $\Hom_{\theta,c}(\wgame,\Aut(G)):=\Hom_c(\wgame,\Aut(G))^{\gt}\cap r_{\ker\eta}^{-1}(\theta)$. We also have a restriction map $r_{\gtl}:\Hom_{\theta,c}(\wgame,\Aut(G))\to\Hom_c(\wgam,\Aut(\gtl))$: indeed, given $\tilde\tau\in \Hom_{\theta,c}(\wgame,\Aut(G))$, $(\gamma,\wga)\in\wgame$, $\gamma'\in\ker\eta$ and $g\in\gtl$, we have
\begin{equation*}
    \tilde\tau_{(\gamma\gamma',\wga)}(g)=\Int_{c(\wga,1)}^{-1}\tilde\tau_{(\gamma,\wga)}\tilde\tau_{\gamma'}(g)=\tilde\tau_{(\gamma,\wga)}\theta_{\gamma'}(g)=\tilde\tau_{(\gamma,\wga)}(g),
\end{equation*}
hence the induced map $\tilde\tau:\wgame\to\Aut(\gtl)$ factors through $\wgam$. 

Note that any automorphism of $\gtl$ preserves the connected component $\gt_0$ of $\gtl$, hence we have a map $r_{\gt_0}:\Hom_c(\wgam,\Aut(\gtl))\to\Hom(\wgam,\Aut(\gt_0))$, where the fact that $Z(\gt_0)$ acts trivially on $\gt_0$ by conjugation implies that the image consists of (honest) homomorphisms. 
% Given a homomorphism $\tau:\wgam\to\Aut(\gt_0)$, we denote the preimage $r_{\gt_0}^{-1}(\tau)$.
Conversely, given a homomorphism $\tau:\wgam\to\Aut(\gt_0)$ and a 2-cocycle $c\in Z^2_{\tau}(\wgam,Z(\gt_0))$ such that $\gt_0\times_{\tau,c}\Lambda$ and $\gtl$ are isomorphic as extensions of $\gt_0$ (see Proposition \ref{prop-extensions-isomorphic-twisted-group}), there is a $c$-twisted extension $e_{\tau,c}:\wgam\to\Aut(\gtl)$ given as follows:
\begin{equation*}
    \begin{tikzcd}[ar symbol/.style = {draw=none,"#1" description},
    isomorphic/.style = {ar symbol={\cong}},
      ]
        \wgam\arrow[r]\arrow[rrrr,bend right=15,"e_{\tau,c}", swap] & \gt_0\times_{\tau,c}\wgam\arrow[r] & \Int(\gt_0\times_{\tau,c}\wgam)\arrow[r] & \Aut(\gt_0\times_{\tau,c}\Lambda)\ar[isomorphic]{r} &  \Aut(\gtl),
    \end{tikzcd}
\end{equation*}
where the first map sends $\gamma\in\Gamma$ to $(1,\gamma)\in\gt_0\times_{\tau,c}\wgam$ and the existence of the second map follows from the fact that $\Lambda$ is normal in $\wgam$. The map $\etc$ is $c$-twisted, since $\Int_{(1,\gamma)}\Int_{(1,\gamma')}=\Int_{c(\gamma,\gamma')}\Int_{1,\gamma\gamma'}$ by definition of the group multiplication on $\gt_0\times_{\tau,c}\wgam$.

In summary, we have the following diagramme:
\begin{equation}\label{eq-restriction-extension maps}
    \begin{tikzcd}
        \Hom(\wgame,\Out(G)) &
        \Hom_{\theta,c}(\wgame,\Aut(G)) \arrow[l,"q_*"]\arrow[r,"r_{\gtl}"] & \Hom_c(\wgam,\Aut(\gtl))\arrow[d,"r_{\gt_0}"]\\
        \Hom(\Gamma,\Out(G))\arrow[u,"p_{\Gamma}^*"]&       &   \Hom(\wgam,\Aut(\gt_0))
    \end{tikzcd},
 \end{equation}
where $q_*$ is the pushforward of the natural projection $\Aut(G)\to\Out(G)$ and $p_{\Gamma}^*$ is the pullback of $p_{\Gamma}:\wgame\to\Gamma$. For each homomorphism $\tau:\wgam\to\Aut(\gt_0)$ and each 2-cocycle $c\in Z^2_{\tau}(\wgam,Z(\gt_0))$ as above we set 
\begin{align*}
    &\Hom_{\theta,\tau,c}(\wgame,\Aut(G)):=r_{\gt_{\Lambda}}^{-1}(e_{\tau,c})\subset\Hom_{\theta,c}(\wgame,\Aut(G))\andd\\
    &\Hom_{\theta,\tau,c}(\wgame,\Out(G)):=q_*(\Hom_{\theta,\tau,c}(\wgame,\Aut(G))).
\end{align*}

% Let $S_{\tau,c}$ be the subset of $\outgtz\times\zzgtw$ extending $(\otau,c)$. We construct an \textbf{extension map}
% \begin{equation}\label{eq-extension-map}
    % e:S_{\tau,c}\to\Hom(\eta(\Gamma),\outgt).
% \end{equation}
% Let $\otau':\wgam\to \outgtz$ be a homomorphism extending $\otau$ and $c'\in \zzgtw$ a 2-cocycle extending $c$. Consider a lift $\tau':\wgam\to\Aut(\gt_0)$ which restricts to $\tau$.
% Let $\underline{G}^{\theta}:=\gt_0\times_{\tau',c'}\wgam$, which contains $\gt$ as a normal subgroup. Given $\gamma\in\wgam$, the inner automorphism $\Int_{(1,\gamma)}\in\Int(\underline{G}^{\theta})$ restricts to $\tau\in\Aut(\gt_0)$. Moreover, for every $\gamma,\gamma'\in \wgam$ we have
% \begin{equation*}
    % \Int_{\gamma}\Int_{\gamma'}=\Int_{(\oc(\gamma,\gamma'),\gamma\gamma')}=\Int_{c'(\gamma,\gamma')}\Int_{(1,\gamma\gamma')},
% \end{equation*}
% in particular the composition with the natural homomorphism $\Aut(\gt)\to\Out(\gt)$ induces a homomorphism $e(\otau',c'):\wgam\to \Out(\gt)$. It is also clear that the image of $\Lambda$ in $\Out(\gt)$ is trivial, thus this descends to a homomorphism $\eta(\Gamma)\to\outgt$ as required. Note that $e(\otau',c')$ does not depend on the choice of $\tau$ and $\tau'$, since their ambigüity relies on the composition with conjugations by elements of $\gt_0<\gt$.

Now let $\tilde\tau:\wgame\to\Aut(G)$ be a $c$-twisted homomorphism in $\Hom_{\theta,\tau,c}(\wgame,\Aut(G))$ preserving $\liegm$. There exists an associated left $(\tau,c)$-twisted action (with respect to the adjoint action of $\gt_0$ on $\liegm$) of $\wgame$ on $\liegm$, namely
\begin{equation}\label{eq-action-tildetau}
    \rhotm:=\pg^*(\mu^{-1})\tilde\tau:\wgame\to\GL(\liegm);\,(\gamma,\wga)\mapsto (v\mapsto\mug^{-1}\tilde\tau_{\gamma,\wga}(v)).
\end{equation}
Moreover, since by definition of $\liegm$ and the fact that $r_{\ker\eta}(\tilde\tau)=\theta$ the action is trivial on $\ker\eta$, and $c$ only depends on the coset in $\wgam$, this factors through a $(\tau,c)$-twisted action of $\wgam$ on $\liegm$ which we also call $\rhotm$. 

\begin{remark}\label{remark-not-preserve-liegm}
    In Sections \ref{section-fixed-points-general-alpha-trivial} and \ref{section-general-theorem} we will encounter $c$-twisted homomorphisms $\tilde\tau$ living in the set $\Hom_{\theta,\tau,c}(\wgame,\Aut(G))$ which do not necessarily preserve $\liegm$. However, we still have a well defined linear homomorphism 
\begin{equation*}
    \rhotm:\wgam\to\Hom(\liegm,\lie g).
\end{equation*}
Given a $\gt_0$-bundle $F$ over $Y$, this induces a map $H^0(Y,F(\liegm)\otimes K_Y)\to H^0(Y,F(\lie g)\otimes K_Y)$. Hence we have a notion of \textbf{$(\tau,c,\rhotm)$-twisted $\wgam$-equivariant $(\gt_0,\liegm)$-Higgs pair} over $Y$, which is given precisely as in Definition \ref{mega} by regarding the images of the Higgs field under the action of $\wgam$ as sections of $F(\lie g)\otimes K_Y$ and imposing that they actually live in $F(\liegm)\otimes K_Y$ and are equal to the Higgs field.
\end{remark}

\section{Fixed points when \texorpdfstring{$\alpha$}{alpha} is trivial}\label{section-fixed-points-general-alpha-trivial}

Fix a homomorphism
$$\Gamma\to\Aut(X)\times\Out(G)\times\C^*;\,\gamma\mapsto(\fg,\ag,\mug).$$ Keep the notation of Section \ref{section-group-theory}. We refine the results of Chapter \ref{chapter-alpha-trivial}.

\begin{proposition}\label{prop-fixed-points-reduction-alpha-trivial-principal}
Consider a lift $\theta:\ker\eta\to\Aut(G)$ of $a\vert_{\ker\eta}$ and a subgroup $\Lambda\le \gamtz$. Take a connected étale cover $p:Y\to X$ associated to a $\Lambda$-bundle over $X$ and the group $\wgam\le\Aut(Y)$ fitting in (\ref{eq-extension-wgam}). Let $\tau:\wgam\to\Aut(\gt_0)$ be a homomorphism and $c\in\zzgtw$ a 2-cocycle such that there is an isomorphism of extensions $\gtl:=\pt^{-1}(\Lambda)\cong \gt_0\times_{\tau,c}\Lambda$. Assume that $\pg^*a\in\outc$ (see Section \ref{section-group-theory}).

Let $F$ be a $(\tau,c)$-twisted $\wgam$-equivariant $\gt_0$-bundle over $Y$. Then $F$ can be regarded as a $\gtl$-bundle over $X$ via Theorem \ref{th-prym-narasimhan-ramanan-principal}, and its extension of structure group $E$ to $G$ is isomorphic to $E\gamma$ for each $\gamma\in\Gamma$.
\end{proposition}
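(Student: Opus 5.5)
The plan is to build the isomorphism $E\cong E\gamma=\etag^*\theta_\gamma^{-1}(E)$ directly on the total space of $F$, using the twisted action of $\wgam$ together with a lift $\tilde\tau\in\homtc$ of $\pg^*a$. First, by Theorem \ref{th-prym-narasimhan-ramanan-principal} (equivalently Propositions \ref{prop-bijection-G,Gamma-action} and \ref{prop-twisted-equivariant-bundles-one-to-one}), the total space of $F$ carries a right $\gtl\cong\gt_0\times_{\tau,c}\Lambda$-action making it a $\gtl$-bundle over $X$. Let $E=F\times_{\gtl}G$ be its extension of structure group. The elements $\wga\in\wgam$ act on $F$ covering automorphisms of $Y$ that lift $p(\wga)$. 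Hence, after forgetting the $\Lambda$-part, they cover automorphisms of $X$.

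Next, fix $\gamma\in\Gamma$ and choose $\wga$ with $(\gamma,\wga)\in\wgame$, which is possible by exactness of (\ref{eq-extension-wgam}). Since $\pg^*a\in\outc$, there is $\tilde\tau\in\homtc$ with $q_*\tilde\tau=\pg^*a$. The automorphism $\tilde\tau_{\gamma,\wga}$ of $G$ lifts $a_\gamma$, preserves $\gtl$, and restricts on $\gtl$ to $e_{\tau,c}(\wga)=\Int_{(1,\wga)}$. Define
$$h_\gamma:E\to E,\qquad [f,g]\mapsto[f\cdot\wga,\ \tilde\tau_{\gamma,\wga}^{-1}(g)].$$
Well-definedness reduces to the identity $(f u)\cdot\wga=(f\cdot\wga)\,\tilde\tau_{\gamma,\wga}^{-1}(u)$ for $u\in\gtl$. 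This holds because the twisted action of $\wga$ on $F$ is, by Proposition \ref{prop-bijection-G,Gamma-action}, right multiplication by $(1,\wga)$ in $\gt_0\times_{\tau,c}\wgam$, and conjugation by $(1,\wga)$ on $\gtl$ is $e_{\tau,c}(\wga)$. The same identity, applied on the $G$-factor, shows that $h_\gamma$ twists the $G$-action by $\tilde\tau_{\gamma,\wga}^{-1}$. Since $h_\gamma$ covers $\etag^{-1}$ on $X$, it gives an isomorphism of $G$-bundles $E\cong\etag^*\tilde\tau_{\gamma,\wga}^{-1}(E)$.

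Finally, $\tilde\tau_{\gamma,\wga}$ and $\theta_\gamma$ differ by an inner automorphism $\Int_s$, because both lift $a_\gamma$. By the remark in Section \ref{section-action}, right multiplication by $s$ identifies $\tilde\tau_{\gamma,\wga}^{-1}(E)$ with $\theta_\gamma^{-1}(E)$, which gives $E\cong E\gamma$.

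I expect the main obstacle to be the compatibility check in the second step. There one must confirm that right multiplication by $(1,\wga)$ really agrees on $\gtl$ with $\tilde\tau_{\gamma,\wga}$ and not merely up to something central. The hypothesis $r_{\gtl}(\tilde\tau)=e_{\tau,c}$ gives exactly this, and the cocycle $c$ only enters when composing different $\wga$, so it does not affect the single-$\gamma$ statement. For $\gamma\in\ker\eta$ the map $\wga$ is the identity, $\tilde\tau_\gamma=\theta_\gamma$ fixes $\gt$ pointwise, and $h_\gamma$ is just the identity on $F$, consistent with Proposition \ref{prop-reduction-principal} for $\alpha$ trivial.
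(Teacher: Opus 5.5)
Your proposal is correct and is the paper's argument. The twisted action of $\wga$ on $F$ intertwines the $\gtl$-action via conjugation by $(1,\wga)$ in $\gt_0\times_{\tau,c}\wgam$, and this is extended to $E$ using $\tilde\tau_{\gamma,\wga}$, which lifts $a_\gamma$. You are slightly more explicit and careful than the paper: you write $h_\gamma$ on $F\times_{\gtl}G$, use $\Int_{(1,\wga)}^{-1}$ instead of the paper's $(1,\wga^{-1})(\cdot)(1,\wga)$, and spell out the final inner-automorphism step.
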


\begin{proof}
% Let $(F,\psi)$ be a $(\tau,c)$-twisted $\wgam$-equivariant $(\gt_0,\liegm)$-Higgs pair over $Y$. Then $(F,\psi)$ is a $(\gt,\liegm)$-Higgs pair over $X$ when we regard the action of $\gamtz$ as multiplication by the corresponding element of $\gt$. Let $(E,\phi)$ be its extension of structure group to $G$, which is a $G$-Higgs bundle over $X$.
Throughout the proof we call $\tau:\wgam\to\Aut(\gt)$ to the extension $e_{\tau,c}$ of the given $\tau:\wgam\to\Aut(\gt_0)$ by abuse of notation. We have to show that $E\cong E\gamma$ for every $\gamma\in\Gamma$. Pick $\wga\in\wgam$ such that $\eta(\gamma)=p(\wga)$ (in other words, $(\gamma,\wga)\in\wgame$). Consider the automorphism of $F$ given by $\wga$. We want to know how it interacts with the $\gtl$-action on $F$. Consider the twisted product $\gt_0\times_{\tau,c}\wgam$. For each $e\in F$ and $(g,\lambda)\in\gt_0\times_{\tau,c}\Lambda\cong \gtl$,
\begin{align*}
    (e(g,\lambda))\cdot\wga=((eg)\cdot\lambda)\cdot\wga=((((eg)\cdot\wga)\cdot\wga^{-1})\cdot\lambda)\cdot\wga=((eg)\cdot\wga)\cdot(1,\wga^{-1})(1,\lambda)(1,\wga)=\\
    (e\cdot\wga)\tau^{-1}_{\wga}(g)\tau^{-1}_{\wga}(1,\lambda)=(e\cdot\wga)\tau^{-1}_{\wga}(g,\lambda),
\end{align*}
thus $\wga$ induces an isomorphism 
\begin{equation*}
    \hg:F\xrightarrow{\sim}\etag^{*-1}\tau_{\wga}(F);\,e\mapsto \etag^{*-1}e\cdot \wga,
\end{equation*}
where $F$ is regarded as a $\gtl$-bundle over $X$. 

Now let $\tilde\tau\in q_*^{-1}(\pg^*a)\subset\homtc$. Using the extension $\tilde\tau_{\gamma,\wga}\in\Aut(G)$ of $\tau$ we may extend the above isomorphism to $E$, thus getting $E\cong \etag^*\tilde\tau_{\gamma,\wga}(F)$. Since $\tilde\tau_{\gamma,\wga}$ is a lift of $a_{\gamma}$, we are done. 
\end{proof}

\begin{proposition}\label{prop-fixed-points-reduction-alpha-trivial-higgs}
Let $\Lambda\le \gamtz$ be a subgroup. Consider a lift $\theta:\ker\eta\to\Aut(G)$ of $a\vert_{\ker\eta}$, a connected étale cover $p:Y\to X$ associated to a $\Lambda$-bundle over $X$ and the group $\wgam\le\Aut(Y)$ fitting in (\ref{eq-extension-wgam}). Let $\tau:\wgam\to\Aut(\gt_0)$ be a homomorphism and $c\in\zzgtw$ a 2-cocycle such that there is an isomorphism of extensions $\gtl:=\pt^{-1}(\Lambda)\cong \gt_0\times_{\tau,c}\Lambda$. Assume that $\pg^*a\in\outc$ (see Section \ref{section-group-theory}) and pick $\tilde\tau\in \homtc$ such that $q_*(\tilde\tau)=\pg^*a$.

Let $(F,\psi)$ be a $(\tau,c,\rhotm)$-twisted $\wgam$-equivariant $(\gt_0,\liegm)$-Higgs pair over $Y$. Then $(F,\psi)$ can be regarded as a $(\gtl,\liegm)$-Higgs pair over $X$ via Proposition \ref{prop-twisted-equivariant-higgs pairs-one-to-one}, and its extension of structure group $(E,\phi)$ to $G$ is isomorphic to $(E,\phi)\gamma$ for each $\gamma\in\Gamma$.
\end{proposition}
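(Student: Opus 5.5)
The plan is to mimic the proof of Proposition \ref{prop-fixed-points-reduction-alpha-trivial-principal} and then check the Higgs field, as was done when passing from Proposition \ref{prop-reduction-principal} to Proposition \ref{prop-reduction-higgs}.

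First, by Proposition \ref{prop-twisted-equivariant-higgs pairs-one-to-one} applied to $\gtl\cong\gt_0\times_{\tau,c}\Lambda$ (with $V=\liegm$ and the restriction of $\rhotm$ to $\Lambda$, which is the restriction of the adjoint action by $\ker\eta$-invariance), the pair $(F,\psi)$ becomes a $(\gtl,\liegm)$-Higgs pair over $X$. Its extension of structure group to $G$ is $(E,\phi)$. For the bundle part, fix $\gamma\in\Gamma$ and choose $\wga\in\wgam$ with $(\gamma,\wga)\in\wgame$. The computation in the proof of Proposition \ref{prop-fixed-points-reduction-alpha-trivial-principal} gives an isomorphism $\hg:F\to\etag^{*-1}\tau_{\wga}(F)$ of $\gtl$-bundles, $e\mapsto e\cdot\wga$. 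Using $\tilde\tau_{\gamma,\wga}\in\Aut(G)$, which restricts to $e_{\tau,c}(\wga)$ on $\gtl$, this extends to an isomorphism $E\cong\etag^{*-1}\tilde\tau_{\gamma,\wga}(E)$. Since $q_*\tilde\tau=\pg^*a$, the automorphism $\tilde\tau_{\gamma,\wga}$ lifts $\ag$, and the isomorphism class is independent of the lift.

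The new step is to check that this isomorphism carries $\phi$ to $\mug\etag^{*}\tilde\tau_{\gamma,\wga}^{-1}(\phi)$, up to the direction conventions of Section \ref{section-action}. I would work locally on an open set $U=p^{-1}(W)$ with $K_X$ trivial on $W$. There $\psi$ is a $\gt_0$-equivariant map $F_U\to\liegm$, and by Proposition \ref{prop:EM-Gamma-equivariant-sections} the $\wgam$-invariance of $\psi$ reads
\[
\psi(e\cdot\wga)=\rhotm(\wga)^{-1}\psi(e)=\mug\,\tilde\tau_{\gamma,\wga}^{-1}(\psi(e)),
\]
with the relation between $K_Y$ and $\etag^*K_X$ absorbed into the pullback. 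If $\phi$ is locally $(e,v)\otimes k$, then its image under $\hg$ is $(e\cdot\wga,v)\otimes\etag^*k$, viewed in $\tilde\tau_{\gamma,\wga}^{-1}(E)(\lieg)$. Writing $v=\psi(e)$ and substituting the invariance relation should give exactly $\mug\tilde\tau^{-1}_{\gamma,\wga}(\phi)$, in the same way as the check in (\ref{eq-Phi-induces-local}) and in Proposition \ref{prop-reduction-higgs}.

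I expect the main obstacle to be bookkeeping rather than any conceptual difficulty. The conventions for left and right actions, the powers $\mu^{\pm1}$, and $\theta$ versus $\theta^{-1}$ have to be tracked consistently. The identification of $\tilde\tau_{\gamma,\wga}(E)(\lieg)$ with $E(\lieg)$ must be compatible with the $\wgam$-action on $F(\liegm)\otimes K_Y$ once everything is descended to $X$. There is also the subtlety from Remark \ref{remark-not-preserve-liegm}: $\tilde\tau$ need not preserve $\liegm$, so the invariance condition has to be read in $F(\lieg)\otimes K_Y$. I would fix conventions by first treating the case where $\wga$ lies in $\Lambda$, where the statement reduces to Proposition \ref{prop-twisted-equivariant-higgs pairs-one-to-one}, and then handle a general $\wga$.
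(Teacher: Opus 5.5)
Your proposal is correct and is essentially the paper's proof. You reuse the bundle isomorphism $\hg$ from Proposition \ref{prop-fixed-points-reduction-alpha-trivial-principal}, then check on a local expression $(e,v)\otimes k$ that $\wgam$-invariance of $\psi$ with respect to $\rhotm=\pg^*(\mu^{-1})\tilde\tau$ forces $\hg(\psi)=\mug\etag^{*}\tilde\tau_{\gamma,\wga}^{-1}(\psi)$, hence $\hg$ sends $\phi$ to $\phi\cdot\gamma$, which is exactly the paper's one-line local computation (it skips the preliminary case $\wga\in\Lambda$ you propose for fixing conventions).
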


\begin{proof}
It is left to show that the isomorphism $\hg:E\xrightarrow{\sim} E\gamma$ defined in the proof of Proposition \ref{prop-fixed-points-reduction-alpha-trivial-principal} for each $\gamma\in\Gamma$ respects the higgs field.  
Let $(e,v)\otimes k$ be a local expression for $\psi$, where $e\in F$, $v\in\liegm$ and $k\in K_Y$. By $\wgam$-invariance of $\psi$ we have
\begin{equation*}
    \psi=\psi\cdot\wga=(e\cdot\wga,\mug^{-1}d\tilde\tau_{\gamma,\wga}v)\otimes\etag^*k=\mug^{-1}\etag^{*-1}d\tilde\tau_{\gamma,\wga}(\hg(\psi)),
\end{equation*}
thus we conclude that $\hg(\psi)=\mug\etag^{*}\tilde\tau_{\gamma,\wga}^{-1}(\psi)$ and so $\hg$ sends $\phi$ to $\phi\cdot\gamma$ as required.
\end{proof}

\begin{proposition}\label{prop-simple-fixed-points-oscar-ramanan-alpha-trivial-principal}
Let $E$ be a simple $G$-bundle over $X$ which is isomorphic to $E\gamma$ for every $\gamma\in\Gamma$. Then there exist a lift $\theta$ of $a\vert_{\ker\eta}$ and a reduction of structure group $F$ of $E$ to $\gtl:=\pt^{-1}(\Lambda)$ satisfying the following: let $p:Y\to X$ be the connected étale cover associated to the $\Lambda$-bundle $F/\gt_0\to X$ and $\wgam$ the subgroup of $\Aut(Y)$ lifting $\eta(\Gamma)$. Then there is a homomorphism $\otau:\wgam\to\Out(\gt_0)$ such that, for every lift $\tau:\wgam\to\Aut(\gt_0)$ (which exists by \cite{de-siebenthal}), we can find a 2-cocycle $c\in Z^2_{\tau}(\wgam,Z(\gt_0))$ such that:
\begin{enumerate}
    \item We have an isomorphism $\gtl\cong\gt_0\times_{\tau,c}\Lambda$ as extensions of $\gt_0$.
    \item $\pg^*a\in\outc$ (see Section \ref{section-group-theory}).
    \item The tautological reduction of $p^*F$ to $\gt_0$ is a $(\tau,c)$-twisted $\wgam$-equivariant $\gt_0$-bundle.
\end{enumerate}
\end{proposition}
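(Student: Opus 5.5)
We would prove this in two stages. First we apply the earlier results to the kernel $\ker\eta$. Then we lift the remaining part of the $\Gamma$-action to the cover $Y$.

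\textbf{Step 1: the action of $\ker\eta$.} Restrict the $\Gamma$-action to $\ker\eta$. This subgroup acts trivially on $X$, and since $\alpha$ is trivial it acts only through $a\vert_{\ker\eta}$ and $\mu$. Because $E$ is simple and fixed by $\ker\eta$, Proposition \ref{prop-simple-fixed-points-principal} applies with $\Gamma$ replaced by $\ker\eta$. It gives a lift $\theta$ of $a\vert_{\ker\eta}$, obtained as $\beta\theta_0$ for $\beta\in\wf(E)$, together with a unique reduction $F_1$ of $E$ to $G_\theta$ satisfying $\tilde c_\theta(F_1)\cong 1$. Triviality of $\tilde c_\theta(F_1)$ lets us reduce further to $\gt$; concretely, $F_1$ can be cut down to the subset where the $Z$-factors $z_\gamma(e)$ in (\ref{eq-definition-reduction-from-iso}) are trivial. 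Pick a connected component of this $\gt$-bundle. By Proposition \ref{prop:connected-reduction} it is a reduction $F$ to $\gtl=\pt^{-1}(\Lambda)$, where $\Lambda\le\gamtz$ is the monodromy group. Its quotient $Y=F/\gt_0$ is then a connected $\Lambda$-cover $p:Y\to X$.

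\textbf{Step 2: extending the action to $Y$.} Now take $\gamma\notin\ker\eta$ and choose isomorphisms $h_\gamma:E\to\etag^*\theta_\gamma'^{-1}(E)$, where $\theta_\gamma'$ lifts $a_\gamma$. Simplicity of $E$ and uniqueness of the reduction in Step 1 imply that $h_\gamma$ carries the reduction to $G_\theta$ to a reduction of the same type for a conjugate lift. Adjusting $\theta'_\gamma$ by an inner automorphism, we may assume that $h_\gamma$ maps $F_1$ onto itself and normalises $\gt$. Composing with an element of $\gt$, we can make $h_\gamma$ preserve the chosen component $F$. It then descends to an automorphism $\wga$ of $Y$ covering $\etag$. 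The pairs $(\gamma,\wga)$ obtained this way exhaust $\wgame$: the ambiguity in $\wga$ is exactly $\Lambda$, and for $\gamma\in\ker\eta$ the relevant automorphisms are the $\Lambda$-translations.

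\textbf{Step 3: the twisted homomorphism.} Assign to each $(\gamma,\wga)$ the automorphism $\tilde\tau_{\gamma,\wga}$ of $G$ by which $h_\gamma$, composed with translation by $\wga$, twists the $G$-action. By simplicity, the compositions $h_\gamma h_{\gamma'}h_{\gamma\gamma'}^{-1}$ restricted to $F$ are given by central elements of $\gt_0$. Hence $\tilde\tau$ is a $c$-twisted homomorphism $\wgame\to\Aut(G)$ that preserves $\gtl$, restricts to $\theta$ on $\ker\eta$, and lifts $\pg^*a$. This is condition (2). Restricting $\tilde\tau$ to $\gt_0$ and projecting to $\Out(\gt_0)$ defines $\otau$, which does not depend on the choices made. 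For any lift $\tau$ of $\otau$, Proposition \ref{prop-different-theta} lets us change the twisted action by a map $s:\wgam\to\gt_0$, replacing $c$ by $cc_s$. This produces the required cocycle $c\in Z^2_\tau(\wgam,Z(\gt_0))$.

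\textbf{Conclusions (1) and (3).} On the subgroup $\Lambda$ the construction reproduces Proposition \ref{prop-extensions-isomorphic-twisted-group}, which gives condition (1). The lifted maps $h_\gamma$ act on $p^*F=F\to Y$ as a $(\tau,c)$-twisted $\wgam$-action, which is condition (3); this is Proposition \ref{prop-twisted-equivariant-bundles-one-to-one} extended from $\Lambda$ to $\wgam$.

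The main obstacle is Step 2: showing that each $h_\gamma$ with $\gamma\notin\ker\eta$ can be normalised to preserve $\gt$ and a single component $F$. We would try to deduce this from the uniqueness statement in Proposition \ref{prop-simple-fixed-points-principal}, combined with the fact that $\etag$ normalises $\ker\eta$ inside $\Gamma$. If a component is not preserved, we would compose $h_\gamma$ with an element of $\gt$ that permutes the components.
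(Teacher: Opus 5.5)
Your overall strategy matches the paper's. You apply Proposition \ref{prop-simple-fixed-points-principal} over $\ker\eta$ and transport the resulting reduction by the $h_\gamma$. You use uniqueness to find correcting elements $t_{\gamma,\hat\gamma}\in G$ so that $e\mapsto \eta_\gamma^*h_\gamma(e)t_{\gamma,\hat\gamma}$ preserves $F$ and covers $\hat\gamma$ on $Y$. Then you read off $\bar\tau$ and $c$.

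\textbf{The gap is in Step 3.} Simplicity of $E$ does not make the discrepancies central in $G^\theta_0$. It only says that $h_\gamma h_{\gamma'}$ and $h_{\gamma\gamma'}$ differ by an element of $Z$. Once the correcting elements $t$ are inserted, compare the composite of the maps of $F$ lifting $\hat\gamma$ and $\hat\gamma'$ with the map lifting $\hat\gamma\hat\gamma'$. They differ by right multiplication by an element $g$ built from the $t$'s. Since both preserve $F$ and cover the same automorphism of $Y$, this only forces $g\in G^\theta_0$, not $g\in Z(G^\theta_0)$. Correspondingly, the twisting automorphisms $\widetilde\theta_\gamma\Int_{t_{\gamma,\hat\gamma}}$ compose correctly only modulo $\Int(G^\theta_0)$. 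That is exactly why the construction yields $\bar\tau$ with values in $\Out(G^\theta_0)$ and nothing finer. So at that point:
\begin{itemize}
\item you do not yet have a $c$-twisted homomorphism with central $c$;
\item condition (2) is not established, and it depends on $\tau$ through $e_{\tau,c}$ anyway;
\item Proposition \ref{prop-different-theta} cannot be invoked, because it transports a genuine $(\tau_0,c_0)$-twisted action for an honest homomorphism $\tau_0$ to another lift, and no such starting action has been produced.
\end{itemize}

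\textbf{The missing step is a renormalisation, and this is where de Siebenthal's theorem actually enters.} Given a lift $\tau$ of $\bar\tau$, re-choose each $t_{\gamma,\hat\gamma}$ inside its $G^\theta_0$-coset so that $\widetilde\theta_\gamma\Int_{t_{\gamma,\hat\gamma}}\vert_{G^\theta_0}=\tau_{\hat\gamma}$. Do this consistently for all $\gamma$ over the same $\hat\gamma$, using simplicity and the triviality of $\theta(\ker\eta)$ on $G^\theta_0$, as in (\ref{eq-c-doesnt-depend-on-ker}). Only then does the discrepancy $g\in G^\theta_0$ satisfy $\Int_g\vert_{G^\theta_0}=\tau_{\hat\gamma}\tau_{\hat\gamma'}\tau_{\hat\gamma\hat\gamma'}^{-1}=1$, so that $g\in Z(G^\theta_0)$. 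This defines $c$ as in (\ref{eq-def-c}), associativity of composition makes it a $2$-cocycle, and the resulting action is $(\tau,c)$-twisted. Since this works directly for every lift $\tau$, Proposition \ref{prop-different-theta} is not needed.

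Two smaller remarks on Step 2, which you single out as the main obstacle. First, the paper handles it by the explicit identity $\widetilde\theta_\beta^{-1}(G^\theta)=G^{\theta'}$, for the homomorphism $\theta'$ of (\ref{eq-def-theta'}), together with the uniqueness in Proposition \ref{prop-simple-fixed-points-principal}. Second, the correcting element must in general be taken in $G_\theta$ rather than $G^\theta$. The map $h_\gamma$ can send $F$ to another component $Fg$ of the $G_\theta$-reduction with $g\notin G^\theta$. This is harmless, since $G^\theta$ is normal in $G_\theta$.
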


\begin{proof}
Let $E$ be a simple $G$-bundle which is isomorphic to $E\gamma$ for each $\gamma\in\Gamma$. According to Proposition \ref{prop-simple-fixed-points-principal} there is a lift 
$$\theta:\ker\eta\to\Aut(G)$$
of $a\vert_{\ker\eta}$ and a reduction $F$ with structure group
$\gt$. Another lift of $a\vert_{\ker\eta}$ gives such a reduction if and only if it is in the orbit of $\theta$ under the conjugation action of $\Int(G)$.
On the other hand, according to \cite{de-siebenthal} there exists a homomorphism
$$\wt:\Gamma\to\Aut(G)$$
lifting $a$. For each $\gamma\in \ker\eta$ we set $\tg=\Int_{\sg}\wtg$, where 
$$\Int_s:\ker\eta\to\Int(G);\,\gamma\mapsto \Int_{\sg}$$
is an element of $Z^1_{\wt}(\ker\eta,\Int(G))$ by Lemma \ref{lemma-lifts-vs-non-abelian-cohomology}. Fix an element $\beta\in\Gamma$. We claim that $\wtb(\gt)=G^{\theta'}$, where \begin{equation}\label{eq-def-theta'}
    \theta':\ker\eta \to\Aut(G);\,\gamma\mapsto\tg':=\Int_{\wtb(s_{\beta\gamma\beta^{-1}})}\wtg.
\end{equation}

Indeed, for every $\gamma\in\ker\eta$ and $g\in\gt$,
\begin{align*}
    \Int_{\wtb(s_{\beta\gamma\beta^{-1}})}\wtg\wtb(g)=
    \Int_{\wtb(s_{\beta\gamma\beta^{-1}})}\wtb\wt_{\beta\gamma\beta^{-1}}=
    \wtb\Int_{s_{\beta\gamma\beta^{-1}}}\wt_{\beta\gamma\beta^{-1}}(g)=\\\wtb\theta_{\beta\gamma\beta^{-1}}(g)=\wtb(g),
\end{align*}
so the inclusion $\wtb(\gt)\subseteq G^{\theta'}$ follows. Note that the same reasoning applied to $\beta$ instead of $\beta^{-1}$ and $\theta'$ instead of $\theta$ gives $\wt_{\beta}(\gtt)\subseteq\gt$, so $\gtt\subseteq\wtb(\gt)$ also follows. Moreover, we can see that $\theta'$ is a homomorphism, or equivalently (see Lemma \ref{lemma-lifts-vs-non-abelian-cohomology}) that the map 
$\Int_{\wtb(s_{\beta\bullet\beta^{-1}})}:\ker\eta\to\Int(G)$ is an element of $Z^1_{\wt}(\ker\eta,\Int(G))$: for every $\gamma$ and $\gamma'\in\ker\eta$ we have
$$\Int_{\wtb(s_{\beta\gamma\beta^{-1}})\wtg(\wtb(s_{\beta\gamma'\beta^{-1}}))}=\Int_{\wtb(s_{\beta^{-1}\gamma\beta}\wt_{\beta^{-1}\gamma\beta}(s_{\beta\gamma'\beta^{-1}}))}=\Int_{\wtb(s_{\beta\gamma\gamma'\beta^{-1}})},$$
where the last equality follows from the fact that $\Int_s\in Z^1_{\wt}(\ker\eta,\Int(G))$.

For each $\gamma\in\Gamma$ there is an isomorphism (unique up to multiplication by an element of $Z$)
$$\hg:E\to \etag^{*-1}\wtg E.$$
Consider the sub-bundle $F':=\fb^{*}\hb(F)$ of $E$. For every $e\in E$ and $g\in G$ we have
$$\fb^{*}\hb(eg)=\fb^{*}\hb(e)\wtb(g),$$
and so by the previous paragraph $F'$ is a reduction of $E$ with structure group $\wtb(\gt)=\gtt$. 

Thus, we have shown that $F'$ is a reduction of structure group of $E$ to $\gtt$. Since $\theta'$ is a lift of $a\vert_{\ker\eta}$, it must be a conjugate of $\theta$ by an element of $\Int(G)$. Moreover, $F't_{\beta}=F$ for some element $t_{\beta}\in G$ by Proposition \ref{prop-simple-fixed-points-principal} and so $\wt_{\beta}\Int_{t_{\beta}}$ preserves $\gt$. Note that, given another element $t'_{\beta}\in G$ such that $F't'_{\beta}=F$, we must have $t_{\beta}=t'_{\beta}g$ for some $g\in\gt$.

Now let $p:Y\to X$ be the étale cover of $X$ defined by $F/\gt_0$. For simplicity we assume that it is connected, so that $\Lambda=\gamtz$ and $\gtl=\gt$. The general case follows using the same argument after taking a connected component of $Y$ and the corresponding monodromy group $\Lambda$. The map $t$ determines a map from $\Gamma$ to the group of automorphisms $\Aut(Y)$ of $Y$ which is a homomorphism if we equip $\Aut(Y)$ with its trasposed multiplication, defined as follows: for each $\gamma\in\Gamma$ take an element of $Y=F/\gt_0$, choose a representative in $F$, apply $\etag^{*}\hg(\bullet)t_{\gamma}$ and take the image in $F/\gt_0$. Different choices of $t$ provide different choices of the map $\Gamma\to\Aut(Y)$ differing by elements of $\gamtz$, which is the Galois group of $Y$ over $X$. Consider the subgroup $\wgamm_Y\le\Aut(Y)$ consisting of the lifts of $\eta(\Gamma)$ to $Y$. By the previous discussion this is the subgroup generated by $\gamtz$ and the image of $\Gamma$ under any of the maps $\Gamma\to\Aut(Y)$ that we have defined. Recall that we are assuming that $\wgamm_Y$ acts on $Y$ on the right (in fact $\gamtz$ acts naturally on the right, since it comes from a principal bundle action). The rest of the proof is committed to defining a suitable action of $\wgamm_Y$ on the tautological reduction of $p^*F$ to $\gt_0$, which we also call $F$ (they have the same total space).

Let $\wgame\subseteq\Gamma\times\wgamm_Y$ be the subset of pairs $(\gamma,\wga)$ such that $\eta(\Gamma)=p(\wga)$. First note that for each pair $(\gamma,\wga)\in \wgame$ we can define an automorphism $\etag^{*}\hg(\bullet)t_{\gamma,\wga}$ of $F$ lifting $\wga$, where $t_{\gamma,\wga}\in G$ is chosen suitably. This is an automorphism of the total space of $F$ as a complex variety, and in general it does not preserve the $\gt_0$-action. Note that another such choice $t'_{\gamma,\wga}$ is equal to $t_{\gamma,\wga}g$ for some $g\in\gt_0$. We claim that the map $$\wt\Int_t:\wgame\to\Aut(G);\,(\gamma,\wga)\mapsto \wtg\Int_{t_{\gamma,\wga}},$$ 
induces a homomorphism $\wgamm_Y\to\Out(\gt_0).$
Indeed, for every $(\gamma,\wga)\in \wgame$, $e\in F$ and $g\in\gt_0$ we have 
\begin{align*}
    \etag^{*}\hg(eg)t_{\gamma,\wga}=\etag^{*}\hg(e)t_{\gamma,\wga}(\Int^{-1}_{t_{\gamma,\wga}}\wtg^{-1}(g))=\etag^{*}\hg(e)t_{\gamma,\wga}((\wtg\Int_{t_{\gamma,\wga}})^{-1}(g))
\end{align*}
and so, since both $\etag^{*}\hg(e)t_{\gamma,\wga}$ and $\etag^{*}\hg(eg)t_{\gamma,\wga}$ lie in $F$, the automorphism $\wtg\Int_{t_{\gamma,\wga}}$ of $G$ must preserve $\gt_0$. Moreover, the image of the restriction $\wtg\Int_{t_{\gamma,\wga}}\vert_{\gt_0}$ in $\Out(\gt_0)$ does not depend on $\gamma$: given another $\gamma'\in\Gamma$ such that $\eta(\gamma')=p(\wga)$, the element $\gamma^{-1}\gamma'$ must lie in $\ker\eta$. By simplicity of $(E,\phi)$, the morphism
$$h_{\gamma^{-1}\gamma'}(\bullet)s_{\gamma^{-1}\gamma'}:(E,\phi)\to\eta_{\gamma^{-1}\gamma'}^{*-1} \theta_{\gamma^{-1}\gamma'}(E,\mu_{\gamma^{-1}\gamma'}\phi)=\theta_{\gamma^{-1}\gamma'}(E,\mu_{\gamma^{-1}\gamma'}\phi)$$
must be equal to the isomorphism induced by the identity on $(F,\psi)$ up to an element of $Z$. Thus the restrictions of $\etag^{*}\hg(\bullet)t_{\gamma,\wga}$ and $\eta_{\gamma'}^{*}h_{\gamma'}(\bullet)t_{\gamma',\wga}$ to $F$ differ by an element of $G$ and, since they both preserve $F$ and lift $\wga$, this element must actually be in $\gt_0$. The compatibility of the $\gt_0$-actions then implies that $\wtg\Int_{t_{\gamma,\wga}}$ and $\wt_{\gamma'}\Int_{t_{\gamma',\wga}}$ must differ by an element of $\Int(\gt_0)$, as required. 

Therefore, we get a map $\wgamm_Y\to\Out(\gt_0)$. It is left to show that it is a homomorphism. For every $(\gamma,\wga)$ and $(\gamma',\wga')\in \wgame$ we have
\begin{align*}
    (\eta_{\gamma\gamma'}^{*}h_{\gamma\gamma'}(F))t_{\gamma\gamma',\wga\wga'}=F=\eta_{\gamma}^{*} h_{\gamma}(F)t_{\gamma,\wga}&=\eta_{\gamma'}^{*} h_{\gamma'}(\eta_{\gamma}^{*-1} h_{\gamma}(F)t_{\gamma,\wga})t_{\gamma',\wga'}\\&=
(\eta_{\gamma\gamma'}^{*}h_{\gamma\gamma'}(F))\wt_{\gamma'}^{-1}(t_{\gamma,\wga})t_{\gamma',\wga'}z
\end{align*}
for some element $z\in Z$. Hence there exists $g\in\gt_0$ such that $$\wt_{\gamma'}(t_{\gamma,\wga})t_{\gamma',\wga'}z=t_{\gamma\gamma',\wga\wga'}g,$$
and so
$$\wtg\Int_{t_{\gamma,\wga}}\wt_{\gamma'}\Int_{t_{\gamma',\wga'}}=\wt_{\gamma\gamma'}\Int_{\wt_{\gamma'}^{-1}(t_{\gamma,\wga})t_{\gamma',\wga'}}=\wt_{\gamma\gamma'}\Int_{t_{\gamma\gamma',\wga\wga'}}\Int_g,$$
as required.

Thus we have obtained a homomorphism 
$$\otau:\wgamm_Y\to\Out(\gt_0).$$
% which is induced by a homomorphism $\wgame\to\Aut(G)^{\theta}/\Int(\gt_0)'$, where $\Aut(G)^{\theta}$ is the subgroup of $\Aut(G)$ preserving $\gt_0$ and $\Int(\gt_0)'$ is the normal subgroup of inner automorphisms by elements of $\gt_0$. 
By \cite{de-siebenthal} $\otau$ lifts to a homomorphism $\wgamm_Y\to\Aut(\gt_0)$. In other words, we may rechoose the map $t:\wgame\to G$ to impose
\begin{equation}\label{eq-def-t}
    \tau:=\wt\Int_t\vert_{\gt_0}.
\end{equation}
More precisely, for each coset $\wga\in\wgamm_Y\cong\wgame/\ker\eta$ we may choose a representative $(\gamma,\wga)\in\wgame$ and define $t_{\gamma,\wga}$ so that $\tau_{\gamma}:=\wtg\Int_{t_{\gamma,\wga}}\vert_{\gt_0}$ and then, for each $\gamma'\in\gamma\ker\eta$, take the unique $t_{\gamma',\wga}$ that fits into the equation 
\begin{equation}\label{eq-c-doesnt-depend-on-ker}
    \etag^{*}\hg(\bullet)t_{\gamma,\wga}\vert_{F}=\eta_{\gamma'}^{*}h_{\gamma'}(\bullet)t_{\gamma',\wga}\vert_{F}
\end{equation}
(this exists because of simplicity of $E$ and because $\ker\eta$ acts trivially on $\gt_0$ via $\theta$). Let $c:\wgamm_Y\times\wgamm_Y\to Z(\gt_0)$ be the (unique) map satisfying
\begin{equation}\label{eq-def-c}
    \eta_{\gamma'}^{*}h_{\gamma'}(\etag^{*}\hg(\bullet )t_{\gamma,\wga}) t_{\gamma',\wga'}\vert_{F}= h_{\gamma\gamma'}(\bullet c(\wga,\wga'))t_{\gamma\gamma',\wga\wga'}\vert_{F}
\end{equation}
for each $\wga,\wga'\in\wgamm_Y$ and any $\gamma,\gamma'\in\Gamma$ satisfying $\eta(\gamma)=p(\wga)$ and $\eta(\gamma')=p(\wga')$. Note that $c$ is well defined: both sides of (\ref{eq-def-c}) are independent of the choice of $\gamma$ and $\gamma'$ by (\ref{eq-c-doesnt-depend-on-ker}), and they are both $\gt_0$-equivariant with respect to the $\gt_0$ action given by $\wt_{\gamma}\Int_{t_{\gamma}}\wt_{\gamma'}\Int_{t_{\gamma'}}=\wt_{\gamma\gamma'}\Int_{t_{\gamma\gamma'}}$, hence they defer by an element of $\gt_0$ commuting with $\gt_0$ (in other words, an element in $Z(\gt_0)$). Because of associativity of the composition of homomorphisms of $\gt_0$ bundles, $c\in Z^2_{\tau}(\wgamm_Y,Z(\gt_0))$ is a 2-cocycle. 

Define a right action of $\wgamm_Y$ on $F$ as follows:
\begin{equation}\label{eq-def-twisted-equivariant-actiom-F}
     F\times\wgamm_Y\to F;\,(e,\wga)\mapsto e\cdot\wga:= \etag^{*-1}\hg(e)t_{\gamma,\wga},\,\eta(\gamma)=p(\wga).
\end{equation}
This is independent of the choice of $\gamma$ by (\ref{eq-c-doesnt-depend-on-ker}), it descends to the action of $\wgamm_Y$ by the construction of $t$ and it is $(\tau,c)$-twisted by (\ref{eq-def-t}) and (\ref{eq-def-c}). Thus it is a right $(\tau,c)$-twisted $\wgamm_Y$-equivariant action on $F$. 

This finishes the proof of (3). Statements (1) and (2) follow by construction, so we are done.
\end{proof}

\begin{proposition}\label{prop-simple-fixed-points-oscar-ramanan-alpha-trivial-higgs}
Let $(E,\phi)$ be a simple $G$-Higgs bundle over $X$ which is isomorphic to $(E,\phi)\gamma$ for every $\gamma\in\Gamma$. Then there exist a lift $\theta$ of $a\vert_{\ker\eta}$ and a reduction of structure group $(F,\psi)$ of $(E,\phi)$ to $\gtl:=\pt^{-1}(\Lambda)$ satisfying the following: let $p:Y\to X$ be the étale cover associated to the $\Lambda$-bundle $F/\gt_0\to X$ and $\wgam$ the subgroup of $\Aut(Y)$ lifting $\eta(\Gamma)$. Then there is a homomorphism $\otau:\wgam\to\Out(\gt_0)$ such that, for every lift $\tau:\wgam\to\Aut(\gt_0)$ (which exists by \cite{de-siebenthal}), we can find a 2-cocycle $c\in Z^2_{\tau}(\wgam,Z(\gt_0))$ such that:
\begin{enumerate}
    \item We have an isomorphism $\gtl\cong\gt_0\times_{\tau,c}\Lambda$ as extensions of $\gt_0$.
    \item $\pg^*a\in\outc$ (see Section \ref{section-group-theory}).
    \item There exists $\tilde\tau\in\homtc$ 
    % preserving $\liegm$ 
    such that $q_*(\tilde\tau)=\pg^*a$ and the tautological reduction of $p^*(F,\psi)$ to $\gt_0$ is a $(\tau,c,\rhotm)$-twisted $\wgam$-equivariant $(\gt_0,\liegm)$-Higgs pair.
\end{enumerate}
\end{proposition}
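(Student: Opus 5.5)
The plan is to run the proof of Proposition \ref{prop-simple-fixed-points-oscar-ramanan-alpha-trivial-principal} verbatim on the underlying bundle, and then check that every object built there is compatible with the Higgs field. Since $(E,\phi)$ is simple and fixed, I first restrict the action to $\ker\eta$, where $\eta$ is trivial and $\alpha$ is trivial. Proposition \ref{prop-simple-fixed-points-higgs}, applied to $\ker\eta$ with character $\mu\vert_{\ker\eta}$, then gives a lift $\theta$ of $a\vert_{\ker\eta}$ and a unique reduction $(F,\psi)$ of $(E,\phi)$ to a $(\gt,\lie g^{\theta}_{\mu})$-Higgs pair. Here $\ctt(F)$ is trivial, so the structure group is $\gt$ rather than $\gs$. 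Passing to a connected component of $F/\gt_0$ replaces $\gt$ by $\gtl$ with $\Lambda$ the monodromy group (Proposition \ref{prop:connected-reduction}); the Higgs field descends by Lemma \ref{lemma-exists-higgs-field-reduction}.

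Next, for each $\gamma\in\Gamma$ I fix isomorphisms $\hg:(E,\phi)\to(E,\phi)\gamma$, which are unique up to $Z$ by simplicity, and a global lift $\wt$ of $a$. Exactly as in the principal case, $\etag^{*}\hg(\bullet)t_{\gamma,\wga}$ maps $F$ to itself for suitable $t_{\gamma,\wga}\in G$. The uniqueness of the reduction, now in the Higgs version of Proposition \ref{prop-simple-fixed-points-higgs}, is what guarantees that the translate of $F$ is again a reduction carrying the Higgs field, so $t$ can be chosen to preserve $(F,\psi)$ and not merely $F$. From this I obtain the homomorphism $\otau:\wgam\to\Out(\gt_0)$, a lift $\tau$, the 2-cocycle $c$ defined by (\ref{eq-def-c}), and the $(\tau,c)$-twisted action (\ref{eq-def-twisted-equivariant-actiom-F}). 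Statements (1) and (2) then follow by construction, just as before.

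For (3), I set $\tilde\tau_{\gamma,\wga}:=\wtg\Int_{t_{\gamma,\wga}}$. This map is $c$-twisted, its restriction to $\ker\eta$ is $\theta$ once $t\vert_{\ker\eta}=s$, it preserves $\gtl$ and induces $e_{\tau,c}$, and it satisfies $q_*\tilde\tau=\pg^*a$. So $\tilde\tau\in\homtc$. It remains to show that $\psi$ is invariant under the induced action $\rhotm=\mu^{-1}d\tilde\tau$. Writing $\psi$ locally as $(e,v)\otimes k$ with $e\in F$, I use that $\hg$ sends $\phi$ to $\mug\etag^*\wtg^{-1}(\phi)$. This is the computation of Proposition \ref{prop-fixed-points-reduction-alpha-trivial-higgs} read backwards:
\[
(e\cdot\wga,\mug^{-1}d\tilde\tau_{\gamma,\wga}v)\otimes\etag^*k=(e,v)\otimes k .
\]
The right multiplication by $t_{\gamma,\wga}$ is absorbed by $\Int_{t_{\gamma,\wga}}$ on the $\lie g$ factor. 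This shows that $\psi$ is $\wgam$-invariant as a section of $F(\lie g)\otimes K_Y$ and lies in $F(\liegm)\otimes K_Y$, which is exactly the notion of Remark \ref{remark-not-preserve-liegm}.

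The main obstacle is the step where the Higgs field is transported along $\etag^*\hg(\bullet)t_{\gamma,\wga}$. When $\gamma\notin\ker\eta$, $\tilde\tau_{\gamma,\wga}$ need not preserve $\liegm$, since $\mu$ and $\theta$ are only defined on $\ker\eta$. So one must check that the image of $\psi$ still lies in $\liegm$ with the correct character. I would handle this by noting that $\tilde\tau_{\gamma,\wga}$ normalizes $\theta(\ker\eta)$ up to $\Int(\gt_0)$, so it maps $\lie g^{\theta}_{\mu}$ to $\lie g^{\theta}_{\mu'}$ with $\mu'=\mu\circ\Int_\gamma=\mu$, because $\mu$ takes values in an abelian group. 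If this fails, I would fall back on the weaker formulation of Remark \ref{remark-not-preserve-liegm}, which only requires invariance of the specific section $\psi$; that invariance follows directly from $\hg(\phi)=\phi\cdot\gamma$.
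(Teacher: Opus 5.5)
Your overall route is the paper's. You reduce on $\ker\eta$ with Proposition \ref{prop-simple-fixed-points-higgs}, rerun the construction of Proposition \ref{prop-simple-fixed-points-oscar-ramanan-alpha-trivial-principal}, put $\tilde\tau=\widetilde\theta\,\Int_t$, and check invariance of $\psi$ by the local computation in which $t_{\gamma,\widehat\gamma}$ is absorbed by $\Ad_{t_{\gamma,\widehat\gamma}}$. The step that does not work as written is the existence of $t$. In the principal proof, the element $t_\beta$ with $\eta_\beta^*h_\beta(F)\,t_\beta=F$ comes from Proposition \ref{prop-simple-fixed-points-principal}, which requires $E$ itself to be simple. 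Here only $(E,\phi)$ is simple, so ``exactly as in the principal case'' is not available. The correct substitute is Proposition \ref{prop-simple-fixed-points-higgs}, but both its converse direction and its uniqueness concern reductions of the Higgs pair (they go through Proposition \ref{prop-reduction-higgs}). So it can be applied to $F'=\eta_\beta^*h_\beta(F)$ only after you know that $\phi$ takes values in $F'(\mathfrak g^{\theta'}_\mu)$; uniqueness cannot be what proves this. Your fix in the last paragraph has the right idea, namely conjugation invariance of $\mu$. But you apply it to $\tilde\tau_{\gamma,\widehat\gamma}$, which already presupposes $t$. The paper applies it one step earlier, to $\widetilde\theta_\beta$. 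With $\theta'_\gamma=\Int_{\widetilde\theta_\beta^{-1}(s_{\beta\gamma\beta^{-1}})}\widetilde\theta_\gamma$ one gets $d\theta'_\gamma\, d\widetilde\theta_\beta^{-1}(v)=\mu_{\beta\gamma\beta^{-1}}\,d\widetilde\theta_\beta^{-1}(v)=\mu_\gamma\, d\widetilde\theta_\beta^{-1}(v)$ for $v\in\mathfrak g^\theta_\mu$. Hence $F'$ is a $(G^{\theta'},\mathfrak g^{\theta'}_\mu)$-reduction of $(E,\phi)$, and only then does Higgs uniqueness produce $t_\beta$. Alternatively, you can make your uniqueness remark honest by checking directly that $F'$ lies in the set (\ref{eq-definition-reduction-from-iso}) attached to the conjugated cocycle, comparing composites of the $h$'s via $\Aut(E,\phi)=Z$. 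That set is cut out by Higgs isomorphisms and therefore carries $\phi$.

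There are two smaller points. First, the $c$-twistedness of $\widetilde\theta\,\Int_t$ as a map to $\Aut(G)$, and not just on $G^\theta_0$, needs an argument. The paper gets it from (\ref{eq-def-c}) together with the fact that $h_{\gamma'}\circ h_\gamma$ and $h_{\gamma\gamma'}$ differ by an element of $\Aut(E,\phi)=Z$. Second, normalizing $\theta(\ker\eta)$ only ``up to $\Int(G^\theta_0)$'' is not by itself enough to preserve $\mathfrak g^\theta_\mu$. What you want is the exact identity $\tilde\tau_{(\gamma,\widehat\gamma)}\theta_{\gamma'}\tilde\tau_{(\gamma,\widehat\gamma)}^{-1}=\theta_{\gamma\gamma'\gamma^{-1}}$ for $\gamma'\in\ker\eta$. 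This holds for every element of $\Hom_{\theta,\tau,c}(\widehat\Gamma_\eta,\Aut(G))$, because $c$ is normalized and the restriction to $\ker\eta$ is $\theta$. Combined with $\mu_{\gamma\gamma'\gamma^{-1}}=\mu_{\gamma'}$, it shows that $\tilde\tau$ actually preserves $\mathfrak g^\theta_\mu$, slightly more than the paper records (cf.\ Remark \ref{remark-not-preserve-liegm}).
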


\begin{proof}
The argument is the same as the proof of Proposition \ref{prop-simple-fixed-points-oscar-ramanan-alpha-trivial-principal} after replacing Proposition \ref{prop-simple-fixed-points-principal} with Proposition \ref{prop-simple-fixed-points-higgs}. However, we need to check several things related to the Higgs field.

First recall that at the beginning of the proof we used Proposition \ref{prop-reduction-principal} to get a lift $\theta:\ker\eta\to\Aut(G)$ of $a\vert_{\ker\eta}$ and a reduction of structure group $F$ of $E$ to $\gt$. In this case we use Proposition \ref{prop-simple-fixed-points-higgs}, so that we obtain a reduction of structure group $(F,\psi)$ of $(E,\phi)$ to $(\gt,\liegm)$. On the other hand take any lift $\wt:\Gamma\to\Aut(G)$ of $a$. By Lemma \ref{lemma-lifts-vs-non-abelian-cohomology} there is a 1-cocycle $\Int_s\in Z^1_{\wt}(\ker\eta,\Int(G))$ such that $\theta=\Int_s\wt$. By assumption we have an isomorphism 
$$\hg:(E,\phi)\to\etag^{*-1}\wtg(E,\mug^{-1}\phi)$$
for every $\gamma\in\Gamma$. In the proof of Proposition \ref{prop-simple-fixed-points-oscar-ramanan-alpha-trivial-principal} we showed that $F':=\etag^*\hg(F)$ is a reduction of $E$ to $G^{\theta'}$, where $\theta'$ is given by (\ref{eq-def-theta'}). It is left to show that there exists a section $\psi'\in H^0(X,F'(\lieg^{\theta'}_{\mu})\otimes K_X)$ such that the Higgs pair $(F',\psi')$ is a reduction of structure group of $(E,\phi)$ ---in other words, that the Higgs field $\phi$ actually lies in $H^0(X,F'(\lieg^{\theta'}_{\mu})\otimes K_X)$.

Using the equation $\hb(\phi)=\mub^{-1}\fb^{*}\wt_{\beta}(\phi)$ we know that $\phi=\mub\wtb\fb^{*}\hb(\phi)\in H^0(X,E(\lieg)\otimes K)$. Given $x\in X$, $e\in \eta_x$, $k\in K_x$ and an element $v\in \liegm$ such that $\phi_x=(e,v)\otimes k$, we have 
$$\phi_{\eta_{\beta}(x)}=[(\fb^{*}\hb(e),\mub d\wtb(v))\otimes \fb^{*}k]_{\fb(x)},$$
hence it is enough to show that $d\wtb(\liegm)\subseteq\lieg^{\theta'}_{\mu}$ (in fact they are equal). But, for every $\gamma\in\ker\eta$ and $v\in\liegm$, we have
\begin{align*}
    d\ttg d\wtb(v)= \Ad_{\wtb(s_{\beta\gamma\beta^{-1}})}d\wtg d\wtb(v)=d\wtb \Ad_{s_{\beta\gamma\beta^{-1}}} d\wt_{\beta\gamma\beta^{-1}} (v)=d\wtb \theta_{\beta\gamma\beta^{-1}} (v)=\\
    \mu_{\beta\gamma\beta^{-1}} d\wtb (v)=\mug d\wtb(v),
\end{align*}
where the last equality follows from the fact that $\C^*$ is abelian and $\mu$ is a homomorphism. 

Next we defined an étale cover $Y:=F/\gt_0$, which we assume for simplicity that is connected, and a $(\theta,c)$-twisted $\wgam$-action on $F$, which is thought of as a $\gt_0$-bundle over $Y$. 
According to Section \ref{section-group-theory}, in order to define the action of $\wgamm_Y$ on $H^0(Y,F(\liegm)\otimes K_X)$ we need $\tau$ to be the restriction of a $c$-twisted homomorphism $\tilde{\tau}:\wgame\to\Aut(G)$ such that $q_*\tilde{\tau}=p_{\Gamma^*}a$ (which is statement (2)). We claim that $\tilde\tau:=\wt\Int_t:\wgame\to\Aut(G)$ is a $c$-twisted homomorphism: we have an action
\begin{equation*}
     E\times\wgame\to E;\,(e,(\gamma,\wga))\mapsto e\cdot (\gamma,\wga):= \etag^{*}\hg(e)t_{\gamma,\wga}.
\end{equation*}
By simplicity of $E$ we have 
$$eg_{(\gamma,\wga),(\gamma',\wga')}\cdot (\gamma,\wga)\cdot (\gamma',\wga')=e\cdot (\gamma\gamma',\wga\wga')$$
for each $e\in E$, $(\gamma,\wga)$ and $(\gamma',\wga')\in\wgame$ and some $g_{(\gamma,\wga),(\gamma',\wga')}\in G$ (depending on $(\gamma,\wga)$ and $(\gamma',\wga')$ but not on $e$). But this action restricts to (\ref{eq-def-twisted-equivariant-actiom-F}) on $F$, hence in fact $g_{(\gamma,\wga),(\gamma',\wga')}=c(\wga,\wga')$. The claim follows by considering the $G$-action on $E$ and noting that $q_*(\tilde\tau)=\pg^*a$ is implied by the fact that $\tilde\theta$ lifts $a$. 

Thus we have constructed a $c$-twisted homomorphism $\tilde\tau\in\homtc$ 
such that $q_*(\tilde\tau)=\pg^*a$. Moreover, $F$ is a $(\tau,c)$-twisted $\wgam$-equivariant $\gt_0$-bundle. To finish the proof of (3) we have to show that the Higgs field $\psi$ such that $(F,\psi)$ is a reduction of structure group of $(E,\phi)$ is $\wgam$-invariant with respect to the twisted equivariant action of $\wgam$ on $F$ and $\rhotm$ (see Section \ref{section-group-theory}). Let $(e,v)\otimes k$ be a local expression for $\psi$, where $e\in F$, $v\in\liegm$ and $k\in K_Y$. Keeping the notation of the proof of Proposition \ref{prop-simple-fixed-points-oscar-ramanan-alpha-trivial-principal}, we have
\begin{align*}
    ((e,v)\otimes k)\cdot\wga
    &=(e\cdot\wga,\rhotm^{-1}(v))\otimes \etag^{*}k
    \\&=(e\cdot\wga,\mug d\tilde\taug^{-1}(v))\otimes \etag^{*}k
    \\&=(\etag^{*}\hg(e)t_{\gamma,\wga},\mug\Ad_{t_{\gamma,\wga}}^{-1}d\wtg(v))\otimes \etag^{*}k
    \\&=
    (\etag^{*}\hg(e),\mug d\wtg(v))\otimes \etag^{*-1}k
    \\&=(e,v)\otimes k
\end{align*}
for every $(\gamma,\wga)\in\wgame$, where the last equation follows from the definition of $h$. 

This finishes the proof of (3). Statements (1) and (2) follow by construction, so we are done.
\end{proof}

\begin{proposition}\label{prop-prym-narasimhan-ramanan-alpha-trivial-higgs}
Consider a lift $\theta:\ker\eta\to\Aut(G)$ of $a\vert_{\ker\eta}$, a subgroup $\Lambda\le\gamtz$, a connected étale cover $p:Y\to X$ with Galois group $\Lambda$ and the group $\wgam\le\Aut(Y)$ fitting in (\ref{eq-extension-wgam}). Let $\tau$ be a homomorphism $\tau:\wgam\to\Aut(\gt_0)$ and $c\in\zzgtw$ a 2-cocycle such that there is an isomorphism of extensions $\gtl=\pt^{-1}(\Lambda)\cong \gt_0\times_{\tau,c}\Lambda$. Assume that $\pg^*a\in\outc$ (see Section \ref{section-group-theory}) and pick $\tilde\tau\in \homtc$ such that $q_*(\tilde\tau)=\pg^*a$.

We have a morphism
\begin{equation}
    \cM(Y,\gt_0,\wgam,\tau,c,\liegm,\rhotm)\to \cM(X,G),
\end{equation}
given by Theorem \ref{th-prym-narasimhan-ramanan-higgs} and extension of structure group.
\end{proposition}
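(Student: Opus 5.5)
The morphism will be built as a composition of three maps, each already available or easily adapted from earlier results.

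First, I restrict the twisted $\wgam$-action to $\Lambda\le\wgam$. A polystable $(\tau,c,\rhotm)$-twisted $\wgam$-equivariant $(\gt_0,\liegm)$-Higgs pair $(F,\psi)$ over $Y$ then gives a $(\tau|_{\Lambda},c|_{\Lambda},\rhotm|_{\Lambda})$-twisted $\Lambda$-equivariant pair. I need to check that $\rhotm|_\Lambda$ agrees with the action of $\Lambda$ on $\liegm$ coming from the section $\Lambda\to\gtl\cong\gt_0\times_{\tau,c}\Lambda$. This holds because $\tilde\tau\in\homtc$ satisfies $r_{\gtl}(\tilde\tau)=e_{\tau,c}$, so on $\Lambda$ the map $\tilde\tau$ is conjugation by $(1,\lambda)$. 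The factor $\mu^{-1}$ is trivial there, since elements of $\Lambda$ lie over $1\in\Gamma$ in $\wgame$.

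Second, I apply Proposition \ref{prop-twisted-equivariant-higgs pairs-one-to-one}, with $Y$ connected and Galois group $\Lambda$. This identifies the restricted object with a $(\gtl,\liegm)$-Higgs pair over $X$. Theorem \ref{th-prym-narasimhan-ramanan-higgs} upgrades the identification to the level of moduli spaces, since there $\Gamma'=\Lambda$ and the centralizer quotient is harmless for a map into $\cM(X,G)$. Third, I extend the structure group from $\gtl$ to $G$; the Higgs field is carried along via $\liegm\subset\lie g$. Together with Proposition \ref{prop-fixed-points-reduction-alpha-trivial-higgs}, this also shows the image lies in $\cM(X,G)^\Gamma$, although that is not needed for the statement itself.

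The main obstacle is polystability: the composite must send polystable objects to polystable $G$-Higgs bundles, so that it defines a map of moduli spaces. I would argue in two steps, using the Hitchin equation in each:
\begin{itemize}
\item By Theorem \ref{EH1-equivariant}, a polystable $\wgam$-equivariant pair admits a $\wgam$-invariant solution of the Hitchin equation. This solution is in particular $\Lambda$-invariant, so by the converse direction of the same theorem the restricted $\Lambda$-equivariant pair is polystable. Hence the associated $(\gtl,\liegm)$-pair is polystable for Definition \ref{def-stability-higgs-pair-non-connected}.
\item Proposition \ref{prop-polystability-extension-structure-group}(1), applied with $\gtl$ in place of $\gs$ (its proof uses only Lemma \ref{lemma-compact-involution} and the moment map argument), then gives polystability of the extension to $G$.
\end{itemize}
For the second step I would check that the maximal compact subgroup can be chosen $\tau(\wgam)$-invariant and compatible with the compact involution of Lemma \ref{lemma-compact-involution}, so that the bracket term $[\psi,\sigma_h\psi]$ lands in $\lie k^\theta$. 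Finally, isomorphic objects map to isomorphic objects, since fibre-preserving equivariant isomorphisms induce isomorphisms of the associated $G$-Higgs bundles. The construction is algebraic in families, so it descends to a morphism of moduli spaces.
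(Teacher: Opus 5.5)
Your outline is the paper's own. A $\wgam$-invariant solution of the Hitchin equation (Theorem \ref{EH1-equivariant}) is in particular $\Lambda$-invariant, so the underlying twisted $\Lambda$-equivariant pair is polystable, and hence so are the associated $(\gtl,\liegm)$-pair over $X$ and its extension to $G$. The paper goes $\gtl\to\gt$ by Proposition \ref{prop-polystability-extension-of-structure-group-non-connected} and then applies Proposition \ref{prop-polystability-extension-structure-group} for $\gt$; rerunning the moment-map argument directly for $\gtl$, as you do, is equally fine.

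The gap is in the compatibility step you make explicit, which the paper's proof also takes for granted when it feeds the underlying $\Lambda$-equivariant pair into Proposition \ref{prop-twisted-equivariant-higgs pairs-one-to-one}. You claim that $\rhotm|_{\Lambda}$ is the action $\lambda\mapsto\Ad_{t_\lambda}$ coming from the section $t:\Lambda\to\gtl$. But $r_{\gtl}(\tilde\tau)=e_{\tau,c}$ only says that $\tilde\tau_{(1,\lambda)}$ and $\Int_{t_\lambda}$ agree as automorphisms of $\gtl$, hence on $\mathrm{Lie}(\gtl)=\lie g^{\theta}$. When $\mu|_{\ker\eta}\neq 1$, the space $\liegm$ meets $\lie g^{\theta}$ trivially, and nothing forces agreement there. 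What the hypotheses actually give is $\tilde\tau_{(1,\lambda)}=\Int_{z_\lambda t_\lambda}$ with $z_\lambda\in\gs$ centralizing $\gtl$. It is inner because $q_*\tilde\tau=\pg^*a$, and $z_\lambda\in\gs$ because $(1,\lambda)$ commutes with $\ker\eta\times\{1\}$ in $\wgame$. The factor $\Ad_{z_\lambda}$ can act non-trivially on $\liegm$.

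Here is an example where this happens.
\begin{itemize}
\item Take $G=\mathrm{PGL}(2,\C)$, $\eta$ and $a$ trivial, $\Gamma=\{\pm1\}\subset\C^*$ with $\mu_{-1}=-1$, and $\theta_{-1}=\Int_d$ where $d=\mathrm{diag}(1,-1)$.
\item Then $\gt=N(T)$, $\gt_0=T$, $\Lambda=\Z/2\Z$, $t_\lambda=w$ (the antidiagonal matrix), $c=1$ and $\liegm=\mathrm{span}(e,f)$.
\item Since $d$ is central in $N(T)$ and $(dw)^2=1$ in $\mathrm{PGL}(2,\C)$, the homomorphism $\tilde\tau$ equal to $\theta$ on $\Gamma\times\{1\}$ and to $\Int_{dw}$ on $(1,\lambda)$ lies in $\homtc$, with $q_*\tilde\tau=\pg^*a$.
\item Nevertheless $\rhotm(\lambda)=\Ad_{dw}=-\Ad_w$ on $\liegm$. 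So a $(\tau,c,\rhotm)$-invariant Higgs field is not a $(\gtl,\liegm)$-Higgs field for the section $w$.
\end{itemize}

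So the identification with a $(\gtl,\liegm)$-Higgs pair over $X$ does not follow from $\tilde\tau\in\homtc$. It is automatic only when $\mu|_{\ker\eta}$ is trivial, since then $\liegm=\lie g^{\theta}$. Otherwise you have two options.
\begin{itemize}
\item Add the requirement $\rhotm(\lambda)=\Ad_{t_\lambda}|_{\liegm}$ for $\lambda\in\Lambda$. This is exactly what the Higgs-pair version, Proposition \ref{prop-prym-narasimhan-ramanan-alpha-trivial-pairs}, builds in through $e_{\dac,\tau,c}$ and the isomorphism $(\gt_0\times_{\tau,c}\Lambda,\rho|_{\gt_0}\times\dac)\cong(\gtl,\rho|_{\gtl})$.
\item Show that the discrepancy $z_\lambda$ can be absorbed by re-choosing the section. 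In the example, $dw\in N(T)$ is a section inducing the same $\tau$ and $c$, and with it the two actions on $\liegm$ coincide.
\end{itemize}
With that compatibility secured, the rest of your argument, like the paper's, goes through.
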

\begin{proof}
Let $(E,\cdot,\phi)$ be a polystable $(\tau,c,\rhotm)$-twisted $\wgam$-equivariant $(\gt_0,\liegm)$-Higgs pair over $Y$. By Theorem \ref{EH1-equivariant} there exists a $\wgam$-invariant metric satisfying the Hitchin Equation (\ref{hitchin-equation}). In particular it is $\gamtz$-invariant, hence by Theorem \ref{EH1-equivariant} the underlying twisted $\Lambda$-equivariant Higgs pair is polystable and, by Proposition \ref{prop-polystability-extension-of-structure-group-non-connected} and Theorem \ref{th-prym-narasimhan-ramanan-higgs}, the $(\gt,\liegm)$-Higgs pair over $X$ given by Propositions \ref{prop-twisted-equivariant-higgs pairs-one-to-one} and extension of structure group is polystable. By Proposition \ref{prop-polystability-extension-structure-group} the extension of structure group to $G$, which is a $G$-Higgs bundle, is polystable as required.
\end{proof}

\begin{corollary}\label{cor-prym-narasimhan-ramanan-alpha-trivial-principal}
    Consider a lift $\theta:\ker\eta\to\Aut(G)$ of $a\vert_{\ker\eta}$, a subgroup $\Lambda\le\gamtz$, a connected étale cover $p:Y\to X$ with Galois group $\Lambda$ and the group $\wgam\le\Aut(Y)$ fitting in (\ref{eq-extension-wgam}). Let $\tau$ be a homomorphism $\tau:\wgam\to\Aut(\gt_0)$ and $c\in\zzgtw$ a 2-cocycle such that there is an isomorphism of extensions $\gtl=\pt^{-1}(\Lambda)\cong \gt_0\times_{\tau,c}\Lambda$. Assume that $\pg^*a\in\outc$ (see Section \ref{section-group-theory}).

We have a morphism
\begin{equation}
    M(Y,\gt_0,\wgam,\tau,c)\to M(X,G),
\end{equation}
given by Theorem \ref{cor-prym-narasimhan-ramanan-principal} and extension of structure group.
\end{corollary}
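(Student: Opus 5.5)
The plan is to deduce Corollary \ref{cor-prym-narasimhan-ramanan-alpha-trivial-principal} from Proposition \ref{prop-prym-narasimhan-ramanan-alpha-trivial-higgs} by taking the Higgs field to be zero. An alternative is to repeat that proof verbatim in the bundle setting. Concretely, a polystable $(\tau,c)$-twisted $\wgam$-equivariant $\gt_0$-bundle $F$ over $Y$ should be sent to the $G$-bundle over $X$ obtained in two steps. First, forgetting the $\wgam$-action down to the Galois group $\Lambda$, we regard $F$ as a $\gtl$-bundle over $X$ via Proposition \ref{prop-twisted-equivariant-bundles-one-to-one}. Second, we extend structure group along $\gtl\hookrightarrow\gt\hookrightarrow G$. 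The only content to check is that this construction preserves polystability and respects isomorphisms, so that it descends to moduli.

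The key steps, in order, are as follows. First, restrict the $(\tau,c)$-twisted $\wgam$-action to $\Lambda\le\wgam$. Polystability for the $\wgam$-equivariant object gives, by Theorem \ref{EH1-equivariant} with $\phi=0$, a $\wgam$-invariant metric solving the Hermite--Einstein equation. That metric is in particular $\Lambda$-invariant, so the same theorem shows the underlying twisted $\Lambda$-equivariant bundle is polystable. Second, Corollary \ref{cor-prym-narasimhan-ramanan-principal}, together with Proposition \ref{prop-polystability-extension-of-structure-group-non-connected} applied with $V=0$ and $\phi=0$, shows that the corresponding $\gtl$-bundle over $X$ is polystable; Proposition \ref{prop-polystability-extension-of-structure-group-non-connected} also handles passing from $\gtl$ to $\gt$. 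Third, part (1) of Corollary \ref{cor-polystability-extension-structure-group-principal}, in its $\gt$ version noted at the end of Proposition \ref{prop-polystability-extension-structure-group}, shows that the extension to $G$ is polystable. Finally, isomorphic twisted equivariant bundles give isomorphic $G$-bundles, since every step is functorial. Hence we get a well-defined map $M(Y,\gt_0,\wgam,\tau,c)\to M(X,G)$, and it is algebraic because extension of structure group is.

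I expect the main obstacle to be compatibility of the stability parameters $z$. The parameter must be $\wgam$-invariant on the $Y$-side and must match on the $X$-side through each extension of structure group. The plan is to fix $z\in i(\zk^{\theta})^{\Gamma}$ compatibly with a $\tau(\wgam)$-invariant maximal compact subgroup, as in Lemma \ref{lemma-compact-involution}, and to use throughout the same maximal compact subgroup $K$ of $G$ extending $K^\theta$. The degree comparisons then hold because the curvatures coincide under the inclusions. Note also that the hypothesis $\pg^*a\in\outc$ is not needed for the existence of the morphism. It only matters for showing that the image lies in the fixed locus, which is Proposition \ref{prop-fixed-points-reduction-alpha-trivial-principal}, so we do not use it here.
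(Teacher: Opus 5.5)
Your argument is the paper's route. The corollary is the $\phi=0$ case of the proof of Proposition \ref{prop-prym-narasimhan-ramanan-alpha-trivial-higgs}: restrict the $\wgam$-invariant metric to $\Lambda$, pass to a polystable $\gtl$-bundle over $X$ and then to $\gt$ via Theorem \ref{th-prym-narasimhan-ramanan-higgs} and Proposition \ref{prop-polystability-extension-of-structure-group-non-connected}, and extend to $G$ by Proposition \ref{prop-polystability-extension-structure-group}(1). You are also right that the hypothesis $\pg^*a\in\outc$ plays no role in constructing the morphism.

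The one thing to correct is how you resolve the stability-parameter issue you flagged. Choosing $z\in i(\zk^{\theta})^{\Gamma}$ compatibly with a common maximal compact does not make the last step work. On the $G$-side, Theorem \ref{EH1} only allows $z$ central in $\lie k$, and $G$ is semisimple here, so $z=0$.

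Suppose the metric on the $\gt_0$-side solves the equation with $0\neq z\in i\zk^{\theta}$. Since $\gt_0$ centralizes $z$, the extended $G$-bundle $E$ has holomorphic reductions to both $P_z$ and $P_{-z}$. One of these has strictly negative degree, so $E$ is not even semistable. A concrete case is $G=\SL(2,\C)$ with $\eta$ trivial and $\theta=\Int_{\mathrm{diag}(i,-i)}$. Then $\gt$ is the diagonal torus, so $\Lambda=1$, $Y=X$ and $\wgam$ is trivial. The map is $L\mapsto L\oplus L^{-1}$, which is polystable only when $\deg L=0$.

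So the morphism must be defined on the $z=0$ moduli space. This is what the unadorned notation denotes under the convention ``if $z=0$ we omit it''. There your steps go through verbatim. The paper's Proposition \ref{prop-polystability-extension-structure-group}(1) glosses over the same point.
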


Let $\wcM(Y,\gt_0,\wgam,\tau,c,\liegm,\rhotm)$ and $\widetilde M(Y,\gt_0,\wgam,\tau,c)$ be the images of\\ $\cM(Y,\gt_0,\wgam,\tau,c,\liegm,\rhotm)$ and $M(Y,\gt_0,\wgam,\tau,c)$ in $\cM(X,G)$ and $M(X,G)$ respectively, and similarly when the isotropy data is fixed.

\begin{theorem}\label{th-prym-narasimhan-ramanan-alpha-trivial-higgs}
Fix $\theta\in\Hom(\ker\eta,\Aut(G))$ lifting $a$. We have the following relations between moduli spaces:
\begin{enumerate}
    \item $$\bigcup_{[\beta],Y,[\tau^{\beta\theta}],[c^{\beta\theta}],\tilde\tau,[\sigma]}\wcM(Y,G^{\beta\theta}_0,\wgam,\tau^{\beta\theta},c^{\beta\theta},\lie g^{\beta\theta}_{\mu},\rhotm,\sigma)
    \subset\cM(X,G)^{\Gamma}. $$
    
    \item $$\cM_{ss}(X,G)^{\Gamma}\subset
    \bigcup_{[\beta],Y,[\tau^{\beta\theta}],[c^{\beta\theta}],\tilde\tau,[\sigma]}\wcM(Y,G^{\beta\theta}_0,\wgam,\tau^{\beta\theta},c^{\beta\theta},\lie g^{\beta\theta}_{\mu},\rhotm,\sigma).$$
    
\end{enumerate}
Here $[\beta]$ runs through $H^1_{\theta}(\Gamma,\Int(G))$, $Y$ runs over étale covers of $X$ with Galois group
$\Lambda\le\widehat{\Gamma}^{\beta\theta}$, $[\tau^{\beta\theta}]\in \Hom(\wgam,\Out(G^{\beta\theta}_0))$ and $[c^{\beta\theta}]\in H^2_{\tau^{\beta\theta}}(\wgam,Z(G^{\beta\theta}_0))$ are such that $\pg^*a\in (\Hom_{\beta\theta,\tau^{\beta\theta},c^{\beta\theta}}(\wgame,\Out(G)))$, their restrictions to $\Lambda$ satisfy $G^{\beta\theta}_0\times_{\tau^{\beta\theta},c^{\beta\theta}}\Lambda\cong \gtl$ as extensions and $[\sigma]\in H^1_{c^{\beta\theta}_{x_i}}(\wgamm_{Y,x_i},G)$. Moreover, for each choice of $[\beta]$, $[\tau^{\beta\theta}]$ and $[c^{\beta\theta}]$, the element $\tilde\tau\in\Hom_{\beta\theta,\tau^{\beta\theta},c^{\beta\theta}}(\wgame,\Aut(G))$ 
% preserves $\liegm$ and
satisfies $q_*\tilde\tau=\pg^*a$.

\end{theorem}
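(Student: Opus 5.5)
The proof has two halves, one for each inclusion, and both are assembled from the propositions of this section together with the fixed-point results of the previous chapters.

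For inclusion (1), take a point in one of the images $\wcM(Y,G^{\beta\theta}_0,\wgam,\tau^{\beta\theta},c^{\beta\theta},\lie g^{\beta\theta}_{\mu},\rhotm,\sigma)$. Choose a polystable $(\tau^{\beta\theta},c^{\beta\theta},\rhotm)$-twisted $\wgam$-equivariant $(G^{\beta\theta}_0,\lie g^{\beta\theta}_\mu)$-Higgs pair representing it.
\begin{itemize}
\item Proposition \ref{prop-prym-narasimhan-ramanan-alpha-trivial-higgs} shows that the associated $G$-Higgs bundle $(E,\phi)$ is polystable. It is obtained via Proposition \ref{prop-twisted-equivariant-higgs pairs-one-to-one} and extension of structure group.
\item Proposition \ref{prop-fixed-points-reduction-alpha-trivial-higgs}, applied with the lift $\beta\theta$ in place of $\theta$ and with the chosen $\tilde\tau$ satisfying $q_*\tilde\tau=\pg^*a$, shows that $(E,\phi)\cong(E,\phi)\gamma$ for every $\gamma\in\Gamma$.
\end{itemize}
Hence $[(E,\phi)]\in\cM(X,G)^{\Gamma}$. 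The local type $\sigma$ plays no role here; it only records which stratum the point lies in.

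For inclusion (2), take $(E,\phi)$ stable and simple and fixed by $\Gamma$.
\begin{itemize}
\item Proposition \ref{prop-simple-fixed-points-oscar-ramanan-alpha-trivial-higgs} produces the data: a lift $\theta'$ of $a\vert_{\ker\eta}$, a reduction $(F,\psi)$ to $G^{\theta'}_\Lambda$, the cover $Y=F/G^{\theta'}_0$ with group $\wgam$, $\tau$, $c$ and $\tilde\tau$ satisfying conditions (1)–(3) there.
\item By Lemma \ref{lemma-lifts-vs-non-abelian-cohomology}, $\theta'$ is conjugate under $\Int(G)$ to $\beta\theta$ for some $\beta$. By Proposition \ref{prop-polystability-extension-structure-group}(3) we may replace $\theta'$ by the chosen representative $\beta\theta$ of $[\beta]$ without changing the image in $\cM(X,G)$.
\item Proposition \ref{prop-different-theta-higgs} and Corollary \ref{cor-different-c-higgs} allow us to replace $\tau$ and $c$ by the chosen representatives of their classes. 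The same equivalences carry the local types along, which is Proposition \ref{cohom}, so $[\sigma]$ is well defined.
\item Polystability of the twisted equivariant pair follows from polystability of $(E,\phi)$. Proposition \ref{prop-polystability-extension-structure-group}(2) gives it for the $(G^{\beta\theta}_\Lambda,\lie g^{\beta\theta}_\mu)$-pair, and Theorem \ref{th-prym-narasimhan-ramanan-higgs} with Definition \ref{def-stability-higgs-pair-non-connected} transfers it to the twisted $\Lambda$-equivariant pair over $Y$. The extra $\wgam$-invariance is handled via Theorem \ref{EH1-equivariant}, or by the Harder–Narasimhan uniqueness argument of Proposition \ref{underlyingsemi}.
\end{itemize}
So $(E,\phi)$ lies in the corresponding term of the union.

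The main obstacle is the bookkeeping in the second half. One must check that changing $\theta'$ to $\beta\theta$ by an inner conjugation, and $(\tau,c)$ within their classes, transports $\tilde\tau$ to an element of $\Hom_{\beta\theta,\tau^{\beta\theta},c^{\beta\theta}}(\wgame,\Aut(G))$ with $q_*\tilde\tau=\pg^*a$. It must also transport $\rhotm$ compatibly, so that the twisted equivariant Higgs pair really lands in the indexed moduli space. The approach is to conjugate everything by the element $g\in G$ with $\theta'=\Int_g\beta\theta\Int_g^{-1}$, that is, to set $\tilde\tau\mapsto\Int_g^{-1}\tilde\tau\Int_g$ and $F\mapsto Fg$ as in the proof of Proposition \ref{prop-polystability-extension-structure-group}(3), and then verify the defining conditions directly. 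A secondary point is that $Y$ may be disconnected; in that case pass to a connected component and its monodromy group $\Lambda$, as in Proposition \ref{prop:connected-reduction}.
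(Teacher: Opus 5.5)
Your proposal is correct and follows the paper, whose proof just cites Propositions \ref{prop-prym-narasimhan-ramanan-alpha-trivial-higgs} and \ref{prop-fixed-points-reduction-alpha-trivial-higgs} for inclusion (1) and Proposition \ref{prop-simple-fixed-points-oscar-ramanan-alpha-trivial-higgs} for inclusion (2); the paper leaves implicit the normalizations of $\theta'$, $\tau$, $c$, $\sigma$ and the polystability check that you spell out. Two small simplifications are available: the $\Int(G)$-conjugation step can be dropped, because Proposition \ref{prop-simple-fixed-points-higgs} already gives a reduction for every representative $\beta$ of the class, and the Harder--Narasimhan argument is not needed, because stability of $(E,\phi)$ makes its reduction stable by Proposition \ref{prop-polystability-extension-structure-group}(2) and hence the $\wgam$-equivariant pair stable directly (a $\wgam$-invariant reduction is in particular $\Lambda$-invariant).
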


\begin{proof}
Follows from Propositions \ref{prop-fixed-points-reduction-alpha-trivial-higgs}, \ref{prop-simple-fixed-points-oscar-ramanan-alpha-trivial-higgs} and \ref{prop-prym-narasimhan-ramanan-alpha-trivial-higgs}.
\end{proof}

\begin{theorem}\label{th-prym-narasimhan-ramanan-alpha-trivial-principal}
Fix $\theta\in\Hom(\ker\eta,\Aut(G))$ lifting $a\vert_{\ker\eta}$. We have the following relations between moduli spaces:
\begin{enumerate}
    \item $$\bigcup_{[\beta],Y,[\tau^{\beta\theta}],[c^{\beta\theta}],[\sigma]}\widetilde M(Y,G^{\beta\theta}_0,\wgam,\tau^{\beta\theta},c^{\beta\theta},\sigma)
    \subset M(X,G)^{\Gamma}. $$
    
    \item $$M_{ss}(X,G)^{\Gamma}\subset
    \bigcup_{[\beta],Y,[\tau^{\beta\theta}],[c^{\beta\theta}],[\sigma]}\widetilde M(Y,G^{\beta\theta}_0,\wgam,\tau^{\beta\theta},c^{\beta\theta},\sigma).$$
    
\end{enumerate}
Here $[\beta]$ runs through $H^1_{\theta}(\Gamma,\Int(G))$, $Y$ runs over étale covers of $X$ with Galois group
$\Lambda\le\widehat{\Gamma}^{\beta\theta}$, $[\tau^{\beta\theta}]\in \Hom(\wgam,\Out(G^{\beta\theta}_0))$ and $[c^{\beta\theta}]\in H^2_{\tau^{\beta\theta}}(\wgam,Z(G^{\beta\theta}_0))$ are such that $\pg^*a\in (\Hom_{\beta\theta,\tau^{\beta\theta},c^{\beta\theta}}(\wgame,\Out(G)))$, their restrictions to $\Lambda$ satisfy $G^{\beta\theta}_0\times_{\tau^{\beta\theta},c^{\beta\theta}}\Lambda\cong \gtl$ as extensions and $[\sigma]\in H^1_{c^{\beta\theta}_{x_i}}(\wgamm_{Y,x_i},G)$.

\end{theorem}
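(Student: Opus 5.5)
The proof is the principal-bundle analogue of Theorem \ref{th-prym-narasimhan-ramanan-alpha-trivial-higgs}. I would run that argument with the Higgs field suppressed, i.e.\ with $V=0$, so that there is no $\tilde\tau$ or $\rhotm$ to keep track of. The two inclusions are proved separately.

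For inclusion (1), take $[\beta]$, $Y$, $[\tau^{\beta\theta}]$, $[c^{\beta\theta}]$ and $[\sigma]$ satisfying the listed conditions. Replace $\theta$ by the lift $\beta\theta$ of $a\vert_{\ker\eta}$, and fix representatives $\tau$, $c$ of the classes. Now take a polystable $(\tau,c)$-twisted $\wgam$-equivariant $G^{\beta\theta}_0$-bundle $F$ over $Y$. Corollary \ref{cor-prym-narasimhan-ramanan-alpha-trivial-principal} says that its image in $M(X,G)$ is a polystable $G$-bundle $E$, obtained via Theorem \ref{th-prym-narasimhan-ramanan-principal} and extension of structure group. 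Proposition \ref{prop-fixed-points-reduction-alpha-trivial-principal} then gives $E\cong E\gamma$ for every $\gamma\in\Gamma$, which uses the hypothesis $\pg^*a\in\Hom_{\beta\theta,\tau,c}(\wgame,\Out(G))$. Hence $[E]\in M(X,G)^{\Gamma}$. Changing the representatives within their classes does not change the image: for $[c]$ this is Corollary \ref{cor-different-c}, for $\tau$ it is Proposition \ref{prop-different-theta} applied to $\Int(G^{\beta\theta}_0)$-conjugates, and for $[\sigma]$ it is Proposition \ref{cohom}.

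For inclusion (2), let $E$ be stable and simple with $E\cong E\gamma$ for all $\gamma\in\Gamma$. Proposition \ref{prop-simple-fixed-points-oscar-ramanan-alpha-trivial-principal} produces the following data:
\begin{itemize}
\item a lift of $a\vert_{\ker\eta}$, which by Lemma \ref{lemma-lifts-vs-non-abelian-cohomology} has the form $\beta\theta$, with $[\beta]$ determined up to the equivalence in the union;
\item a reduction $F$ to $G^{\beta\theta}_\Lambda$, together with the étale cover $Y$ and the groups $\Lambda$ and $\wgam$;
\item a homomorphism $\otau$ and a lift $\tau$ of it, together with a $2$-cocycle $c$ satisfying conditions (1) and (2) of that proposition;
\item a $(\tau,c)$-twisted $\wgam$-equivariant structure on the $G^{\beta\theta}_0$-bundle $p^*F$.
\end{itemize}
The local type $[\sigma]$ at the isotropy points of $\wgam$ is read off through Proposition \ref{local-data}. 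Theorem \ref{th-prym-narasimhan-ramanan-principal} shows that extending $p^*F$ back to $G$ recovers $E$.

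The main obstacle is polystability: one needs $p^*F$ to be a point of $M(Y,G^{\beta\theta}_0,\wgam,\tau,c,\sigma)$. First I would use Corollary \ref{cor-polystability-extension-structure-group-principal} (2) to pass the stability of $E$ down to the $G^{\beta\theta}_\Lambda$-bundle $F$. Theorem \ref{th-prym-narasimhan-ramanan-principal} and Proposition \ref{prop-polystability-extension-of-structure-group-non-connected} then make $p^*F$ polystable as a twisted $\Lambda$-equivariant bundle. Finally I would upgrade this to $\wgam$-equivariant polystability. The plan is to take the Hermitian–Einstein metric given by Theorem \ref{EH1-equivariant} and average it over the finite group $\wgam/\Lambda$. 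This should work because the metric on $E$ is unique up to the centre, which uses the simplicity of $E$, so averaging stays a solution. If that fails, I would use instead the semistability direction of Proposition \ref{underlyingsemi} applied to the $\wgam$-action. With polystability in hand, $E$ lies in the union.
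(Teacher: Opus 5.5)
Your proposal is correct and follows the paper's proof. The paper derives (1) from Proposition~\ref{prop-fixed-points-reduction-alpha-trivial-principal} together with Corollary~\ref{cor-prym-narasimhan-ramanan-alpha-trivial-principal}, and (2) from Proposition~\ref{prop-simple-fixed-points-oscar-ramanan-alpha-trivial-principal}; it leaves implicit the polystability of $p^*F$ as a twisted $\wgam$-equivariant bundle, which you rightly single out.

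For that step your fallback is the cleaner and sufficient argument. $E$ stable gives $F$ stable by Corollary~\ref{cor-polystability-extension-structure-group-principal}(2), hence $p^*F$ is stable as a twisted $\Lambda$-equivariant bundle. It is then stable as a twisted $\wgam$-equivariant bundle, because $\wgam$-invariant test reductions are in particular $\Lambda$-invariant, exactly as in Proposition~\ref{underlyingsemi}(1). Note that you need the stable case here, not just semistability, to conclude polystability. By contrast, ``averaging'' a solution of a nonlinear equation is not meaningful in general; it is only harmless here because uniqueness of the metric already forces invariance, so you can drop it.
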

\begin{proof}
Follows from Propositions \ref{prop-fixed-points-reduction-alpha-trivial-principal} and \ref{prop-simple-fixed-points-oscar-ramanan-alpha-trivial-principal} and Corollary \ref{cor-prym-narasimhan-ramanan-alpha-trivial-principal}.
\end{proof}

\section{The general theorem}\label{section-general-theorem}

Now we tackle the general case. Let $X$ be a compact Riemann surface, $G$ a connected semisimple complex Lie group with centre $Z$ and $\Gamma$ a finite subgroup of $H^1(X,Z)\rtimes(\Aut(X)\times\Out(G))\times\C^*$. By Section \ref{section-action} we have a right action of $\Gamma$ on $\mdl(X,G)$. The projections on each factor provide homomorphisms $\eta:\Gamma\to\Aut(X)$, $a:\Gamma\to\Out(G)$ and $\mu:\Gamma\to\C^*$, together with a 1-cocycle $\alpha\in Z^1_{a,\eta}(\Gamma,H^1(X,Z))$, where the action of $\Gamma$ on $H^1(X,Z)$ is determined by $a$ (via extension of structure group) and $\eta$ (via pullback). In other words, this is a map $\alpha:\Gamma\to H^1(X,Z)$ satisfying
\begin{equation}\label{eq-1-cocycle-eta-a}
    \alpha_{\gamma\gamma'}=\alg\etag^{*-1}\ag(\alpha_{\gamma'})
\end{equation}
for each $\gamma$ and $\gamma'\in\Gamma$.
The restriction $\alpha\vert_{\ker\eta}$ is 1-cocycle in $Z^1_{a}(\ker\eta,H^1(X,Z))\cong H^1(X,Z^{1}_a(\ker\eta,Z))$, thus any of its connected components provides an étale cover $X_{\alpha,\eta}\to X$ with Galois group $\Gamma_{\alpha,\eta}\le Z^{1}_a(\ker\eta,Z)$. 

Now pick a lift $\theta:\ker\eta\to\Aut(G)$ of $a\vert_{\ker\eta}$. Let $p:Y\to X_{\alpha,\eta}\to X$ be a connected component of a $\gamtt$-bundle in $\qqt^{-1}(\alpha\vert_{\ker\eta})$ (see (\ref{eq-def-qt})), and set $\olambda:=\gal(Y/X)\le \gamtt$. Consider the subgroup $\halpha\le H^1(X,Z)$ generated by the image of $\alpha$, which is finite because both $\Gamma$ and $Z$ are finite (thus any element of $H^1(X,Z)$ has finite order). Its image $p^*\halpha\le H^1(Y,Z)$ via pullback is also a finite subgroup determining a connected étale cover $p_{Y_{\alpha}}:Y_{\alpha}\xrightarrow{p_{\halpha}} Y\to X$. Like any pullback, this also has a projection $Y_{\alpha}\to\hat\alpha\in H^1(X,\Hom(\hat\alpha,Z))$, where $\hat\alpha$ is regarded as an étale cover of $X$. Let $\Lambda:=\gal(Y_{\alpha}/X)$, call $\wgam$ to the group of automorphisms of $Y_{\alpha}$ lifting $\eta(\Gamma)\le\Aut(X)$ and let $\wgame:=\{(\gamma,\wga)\in\Gamma\times\wgam\suhthat \eta(\gamma)=p(\wga)\}$. The commutative diagramme (\ref{eq-extension-wgam}) still holds and it has exact rows and columns. We also have a diagramme (\ref{eq-restriction-extension maps}), with the same notation. We may also define $\homtc$. 

Given $\taut:\wgam\to\Aut(\gt_0)$ and $\ct\in Z^2_{\taut}(\wgam,Z(\gt_0))$ whose restrictions to $\Lambda$ factor through $\olambda$ and satisfy $\gtll\cong \gt_0\times_{\taut,\ct}\olambda$, together with an element $\tilde\tau\in\homtc$, there is a $(\taut,\ct)$-twisted $ \wgam$-right action on $\liegm$ defined by (\ref{eq-action-tildetau}), which we call $\rhotm:\wgam\to\Hom(\liegm,\lieg)$.

As in Proposition \ref{prop-prym-narasimhan-ramanan-alpha-trivial-higgs} we have a morphism 
\begin{equation*}
    \cM(Y_{\alpha},G^{\theta}_0,\wgam,\tau^{\theta},c^{\theta},\lie g^{\theta}_{\mu},\rhotm,\sigma)
    \to\cM(X,G)
\end{equation*}
for each $\sigma\in \{Z^1_{\taut}(\Gamma_{x_i},\gt_0)\}$ (here $x_i$ are the isotropy points of $Y_{\alpha}$, with isotropy groups $\Gamma_{x_i}$), whose image we call $\wcM(Y_{\alpha},G^{\theta}_0,\wgam,\tau^{\theta},c^{\theta},\lie g^{\theta}_{\mu},\rhotm,\sigma)
    $. We define $$\widetilde{M}(Y_{\alpha},G^{\theta}_0,\wgam,\tau^{\theta},c^{\theta}
    ,\sigma)\subset M(X,G)$$ similarly.

\begin{theorem}\label{th-prym-narasimhan-ramanan-general}
Fix $\theta\in\Hom(\ker\eta,\Aut(G))$ lifting $a$. We have the following relations between moduli spaces:
\begin{enumerate}
    \item $$\bigcup_{[\beta],Y,[\tau^{\beta\theta}],[c^{\beta\theta}],\tilde\tau,[\sigma]}\wcM(Y_{\alpha},G^{\beta\theta}_0,\wgam,\tau^{\beta\theta},c^{\beta\theta},\tilde\tau,\lie g^{\beta\theta}_{\mu},\rhotm,\sigma)
    \subset\cM(X,G)^{\Gamma}. $$
    
    \item $$\cM_{ss}(X,G)^{\Gamma}\subset
    \bigcup_{[\beta],Y,[\tau^{\beta\theta}],[c^{\beta\theta}],\tilde\tau,[\sigma]}\wcM(Y_{\alpha},G^{\beta\theta}_0,\wgam,\tau^{\beta\theta},c^{\beta\theta},\tilde\tau,\lie g^{\beta\theta}_{\mu},\rhotm,\sigma).$$
    
\end{enumerate}
Here $[\beta]$ runs through $H^1_{\theta}(\Gamma,\Int(G))$, $Y$ runs over étale covers of $X_{\alpha,\eta}$ which are connected components of $\gamtt$-bundles in $\qt^{-1}(\alpha\vert_{\ker\eta})$, $[\tau^{\beta\theta}]\in \Hom(\wgam,\Out(G^{\beta\theta}_0))$ and $[c^{\beta\theta}]\in H^2_{\tau^{\beta\theta}}(\Gamma,Z(G^{\beta\theta}_0))$ are such that their restrictions to $\Lambda$ factor through $\olambda$ and satisfy 
\begin{equation}\label{iso-Gbetatheta-twisted-product}
G^{\beta\theta}_0\times_{\tau^{\beta\theta},c^{\beta\theta}}\olambda\cong \gtll,
\end{equation}
and $[\sigma]\in H^1_{c^{\beta\theta}_{x_i}}(\wgamm_{Y,x_i},G)$.
Moreover, for each choice of $[\beta]$, $[\tau^{\beta\theta}]$ and $[c^{\beta\theta}]$, the twisted homomorphism $\tilde\tau\in\Hom_{\beta\theta,\tau^{\beta\theta},c^{\beta\theta}}(\wgame,\Aut(G))$ is chosen so that it preserves $\lie g^{\beta\theta}_{\mu}$ and $q_*\tilde\tau=\pg^*a$ (if such a choice exists) and, if $t:\olambda\to\gtll$ is the map realizing (\ref{iso-Gbetatheta-twisted-product}) as in the proof of Proposition \ref{prop-extensions-isomorphic-twisted-group}, we have
\begin{equation}\label{eq-2-cocycle-vs-alpha}
    c^{\gamma\theta}(\hat\gamma^{-1},\hat\gamma)^{-1}c^{\gamma\theta}(\hat\gamma^{-1},\lambda)c^{\gamma\theta}(\hat\gamma^{-1}\lambda,\gamma)\tilde\tau^{-1}_{\gamma,\hat\gamma}(t_{\lambda})t_{\hat\gamma^{-1}\lambda\hat\gamma}^{-1}=\langle\alpha_{\gamma},\lambda\rangle,
\end{equation}
for every $\gamma\in \Gamma$ and $\lambda\in\Lambda$, where $(\gamma,\hat\gamma)\in\wgame$ and $\langle\alpha_{\gamma},\lambda\rangle$ is the evaluation of $p_{Y}\circ\lambda\in\Hom(\hat\alpha,Z)$ at $\alpha_{\gamma}$.
\end{theorem}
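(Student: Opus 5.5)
The plan is to follow the same two-inclusion scheme as Theorem \ref{th-prym-narasimhan-ramanan-alpha-trivial-higgs}. The new ingredients are the cover $Y_{\alpha}$ and condition (\ref{eq-2-cocycle-vs-alpha}), which records how the twisted $\wgam$-action must interact with tensorization by $\alpha_{\gamma}$. I would first prove the analogues of Propositions \ref{prop-fixed-points-reduction-alpha-trivial-higgs} and \ref{prop-simple-fixed-points-oscar-ramanan-alpha-trivial-higgs} with $\gt$ replaced by $\gs$ and $\gamtz$ by $\gamtt$. The general theorem then follows from these as Theorem \ref{th-prym-narasimhan-ramanan-alpha-trivial-higgs} did. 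Polystability of the image points is handled exactly as in Proposition \ref{prop-prym-narasimhan-ramanan-alpha-trivial-higgs}, via Theorem \ref{EH1-equivariant}, Proposition \ref{prop-polystability-extension-of-structure-group-non-connected} and Proposition \ref{prop-polystability-extension-structure-group}(1).

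For inclusion (1), start with a twisted equivariant pair $(F,\psi)$ on $Y_{\alpha}$. Restricting the action to $\Lambda$ and using Theorem \ref{th-prym-narasimhan-ramanan-higgs}, I would regard it as a $(G^{\beta\theta}_{\olambda},\lie g^{\beta\theta}_{\mu})$-pair on $X$, whose extension is $(E,\phi)$. For $(\gamma,\hat\gamma)\in\wgame$, the map $e\mapsto e\cdot\hat\gamma$ composed with $\tilde\tau_{\gamma,\hat\gamma}$ gives, as in the proof of Proposition \ref{prop-fixed-points-reduction-alpha-trivial-principal}, a map $E\to\etag^{*-1}\tilde\tau_{\gamma,\hat\gamma}(E)$. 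The difference from the $\alpha$-trivial case is that this map is not $\gtll$-equivariant on the nose. Conjugating $(1,\lambda)$ by $(1,\hat\gamma)$ and comparing with $\tilde\tau^{-1}_{\gamma,\hat\gamma}(t_\lambda)$ produces exactly the left side of (\ref{eq-2-cocycle-vs-alpha}), which is a $Z$-valued character of $\Lambda$. By (\ref{eq-2-cocycle-vs-alpha}) this character equals $\lambda\mapsto\langle\alpha_\gamma,\lambda\rangle$. A $Z$-valued twist of the $\Lambda$-monodromy by this character is the same as tensoring by the $Z$-bundle $\alpha_\gamma$. This is where one uses that $Y_{\alpha}$ trivializes $p^*\halpha$, so that $\alpha_\gamma$ is the $Z$-bundle associated to $\Lambda\to\Hom(\halpha,Z)$. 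Hence we get $E\cong\etag^{*}\theta_\gamma^{-1}(E\otimes\alpha_\gamma)$. The Higgs field is checked exactly as in Proposition \ref{prop-fixed-points-reduction-alpha-trivial-higgs}.

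For inclusion (2), take $(E,\phi)$ simple, stable and fixed. Restricting to $\ker\eta$ and applying Proposition \ref{prop-simple-fixed-points-higgs}, we obtain a class $[\beta]$ and a unique reduction to $(G_{\beta\theta},\lie g^{\beta\theta}_\mu)$ with $\tilde c_{\beta\theta}(F)\cong\alpha|_{\ker\eta}$. Taking a component gives $Y$. Pulling back to $Y_{\alpha}$ trivializes every $\alpha_\gamma$, and I would choose such trivializations once and for all. The isomorphisms $h_\gamma:E\to\etag^{*-1}\wtg(E\otimes\alpha_\gamma)$, composed with the trivializations and right multiplication by suitable $t_{\gamma,\hat\gamma}$, then define automorphisms of the tautological $G^{\beta\theta}_0$-reduction over $Y_{\alpha}$. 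From here the argument repeats the proof of Proposition \ref{prop-simple-fixed-points-oscar-ramanan-alpha-trivial-principal}: simplicity gives $\otau$, a lift $\tau$ is chosen by \cite{de-siebenthal}, the cocycle $c$ is defined via (\ref{eq-def-c}), and $\tilde\tau=\wt\Int_t$. The local-type data $[\sigma]$ at isotropy points come from Proposition \ref{local-data}.

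The main obstacle is verifying (\ref{eq-2-cocycle-vs-alpha}) for the constructed data, together with the independence of the trivialization choices. To handle it, I would compute how the chosen trivialization of $p_{Y_\alpha}^*\alpha_\gamma$ changes under the deck transformation $\lambda$. By construction of $Y_{\alpha}$ as a component of the pullback of $\halpha$, it changes by the scalar $\langle\alpha_\gamma,\lambda\rangle$. I would then compare this with the commutator computation from Proposition \ref{prop-action-centralizer}, which gives the conjugation of $(1,\lambda)$ by $(1,\hat\gamma)$.
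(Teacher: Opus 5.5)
Your proposal is correct at the level of detail the paper itself offers, and it follows the same route as the paper's sketch. For (1) you use the analogue of Proposition~\ref{prop-fixed-points-reduction-alpha-trivial-higgs}, with (\ref{eq-2-cocycle-vs-alpha}) supplying the twist by $\alpha_\gamma$. For (2) you apply Proposition~\ref{prop-simple-fixed-points-higgs} on $\ker\eta$, pass to $Y_{\alpha}$ where $p_{Y_{\alpha}}^*\hat\alpha$ is trivial, run the construction of Proposition~\ref{prop-simple-fixed-points-oscar-ramanan-alpha-trivial-principal}, and read off (\ref{eq-2-cocycle-vs-alpha}) from how the trivialization of $p_{Y_\alpha}^*\alpha_\gamma$ transforms under $\Lambda$. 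The two points you single out as the main obstacle are the compatibility of the chosen trivializations and the explicit verification of (\ref{eq-2-cocycle-vs-alpha}); the paper also leaves exactly these at the level of a sketch.
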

\begin{proof}[Sketch of the proof]
(1) follows by an argument analogous to the proof of Proposition \ref{prop-simple-fixed-points-oscar-ramanan-alpha-trivial-higgs}, except for (\ref{eq-2-cocycle-vs-alpha}). To prove (2) let $(E,\phi)$ be a simple $G$-bundle which is fixed by the $\Gamma$-action. Using Proposition \ref{prop-simple-fixed-points-higgs} we get a reduction of structure group $(F,\psi)$ to a $(G_{\beta\theta},\lie g^{\beta\theta}_{\mu})$-Higgs pair for some lift $\beta\theta$ of $a\vert_{\ker\eta}$. Let $Y$ be the étale cover of $X$ given by a connected component of the $\gamtt$-bundle $F(\gs/\gt_0)$. Using the equivalence between Higgs bundles on $X$ and $\Lambda$-equivariant Higgs bundles on $Y_{\alpha}$, together with the fact that $p_{Y_{\alpha}}^*\hat\alpha$ is trivial on $Y_{\alpha}$, we may get a twisted equivariant $\wgam$-action on $p_{Y_{\alpha}}^*(F,\psi)$ as in the proof of Proposition \ref{prop-simple-fixed-points-oscar-ramanan-alpha-trivial-principal}. 

We get (\ref{eq-2-cocycle-vs-alpha}) by considering each $p_{Y_{\alpha}}^*\alg$ as the trivial bundle with action twisted by the pairing of $\alg$ with the elements of the Galois group $\Lambda$.
\end{proof}

\begin{theorem}\label{th-prym-narasimhan-ramanan-general-principal}
Fix $\theta\in\Hom(\ker\eta,\Aut(G))$ lifting $a$. We have the following relations between moduli spaces:
\begin{enumerate}
    \item $$\bigcup_{[\beta],Y_{\alpha},[\tau^{\beta\theta}],[c^{\beta\theta}],[\sigma]}\widetilde M(Y,G^{\beta\theta}_0,\wgam,\tau^{\beta\theta},c^{\beta\theta},\sigma)
    \subset M(X,G)^{\Gamma}. $$
    
    \item $$M_{ss}(X,G)^{\Gamma}\subset
    \bigcup_{[\beta],Y_{\alpha},[\tau^{\beta\theta}],[c^{\beta\theta}],[\sigma]}\widetilde M(Y,G^{\beta\theta}_0,\wgam,\tau^{\beta\theta},c^{\beta\theta},\sigma).$$
    
\end{enumerate}
Here $[\beta]$ runs through $H^1_{\theta}(\Gamma,\Int(G))$, $Y$ runs over étale covers of $X$ with Galois group equal to a subgroup
$\Lambda\le\widehat{\Gamma}^{\beta\theta}$, $[\tau^{\beta\theta}]\in \Hom(\wgam,\Out(G^{\beta\theta}_0))$ and $[c^{\beta\theta}]\in H^2_{\tau^{\beta\theta}}(\wgam,Z(G^{\beta\theta}_0))$ are such that $\pg^*a\in (\Hom_{\beta\theta,\tau^{\beta\theta},c^{\beta\theta}}(\wgame,\Out(G)))$,  their restrictions to $\Lambda$ factor through $\olambda$, satisfy 
\begin{equation*}
    G^{\beta\theta}_0\times_{\tau^{\beta\theta},c^{\beta\theta}}\olambda\cong \gtll,
\end{equation*}
and $[\sigma]\in H^1_{c^{\beta\theta}_{x_i}}(\wgamm_{Y,x_i},G)$.
Moreover, if $t:\olambda\to\gtll$ is the map realizing (\ref{iso-Gbetatheta-twisted-product}) as in the proof of Proposition \ref{prop-extensions-isomorphic-twisted-group}, we have (\ref{eq-2-cocycle-vs-alpha}).
% \begin{equation}\label{eq-2-cocycle-vs-alpha}
%     c^{\gamma\theta}(\hat\gamma^{-1},\hat\gamma)^{-1}c^{\gamma\theta}(\hat\gamma^{-1},\lambda)c^{\gamma\theta}(\hat\gamma^{-1}\lambda,\gamma)\tilde\tau^{-1}_{\gamma,\hat\gamma}(t_{\lambda})t_{\hat\gamma^{-1}\lambda\hat\gamma}^{-1}=\langle\alpha_{\gamma},\lambda\rangle,
% \end{equation}
% for every $\gamma\in \Gamma$ and $\lambda\in\Lambda$, where $(\gamma,\hat\gamma)\in\wgame$ and $\langle\alpha_{\gamma},\lambda\rangle$ is the evaluation of $p_{Y}\circ\lambda\in\Hom(\hat\alpha,Z)$ at $\alpha_{\gamma}$.
\end{theorem}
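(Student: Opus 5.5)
The plan is to follow the structure of Theorem \ref{th-prym-narasimhan-ramanan-alpha-trivial-principal}: prove Theorem \ref{th-prym-narasimhan-ramanan-general-principal} as the principal-bundle specialization of Theorem \ref{th-prym-narasimhan-ramanan-general}, simply dropping the Higgs field. There are three ingredients. First, a ``reduction implies fixed'' statement generalizing Proposition \ref{prop-fixed-points-reduction-alpha-trivial-principal} to nontrivial $\alpha$. Second, a ``simple fixed implies reduction'' statement generalizing Proposition \ref{prop-simple-fixed-points-oscar-ramanan-alpha-trivial-principal}. Third, the morphism $M(Y_{\alpha},G^{\beta\theta}_0,\wgam,\tau^{\beta\theta},c^{\beta\theta},\sigma)\to M(X,G)$ supplied by Corollary \ref{cor-prym-narasimhan-ramanan-principal}, extension of structure group, and Corollary \ref{cor-polystability-extension-structure-group-principal}, which preserves polystability. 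The fixed isotropy data $\sigma$ plays no role beyond labelling components, as in Theorem \ref{main-principal}.

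For inclusion (1), start with a polystable twisted equivariant $G^{\beta\theta}_0$-bundle $F$ on $Y_{\alpha}$. Descend along the Galois group $\Lambda$ via Proposition \ref{prop-twisted-equivariant-bundles-one-to-one} to obtain a $\gtll$-bundle on $X$, then extend to a $G$-bundle $E$. For each $\gamma$, choose $(\gamma,\hat\gamma)\in\wgame$. As in the proof of Proposition \ref{prop-fixed-points-reduction-alpha-trivial-principal}, the action of $\hat\gamma$ intertwines the $\gtll$-action twisted by $\tilde\tau_{\gamma,\hat\gamma}$. The new point is that it does so only up to the scalar on the left of (\ref{eq-2-cocycle-vs-alpha}). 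That scalar equals $\langle\alpha_\gamma,\lambda\rangle$, and this is exactly the descent datum for $p_{Y_\alpha}^*\alpha_\gamma$, which is trivial on $Y_{\alpha}$ with $\Lambda$-action twisted by the pairing. Hence $\hat\gamma$ descends to an isomorphism $E\cong\eta_\gamma^*\tilde\tau_{\gamma,\hat\gamma}^{-1}(E\otimes\alpha_\gamma)$. Since $q_*\tilde\tau=\pg^*a$, this gives $E\cong E\gamma$.

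For inclusion (2), let $E$ be simple, stable and fixed. First apply Proposition \ref{prop-simple-fixed-points-principal} to the subgroup $\ker\eta$ with the cocycle $\alpha|_{\ker\eta}$. This yields a class $[\beta]$ and a reduction $F$ to $G_{\beta\theta}$ with $\tilde c_{\beta\theta}(F)\cong\alpha|_{\ker\eta}$. Consequently a connected component $Y$ of $F/\gt_0$ lies in $\qt^{-1}(\alpha|_{\ker\eta})$. Next pull back to $Y_\alpha$, where every $\alpha_\gamma$ becomes trivial, and repeat the proof of Proposition \ref{prop-simple-fixed-points-oscar-ramanan-alpha-trivial-principal}. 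For this, take the isomorphisms $h_\gamma:E\to\eta_\gamma^{*-1}\tilde\theta_\gamma(E\otimes\alpha_\gamma)$, compose them with the trivializations of $p_{Y_\alpha}^*\alpha_\gamma$ and with right multiplication by suitable $t_{\gamma,\hat\gamma}$, and obtain automorphisms of $p_{Y_\alpha}^*F$ lifting $\wgam$. Simplicity of $E$ forces the compositions to differ by elements of $Z(G^{\beta\theta}_0)$. This produces $\tau^{\beta\theta}$ (lifted via de Siebenthal), the cocycle $c^{\beta\theta}$ and $\tilde\tau=\tilde\theta\Int_t$. Stability of the resulting object follows from Corollary \ref{cor-polystability-extension-structure-group-principal}(2) and Theorem \ref{th-prym-narasimhan-ramanan-higgs}. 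Finally, conjugating $\beta\theta$ within its class does not change the image, by part (3) of that corollary.

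The main obstacle is verifying the compatibility condition (\ref{eq-2-cocycle-vs-alpha}) in (2), and conversely showing it is enough in (1). My plan is to compute, for $\lambda\in\Lambda$ and $(\gamma,\hat\gamma)\in\wgame$, the conjugate $(1,\hat\gamma)^{-1}(t_\lambda,\lambda)(1,\hat\gamma)$ using the formula from Proposition \ref{prop-action-centralizer}. The conjugate should be $t_{\hat\gamma^{-1}\lambda\hat\gamma}$ times the $\lambda$-monodromy of the trivialized bundle $p_{Y_\alpha}^*\alpha_\gamma$. The delicate part is bookkeeping, namely consistent choices of $t$ and of the trivializations of $p_{Y_\alpha}^*\hat\alpha$ for all $\gamma$ at once, which I would fix using the cocycle relation (\ref{eq-1-cocycle-eta-a}).
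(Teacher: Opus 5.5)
Your proposal takes essentially the paper's route, namely the sketch given for Theorem \ref{th-prym-narasimhan-ramanan-general} with the Higgs field dropped. You reduce over $\ker\eta$ with Proposition \ref{prop-simple-fixed-points-principal}, take a component $Y$ of $F/G^{\beta\theta}_0$, pass to $Y_\alpha$ where $\hat\alpha$ trivializes, and rerun the construction of Proposition \ref{prop-simple-fixed-points-oscar-ramanan-alpha-trivial-principal}; like the paper, you read off (\ref{eq-2-cocycle-vs-alpha}) from $p_{Y_\alpha}^*\alpha_\gamma$ as the trivial bundle with $\Lambda$ acting through the pairing, and leave that bookkeeping unverified. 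One small correction for (1): descending along $\Lambda$ gives a $G^{\beta\theta}_0\times_{\tau,c}\Lambda$-bundle, which you must still push out along $\Lambda\to\olambda$, legitimate because $\tau$ and $c$ factor through $\olambda$ on $\Lambda$.
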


If $\mu$ is trivial, we may use the notation of Sections \ref{section-twisted-equivariant-higgs-pairs-and-hitchin-equations} and \ref{section-character-variety-alpha-trivial} to get a generalization of Theorems \ref{main-rep} and \ref{th-prym-narasimhan-ramanan-character-varieties}:

\begin{theorem}\label{th-prym-narasimhan-ramanan-character-general}
Let $\mu:\Gamma\to\C^*$ be trivial. Fix $\theta\in\Hom(\ker\eta,\Aut(G))$ lifting $a$. We have the following relations between character varieties:
\begin{enumerate}
    \item $$\bigcup_{[\beta],Y,[\tau^{\beta\theta}],[c^{\beta\theta}]}\wcalR(Y,G^{\beta\theta}_0,\wgam,\tau^{\beta\theta},c^{\beta\theta})
    \subset\calR(X,G)^{\Gamma}. $$
    
    \item $$\calR_{\irr}(X,G)^{\Gamma}\subset
    \bigcup_{[\beta],Y,[\tau^{\beta\theta}],[c^{\beta\theta}]}\wcalR(Y,G^{\beta\theta}_0,\wgam,\tau^{\beta\theta},c^{\beta\theta}).$$
    
\end{enumerate}
Here $[\beta]$ runs through $H^1_{\theta}(\Gamma,\Int(G))$, $Y$ runs over étale covers of $X_{\alpha,\eta}$ which are connected components of $\gamtt$-bundles in $\qt^{-1}(\alpha\vert_{\ker\eta})$, and $[\tau^{\beta\theta}]\in \Hom(\wgam,\Out(G^{\beta\theta}_0))$ and $[c^{\beta\theta}]\in H^2_{\tau^{\beta\theta}}(\wgam,Z(G^{\beta\theta}_0))$ are such that $\pg^*a\in (\Hom_{\beta\theta,\tau^{\beta\theta},c^{\beta\theta}}(\wgame,\Out(G)))$ and their restrictions to $\Lambda$ factor through $\olambda$ and satisfy 
(\ref{iso-Gbetatheta-twisted-product}).
Moreover,
% for each choice of $[\beta]$, $[\tau^{\beta\theta}]$ and $[c^{\beta\theta}]$, we choose a twisted homomorphism $\tilde\tau\in\Hom_{\beta\theta,\tau^{\beta\theta},c^{\beta\theta}}(\wgame,\Aut(G))$ so that $q_*\tilde\tau=\pg^*a$ (if such a choice exists) and,
if $t:\olambda\to\gtll$ is the map realizing (\ref{iso-Gbetatheta-twisted-product}) as in the proof of Proposition \ref{prop-extensions-isomorphic-twisted-group}, we have (\ref{eq-2-cocycle-vs-alpha})
for every $\gamma\in \Gamma$ and $\lambda\in\Lambda$, where $(\gamma,\hat\gamma)\in\wgame$.
\end{theorem}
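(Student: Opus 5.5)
The plan is to transport Theorem \ref{th-prym-narasimhan-ramanan-general} through the non-abelian Hodge correspondence, just as Theorem \ref{main-rep} was obtained from Theorem \ref{main} and Theorem \ref{th-prym-narasimhan-ramanan-character-varieties} from Theorem \ref{th-prym-narasimhan-ramanan-oscar-ramanan-higgs}. Since $\mu$ is trivial, $\lie g^{\beta\theta}_{\mu}=\lie g^{\beta\theta}$ and $\rhotm$ is the action induced by $\tilde\tau$ restricted to $\lie g^{\beta\theta}$, which is just the adjoint-compatible action $d\tau^{\beta\theta}$ on the Lie algebra of $G^{\beta\theta}_0$ (as $\ker\eta$ acts trivially on it through $\beta\theta$). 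Hence the moduli spaces appearing in Theorem \ref{th-prym-narasimhan-ramanan-general} are honest moduli spaces $\cM(Y_\alpha,G^{\beta\theta}_0,\wgam,\tau^{\beta\theta},c^{\beta\theta})$ of twisted equivariant $G^{\beta\theta}_0$-Higgs bundles (summed over local types $\sigma$). By Theorem \ref{equivariant-nahc} each of these is homeomorphic to $\calR(Y_\alpha,G^{\beta\theta}_0,\wgam,\tau^{\beta\theta},c^{\beta\theta})$, and by Theorem \ref{rep1} $\cM_0(X,G)\cong\calR(X,G)$, with stable-and-simple points corresponding to irreducible representations.

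First I check that the $\Gamma$-action on $\calR(X,G)$, namely $\rho\cdot\gamma=\theta_\gamma^{-1}\circ(\rho\otimes\alpha_\gamma)\circ\eta_{\gamma*}$, corresponds to the action on $\cM(X,G)$ of Section \ref{section-action}. This combines the two verifications already quoted in Sections \ref{section-prym-narasimhan-ramanan-character-varieties} and \ref{section-character-variety-alpha-trivial}: pullback of a harmonic metric by $\eta_\gamma$ is harmonic, and twisting by the flat finite-order $Z$-bundle $\alpha_\gamma$ multiplies the holonomy by the corresponding character into $Z$. Consequently the homeomorphism of Theorem \ref{rep1} restricts to $\cM_0(X,G)^\Gamma\cong\calR(X,G)^\Gamma$ and to $\cM_{ss}(X,G)^\Gamma\cong\calR_{\irr}(X,G)^\Gamma$.

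Next I show that the morphism $\cM(Y_\alpha,G^{\beta\theta}_0,\wgam,\dots)\to\cM(X,G)$ of Theorem \ref{th-prym-narasimhan-ramanan-general} is intertwined with the natural map of character varieties. That map takes a twisted equivariant representation of $\pi_1(Y_\alpha,\wgam)$ in $G^{\beta\theta}_0\times_{\tau,c}\wgam$ to its restriction to $\pi_1(X)\subset\pi_1(Y_\alpha,\wgam)$, landing in $G_{\beta\theta}\le G$ via $t$. This intertwining follows because the Hitchin metric of the twisted equivariant object is $\wgam$-invariant (Theorem \ref{EH1-equivariant}). It then descends, by Propositions \ref{prop-twisted-equivariant-higgs pairs-one-to-one} and \ref{prop-polystability-extension-structure-group}, to the metric solving the Hitchin equation on the extended $G$-bundle. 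So the flat connections match, and so does their holonomy. The images $\wcalR$ are therefore the images of $\wcM$ under Theorem \ref{rep1}, and both inclusions follow from those of Theorem \ref{th-prym-narasimhan-ramanan-general}, with condition (\ref{eq-2-cocycle-vs-alpha}) carried over unchanged.

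The main obstacle is the compatibility in the previous step. Theorem \ref{equivariant-nahc} is stated for $Y_\alpha$ with the group $\wgam$ and with the equivariant fundamental group. I must identify $\pi_1(Y_\alpha,\wgam)$ with an extension of $\eta(\Gamma)$ containing $\pi_1(X)$, and check that the $\alpha$-twist is correctly absorbed into the cocycle via (\ref{eq-2-cocycle-vs-alpha}). I would handle this by working with the flat connection on the total space, which avoids choosing base points. Since Theorem \ref{th-prym-narasimhan-ramanan-general} is itself only sketched, I would state the result with the same level of detail.
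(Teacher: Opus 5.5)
Your proposal is correct and takes the same route as the paper, whose proof simply transports Theorem \ref{th-prym-narasimhan-ramanan-general} through the non-abelian Hodge correspondences (Theorem \ref{equivariant-nahc}, with Theorem \ref{rep1} on $X$). Your checks are the details the paper leaves implicit: that the $\Gamma$-actions match, that stable and simple points correspond to irreducible representations, and that the extension-of-structure-group maps are intertwined with the character-variety maps.
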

\begin{proof}
Use Theorems \ref{equivariant-nahc} and \ref{th-prym-narasimhan-ramanan-general}.
\end{proof}

\newpage
\chapter{Fixed points on moduli spaces of arbitrary Higgs pairs}\label{chapter-pairs}
Even though we are particularly interested in the study of fixed points in moduli spaces of Higgs bundles, the same arguments may be performed for more general moduli spaces of Higgs pairs. Throughout this section $X$ will be a compact Riemann surface, $G$ a connected semisimple complex Lie group with centre $Z$ and Lie algebra $\lie g$ and $\rho:G\to\GL(V)$ a representation in a complex vector space $V$. 

We assume that $Z\subset\ker\rho$, i.e. the action of $Z$ on $V$ is trivial ---this is of course true in the case of the adjoint representation. It implies, for example, that for each $z\in Z$ and each $(G,V)$-Higgs pair $(E,\phi)$, we have an automorphism sending $e\in E$ to $ez$ which preserves the Higgs field. As usual, we call the Higgs pairs with automorphism group equal to $Z$ \textbf{simple}. 

\begin{remark}
    The previous assumption is mostly a matter of notation, since it can be omitted by replacing $Z$ with $Z\cap\ker\rho$. With this minor change, all the results of this section hold.
\end{remark}

\section{Group actions on moduli spaces of Higgs pairs}\label{section-action-pairs}

\begin{definition}
    We denote by $\GL_G(V)\subset \GL(V)\times\Aut(G)$ the set of pairs $(\da,\theta)$, where $\theta\in\Aut(G)$ is a group automorphism and $\da\in\GL(V)$ is a $\theta$-twisted linear automorphism, i.e. a linear automorphism satisfying
    \begin{equation}\label{eq-compatibility-rhog-theta}
        \da\rho(g)v=\rho(\theta(g))\da v
    \end{equation}
    for each $v\in V$ and $g\in G$.
    
    This has a group structure given by the restriction of the product multiplication on $\GL(V)\times\Aut(G)$, since for every two pairs $(\da,\theta)$ and $(\da',\theta')\in\GL_G(V)$ we have
    \begin{equation*}
        \da\da'\rho(g)v=\da\rho(\theta'(g))\da'=
        \rho(\theta\theta'(g))\da\da'v.
    \end{equation*}
\end{definition}

We have a homomorphism 
\begin{equation*}
    \rho_G:G\to\GL_G(V);\,s\mapsto (\rho(s),\Int_s),
\end{equation*}
since, for every $v\in V$ and every two elements $s,g\in G$, we have
$$\rho(s)\rho(g)v=\rho(sgs^{-1})\rho(s)v.$$
The subgroup $\rho_G(G)\le \GL_G(V)$ is normal: 
\begin{align*}
    (\da,\theta)\rho_G(s)(\da,\theta)^{-1}
    =(\da,\theta)(\rho(s),\Int_s)(\da^{-1},\theta^{-1})
    =(\rho(\theta(s)),\Int_{\theta(s)})
    =\rho_G(\theta(s))
\end{align*}
for every $(\da,\theta)\in\GL_G(V)$ and $s\in G$.

The group $\GL_G(V)$ acts on the set of $(G,V)$-Higgs pairs on the left: each $(\da,\theta)$ sends a Higgs pair $(E,\phi)$ to $(\theta(E),\da(\phi))$, where $\theta(E)$ is given in Section \ref{section-action} and $\da(\phi)$ is defined using the action of $\da$ on $V$. We check that $\da(\phi)$ is well defined: if a local expression for $\phi\in H^0(X,E(V)\otimes K_X)$ is $(e,v)\otimes k$ for some $v\in V$ and local sections $e$ and $k$ of $E$ and $K_X$ respectively, we have
\begin{align*}
    (e g,\da(\rho({g^{-1}})v))&=(e \theta^{-1}(\theta(g)),\rho(\theta(g^{-1}))\da(v))
    \\&=(e\cdot\theta(g),\rho(\theta(g))^{-1}\da(v))
    \\&=(e,\da(v)),
\end{align*}
where the presence or absence of the dot denotes the $G$-action on $\theta(E)$ or $E$ respectively.

Moreover, the isomorphism class of $(E,\phi)$ is preserved by $\rho_G(G)$: for each $g\in G$, the morphism
\begin{equation*}
    E\to\Int_g(E);\,e\mapsto eg
\end{equation*}
induces the Higgs field $\rho(g)(\phi)$. Thus, we get a left action of 
\begin{equation}\label{eq-def-Out(G,V)}
    \Out(G,V):=\GL_G(V)/\rho_G(G)
\end{equation} 
on the set of isomorphism classes of $(G,V)$-Higgs pairs.

We also have a (right and left) action of $H^1(X,Z)$, such that $\alpha\in H^1(X,Z)$ sends $(E,\phi)$ to $(E\otimes\alpha,\phi)$. This is due to the fact that $Z$ acts trivially on $V$, so that $E\times_{\rho}V\cong(E\otimes \alpha)\times_{\rho}V$. Finally, $\Aut(X)$ and $\C^*$ act on the right by pullback and rescaling of the Higgs field as in Section \ref{section-action}. 

Altogether, we get a right group action of $H^1(X,Z)\rtimes(\Out(G,V)\times\Aut(X))\times\C^*$ on the set of isomorphism classes on $(G,V)$-Higgs pairs, so that $(\alpha,a,\eta,\mu)$ sends the class of $(E,\phi)$ to the class of
\begin{equation*}
    \eta^*(\theta^{-1}(E\otimes\alpha),\mu\da^{-1}(\phi)),
\end{equation*}
where $(\da,\theta)\in\GL_G(V)$ is any element in the coset $a\in\Out(G,V)$. Here $\Out(G,V)$ acts on $H^1(X,Z)$ via its projection on $\Out(G)$. We check that this is a well defined right group action: for every two elements $(\alpha,(\da,\theta),\eta,\mu)$ and $(\alpha',(\da',\theta'),\eta',\mu')$ in $ H^1(X,Z)\rtimes(\GL_G(V)\times\Aut(X))\times\C^*$, we have
\begin{align*}
    &((E,\phi)\cdot (\alpha,(\da,\theta),\eta,\mu))\cdot (\alpha',(\da',\theta'),\eta',\mu')\\
    &=[\eta^*(\theta^{-1}(E\otimes\alpha),\mu\da^{-1}(\phi))]\cdot (\alpha',(\da',\theta'),\eta',\mu')\\
    &=\eta'^*\eta^*(\theta'^{-1}(\theta^{-1}(E\otimes\alpha)\otimes\alpha'),\mu'\mu\da'^{-1}\da^{-1}(\phi))\\
    &=(\eta\eta')^*((\theta\theta')^{-1}(E\otimes\alpha\theta(\alpha')),\mu\mu'(\da\da')^{-1}(\phi))\\
    &=(E,\phi)\cdot (\alpha\theta(\alpha'),(\da\da',\theta\theta'),\eta\eta',\mu\mu')\\
    &= (E,\phi)\cdot ((\alpha,(\da,\theta),\eta,\mu) (\alpha',(\da',\theta'),\eta',\mu')).
\end{align*}
This action preserves polystability, hence we have a right group action of $H^1(X,Z)\rtimes(\Out(G,V)\times\Aut(X))\times\C^*$ on the moduli space $\mdl(X,G,V)$ of $(G,V)$-Higgs pairs over $X$.

\section{Fixed points when the action on the curve is trivial}

Let $\Gamma$ be a finite subgroup of $H^1(X,Z)\rtimes\Out(G,V)\times\C^*$. Projections on the second, third and first factors provide homomorphisms $a:\Gamma\to \Out(G,V)$, $\mu:\Gamma\to\C^*$ and a 1-cocycle $\alpha\in Z^1_a(\Gamma,H^1(X,Z))$ respectively ---here $\Gamma$ acts on $Z$ via the projection of $a$ on $\Out(G)$. Assume that there is a homomorphism $\Gamma\to\GL_G(V)$ lifting $a$ and fix one, say $(\da,\theta)$, where $\theta:\Gamma\to\Aut(G)$ is a homomorphism and $\da:\Gamma\to\GL(V)$ is a $\theta$-twisted representation. 

We denote by $V_{\mu}^{\da}$ the $\mu$-weight space of the action of $\Gamma$ on $V$, i.e. the subspace of $V$ consisting of vectors $v\in V$ such that $\daga(v)=\mug v$. Note that the action of $\gs$ on $V$ via $\rho$ preserves $\lievm$, since 
\begin{equation*}
    \daga\rho(g)v=\rho(g)\rho(z)\daga v=\mug\rho(g)v
\end{equation*}
for every $\gamma\in\Gamma$, $g\in\gs$ and $v\in V$ and some $z\in Z$. Here we are using that $\rho(Z)$ is trivial.

\begin{proposition}\label{prop-reduction-pairs}
Let $(E,\phi) $ be a $(G,V)$-Higgs pair. With notation as in Section \ref{section-Gtheta}, assume that there is a $(\gs,\lievm)$-Higgs pair $(F,\psi)$ which is a reduction of structure group of $(E,\phi)$ satisfying (\ref{eq-c-theta}):
\begin{equation*}
    \ctt(F)\cong\alpha.
\end{equation*}
Then $(E,\phi)$ is isomorphic to $(E,\phi)\gamma= (\tg^{-1}(E\otimes\ambda),\mug\daga^{-1}(\phi))$ for every $\ambda\in\llambda$.
\end{proposition}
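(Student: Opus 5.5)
The plan is to mimic the proofs of Propositions \ref{prop-reduction-principal} and \ref{prop-reduction-higgs}, replacing the adjoint representation by $\rho$ and the twisting $\tg^{-1}$ on $\lieg$ by $\daga^{-1}$ on $V$. Fix $\gamma\in\Gamma$. The first step is purely about principal bundles and uses no properties of $V$. Take an open cover $\{U_i\}$ trivializing both $F$ and $\alg$, with local sections $e_i$ of $F$ and $z_i$ of $\alg$ and transition functions $g_{ij}$, $z_{ij}$. The hypothesis $\ctt(F)\cong\alpha$ lets us choose trivializations of $\alg$ so that $z_{ij}=g_{ij}^{-1}\tg(g_{ij})$. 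Then the transition functions of $\tg^{-1}(F\otimes\alg)$ with respect to the sections $e_i\otimes z_i$ are $\tg^{-1}(g_{ij}z_{ij})=g_{ij}$. Setting $\hg(e_i):=e_i\otimes z_i$ and extending $\gs$-equivariantly therefore gives an isomorphism $F\cong\tg^{-1}(F\otimes\alg)$, and extension of structure group yields $\hg:E\xrightarrow{\sim}\tg^{-1}(E\otimes\alg)$. All of this is exactly as in Proposition \ref{prop-reduction-principal}.

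The second step checks that $\hg$ carries $\phi$ to $\mug\daga^{-1}(\phi)$. Locally write $\psi=(e_i,v)\otimes k$ with $v\in\lievm$. Since $\hg$ is induced by the identity on the total space of $F$, $\hg(\psi)$ has the same local form $(e_i,v)\otimes k$, now read in $\tg^{-1}(E\otimes\alg)(V)\otimes K_X$. Here we use $Z\subset\ker\rho$, which gives $(E\otimes\alg)\times_\rho V\cong E\times_\rho V$, so the twist by $\alg$ does not affect the associated vector bundle.

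On the other side, $\mug\daga^{-1}(\phi)$ is locally $\mug(e_i,\daga^{-1}(v))\otimes k$. We need to justify that this is a well-defined section of $\tg^{-1}(E)(V)\otimes K_X$, which is the computation done in Section \ref{section-action-pairs} using (\ref{eq-compatibility-rhog-theta}) applied to $(\daga,\tg)^{-1}$. Because $v\in\lievm$ we have $\daga^{-1}(v)=\mug^{-1}v$, so the expression equals $(e_i,v)\otimes k$. This agrees with $\hg(\phi)$, and the local identifications glue because both sides are defined globally.

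I expect the main obstacle to be bookkeeping rather than anything conceptual. One point is making sure the identification $\tg^{-1}(E)\times_\rho V$ with the twisted Higgs field uses the correct direction of $\daga$ versus $\daga^{-1}$, so that $\lievm$ being the $\mu$-eigenspace yields exactly the factor $\mug$ with the right sign of the exponent. The other point is checking that $\lievm$ is preserved by $\gs$, which the paper has already verified using $\rho(Z)=1$. This is what makes $F(\lievm)$ meaningful and lets the local expressions with $e_i\in F$ be used consistently.
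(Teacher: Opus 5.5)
Your proposal is correct and is essentially the paper's own argument: the paper proves this by running the proof of Proposition \ref{prop-reduction-principal} in the same way as Proposition \ref{prop-reduction-higgs}, with the action $\theta$ on $\lieg$ replaced by the action $\da$ on $V$. Your explicit use of $Z\subset\ker\rho$ to identify $(E\otimes\alg)\times_\rho V$ with $E\times_\rho V$ is exactly what that substitution requires.
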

\begin{proof}
    Follows from the proof of Proposition \ref{prop-reduction-principal} as the proof of Proposition \ref{prop-reduction-higgs} after replacing the action $\theta$ of $\Gamma$ on $\lie g$ with the action $\da$ on $V$. 
\end{proof}

Let $S^{\Gamma}$ be the set of isomorphism classes of $(G,V)$-Higgs pairs which are fixed by $\Gamma$. As in Section \ref{section-simple-and-H} we may construct a map
\begin{equation*}
    \wf:S^{\llambda}\to 
    H^1_{\theta}(\Gamma,G/Z).
\end{equation*}

\begin{proposition}\label{prop-simple-fixed-points-pairs}
Let $(E,\phi)$ be a simple $(G,V)$-Higgs pair over $X$ which is isomorphic to $(E,\phi) \ambda$ for every $\ambda$ in $\llambda$. Then a 1-cocycle $\beta\in Z^1_{\theta}(\Gamma,G/Z)$ is in $\wf(E,\phi)\in H^1_{\theta}(X,\Int(G))$ if and only if there exists a $G_{\beta\theta}$-bundle $F$ which is a reduction of structure group of $ E $ and satisfies (\ref{eq-c-betatheta}):
\begin{equation*}
    \tilde c_{\beta\theta}(F)\cong\alpha.
\end{equation*}
For each $\beta\in Z^1_{\theta}(\Gamma,G/Z)$ such a reduction is unique.
\end{proposition}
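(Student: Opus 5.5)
The plan is to follow the proof of Proposition \ref{prop-simple-fixed-points-higgs} (itself modelled on Proposition \ref{prop-simple-fixed-points-principal}), replacing the action $\theta$ of $\Gamma$ on $\lie g$ by the $\theta$-twisted action $\da$ on $V$. Fix isomorphisms $\hg:(E,\phi)\to(\tg^{-1}(E\otimes\alg),\mug\daga^{-1}(\phi))$ for $\gamma\in\Gamma$. By simplicity, $\Aut(E,\phi)=Z$. Hence the induced maps $\ohg:E/Z\to E/Z$ are uniquely determined and satisfy $\ohg\oh_{\gamma'}=\oh_{\gamma\gamma'}$. The map $f:E/Z\to\fun{\Gamma}{G/Z}$ defined by $\ohg(e)=e\fg(e)$ then takes values in $Z^1_\theta(\Gamma,G/Z)$ by the argument of Lemma \ref{lemma-f-Z}. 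The GIT argument of Section \ref{section-simple-and-H}, using Proposition \ref{prop-representation-varieties}, still applies, since only the bundle and the simplicity of the pair are involved. So $\wf(E,\phi)$ is well defined, and the analogue of Lemma \ref{lemma-orbit-fibre-higgs} holds: $\beta\in\wf(E,\phi)$ if and only if $f^{-1}(\beta)$ meets every fibre.

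For the \emph{only if} direction, take $\beta=\Int_s$ in the class. Define $F$ as in (\ref{eq-definition-reduction-from-iso}), that is, the preimage of $f^{-1}(\beta)$ in $E$. The computation in the proof of Proposition \ref{prop-simple-fixed-points-principal} shows that $F$ is a reduction to $G_{\beta\theta}$ with $\tilde c_{\beta\theta}(F)\cong\alpha$; this part involves only the bundle. The remaining task is the Higgs field, which the statement does not strictly require but which makes $(F,\psi)$ a reduction of $(E,\phi)$ to $(G_{\beta\theta},V^{\rho(s)\da}_\mu)$.

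To handle it, write $\phi=(e,v)\otimes k$ with $e\in F$, using Lemma \ref{lemma-exists-higgs-field-reduction}. The composite $\hg'=\sg^{-1}\circ\hg$ sends $e$ to $e\otimes\zg(e)$. Because $\rho(Z)$ is trivial, invariance of $\phi$ reads $v=\mug(\rho(\sg)\daga)^{-1}v$, exactly as in (\ref{eq-Phi-induces-local}). This shows $\phi$ lies in $F$ twisted by the $\mu$-weight space of the lift $(\rho(s)\da,\Int_s\theta)$.

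Uniqueness of the reduction for fixed $\beta$ follows from simplicity, as before. By Proposition \ref{prop-reduction-pairs} any such reduction yields isomorphisms $\hg$, and these determine $F$ through (\ref{eq-definition-reduction-from-iso}).

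For the \emph{if} direction, suppose $F'$ is a reduction to $G_{\beta'\theta}$ with $\tilde c_{\beta'\theta}(F')\cong\alpha$. The proof of Proposition \ref{prop-reduction-principal} gives isomorphisms with $\ohg'(e)=e\beta'_\gamma$ at some point $e$. By simplicity these coincide with $\ohg$, so $f(e)=\beta'$ and Lemma \ref{lemma-orbit-fibre-higgs} gives $\beta'\in\wf(E,\phi)$.

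I expect the main point needing care to be the bookkeeping of the twisted action: conjugating $\theta$ by $\Int_s$ must be matched by replacing $\da$ with $\rho(s)\da$, and one has to check this pair still lies in $\GL_G(V)$. This follows from (\ref{eq-compatibility-rhog-theta}) and the equality $\rho_G(s)=(\rho(s),\Int_s)$.
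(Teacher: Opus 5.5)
Your ``only if'' direction, the construction of $F$, and the Higgs-field computation follow the paper's route. That route is to rerun Proposition \ref{prop-simple-fixed-points-principal} as in Proposition \ref{prop-simple-fixed-points-higgs}, with $\theta'$ on $\mathfrak g$ replaced by $\rho(s)\kappa$ on $V$. The gap is in the ``if'' direction. A bundle reduction $F'$ to $G_{\beta'\theta}$ with $\tilde c_{\beta'\theta}(F')\cong\alpha$, fed into Proposition \ref{prop-reduction-principal}, only gives isomorphisms of $G$-bundles $E\to\theta_\gamma^{-1}(E\otimes\alpha_\gamma)$. These need not send $\phi$ to $\mu_\gamma\kappa_\gamma^{-1}(\phi)$. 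Simplicity of the pair, $\Aut(E,\phi)=Z$, determines isomorphisms of Higgs pairs up to $Z$. It says nothing about bundle isomorphisms, since $E$ itself need not be simple. So you cannot conclude that these maps agree with your $h_\gamma$ modulo $Z$, nor that $f(e)=\beta'$.

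With bundle reductions only, the ``if'' direction is in fact false. Take $G=\SL(2,\C)$, $V=\mathfrak g$ with the adjoint representation, and $\Gamma=\Z/2\Z$ acting by $\phi\mapsto-\phi$, so that $\alpha,\theta,\kappa$ are trivial and $\mu(-1)=-1$. Let $(E,\phi)$ be stable, simple and fixed with $\phi\neq0$, e.g.\ a point of the Hitchin section. No isomorphism $(E,\phi)\to(E,-\phi)$ is central, so the trivial cocycle is not in $\widetilde f(E,\phi)$. Yet for $\beta=1$ we have $G_{\beta\theta}=G$, and $F=E$ is a $G_{\beta\theta}$-reduction with $\tilde c_{\beta\theta}(E)$ trivial, hence equal to $\alpha$.

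The statement must therefore be read with $F$ underlying a reduction of the pair, i.e.\ $\phi\in H^0(X,F(V^{\rho(s)\kappa}_\mu)\otimes K_X)$. This is what the paper's proof does when it uses Proposition \ref{prop-reduction-pairs} instead of Proposition \ref{prop-reduction-principal} for the converse and for uniqueness. Your remark that the Higgs field ``is not strictly required'' is backwards. It is an extra conclusion in the ``only if'' direction, but an indispensable hypothesis in the ``if'' direction. Your uniqueness step already uses it tacitly, since Proposition \ref{prop-reduction-pairs} needs it.

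With that hypothesis, the converse goes as follows:
\begin{itemize}
\item apply Proposition \ref{prop-reduction-pairs} to the lift $(\rho(s')\kappa,\Int_{s'}\theta)$;
\item compose with right multiplication by $s'_\gamma$ to obtain Higgs-pair isomorphisms $(E,\phi)\to(E,\phi)\gamma$;
\item only then invoke simplicity.
\end{itemize}
For this lift to be a homomorphism $\Gamma\to\GL_G(V)$, you need $\rho_G\circ s'$ to be a 1-cocycle. This uses $\rho(Z)=1$, because $s'$ satisfies the cocycle identity only modulo $Z$. The fact that each $\rho_G(s'_\gamma)$ lies in $\GL_G(V)$ is not enough.
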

\begin{proof}
    Follows from the proof of Proposition \ref{prop-simple-fixed-points-principal} as the proof of Proposition \ref{prop-simple-fixed-points-higgs} after replacing the action $\theta'$ of $\Gamma$ on $\lie g$ with the action $\rho(s)\da$ on $V$, where $\beta=\Int_s$. 
\end{proof}

Fix a non-degenerate $G$-invariant pairing on $\lie g$. Choose a maximal compact subgroup $K$ of $G$ satisfying Lemma \ref{lemma-compact-involution} and a hermitian metric $h_V$ on $V$ such that $\rho(K)$ is contained in the group $U(V)$ of unitary automorphisms of $V$. Given a $(G,V)$-Higgs pair $(E,\phi)$, choose a metric $h\in \Omega^0(X,E(G/K))$ on $E$. We define
\begin{equation}\label{eq-def-mu(phi)}
    \mu_h(\phi):=d\rho^*(-\frac i2\phi\otimes\phi^{*h,h_V}),
\end{equation}
where $\phi^{*h,h_V}\in \Omega^0(X,E(V)^*\otimes K_X^*)$ is defined using the metric on $E$ and the hermitian metric on $V$. Here we identify $-\frac i2\phi\otimes\phi^{*h,h_V}$ as a skew symmetric section of $\End(E(V)\otimes K)^*=\End(E(V))^*$, hence a section of $E_h(\lie u(V))^*$, where $\lie u(V)$ is the Lie algebra of $U(V)$. The map $d\rho^*:E_h(\lie u(V))^*\to E_h(\lie k)^*$ is induced by the dual of the infinitesimal action $d\rho$ of $\lie k$ on $V$.

\begin{theorem}\label{EH1-pairs}
Let $(E,\phi)$ be a $(G,V)$-Higgs pair on $X$. 
Then $(E,\phi)$ is polystable if and only if the $G$-bundle $E$ and the vector space $V$ 
admit compatible metrics $h$ satisfying the {Hermite--Yang--Mills--Higgs equation}
\begin{equation}\label{hitchin-equation-pairs}
\Lambda F_h+\mu_h(\phi)=0,
\end{equation}
where $F_h\in \Omega^2(X,E_h(\lie k))$ is the Chern curvature and $\Lambda:\Omega^2(X)\to\Omega^0(X)$ is the adjoint of wedging with the volume form on $X$. The left hand side of (\ref{hitchin-equation-pairs}) may be regarded as a moment map, see \cite{PBI}.
\end{theorem}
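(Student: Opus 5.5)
This is the Hitchin--Kobayashi correspondence for $(G,V)$-Higgs pairs. I would prove it by casting (\ref{hitchin-equation-pairs}) as a moment map equation and applying the general correspondence between polystability and zeros of moment maps in infinite-dimensional Kähler quotients, as in \cite{PBI}. The set-up is as follows. Fix the underlying smooth $G$-bundle $E$ and a smooth reduction $h$ to $K$, giving the $K$-bundle $E_h$. Let $\mathcal A$ be the affine space of $K$-connections on $E_h$; these are equivalently holomorphic structures on $E$. Let $\mathcal C\subset\mathcal A\times\Omega^{1,0}(X,E(V))$ be the configuration space of pairs $(A,\phi)$ with $\bar\partial_A\phi=0$. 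The Kähler structure on $\mathcal A$ comes from the invariant pairing on $\lie g$, and the one on $\Omega^{1,0}(X,E(V))$ comes from $h_V$ and the area form of $X$. The unitary gauge group $\mathcal K$ of $E_h$ acts preserving both.

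First, I compute the moment map. On $\mathcal A$ it is the classical Atiyah--Bott term $\Lambda F_A$. On the linear factor, the infinitesimal action of $\xi\in\Omega^0(E_h(\lie k))$ is $\phi\mapsto d\rho(\xi)\phi$, and the Hamiltonian is $\frac{i}{2}\langle d\rho(\xi)\phi,\phi\rangle$ integrated over $X$, which is exactly $\langle\xi,\mu_h(\phi)\rangle$ by (\ref{eq-def-mu(phi)}). The total moment map is therefore the left-hand side of (\ref{hitchin-equation-pairs}). Changing $h$ within the complexified gauge orbit $\mathcal G=\Omega^0(E(G))$ is equivalent to moving the pair along its $\mathcal G$-orbit. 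So the existence of a metric $h$ solving (\ref{hitchin-equation-pairs}) is the same as the $\mathcal G$-orbit of $(E,\phi)$ meeting the zero set of the moment map.

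Second, I introduce the integral of the moment map (the Donaldson functional) $M(h_0,h)$ on the symmetric space $\Omega^0(E(G/K))$. I check that it is geodesically convex. Along the geodesic $h_t=h_0e^{ts}$, its derivative equals $\int\langle\Lambda F_{h_t}+\mu_{h_t}(\phi),s\rangle$. As $t\to\infty$, this derivative tends to
\[
\deg E(\tau,s)-\langle z,s\rangle+\text{(a term that is finite iff }\phi\in E(V)_{\tau,s}\text{)},
\]
where $\tau$ is the filtration defined by the eigenbundles of $s$. This is where Definition~\ref{def-stability-higgs-pair} enters. The bounded-weight condition on $\phi$ says precisely that $|\rho(e^{ts})\phi|$ stays bounded, and the limit being positive for every such $\tau,s$ is exactly semistability/stability.

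Third come the two directions. For ``solution $\Rightarrow$ polystable'', convexity shows the asymptotic slope is nonnegative along every ray. Equality forces the derivative to be constant, hence $s$ is $h$-parallel. This gives the holomorphic Levi reduction $\tau'$ with $\phi\in E(V)^0_{\tau',s}$, which is polystability. For ``polystable $\Rightarrow$ solution'', I reduce to the stable case by splitting along the Levi reduction, where one again gets a stable pair for the smaller group $L_s$. In the stable case I run the continuity method or heat flow for $M$. The key estimate is properness, $M(h_0,h)\ge C\|s\|-C'$, which gives $C^0$ bounds; higher regularity then follows by standard elliptic theory.

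The main obstacle is the properness estimate, which I would argue by contradiction following Uhlenbeck--Yau/Simpson. Suppose $M$ is not proper. Normalise a sequence $s_i$ with $\|s_i\|\to\infty$ and extract a weak $L^2_1$ limit $s_\infty$. Show its eigenvalues are constant, and that its eigen-projections are weakly holomorphic. This weak holomorphicity uses both the Chern connection terms and the fact that $\rho(e^{ts})\phi$ stays bounded. By Uhlenbeck--Yau regularity these projections define a genuine holomorphic reduction $\tau\in H^0(X,E(G/P_s))$ with $\phi\in E(V)_{\tau,s}$. Finally, show this reduction violates stability. The delicate point, beyond the vector bundle case, is handling the $\mu_h(\phi)$ term in the limit; for this I would use the convexity of $t\mapsto|\rho(e^{ts})v|^2$ on $V$.
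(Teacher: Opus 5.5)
Your outline is correct and is the standard moment-map argument of \cite{PBI}: integral of the moment map, convexity along geodesics, maximal weights, and a Uhlenbeck--Yau/Simpson-type properness estimate. The paper itself only cites that reference without reproducing the proof, and the one point you gloss over is that the $L_s$-pair from the Levi reduction is stable only modulo central directions of $\lie l_s$ acting trivially on $V^0_s$ (such as $s$ itself), so properness must be proved modulo those directions, as \cite{PBI} does.
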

\begin{proof}
    See \cite{PBI}.
\end{proof}

\begin{lemma}\label{lemma-invariant-metric}
Given a homomorphism $(\da,\theta):\Gamma\to\GL_G(V)$ and a $\theta(\Gamma)$-invariant maximal compact subgroup $K$ of $\gs$, there exists a $K$-invariant hermitian metric on $V$ which is also $\da(\Gamma)$-invariant.
\end{lemma}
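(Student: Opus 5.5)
The plan is the standard averaging trick over a compact group generated by $K$ and the finite group $\da(\Gamma)$. The first step is to build a compact subgroup of $\GL(V)$ containing both $\rho(K)$ and $\da(\Gamma)$. By (\ref{eq-compatibility-rhog-theta}) we have $\daga\rho(k)\daga^{-1}=\rho(\tg(k))$ for every $k\in K$ and $\gamma\in\Gamma$. Since $K$ is $\theta(\Gamma)$-invariant, $\rho(\tg(k))\in\rho(K)$. Hence $\da(\Gamma)$ normalizes $\rho(K)$, and the subgroup
\begin{equation*}
  \widetilde K:=\rho(K)\,\da(\Gamma)\subset\GL(V)
\end{equation*}
is indeed a subgroup.

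The second step is to check that $\widetilde K$ is compact. It is a union of the cosets $\rho(K)\da_\gamma$, and there are only finitely many of these since $\Gamma$ is finite. Each coset is compact because $\rho(K)$ is the continuous image of a compact group. A finite union of compact sets is compact, so $\widetilde K$ is a compact subgroup of $\GL(V)$.

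The third step is the averaging itself. Take any hermitian metric $h_0$ on $V$ and set
\begin{equation*}
  h_V(v,w):=\int_{\widetilde K} h_0(uv,uw)\,du,
\end{equation*}
where $du$ is the normalized Haar measure on $\widetilde K$. Left invariance of $du$ makes $h_V$ invariant under every element of $\widetilde K$, in particular under $\rho(K)$ and under $\da(\Gamma)$. Positivity and hermitian symmetry are inherited from $h_0$, since we are integrating positive definite forms.

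I expect no real obstacle. The only point needing care is the normalization step: conjugation by $\daga$ carries $\rho(K)$ into $\rho(K)$, and this uses precisely the $\theta(\Gamma)$-invariance of $K$. If one prefers to avoid Haar measure on a possibly non-connected group, the averaging can be done in two stages instead. First average $h_0$ over $K$ to obtain a $\rho(K)$-invariant metric $h_1$. Then average $h_1$ over the finite group $\Gamma$ by
\begin{equation*}
  h_V(v,w):=\frac{1}{|\Gamma|}\sum_{\gamma\in\Gamma} h_1(\daga v,\daga w).
\end{equation*}
Each summand $h_1(\daga\cdot,\daga\cdot)$ remains $\rho(K)$-invariant by the same conjugation identity, so $h_V$ is invariant under both $\rho(K)$ and $\da(\Gamma)$.
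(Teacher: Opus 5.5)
Your proposal is correct and is essentially the paper's proof: the paper takes $h(v,v')=\sum_{\gamma\in\Gamma}\int_K h_0(\kappa_\gamma\rho(k)v,\kappa_\gamma\rho(k)v')\,dk$. This is the same averaging over $K$ and $\Gamma$; it agrees with your two-stage formula up to the factor $1/|\Gamma|$, using $\kappa_\gamma\rho(k)=\rho(\theta_\gamma(k))\kappa_\gamma$ and the $\theta_\gamma$-invariance of Haar measure on $K$. One small slip: in your single-integral version it is right-invariance of $du$ that you need, which holds because compact groups are unimodular.
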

\begin{proof}
    Take any hermitian metric $h_0$ on $V$ and define a new metric $h$ by
    \begin{equation*}
h(v,v')=\sum_{\gamma\in\Gamma}\int_{K}h_0(\daga\rho(k)v,\daga\rho(k)v')dk.
    \end{equation*}
    This is both $K$ and $\Gamma$-invariant, as required.
\end{proof}

% involution $\sigma$ of $G$ preserving $\gs$ such that, for every character $\mu:\Gamma\to\C^*$,
% $$d\sigma(\lieg^{\theta}_{\mu})=\lieg^{\theta}_{\mu^{-1}}.$$

\begin{proposition}\label{prop-polystability-extension-structure-group-pairs}
Let $(\da,\theta)\in\homgh$ be a lift of $a$. Then:
\begin{enumerate}
    \item If a $(\gs,\lievm)$-Higgs pair $(F,\psi)$ is polystable, the $(G,V)$-Higgs pair obtained by extension of structure group is also polystable.
    \item If $(E,\phi)$ is a (semi,poly)stable $(G,V)$-Higgs pair with a reduction of structure group to a $(\gs,\lievm)$-pair $(F,\psi)$, then $(F,\psi)$ is (semi,poly)stable.
    \item Given $g\in G$ and $(\da',\theta'):=\rho_G(g)(\da,\theta)\rho_G(g)^{-1}$,
    there is a canonical isomorphism between $\cM(X,\gs,\lievm)$ and $\cM(X,G_{\theta'},V^{\da'}_{\mu})$ making the following diagramme commute:
\begin{equation}\label{eq-diagramme-extension-theta-theta'-pairs}
    \begin{tikzcd}
\cM(X,\gs,\lievm)\arrow{r}\arrow{d} &
\cM(X,G,V)\\
\cM(X,G_{\theta'},V^{\da'}_{\mu})\arrow{ur}
\end{tikzcd},
\end{equation}
where the morphisms to $\cM(X,G,V)$ are given by extension of structure group. For each $\alpha\in Z^1_{a}(\Gamma,H^1(X,Z))$, it restricts to a diagramme
\[\begin{tikzcd}
\cM_{\alpha}(X,\gs,\lievm)\arrow{r}\arrow{d} &
\cM(X,G,V)\\
\cM_{\alpha}(X,G_{\theta'},\lieg^{\da'}_{\mu})\arrow{ur}
\end{tikzcd},
\]
where $\cM_{\alpha}(X,\gs,\lievm)$ is the moduli space of $(\gs,\lievm)$-Higgs pairs $(F,\psi)$ such that $\ctt(F)\cong\alpha$. 
\end{enumerate}
(1), (2) and (\ref{eq-diagramme-extension-theta-theta'-pairs}) are also true after replacing $\gs$ and $G_{\theta'}$ by $\gt$ and $G^{\theta'}$ respectively. 
\end{proposition}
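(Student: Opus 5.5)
The plan is to mimic the proof of Proposition \ref{prop-polystability-extension-structure-group}, replacing the adjoint representation by $\rho$ and the weight space $\liegm$ by $\lievm$; the only new ingredient is controlling the moment map term $\mu_h(\phi)$ of (\ref{hitchin-equation-pairs}).

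For (2) I would argue by contradiction, exactly as before. Suppose $(F,\psi)$ is not semistable. Then there are $s\in i(\lie k^{\theta})^{\Gamma}$ and a reduction $\tau$ of $F$ to $P_s\le\gs$ with $\deg F(\tau,s)<\langle z,s\rangle$ and $\psi\in H^0(X,F(\lievm)_{\tau,s}\otimes K_X)$. Now regard the same $s$ inside $i\lie k$; it defines a parabolic $\widetilde P_s\le G$ with $\widetilde P_s\cap\gs=P_s$, and the reduction $\tau$ extends to a reduction $\widetilde\tau$ of $E$ with the same degree, since the curvatures coincide. Definition (\ref{eq-def-V_s}) is intrinsic to $\rho(e^{ts})$, so $(\lievm)_s=V_s\cap\lievm$. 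Hence $\phi\in H^0(X,E(V)_{\widetilde\tau,s}\otimes K_X)$, which contradicts the semistability of $(E,\phi)$. The stable and polystable cases go the same way, because $(\lievm)^0_s=V^0_s\cap\lievm$ and the Levi reductions correspond.

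For (1) I would use Theorem \ref{EH1-pairs}. First choose $K$ by Lemma \ref{lemma-compact-involution}, so that $K\cap\gs=K_\theta$ is a maximal compact subgroup of $\gs$. Then use Lemma \ref{lemma-invariant-metric} to pick a hermitian metric $h_V$ on $V$ that is both $K$- and $\da(\Gamma)$-invariant. With this choice the weight spaces $V^{\da}_{\mu}$ for distinct $\mu$ are $h_V$-orthogonal, because each $\daga$ is unitary. A polystable $(F,\psi)$ then carries a metric $h_\theta$ solving the equation with $\mu_{h_\theta}(\psi)\in F_{h_\theta}(\lie k_\theta)$, the latter space being the orthogonal projection of $E_h(\lie k)$. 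The induced metric $h$ on $E$ has $F_h=F_{h_\theta}$.

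The key step, and the main obstacle, is to show that $\mu_h(\phi)=d\rho^*(-\tfrac i2\phi\otimes\phi^*)$ already lies in $E_h(\lie k^\theta)$. Equivalently, for $\xi\in\lie k$ in the orthogonal complement of $\lie k^\theta$, I need $h_V(d\rho(\xi)v,v)=0$ for all $v\in\lievm$. I would decompose $\lie g=\bigoplus_\nu\lieg^\theta_\nu$ and use that $d\rho(\lieg^\theta_\nu)$ maps $V^{\da}_\mu$ into $V^{\da}_{\nu\mu}$. This follows from (\ref{eq-compatibility-rhog-theta}) differentiated, with care because $\theta$ enters via $\Gamma$ and the fixed-point algebra of $\gs$ is $\lieg^\theta_1$. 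Together with orthogonality of the weight spaces, this kills every component with $\nu\neq1$. The leftover worry is that $\lie k\cap\lieg^\theta_1$ must coincide with $\lie k^\theta$ and that the orthogonal complement splits compatibly; this is where the property $d\sigma(\lieg^\theta_\nu)=\lieg^\theta_{\nu^{-1}}$ from Lemma \ref{lemma-compact-involution} is needed. Granted this, $h$ solves (\ref{hitchin-equation-pairs}) for $(E,\phi)$, and Theorem \ref{EH1-pairs} gives polystability.

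For (3) I would set $F':=Fg^{-1}$. This is a reduction to $\Int_g(\gs)=G_{\theta'}$, and similarly $\Int_g(\gt)=G^{\theta'}$, so $\ctt$ is unchanged. Writing $\psi$ locally as $(e,v)\otimes k$, I define $\psi'$ locally by $(eg^{-1},\rho(g)v)\otimes k$. Then $\rho(g)$ maps $\lievm$ onto $V^{\da'}_\mu$, because $\da'_\gamma=\rho(\theta_\gamma(g))\da_\gamma\rho(g)^{-1}$ up to the twisting, and it represents the same section of $E(V)\otimes K_X$. So the diagram commutes, and the same argument with $\gt$ in place of $\gs$ handles the last claim.
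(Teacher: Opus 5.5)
Your arguments for (2) and (3) are the paper's, and so is the overall moment-map strategy for (1): extend the metric, note $F_h=F_{h_\theta}$, show that $\mu_h(\phi)$ already lies in the $\lie k^\theta$-part, and conclude with Theorem \ref{EH1-pairs}.

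The gap is in the key step of (1). The decomposition $\lieg=\bigoplus_\nu\lieg^\theta_\nu$ into simultaneous eigenspaces for characters $\nu$ of $\Gamma$ exists only when $\theta(\Gamma)$ acts on $\lieg$ through an abelian group. Here $\Gamma$ is an arbitrary finite group. In general $\sum_\nu\lieg^\theta_\nu$ is only the fixed subalgebra of the commutator subgroup $[\theta(\Gamma),\theta(\Gamma)]$, as the proof of Lemma \ref{lemma-compact-involution} itself shows. For instance, for the triality action of $S_3$ on $\lie{so}(8,\C)$ this sum is just $\lie g_2$, and the remaining $14$ dimensions carry the $2$-dimensional irreducible representation of $S_3$.

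Components of $\xi\perp\lie k^\theta$ lying in such higher-dimensional isotypic pieces are not handled by your argument. The property $d\sigma(\lieg^\theta_\nu)=\lieg^\theta_{\nu^{-1}}$ cannot rescue it, since it only concerns that same sum of character eigenspaces. So your ``leftover worry'' stays open precisely in the non-abelian case.

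The missing idea is to use $\Gamma$-invariance directly rather than any decomposition of $\lieg$; this is what the paper does. With $h_V$ invariant under $\da(\Gamma)$ and $|\mug|=1$, one gets $\daga h_V(v)=\mug^{-1}h_V(v)$ for $v\in\lievm$. Hence $\psi\otimes\psi^{*h,h_V}$ takes values in $\End(V)^{\da}$. Since $d\rho$ is $\Gamma$-equivariant by (\ref{eq-compatibility-rhog-theta}), $\mu_h(\phi)$ lies in the $\Gamma$-fixed part of $\lie k$, namely $\lie k^\theta$.

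In your own terms: from $d\rho(d\theta_\gamma\xi)=\daga d\rho(\xi)\daga^{-1}$ and unitarity, $h_V(d\rho(d\theta_\gamma\xi)v,v)=|\mug|^{-2}h_V(d\rho(\xi)v,v)=h_V(d\rho(\xi)v,v)$. So this functional is $\theta(\Gamma)$-invariant, and after averaging over $\Gamma$ it vanishes on the orthogonal complement of $\lie k^\theta$ in $\lie k$ (taken with respect to the Killing form, which is invariant under all automorphisms). This works for every finite $\Gamma$ and needs nothing from Lemma \ref{lemma-compact-involution}. What it does need is that $\theta(\Gamma)$ preserve $K$. That is also exactly what the averaging in Lemma \ref{lemma-invariant-metric} requires in order to make $h_V$ invariant under both $K$ and $\da(\Gamma)$.

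One small slip in (3): the first component of $\rho_G(g)(\da,\theta)\rho_G(g)^{-1}$ is $\da'_\gamma=\rho(g)\daga\rho(g)^{-1}$. Your expression $\rho(\theta_\gamma(g))\daga\rho(g)^{-1}$ reduces to $\daga$ by (\ref{eq-compatibility-rhog-theta}). With the correct formula, $\da'_\gamma\rho(g)v=\mug\rho(g)v$ gives $\rho(g)\lievm=V^{\da'}_\mu$ immediately.
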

\begin{proof}
The proofs of (2) and (3) are analogous to the proofs of the respective statements in Proposition \ref{prop-polystability-extension-structure-group}. In order to prove (1) fix a maximal $\Gamma$-invariant compact subgroup $K_{\theta}$ of $\gs$ and consider a maximal compact subgroup $K$ of $G$ containing it, so that $K_{\theta}=K\cap\gs$. Note that $K_{\theta}$ exists because $\gs\rtimes_{\theta}\Gamma$ is reductive, hence it must have a maximal compact subgroup. Its intersection with $\gs$, which is a maximal compact subgroup of $\gs$, must then be $\Gamma$-invariant. By Lemma \ref{lemma-invariant-metric} there is a $\Gamma$-invariant metric $h_V$ on $V$ such that $\rho(K)\subset U(V)$.
    
Given a polystable $(\gs,\lievm)$-Higgs pair $(F,\psi)$, by Theorem \ref{EH1-equivariant} and Proposition \ref{prop-twisted-equivariant-bundles-one-to-one} there exists a $\gamtt$-invariant reduction $h_{F}\in \Omega^0(F/K_{\theta})$ satisfying the Hermite--Yang--Mills--Higgs Equation (\ref{hitchin-equation-pairs}). Let $(E,\phi)$ be the extension of structure group of $(F,\psi)$ to $G$. Using the inclusion $F/K_{\theta}\subset E/K$, we get a reduction of structure group $h_E\in \Omega^0(E/K)$. 

On the other hand, (\ref{hitchin-equation}) is an equation setting a moment map equal to $0$: if we consider the topological bundle underlying $E$ and the space of $G$-connections $\mathcal A$ on it, there is an action of the gauge group preserving the metric $h$ and this provides a moment map
$m:X\to E_h(\lie k)^*,$
where $E_h$ is the reduction of $E$ to $K$ given by $h$ and $\lie{k}$ is the Lie algebra of $K$. Using the Killing form, $m$ may be regarded as a map $X\to E_h(\lie{k})$. The space $\mathcal B$ of $(\gs,\lievm)$-Higgs pairs $(B,\psi)$, where $B$ is a hermitian $\gs$-connection and $\psi\in\Omega^1(F(\lievm))$ (here $F$ is the reduction of structure group to $\gs$ determined by the $\gs$-connection) is then embedded in $\mathcal A$, and the corresponding moment map $m_{\theta}$ is the restriction of
$$X\xrightarrow{m}E_h(\lie{k})\to F_{h_{\theta}}(\lie k^{\theta})$$
to $\mathcal B$, where the second homomorphism is given by orthogonal projection and we use the obvious notations. Given a polystable $(\gs,\lievm)$-Higgs pair $(F,\psi)$ as in the previous paragraph, the moment map at $(F,\psi)$ is given by:
$$m(E,\phi)=F_{h}+\mu_h(\psi).$$
But $F_h=F_{h_{\theta}}$ (the curvature of the Chern connection for the metric $h_{\theta}$), so if we can prove that $\mu_h(\psi)\in \Omega^0(X,F_{h_{\theta}}(\lie k^{\theta}))$ then the moment map $m_{\theta}$ would just be the restriction of $m$ and so $m(E,\phi)=m_{\theta}(F,\psi)$. Thus, $(E,\phi)$ would satisfy (\ref{hitchin-equation-pairs}) and so, by Theorem \ref{EH1-pairs}, it would be polystable.

Consider the $\C$-antilinear isomorphism $h_V:V\xrightarrow{\sim} V^*$ given by $h_V$. For each $v\in\lievm$ and each $\gamma\in\Gamma$ we have
\begin{equation*}
    \daga h_V(v)=h_V(\daga(v))=h_V(\mug v)=\mug^{-1}h_V(v),
\end{equation*}
where we are using that $\gamma$ has finite order and so $\mug$ is a root of unity, which satisfies $\overline{\mug}=\mug^{-1}$. Thus $h_V(\lievm)=(V^*)^{\da^{*}}_{\mu^{-1}}$, where $\da^*:\Gamma\to\GL(V^*)$ is the dual representation. This implies that $\psi\otimes\psi^{*h,h_v}\in \Omega^0(X,F_{h_{\theta}}(\lievm\otimes V^{*\da^*}_{\mu^{-1}}))\subset \Omega^0(X,F_{h_{\theta}}(\End(V)^{\da}))$, where $\End(V)^{\da}$ is the fixed point subspace of the action of $\Gamma$ on $\End(V)$ induced by its action on $V$. Since $d\rho$ is $\Gamma$-equivariant by (\ref{eq-compatibility-rhog-theta}), we conclude using (\ref{eq-def-mu(phi)}) that $\mu_h(\phi)\in \Omega^0(X,F_{h_{\theta}}(\lie k\cap\lie g^{\theta}))=\Omega^0(X,F_{h_{\theta}}(\lie k^{\theta}))$, as required.
\end{proof}

We call $\widetilde{\mdl}_{\alpha}(X,\gs,\lievm)$ to the image of ${\mdl}_{\alpha}(X,\gs,\lievm)$ in $\mdl(X,G,V)$. By Proposition \ref{prop-polystability-extension-structure-group}, if $(\da',\theta'):=\rho_G(g)(\da,\theta)\rho_G(g)^{-1}$ for some $g\in G$, we have 
$\widetilde{\mdl}_{\alpha}(X,\gs,\lievm)=\widetilde{\mdl}_{\alpha}(X,G_{\theta'},V^{\da'}_{\mu})$.

Using the notation of Sections \ref{section-action} and \ref{section-action-pairs}, we have the following:

\begin{lemma}\label{lemma-lifts-vs-non-abelian-cohomology-pairs}
Fix a lift $(\da,\theta)$ of $a:\Gamma\to\Out(G,V)$. Let $S_a$ be the set of lifts of $a$.
There is a $G$-equivariant bijection
\begin{equation}\label{eq-bijection-Sa-cocycles-pairs}
    \{\text{Lifts of $a$}\}\leftrightarrow Z^1_{(\da,\theta)}(\Gamma,\rho_G(G));\,(\Int_s\da,\rho(s)\theta)\mapsto(\Int_s,\rho(s)),
\end{equation}
where the action on the left hand side is given by conjugation using $\rho_G$ and the action on the right hand side is given by (\ref{eq-cohomologous}).
In particular, this induces a bijection
\begin{equation}
    \{\text{Lifts of $a$}\}/G\leftrightarrow H^1_{(\da,\theta)}(\Gamma,\rho_G(G)).
\end{equation}
\end{lemma}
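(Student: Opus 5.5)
This is the analogue of Lemma \ref{lemma-lifts-vs-non-abelian-cohomology}, with $\Int(G)$ replaced by the normal subgroup $\rho_G(G)\le\GL_G(V)$, and I would follow that proof step by step. Take any other lift $(\da',\theta')$ of $a$. Since both lifts map to the same element of $\Out(G,V)=\GL_G(V)/\rho_G(G)$, for each $\gamma$ the element $(\da'_\gamma,\theta'_\gamma)(\da_\gamma,\theta_\gamma)^{-1}$ lies in $\rho_G(G)$. Hence $(\da'_\gamma,\theta'_\gamma)=\beta_\gamma(\da_\gamma,\theta_\gamma)$ for a unique map $\beta:\Gamma\to\rho_G(G)$, which we may write $\beta_\gamma=(\rho(s_\gamma),\Int_{s_\gamma})$. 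Here $s_\gamma$ is determined only modulo $\ker\rho_G$, and this kernel contains $Z$ because $Z\subset\ker\rho$ by assumption. I read the map in the statement as this assignment $\beta\theta\mapsto\beta$; the components written in (\ref{eq-bijection-Sa-cocycles-pairs}) appear to be a typographical swap.

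Next I check the cocycle condition. Let $\Gamma$ act on $\rho_G(G)$ by conjugation by $(\da,\theta)$; by the normality computation before (\ref{eq-def-Out(G,V)}) this action is $\rho_G(g)\mapsto\rho_G(\theta_\gamma(g))$. Imitating the display preceding Lemma \ref{lemma-lifts-vs-non-abelian-cohomology}, compute
\begin{equation*}
\beta_{\gamma\gamma'}(\da,\theta)_{\gamma\gamma'}=(\da',\theta')_\gamma(\da',\theta')_{\gamma'}=\beta_\gamma(\da,\theta)_\gamma\beta_{\gamma'}(\da,\theta)_{\gamma'}=\beta_\gamma\,\bigl[(\da,\theta)_\gamma\beta_{\gamma'}(\da,\theta)_\gamma^{-1}\bigr]\,(\da,\theta)_{\gamma\gamma'}.
\end{equation*}
This shows that $(\da',\theta')$ is a homomorphism if and only if $\beta$ satisfies (\ref{eq-def-1-cocycle}) for this action. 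That gives the bijection, with inverse $\beta\mapsto\beta(\da,\theta)$.

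For $G$-equivariance, let $g\in G$ act on lifts by $\rho_G(g)(\cdot)\rho_G(g)^{-1}$. Then
\begin{equation*}
\rho_G(g)\beta_\gamma(\da,\theta)_\gamma\rho_G(g)^{-1}=\rho_G(g)\beta_\gamma\,\bigl[(\da,\theta)_\gamma\rho_G(g)^{-1}(\da,\theta)_\gamma^{-1}\bigr](\da,\theta)_\gamma,
\end{equation*}
so the cocycle changes to $\rho_G(g)\beta_\gamma\rho_G(\theta_\gamma(g))^{-1}$. This is exactly the relation (\ref{eq-cohomologous}) with $b=\rho_G(g)^{-1}$. Passing to orbits on both sides gives the induced bijection between $\{\text{Lifts of } a\}/G$ and $H^1_{(\da,\theta)}(\Gamma,\rho_G(G))$.

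The only delicate step is the bookkeeping. The twisting action on $\rho_G(G)$ has to be identified correctly through the normality formula, and the ambiguity in $s_\gamma$ must be shown to be harmless. Everything is phrased in terms of the elements $\beta_\gamma\in\rho_G(G)$, so the choice of $s_\gamma$ never enters.
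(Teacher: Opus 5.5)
Your proposal is correct and takes the same route as the paper, whose proof just refers back to Lemma \ref{lemma-lifts-vs-non-abelian-cohomology}: you write every lift as $\beta(\da,\theta)$ with $\beta_\gamma\in\rho_G(G)$, check the cocycle condition for the twisting action $\rho_G(g)\mapsto\rho_G(\theta_\gamma(g))$, and identify conjugation by $\rho_G(g)$ with the coboundary relation. You are also right that the components in (\ref{eq-bijection-Sa-cocycles-pairs}) are swapped; the intended assignment is $(\rho(s)\da,\Int_s\theta)\mapsto(\rho(s),\Int_s)$.
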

\begin{proof}
    This is analogous to the proof of Lemma \ref{lemma-lifts-vs-non-abelian-cohomology}.
\end{proof}

Let $\mdl(X,G,V)^{\llambda}$ be the fixed point locus of $\mdl(X,G,V)$ under the action of $\llambda$, and let $\mdl_{ss}(X,G,V)^{\llambda}$ be the intersection with the stable and simple locus.

\begin{theorem}\label{th-fixed-points-oscar-ramanan-pairs}
Fix a homomorphism $(\da,\theta):\Gamma\to\GL_G(V)$ lifting $a$. We have the following relations between moduli spaces:
\begin{enumerate}
    \item $$\bigcup_{[\dab,\beta]\in H^1_{\theta}(\Gamma,\rho_G(G))}\widetilde{\cM}_{\alpha}(X,G_{\beta\theta},V^{\dab\da}_{\mu})\subset\cM(X,G,V)^{\Gamma}.$$
    
    \item $$\cM_{ss}(X,G,V)^{\Gamma}\subset\bigcup_{[\dab,\beta]\in H^1_{\theta}(\Gamma,\rho_G(G))}\widetilde{\cM}_{\alpha}(X,G_{\beta\theta},V^{\dab\da}_{\mu}).$$
    
\end{enumerate}

 Moreover, the intersections 
$$\cM_{ss}(X,G,V)\cap\bigcup_{[\dab,\beta]\in H^1_{\theta}(\Gamma,\rho_G(G))}\widetilde{\cM}_{\alpha}(X,G_{\beta\theta},V^{\dab\da}_{\mu})$$
are disjoint for different $[\dab,\beta]\in H^1_{\theta}(\Gamma,\rho_G(G))$.
\end{theorem}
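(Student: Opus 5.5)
The proof copies the Higgs-bundle case (Theorem \ref{th-fixed-points-oscar-ramanan-higgs}), with the pair $(\da,\theta)$ in place of $\theta$ and Lemma \ref{lemma-lifts-vs-non-abelian-cohomology-pairs} used to index lifts. Fix a lift $(\da,\theta)$. By Lemma \ref{lemma-lifts-vs-non-abelian-cohomology-pairs}, every other lift has the form $(\rho(s)\da,\Int_s\theta)$ with $(\rho(s),\Int_s)\in Z^1_{(\da,\theta)}(\Gamma,\rho_G(G))$. Two lifts that are conjugate under $\rho_G(G)$ yield the same image in $\cM(X,G,V)$, by the diagramme (\ref{eq-diagramme-extension-theta-theta'-pairs}) of Proposition \ref{prop-polystability-extension-structure-group-pairs}(3). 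Hence the unions in (1) and (2) are well defined when indexed by classes $[\dab,\beta]$.

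For (1), take a polystable $(G_{\beta\theta},V^{\dab\da}_{\mu})$-Higgs pair $(F,\psi)$ with $\tilde c_{\beta\theta}(F)\cong\alpha$. Proposition \ref{prop-polystability-extension-structure-group-pairs}(1), applied to the lift $(\dab\da,\beta\theta)$, shows that its extension $(E,\phi)$ is polystable. Proposition \ref{prop-reduction-pairs}, applied to the same lift, gives $(E,\phi)\cong(E,\phi)\gamma$ for every $\gamma$. Here we must check that the action of $\gamma$ computed with $(\dab_\gamma\da_\gamma,\beta_\gamma\theta_\gamma)$ agrees with the one computed with $(\da_\gamma,\theta_\gamma)$. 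This holds because the two differ by $\rho_G(s_\gamma)$, which preserves isomorphism classes (Section \ref{section-action-pairs}).

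For (2), let $(E,\phi)$ be stable and simple and fixed by $\Gamma$. The construction of Section \ref{section-simple-and-H} applies verbatim and gives $\wf(E,\phi)\in H^1_\theta(\Gamma,G/Z)$. The only input needed is that the $\hg$ are unique up to $Z$, and this follows from simplicity, since $Z$ acts trivially on $V$. Choose $\beta=\Int_s$ in this class. Proposition \ref{prop-simple-fixed-points-pairs} then provides a unique reduction $F$ to $G_{\beta\theta}$ with $\tilde c_{\beta\theta}(F)\cong\alpha$.

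The step I expect to need the most care is showing that $\phi$ comes from a section of $F(V^{\rho(s)\da}_\mu)\otimes K_X$. I would mimic (\ref{eq-Phi-induces-local}). Take local expressions $(e,v)\otimes k$ with $e\in F$, so that the modified isomorphism $\hg'=\hg\cdot s_\gamma^{-1}$ acts as $e\mapsto e\otimes z_\gamma(e)$. Invariance then forces $v=\mug(\rho(s_\gamma)\da_\gamma)^{-1}v$, that is, $v\in V^{\rho(s)\da}_\mu$. The fact $\rho(Z)=1$ is used to drop $z_\gamma$. Lemma \ref{lemma-exists-higgs-field-reduction} supplies the Higgs field on the reduction, and Proposition \ref{prop-polystability-extension-structure-group-pairs}(2) gives its stability. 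Hence $(E,\phi)$ lies in $\widetilde{\cM}_\alpha(X,G_{\beta\theta},V^{\dab\da}_\mu)$ with $(\dab,\beta)=(\rho(s),\Int_s)$.

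Disjointness comes from Proposition \ref{prop-simple-fixed-points-pairs}. A simple point lying in the components for $[\dab,\beta]$ and $[\dab',\beta']$ would, by the "if" direction, have both $\beta$ and $\beta'$ in the single class $\wf(E,\phi)$. Then $\beta$ and $\beta'$ are cohomologous, and since $\dab=\rho(s)$ is determined by $\beta=\Int_s$ up to $\rho(Z)=1$, the classes in $H^1(\Gamma,\rho_G(G))$ coincide.
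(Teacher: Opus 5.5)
Your proposal is correct and follows the paper's route: the paper's proof just cites Propositions \ref{prop-reduction-pairs} and \ref{prop-simple-fixed-points-pairs}. You spell out the supporting steps the paper leaves implicit, namely polystability of the extension via Proposition \ref{prop-polystability-extension-structure-group-pairs}, independence of the representative via Lemma \ref{lemma-lifts-vs-non-abelian-cohomology-pairs}, the weight-space check on $\phi$ using $\rho(Z)=1$, and the use of the Higgs-pair reduction (not just the bundle reduction) in the ``if'' direction for disjointness.
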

\begin{proof}
    Follows from Propositions \ref{prop-reduction-pairs} and \ref{prop-simple-fixed-points-pairs}.
\end{proof}

With notation as in Section \ref{section-prym-narasimhan-ramanan}, for every lift $(\kappa,\theta)$ of $a$ and a section $t:\gamtt\to\gs$ of (\ref{eq-extension-connected-component}) inducing the isomorphism $\gs\cong  \gt_0\times_{(\taut,\ct)}\gamtt$ as in Proposition \ref{prop-extensions-isomorphic-twisted-group}, we have a $(\taut,\ct)$-twisted action $\rho(t)$ of $\gamtt$ on $\lievm$. Let $Y$ be a $\gam$-bundle over $X$, where $\gam\le\gamtt$. We denote by ${\mdl}(Y,\gt_0,\gam,\tau^{\beta\theta},c^{\beta\theta}, \lievm)$ the moduli space of $(\tau^{\beta\theta},c^{\beta\theta},\rho(t))$-twisted $\gam$-equivariant $(\gt_0,\liegm)$-Higgs pairs over $Y$ given by $\rho(t)$. We have the following Prym--Narasimhan--Ramanan construction:

\begin{theorem}\label{th-prym-narasimhan-ramanan-oscar-ramanan-pairs}
For each homomorphism $(\da,\theta):\Gamma\to\GL_G(V)$ lifting $a$ we have an isomorphism
\begin{equation}
    \bigsqcup_{\qqt(Y)\cong \alpha}\mdl(Y,\gt_0,\tau^{\theta},\gam,c^{\theta}, \lievm)/Z_{\gamtt}(\gam)\cong \mdl_{\alpha}(X,\gs,\lievm),
\end{equation}
where $Z_{\gamtt}(\gam)$ is the centralizer of $\gam$ in $\gamtt$, which acts on  by Proposition \ref{prop-action-centralizer-higgs}.

Fix such a lift $\theta$. Let $\widetilde{\mdl}(Y,\gt_0,\gam,\tau^{\beta\theta},c^{\beta\theta}, \lievm)/Z_{\gamtt}(\gam)$ be the image of the moduli space ${\mdl}(Y,\gt_0,\gam,\tau^{\beta\theta},c^{\beta\theta}, \lievm)$ in $\mdl(X,G,V)$ via the composition of the isomorphism given in Theorem \ref{th-prym-narasimhan-ramanan-higgs} and extension of structure group from $\gs$ to $G$. Then we have the following inclusions:

\begin{enumerate}
    \item $$\bigcup_{[\dab,\beta]\in H^1_{\theta}(\Gamma,\rho_G(G)),\qqt(Y)\cong \alpha}\widetilde{\mdl}(Y,\gt_0,\gam,\tau^{\beta\theta},c^{\beta\theta}, V^{\dab\da}_{\mu})/Z_{\gamtt}(\gam)\subset\cM(X,G,V)^{\Gamma}.$$
    
    \item $$\cM_{ss}(X,G,V)^{\Gamma}\subset\bigcup_{[\dab,\beta]\in H^1_{\theta}(\Gamma,\rho_G(G)),\qqt(Y)\cong \alpha}\widetilde{\mdl}(Y,\gt_0,\gam,\tau^{\beta\theta},c^{\beta\theta}, V^{\dab\da}_{\mu})/Z_{\gamtt}(\gam).$$
    
\end{enumerate}

The intersections 
$$\cM_{ss}(X,G,V)\cap\widetilde{\mdl}(Y,\gt_0,\gam,\tau^{\beta\theta},c^{\beta\theta}, V^{\dab\da}_{\mu})/Z_{\gamtt}(\gam)
% =\cM_{ss}(X,G)^{\Gamma}\cap\widetilde{\mdl}(Y,\tau^{\beta\theta},c^{\beta\theta}, \gt_0,\gam,\lieg^{\beta\theta}_{\mu},\mu)/Z_{\gamtt}(\gam)
$$
are disjoint for different $[\dab,\beta]\in H^1_{\theta}(\Gamma,\rho_G(G))$ and $Y$.

\end{theorem}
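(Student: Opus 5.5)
The plan mirrors the proof of Theorem \ref{th-prym-narasimhan-ramanan-oscar-ramanan-higgs}, with $(\lie g,\theta)$ replaced by $(V,\da)$ throughout.

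\emph{The isomorphism.} First I establish the displayed isomorphism for the fixed lift $(\da,\theta)$. By Proposition \ref{prop-extensions-isomorphic-twisted-group} and the choice of section $t$, we have $\gs\cong\gt_0\times_{(\taut,\ct)}\gamtt$. The representation $\rho$ restricted to $\gs$ preserves $\lievm$, as observed before Proposition \ref{prop-reduction-pairs}. Pulling it back along $t$ gives the $(\taut,\ct)$-twisted action $\rho(t)$ of $\gamtt$ on $\lievm$, exactly as at the start of Section \ref{section-twisted-and-non-connected-higgs}.

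A $(\gs,\lievm)$-Higgs pair $(F,\psi)$ satisfies $\ctt(F)\cong\alpha$ if and only if $F/\gt_0$ is a $\gamtt$-bundle $\hat Y$ with $\qqt(\hat Y)\cong\alpha$; this is the same observation used before Theorem \ref{th-prym-narasimhan-ramanan-oscar-ramanan-higgs}, since $\qqt$ factors $\ctt$ through $F/\gt_0$. Hence $\mdl_\alpha(X,\gs,\lievm)$ decomposes as a disjoint union over such $Y$, where $Y$ is the connected component of $\hat Y$ with monodromy group $\gam$. Applying Theorem \ref{th-prym-narasimhan-ramanan-higgs} to each piece, with $V=\lievm$ and $\rhog=\rho(t)$, identifies it with $\mdl(Y,\gt_0,\gam,\taut,\ct,\lievm)/Z_{\gamtt}(\gam)$. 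Polystability is compatible on both sides by Proposition \ref{prop-polystability-extension-of-structure-group-non-connected} together with the equivariant Hitchin--Kobayashi correspondence used there.

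\emph{The two inclusions.} Compose with extension of structure group $\gs\to G$. Proposition \ref{prop-polystability-extension-structure-group-pairs}(1) guarantees that the image lands in $\mdl(X,G,V)$. Running over $[\dab,\beta]\in H^1_{\theta}(\Gamma,\rho_G(G))$, Lemma \ref{lemma-lifts-vs-non-abelian-cohomology-pairs} identifies these classes with the lifts $(\dab\da,\beta\theta)$ of $a$ up to $\rho_G(G)$-conjugacy. Proposition \ref{prop-polystability-extension-structure-group-pairs}(3) shows that the image does not depend on the representative. Inclusions (1) and (2) then follow from Theorem \ref{th-fixed-points-oscar-ramanan-pairs} by substituting the isomorphism above, applied to each lift $(\dab\da,\beta\theta)$, into each component $\widetilde{\cM}_\alpha(X,G_{\beta\theta},V^{\dab\da}_\mu)$.

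\emph{Disjointness.} For different $[\dab,\beta]$, disjointness is the uniqueness statement in Proposition \ref{prop-simple-fixed-points-pairs}: a simple fixed pair has a class $\wf(E,\phi)$ determined by $(E,\phi)$, and the reduction is unique for each $\beta$. For different $Y$ with the same $\beta$, the reduction $F$ to $G_{\beta\theta}$ is unique, so $F/\gt_0$ is determined, and hence so is its connected component $Y$ up to the $Z_{\gamtt}(\gam)$-action already quotiented out.

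\emph{Main obstacle.} The step I expect to be most delicate is checking that the twisted action $\rho(t)$ on $\lievm$ really satisfies (\ref{eq-action-gamma-V}), and that it transforms correctly under the centralizer action of Proposition \ref{prop-action-centralizer-higgs}. Here $\rho(t)$ is only a $\ct$-twisted action, and $\rho(Z(\gt_0))$ need not be trivial on $V$. I would handle this by working directly with the honest $\gs$-action via Proposition \ref{prop-bijection-G,Gamma-action}, rather than with $\rho(t)$ by hand.
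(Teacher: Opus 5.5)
Your proposal is correct and follows the argument the paper intends. The paper gives no separate proof of this statement; as with Theorem \ref{th-prym-narasimhan-ramanan-oscar-ramanan-higgs}, the result is obtained by applying Theorem \ref{th-prym-narasimhan-ramanan-higgs} to $\gs$ acting on $\lievm$ through the pulled-back twisted action $\rho(t)$, and then substituting into Theorem \ref{th-fixed-points-oscar-ramanan-pairs} via Lemma \ref{lemma-lifts-vs-non-abelian-cohomology-pairs}. Two small points on disjointness: for different $[\dab,\beta]$ it already follows from the disjointness statement in Theorem \ref{th-fixed-points-oscar-ramanan-pairs}, and ``different $Y$'' is best read as different $\gamtt$-bundles $F/\gt_0$, which the uniqueness of the reduction in Proposition \ref{prop-simple-fixed-points-pairs} determines.
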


\section{Fixed points when there is no tensorization}\label{section-alpha-trivial-pairs}

Let $\Gamma$ be a finite subgroup of $\Aut(X)\times\Out(G,V)\times\C^*$. Projections on the second and third and first factors provide homomorphisms $\eta:\Gamma\to\Aut(X)$, $a:\Gamma\to \Out(G,V)$ and $\mu:\Gamma\to\C^*$, respectively. Assume that there is a homomorphism $\Gamma\to\GL_G(V)$ lifting $a$ and fix one, say $(\da,\theta)$, where $\theta:\Gamma\to\Aut(G)$ is a homomorphism and $\da:\Gamma\to\GL(V)$ is a $\theta$-twisted representation. 

\begin{proposition}\label{underlyingsemi-pairs}
If $(E,\cdot,\phi)$ is a polystable $(\theta,c,\mu^{-1}\da)$-twisted $\Gamma$-equivariant $(G,V)$-Higgs pair, so is the underlying $(G,V)$-Higgs pair $(E,\phi)$. Thus we have a forgetful map
\begin{equation}\label{eq-forgetful-morphism-pairs}
    \mdl(X,G,\Gamma,\theta,c,V,\mu^{-1}\da)\to\mdl(X,G,V).
\end{equation}
\end{proposition}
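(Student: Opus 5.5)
The plan is to mirror the proof of Proposition \ref{underlyingsemi} (3), replacing the Hitchin equation by the Hermite--Yang--Mills--Higgs equation of Theorem \ref{EH1-pairs}. First, I would reduce to a convenient choice of compact data. By Proposition \ref{prop-different-theta-higgs} we may change $\theta$ within its class, so we assume $\theta(\Gamma)$ preserves a maximal compact subgroup $K\le G$. By Lemma \ref{lemma-invariant-metric}, applied to the finite group $\Gamma$ acting on $V$ through $\da$ (note $\mu^{-1}\da$ differs from $\da$ only by roots of unity, which are unitary), there is a hermitian metric $h_V$ on $V$ that is both $K$-invariant and $\Gamma$-invariant. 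We also fix a $\Gamma$-invariant non-degenerate pairing on $\lie g$.

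Next, I would invoke the equivariant Hitchin--Kobayashi correspondence. For twisted equivariant pairs this is the analogue of Theorem \ref{EH1-equivariant}, with (\ref{hitchin-equation}) replaced by (\ref{hitchin-equation-pairs}), taken from \cite{PBI}. Polystability of $(E,\cdot,\phi)$ then gives a $\Gamma$-invariant reduction $h\in\Omega^0(X,E(G/K))^{\Gamma}$ satisfying $\Lambda F_h+\mu_h(\phi)=0$. The key observation is that the equation does not involve the $\Gamma$-action at all: $F_h$ is the curvature of the Chern connection of $h$ on the underlying $G$-bundle, and $\mu_h(\phi)$ in (\ref{eq-def-mu(phi)}) depends only on $h$, $h_V$ and $\phi$. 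So $h$, after forgetting invariance, is a solution of (\ref{hitchin-equation-pairs}) for the underlying pair $(E,\phi)$. Theorem \ref{EH1-pairs} then shows $(E,\phi)$ is polystable, for the same parameter $z$, which lies in $i\zk^{\Gamma}\subset i\zk$.

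The existence of the forgetful map (\ref{eq-forgetful-morphism-pairs}) follows on points immediately. Isomorphic twisted equivariant pairs have isomorphic underlying pairs, so the assignment $(E,\cdot,\phi)\mapsto(E,\phi)$ is well defined on isomorphism classes. To obtain a morphism of varieties, I would argue as for (\ref{eq-forgetful-morphism}): the construction works in families, since forgetting the $\Gamma$-action on a family of polystable objects gives a family of polystable $(G,V)$-Higgs pairs. The coarse moduli property of $\mdl(X,G,V)$ then yields the morphism.

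The main obstacle I expect is justifying the equivariant version of Theorem \ref{EH1-pairs} for arbitrary $V$ and a twisted action $\mu^{-1}\da$. The paper only states Theorem \ref{EH1-equivariant} and refers to \cite{PBI}. The point to check is that the $\Gamma$-invariant solution exists for the chosen $\Gamma$-invariant $h_V$ and $K$, that is, that the moment map is $\Gamma$-equivariant. This should follow from (\ref{eq-compatibility-rhog-theta}): it makes $d\rho$ intertwine the actions of $\theta$ and $\da$, and $h_V$ is $\da(\Gamma)$-invariant. Given this, the rest is formal. An alternative route, avoiding metrics, would be to show semistability directly as in Proposition \ref{underlyingsemi} (2) via uniqueness of the Harder--Narasimhan reduction, but handling polystability that way is harder, so I would use the analytic route.
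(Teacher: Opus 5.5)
Your argument is correct and is essentially the paper's: the paper proves this by analogy with Proposition \ref{underlyingsemi}(3), where the equivariant Hitchin--Kobayashi correspondence produces a $\Gamma$-invariant solution of the Hermite--Yang--Mills--Higgs equation; forgetting invariance, this solves the equation for the underlying pair, and Theorem \ref{EH1-pairs} gives polystability. Your choice of a $\Gamma$- and $K$-invariant $h_V$, so that the equivariant and non-equivariant moment maps coincide, is a detail the paper leaves implicit, and the paper only asserts the forgetful map on points; your family argument for a morphism should be phrased with semistable families rather than polystable ones, and there the relevant input is the analogue of Proposition \ref{underlyingsemi}(2).
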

\begin{proof}
    Analogous to Proposition \ref{underlyingsemi}.
\end{proof}

We denote the image of (\ref{eq-forgetful-morphism-pairs}) with a tilde as usual.

\begin{proposition}\label{obvious-fix-pairs}
 $\widetilde{\cM}(X,G,\Gamma,\theta,c,V,\mu^{-1}\da,\sigma)\subset \cM(X,G,V)^{\Gamma}$.
\end{proposition}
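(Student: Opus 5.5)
The plan is to imitate Proposition \ref{obvious-fix}, substituting the action $\mu^{-1}\da$ on $V$ for $\mu^{-1}\theta$ on $\lie g$. Start with a point of $\widetilde{\cM}(X,G,\Gamma,\theta,c,V,\mu^{-1}\da,\sigma)$. By Proposition \ref{underlyingsemi-pairs} it is represented by a polystable $(G,V)$-Higgs pair $(E,\phi)$ that is the image of a polystable $(\theta,c,\mu^{-1}\da)$-twisted $\Gamma$-equivariant pair $(E,\cdot,\phi)$. The task is then to exhibit, for each $\gamma\in\Gamma$, an isomorphism of $(G,V)$-Higgs pairs
$$(E,\phi)\cong\etag^*\bigl(\tg^{-1}(E),\mug\daga^{-1}(\phi)\bigr)=(E,\phi)\cdot\gamma,$$
where the right-hand side is the action from Section \ref{section-action-pairs} with $\alpha$ trivial. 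Since points of $\cM(X,G,V)$ are isomorphism classes, this suffices.

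The isomorphism should come directly from the action map $e\mapsto e\cdot\gamma$. This map covers $\etag^{-1}$ on $X$, and by (\ref{eq-twisted-equivariant-axioms}) it satisfies $(eg)\cdot\gamma=(e\cdot\gamma)\tg^{-1}(g)$. It therefore gives a $G$-equivariant isomorphism from $E$ to $\etag^*\tg^{-1}(E)$ covering the identity, or to $\etag^*\tg(E)$ depending on convention; I would fix the convention by comparing with the proof of Proposition \ref{prop-fixed-points-reduction-alpha-trivial-higgs}. The cocycle $c$ plays no role, because each $\gamma$ is treated separately.

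Compatibility with the Higgs field is the step most likely to cause trouble, because of signs and inverses. Write $\phi$ locally as $(e,v)\otimes k$. The $\Gamma$-invariance in Definition \ref{mega}, with $\rhog=\mu^{-1}\da$, gives $\phi=(e\cdot\gamma,\mug\daga^{-1}v)\otimes\etag^*k$, since $v\cdot\rhog(\gamma)=\rhog(\gamma)^{-1}v$. The induced map therefore sends $\phi$ to $\mug\etag^*\daga^{-1}(\phi)$, and this is precisely the Higgs field of $(E,\phi)\cdot\gamma$, because $(\da,\theta)$ lifts $a_\gamma$. To confirm the computation is well defined on the associated bundle, use the twisted-equivariance identity (\ref{eq-compatibility-rhog-theta}) in the same way as in the well-definedness check of $\da(\phi)$ in Section \ref{section-action-pairs}. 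Finally, the local-type data $\sigma$ only restricts which equivariant structures are permitted, so the inclusion holds for every $\sigma$.
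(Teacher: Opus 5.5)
Your proposal is correct and follows the paper, whose proof is just ``straightforward from definitions'' in the same way as Proposition \ref{obvious-fix}: the twisted equivariant structure gives, for each $\gamma$, an isomorphism between $(E,\phi)$ and its translate, and points of $\cM(X,G,V)$ are isomorphism classes. The map $e\mapsto e\cdot\gamma$ covers $\eta_\gamma^{-1}$, so it literally exhibits $(E,\phi)\cong(E,\phi)\cdot\gamma^{-1}$ rather than $(E,\phi)\cdot\gamma$; since you do this for every $\gamma\in\Gamma$, the convention issue you flagged does not affect the conclusion.
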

\begin{proof}
    Straightforward from definitions.
\end{proof}

\begin{proposition} \label{simple-pairs}
Let $(E,\phi)$ be a simple $(G,V)$-Higgs pair over $X$ such that 
 $(E,\phi)\cong (\eta_{\gamma}^*\theta^{-1}_{\gamma}(E),\mu_{\gamma}\eta_{\gamma}^*\daga^{-1}(\phi))$ for each $\gamma\in\Gamma$. 
Then $(E,\phi)$ admits a $(\theta,c,\mu^{-1}\da)$-twisted $\Gamma$-equivariant structure for some
$c\in Z^2_\theta(\Gamma,Z)$.
\end{proposition}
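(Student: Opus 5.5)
The plan is to mimic the proof of Proposition \ref{simple}, replacing the adjoint action $\theta^{-1}$ on $\lie g$ by the $\theta$-twisted representation $\da$ on $V$.

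\textbf{Step 1: the group of twisted automorphisms.} Following Remark \ref{remark-exact-sequence-aut}, I will define $\Aut_{\Gamma,\eta,\theta,\da,\mu}(E,\phi)$. Its elements are biholomorphisms $f:E\to E$ covering $\eta_\gamma^{-1}$ for some $\gamma\in\Gamma$ and satisfying $f(eg)=f(e)\theta_\gamma^{-1}(g)$. In addition, the induced map on $E(V)\otimes K_X$, obtained by pairing $f$ with $\mu_\gamma^{-1}\da_\gamma^{-1}$ on $V$, must send $\phi$ to $\phi$.

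This induced map is well defined because of the compatibility (\ref{eq-compatibility-rhog-theta}), by the same computation used in Section \ref{section-action-pairs} to define $\da(\phi)$. The group law is $ff'=f'\circ f$, and sending $f$ to $\gamma$ gives a homomorphism to $\Gamma$ with the transposed multiplication.

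\textbf{Step 2: the extension.} The kernel of this homomorphism is $\Aut(E,\phi)$, the automorphisms covering the identity. By simplicity this kernel equals $Z$, and here we use that $\rho(Z)$ is trivial, so multiplication by $Z$ does preserve $\phi$.

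The hypothesis $(E,\phi)\cong(\eta_\gamma^*\theta_\gamma^{-1}(E),\mu_\gamma\eta_\gamma^*\da_\gamma^{-1}(\phi))$ gives, for each $\gamma$, an isomorphism $h_\gamma$. Via the natural identification of total spaces of $E$ and $\theta_\gamma^{-1}(E)$, each $h_\gamma$ is an element of the group lying over $\gamma$. Hence the homomorphism to $\Gamma$ is surjective, and we obtain an extension
$$1\to Z\to\Aut_{\Gamma,\eta,\theta,\da,\mu}(E,\phi)\to\Gamma\to 1.$$

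\textbf{Step 3: producing the twisted structure.} Choose a set-theoretic section $\gamma\mapsto h_\gamma$ with $h_1=\id$. Then $h_\gamma h_{\gamma'}h_{\gamma\gamma'}^{-1}$ lies in $Z$; call it $c(\gamma,\gamma')$, after adjusting for the transposed order. Associativity in the extension group gives the cocycle identity (\ref{eq-2-cocycle-condition}). In this identity the conjugation action of $\Gamma$ on $Z$ comes from $f(ez)=f(e)\theta_\gamma^{-1}(z)$, so $c\in Z^2_\theta(\Gamma,Z)$.

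Setting $e\cdot\gamma:=h_\gamma(e)$ then satisfies both axioms of (\ref{eq-twisted-equivariant-axioms}). By construction $\phi$ is invariant for the action $\mu^{-1}\da$. This is exactly a $(\theta,c,\mu^{-1}\da)$-twisted $\Gamma$-equivariant structure on $(E,\phi)$.

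\textbf{Main obstacle.} The only real subtlety is bookkeeping of conventions: the transposed product, the right-versus-left action, and whether the cocycle is twisted by $\theta$ or $\theta^{-1}$. I would fix these by checking the second axiom of (\ref{eq-twisted-equivariant-axioms}) directly against the definition of $c$. The Higgs-field invariance must also be checked locally, as in the proof of Proposition \ref{prop-reduction-higgs}.
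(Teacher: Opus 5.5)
Your argument is correct and is the same as the paper's, which just says the proof is analogous to Proposition~\ref{simple}. There, simplicity makes the group of $\Gamma$-twisted automorphisms of $(E,\phi)$ an extension of $\Gamma$ by $Z$; a normalized set-theoretic section of it is the required $(\theta,c,\mu^{-1}\kappa)$-twisted structure, and $c\in Z^2_\theta(\Gamma,Z)$ is the resulting cocycle.

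Two points to fix in the bookkeeping. An automorphism lying over $\gamma$ must be paired with $(\mu_\gamma^{-1}\kappa_\gamma)^{-1}=\mu_\gamma\kappa_\gamma^{-1}$ on $V$, not with $\mu_\gamma^{-1}\kappa_\gamma^{-1}$, which would encode $(\theta,c,\mu\kappa)$-structures instead. Also, the element over $\gamma$ supplied by the hypothesis is $h_\gamma^{-1}$ (or $h_{\gamma^{-1}}$), since $h_\gamma$ itself covers $\eta_\gamma$ and is $\theta_\gamma$-twisted.
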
 
\begin{proof}
    Analogous to Proposition \ref{simple}.
\end{proof}

\begin{theorem}\label{main-pairs}
 Let ${\cM}_{ss}(X,G,V)\subset \cM(X,G,V)$ be the subvariety of $\cM(X,G,V)$ 
consisting of those $(G,V)$-Higgs pairs
 which are stable and simple. Fix a homomorphism $(\da,\theta):\Gamma\to \GL_G(V)$ lifting $a:\Gamma\to \Out(G,V)$. Then 
 $$
{\cM}_{ss}(X,G,V)^{\Gamma}\subset \bigcup_{[c]\in H^2_a(\Gamma,Z), 
[\sigma]\in \{H^1_{c_{x_i}}(\Gamma_{x_i},G)\}} 
\widetilde{\cM}(X,G,\Gamma,\theta,c,V,\mu^{-1}\da,\sigma)
$$
and 
$$ \bigcup_{[c]\in H^2_a(\Gamma,Z), 
[\sigma]\in \{H^1_{c_{x_i}}(\Gamma_{x_i},G)\}} 
\widetilde{\cM}(X,G,\Gamma,\theta,c,V,\mu^{-1}\da,\sigma)
\subset
{\cM}(X,G,V)^{\Gamma}.
$$

\end{theorem}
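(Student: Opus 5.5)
The plan is to follow the proof of Theorem \ref{main} verbatim, replacing the adjoint data $(\lie g,\theta)$ by the representation data $(V,\da)$. There are two inclusions, and they are handled separately.

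\emph{Second inclusion.} This is immediate from Proposition \ref{obvious-fix-pairs}. Fix $[c]\in H^2_a(\Gamma,Z)$ and local types $[\sigma]$. Any point of $\widetilde{\cM}(X,G,\Gamma,\theta,c,V,\mu^{-1}\da,\sigma)$ is the underlying polystable $(G,V)$-Higgs pair of a polystable twisted equivariant pair; this uses Proposition \ref{underlyingsemi-pairs}. The twisted action gives, for each $\gamma\in\Gamma$, an isomorphism
\[
(E,\phi)\cong\bigl(\eta_\gamma^*\theta_\gamma^{-1}(E),\mu_\gamma\eta_\gamma^*\daga^{-1}(\phi)\bigr),
\]
so its class is $\Gamma$-fixed. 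Taking the union over $[c]$ and $[\sigma]$ gives the inclusion.

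\emph{First inclusion.} Let $(E,\phi)$ be stable, simple and fixed by $\Gamma$. By definition of the action in Section \ref{section-action-pairs}, with $\alpha$ trivial, there is for each $\gamma$ an isomorphism $h_\gamma\colon(E,\phi)\to(\eta_\gamma^*\theta_\gamma^{-1}(E),\mu_\gamma\eta_\gamma^*\daga^{-1}(\phi))$. Proposition \ref{simple-pairs} then gives a $(\theta,c,\mu^{-1}\da)$-twisted $\Gamma$-equivariant structure for some $c\in Z^2_\theta(\Gamma,Z)$. The mechanism is the same as in Proposition \ref{simple}. Simplicity gives $\Aut(E,\phi)=Z$, which uses the standing hypothesis $Z\subset\ker\rho$ so that $Z$ really acts by automorphisms. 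The group $\Aut_{\Gamma,\eta,\theta}(E,\phi)$ of Remark \ref{remark-exact-sequence-aut}, now built with the action $\mu^{-1}\da$ on $V$, is therefore an extension of $\Gamma$ by $Z$. A set-theoretic section of this extension is a $c$-twisted lift.

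Next I check that this twisted equivariant pair is polystable, so that it defines a point of the moduli space. By the easy direction, item (1) of Proposition \ref{underlyingsemi} adapted to pairs, stability of $(E,\phi)$ implies stability of $(E,\cdot,\phi)$: every $\Gamma$-invariant reduction $\sigma\in H^0(X,E(G/P_s))^\Gamma$ with $\phi\in H^0(X,E(V)_{\sigma,s}\otimes K_X)$ is in particular a reduction, and the inequality is inherited. The local types at the isotropy points give some $[\sigma]$ by Proposition \ref{local-data}. Hence $(E,\phi)\in\widetilde{\cM}(X,G,\Gamma,\theta,c,V,\mu^{-1}\da,\sigma)$.

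Finally, the union may be indexed by classes $[c]\in H^2_a(\Gamma,Z)$ and $[\sigma]$ rather than by cocycles. For this I use the pairs version of Proposition \ref{cohom} together with Corollary \ref{cor-different-c-higgs}. Since $Z$ acts trivially on $V$, one has $\rho(s)\da=\da$ for $s\colon\Gamma\to Z$, so changing $c$ within its class leaves the twisted action on $V$ unchanged. It also identifies the images in $\cM(X,G,V)$.

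The main obstacle is the stability comparison in the first inclusion. It is needed only in the easy direction, and it requires the pairs version of the stability notions (Definition \ref{definition-stable-twisted-equivariant}) to restrict correctly when the Higgs field lies in $V_s$ rather than $\lie p_s$. I expect this to be formal. The converse direction, Proposition \ref{underlyingsemi-pairs}, is only needed for the second inclusion and is already available.
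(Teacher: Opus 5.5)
Your proposal is correct and follows the paper's own argument. The paper proves this theorem by citing Proposition \ref{obvious-fix-pairs} for the second inclusion, and Proposition \ref{simple-pairs} for the first, with the passage to classes $[c]$ and $[\sigma]$ handled through Proposition \ref{cohom} exactly as in Theorem \ref{main}. The extra checks you spell out are the details the paper leaves implicit: that stability of the underlying pair gives stability of the twisted equivariant pair, and that $\rho(s)\da=\da$ for $s\colon\Gamma\to Z$ because $Z\subset\ker\rho$.
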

\begin{proof}
    Follows from propositions \ref{obvious-fix-pairs} and \ref{simple-pairs}.
\end{proof}

\section{Fixed points for general actions}
We keep the notation of Section \ref{section-alpha-trivial-pairs} for now. We keep the assumption that there is a homomorphism $\theta:\Gamma\to\GL_G(V)$ lifting $a$, which we fix.

Let $\Lambda\le \gamtz$ be a subgroup, $\gtl:=\pt^{-1}(\Lambda)$ and let $p:Y\to X$ be a connected étale cover associated to a $\Lambda$-bundle over $X$ and consider the subgroup $\wgam\le\Aut(Y)$ lifting $\eta(\Gamma)$. This contains $\Lambda$, the Galois group of $Y$ over $X$, as a normal subgroup. Let 
\begin{equation*}
    \wgame:=\{(\gamma,\wga)\in\Gamma\times\wgam\suhthat \etag=p(\wga)\}.
\end{equation*}
Recall the commutative diagramme (\ref{eq-extension-wgam}). 

We say that a map $(\tda,\tilde\tau):\wgame\to\GL_G(V)$ is a \textbf{$c$-twisted homomorphism} for some $c\in Z^2_{\tilde\tau}(\wgam,Z(\gt_0))$ if $\tilde\tau$ is $c$-twisted ---i.e. if it satisfies (\ref{eq-c-twisted-hom})--- and
\begin{equation}\label{eq-c-twisted-da}
\tda_{\gamma,\wga}\tda_{\gamma',\wga'}=\rho(c(\gamma,\gamma'))\tda_{\gamma\gamma',\wga\wga'}
\end{equation}
for every $(\gamma,\wga)$ and $(\gamma',\wga')\in\wgame$.
Equivalently, the associated (left) actions of $\wgame$ on $G$ and $V$ are $c$-twisted. As in Section \ref{section-group-theory} denote by $\Hom_c(A,B)$ the set of $c$-twisted homomorphisms from $A$ to $B$ and let $\Hom_c(\wgame,\GL_G(V))^{\gtl}$ be the set of $c$-twisted homomorphisms whose associated $c$-twisted $\wgame$-action on $G$ preserves $\gtl$. As in Section \ref{section-group-theory} we have a restriction map
$r_{\ker\eta}:\Hom_c(\wgame,\GL_G(V))\to\Hom(\ker\eta,\GL_G(V))$. Let $\Hom_{\theta,c}(\wgame,\GL_G(V)):=\Hom_c(\wgame,\GL_G(V))^{\gt}\cap r_{\ker\eta}^{-1}(\da,\theta)$. We also have a restriction map $r_{\gtl}:\Hom_{\theta,c}(\wgame,\GL_G(V))\to\Hom_c(\wgam,\GL_{\gtl}(V))$, since 
$\theta$ is trivial on $\gtl$. Any automorphism of $\gtl$ preserves the connected component $\gt_0$ of $\gtl$, hence we have a map $r_{\gt_0}:\Hom_c(\wgam,\GL_{\gtl}(V))\to\Hom(\wgam,\GL_{\gt_0}(V))$, where the fact that $Z(\gt_0)$ acts trivially on $\gt_0$ by conjugation implies that the image consists of (honest) homomorphisms. 

In summary, we have the following diagramme:
\begin{equation}\label{eq-restriction-extension maps-pairs}
    \begin{tikzcd}
        \Hom(\wgame,\Out(G,V)) &
        \Hom_{\theta,c}(\wgame,\GL_G(V)) \arrow[l,"q_*"]\arrow[r,"r_{\gtl}"] & \Hom_c(\wgam,\GL_{\gtl}(V))\arrow[d,"r_{\gt_0}"]\\
        \Hom(\Gamma,\Out(G,V))\arrow[u,"p_{\Gamma}^*"]&       &   \Hom(\wgam,\GL_{\gt_0}(V)))
    \end{tikzcd},
 \end{equation}
where $q_*$ is the pushforward of the natural projection $\GL_G(V)\to\Out(G,V)$, given by (\ref{eq-def-Out(G,V)}), and $p_{\Gamma}^*$ is the pullback of $p_{\Gamma}:\wgame\to\Gamma$.

Conversely, given a $c$-twisted homomorphism $(\da,\tau):\wgam\to\GL_{\gt_0}(V)$, where  $c$ is a a 2-cocycle in $Z^2_{\tau}(\wgam,Z(\gt_0))$, we have a representation 
\begin{equation*}
    \rho\vert_{\gt_0}\times\dac:\gt_0\times_{\tau,c}\Lambda\to\GL(V).
\end{equation*}
We say that the pairs $(\gt_0\times_{\tau,c}\Lambda,\rho\vert_{\gt_0}\times\dac)$ and $(\gtl,\rho\vert_{\gtl})$ are \textbf{isomorphic} if there is an isomorphism $f:\gt_0\times_{\tau,c}\Lambda\xrightarrow{\sim}\gtl$ of extensions of $\gt_0$ such that $\rho\circ f=\rho\vert_{\gt_0}\times\dac $, i.e. making the following diagramme commute:
\begin{equation*}
    \begin{tikzcd}
        \gt_0\times_{\tau,c}\Lambda \arrow[r,"\rho\vert_{\gt_0}\times\dac"]\arrow[d,"f"] & \GL(V) \\
        \gtl\arrow[ru,"\rho"]
    \end{tikzcd}.
 \end{equation*}
Assuming that $\tau,\dac$ and $c$ are chosen so that this is the case, there is a $c$-twisted extension $e_{\dac,\tau,c}:\wgam\to\GL_{\gtl}(V)$ given as the composition:
\begin{equation*}
    \begin{tikzcd}[ar symbol/.style = {draw=none,"#1" description},
    isomorphic/.style = {ar symbol={\cong}},
      ]
        \wgam\arrow[r]\arrow[rrr,bend right=15,"e_{\dac,\tau,c}", swap] & \gt_0\times_{\tau,c}\wgam\arrow[r,"\rho_G\times\dac"] & 
        \GL_{\gt_0\times_{\tau,c}\Lambda}(V)\arrow[isomorphic]{r} &  \GL_{\gtl}(V),
    \end{tikzcd}
\end{equation*}
where the first map sends $\gamma\in\Gamma$ to $(1,\gamma)\in\gt_0\times_{\tau,c}\wgam$.

 For each homomorphism $\tau:\wgam\to\GL_{\gt_0}(V)$ and each 2-cocycle $c\in Z^2_{\tau}(\wgam,Z(\gt_0))$ as above we set 
\begin{align*}
    &\Hom_{\theta,\dac,\tau,c}(\wgame,\GL_{G}(V)):=r_{\gt}^{-1}(e_{\dac,\tau,c})\subset \Hom_{\theta,c}(\wgame,\GL_G(V))
    \andd\\&
    \Hom_{\theta,\dac,\tau,c}(\wgame,\Out(G,V)):=q_*(\Hom_{\theta,\epsilon,\tau,c}(\wgame,\GL_{G}(V))).
\end{align*}

Now fix homomorphisms  $(\da,\theta):\ker\eta\to\GL_G(V)$ and $\tau:\wgam\to\GL_{\gt_0}(V)$, a 2-cocycle $c\in Z^2_{\tilde\tau}(\wgam,Z(\gt_0))$ as above and $(\tda,\tilde\tau)\in\Hom_{\theta,\epsilon,\tau,c}(\wgame,\GL_{G}(V))$. There exists an associated left $(\tau,c)$-twisted action of $\wgame$ on $V$, namely $\rholm:=\pg^*(\mu^{-1})\tda$.
The restriction to $\lievm$ factors through a $(\tau,c)$-twisted homomorphism 
\begin{equation*}
    \wgam\to\Hom(\lievm,V).
\end{equation*}
Consequently, as in Remark \ref{remark-not-preserve-liegm} we have a notion of $(\tau,c,\rholm)$-twisted $\wgam$-equivariant $(\gt_0,\lievm)$-Higgs pair over $Y$.

\begin{proposition}\label{prop-fixed-points-reduction-alpha-trivial-pairs}
Consider a lift $(\da,\theta):\ker\eta\to\GL_G(V)$ of $a\vert_{\ker\eta}$ and a subgroup $\Lambda\le \gamtz$. Take a connected étale cover $p:Y\to X$ associated to a $\Lambda$-bundle over $X$ and the group $\wgam\le\Aut(Y)$ fitting in (\ref{eq-extension-wgam}). Let $(\dac,\tau):\wgam\to\GL_{\gt_0}(V)$ be a homomorphism and $c\in\zzgtw$ a 2-cocycle such that there is an isomorphism $(\gt_0\times_{\tau,c}\Lambda,\rho\vert_{\gt_0}\times\dac)\cong(\gtl,\rho\vert_{\gtl})$. Assume that $\pg^*a\in\Hom_{\theta,\dac,\tau,c}(\wgame,\Out(G,V))$ and pick $\tilde\tau\in \Hom_{\theta,\dac,\tau,c}(\wgame,\GL_{G}(V))$ 
% preserving $\lievm$ 
such that $q_*(\tilde\tau)=\pg^*a$.

Let $(F,\psi)$ be a $(\tau,c,\rholm)$-twisted $\wgam$-equivariant $(\gt_0,\lievm)$-Higgs pair over $Y$. Then $(F,\psi)$ can be regarded as a $(\gtl,\lievm)$-Higgs pair over $X$ via Proposition \ref{prop-twisted-equivariant-higgs pairs-one-to-one}, and its extension of structure group $(E,\phi)$ to $G$ is isomorphic to $(E,\phi)\gamma$ for each $\gamma\in\Gamma$.
\end{proposition}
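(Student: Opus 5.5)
The plan is to follow the proofs of Propositions \ref{prop-fixed-points-reduction-alpha-trivial-principal} and \ref{prop-fixed-points-reduction-alpha-trivial-higgs}, replacing the adjoint action $\theta$ on $\lie g$ with the $\theta$-twisted action $\dac$, $\tda$ on $V$. The argument has two steps.

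\textbf{Step 1: the bundle.} Using Proposition \ref{prop-twisted-equivariant-higgs pairs-one-to-one}, regard $(F,\psi)$ as a $(\gtl,\lievm)$-Higgs pair over $X$. The isomorphism $(\gt_0\times_{\tau,c}\Lambda,\rho\vert_{\gt_0}\times\dac)\cong(\gtl,\rho\vert_{\gtl})$ ensures that the representation of $\gtl$ on $V$ induced from the twisted equivariant data is exactly $\rho\vert_{\gtl}$. As a result, the associated bundles $F(V)$ over $Y$ and over $X$ are compatible.

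Fix $\gamma\in\Gamma$ and pick $\wga$ with $(\gamma,\wga)\in\wgame$. The computation in the proof of Proposition \ref{prop-fixed-points-reduction-alpha-trivial-principal} applies unchanged, since it uses only the group structure of $\gt_0\times_{\tau,c}\wgam$. It shows that $e\mapsto \etag^{*-1}e\cdot\wga$ satisfies $(eg)\cdot\wga=(e\cdot\wga)\,e_{\dac,\tau,c}(\wga)^{-1}(g)$ for $g\in\gtl$, where only the $\Aut(\gtl)$-component of $e_{\dac,\tau,c}$ enters. Since $\tilde\tau_{\gamma,\wga}$ restricts to this automorphism on $\gtl$, the map extends $G$-equivariantly to an isomorphism
\[
\hg:E\to\etag^{*-1}\tilde\tau_{\gamma,\wga}(E).
\]
Because $q_*(\tilde\tau)=\pg^*a$, the pair $(\tda_{\gamma,\wga},\tilde\tau_{\gamma,\wga})$ is a lift of $a_\gamma\in\Out(G,V)$. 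By the discussion in Section \ref{section-action-pairs}, the class of $(E,\phi)\gamma$ does not depend on which lift is chosen.

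\textbf{Step 2: the Higgs field.} It remains to check that $\hg$ sends $\phi$ to $\mug\etag^*\tda_{\gamma,\wga}^{-1}(\phi)$. Write $\psi$ locally as $(e,v)\otimes k$. The $\wgam$-invariance of $\psi$ with respect to $\rholm=\pg^*(\mu^{-1})\tda$ gives
\[
\psi=(e\cdot\wga,\ \mug^{-1}\tda_{\gamma,\wga}v)\otimes\etag^*k.
\]
This is the same identity as in the proof of Proposition \ref{prop-fixed-points-reduction-alpha-trivial-higgs}, with $d\tilde\tau$ replaced by $\tda$. It yields $\hg(\psi)=\mug\etag^*\tda_{\gamma,\wga}^{-1}(\psi)$.

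In the setting of Remark \ref{remark-not-preserve-liegm}, $\tda$ need not preserve $\lievm$. In that case the identity should be read in $F(V)\otimes K_Y$, and extension of structure group to $G$ carries it over to $\phi$.

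\textbf{Main obstacle.} I expect the hardest point to be checking that the $V$-component is compatible: the pairing $[e,v]$ must be well defined under $\hg$. This needs the twisted relation $\tda_{\gamma,\wga}\rho(g)=\rho(\tilde\tau_{\gamma,\wga}(g))\tda_{\gamma,\wga}$, which is (\ref{eq-compatibility-rhog-theta}). It also needs the $c$-twisting (\ref{eq-c-twisted-da}), which is harmless because $\rho(Z)$ acts trivially on $V$. Finally, the choice of $\wga$ does not matter: two choices differ by elements of $\Lambda$, which act through $\rho\vert_{\gtl}$, and this is absorbed by the $G$-action on $E$.
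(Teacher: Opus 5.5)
Your proposal is correct and is essentially the paper's argument. The paper's proof just says the result follows from Proposition \ref{prop-fixed-points-reduction-alpha-trivial-principal} in the same way as Proposition \ref{prop-fixed-points-reduction-alpha-trivial-higgs}: build $h_\gamma$ from the action of $\hat\gamma$, extend it via $\tilde\tau_{\gamma,\hat\gamma}$, and then check the Higgs field with $d\tilde\tau$ replaced by the $V$-component $\tilde\epsilon$. That is exactly your Steps~1 and~2.

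One small correction to your closing remark. The cocycle $c$ takes values in $Z(G^{\theta}_0)$, which need not act trivially on $V$, so "$\rho(Z)$ acts trivially" does not justify it. However, (\ref{eq-c-twisted-da}) is not needed here at all: for each fixed $\gamma$ you only need one isomorphism $(E,\phi)\cong(E,\phi)\gamma$. Its well-definedness uses only the compatibility (\ref{eq-compatibility-rhog-theta}) between $\tilde\epsilon_{\gamma,\hat\gamma}$ and $\tilde\tau_{\gamma,\hat\gamma}$.
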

\begin{proof}
    Follows from Proposition \ref{prop-fixed-points-reduction-alpha-trivial-principal} like Proposition \ref{prop-fixed-points-reduction-alpha-trivial-higgs}.
\end{proof}

\begin{proposition}\label{prop-simple-fixed-points-oscar-ramanan-alpha-trivial-pairs}
Let $(E,\phi)$ be a simple $(G,V)$-Higgs pair over $X$ which is isomorphic to $(E,\phi)\gamma$ for every $\gamma\in\Gamma$. Then there exist a lift $(\da,\theta)$ of $a\vert_{\ker\eta}$ and a connected reduction of structure group $(F,\psi)$ of $(E,\phi)$ to $(\gtl:=\pt^{-1}(\Lambda),\lievm)$ satisfying the following: let $p:Y\to X$ be the étale cover associated to the $\Lambda$-bundle $F/\gt_0\to X$ and $\wgam$ the subgroup of $\Aut(Y)$ lifting $\eta(\Gamma)$. Then there is a homomorphism $(\overline{\dac},\otau):\wgam\to\Out(\gt_0,V)$ such that, for every homomorphism $\tau:\wgam\to\GL(\gt_0)$ lifting $\otau$ (which exists by \cite{de-siebenthal}), we can find a 2-cocycle $c\in Z^2_{\tau}(\wgam,Z(\gt_0))$ and a map $\dac:\wgam\to\GL(V)$ such that:
\begin{enumerate}
    \item The pair $(\dac,\tau)$ is a $c$-twisted map $\wgam\to\GL_G(V)$.
    \item We have an isomorphism $(\gt_0\times_{\tau,c}\Lambda,\rho\vert_{\gt_0}\times\dac)\cong(\gtl,\rho\vert_{\gtl})$.
    \item $\pg^*a\in\Hom_{\theta,\dac,\tau,c}(\wgame,\Out(G,V))$.
    \item There exists $(\tilde{\dac},\tilde\tau)\in\Hom_{\theta,\dac,\tau,c}(\wgame,\GL_{G}(V))$ 
    % preserving $\lievm$ 
    such that $q_*(\tilde{\dac},\tilde\tau)=\pg^*a$ and the tautological reduction of $p^*(F,\psi)$ to $\gt_0$ is a $(\tau,c,\rholm)$-twisted $\wgam$-equivariant $(\gt_0,\lievm)$-Higgs pair.
\end{enumerate}
\end{proposition}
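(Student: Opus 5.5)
The plan is to repeat the argument of Proposition \ref{prop-simple-fixed-points-oscar-ramanan-alpha-trivial-higgs}, carrying the pair $(\da,\theta)\in\GL_G(V)$ in place of $\theta\in\Aut(G)$ throughout. The analogue of Proposition \ref{prop-simple-fixed-points-higgs} is Proposition \ref{prop-simple-fixed-points-pairs}, applied to the restriction of the action to $\ker\eta$. With $\alpha$ trivial, it gives a lift $(\da,\theta)$ of $a\vert_{\ker\eta}$ (via the bijection of Lemma \ref{lemma-lifts-vs-non-abelian-cohomology-pairs}) together with a reduction $(F,\psi)$ of $(E,\phi)$ to $(\gt,\lievm)$. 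Taking a connected component of $F$ and its monodromy group $\Lambda\le\gamtz$, as in Proposition \ref{prop:connected-reduction}, refines this to a connected reduction to $(\gtl,\lievm)$. We then set $Y:=F/\gt_0$.

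Next, fix an arbitrary lift $(\tilde\da,\wt):\Gamma\to\GL_G(V)$ of $a$, whose existence is assumed in this section. For each $\gamma$ choose isomorphisms $\hg:(E,\phi)\to\etag^{*-1}(\wtg E,\mug^{-1}\tilde\da_\gamma\phi)$.

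We then check, exactly as in the proof of Proposition \ref{prop-simple-fixed-points-oscar-ramanan-alpha-trivial-principal}, that $\etag^*\hg(F)$ is a reduction to $G^{\theta'}$, where $\theta'$ is given by (\ref{eq-def-theta'}). The Higgs field must also land in $V^{\da'}_\mu$ with $\da'_\gamma=\rho(\wt_\beta^{-1}(s_{\beta\gamma\beta^{-1}}))\tilde\da_\gamma$. This follows from the same computation as in the Higgs case, now using (\ref{eq-compatibility-rhog-theta}) instead of the equivariance of $\Ad$.

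Simplicity then lets us choose, for every $(\gamma,\wga)\in\wgame$, an element $t_{\gamma,\wga}\in G$ such that $\etag^*\hg(\bullet)t_{\gamma,\wga}$ preserves $F$ and lifts $\wga$. The pair $\bigl(\rho(t_{\gamma,\wga})^{-1}\tilde\da_\gamma,\ \wtg\Int_{t_{\gamma,\wga}}\bigr)$ preserves $\gt_0$. Its class modulo $\rho_G(\gt_0)$ is independent of $\gamma$ in the coset of $\ker\eta$ and multiplicative, and this yields the homomorphism $(\overline{\dac},\otau):\wgam\to\Out(\gt_0,V)$.

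Given any lift $\tau$ of $\otau$, we readjust $t$ so that $\wt\Int_t\vert_{\gt_0}=\tau$. We then define $\dac$ as the corresponding $V$-component and $c$ by (\ref{eq-def-c}). The associativity argument shows that $c\in Z^2_\tau(\wgam,Z(\gt_0))$, and applying $\rho$ to the defining identity of $c$ gives (\ref{eq-c-twisted-da}), hence (1). The action (\ref{eq-def-twisted-equivariant-actiom-F}) defines the twisted $\wgam$-equivariant structure on $F$.

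Setting $(\tilde\dac,\tilde\tau):=(\rho(t)^{-1}\tilde\da,\wt\Int_t)$ on $\wgame$ gives a $c$-twisted map. The proof that it is $c$-twisted is the simplicity argument used in the Higgs case, applied to the $\wgame$-action on $E$. It satisfies $q_*(\tilde\dac,\tilde\tau)=\pg^*a$ because $(\tilde\da,\wt)$ lifts $a$ and $\rho_G(t)$ is trivial in $\Out(G,V)$, which gives (3) and most of (4). The $\wgam$-invariance of $\psi$ for $\rholm=\pg^*(\mu^{-1})\tilde\dac$ is the same local computation as at the end of the proof of Proposition \ref{prop-simple-fixed-points-oscar-ramanan-alpha-trivial-higgs}, with $d\wtg$ replaced by $\tilde\da_\gamma$ and $\Ad_t$ replaced by $\rho(t)$.

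The main obstacle will be (2). We need not only an isomorphism of extensions $\gtl\cong\gt_0\times_{\tau,c}\Lambda$, but also that $\rho$ on $\gtl$ corresponds to $\rho\vert_{\gt_0}\times\dac$. The approach will be to observe that for $\lambda\in\Lambda$ one may take $\gamma=1$, so that $\hg=\mathrm{id}$, and that $t_{1,\lambda}\in\gtl$ is precisely the section used in Proposition \ref{prop-extensions-isomorphic-twisted-group}. Then $\dac(\lambda)=\rho(t_{1,\lambda})^{-1}$, up to the inversion convention for right actions, and the map $(g,\lambda)\mapsto g\,t_{1,\lambda}$ intertwines the two representations.

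The care needed is in checking that the readjustment of $t$ made to realize the prescribed $\tau$ keeps $t_{1,\lambda}$ inside $\gtl$. This should hold because the readjustment is by elements of $\gt_0$, and because $\rho(Z)$ is trivial, so the $Z$-ambiguities in $c$ do not affect $\rho$.
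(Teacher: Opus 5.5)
Your overall route is the paper's. You rerun Proposition~\ref{prop-simple-fixed-points-oscar-ramanan-alpha-trivial-higgs} with Proposition~\ref{prop-simple-fixed-points-pairs} in place of Proposition~\ref{prop-simple-fixed-points-higgs}, fix a global lift $(\tilde\da,\wt)$ of $a$, and build the $V$-component of the twisted lift from the same map $t$ that gives $\tilde\tau=\wt\Int_t$. But that $V$-component is wrong as you wrote it, and it is the only genuinely new ingredient here (the one the paper's proof singles out).

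The pair $\bigl(\rho(t)^{-1}\tilde\da_\gamma,\ \wtg\Int_t\bigr)$ is not in $\GL_G(V)$. By (\ref{eq-compatibility-rhog-theta}),
\[
\rho(t)^{-1}\tilde\da_\gamma\,\rho(g)=\rho\bigl(t^{-1}\wtg(g)\,t\bigr)\,\rho(t)^{-1}\tilde\da_\gamma ,
\]
so $\rho(t)^{-1}\tilde\da_\gamma$ is twisted by $\Int_{t^{-1}}\wtg$, not by $\wtg\Int_t$. You multiplied by $\rho_G(t)^{-1}$ on the left in one component and by $\rho_G(t)$ on the right in the other. Consequences:
\begin{itemize}
\item The class in $\Out(\gt_0,V)$ you use to define $(\overline{\dac},\otau)$ is not defined.
\item Products of your $V$-components equal $\rho\bigl(\wtg(t_{\gamma',\wga'})\,t_{\gamma,\wga}\bigr)^{-1}\tilde\da_{\gamma\gamma'}$. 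They are governed by a different element of $G$ than $\tilde\tau_{\gamma,\wga}\tilde\tau_{\gamma',\wga'}=\wt_{\gamma\gamma'}\Int_{\wt_{\gamma'}^{-1}(t_{\gamma,\wga})t_{\gamma',\wga'}}$, so (\ref{eq-c-twisted-da}) does not follow from the relation defining $c$.
\item In the invariance computation you get $\bigl(\etag^*\hg(e)\,t,\ \mug\tilde\da_\gamma^{-1}\rho(t)v\bigr)$, and $t$ no longer cancels.
\end{itemize}

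The correct choice is the paper's $\tilde\dac:=\tilde\da\rho(t)$, that is, the product $(\tilde\dac,\tilde\tau)_{\gamma,\wga}=(\tilde\da_\gamma,\wtg)\,\rho_G(t_{\gamma,\wga})$ in $\GL_G(V)$. With it, the rest of your argument goes through:
\begin{itemize}
\item The pair lies in $\GL_G(V)$ and restricts to $\GL_{\gt_0}(V)$.
\item $q_*(\tilde\dac,\tilde\tau)=\pg^*a$ because $\rho_G(t)$ is trivial in $\Out(G,V)$; your justification only applies to a pair of this form.
\item By normality of $\rho_G(G)$,
\[
(\tilde\dac,\tilde\tau)_{\gamma,\wga}(\tilde\dac,\tilde\tau)_{\gamma',\wga'}=(\tilde\da_{\gamma\gamma'},\wt_{\gamma\gamma'})\,\rho_G\bigl(\wt_{\gamma'}^{-1}(t_{\gamma,\wga})t_{\gamma',\wga'}\bigr).
\]
So the $c$-twisting of $\tilde\dac$ is inherited from that of $\tilde\tau$, and the $Z$-ambiguity is invisible because $\rho(Z)=1$.
\item $\mug\tilde\dac^{-1}=\mug\rho(t)^{-1}\tilde\da_\gamma^{-1}$ makes the invariance computation literally the Higgs one with $\Ad_t$ replaced by $\rho(t)$.
\end{itemize}

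Your treatment of (2), which the paper leaves implicit, is then sound and worth keeping: take the representative $(1,\lambda)$ with $h_1=\id$, note $t_{1,\lambda}\in\gtl$, $t_{1,\lambda}t_{1,\lambda'}=c(\lambda,\lambda')t_{1,\lambda\lambda'}$ and $\tau_\lambda=\Int_{t_{1,\lambda}}\vert_{\gt_0}$. One correction: since $\tilde\da_1=1$, you get $\dac(\lambda)=\rho(t_{1,\lambda})$, not its inverse. Then $(g,\lambda)\mapsto g\,t_{1,\lambda}$ intertwines $\rho\vert_{\gt_0}\times\dac$ with $\rho\vert_{\gtl}$ exactly, and the hedge about inversion conventions is unnecessary.
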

\begin{proof}
    Follows from Proposition \ref{prop-simple-fixed-points-oscar-ramanan-alpha-trivial-principal} like Proposition \ref{prop-simple-fixed-points-oscar-ramanan-alpha-trivial-higgs}, after replacing Proposition \ref{prop-simple-fixed-points-principal} with Proposition \ref{prop-simple-fixed-points-pairs}. The existence of $\dac$ follows from the construction of $\tilde\tau$: take a lift $(\tilde{\da},\tilde\theta):\Gamma\to\GL_G(V)$ of $a$, which exists by assumption ---see the beginning  of the section. Following the proof of Proposition \ref{prop-simple-fixed-points-oscar-ramanan-alpha-trivial-principal}, $\tilde\tau=\tilde\theta\Int_t$ for some map $t:\wgame\to G$ and so we may set $\tilde{\dac}:=\tilde{\da}\rho(t)$.
\end{proof}

\begin{proposition}\label{prop-prym-narasimhan-ramanan-alpha-trivial-pairs}
Consider a lift $(\da,\theta):\ker\eta\to\GL_{G}(V)$ of $a\vert_{\ker\eta}$, a subgroup $\Lambda\le\gamtz$, a connected étale cover $p:Y\to X$ with Galois group $\Lambda$ and the group $\wgam\le\Aut(Y)$ fitting in (\ref{eq-extension-wgam}). Let $(\dac,\tau):\wgam\to\GL_{\gt_0}(V)$ be a $c$-twisted homomorphism, where $c\in\zzgtw$ is a 2-cocycle, such that there is an isomorphism $(\gt_0\times_{\tau,c}\Lambda,\rho\vert_{\gt_0}\times\dac)\cong(\gtl,\rho\vert_{\gtl})$. Assume that $\pg^*a\in\Hom_{\theta,\dac,\tau,c}(\wgame,\Out(G,V))$ and pick $(\tilde{\dac},\tilde\tau)\in \Hom_{\theta,\dac,\tau,c}(\wgame,\GL_{G}(V))$ such that $q_*(\tilde{\dac},\tilde\tau)=\pg^*a$.

We have a morphism
\begin{equation}
    \cM(Y,\gt_0,\wgam,\tau,c,\lievm,\rholm)\to \cM(X,G,V),
\end{equation}
given by Theorem \ref{th-prym-narasimhan-ramanan-higgs} and extension of structure group.
\end{proposition}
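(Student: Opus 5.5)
The plan is to mirror the proof of Proposition \ref{prop-prym-narasimhan-ramanan-alpha-trivial-higgs}, replacing Hitchin's equation by the Hermite--Yang--Mills--Higgs equation (\ref{hitchin-equation-pairs}) and the Lie algebra $\liegm$ by the weight space $\lievm$. The morphism is the composition of three maps, and for each I need to check that polystability is preserved.

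\textbf{Step 1: pass from the $\wgam$-action to the $\Lambda$-action.} Start with a polystable $(\tau,c,\rholm)$-twisted $\wgam$-equivariant $(\gt_0,\lievm)$-Higgs pair $(F,\cdot,\psi)$ over $Y$. By Theorem \ref{EH1-equivariant} (in its version for pairs, as in \cite{PBI}), $F$ carries a $\wgam$-invariant reduction $h$ solving (\ref{hitchin-equation-pairs}). The metric $h_V$ on $V$ should be chosen as in Lemma \ref{lemma-invariant-metric}, so that it is invariant under $K$ and under the twisted action; this makes the moment map term $\mu_h(\psi)$ equivariant. Restricting the action to the normal subgroup $\Lambda\le\wgam$, the same metric is $\Lambda$-invariant. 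Hence the underlying $(\tau|_\Lambda,c|_\Lambda,\dac|_\Lambda)$-twisted $\Lambda$-equivariant pair is polystable.

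\textbf{Step 2: descend to $X$.} Use the isomorphism $(\gt_0\times_{\tau,c}\Lambda,\rho\vert_{\gt_0}\times\dac)\cong(\gtl,\rho\vert_{\gtl})$ together with Proposition \ref{prop-twisted-equivariant-higgs pairs-one-to-one}. This turns the $\Lambda$-equivariant pair into a $(\gtl,\lievm)$-Higgs pair over $X$. The compatibility $\rho\circ f=\rho\vert_{\gt_0}\times\dac$ is exactly what identifies the twisted associated bundle on $Y$ with $F(\lievm)$ on $X$, so the Higgs field descends. Polystability is preserved because the $\Lambda$-invariant solution of (\ref{hitchin-equation-pairs}) descends to a solution on $X$, and Theorem \ref{EH1-pairs} for the non-connected group $\gtl$ then applies. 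Then extend from $\gtl$ to $\gs$ using the pairs analogue of Proposition \ref{prop-polystability-extension-of-structure-group-non-connected}, whose degree and reduction argument carries over verbatim.

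\textbf{Step 3: extend to $G$ and check well-definedness.} Apply Proposition \ref{prop-polystability-extension-structure-group-pairs}(1) (the version for $\gt$, or for $\gs$ containing $\gtl$) to extend from $\gs$ to $G$; this gives a polystable $(G,V)$-Higgs pair. The map respects isomorphisms, since equivariant isomorphisms descend to isomorphisms of $G$-pairs, so it induces a map of moduli spaces. To see it is a morphism, I would argue via families, as in the Higgs bundle case: the correspondence of Proposition \ref{prop-twisted-equivariant-higgs pairs-one-to-one} and extension of structure group both work in families.

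\textbf{Main obstacle.} The delicate point is Step 1 in the pairs setting. The moment map term $\mu_h(\psi)$ in (\ref{eq-def-mu(phi)}) must take values in $\lie k^{\theta}$, and the metrics must be simultaneously $K$- and $\Gamma$-invariant. I would handle this exactly as in the proof of Proposition \ref{prop-polystability-extension-structure-group-pairs}. There, $h_V$ maps $\lievm$ onto the $\mu^{-1}$-weight space of the dual representation, so $\psi\otimes\psi^{*}$ is invariant under $\Gamma$ and $d\rho^*$ lands in $\lie k\cap\lie g^\theta$.
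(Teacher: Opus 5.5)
Your proposal is correct and follows the same route as the paper. The paper reruns the proof of Proposition \ref{prop-prym-narasimhan-ramanan-alpha-trivial-higgs} with Theorem \ref{EH1-pairs} and Proposition \ref{prop-polystability-extension-structure-group-pairs} substituted: take a $\widehat{\Gamma}_Y$-invariant solution on $Y$, restrict it to $\Lambda$, descend to a polystable $(G^{\theta}_{\Lambda},V^{\kappa}_{\mu})$-pair on $X$ via Propositions \ref{prop-twisted-equivariant-higgs pairs-one-to-one} and \ref{prop-polystability-extension-of-structure-group-non-connected} and Theorem \ref{th-prym-narasimhan-ramanan-higgs}, then extend to $G$.

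One small correction to your ``main obstacle'': the check that the moment-map term lands in $\mathfrak{k}^{\theta}$ is needed only for the final extension to $G$, and Proposition \ref{prop-polystability-extension-structure-group-pairs} already contains it. So Step 1, restricting an invariant solution to $\Lambda$, is routine. Also, Proposition \ref{prop-polystability-extension-of-structure-group-non-connected} is already stated for $(G',V)$-pairs, so you need no separate ``pairs analogue'' of it.
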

\begin{proof}
    Same as Proposition \ref{prop-prym-narasimhan-ramanan-alpha-trivial-higgs} after replacing Theorem \ref{EH1} with Theorem \ref{EH1-pairs}, and Proposition \ref{prop-polystability-extension-structure-group} with Proposition \ref{prop-polystability-extension-structure-group-pairs}.
\end{proof}

\begin{theorem}\label{th-prym-narasimhan-ramanan-alpha-trivial-pairs}
Fix $\theta\in\Hom(\ker\eta,\GL_{G}(V))$ lifting $a\vert_{\ker\eta}$. We have the following relations between moduli spaces:
\begin{enumerate}
    \item $$\bigcup_{[\dab,\beta],Y,[{\dac}^{\beta\theta},\tau^{\beta\theta}],[c^{\beta\theta}],(\tilde{\dac},\tilde\tau),[\sigma]}\wcM(Y,G^{\beta\theta}_0,\wgam,\tau^{\beta\theta},c^{\beta\theta},V^{\dab\da}_{\mu},\rholm,\sigma)
    \subset\cM(X,G,V)^{\Gamma}. $$
    
    \item $$\cM_{ss}(X,G)^{\Gamma}\subset
    \bigcup_{[\dab,\beta],Y,[{\dac}^{\beta\theta},\tau^{\beta\theta}],[c^{\beta\theta}],(\tilde{\dac},\tilde\tau),[\sigma]}\wcM(Y,G^{\beta\theta}_0,\wgam,\tau^{\beta\theta},c^{\beta\theta},V^{\dab\da}_{\mu},\rholm,\sigma)
    .$$
    
\end{enumerate}
Here $[\dab,\beta]$ runs through $H^1_{\da,\theta}(\Gamma,\rho_G(G))$, $Y$ runs over étale covers of $X$ with Galois group equal to a subgroup
$\Lambda\le\widehat{\Gamma}^{\beta\theta}$, the elements $[{\dac}^{\beta\theta},\tau^{\beta\theta}]\in \Hom_{c^{\beta\theta}}(\wgam,\Out(G^{\beta\theta}_0,V))$ and $[c^{\beta\theta}]\in H^2_{\tau^{\beta\theta}}(\wgam,Z(G^{\beta\theta}_0))$ are such that $\pg^*a\in (\Hom_{\beta\theta,\dac^{\beta\theta},\tau^{\beta\theta},c^{\beta\theta}}(\wgame,\Out(G)))$, their restrictions to $\Lambda$ satisfy 
$$(G^{\beta\theta}_0\times_{\tau^{\beta\theta},c^{\beta\theta}}\Lambda,\rho\vert_{G^{\beta\theta}_0}\times\dac)\cong(G^{\beta\theta}_{\Lambda},\rho\vert_{G^{\beta\theta}_{\Lambda}})$$ 
and $[\sigma]\in H^1_{c^{\beta\theta}_{x_i}}(\wgamm_{Y,x_i},G)$. Moreover, for each choice of $[\dab,\beta]$, $[{\dac}^{\beta\theta},\tau^{\beta\theta}]$ and $[c^{\beta\theta}]$, the element $\tilde\tau\in\Hom_{\beta\theta,{\dac}^{\beta\theta},\tau^{\beta\theta},c^{\beta\theta}}(\wgame,\GL_{G}(V))$ 
% preserves $\lievm$ and
satisfies $q_*\tilde\tau=\pg^*a$.

\end{theorem}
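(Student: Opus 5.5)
The plan mirrors the proof of Theorem \ref{th-prym-narasimhan-ramanan-alpha-trivial-higgs}: part (1) comes from Propositions \ref{prop-prym-narasimhan-ramanan-alpha-trivial-pairs} and \ref{prop-fixed-points-reduction-alpha-trivial-pairs}, and part (2) from Proposition \ref{prop-simple-fixed-points-oscar-ramanan-alpha-trivial-pairs}. The only genuinely new bookkeeping is that the index $[\dab,\beta]$ now lives in $H^1_{\da,\theta}(\Gamma,\rho_G(G))$, which by Lemma \ref{lemma-lifts-vs-non-abelian-cohomology-pairs} parametrizes $G$-conjugacy classes of lifts $(\dab\da,\beta\theta)$ of $a\vert_{\ker\eta}$ to $\GL_G(V)$.

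For (1), fix data $[\dab,\beta]$, $Y$, $[\dac^{\beta\theta},\tau^{\beta\theta}]$, $[c^{\beta\theta}]$, $(\tilde\dac,\tilde\tau)$ and $[\sigma]$ as in the statement. Proposition \ref{prop-prym-narasimhan-ramanan-alpha-trivial-pairs} gives the morphism to $\cM(X,G,V)$, applied with the lift $(\dab\da,\beta\theta)$ in place of $(\da,\theta)$. Its image consists of extensions of structure group of $(F,\psi)$, and these are $\Gamma$-fixed by Proposition \ref{prop-fixed-points-reduction-alpha-trivial-pairs}. Fixing the local type $\sigma$ just restricts to a subvariety, so the inclusion persists. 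One point needs checking: the image depends only on the classes of $\beta$, $\tau$ and $c$, not on representatives. I would handle $c$ by Proposition \ref{cohom} and Corollary \ref{cor-different-c-higgs}, with the twist $\rho(s)$ of the $V$-action. I would handle conjugation of the lift by $\rho_G(g)$ with the diagramme (\ref{eq-diagramme-extension-theta-theta'-pairs}) of Proposition \ref{prop-polystability-extension-structure-group-pairs}(3).

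For (2), take $(E,\phi)\in\cM_{ss}(X,G,V)^\Gamma$. It is simple and isomorphic to $(E,\phi)\gamma$ for every $\gamma\in\Gamma$, so Proposition \ref{prop-simple-fixed-points-oscar-ramanan-alpha-trivial-pairs} applies. It yields a lift of $a\vert_{\ker\eta}$, which up to $G$-conjugacy has the form $(\dab\da,\beta\theta)$ for some class $[\dab,\beta]$. It also yields a reduction $(F,\psi)$ to $(G^{\beta\theta}_\Lambda,V^{\dab\da}_\mu)$, a cover $Y=F/G^{\beta\theta}_0$, and data $\dac,\tau,c,(\tilde\dac,\tilde\tau)$ satisfying exactly the compatibility conditions in the statement. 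Finally it makes $p^*(F,\psi)$ into a $(\tau,c,\rholm)$-twisted $\wgam$-equivariant pair. I would take $[\sigma]$ to be its local type at the isotropy points of $\wgam$ acting on $Y$, as in Section \ref{section-isotropy}.

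The main obstacle is showing that this twisted equivariant pair is polystable, so that it really defines a point of the moduli space on $Y$ mapping to $(E,\phi)$. I would first use Proposition \ref{prop-polystability-extension-structure-group-pairs}(2) to get polystability of $(F,\psi)$ as a $G^{\beta\theta}_\Lambda$-pair on $X$. I would then pass to the twisted $\Lambda$-equivariant pair on $Y$ through Proposition \ref{prop-twisted-equivariant-higgs pairs-one-to-one} and Definition \ref{def-stability-higgs-pair-non-connected}. The harder step is upgrading to $\wgam$-equivariant polystability. The route I would try is to average the solution of (\ref{hitchin-equation-pairs}) over the finite group, combining Theorem \ref{EH1-pairs} with Theorem \ref{EH1-equivariant} and using the $\Gamma$-invariant hermitian metric of Lemma \ref{lemma-invariant-metric}. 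If averaging fails, the fallback is uniqueness of the harmonic metric for a stable, simple pair, which forces that metric to be $\wgam$-invariant.
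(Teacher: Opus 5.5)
Your proposal follows the paper's argument, which combines Propositions \ref{prop-fixed-points-reduction-alpha-trivial-pairs} and \ref{prop-prym-narasimhan-ramanan-alpha-trivial-pairs} for (1) and Proposition \ref{prop-simple-fixed-points-oscar-ramanan-alpha-trivial-pairs} for (2), as in the Higgs bundle case. The polystability step you single out is easier than you fear and needs no metrics; naively averaging solutions of the nonlinear equation would not work anyway. Since $(E,\phi)$ is stable, Proposition \ref{prop-polystability-extension-structure-group-pairs}(2) makes $(F,\psi)$ stable, which is twisted $\Lambda$-equivariant stability on $Y$, and $\wgam$-equivariant stability only tests the smaller set of $\wgam$-invariant reductions with $s\in i\lie k^{\wgam}$, exactly as in Proposition \ref{underlyingsemi}(1).
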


Now we tackle the most general case. Let $X$ be a compact Riemann surface, $G$ a connected semisimple complex Lie group with centre $Z$ and $\Gamma$ a finite subgroup of $H^1(X,Z)\rtimes(\Aut(X)\times\Out(G,V))\times\C^*$. By Section \ref{section-action-pairs} we have a right action of $\Gamma$ on $\mdl(X,G,V)$. The projections on each factor provide homomorphisms $\eta:\Gamma\to\Aut(X)$, $a:\Gamma\to\Out(G,V)$ and $\mu:\Gamma\to\C^*$, together with 1-cocycle $\alpha\in Z^1_{a,\eta}(\Gamma,H^1(X,Z))$ as defined in Section \ref{section-action}, where the action of $\Gamma$ on $H^1(X,Z)$ is determined by $a$ (via extension of structure group) and $\eta$ (via pullback). This is a map $\alpha:\Gamma\to H^1(X,Z)$ satisfying
\begin{equation*}
    \alpha_{\gamma\gamma'}=\alg\etag^{*-1}\ag(\alpha_{\gamma'})
\end{equation*}
for each $\gamma$ and $\gamma'\in\Gamma$, where $\ag$ acts on $Z$ via its projection on $\Out(G)$. In what follows we assume that there is a homomorphism $\Gamma\to\GL_G(V)$ lifting $a$. 

The restriction $\alpha\vert_{\ker\eta}$ is 1-cocycle in $Z^1_{a}(\ker\eta,H^1(X,Z))\cong H^1(X,Z^{1}_a(\ker\eta,Z))$, thus any of its connected components provides an étale cover $X_{\alpha,\eta}\to X$ with Galois group $\Gamma_{\alpha,\eta}\le Z^{1}_a(\ker\eta,Z)$. 

Pick a lift $(\da,\theta):\ker\eta\to\Aut(G)$ of $a\vert_{\ker\eta}$. Let $p:Y\to X_{\alpha,\eta}\to X$ be a connected component of a $\gamtt$-bundle in $\qqt^{-1}(\alpha\vert_{\ker\eta})$ (see (\ref{eq-def-qt})), and set $\olambda:=\gal(Y/X)\le \gamtt$. Consider the subgroup $\halpha\le H^1(X,Z)$ generated by the image of $\alpha$, which is finite because both $\Gamma$ and $Z$ are finite (thus any element of $H^1(X,Z)$ has finite order). Its image $p^*\halpha\le H^1(Y,Z)$ via pullback is also a finite subgroup determining a connected étale cover $$p_{Y_{\alpha}}:Y_{\alpha}\xrightarrow{p_{\halpha}} Y\to X.$$ Like any pullback, this also has a projection $Y_{\alpha}\to\hat\alpha$, where $\hat\alpha\in H^1(X,\Hom(\hat\alpha,Z))$ is regarded as an étale cover of $X$. Let $\Lambda:=\gal(Y_{\alpha}/X)$, call $\wgam$ to the group of automorphisms of $Y_{\alpha}$ lifting $\eta(\Gamma)\le\Aut(X)$ and let $\wgame:=\{(\gamma,\wga)\in\Gamma\times\wgam\suhthat \eta(\gamma)=p(\wga)\}$. The commutative diagramme (\ref{eq-extension-wgam}) still holds and it has exact rows and columns. We also have a diagramme (\ref{eq-restriction-extension maps-pairs}), with the same notation. We may also define $\Hom_{\theta,\dac,\tau,c}(\wgame,\GL_{G}(V))$ and $\Hom_{\theta,\dac,\tau,c}(\wgame,\Out(G,V))$. 

Given $(\dac^{\theta},\taut):\wgam\to\Aut_{\gt_0}(V)$ and $\ct\in Z^2_{\taut}(\wgam,Z(\gt_0))$ whose restrictions to $\Lambda$ factor through $\olambda$ and satisfy 
$$(\gtll,\rho)\cong(\gt_0\times_{\taut,\ct}\olambda,\rho\vert_{G^{\beta\theta}_0}\times\dac),$$
together with $\tilde\tau\in\Hom_{\theta,\dac,\tau,c}(\wgame,\GL_{G}(V))$, there is a $(\taut,\ct)$-twisted $ \wgam$-right action $\rholm:=\mu^{-1}\tilde{\dac}:\wgam\to\Hom(\lievm,V)$.
As in Proposition \ref{prop-prym-narasimhan-ramanan-alpha-trivial-pairs} we have a morphism 
\begin{equation*}
    \cM(Y_{\alpha},G^{\theta}_0,\wgam,\tau^{\theta},c^{\theta},\lievm,\rholm,\sigma)
    \to\cM(X,G,V)
\end{equation*}
for each $\sigma\in \{Z^1_{\taut}(\Gamma_{x_i},\gt_0)\}$ (here $x_i$ are the isotropy points of $Y_{\alpha}$, with isotropy groups $\Gamma_{x_i}$), whose image we call $\wcM(Y_{\alpha},G^{\theta}_0,\wgam,\tau^{\theta},c^{\theta},\lievm,\rholm,\sigma)
    $.

\begin{theorem}\label{th-prym-narasimhan-ramanan-general-pairs}
Fix $(\da,\theta)\in\Hom(\ker\eta,\GL_{G}(V))$ lifting $a$. We have the following relations between moduli spaces:
\begin{enumerate}
    \item $$\bigcup_{[\dab,\beta],Y,[{\dac}^{\beta\theta},\tau^{\beta\theta}],[c^{\beta\theta}],(\tilde{\dac},\tilde\tau),[\sigma]}\wcM(Y,G^{\beta\theta}_0,\wgam,\tau^{\beta\theta},c^{\beta\theta},\tilde\tau,V^{\beta\theta}_{\mu},\rholm,\sigma)
    \subset\cM(X,G,V)^{\Gamma}. $$
    
    \item $$\cM_{ss}(X,G)^{\Gamma}\subset
    \bigcup_{[\dab,\beta],Y,[{\dac}^{\beta\theta},\tau^{\beta\theta}],[c^{\beta\theta}],(\tilde{\dac},\tilde\tau),[\sigma]}\wcM(Y,G^{\beta\theta}_0,\wgam,\tau^{\beta\theta},c^{\beta\theta},\tilde\tau,V^{\beta\theta}_{\mu},\rholm,\sigma).$$
    
\end{enumerate}

Here $[\dab,\beta]$ runs through $H^1_{\da,\theta}(\Gamma,\rho_G(G))$, $Y$ runs over étale covers of $X$ with Galois group equal to a subgroup
$\Lambda\le\widehat{\Gamma}^{\beta\theta}$, $[{\dac}^{\beta\theta},\tau^{\beta\theta}]\in \Hom_{c^{\beta\theta}}(\wgam,\Out(G^{\beta\theta}_0,V))$ and $[c^{\beta\theta}]\in H^2_{\tau^{\beta\theta}}(\wgam,Z(G^{\beta\theta}_0))$ are such that $\pg^*a\in (\Hom_{\beta\theta,\dac^{\beta\theta},\tau^{\beta\theta},c^{\beta\theta}}(\wgame,\Out(G)))$, their restrictions to $\Lambda$ factor through $\olambda$ and satisfy 
\begin{equation*}
(G^{\beta\theta}_0\times_{\tau^{\beta\theta},c^{\beta\theta}}\Lambda,\rho\vert_{G^{\beta\theta}_0}\times\dac)\cong(G^{\beta\theta}_{\Lambda},\rho\vert_{G^{\beta\theta}_{\Lambda}})
\end{equation*}
and $[\sigma]\in H^1_{c^{\beta\theta}_{x_i}}(\wgamm_{Y,x_i},G)$.
Moreover, for each choice of $[\dab,\beta]$, $[{\dac}^{\beta\theta},\tau^{\beta\theta}]$ and $[c^{\beta\theta}]$, the twisted homomorphism $(\tilde{\dac},\tilde\tau)\in\Hom_{\beta\theta,\dac^{\beta\theta},\tau^{\beta\theta},c^{\beta\theta}}(\wgame,\GL_{G}(V))$ is chosen so that $q_*\tilde\tau=\pg^*a$ and, if $t:\olambda\to\gtll$ is the map realizing (\ref{iso-Gbetatheta-twisted-product}) as in the proof of Proposition \ref{prop-extensions-isomorphic-twisted-group}, we have
\begin{equation*}
    c^{\gamma\theta}(\hat\gamma^{-1},\hat\gamma)^{-1}c^{\gamma\theta}(\hat\gamma^{-1},\lambda)c^{\gamma\theta}(\hat\gamma^{-1}\lambda,\gamma)\tilde\tau^{-1}_{\gamma,\hat\gamma}(t_{\lambda})t_{\hat\gamma^{-1}\lambda\hat\gamma}^{-1}=\langle\alpha_{\gamma},\lambda\rangle,
\end{equation*}
for every $\gamma\in \Gamma$ and $\lambda\in\Lambda$, where $(\gamma,\hat\gamma)\in\wgame$ and $\langle\alpha_{\gamma},\lambda\rangle$ is the evaluation of $p_{Y}\circ\lambda\in\Hom(\hat\alpha,Z)$ at $\alpha_{\gamma}$.
\end{theorem}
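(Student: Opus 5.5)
The proof runs parallel to that of Theorem \ref{th-prym-narasimhan-ramanan-general}, with the adjoint representation replaced by $\rho$ and $\Aut(G)$ replaced by $\GL_G(V)$ throughout. We prove the two inclusions separately.

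\emph{Inclusion (1).} Fix the data in the union and a $(\tau^{\beta\theta},c^{\beta\theta},\rholm)$-twisted $\wgam$-equivariant $(G^{\beta\theta}_0,V^{\beta\theta}_\mu)$-Higgs pair $(F,\psi)$ on $Y_\alpha$. First restrict the twisted action to $\Lambda=\gal(Y_\alpha/X)$. By Proposition \ref{prop-twisted-equivariant-higgs pairs-one-to-one}, the isomorphism $(G^{\beta\theta}_0\times_{\tau^{\beta\theta},c^{\beta\theta}}\Lambda,\rho\times\dac)\cong(G^{\beta\theta}_\Lambda,\rho)$ turns $(F,\psi)$ into a Higgs pair on $X$ with structure group $G^{\beta\theta}_\Lambda$. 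Its image in $\gs$ (for the lift $\beta\theta$) comes from the factorisation of $\Lambda$ through $\olambda$, and its class $\tilde c_{\beta\theta}$ equals $\alpha\vert_{\ker\eta}$, because $Y$ lies in $q_{\beta\theta}^{-1}(\alpha\vert_{\ker\eta})$.

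We then extend to $G$, using Proposition \ref{prop-polystability-extension-structure-group-pairs} for polystability and Proposition \ref{prop-prym-narasimhan-ramanan-alpha-trivial-pairs} to obtain the morphism. To show that $(E,\phi)\cong(E,\phi)\gamma$, we copy the proof of Proposition \ref{prop-fixed-points-reduction-alpha-trivial-pairs}. For $(\gamma,\hat\gamma)\in\wgame$, the action of $\hat\gamma$ on $F$, combined with $(\tilde\dac,\tilde\tau)_{\gamma,\hat\gamma}$, defines a map $E\to\eta_\gamma^{*-1}\tilde\tau_{\gamma,\hat\gamma}(E)$. The new ingredient is the twist by $\alpha_\gamma$. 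The map intertwines the $\Lambda$-actions only up to the scalar on the left of (\ref{eq-2-cocycle-vs-alpha}); this is the same conjugation computation as in Proposition \ref{prop-action-centralizer}. By (\ref{eq-2-cocycle-vs-alpha}) that scalar is $\langle\alpha_\gamma,\lambda\rangle$, which is exactly the $\Lambda$-equivariant structure of $p_{Y_\alpha}^*\alpha_\gamma$ (the trivial $Z$-bundle on $Y_\alpha$ twisted by the pairing). So after descending we get $E\cong\eta_\gamma^*\tilde\theta_\gamma^{-1}(E\otimes\alpha_\gamma)$. Because $Z\subset\ker\rho$, tensoring by $\alpha_\gamma$ leaves the Higgs field unchanged. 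The $\Gamma$-invariance of $\psi$ with respect to $\rholm=\mu^{-1}\tilde\dac$ then shows that $\phi$ is sent to $\mu_\gamma\tilde\dac^{-1}_{\gamma}(\phi)$, as in Proposition \ref{prop-fixed-points-reduction-alpha-trivial-higgs}.

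\emph{Inclusion (2).} Let $(E,\phi)$ be simple, stable and fixed. Applying Proposition \ref{prop-simple-fixed-points-pairs} to $\ker\eta$ with the cocycle $\alpha\vert_{\ker\eta}$ gives a lift $(\dab\da,\beta\theta)$ and a unique reduction $(F,\psi)$ to $(G_{\beta\theta},V^{\dab\da}_\mu)$ with $\tilde c_{\beta\theta}(F)\cong\alpha\vert_{\ker\eta}$. We take $Y$ to be a connected component of $F/G^{\beta\theta}_0$; it then lies in $q_{\beta\theta}^{-1}(\alpha\vert_{\ker\eta})$. Pulling back to $Y_\alpha$ does two things: it reduces the structure group to $G^{\beta\theta}_\Lambda$, and it trivialises every $p_{Y_\alpha}^*\alpha_\gamma$, since $\halpha$ is killed on $Y_\alpha$.

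For each $\gamma$, compose the isomorphism $h_\gamma:E\to\eta_\gamma^{*-1}\tilde\theta_\gamma(E\otimes\alpha_\gamma)$ with these trivialisations. Then run the argument of Proposition \ref{prop-simple-fixed-points-oscar-ramanan-alpha-trivial-principal}, in its pairs form (Proposition \ref{prop-simple-fixed-points-oscar-ramanan-alpha-trivial-pairs}), on the tautological $G^{\beta\theta}_0$-reduction over $Y_\alpha$. We choose correcting elements $t_{\gamma,\hat\gamma}$ so that the maps preserve the reduction. Simplicity gives a well-defined homomorphism $\wgam\to\Out(G^{\beta\theta}_0,V)$, and lifting it (de Siebenthal) defines $\tau^{\beta\theta}$, $c^{\beta\theta}$, $\tilde\tau=\tilde\theta\Int_t$ and $\tilde\dac=\tilde\da\rho(t)$. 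The twisted equivariance of $\psi$ is verified exactly as in Proposition \ref{prop-simple-fixed-points-oscar-ramanan-alpha-trivial-higgs}. The local types $[\sigma]$ are simply recorded at the isotropy points of $Y_\alpha$.

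\emph{Main obstacle.} The hardest step is deriving (\ref{eq-2-cocycle-vs-alpha}) in part (2) and using it in part (1). We have to track how the trivialisation of $p_{Y_\alpha}^*\alpha_\gamma$ interacts with the deck action of $\lambda\in\Lambda$. Conjugating $(1,\lambda)$ by the lifted action of $\hat\gamma$ in $G^{\beta\theta}_0\times_{\tau,c}\wgam$ produces the cocycle factors $c(\hat\gamma^{-1},\hat\gamma)^{-1}c(\hat\gamma^{-1},\lambda)c(\hat\gamma^{-1}\lambda,\hat\gamma)$, by the computation in Proposition \ref{prop-action-centralizer}. Combined with $\tilde\tau^{-1}_{\gamma,\hat\gamma}(t_\lambda)t^{-1}_{\hat\gamma^{-1}\lambda\hat\gamma}$, this must equal the monodromy of $\alpha_\gamma$ along $\lambda$. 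I would first check the case where $\eta$ is trivial, comparing with Proposition \ref{prop-reduction-principal}, and then treat the general case by keeping track of pullbacks under $\eta_\gamma$.
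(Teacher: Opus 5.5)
Your proposal is correct at the level of detail the paper itself provides, and it follows the same route. The paper gives no separate proof of this statement; the intended argument is the sketch of Theorem~\ref{th-prym-narasimhan-ramanan-general} transported to pairs, namely: reduce over $\ker\eta$ via Proposition~\ref{prop-simple-fixed-points-pairs}, pass to $Y_\alpha$ where $p_{Y_\alpha}^*\halpha$ becomes trivial, build the twisted $\wgam$-action as in Proposition~\ref{prop-simple-fixed-points-oscar-ramanan-alpha-trivial-pairs}, and read off (\ref{eq-2-cocycle-vs-alpha}) from the pairing-twisted $\Lambda$-structure on the trivialised $p_{Y_\alpha}^*\alpha_\gamma$, which is exactly what you do, and your tracing of the cocycle factors in (\ref{eq-2-cocycle-vs-alpha}) to the conjugation computation of Proposition~\ref{prop-action-centralizer} is more explicit than the paper (note only that your ``intertwines up to the scalar'' step implicitly reads $\tilde\tau$ as restricting to $\tau^{\beta\theta}$ on $G^{\beta\theta}_0$ rather than to $e_{\tau,c}$ on the whole non-connected group, on which $\ker\eta$ acts through $c_{\beta\theta}$, a looseness already present in the paper's formulation).
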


\end{document}